\definecolor{darkblue}{rgb}{0.0, 0.0, 0.55}
\newcommand{\warningsign}{\tikz[baseline=-.75ex] \node[scale=0.8,shape=regular polygon, regular polygon sides=3, inner sep=0pt, draw, thick] {\textbf{!}};}
\theoremstyle{plain}
\newtheorem{pro}{Proposition}[section]
\newtheorem{pront}[pro]{Proposition and Notation}
\newtheorem{proterm}[pro]{Proposition and Terminology}
\newtheorem{thm}[pro]{Theorem}
\newtheorem{dfpro}[pro]{Definition and Proposition}
\newtheorem{thmdef}[pro]{Theorem and Definition}
\newtheorem{cordef}[pro]{Corollary and Definition}
\newtheorem{cor}[pro]{Corollary}
\theoremstyle{definition}
\newtheorem{lem}[pro]{Lemma}
\newtheorem{df}[pro]{Definition}
\newtheorem{dflem}[pro]{Definition and Lemma}
\newtheorem{term}[pro]{Terminology}
\newtheorem{convention}[pro]{Convention}
\newtheorem{notation}[pro]{Notation}
\newtheorem{dfrem}[pro]{Definition and Remark}
\newtheorem{notrem}[pro]{Notation and Remark}
\newtheorem{notterm}[pro]{Notation and Terminology}
\newtheorem{termnot}[pro]{Terminology and Notation}
\newtheorem{remterm}[pro]{Remark and Terminology}
\newtheorem{rem}[pro]{Remark}
\newtheorem{motivation}[pro]{Motivation}
\newtheorem{ex}[pro]{Example}
\newtheorem{exo}[pro]{Exercise}
\newtheorem{nt}[pro]{Notation}
\newtheorem{reminder}[pro]{Reminder}
\newcommand\ii{\mathbbm i}
\newcommand\bigdotcup{\mathop{\dot\bigcup}}
\newcommand\et{\mathbin{\&}}
\newcommand\biget{\mathop{\text{\LARGE\&}}}
\newcommand\N{\mathbb N}
\newcommand\Z{\mathbb Z}
\newcommand\Q{\mathbb Q}
\newcommand\R{\mathbb R}
\newcommand\C{\mathbb C}
\newcommand\F{\mathbb F}
\newcommand\pow{\mathcal P}
\newcommand\x{\ushort X}
\newcommand\y{\ushort Y}
\newcommand\g{\ushort g}
\newcommand\al\alpha
\newcommand\be\beta
\newcommand\la\lambda
\newcommand\ep\varepsilon
\newcommand\La\Lambda
\newcommand\De\Delta
\newcommand\de\delta
\newcommand\ga\gamma
\newcommand\Ph\Phi
\newcommand\Ps\Psi
\newcommand\ph\varphi
\newcommand\ps\psi
\newcommand\si\sigma
\newcommand\ze\zeta
\newcommand\rh\varrho
\newcommand\io\iota
\newcommand\ch\chi
\newcommand\Om\Omega
\newcommand\Ga\Gamma
\renewcommand\O{\mathcal O}
\newcommand\p{\mathfrak p}
\newcommand\q{\mathfrak q}
\newcommand\m{\mathfrak m}
\newcommand\cc[2]{\overline{#1}^{\substack{\vspace*{-0.5em}{\scriptscriptstyle#2}}}}
\newcommand\mypm[2]{\mathbin{\smash{\raisebox{0.5ex}{$\underset{\smash #2}{\smash #1}$}}}} 
\DeclareMathOperator\relint{relint}
\DeclareMathOperator\irr{irr}
\DeclareMathOperator\supp{supp}
\DeclareMathOperator\id{id}
\DeclareMathOperator\sgn{sgn}
\DeclareMathOperator\conv{conv}
\DeclareMathOperator\cone{cone}
\DeclareMathOperator\chara{char}
\DeclareMathOperator\Aut{Aut}
\DeclareMathOperator\End{End}
\DeclareMathOperator\GL{GL}
\DeclareMathOperator\rk{rk}
\DeclareMathOperator\sg{sg}
\DeclareMathOperator\tr{tr}
\DeclareMathOperator\ev{ev}
\DeclareMathOperator\set{Set}
\DeclareMathOperator\class{Class}
\DeclareMathOperator\transfer{Transfer}
\DeclareMathOperator\slim{Slim}
\DeclareMathOperator\fatten{Fatten}
\DeclareMathOperator\lf{LF}
\DeclareMathOperator\spec{spec}
\DeclareMathOperator\sper{sper}
\DeclareMathOperator\qf{qf}
\DeclareMathOperator\rrad{rrad}
\DeclareMathOperator\rspec{rspec}
\DeclareMathOperator\rnil{rnil}
\DeclareMathOperator\st{st}
\DeclareMathOperator\extr{extr}
\DeclareMathOperator\aff{aff}
\DeclareMathOperator\hess{Hess}
\DeclareMathOperator\convbd{convbd}
\newcommand{\operator}[1]{\mathop{\vphantom{\sum}\mathchoice
{\vcenter{\hbox{\huge $#1$}}}
{\vcenter{\hbox{\Large $#1$}}}{#1}{#1}}\displaylimits}
\newcommand{\Et}{\operator{\mathrm{\&}}}
\newcommand\alal[2]{$\begin{Bmatrix}\text{#1}\\\text{#2}\end{Bmatrix}$}
\newcommand\alalal[3]{$\begin{Bmatrix}\text{#1}\\\text{#2}\\\text{#3}\end{Bmatrix}$}
\newcommand\malal[2]{\begin{Bmatrix}#1\\#2\end{Bmatrix}}
\newcommand\malalal[3]{\begin{Bmatrix}#1\\#2\\#3\end{Bmatrix}}
\begin{document}
\titlehead{\url{http://www.math.uni-konstanz.de/~schweigh/}}
\subject{Lecture notes}
\title{Real Algebraic Geometry,\\Positivity and Convexity}
\subtitle{2020--2022}
\publishers{Universität Konstanz, Germany}
\author{Markus Schweighofer}
\uppertitleback{\textbf{Preface.}
Chapters 1--4 are the lecture notes of my course ``Real Algebraic Geometry I''
from the winter term 2020/2021.
Chapters 5--8 are the lecture notes of its continuation ``Real Algebraic Geometry II''
from the summer term 2021.
Chapters 9--10 are the lecture notes of its further continuation ``Geometry of Linear Matrix Inequalities''
from the winter term 2021/2022.

\bigskip
Because of the COVID-19 pandemic, I produced a series of accompanying screencasts of varying length where go in detail through the material.
These can be found within a playlist of
my Youtube Channel \begin{center}\url{https://www.youtube.com/channel/UCcsp7yAQjJQHx3SN3aL_r1A}\end{center}
Other material like
the exercise sheets are available somewhere on my website:
\begin{center}
\url{http://www.math.uni-konstanz.de/~schweigh/}
\end{center}

\bigskip
Whoever and wherever you are, please report any ambiguities and errors (including typos) to:
\begin{center}
\texttt{markus.schweighofer@uni-konstanz.de}
\end{center}
}
\lowertitleback{This document is to a large extent based on the work of other people.
For the relevant scientific sources, we refer to the literature referenced at the end of this 
document as well as the bibliographies of the books \cite{abr,bcr,bpr,ks,mar,pd}.
I would like to thank the numerous people that helped to improve these lecture notes:
First of all, I thank my former doctoral student Tom-Lukas Kriel, especially for coauthoring Chapters 9 and 10. Thanks go also
to Alexander Taveira Blomenhofer, Sebastian Gruler and María López Quijorna for leading several accompanying exercise classes.
Last but not least numerous participants of my lectures pointed out errors and typos. Among them I mention especially
Alexander Taveira Blomenhofer, Johannes Buchwald, Nicolas Daans, Carl Eggen, Jakob Everling, Rüdiger Grunwald, Pirmin Klink, Arne Lien, Leonhard Nenno, Emre Öztürk, Jonas Riehle, David Sawall, Joschka Schmidt, Alison Surey in alphabetical order.}
\date{Version of \today, \currenttime}
\frontmatter
\pagestyle{empty}
\maketitle
\pagestyle{scrheadings}
\manualmark

\tableofcontents

\chapter{Introduction}

The study of polynomial equations is a canonical subject in mathematics education, as is illustrated by the following examples: Quadratic equations in one variable (high school), systems of linear equations (linear algebra), polynomial equations in one variable and their symmetries (algebra, Galois theory), diophantine equations (number theory) and systems of polynomial equations (algebraic geometry, commutative algebra).

\bigskip\noindent
In contrast to this, the study of polynomial inequalities (in the sense of ``greater than'' or ``greater or equal than'') is mostly neglected even though it is much more important for applications: Indeed, in applications one often searches for a real solution rather than a complex one (as in classical algebraic geometry) and this solution must not necessarily be exact but only approximate.

\bigskip\noindent
In a course about linear algebra there is frequently no time for linear optimization. An introductory course about algebra usually treats groups, rings and fields but disregards ordered and real closed fields as well as preorders or prime cones of rings. In a first course on algebraic geometry there is often no special attention paid to the real part of a variety and in commutative algebra quadratic modules are practically never treated.

\bigskip\noindent
Most algebraists do not even know the notion of a preorder although it is as important for the study of systems of polynomial inequalities as the notion of an ideal is for the study of systems of polynomial equations. People from more applied areas such as numerical analysis, mathematical optimization or functional analysis know often more about real algebraic geometry than some algebraists, but often do not even recognize that polynomials play a decisive role in what they are doing. There are for example countless articles from functional analysis which are full of equations with binomial coefficients which turn out to be just disguised simple polynomial identities.

\bigskip\noindent
In the same way as the study of polynomial systems of equations leads to the study of rings and their generalizations (such as modules), the study of systems of polynomial inequalities leads to the study of rings which are endowed with something that resembles an order. This additional structure raises many new questions that have to be clarified. These questions arise already at a very basic level so that we need as prerequisites only basic linear algebra, algebra and analysis. In particular, at least the first half of this
course is really extremely well suited to students heading for students enrolled in programs for mathematics education. It includes several topics which are directly relevant for high school teaching.

\bigskip\noindent
To arouse the reader's curiosity, we present the following table.
It contains on the left column notions we assume the reader is familiar with. On the right column we name what could be seen
more or less as their real counterparts mostly introduced in this course.

\begin{tabular}{r|l}
Algebra&Real Algebra\\
Algebraic Geometry&Real Algebraic Geometry\\
systems of polynomial equations&systems of polynomial inequalities\\
``$=$''&``$\ge$''\\
complex solutions&real solutions\\
$\C$&$\R$\\
algebraically closed fields&real closed fields\\
fields&ordered fields\\
ideals&preorders\\
prime ideals&prime cones\\
spectrum&real spectrum\\
Noetherian&quasi-compact\\
radical&real radical\\
fundamental theorem of algebra&fundamental theorem of algebra\\
Aachen, Aalborg, Aarhus, \dots&Dortmund, Dresden, Dublin, Innsbruck, \dots\\
\dots, Zagreb, Zürich&\qquad\quad \dots, Konstanz, Leipzig, Ljubljana, Rennes
\end{tabular}

\bigskip\noindent
It is intended that the fundamental theorem of algebra appears on both sides of the table. In its usual form, it says
that each non-constant univariate complex polynomial has a complex root. In Section \ref{sec:rcf}, we will formulate it in a
``real'' way. The difficulties one has to deal with in the ``real world'' become already apparent when one asks the corresponding
``real question'': When does a univariate complex polynomial have a real root? The answer to this will be given in Section
\ref{sec:hermite} and requires already quite a bit of thoughts.

\bigskip\noindent
Traditionally, Real Algebraic Geometry has many ties with fields like Model Theory, Valuation Theory, Quadratic Form Theory
and Algebraic Topology. In this lecture, we mainly emphasize however connections to fields like Optimization, Functional Analysis and Convexity
that came up during the recent years and are now fully established.

\bigskip\noindent
Throughout the lecture, $\N:=\{1,2,3,\dots\}$ and $\N_0:=\{0\}\cup\N$ denote the set of positive and nonnegative integers, respectively.

\mainmatter

\chapter{Ordered fields}

\section{Orders of fields}

\begin{reminder}\label{ordered-set}
Let $M$ be a set. An \emph{order} on $M$ is a relation $\le$ on $M$ such that for all $a,b,c\in M$:
\[
\begin{array}{rcl}
&a\le a & \text{(reflexivity)}\\
&(a\le b \et b\le c)\implies a\le c & \text{(transitivity)}\\
&(a\le b \et b\le a)\implies a=b & \text{(antisymmetry)}\\
\text{and}&a\le b\text{ or }b\le a & \text{(linearity)}
\end{array}
\]
In this case, $(M,\le)$ (or simply $M$ if $\le$ is clear from the context) is called an \emph{ordered set}. For $a,b\in M$, one defines
\begin{align*}
a<b&:\iff a\le b\et a\ne b,\\
a\ge b&:\iff b\le a
\end{align*}
and so on.
\end{reminder}

\begin{df}\label{ordhom}
Let $(M,\le_1)$ and $(N,\le_2)$ be ordered sets and $\ph\colon M\to N$ be a map.
Then $\ph$ is called a \emph{homomorphism} (of ordered sets)
or \emph{monotonic} if \[a\le_1 b\implies\ph(a)\le_2\ph(b)\] for all $a,b\in M$. If $\ph$ is
\alal{injective}{bijective} and if
\[a\le_1 b\iff\ph(a)\le_2\ph(b)\]
for all $a,b\in M$, then $\ph$ is called an \alal{\emph{embedding}}{\emph{isomorphism}} (of ordered sets).
\end{df}

\begin{pro}\label{automono}
Let $(M,\le_1)$ and $(N,\le_2)$ be ordered sets and $\ph\colon M\to N$ a homomorphism. Then the following are equivalent:
\begin{enumerate}[\normalfont(a)]
\item $\ph$ is an embedding
\item $\ph$ is injective
\item $\forall a,b\in M:(\ph(a)\le_2\ph(b)\implies a\le_1b)$
\end{enumerate}
\end{pro}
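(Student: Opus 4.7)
The plan is to prove the cycle (a)$\implies$(b)$\implies$(c)$\implies$(a), so that the three conditions are mutually equivalent. The implication (a)$\implies$(b) is immediate from Definition \ref{ordhom}, since an embedding is by definition injective. So the work lies in (b)$\implies$(c) and (c)$\implies$(a), and the essential ingredient in both is the linearity axiom for the orders, together with antisymmetry.

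For (b)$\implies$(c), I would assume $\ph$ is injective and $\ph(a)\le_2\ph(b)$, and argue by the linearity of $\le_1$: either $a\le_1 b$ (done) or $b\le_1 a$. In the latter case, monotonicity of $\ph$ gives $\ph(b)\le_2\ph(a)$; combining with $\ph(a)\le_2\ph(b)$, the antisymmetry of $\le_2$ yields $\ph(a)=\ph(b)$, whence injectivity forces $a=b$, so by reflexivity $a\le_1 b$ as well. This is the step where the linearity hypothesis on the source is crucial---without it, the dichotomy ``$a\le_1 b$ or $b\le_1 a$'' would not be available.

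For (c)$\implies$(a), I would verify the two defining conditions of an embedding separately. The biconditional $a\le_1 b\iff\ph(a)\le_2\ph(b)$ follows from combining the monotonicity of $\ph$ (already assumed) with hypothesis (c). To see injectivity, suppose $\ph(a)=\ph(b)$, so that both $\ph(a)\le_2\ph(b)$ and $\ph(b)\le_2\ph(a)$ hold; applying (c) twice gives $a\le_1 b$ and $b\le_1 a$, and antisymmetry of $\le_1$ concludes $a=b$.

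I expect no serious obstacle: the argument is almost formal once one notices the correct place to apply linearity (in (b)$\implies$(c)) versus antisymmetry (in (c)$\implies$(a) for injectivity). The subtle point is not to forget that the conditions defining an embedding include injectivity, which must be rederived from (c) rather than simply read off from the order-reflecting property.
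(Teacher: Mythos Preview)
Your proof is correct and essentially the same as the paper's. The paper organizes it as (c)$\implies$(b), (b)$\implies$(c) (by contraposition rather than your direct case split on linearity, but with the same content), and then observes that (a)$\iff$((b)$\et$(c)) by Definition~\ref{ordhom}; your cycle (a)$\implies$(b)$\implies$(c)$\implies$(a) contains the same arguments rearranged.
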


\begin{proof}
\underline{(c)$\implies$(b)} \quad Suppose (c) holds and let $a,b\in M$ such that $\ph(a)=\ph(b)$. Then
$\ph(a)\le_2\ph(b)$ and $\ph(a)\ge_2\ph(b)$. Now (c) implies $a\le_1b$ and $a\ge_1b$. Hence $a=b$.

\smallskip
\underline{(b)$\implies$(c)} \quad Suppose (b) holds and let $a,b\in M$ with $a\not\le_1b$. To show: $\ph(a)\not\le_2\ph(b)$. We have
$a>_1b$ and it suffices to show $\ph(a)>_2\ph(b)$. From $a\ge_1b$ it follows by the monotonicity of $\ph$ that $\ph(a)\ge_2\ph(b)$.
From $a\ne b$ and the injectivity of $\ph$ we get $\ph(a)\ne\ph(b)$.

\smallskip
From (b)$\iff$(c) and (a)$\iff$((b)$\et$(c)) [$\to$ \ref{ordhom}] the claim now follows.
\end{proof}

\begin{df}\label{def-ordered-field}
Let $K$ be a field. An \emph{order} of $K$ is an order $\le$ on $K$ such that for all $a,b,c\in K$ we have:
\[
\begin{array}{rcl}
&a\le b\implies a+c\le b+c & \text{(monotonicity of addition)}\\
\text{and}&(a\le b \et c\ge0)\implies ac\le bc & \text{(monotonicity of multiplication).}\\
\end{array}
\]
In this case, $(K,\le)$ (or simply $K$ when $\le$ is clear from the context) is called an \emph{ordered field}.
\end{df}

\begin{df}\label{ordfieldhom}
Let $(K,\le_1)$ and $(L,\le_2)$ be ordered fields.

A field homomorphism (or equivalently, field embedding!) $\ph\colon K\to L$
is called a \emph{homomorphism} or \emph{embedding} of ordered fields if $\ph$ is monotonic
(pay attention to \ref{automono} together with the fact that field homomorphisms are injective).
If $\ph$ is moreover surjective, then $\ph$ is called an \emph{isomorphism} of ordered fields. 

If there exists an embedding of ordered fields
from $(K,\le_1)$ into $(L,\le_2)$, then $(K,\le_1)$ is called \emph{embeddable} in $(L,\le_2)$ and one denotes
$(K,\le_1)\hookrightarrow(L,\le_2)$.
If there is an isomorphism of ordered fields from $(K,\le_1)$ to $(L,\le_2)$, then $(K,\le_1)$ and $(L,\le_2)$ are called
\emph{isomorphic}. This is denoted by $(K,\le_1)\cong(L,\le_2)$.

$(K,\le_1)$ is called an \emph{ordered subfield} of $(L,\le_2)$, or equivalently $(L,\le_2)$ an \emph{ordered extension field} of $(K,\le_1)$,
if $(K,\le_1)\to(L,\le_2),\ a\mapsto a$ is an embedding, that is if $K$ is a subfield of $L$ and $(\le_1)=(\le_2)\cap(K\times K)$. For every
subfield of $L$ there is obviously a unique order making it into an ordered subfield of $(L,\le_2)$. This order is called the order \emph{induced}
by $(L,\le_2)$.
\end{df}

\begin{pro}\label{squares}
Let $(K,\le)$ be an ordered field. Then $a^2\ge0$ for all $a\in K$.
\end{pro}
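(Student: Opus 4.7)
The plan is to use the linearity axiom to split into two cases and apply the two monotonicity axioms of Definition \ref{def-ordered-field}, together with elementary field identities.

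First I would dispose of the easy case $a \ge 0$: applying monotonicity of multiplication to $0 \le a$ and $a \ge 0$ (taking the $c$ of the axiom to be $a$) gives $0 \cdot a \le a \cdot a$, that is $0 \le a^2$.

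In the remaining case $a \le 0$, I would first upgrade this to $0 \le -a$ using monotonicity of addition: adding $-a$ to both sides of $a \le 0$ yields $0 \le -a$. Now monotonicity of multiplication, applied with $c = -a \ge 0$ to the inequality $0 \le -a$, gives $0 \cdot (-a) \le (-a)(-a)$. The field identity $(-a)(-a) = a^2$ then finishes the argument.

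I do not foresee any real obstacle; the only subtlety is that monotonicity of multiplication as stated in \ref{def-ordered-field} requires the multiplier to be nonnegative, which is exactly why the case $a < 0$ must be reduced to a statement about $-a \ge 0$ before multiplying. The conclusion in both cases is $0 \le a^2$, which by linearity covers every $a \in K$.
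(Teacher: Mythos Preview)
Your proposal is correct and follows essentially the same approach as the paper's proof: split into the cases $a\ge0$ and $a\le0$ via linearity, handle the first case directly by monotonicity of multiplication, and in the second case first pass to $-a\ge0$ via monotonicity of addition before applying monotonicity of multiplication and the identity $(-a)(-a)=a^2$.
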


\begin{proof}
Let $a\in K$.
When $a\ge0$ this follows immediately from the monotonicity of multiplication [$\to$ \ref{def-ordered-field}]. When $a\le0$ the monotonicity of
addition [$\to$ \ref{def-ordered-field}] yields $0=a-a\le-a$, whence $-a\ge0$ and therefore $a^2=(-a)^2\ge0$.
\end{proof}

\begin{pro}\label{qembeds}
Let $(K,\le)$ be an ordered field. Then $K$ is of characteristic $0$ and the uniquely determined field homomorphism
$\Q\to K$ is an embedding of ordered fields ${(\Q,\le_\Q)}\hookrightarrow(K,\le)$. Hence $(K,\le)$ can be seen as an ordered
extension field of $(\Q,\le_{\Q})$. In particular, for $K=\Q$ it follows that $(\le_\Q)=(\le)$, i.e., $\Q$ can only be ordered in the familiar way.
\end{pro}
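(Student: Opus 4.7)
The plan is to first pin down that $1_K > 0$, bootstrap to $n \cdot 1_K > 0$ for every $n \in \N$, and use this both to get $\chara K = 0$ (so that the unique field homomorphism $\ph\colon\Q\to K$ exists) and to verify that $\ph$ is monotonic. The final sentence will then follow by specializing to $K = \Q$.

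First I will invoke \ref{squares} to obtain $1_K = 1_K^2 \ge 0$; combining this with $1_K \ne 0$ gives $1_K > 0$. A short induction using the monotonicity of addition [$\to$ \ref{def-ordered-field}] then yields $n \cdot 1_K > 0$ for all $n \in \N$, and in particular $n \cdot 1_K \ne 0$. Hence $\chara K = 0$ and there is a unique field homomorphism $\ph\colon\Q\to K$, necessarily sending $m/n$ (with $m \in \Z$, $n \in \N$) to $(m \cdot 1_K)(n \cdot 1_K)^{-1}$.

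To verify that $\ph$ is monotonic, I will take $a \le_\Q b$ in $\Q$, write $b - a = m/n$ with $m \in \N_0$ and $n \in \N$, and compute $\ph(b) - \ph(a) = (m \cdot 1_K)(n \cdot 1_K)^{-1}$. The numerator is $\ge 0$ (trivially if $m=0$, by the first step otherwise) and the denominator is $> 0$. The small side step here is showing that the inverse of a positive element of $K$ is itself positive: if $x > 0$ and $xy = 1$, then $y \ne 0$, and assuming $y < 0$ would give $-y > 0$ and hence $-1 = x(-y) \ge 0$, contradicting $1_K > 0$. A standard product-of-signs argument in $K$ then yields $\ph(b) - \ph(a) \ge 0$, i.e., $\ph(a) \le \ph(b)$. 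Since $\ph$ is an injective monotonic map, \ref{automono} upgrades it to an embedding of ordered sets, which combined with its being a field homomorphism means it is an embedding of ordered fields in the sense of \ref{ordfieldhom}.

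The last claim is then immediate: specializing to $K = \Q$ with an arbitrary order $\le$, the unique field homomorphism $\Q \to \Q$ is the identity, and the fact that it is an embedding of ordered fields reads precisely $a \le_\Q b \iff a \le b$ for all $a, b \in \Q$, i.e., $(\le_\Q) = (\le)$. The only mildly nontrivial ingredient in the whole argument is the sign-of-inverse lemma used above; everything else is an induction on top of \ref{squares} and the ordered field axioms.
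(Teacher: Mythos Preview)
Your proof is correct and follows essentially the same route as the paper's: use \ref{squares} to get $1>0$, bootstrap to $n\cdot1_K>0$, deduce $\chara K=0$, and then check monotonicity of the unique $\ph\colon\Q\to K$ by reducing to a nonnegative-over-positive quotient. The only cosmetic difference is in the ``inverse of a positive element is positive'' step: the paper writes $\frac1{\ph(n)}=\left(\frac1{\ph(n)}\right)^2\ph(n)$ and appeals once more to \ref{squares}, whereas you argue by contradiction on the sign of the inverse; both work equally well.
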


\begin{proof} From \ref{squares} we have $0\le1^2=1$ in $(K,\le)$. Using the monotonicity of the addition, we deduce
\begin{equation}
\tag{$*$}
0\le1\le1+1\le1+1+1\le\dots
\end{equation}
If we had $\chara K\ne0$, then $(*)$ would give $0\le1\le0$ by the transitivity of $\le$ which would imply
$0=1$ in $K$ by the antisymmetry of $\le$, contradicting the definition of a field. Let $\ph$ denote the field homomorphism
$\Q\to K$ and let $a,b\in\Q$ with $a\le_{\Q}b$. To show: $\ph(a)\le\ph(b)$. Write $a=\frac kn$ and $b=\frac\ell n$ with
$k,\ell\in\Z$ and $n\in\N$. Then
\[\ph(n)=\underbrace{1+\dots+1}_{\text{$n$ times}}\underset{\chara K=0}{\overset{(*)}>}0\] and, by the monotonicity of multiplication
and Proposition \ref{squares}, also
\[\frac1{\ph(n)}=\left(\frac1{\ph(n)}\right)^2\ph(n)\ge0.\]
Hence it suffices to show that $\ph(a)\ph(n)\le\ph(b)\ph(n)$.
This reduces to $\ph(an)\le\ph(bn)$, that is $\ph(k)\le\ph(\ell)$, or equivalently $\ph(\ell-k)\ge0$.
But due to $\ell-k\ge_\Q0$ this follows from $(*)$.
\end{proof}

\begin{pront}\label{introabssgn}
Let $(K,\le)$ be an ordered field. Then for every $a\in K^\times$ there are uniquely determined
$\sgn a\in\{-1,1\}$ (``sign'' of $a$) and $|a|\in K_{\ge0}:=\{x\in K\mid x\ge0\}$ (``absolute value'' of $a$) such that
\[a=(\sgn a)|a|.\]
One declares $\sgn0:=|0|:=0$. It follows that $|ab|=|a||b|$, $\sgn(ab)=(\sgn a)(\sgn b)$ and
$|a+b|\le|a|+|b|$ for all $a,b\in K$.
\end{pront}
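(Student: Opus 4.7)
The plan is to handle existence and uniqueness of $\sgn a$ and $|a|$ first, then derive the three identities.

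For existence when $a\in K^\times$, I would use the linearity axiom from \ref{ordered-set} to split into cases. If $a\ge 0$, set $\sgn a:=1$ and $|a|:=a$; if $a\le 0$, apply monotonicity of addition to get $-a\ge 0$ and set $\sgn a:=-1$ and $|a|:=-a$. For both to be available as values in $\{-1,1\}$ we need $1\ne -1$, which follows from $\chara K=0$ established in \ref{qembeds} (otherwise $1+1=0$ forces $1=-1$). For uniqueness, suppose $a=sb=s'b'$ with $s,s'\in\{-1,1\}$ and $b,b'\in K_{\ge 0}$. If $s=s'$ then cancellation gives $b=b'$. Otherwise $s=-s'$, so $b=-b'$; but then both $b\ge 0$ and $-b=b'\ge 0$, so monotonicity of addition plus antisymmetry forces $b=0$, whence $a=0$, contradicting $a\in K^\times$.

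For the multiplicative identities, I would handle the case that either $a$ or $b$ is $0$ using the conventions $\sgn 0=|0|=0$ directly. For $a,b\in K^\times$, I would write
\[
ab=\bigl((\sgn a)|a|\bigr)\bigl((\sgn b)|b|\bigr)=\bigl((\sgn a)(\sgn b)\bigr)\bigl(|a|\,|b|\bigr),
\]
observe that $(\sgn a)(\sgn b)\in\{-1,1\}$ and that $|a|\,|b|\ge 0$ by monotonicity of multiplication (using $|a|,|b|\ge 0$), and then invoke the uniqueness part to conclude $\sgn(ab)=(\sgn a)(\sgn b)$ and $|ab|=|a|\,|b|$.

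For the triangle inequality, my key lemma is that $x\le|x|$ and $-x\le|x|$ for every $x\in K$: in the case $x\ge 0$ we have $|x|=x$ and $-x\le 0\le x=|x|$, while in the case $x\le 0$ we have $|x|=-x$ and $x\le 0\le -x=|x|$. Adding the corresponding inequalities for $a$ and $b$ using monotonicity of addition yields $a+b\le|a|+|b|$ and $-(a+b)\le|a|+|b|$. Since $|a+b|$ equals either $a+b$ or $-(a+b)$, both bounded above by $|a|+|b|$, the conclusion follows.

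The only mildly delicate point is the uniqueness step, because it is the one place where the axiom $1\ne-1$ (i.e., characteristic zero) is really used; everything else is bookkeeping on the two sign cases.
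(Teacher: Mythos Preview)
Your proof is correct and follows essentially the same approach as the paper's; the paper simply declares the existence/uniqueness part ``very easy'' and argues the multiplicative identities exactly as you do. The only cosmetic difference is in the triangle inequality: the paper reduces WLOG to the case $a+b\ge0$ (replacing $a,b$ by $-a,-b$ otherwise) and writes the chain $|a+b|=a+b\le a+|b|\le|a|+|b|$, which is just a terser use of your lemma $\pm x\le|x|$.
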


\begin{proof}
The first part is very easy. Let now $a,b\in K$. Then $ab=(\sgn a)(\sgn b)|a||b|$, implying $|ab|=|a||b|$ as well as
$\sgn(ab)=(\sgn a)(\sgn b)$. For the claimed triangle inequality, we can suppose $a+b\ge0$ (otherwise replace $a$ by $-a$ and $b$ by $-b$).
Then $|a+b|=a+b\le a+|b|\le|a|+|b|$.
\end{proof}

\begin{df}\label{archetcdef}
Let $(K,\le)$ be an ordered field.
\begin{enumerate}[(a)]
\item $(K,\le)$ is called \emph{Archimedean} if $\forall a\in K:\exists N\in\N:a\le N$
(or equivalently, $\forall a\in K:\exists N\in\N:-N\le a$).
\item A sequence $(a_n)_{n\in\N}$ in $K$ is called
\begin{itemize}
\item a \emph{Cauchy sequence} if $\forall\ep\in K_{>0}:\exists N\in\N:\forall m,n\ge N:|a_m-a_n|<\ep$,
\item \emph{convergent to $a\in K$} if $\forall\ep\in K_{>0}:\exists N\in\N:\forall n\ge N:|a_n-a|<\ep$
(one easily shows that $a$ is then uniquely determined and writes $\lim_{n\to\infty}a_n=a$),
\item \emph{convergent} if there is some $a\in K$ such that $\lim_{n\to\infty}a_n=a$.
\end{itemize}
We call $(K,\le)$ \emph{Cauchy complete} if every Cauchy sequence converges in $K$ (by the way it is immediate that every convergent
sequence is a Cauchy sequence).
\item We call a subset $A\subseteq K$ \emph{bounded from above} if $K$ contains an upper bound for $A$ (meaning some
$b\in K$ such
that $\forall a\in A:a\le b$). We call $(K,\le)$ \emph{complete} if every nonempty subset of $K$ bounded from above possesses a least
upper bound, i.e., a supremum.
\end{enumerate}
\end{df}

\begin{pro}\label{qdense}
Let $(K,\le)$ be an ordered field. Then the following are equivalent:
\begin{enumerate}[\normalfont(a)]
\item $(K,\le)$ is Archimedean
\item $\forall a,b\in K:(a<b\implies\exists c\in\Q:a<c<b)$
\end{enumerate}
\end{pro}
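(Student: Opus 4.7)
The plan is to prove the two implications separately, with the routine direction (b)$\implies$(a) being brief and the genuine content lying in (a)$\implies$(b).

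For (b)$\implies$(a), I would argue as follows. Given $a\in K$, apply (b) to the pair $a<a+1$ to obtain some $c\in\Q$ with $a<c<a+1$. Now in $\Q$ there plainly exists $N\in\N$ with $c\le N$ (write $c=\frac{p}{q}$ with $q\in\N$ and take $N:=|p|+1$, for instance). Since by \ref{qembeds} the canonical map $\Q\to K$ is an embedding of ordered fields, this inequality transfers to $K$, giving $a<c\le N$ in $K$.

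For (a)$\implies$(b), which is the substantial direction, suppose $a<b$ in $K$, so $b-a>0$. The plan has two steps. First, I would use the Archimedean property to produce $n\in\N$ such that $n(b-a)>1$: indeed $\frac1{b-a}>0$ (since the product of $\frac1{b-a}$ and $b-a$ is $1>0$), so by (a) there is some $N\in\N$ with $\frac1{b-a}\le N$, and then $n:=N+1$ satisfies $\frac1{b-a}<n$, whence multiplying by the positive element $b-a$ yields $1<n(b-a)=nb-na$. Second, I would locate an integer strictly between $na$ and $nb$. For this, consider the set
\[
S:=\{k\in\Z\mid k>na\}\subseteq\Z.
\]
Applying the Archimedean property to $na$ shows $S$ is nonempty, and applying it to $-na$ produces $M\in\N$ with $-M\le na$, so every element of $S$ satisfies $k>-M$, i.e., $k\ge -M+1$. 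Thus $S$ is bounded below in $\Z$ and so has a minimum $m$. By minimality, $m-1\notin S$, i.e., $m-1\le na$, hence $m\le na+1<nb$. Combining, $na<m<nb$, so $c:=\frac{m}{n}\in\Q$ satisfies $a<c<b$ after dividing by the positive integer $n$ (again using that $\Q\hookrightarrow K$ preserves the order by \ref{qembeds}).

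The main obstacle is the second step of (a)$\implies$(b): passing from the purely metric fact $nb-na>1$ to the combinatorial fact that a rational integer actually fits strictly between $na$ and $nb$. This needs both the Archimedean property (to bound $na$ between two integers) and the well-ordering of $\Z$ below a fixed bound (to pick the smallest integer exceeding $na$). Everything else is a direct application of the embedding $\Q\hookrightarrow K$ from \ref{qembeds} together with elementary order manipulations.
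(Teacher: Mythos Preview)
Your proof is correct and follows essentially the same route as the paper: for (a)$\implies$(b) both you and the paper choose a large enough integer denominator and then take the least integer numerator exceeding the scaled left endpoint, and for (b)$\implies$(a) your use of the interval $(a,a+1)$ is a minor variant of the paper's use of $(0,\frac1a)$, arguably a touch more direct.
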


\begin{proof}
\underline{(b)$\implies$(a)} \quad Suppose (b) holds and let $a\in K$. To show: $\exists N\in \N:a\le N$. WLOG $a>0$. To show: $\exists N\in\N:
\frac1N\le\frac 1a$. Choose $c\in\Q$ such that $0<c<\frac 1a$. Write $c=\frac mN$ for certain $m,N\in\N$. Then
$\frac 1N\le\frac mN=c<\frac 1a$.

\smallskip
\underline{(a)$\implies$(b)} \quad Suppose (a) holds and let $a,b\in K$ such that $a<b$. Choose $N\in\N$ such that
$\frac1{b-a}<N$. Then $\frac 1N<b-a$ and hence $a+\frac 1N<b$. Now choose the smallest $m\in\Z$ such that
$a<\frac mN$. If we had $\frac mN\ge b$, then $a+\frac 1N<\frac mN$ and therefore $a<\frac{m-1}N$, contradicting our choice of $m$.
Therefore $a<\frac mN<b$.
\end{proof}

\begin{lem} Let $(K,\le)$ be an Archimedean ordered field. Then
\[K=\left\{\lim_{n\to\infty}a_n\mid
\text{$(a_n)_{n\in\N}$ sequence in $\Q$ that converges in $K$}
\right\}.\]
\end{lem}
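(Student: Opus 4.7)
The claim is a set equality, and the inclusion $\supseteq$ is trivial since the right-hand side is by construction a subset of $K$. So the substance lies in showing that every $a \in K$ arises as the limit of a $\Q$-valued sequence that converges in $K$.

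The plan is to use Proposition \ref{qdense}, which tells us that in an Archimedean ordered field the rationals are order-dense. Fix $a \in K$. For each $n \in \N$, apply \ref{qdense} to the strict inequality $a - \frac1n < a + \frac1n$ (which holds because $\frac1n > 0$ in $K$, a consequence of \ref{squares} and the fact that $n \geq 1$) to pick some $a_n \in \Q$ with $a - \frac1n < a_n < a + \frac1n$. Then by the definition of $|\cdot|$ in \ref{introabssgn}, we have $|a_n - a| < \frac1n$.

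It remains to check $\lim_{n\to\infty} a_n = a$ in $K$. Let $\ep \in K_{>0}$ be arbitrary. Using the Archimedean property \ref{archetcdef}(a) applied to $\frac1\ep$, we obtain $N \in \N$ with $\frac1\ep \le N$, and multiplying by $\frac{\ep}{N} > 0$ (monotonicity of multiplication, and the positivity of reciprocals of positives as established in the proof of \ref{qembeds}) gives $\frac1N \le \ep$. Hence for every $n \ge N$ we have
\[
|a_n - a| < \frac1n \le \frac1N \le \ep,
\]
which is exactly the definition of convergence to $a$. So $a$ lies in the right-hand side.

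There is no serious obstacle; the only subtlety is to remember that all inequalities must be interpreted inside $K$, and that manipulations like ``$\frac1\ep < N \implies \frac1N < \ep$'' must be justified using the monotonicity axioms of \ref{def-ordered-field} together with the positivity of squares and reciprocals. Once those bookkeeping checks are done, the rational sequence provided by density does the job.
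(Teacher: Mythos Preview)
Your proof is correct and essentially identical to the paper's: both invoke Proposition \ref{qdense} to pick rationals $a_n$ within distance $\frac1n$ of $a$, then use the Archimedean property to verify convergence. The only cosmetic difference is that the paper takes $a_n\in[a,a+\frac1n)$ while you use the symmetric interval $(a-\frac1n,a+\frac1n)$.
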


\begin{proof}
Let $a\in K$. We have to show that there is a sequence $(a_n)_{n\in\N}$ in $\Q$ that converges in $K$ to $a$. Choose
for every $n\in\N$ according to \ref{qdense} some $a_n\in\Q$ such that $a\le a_n<a+\frac 1n$. Let $\ep\in K_{>0}$. Choose
$N\in\N$ such that $\frac 1\ep<N$. For $n\ge N$ we now have
$|a_n-a|=a_n-a<\frac 1n\le\frac 1N<\ep$.
\end{proof}

\begin{lem} Suppose $(K,\le)$ is an Archimedean ordered field and $(a_n)_{n\in\N}$ is a sequence in $\Q$. Then the following are
equivalent:
\begin{enumerate}[(a)]
\item $(a_n)_{n\in\N}$ is a Cauchy sequence in $(\Q,\le_\Q)$
\item $(a_n)_{n\in\N}$ is a Cauchy sequence in $(K,\le)$
\end{enumerate}
\end{lem}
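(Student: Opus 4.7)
The proof rests on two facts already established: that $(\Q,\le_\Q)$ embeds into $(K,\le)$ as an ordered subfield (Proposition \ref{qembeds}), and the density statement \ref{qdense} which is available because $(K,\le)$ is Archimedean. The embedding guarantees that for rationals $p,q$ the inequality $p<q$ holds in $\Q$ iff it holds in $K$; in particular $|a_m-a_n|$ computed in $\Q$ equals $|a_m-a_n|$ computed in $K$ (both equal the same element of $\Q\subseteq K$), and any comparison of such a quantity with a rational $\ep$ has the same truth value in $\Q$ and in $K$.

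For the direction (b)$\implies$(a), the plan is straightforward: given $\ep\in\Q_{>0}$, view $\ep$ as an element of $K_{>0}$ via the embedding, apply (b) to obtain $N\in\N$ with $|a_m-a_n|<\ep$ in $K$ for all $m,n\ge N$, and transfer this rational inequality back to $\Q$.

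For (a)$\implies$(b), I would take an arbitrary $\ep\in K_{>0}$ and use \ref{qdense} (applied to $0<\ep$) to choose a rational $\ep'\in\Q$ with $0<\ep'<\ep$. Then (a) gives some $N\in\N$ such that $|a_m-a_n|<\ep'$ in $\Q$, hence in $K$, for all $m,n\ge N$; by transitivity, $|a_m-a_n|<\ep$ in $K$.

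There is no real obstacle here: the main conceptual point is that the Cauchy condition in (b) quantifies over $\ep\in K_{>0}$ while (a) only quantifies over $\ep\in\Q_{>0}$, and bridging this gap is precisely what the Archimedean hypothesis provides via \ref{qdense}. Without Archimedeanity the implication (a)$\implies$(b) would fail, since $\Q$ could be far from dense in $K$ (think of a non-Archimedean extension with infinitesimals).
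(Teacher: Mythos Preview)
Your proof is correct and is precisely the argument the paper has in mind: the paper's proof consists of the single sentence ``This follows easily from \ref{qdense},'' and you have simply written out the details of that derivation.
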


\begin{proof} This follows easily from \ref{qdense}.
\end{proof}

\begin{exo}
Suppose $(K,\le)$ is an ordered field and $(a_n)_{n\in\N}$, $(b_n)_{n\in\N}$ are convergent sequences in $K$. Then
\[\lim_{n\to\infty}(a_n+b_n)=\left(\lim_{n\to\infty}a_n\right)+\left(\lim_{n\to\infty}b_n\right)\qquad\text{and}\qquad
\lim_{n\to\infty}(a_nb_n)=\left(\lim_{n\to\infty}a_n\right)\left(\lim_{n\to\infty}b_n\right).\]
\end{exo}

\begin{thm}\label{realschar}
Let $(K,\le)$ be an ordered field. Then the following are equivalent:
\begin{enumerate}[\normalfont(a)]
\item $(K,\le)$ is Archimedean and Cauchy complete
\item $(K,\le)$ is complete
\end{enumerate}
\end{thm}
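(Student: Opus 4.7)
The plan is to prove (b)$\Rightarrow$(a) via the classical consequences of having suprema, and (a)$\Rightarrow$(b) by constructing an explicit Cauchy sequence of rational approximations to the desired supremum.

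For (b)$\Rightarrow$(a), assume $(K,\le)$ is complete. I would first derive the Archimedean property by contradiction: if $\N\subseteq K$ were bounded above, then $s:=\sup\N$ would exist; since $s-1<s$ the value $s-1$ fails to be an upper bound, so some $N\in\N$ satisfies $N>s-1$, giving $N+1\in\N$ with $N+1>s$, contradicting that $s$ is an upper bound. For Cauchy completeness, given a Cauchy sequence $(a_n)$ in $K$, the Cauchy condition with $\ep=1$ immediately yields boundedness of $(a_n)$. Hence each set $\{a_k\mid k\ge n\}$ is nonempty and bounded above and below, so by completeness (applied also to negatives for infima) I can form $b_n:=\sup_{k\ge n}a_k$ and $c_n:=\inf_{k\ge n}a_k$. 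The sequence $(b_n)$ is decreasing and bounded below, $(c_n)$ is increasing and bounded above; completeness gives limits $b$ and $c$ via the standard argument that monotone bounded sequences converge to their sup/inf. The Cauchy property forces $b_n-c_n\to0$, whence $b=c$, and the sandwich $c_n\le a_n\le b_n$ concludes $a_n\to b$.

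For (a)$\Rightarrow$(b), suppose $(K,\le)$ is Archimedean and Cauchy complete, and let $\emptyset\ne A\subseteq K$ be bounded above. For each $n\in\N$, the Archimedean property supplies both an integer $m$ with $m/n$ strictly above some upper bound of $A$ (so $m/n$ is an upper bound) and an integer $m$ with $m/n$ strictly below some fixed element of $A$ (so $m/n$ is not). Consequently there is a smallest $m_n\in\Z$ for which $s_n:=m_n/n$ is an upper bound of $A$, and by minimality $s_n-\frac1n$ is not an upper bound. To see $(s_n)$ is Cauchy, given $n,k\in\N$ pick $a\in A$ with $a>s_n-\frac1n$; since $s_k$ is an upper bound, $s_k>s_n-\frac1n$, and symmetrically $s_n>s_k-\frac1k$. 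Hence $|s_n-s_k|<\max(\frac1n,\frac1k)$, which tends to $0$ for $n,k$ large by the Archimedean property. Cauchy completeness yields $s:=\lim s_n\in K$. A short verification then shows $s=\sup A$: it is an upper bound as the limit of upper bounds (using the limit exercise), and for any $b<s$ one chooses $n$ large enough that $s_n-\frac1n>b$ (picking $n$ so that $|s_n-s|$ and $\frac1n$ are both less than $\frac12(s-b)$), producing $a\in A$ with $a>s_n-\frac1n>b$, so no $b<s$ is an upper bound.

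The main obstacle I anticipate is the set-up in (a)$\Rightarrow$(b): one has to engineer the sequence $s_n$ so that both the upper-bound property of $s_n$ and its failure at $s_n-\frac1n$ are simultaneously available, because these two facts are precisely what produce, on the one hand, the Cauchy estimate $|s_n-s_k|<\max(\frac1n,\frac1k)$, and on the other hand, the least-upper-bound property at the limit. Once this rational-approximation construction is in place, both directions reduce to careful but routine $\ep$--$N$ arguments relying only on the Archimedean property and the limit rules from the preceding exercise.
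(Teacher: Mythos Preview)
Your proof is correct. The direction (a)$\Rightarrow$(b) is essentially identical to the paper's argument: the paper also constructs the sequence $a_n:=k_n/n$ where $k_n$ is the least integer making $k_n/n$ an upper bound of $A$, and then shows it is Cauchy with the required limit (leaving the details as an exercise, which you carry out).

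For (b)$\Rightarrow$(a) you take a different route. The paper proves the contrapositive: if $(K,\le)$ is not Archimedean it exhibits the set $\{a\in K\mid\forall N\in\N:a\le -N\}$, which is nonempty but has no least upper bound (any upper bound $b$ can be replaced by $b-1$); if $(K,\le)$ is not Cauchy complete it takes a non-convergent Cauchy sequence $(a_n)$ and shows the set $\{a\in K\mid\exists N\in\N:\forall n\ge N:a\le a_n\}$ has no supremum. Your direct approach via $\sup\N$ and the $\limsup/\liminf$ sandwich is equally standard and arguably more transparent; the paper's contrapositive has the minor advantage that it avoids invoking monotone-convergence and the squeeze argument, trading those for the single verification that the exhibited sets have no supremum.
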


\begin{proof}
\underline{(a)$\implies$(b)} \quad Suppose (a) holds and let $A\subseteq K$ be a nonempty subset bounded from above. Choose for  
every $n\in\N$ the smallest $k_n\in\Z$ such that $\forall a\in A:a\le\frac{k_n}n$ and set $a_n:=\frac{k_n}n\in\Q$ (use the Archimedean property!). Using again the Archimedean property, one can show easily
that $(a_n)_{n\in\N}$ is a Cauchy sequence and therefore convergent by hypothesis. We leave it as an exercise to the reader to show that
$a:=\lim_{n\to\infty}a_n$ is the least upper bound of $A$ in $(K,\le)$.

\underline{(b)$\implies$(a)} \quad We prove the contraposition.

First, suppose that $(K,\le)$ is not Archimedean, i.e., the set \[A:=\{a\in K\mid\forall N\in\N:a\le-N\}\] is not empty.
We claim  that $A$ does not have a least upper bound: Indeed, if $a\in K$ is an upper bound of $A$, then so is $a-1<a$ since
$A=\{a\in K\mid\forall N\in\Z:a\le N\}={\{a\in K\mid\forall N\in\Z:a+1\le N\}}=\{a-1\mid a\in K,\forall N\in\N:a\le N\}=A-1$.

Finally, suppose that  $(K,\le)$ is not Cauchy complete, say $(a_n)_{n\in\N}$ is a Cauchy sequence in $K$ that does not converge.
We claim that \[A:=\{a\in K\mid\exists N\in\N:\forall n\ge N:a\le a_n\}\] is nonempty and bounded from above but does not possess a
least upper bound. We leave this as an exercise to the reader.
\end{proof}

\begin{lem}\label{archemb}
Suppose $(K,\le)$ is an Archimedean ordered field and $(R,\le_R)$ a complete ordered field. Then there is exactly one
embedding $(K,\le)\hookrightarrow(R,\le_R)$. This embedding is an isomorphism if and only if $(K,\le)$ is complete.
\end{lem}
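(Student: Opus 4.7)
The plan has three parts: construct an embedding by extending $\Q\hookrightarrow R$ via rational Cauchy sequences, verify uniqueness by a continuity argument, and settle the isomorphism criterion using Cauchy completeness of $K$ together with density of $\Q$ in $R$.

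For existence, first observe that $R$ is Archimedean by Theorem \ref{realschar}. Given $a\in K$, the first of the two preceding lemmas furnishes a sequence $(a_n)_{n\in\N}$ in $\Q$ with $a_n\to a$ in $K$; this sequence is Cauchy in $K$, hence Cauchy in $\Q$ by the second preceding lemma, and therefore Cauchy in $R$. Cauchy completeness of $R$ then lets me set $\ph(a):=\lim_{n\to\infty}a_n$, the limit being taken in $R$. Well-definedness and the field-homomorphism property both follow from the exercise on arithmetic of limits applied in $K$ and in $R$: two rational approximations to $a$ differ by a null sequence in $\Q$ that remains null in $R$, and sums and products of limits behave compatibly.

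The crucial check is monotonicity. Given $a\le_K b$ in $K$, pick rational approximants $a_n\to a$ and $b_n\to b$. I argue by contradiction that $\ph(a)\le_R\ph(b)$: otherwise $\ph(b)-\ph(a)<_R 0$, and since $R$ is Archimedean, \ref{qdense} provides a positive rational $d$ with $\ph(b)-\ph(a)<_R -d$; then eventually $b_n-a_n<_\Q -d$, hence $b_n-a_n<_K -d$, and passing to the limit in $K$ gives $b-a\le_K -d<_K 0$, contradicting $a\le_K b$. Uniqueness is then immediate: any embedding $\ps\colon(K,\le)\hookrightarrow(R,\le_R)$ must restrict on $\Q$ to the unique field homomorphism $\Q\to R$, which is the inclusion underlying $\ph$. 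Monotonicity of $\ps$, combined with density of $\Q$ in $R$, shows that $\ps$ is continuous in the obvious order-theoretic sense, so for any rational approximation $a_n\to a$ in $K$ one obtains $\ps(a)=\lim_R\ps(a_n)=\lim_R a_n=\ph(a)$.

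For the isomorphism criterion, one direction is trivial: an isomorphism transports completeness from $R$ to $K$. Conversely, assume $(K,\le)$ is complete. Given $b\in R$, Archimedeanness of $R$ yields a rational sequence $b_n\to b$ in $R$; it is Cauchy in $\Q$ and hence in $K$, and Cauchy completeness of $K$ (via \ref{realschar}) yields $a\in K$ with $b_n\to a$ in $K$, whence $\ph(a)=\lim_R b_n=b$. Thus $\ph$ is surjective. I expect the main obstacle to be the careful bookkeeping in the monotonicity and continuity arguments, where inequalities must be transferred cleanly between $\Q$, $K$, and $R$ through rational approximations.
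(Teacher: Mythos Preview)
Your argument is correct. The paper leaves this lemma as an exercise, and your construction via rational Cauchy approximants, together with the monotonicity-via-contradiction step and the continuity argument for uniqueness, is exactly the intended standard route building on the two preceding lemmas and Theorem~\ref{realschar}.
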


\begin{proof}
Exercise.
\end{proof}

\begin{thm}\label{introduce-the-reals}
There is a complete ordered field $(\R,\le)$. It is essentially unique, for if ${(K,\le_K)}$ is another complete
ordered field, then there is exactly one
isomorphism from $(K,\le_K)$ to $(\R,\le)$.
\end{thm}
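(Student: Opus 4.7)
The plan is to construct $(\R,\le)$ explicitly as a quotient of Cauchy sequences of rationals, verify directly that it is an Archimedean and Cauchy complete ordered field, and then invoke Theorem \ref{realschar} to upgrade this to completeness. The uniqueness part will then follow almost immediately from Lemma \ref{archemb}.

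For the construction I would let $C\subseteq\Q^\N$ be the set of Cauchy sequences in $(\Q,\le_\Q)$ and $N\subseteq C$ the subset of null sequences (those converging to $0$). Pointwise operations make $C$ a commutative ring and $N$ an ideal; the standard observation that a non-null Cauchy sequence is eventually bounded away from $0$ in absolute value shows that every element of $C\setminus N$ is invertible modulo $N$, so $\R:=C/N$ is a field. I would define $[(a_n)]\ge 0$ to mean that for every $\ep\in\Q_{>0}$ one has $a_n\ge-\ep$ for all sufficiently large $n$, and verify that this depends only on the class and turns $\R$ into an ordered field. The map sending $a\in\Q$ to the class of the constant sequence realises $(\Q,\le_\Q)$ as an ordered subfield of $(\R,\le)$. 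Archimedeanness is then immediate, since every Cauchy sequence in $\Q$ is bounded and any rational bound is dominated by some $N\in\N$.

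The main obstacle is Cauchy completeness. Given a Cauchy sequence $(x_n)_{n\in\N}$ in $\R$, I would use \ref{qdense} (applicable since $\R$ has just been shown to be Archimedean) to pick rationals $q_n$ with $|x_n-q_n|<\frac 1n$, check via the triangle inequality that $(q_n)_{n\in\N}$ is itself a Cauchy sequence in $\Q$, and show that its class $x:=[(q_n)]\in\R$ satisfies $\lim_{n\to\infty}x_n=x$. This step is delicate because it mixes rational representatives with the real elements they represent, but once organised as a routine $\ep/3$ estimate it presents no real difficulty. With Archimedeanness and Cauchy completeness in hand, Theorem \ref{realschar} delivers completeness.

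For the uniqueness clause, suppose $(K,\le_K)$ is another complete ordered field. By Theorem \ref{realschar} it is Archimedean and Cauchy complete, so Lemma \ref{archemb} produces exactly one embedding $\ph\colon(K,\le_K)\hookrightarrow(\R,\le)$, and the second sentence of \ref{archemb} guarantees that this $\ph$ is an isomorphism precisely because $(K,\le_K)$ is complete. It is the unique isomorphism $(K,\le_K)\to(\R,\le)$, since any such isomorphism is in particular an embedding and embeddings are unique by \ref{archemb}.
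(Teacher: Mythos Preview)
Your proposal is correct and follows essentially the same route as the paper: construct $\R$ as the quotient of the ring of rational Cauchy sequences by the ideal of null sequences, equip it with the obvious order, verify Archimedeanness and Cauchy completeness (the latter via rational approximants $q_n$ with $|x_n-q_n|<\frac1n$ obtained from \ref{qdense}), invoke \ref{realschar} for completeness, and deduce uniqueness from \ref{archemb} together with \ref{realschar}. The only cosmetic difference is that the paper defines the order via $a\le b$ directly (choosing representatives with $a_n\le b_n$ for all $n$) rather than via the nonnegative cone.
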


\begin{proof}
The uniqueness is clear from \ref{archemb} together with \ref{realschar}. We only sketch the proof of existence and leave the details
as an exercise to the reader: Show that the Cauchy sequences in $\Q$ form a subring $C$ of $\Q^\N$ and that
\[I:=\left\{(a_n)_{n\in\N}\in C\mid\lim_{n\to\infty}a_n=0\right\}\]
is a maximal ideal of $C$. Set $\R:=C/I$. Show that
\[a\le b:\iff\exists (a_n)_{n\in\N},(b_n)_{n\in\N}\text{ in }C:(a=\cc{(a_n)_{n\in\N}}I ~\&~b=\cc{(b_n)_{n\in\N}}I
~\&~\forall n\in\N:a_n\le b_n)\]
($a,b\in\R$) defines an order $\le$ on $\R$. It is clear that $(\R,\le)$ is Archimedean. By Theorem \ref{realschar} it suffices to show that
$(\R,\le)$ is Cauchy complete. To this end, let $(a_n)_{n\in\N}$ be a Cauchy sequence in $(\R,\le)$. By \ref{qdense},
there exists a sequence $(q_n)_{n\in\N}$ in $\Q$ such that
$|a_n-q_n|<\frac{1}{n}$ for $n\in\N$. Now deduce from the fact that $(a_n)_{n\in\N}$ is a Cauchy sequence in $(\R,\le)$
that $(q_n)_{n\in\N}$ is such in $(\R,\le)$ and hence also in $(\Q,\le)$. Now $(q_n)_{n\in\N}\in C$. Set
$a:=\cc{(q_n)_{n\in\N}}{\scriptstyle I}$. It is enough to show $\lim_{n\to\infty}a_n=a$. Finally show that this is equivalent to
$\lim_{n\to\infty}q_n=a$ in $(K,\le)$ and prove the latter.
\end{proof}

\begin{cor}\label{archsubfieldreals}
$(\R,\le)$ is an Archimedean ordered field into which every Archimedean ordered field can be embedded. Up to isomorphy it is the only such ordered field.
\end{cor}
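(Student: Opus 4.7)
My plan is to derive this corollary essentially by assembling the already established results \ref{realschar}, \ref{archemb} and \ref{introduce-the-reals}; no new constructions should be needed.

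First I verify the two positive statements. By Theorem \ref{introduce-the-reals}, $(\R,\le)$ is a complete ordered field, and Theorem \ref{realschar} then gives at once that $(\R,\le)$ is Archimedean (and Cauchy complete). Next, to show that every Archimedean ordered field embeds into $(\R,\le)$, I let $(K,\le)$ be an arbitrary Archimedean ordered field and apply Lemma \ref{archemb} with $(R,\le_R):=(\R,\le)$, which is legitimate because $(\R,\le)$ is complete; this produces (in fact a unique) embedding $(K,\le)\hookrightarrow(\R,\le)$.

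For the uniqueness up to isomorphism, I let $(K,\le_K)$ be another Archimedean ordered field with the property that every Archimedean ordered field embeds into it. Applying this property to $(\R,\le)$, I obtain an embedding $\iota\colon(\R,\le)\hookrightarrow(K,\le_K)$. Conversely, since $(K,\le_K)$ is Archimedean and $(\R,\le)$ is complete, Lemma \ref{archemb} yields an embedding $\pi\colon(K,\le_K)\hookrightarrow(\R,\le)$. Now $\pi\circ\iota$ is an embedding $(\R,\le)\hookrightarrow(\R,\le)$, and so is the identity. By the uniqueness clause of Lemma \ref{archemb} (applied with both $K$ and $R$ equal to $\R$), these two embeddings agree, i.e.\ $\pi\circ\iota=\id_{\R}$. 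In particular $\pi$ is surjective, and being an embedding it is an isomorphism $(K,\le_K)\cong(\R,\le)$.

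There is no real obstacle here; the only point worth underlining is the mild ``Schroeder–Bernstein''-flavoured step in the last paragraph, which is trivialised by the uniqueness part of Lemma \ref{archemb}: having embeddings in both directions together with the uniqueness of the embedding $\R\hookrightarrow\R$ forces one of them to be surjective.
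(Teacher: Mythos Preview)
Your proof is correct and follows essentially the same approach as the paper: both establish the first statement via \ref{realschar}, \ref{archemb} and \ref{introduce-the-reals}, and both prove uniqueness by composing an embedding $\R\hookrightarrow K$ (from the assumed universal property of $K$) with an embedding $K\hookrightarrow\R$ (from the first part), then invoking the uniqueness clause of \ref{archemb} to force the composite to be $\id_\R$ and hence the second embedding to be surjective.
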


\begin{proof}
The first statement is clear from \ref{realschar}, \ref{archemb} and \ref{introduce-the-reals}. Uniqueness: Let $(K,\le_K)$ be another such
ordered field. Then \[(\R,\le)\overset\ph\hookrightarrow(K,\le_K)\overset\ps\hookrightarrow(\R,\le)\] and $\ps\circ\ph$ is the by \ref{archemb}
unique embedding $(\R,\le)\hookrightarrow(\R,\le)$, i.e., $\ps\circ\ph=\id$. This implies that $\ps$ is surjective. Hence $(K,\le_K)\cong(\R,\le)$.
\end{proof}

\begin{nt}\label{divnot}
Let $A$ be a ring. Then we often use suggestive notation to describe certain subsets of $A$ such as the following:
\begin{itemize}
\item $A^2=\{a^2\mid a\in A\}$\qquad(``squares'')
\item $\sum A^2=\{\sum_{i=1}^\ell a_i^2\mid\ell\in\N_0,a_i\in A\}$\qquad(``sums of squares'')
\item $\sum A^2T=\left\{\sum_{i=1}^\ell a_i^2t_i\mid\ell\in\N_0,a_i\in A,t_i\in T\right\}\qquad(T\subseteq A)$\\
\hspace*{15em}(``sums of elements of $T$ weighted by squares'')
\item $T+T=\{s+t\mid s,t\in T\}\qquad(T\subseteq A)$
\item $TT=\{st\mid s,t\in T\}\qquad(T\subseteq A)$
\item $-T=\{-t\mid t\in T\}\qquad(T\subseteq A)$
\item $T+aT=\{s+at\mid s,t\in T\}\qquad(T\subseteq A,a\in A)$
\end{itemize}
\end{nt}

\begin{pro}\label{unary-order}
Let $K$ be a field.
\begin{enumerate}[\normalfont(a)]
\item If $\le$ is an order of $K$ \emph{[$\to$ \ref{def-ordered-field}]}, then $P:=K_{\ge0}=\{a\in K\mid a\ge0\}$ has the following properties:
\begin{equation}
\tag{$*$} P+P\subseteq P,\quad PP\subseteq P,\quad P\cup-P=K\quad\text{ and }\quad P\cap-P=\{0\}.
\end{equation}
\item If $P$ is a subset of $K$ satisfying $(*)$, then the relation $\le_P$ on $K$ defined by
\[a\le_P b:\iff b-a\in P\qquad(a,b\in K)\]
is an order of $K$.
\item The correspondence
\begin{align*}
(\le)&\mapsto K_{\ge0}\\
(\le_P)&\mapsfrom P
\end{align*}
defines a bijection between the set of all orders on $K$ and the set of all subsets of $K$ satisfying $(*)$.
\end{enumerate}
\end{pro}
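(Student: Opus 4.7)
The plan is to establish each of (a), (b), (c) in turn, each reducing to a short direct verification. I expect no serious obstacle anywhere; the most subtle point is making sure $0 \in P$ in (b) before invoking reflexivity, which one has to extract from $P \cup -P = K$.

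For (a), I verify each of the four properties of $P := K_{\ge 0}$ from the axioms of an ordered field. The inclusion $P + P \subseteq P$ follows from monotonicity of addition applied twice: from $a \ge 0$ and $b \ge 0$, one gets $a + b \ge b \ge 0$. The inclusion $PP \subseteq P$ is immediate from monotonicity of multiplication: $0 \le a$ and $b \ge 0$ give $0 = 0 \cdot b \le ab$. The equality $P \cup -P = K$ is linearity of $\le$ applied to the pair $(0, a)$: either $0 \le a$ (so $a \in P$), or $a \le 0$, in which case adding $-a$ to both sides yields $0 \le -a$, hence $a \in -P$. Finally, $P \cap -P = \{0\}$ follows from antisymmetry: if $a \ge 0$ and $-a \ge 0$, then adding $a$ to the second inequality gives $a \le 0$, and together with $a \ge 0$ this forces $a = 0$.

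For (b), given $P$ satisfying $(*)$, I check the four order axioms from \ref{ordered-set} and the two monotonicity axioms from \ref{def-ordered-field} for the relation $\le_P$. Reflexivity needs $0 \in P$: applying $P \cup -P = K$ to $0 \in K$ yields $0 \in P$ or $-0 \in P$, and both reduce to $0 \in P$ since $-0 = 0$. Transitivity uses $P + P \subseteq P$ to combine $b - a \in P$ and $c - b \in P$ into $c - a \in P$. Antisymmetry uses $P \cap -P = \{0\}$: from $b - a \in P$ and $a - b = -(b - a) \in P$ we get $b - a = 0$. Linearity applies $P \cup -P = K$ to the element $b - a$. Monotonicity of addition is automatic since $(b + c) - (a + c) = b - a$. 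Monotonicity of multiplication unfolds to $bc - ac = (b - a)c \in P$, which is $PP \subseteq P$ applied to $b - a \in P$ and $c \in P$.

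For (c), it remains to show that the two constructions are mutually inverse, which is a direct unfolding of definitions. Starting from an order $\le$, setting $P := K_{\ge 0}$, and then forming $\le_P$, one has $a \le_P b \iff b - a \in P \iff b - a \ge 0 \iff a \le b$, where the last equivalence uses monotonicity of addition applied with $\pm a$. Conversely, starting from $P$ satisfying $(*)$, forming $\le_P$, and then taking the nonnegative elements yields $\{a \in K \mid 0 \le_P a\} = \{a \in K \mid a - 0 \in P\} = P$. Together with (a) and (b), this gives the claimed bijection.
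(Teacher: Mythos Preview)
Your proof is correct and follows essentially the same approach as the paper's own proof, which likewise traces each property in $(*)$ back to the corresponding axiom of an ordered field and vice versa, and then checks that the two constructions are mutually inverse by the same direct unfoldings. Your version is simply more explicit (e.g., you spell out why $0 \in P$ follows from $P \cup -P = K$, which the paper uses tacitly).
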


\begin{proof} (a) We get $P\mypm+\cdot P\subseteq P$ from the monotonicity of \alal{addition}{multiplication} [$\to$ \ref{def-ordered-field}],
$P\cup-P=K$ from the linearity [$\to$ \ref{ordered-set}] and $P\cap-P=\{0\}$ from the antisymmetry [$\to$ \ref{ordered-set}].

\smallskip
(b) We get reflexivity from $0\in P$, transitivity from $P+P\subseteq P$, antisymmetry from $P\cap-P=\{0\}$, linearity from
$P\cup-P=K$, monotonicity of addition from the definition of $\le_P$ and monotonicity of multiplication
$PP\subseteq P$.

\smallskip
(c) Suppose first that $\le$ is an order of $K$ and set $P:=K_{\ge0}$. Then $(\le)=(\le_P)$ since
$a\le b\iff b-a\ge0\iff b-a\in P\iff a\le_P b$ for all $a,b\in K$.
Conversely, let $P\subseteq K$ be given such that $P$ satisfies $(*)$. We show $K_{\ge_P\,0}=P$.
Indeed, \[K_{\ge_P\,0}=\{a\in K\mid 0\le_Pa\}=\{a\in K\mid a\in P\}=P.\]
\end{proof}

\begin{rem}\label{unaryrem}
\ref{unary-order}(c) allows us to view orders of fields $K$ as subsets of $K$. We reformulate some of the preceding notions and results in this new language:
\begin{enumerate}[(a)]
\item
Definition \ref{def-ordered-field}: Let $K$ be a field. An order of $K$ is a subset $P$ of $K$ satisfying
\[P+P\subseteq P,\quad PP\subseteq P,\quad P\cup-P=K\quad\text{ and }\quad P\cap-P=\{0\}.\]
\item Definition \ref{ordfieldhom}: Let $(K,P)$ and $(L,Q)$ be ordered fields. A field homomorphism $\ph\colon K\to L$ is called a
homomorphism or an embedding of ordered fields if $\ph(P)\subseteq Q$. One calls $(K,P)$ an ordered subfield of $(L,Q)$ if
$K$ is a subfield of $L$ and $P=Q\cap K$ (or equivalently $P\subseteq Q$).
\item Proposition \ref{squares}: Let $(K,P)$ be an ordered field. Then $K^2\subseteq P$.
\item Definition \ref{archetcdef}: An ordered field $(K,P)$ is called \emph{Archimedean} if
\[\forall a\in K:\exists N\in\N:N+a\in P,\]
($\iff P-\N=K\iff P+\Z=K\iff P+\Q=K$).
\end{enumerate}
\end{rem}

\section{Preorders}

\begin{df}\label{defpreorder}
Let $A$ be a commutative ring and $T\subseteq A$. Then $T$ is called a \emph{preorder} of $A$ if
$A^2\subseteq T$, $T+T\subseteq T$ and $TT\subseteq T$. If moreover $-1\notin T$, then $T$ is called a \emph{proper} preorder of $A$.
\end{df}

\begin{ex}\label{sqsm}
\begin{enumerate}[(a)]
\item If $A$ is a commutative ring, then $\sum A^2$ is the smallest preorder of $A$.
\item Every order of a field is a proper preorder.
\end{enumerate}
\end{ex}

\begin{pro}\label{diffsquare}
Let $A$ be a commutative ring with $\frac12\in A$ (i.e., $2\in A^\times$). Then
\[a=\left(\frac{a+1}2\right)^2-\left(\frac{a-1}2\right)^2\] for all $a\in A$. In particular, $A=A^2-A^2$.
\end{pro}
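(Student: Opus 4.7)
The statement is essentially a one-line algebraic identity, so there is no genuine obstacle: the proof is a direct computation that uses only the assumption $2 \in A^\times$ to justify dividing by $2$ (and hence by $4$) inside the ring $A$.

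My plan is to expand both squares on the right-hand side. Writing
\[
\left(\frac{a+1}{2}\right)^2 = \frac{(a+1)^2}{4} = \frac{a^2+2a+1}{4}
\qquad\text{and}\qquad
\left(\frac{a-1}{2}\right)^2 = \frac{(a-1)^2}{4} = \frac{a^2-2a+1}{4},
\]
the subtraction collapses the $a^2$ and $+1$ terms and leaves $\frac{4a}{4} = a$, which yields the identity. All manipulations are legitimate in $A$ because $2$ (and hence $4$) is a unit, so expressions like $\frac{1}{4}$ denote well-defined elements of $A$.

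For the final claim $A = A^2 - A^2$, the inclusion $A^2 - A^2 \subseteq A$ is trivial since $A$ is closed under multiplication and subtraction. The reverse inclusion $A \subseteq A^2 - A^2$ is immediate from the identity just proved: for any $a \in A$, the elements $\frac{a+1}{2}$ and $\frac{a-1}{2}$ lie in $A$ (again using $\frac{1}{2} \in A$), and their squares witness $a$ as a difference of two elements of $A^2$.

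The only potential pitfall is the use of fractional notation in a general commutative ring, but the hypothesis $\frac{1}{2} \in A$ is exactly what makes this meaningful, so there is nothing subtle to handle.
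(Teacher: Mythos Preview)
Your proof is correct; the identity is an immediate computation, and the paper in fact states the proposition without any proof at all. Your expansion of the squares and the observation that $\frac12\in A$ legitimizes the fractional notation is exactly the (implicit) argument.
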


\begin{dfpro}\label{supportideal}
Let $A$ be a commutative ring with $\frac12\in A$ and $T\subseteq A$ a preorder. Then the \emph{support} $T\cap-T$ of
$T$ is an ideal of $A$.
\end{dfpro}

\begin{proof}
$T\cap-T$ is obviously a subgroup of (the additive group of) $A$ and we have
\[
\begin{array}{rcl}
A(T\cap-T)&\overset{\ref{diffsquare}}=&(A^2-A^2)(T\cap-T)\\
&\subseteq&(T-T)(T\cap-T)\\
&\subseteq&(T(T\cap-T))-(T(T\cap-T))\\
&\subseteq&((TT)\cap(-TT))+((-TT)\cap TT)\\
&\subseteq&(T\cap-T)+((-T)\cap T)\subseteq T\cap -T.
\end{array}
\]
\end{proof}

\begin{cor}\label{preproper}
Suppose $A$ is a commutative ring with $\frac12\in A$ and $T\subseteq A$ is a preorder. Then
\[\text{T is proper}\iff T\ne A.\]
\end{cor}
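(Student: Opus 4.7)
The plan is to prove each direction separately, with the forward direction being essentially trivial and the reverse direction relying on \ref{diffsquare} to exploit the hypothesis that $2$ is invertible.

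For the implication ``$T$ proper $\Longrightarrow T\ne A$'' I would simply note that by definition of properness $-1\notin T$, whereas $-1\in A$, so $T\ne A$. No ring-theoretic input is needed here.

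For the converse, I would prove the contrapositive: assuming $T$ is not proper, so that $-1\in T$, I show $T=A$. The key observation is that once $-1\in T$, the closure properties of a preorder force $-A^2\subseteq T$, because for any $a\in A$ we have $-a^2=(-1)a^2\in TT\subseteq T$. Combined with $A^2\subseteq T$ and $T+T\subseteq T$, this gives $A^2-A^2\subseteq T$. Now invoking \ref{diffsquare}, which provides the identity $a=\left(\frac{a+1}{2}\right)^2-\left(\frac{a-1}{2}\right)^2$ valid because $\frac12\in A$, we conclude $A=A^2-A^2\subseteq T$, hence $T=A$.

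The only nontrivial ingredient is the use of \ref{diffsquare}, which is precisely the point at which the hypothesis $\frac12\in A$ enters; without it one could not write an arbitrary ring element as a difference of two squares, and the argument would break. I do not anticipate any real obstacle, since the proof reduces to three lines once \ref{diffsquare} is applied. One could alternatively formulate the reverse direction as ``$T=A\Rightarrow-1\in T$'', which is immediate, and the content of the statement is really the opposite implication ``$-1\in T\Rightarrow T=A$'' that I have sketched above.
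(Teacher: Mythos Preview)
Your proof is correct. Both directions are handled properly, and your use of \ref{diffsquare} to conclude $A=A^2-A^2\subseteq T$ once $-1\in T$ is clean and valid.

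The paper takes a slightly different route for the nontrivial direction. Rather than proving the contrapositive directly via \ref{diffsquare}, it argues through \ref{supportideal}: assuming $T\ne A$, one has $T\cap -T\ne A$; since $T\cap -T$ is an ideal (this is where $\frac12\in A$ enters, via \ref{supportideal}), it cannot contain $1$; but $1=1^2\in T$, so $1\notin -T$, i.e., $-1\notin T$. Your argument bypasses the support-ideal machinery and appeals to \ref{diffsquare} directly, which is arguably more elementary. The paper's approach, on the other hand, highlights the structural role of the support $T\cap -T$ and shows how the corollary falls out immediately once one knows it is an ideal. Both proofs ultimately rest on \ref{diffsquare}, just at different levels of abstraction.
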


\begin{proof}
``$\Longrightarrow$'' trivial

\smallskip
``$\Longleftarrow$'' Suppose $T\ne A$. Then of course also $T\cap-T\ne A$. Since $T\cap-T$ is an ideal, we have $1\notin T\cap -T$.
Since $1=1^2\in T$, it follows that $1\notin-T$, i.e., $-1\notin T$.
\end{proof}

\begin{ex} In \ref{diffsquare}, \ref{supportideal} and \ref{preproper}, it is essential to require $\frac12\in A$. Take for example
$A=\F_2(X)$. Then $A^2=\F_2(X^2)$ since $\F_2(X)\to\F_2(X),\ p\mapsto p^2$ is a homomorphism (Frobenius). Therefore $A^2-A^2
=\F_2(X^2)\ne\F_2(X)$. Moreover $T:=\F_2(X^2)=\sum A^2$ is a preorder of $A$ but $T\cap-T=\F_2(X^2)$ is not an ideal of $A$
(since $1\in T\cap-T\ne\F_2(X)$). Also $T$ is not proper although $T\ne A$. The same is true for $\F_2[X]$ instead of $\F_2(X)$ and
from this one can get similar examples in the ring $\Z[X]$ (exercise).
\end{ex}

\begin{pro}\label{propfield}
Let $K$ be a field and $T\subseteq K$ a preorder. Then
\[\text{T is proper}\iff T\cap-T=\{0\}.\]
\end{pro}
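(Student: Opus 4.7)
The plan is to prove the two implications directly, exploiting the fact that $K$ is a field — this lets me avoid the assumption $\tfrac12\in A$ needed in the more general Corollary \ref{preproper}.

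First, for the direction ``$T\cap -T=\{0\}\Rightarrow T$ proper'', I would argue by contradiction. Assume $-1\in T$. Since $1=1^2\in A^2\subseteq T$ by \ref{defpreorder}, we also have $-(-1)=1\in T$, i.e., $-1\in -T$. Then $-1\in T\cap -T=\{0\}$, which is absurd because $1\ne 0$ in the field $K$. Hence $-1\notin T$, so $T$ is proper.

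For the converse, the strategy is: given $a\in T\cap -T$ with $a\ne 0$, manufacture $-1\in T$ to contradict properness. The key observation — and the only place the field hypothesis really enters — is that $a^{-1}$ exists in $K$, so $a^{-2}=(a^{-1})^2\in A^2\subseteq T$. Since $a,-a\in T$ and $T$ is closed under multiplication,
\[
(-a)\cdot a\cdot a^{-2}=-1\in T,
\]
the desired contradiction. Combining $T\cap -T\subseteq\{0\}$ with the trivial inclusion $\{0\}\subseteq T\cap -T$ (as $0=0^2\in T$ and $-0=0\in T$) yields the equality.

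I do not anticipate any real obstacle. The only point worth highlighting is that one need not invoke the support-ideal machinery of \ref{supportideal}, nor assume $\tfrac12\in A$: invertibility of nonzero elements of $K$ directly allows one to normalize an arbitrary nonzero element of $T\cap -T$ into the element $-1$.
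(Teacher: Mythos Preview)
Your proof is correct, and in fact cleaner than the paper's own argument. The paper first disposes of the $\chara K=2$ case by hand (there $-1=1\in T\cap-T$, so both sides of the equivalence are false), and then for $\chara K\ne2$ invokes \ref{supportideal} to conclude that $T\cap-T$ is an ideal of the field $K$, hence either $\{0\}$ or all of $K$; the equivalence then reduces to $-1\notin T\iff 1\notin T\cap-T$. Your direct normalization $(-a)\cdot a\cdot a^{-2}=-1$ bypasses both the case distinction and the support-ideal machinery entirely, using only invertibility in $K$; the argument is uniform in the characteristic (in characteristic $2$ both implications become vacuous, as you implicitly handle). What the paper's route buys is a visible connection to the general framework of \ref{supportideal}, but your approach is self-contained and arguably preferable as a proof of this isolated statement.
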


\begin{proof}
If $\chara K=2$, then $-1=1\in T\cap-T$. Therefore suppose now $\chara K\ne2$. Then
\[-1\notin T\overset{1\in T}\iff1\notin T\cap-T\overset{\ref{supportideal}}{\underset{\substack{\text{$K$ field}\\\chara K\ne2}}\iff}T\cap-T=\{0\}.\]
\end{proof}

\begin{lem}\label{againpreorder}
Suppose $A$ is a commutative ring, $T\subseteq A$ a preorder and $a\in A$. Then $T+aT$ is again a preorder.
\end{lem}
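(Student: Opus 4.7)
The plan is to verify the three defining axioms of a preorder from Definition \ref{defpreorder} directly for $T+aT$, namely $A^2 \subseteq T+aT$, closure under addition, and closure under multiplication, using only that $T$ itself satisfies these.

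First I would observe that $T \subseteq T+aT$: since $0 = 0^2 \in A^2 \subseteq T$, for any $s \in T$ we have $s = s + a\cdot 0 \in T+aT$. In particular $A^2 \subseteq T \subseteq T+aT$, taking care of the first axiom.

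Closure under addition is immediate from $T+T \subseteq T$: for $s_1+at_1, s_2+at_2$ with $s_i, t_i \in T$,
\[
(s_1+at_1)+(s_2+at_2) = (s_1+s_2) + a(t_1+t_2) \in T+aT.
\]
For closure under multiplication I would expand
\[
(s_1+at_1)(s_2+at_2) = (s_1 s_2 + a^2 t_1 t_2) + a(s_1 t_2 + t_1 s_2).
\]
The key observation is that $a^2 \in A^2 \subseteq T$; hence $a^2 t_1 t_2 \in TTT \subseteq T$, and using $T+T \subseteq T$ the first summand lies in $T$, while the coefficient of $a$ is $s_1 t_2 + t_1 s_2 \in T$. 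Thus the product lies in $T+aT$.

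I do not foresee any genuine obstacle: the argument is really just the quadratic-form identity for $(s_1+at_1)(s_2+at_2)$ combined with the trick that even if $a$ itself need not belong to $T$, its square $a^2$ always does, which is exactly what makes the cross-term calculation close up inside $T+aT$.
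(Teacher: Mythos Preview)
Your proof is correct and follows essentially the same approach as the paper's: both verify the three preorder axioms directly, with the key step being that $a^2\in A^2\subseteq T$ handles the quadratic cross-term in the product. The only cosmetic difference is that the paper writes the argument in set-inclusion form (e.g., $(T+aT)(T+aT)\subseteq TT+aTT+aTT+a^2TT\subseteq\ldots$) while you work element-wise.
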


\begin{proof}
$A^2\subseteq T\subseteq T+aT$, $(T+aT)+(T+aT)\subseteq(T+T)+a(T+T)\subseteq T+aT$ and
$(T+aT)(T+aT)\subseteq TT+aTT+aTT+a^2TT\subseteq T+aT+aT+A^2T\subseteq T+a(T+T)+TT\subseteq T+aT+T\subseteq T+aT$
\end{proof}

\begin{thm}\label{orderpreorder}
Let $K$ be a field and $P\subseteq K$. Then the following are equivalent:
\begin{enumerate}[\normalfont(a)]
\item $P$ is an order of $K$ \emph{[$\to$ \ref{unaryrem}]}.
\item $P$ is a proper preorder of $K$ \emph{[$\to$ \ref{defpreorder}]} such that $P\cup-P=K$.
\item $P$ is a maximal proper preorder of $K$.
\end{enumerate}
\end{thm}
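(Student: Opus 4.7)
The strategy is to split the three-way equivalence as (a)$\iff$(b) and (b)$\iff$(c), leveraging the preorder framework just developed, in particular Propositions \ref{propfield} and \ref{squares} and Lemma \ref{againpreorder}.

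The equivalence (a)$\iff$(b) should be essentially bookkeeping. If $P$ is an order of $K$, then by definition $P+P \subseteq P$, $PP \subseteq P$, $P \cup -P = K$, and $P \cap -P = \{0\}$; Remark \ref{unaryrem}(c) adds $K^2 \subseteq P$, so $P$ is a preorder, and it is proper because $-1 \in P$ would force $-1 \in P \cap -P = \{0\}$, which is absurd. Conversely, a proper preorder $P$ with $P \cup -P = K$ automatically satisfies $P \cap -P = \{0\}$ by Proposition \ref{propfield}, giving the final axiom of an order.

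For (b)$\Rightarrow$(c), suppose $Q$ is a proper preorder with $P \subsetneq Q$ and pick $a \in Q \setminus P$. The assumption $P \cup -P = K$ forces $-a \in P \subseteq Q$, so $a \in Q \cap -Q$; but by \ref{propfield} this intersection is $\{0\}$, and since $0 = 0^2 \in K^2 \subseteq P$ this contradicts $a \notin P$.

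The one nontrivial direction is (c)$\Rightarrow$(b), and I expect this to be the main obstacle. Given $a \in K$ with $-a \notin P$, I would form $T := P + (-a)P$, a preorder by Lemma \ref{againpreorder} which strictly contains $P$ because $-a = 0 + (-a)\cdot 1 \in T \setminus P$. Maximality of $P$ then forces $T$ not to be proper, so $-1 \in T$; writing $-1 = s - at$ with $s, t \in P$ gives $at = 1 + s \in P$. The case $t = 0$ would yield $-1 = s \in P$, contradicting properness of $P$, so $t \ne 0$; now the field-theoretic identity $1/t = t(1/t)^2$ puts $1/t$ into $PP \subseteq P$ (since $t \in P$ and $(1/t)^2 \in K^2 \subseteq P$), whence $a = (at)(1/t) \in P$. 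This shows $P \cup -P = K$, and $P$ is a proper preorder by hypothesis. The delicate point is the inversion step, which is exactly where the field hypothesis becomes indispensable, and which is the standard mechanism for saturating preorders into orders.
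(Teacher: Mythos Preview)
Your argument is correct. The only substantive difference from the paper lies in the implication (c)$\Rightarrow$(b). The paper argues by contradiction: assuming $a\notin P$ \emph{and} $-a\notin P$, it forms both auxiliary preorders $P+aP$ and $P-aP$, extracts certificates $-1=s+as'$ and $-1=t-at'$ from each, and multiplies them to get $-1=s+t+st+a^2s't'\in P$ without ever inverting an element. You instead assume only $-a\notin P$, use the single preorder $P-aP$, and invoke the field inverse $1/t=t(1/t)^2\in P$ to conclude $a\in P$ directly. Your route is arguably cleaner here, but the paper's multiplication trick is the one that generalizes to commutative rings (where inversion is unavailable), and indeed it reappears almost verbatim in the proof of Theorem~\ref{maxpreorder}.
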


\begin{proof}
\underline{(a)$\implies$(b)}\quad\ref{sqsm}(b)

\smallskip
\underline{(b)$\implies$(c)}\quad Suppose (b) holds and let $T$ be a proper preorder of $K$ with $P\subseteq T$. To show: $T\subseteq P$.
To this end, let $a\in T$. If $a$ was not in $P$, then $-a\in P\subseteq T$ (since $P\cup-P=K$) and therefore
$a\in T\cap-T\overset{\ref{propfield}}=\{0\}$ in contradiction to $0=0^2\in P$.

\smallskip
\underline{(c)$\implies$(a)}\quad Suppose (c) holds. Because of \ref{propfield}, we have to show only $P\cup-P=K$. Assume $P\cup-P\ne K$.
Choose then $a\in K$ such that $a\notin P$ and $-a\notin P$. Then $P+aP$ and $P-aP$ are preorders according to Lemma \ref{againpreorder}
and both contain $P$ properly (note that $0,1\in P$). Because of the maximality of $P$ none of $P+aP$ and $P-aP$ is proper, i.e.,
$-1\in P+aP$ and $-1\in P-aP$. Write $-1=s+as'$ and $-1=t-at'$ for certain $s,s',t,t'\in P$. Then
$-as'=1+s$ and $at'=1+t$. It follows that $-a^2s't'=1+s+t+st$ and therefore
$-1=s+t+st+a^2s't'\in P+P+PP+A^2PP\subseteq P\ \lightning$.
\end{proof}

\begin{thm}\label{artin-schreier}
Let $K$ be a field and $T\subseteq K$ a proper preorder. Then there is an order $P$ of $K$ such that $T\subseteq P$ and we have
$T=\bigcap\{P\mid\text{$P$ order of $K$},T\subseteq P\}$.
\end{thm}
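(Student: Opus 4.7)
The plan has two parts: first I would produce some order $P$ of $K$ containing $T$ by a Zorn's lemma argument and the characterization in Theorem \ref{orderpreorder}; second, I would sharpen this to show that for every $a\in K\setminus T$ there is in fact an order $P\supseteq T$ with $a\notin P$, which gives the desired intersection formula.

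For the existence of some order $P\supseteq T$, I apply Zorn's lemma to
\[\mathcal F:=\{T'\mid T'\text{ proper preorder of }K,\ T\subseteq T'\},\]
ordered by inclusion. $\mathcal F$ is nonempty since $T\in\mathcal F$. For a chain $(T_i)_{i\in I}$ in $\mathcal F$, the union $T_\infty:=\bigcup_iT_i$ contains $K^2$ and is clearly closed under addition and multiplication (since any two elements lie in a common $T_i$), hence is a preorder; and $-1\notin T_i$ for any $i$ forces $-1\notin T_\infty$, so $T_\infty\in\mathcal F$ is an upper bound. Zorn gives a maximal $P\in\mathcal F$, and by the implication (c)$\Rightarrow$(a) of Theorem \ref{orderpreorder}, $P$ is an order of $K$.

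For the intersection formula, the inclusion $T\subseteq\bigcap\{P\mid P\text{ order},\ T\subseteq P\}$ is trivial. For the reverse inclusion, I fix $a\in K\setminus T$ and aim to construct an order $P\supseteq T$ with $a\notin P$. Consider $T':=T-aT$, which is a preorder by Lemma \ref{againpreorder} (applied with $-a$). I claim $T'$ is proper. Otherwise $-1=s-at$ for some $s,t\in T$. If $t=0$, then $-1=s\in T$, contradicting properness of $T$. If $t\neq 0$, then $at=1+s$ gives
\[a=\frac{1+s}{t}=(1+s)\,t\cdot\left(\frac1t\right)^2,\]
and since $1+s\in T$, $t\in T$ and $(1/t)^2\in K^2\subseteq T$, we get $a\in T$, again a contradiction. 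So $T'$ is a proper preorder containing $T$, and by the first part there is an order $P\supseteq T'\supseteq T$. Since $-a=0-a\cdot1\in T'\subseteq P$, if we also had $a\in P$, then $a\in P\cap-P=\{0\}$ by \ref{propfield}, and $a=0\in T$ would contradict $a\notin T$. Hence $a\notin P$, as required.

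The main technical obstacle is the verification that $T-aT$ is proper whenever $a\notin T$: the trick of rewriting $a=(1+s)t(1/t)^2$ to promote the relation $-1=s-at$ into membership $a\in T$ is what makes the argument go through. Everything else (the Zorn step, the passage from $-a\in P$ and $a\in P$ to $a=0$, and the deduction of the intersection formula from the one-element separation result) is essentially bookkeeping with the results already proved in the excerpt.
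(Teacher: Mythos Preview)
Your proof is correct and follows essentially the same approach as the paper: Zorn's lemma on proper preorders containing $T$ together with Theorem~\ref{orderpreorder}(c)$\Rightarrow$(a) for existence, and then for each $a\notin T$ the preorder $T-aT$ (proper via the rewriting $a=(1+s)t(1/t)^2$) to produce an order excluding $a$. The only differences are cosmetic, such as your explicit case split on $t=0$ versus $t\ne0$.
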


\begin{proof} Consider the partially ordered set of all proper preorders of $K$ containing $T$. In this partially ordered set, every chain
has an upper bound (the empty chain has $T$ as an upper bound and every nonempty chain possesses its union as an upper bound).
By Zorn's lemma, the partially ordered set has a maximal element. Every such element is obviously a maximal proper preorder and therefore by
\ref{orderpreorder} an order. Now we turn to the second statement:

``$\subseteq$'' is clear.

``$\supseteq$'' Let $a\in K\setminus T$. To show: There is an order $P$ of $K$ with $T\subseteq P$ and $a\notin P$.
By \ref{againpreorder}, $T-aT$ is a preorder. It is proper for otherwise there would be $s,t\in T$ with $-1=s-at$ and it would follow that
$t\ne0$, $at=1+s$ and $a=\left(\frac 1t\right)^2t(1+s)\in K^2TT\subseteq T$. By the already proved, there is an order $P$ of $K$
with $T-aT\subseteq P$. If $a$ lied in $P$, then $a\in P\cap-P=\{0\}$ in contradiction to $a\notin T$.
\end{proof}

\begin{df}\label{dfreal}
A field is called \emph{real} (in older literature mostly \emph{formally real}) if it admits an order.
\end{df}

\begin{thm}\label{realchar}
Let $K$ be a field. Then the following are equivalent:
\begin{enumerate}[\normalfont(a)]
\item $K$ is real.
\item $-1\notin\sum K^2$
\item $\forall n\in\N:\forall a_1,\dots,a_n\in K:(a_1^2+\dots+a_n^2=0\implies a_1=0)$
\end{enumerate}
\end{thm}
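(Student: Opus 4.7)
The plan is to prove (a) $\Longleftrightarrow$ (b) by invoking the Artin--Schreier theorem (\ref{artin-schreier}) and (b) $\Longleftrightarrow$ (c) by a direct algebraic manipulation. The cyclic implication (a)$\Rightarrow$(b)$\Rightarrow$(c)$\Rightarrow$(a) is probably most convenient, but I would organize it as two independent equivalences for clarity.

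For (a)$\Rightarrow$(b): if $P$ is an order of $K$, then $K^2\subseteq P$ by \ref{unaryrem}(c), and since $P$ is closed under addition we get $\sum K^2\subseteq P$. Because $P$ is an order, it is in particular a proper preorder (\ref{sqsm}(b)), so $-1\notin P$, hence $-1\notin\sum K^2$.

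For (b)$\Rightarrow$(a): by \ref{sqsm}(a), $\sum K^2$ is a preorder of $K$; the hypothesis $-1\notin\sum K^2$ says exactly that it is proper. Applying \ref{artin-schreier} to $T:=\sum K^2$ yields an order $P$ of $K$ with $\sum K^2\subseteq P$, and in particular $K$ admits an order, so $K$ is real.

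For (b)$\Leftrightarrow$(c) I would argue by contraposition in both directions. If (b) fails, write $-1=b_1^2+\dots+b_m^2$ with $b_i\in K$; then $1^2+b_1^2+\dots+b_m^2=0$ with leading entry $1\neq 0$, violating (c). Conversely, assume (c) fails: there exist $n\in\N$ and $a_1,\dots,a_n\in K$ with $a_1^2+\dots+a_n^2=0$ and $a_1\neq 0$. Since $K$ is a field, $a_1^2=0$ would force $a_1=0$, so $n\ge 2$; dividing the relation by $-a_1^2$ gives
\[
-1=\left(\tfrac{a_2}{a_1}\right)^{\!2}+\dots+\left(\tfrac{a_n}{a_1}\right)^{\!2}\in\sum K^2,
\]
so (b) fails. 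The only subtlety here is the separate treatment of $n=1$, but this is instantly settled in any field.

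There is no real obstacle; the substance of the theorem is entirely contained in \ref{artin-schreier}, which provides the extension of the preorder $\sum K^2$ to a full order once properness is established. Everything else is bookkeeping between the three equivalent formulations of ``$-1$ is not a sum of squares''.
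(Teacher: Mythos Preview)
Your proof is correct and follows essentially the same approach as the paper: the equivalence (a)$\Leftrightarrow$(b) via Proposition~\ref{squares} (equivalently \ref{unaryrem}(c)) and Theorem~\ref{artin-schreier}, and (b)$\Leftrightarrow$(c) by the same algebraic manipulation. The only cosmetic difference is that the paper phrases (b)$\Leftrightarrow$(c) as a direct chain of equivalences rather than two contrapositions.
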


\begin{proof}
\underline{(a)$\implies$(b)}\quad follows from \ref{squares}.

\smallskip
\underline{(b)$\implies$(a)}\quad By \ref{sqsm}, $\sum K^2$ is a preorder. If it is proper, then it is contained
in an order by \ref{artin-schreier}.

\smallskip
\underline{(b)$\iff$(c)}\quad
\begin{align*}-1\in\sum K^2&\iff\exists n\in\N:\exists a_2,\dots,a_n\in K:-1=a_2^2+\dots+a_n^2\\
&\iff\exists n\in\N:\exists a_2,\dots,a_n\in K:1^2+a_2^2+\dots+a_n^2=0\\
&\iff\exists n\in\N:\exists a_1\in K^\times:\exists a_2,\dots,a_n\in K:a_1^2+a_2^2+\dots+a_n^2=0\\
\end{align*}
\end{proof}

\begin{ex} Because of $-1=\ii^2\in\sum\C^2$, the field $\C:=\R(\ii)$ does not admit an order.
\end{ex}

\section{Extensions of orders}

\begin{df}\label{oexf}
Let $(K,P)$ be an ordered field and $L$ an extension field of $K$ (or in other words: let $L|K$ be
a field extension and $P$ be an order of $K$). We call $Q$ an \emph{extension} of the order $P$ to $L$ if
the following equivalent conditions are fulfilled [$\to$ \ref{unaryrem}(b)]:
\begin{enumerate}[(a)]
\item $(L,Q)$ is an ordered extension field of $(K,P)$.
\item $Q$ is an order of $L$ such that $P\subseteq Q$.
\item $Q$ is an order of $L$ such that $Q\cap K=P$.
\end{enumerate}
\end{df}

\begin{thm}\label{extend-ordering}
Let $(K,P)$ be an ordered field and $L$ an extension field of $K$. Then the order $P$ of $K$ can be extended
to $L$ if and only if $-1\notin\sum L^2P$.
\end{thm}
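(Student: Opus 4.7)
The plan is to show that $\sum L^2 P$ is the smallest preorder of $L$ containing $P$, and then use the Artin–Schreier extension theorem \ref{artin-schreier} applied within $L$.

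For the easy direction ($\Rightarrow$), I would suppose that $Q$ is an extension of $P$ to $L$. Since $Q$ is a preorder of $L$, we have $L^2\subseteq Q$ and $QQ\subseteq Q$, and because $P\subseteq Q$, every generator $a^2p$ with $a\in L$, $p\in P$ lies in $Q$. Closure under addition then gives $\sum L^2P\subseteq Q$. Since $Q$ is proper, $-1\notin Q$, hence $-1\notin\sum L^2P$.

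For the nontrivial direction ($\Leftarrow$), I would first verify that $T:=\sum L^2P$ is a preorder of $L$. The inclusion $L^2\subseteq T$ holds because $1\in P$, so $a^2=a^2\cdot 1\in T$ for each $a\in L$. Closure under addition is immediate from the form of the elements. For closure under multiplication I would expand
\[
\left(\sum_i a_i^2p_i\right)\left(\sum_j b_j^2q_j\right)=\sum_{i,j}(a_ib_j)^2(p_iq_j),
\]
and note that $p_iq_j\in PP\subseteq P$ since $P$ is a preorder (in fact an order) of $K$. Assuming $-1\notin T$, the preorder $T$ is proper. By the Artin–Schreier theorem \ref{artin-schreier} applied to the proper preorder $T$ of the field $L$, there exists an order $Q$ of $L$ with $T\subseteq Q$. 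Finally, for every $p\in P$ one has $p=1^2\cdot p\in T\subseteq Q$, so $P\subseteq Q$; thus $Q$ is an extension of $P$ in the sense of \ref{oexf}(b).

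I do not expect any real obstacle: the whole argument is a direct application of \ref{artin-schreier} once one identifies $\sum L^2P$ as the smallest preorder of $L$ containing $P$. The only mildly delicate point is the multiplicative closure check, which reduces to observing $PP\subseteq P$ coming from $P$ being an order of $K$; everything else is a bookkeeping exercise about the operator $\sum(\cdot)^2(\cdot)$ defined in \ref{divnot}.
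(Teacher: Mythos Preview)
Your proof is correct and follows essentially the same approach as the paper: identify $\sum L^2P$ as the preorder of $L$ generated by $P$ and then apply \ref{artin-schreier}. You spell out the verification that $\sum L^2P$ is a preorder and the inclusion $\sum L^2P\subseteq Q$ more explicitly than the paper does, but the argument is the same.
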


\begin{proof}
Since every order is a preorder [$\to$ \ref{sqsm}], an order of $L$ contains $P$ if and only if it contains
the preorder generated in $L$ by $P$ (i.e., the smallest preorder of $L$ containing $P$, or in other words, the
intersection of all preorders of $L$ containing $P$), namely $\sum L^2P$. If $\sum L^2P$ is not proper,
then there is of course no order of $L$ containing it. On the contrary, if $\sum L^2P$ is proper, then there is
such an order by Theorem \ref{artin-schreier}.
\end{proof}

\begin{reminder}\label{quadratic-remind}
Let $L|K$ be a field extension with $\chara K\ne2$. Then
\[[L:K]\le2\iff\exists d\in K:L=K(\sqrt d)\] since for $x\in L$ and $a,b,c\in K$ with $a\ne0$ and
$ax^2+bx+c=0$ we have $(2ax+b)^2=4a(ax^2+bx)+b^2=b^2-4ac=:d$ and therefore
$K(x)=K(2ax+b)=K(\sqrt d)$.
\end{reminder}

\begin{thm}\label{extend2}
Let $(K,P)$ be an ordered field and $d\in K$. The order $P$ can be extended to $K(\sqrt d)$ if and only if
$d\in P$.
\end{thm}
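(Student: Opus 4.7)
The plan is to handle the two directions separately, with the forward direction being essentially immediate and the backward direction being the substance of the argument. Before doing anything, I would dispose of the trivial case $\sqrt d\in K$: then $K(\sqrt d)=K$, the order $P$ trivially extends itself, and $d=(\sqrt d)^2\in K^2\subseteq P$, so both conditions hold. Henceforth I assume $[K(\sqrt d):K]=2$, so that $\{1,\sqrt d\}$ is a $K$-basis of $L:=K(\sqrt d)$.

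For ``$\Longrightarrow$'', suppose $Q$ is an extension of $P$ to $L$. Then $d=(\sqrt d)^2\in L^2\subseteq Q$ by \ref{unaryrem}(c), and since $d\in K$ and $Q\cap K=P$, we conclude $d\in P$.

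For ``$\Longleftarrow$'', suppose $d\in P$. By Theorem \ref{extend-ordering}, it suffices to show that $-1\notin\sum L^2P$. I argue by contradiction: assume there exist $\ell_i\in L$ and $p_i\in P$ with $-1=\sum_i\ell_i^2p_i$. Writing $\ell_i=a_i+b_i\sqrt d$ with $a_i,b_i\in K$, one computes
\[
\ell_i^2=a_i^2+b_i^2d+2a_ib_i\sqrt d,
\]
so
\[
-1=\sum_i\ell_i^2p_i=\sum_i(a_i^2+b_i^2d)p_i+\Bigl(\sum_i2a_ib_ip_i\Bigr)\sqrt d.
\]
Since $\{1,\sqrt d\}$ is $K$-linearly independent, comparing the rational parts gives
\[
-1=\sum_ia_i^2p_i+d\sum_ib_i^2p_i.
\]

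The key observation is now that the right-hand side lies in $P$: indeed $P$ is a preorder containing $K^2$, so each $a_i^2p_i,b_i^2p_i\in PP\subseteq P$, hence $\sum_ia_i^2p_i\in P$ and $\sum_ib_i^2p_i\in P$; using $d\in P$ and $PP\subseteq P$ then gives $d\sum_ib_i^2p_i\in P$, and $P+P\subseteq P$ finishes the claim. Thus $-1\in P$, contradicting the properness of the order $P$ (since $1\in P$ would then force $1\in P\cap -P=\{0\}$). The main obstacle, such as it is, is simply the bookkeeping in this computation; conceptually the argument is that the preorder $\sum L^2P$ inside $L$, when $d\in P$, collapses on the rational part to $\sum K^2P+d\sum K^2P\subseteq P$, which cannot contain $-1$.
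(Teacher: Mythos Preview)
Your proof is correct. The approach differs from the paper's, though, so a brief comparison is in order.

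The paper argues both directions simultaneously by a chain of equivalences. From the same expansion you carried out, it observes that $P+dP\subseteq\sum L^2P\subseteq P+dP+K\sqrt d$, so that $-1\in\sum L^2P\iff -1\in P+dP$ (since $-1\in K$). It then invokes the maximality of $P$ among proper preorders (Theorem~\ref{orderpreorder}) together with Lemma~\ref{againpreorder} to conclude $-1\notin P+dP\iff P+dP=P\iff d\in P$. Your argument is more elementary: the backward direction is the same rational-part computation but stops as soon as $-1\in P$ is reached, without appealing to maximality; and your forward direction (squares lie in any order, then intersect with $K$) is more direct than the paper's, which for $d\notin P$ deduces $P+dP\supsetneq P$ and hence improperness via maximality. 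The paper's route is slicker and advertises the structural results~\ref{orderpreorder} and~\ref{againpreorder}; yours is self-contained and avoids those dependencies.
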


\begin{proof}
If $\sqrt d\in K$, then $d=(\sqrt d)^2\in P$. Suppose now that $\sqrt d\notin K$. Set $L:=K(\sqrt d)$.
Because of $P+dP\subseteq\sum L^2P\subseteq P+dP+K\sqrt d$, we have
$-1\notin\sum L^2P\iff-1\notin P+dP$. Since $P$ is a maximal proper preorder by
\ref{orderpreorder} and $P+dP$ is a preorder by \ref{againpreorder}, we obtain
$-1\notin P+dP\iff P=P+dP\iff d\in P$. Combining, we get $-1\notin\sum L^2P\iff d\in P$ and we conclude
by Theorem \ref{extend-ordering}.
\end{proof}

\begin{ex}\label{sqrt2}
In \ref{extend2}, the extension is in general not unique: $\Q(\sqrt 2)$ admits exactly two
orders, namely the ones induced by the two field embeddings $\Q(\sqrt 2)\hookrightarrow\R$
(in the one $\sqrt 2$ is positive, in the other negative). That it does not admit a third one, follows from the
fact that for every order $P$ of $\Q(\sqrt2)$ we have by \ref{archsubfieldreals}
$(\Q(\sqrt 2),P)\hookrightarrow(\R,\R_{\ge0})$ because $P$ is Archimedean since
$\Q(\sqrt 2)=\Q+\Q\sqrt2$ and
\[|\sqrt2|_P-1\overset{\ref{diffsquare}}=
\left(\frac{|\sqrt2|_P}2\right)^2-\left(\frac{|\sqrt2|_P-2}2\right)^2\overset{\ref{squares}}\le_P
\left(\frac{|\sqrt2|_P}2\right)^2=\frac12\]
[$\to$ \ref{archetcdef}(a)].
\end{ex}

\begin{thm}\label{extendodd}
If $L|K$ is a field extension of odd degree, then each order of $K$ can be extended to $L$.
\end{thm}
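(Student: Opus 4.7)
The plan is to use the characterization in Theorem \ref{extend-ordering}, so the goal is to show $-1 \notin \sum L^2 P$, by strong induction on the odd integer $n := [L:K]$. The case $n = 1$ is trivial since $L = K$. For the inductive step we exploit that $\chara K = 0$ (by Proposition \ref{qembeds}, since $K$ is ordered), so $L \mid K$ is separable and the primitive element theorem lets us write $L = K(\alpha)$ with minimal polynomial $f \in K[x]$ of degree $n$.

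Suppose for contradiction that $-1 \in \sum L^2 P$, i.e., $-1 = \sum_{i=1}^m p_i g_i(\alpha)^2$ for some $p_i \in P$ and $g_i \in K[x]$ of degree less than $n$. I form the auxiliary polynomial
\[
h(x) := 1 + \sum_{i=1}^m p_i g_i(x)^2 \in K[x].
\]
Since $h(\alpha) = 0$, we have $f \mid h$ in $K[x]$; write $h = fq$. The crucial bookkeeping step is to determine $\deg q$. After discarding zero summands, the leading coefficient of $\sum p_i g_i(x)^2$ equals a sum of nonzero elements of $P$; because $P$ is an \emph{order} of $K$ (so $P + P \subseteq P$ and $P \cap -P = \{0\}$), such a sum cannot vanish. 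Hence $\deg h = 2\max_i \deg g_i$ is a positive even integer of size at most $2(n-1)$. Since $h \neq 0$ and $f \mid h$ with $\deg f = n$ odd, parity forces $\deg q$ to be odd with $1 \le \deg q \le n-2$.

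Next, I pick an irreducible factor $q_1 \in K[x]$ of $q$ of odd degree, which exists because a product of polynomials of even degree has even degree. Let $\beta$ be a root of $q_1$ in some extension of $K$; then $[K(\beta):K] = \deg q_1$ is odd and strictly less than $n$. By the induction hypothesis, $P$ extends to an order $P'$ on $K(\beta)$. Evaluating gives $h(\beta) = f(\beta)\,q(\beta) = 0$. On the other hand, $h(\beta) = 1 + \sum p_i g_i(\beta)^2 \in 1 + P'$, which must be nonzero (else $-1 \in P'$, contradicting that $P'$ is a proper preorder). This contradiction shows $-1 \notin \sum L^2 P$, completing the induction.

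The principal obstacle is the degree bookkeeping: one has to check that the leading coefficients of the $p_i g_i(x)^2$ do not cancel (which is exactly where orderedness of $P$, and not merely its being a proper preorder, is used) and then combine this with the odd--even parity argument to ensure $\deg q \geq 1$ is odd, so that a root $\beta$ in a \emph{strictly smaller} odd-degree extension is available for the induction.
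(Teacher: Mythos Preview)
Your proof is correct and follows essentially the same approach as the paper's: both reduce via the primitive element theorem to a polynomial identity $1 + \sum p_i g_i^2 = fq$, use the fact that $P$ is an order to show the left-hand side has positive even degree (hence $q$ has odd degree $\le n-2$), pick an odd-degree irreducible factor of $q$, and evaluate at a root to contradict the inductive hypothesis. The only cosmetic difference is that the paper phrases the induction as a minimal-counterexample argument.
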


\begin{proof}
Assume the claim does not hold. Then there is a counterexample $L|K$ with $[L:K]=2n+1$ for some
$n\in\N$. We choose the counterexample in a way such that $n$ is as small as possible.
We will now produce another counterexample $L'|K$ with $[L':K]\le2n-1$ which will contradict
the minimality of $n$. Due to $\chara K=0$, we have that $L|K$ is separable. By the primitive element theorem,
there is some $a\in L$ with $L=K(a)=K[a]$. The condition $-1\in\sum L^2P$ which is satisfied by
\ref{extend-ordering} translates via the isomorphism
$K[X]/(f)\to L,\ \overline g\mapsto g(a)$ in
\begin{equation}
\tag{$*$}
1+\sum_{i=1}^\ell a_ig_i^2=hf
\end{equation}
 with $\ell\in\N$, $a_i\in P$, $g_i,h\in K[X]$, where $f$ denotes the minimal
polynomial of $a$ over $K$ (in particular $\deg f=[K(a):K]=[L:K]=2n+1$) and the $g_i$ are chosen in such a way
that $\deg g_i\le 2n$. Each of the $\ell+1$ terms in the sum on the left hand side of $(*)$ has an \emph{even}
degree $\le4n$ and a leading coefficient from $PK^2\subseteq P$ (except those terms that are zero of course).
Since $P$ is an order, the monomials of highest degree appearing on the left hand side of $(*)$ cannot cancel
out. So the left hand side and therefore also the right hand side of $(*)$ has an \emph{even} degree $\le4n$.
It follows that $h$ has an \emph{odd} degree $\le2n-1$. Choose an irreducible divisor $h_1$ of $h$ in
$K[X]$ of \emph{odd} degree and a root $b$ of $h_1$ in an extension field of $K$ (e.g., in the
splitting field of $h_1$ over $K$ or in the algebraic closure of $K$). Set $L':=K(b)$.
Substituting $b$ in $(*)$ yields $-1=\sum_{i=1}^\ell a_ig_i(b)^2\in\sum PL'^2$.
By \ref{extend-ordering}, $P$ cannot be extended to $L'$. Since
$[L':K]=[K(b):K]=\deg\irr_Kb=\deg h_1\le 2n-1$ is \emph{odd}, we gain the desired still smaller counterexample.
 \end{proof}
 
 \begin{thm}\label{rtlfct}
 Let $K$ be a field. Then every order of $K$ can be extended to $K(X)$.
 \end{thm}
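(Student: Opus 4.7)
By Theorem \ref{extend-ordering}, it suffices to verify that $-1 \notin \sum K(X)^2 P$. I would argue by contradiction: assuming a representation $-1 = \sum_{i=1}^n (f_i/g)^2 p_i$ in $K(X)$ with $f_i, g \in K[X]$, $g \neq 0$ (one may take a common denominator for all the $f_i/g$), and $p_i \in P$, clearing denominators gives the identity
\[
-g^2 \;=\; \sum_{i=1}^n f_i^2 p_i
\]
in the polynomial ring $K[X]$. The whole strategy is then to compare the \emph{leading coefficients} of the two sides.

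The left-hand side is nonzero with leading coefficient $-(\operatorname{lc}(g))^2$. Since $(\operatorname{lc}(g))^2 \in K^2 \subseteq P$ is nonzero, this leading coefficient lies in $(-P) \setminus \{0\}$. On the right, each nonzero summand $f_i^2 p_i$ has leading coefficient $(\operatorname{lc}(f_i))^2 p_i$, which is a nonzero element of $P$ (using $K^2 \subseteq P$, $PP \subseteq P$, and the fact that $K$ is a field, so a product of nonzero factors is nonzero). Collecting the summands of maximal degree $d$, the coefficient of $X^d$ in $\sum_{i=1}^n f_i^2 p_i$ is a sum of nonzero elements of $P$.

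The crucial point is that such a sum cannot vanish: if $a_1, \dots, a_k \in P$ with $a_1 \neq 0$ and $a_1 + \cdots + a_k = 0$, then $a_2 + \cdots + a_k \in P$ would equal $-a_1 \in -P$, forcing $-a_1 \in P \cap -P = \{0\}$ by the definition of an order (cf.\ \ref{propfield}), contradicting $a_1 \neq 0$. Therefore the leading coefficient on the right-hand side is a nonzero element of $P$.

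Equating the leading coefficients of the two sides then produces an element of $P \setminus \{0\}$ that simultaneously lies in $(-P) \setminus \{0\}$, again contradicting $P \cap -P = \{0\}$. The only genuinely delicate step is the non-cancellation of the leading terms on the right, which the sum-of-positives observation just mentioned takes care of; the rest is a routine degree count together with the basic closure properties $K^2 \subseteq P$, $P+P\subseteq P$ and $PP\subseteq P$.
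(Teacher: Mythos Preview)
Your proof is correct, but it takes a different route from the paper's. Both begin with the same reduction via Theorem~\ref{extend-ordering} to showing $-1\notin\sum K(X)^2P$. The paper then argues in one line: since $K$ has characteristic $0$ it is infinite, so one can evaluate a hypothetical identity $-1=\sum r_i^2p_i$ at a point of $K$ avoiding the finitely many poles of the $r_i$, yielding $-1\in\sum K^2P=P$, a contradiction. You instead clear denominators and compare leading coefficients in $K[X]$, using that terms with leading coefficients in $P\setminus\{0\}$ cannot cancel. Your argument is slightly longer but perfectly sound; it is in fact the same leading-coefficient trick the paper uses in the proof of Theorem~\ref{extendodd}. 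The evaluation approach is slicker here because the target is simply $K$; your approach has the minor advantage of not explicitly invoking $\#K=\infty$.
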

 
 \begin{proof}
Let $P$ be an order of $K$. Assume that $P$ cannot be extended to $K(X)$. By \ref{extend-ordering}
we then have $-1\in\sum K(X)^2P$. Because of $\#K=\infty$ [$\to$ \ref{qembeds}] we can plug in a suitable
point from $K$ (``avoid finitely many poles'') and get $-1\in\sum K^2P=P\ \lightning$.
 \end{proof}
 
 \begin{ex}\label{ordersrx}
 Due to \ref{rtlfct} there is an order on $\R(X)$. If $P$ is such an order, then by the completeness
 of $(\R,\le)$ [$\to$ \ref{introduce-the-reals}], the set $\R_{\le_PX}=\{a\in\R\mid a\le_PX\}$ is either empty or
 not bounded from above (in which case it is $\R$) or it has a supremum $t$ in $\R$ (in which case it equals
 $(-\infty,t)$ if $t>_PX$ or $(-\infty,t]$ if $t<_PX$). Hence
 \[\R_{\le_PX}=\{a\in\R\mid a\le_PX\}\in\{\emptyset\}\cup\{(-\infty,t)\mid t\in\R\}\cup\{(-\infty,t]\mid t\in\R\}\cup\{\R\}
 =:C.\]
 We claim now that the map
\begin{align*}
\Ph\colon\{P\mid P\text{ order of $\R(X)$}\}&\to C\\P&\mapsto\R_{\le_PX}
\end{align*}
is a bijection. It is easy to see that for all $I,J\in C$ there is a ring automorphism $\ph_{I,J}$ of $\R(X)$ such that
for all orders $P$ of $\R(X)$, we have
\[\Ph(P)=I\iff\Ph(\ph_{I,J}(P))=J:\]
\begin{itemize}
\item $I=\R \et J=(-\infty,0] \leadsto \ph_{I,J}\colon X\mapsto\frac1X$
\item $I=\emptyset \et J=(-\infty,0) \leadsto \ph_{I,J}\colon X\mapsto\frac1X$
\item $I=(-\infty,t) \et J=(-\infty,0) \leadsto \ph_{I,J}\colon X\mapsto X+t$
\item $I=(-\infty,t] \et J=(-\infty,0] \leadsto \ph_{I,J}\colon X\mapsto X+t$
\item $I=(-\infty,0) \et J=(-\infty,0] \leadsto \ph_{I,J}\colon X\mapsto-X$
\item other $I$ and $J$ $\leadsto$ composition of the above automorphisms
\end{itemize}
From this we get the \emph{surjectivity} of $\Ph$, since as already mentioned there is an order $P$ of $\R(X)$
and if we set $I:=\Ph(P)$, then $\Ph(\ph_{I,J}(P))=J$ for all $J\in C$. For the \emph{injectivity} of $\Ph$, it suffices
to show that \emph{there is} some $I\in C$ having only one preimage under $\Ph$ since then
\begin{align*}
\#\{P\mid\Ph(P)=J\}&=\#\{P\mid\Ph(\ph_{J,I}(P))=I\}\\
&=\#\{\ph_{J,I}(P)\mid\Ph(\ph_{J,I}(P))=I\}=\#\{P\mid\Ph(P)=I\}=1
\end{align*}
 for all $J\in C$. We therefore fix $I:=\R\in C$ and show that there at most (and therefore exactly) one order
 $P$ of $\R(X)$ such that $\Ph(P)=I$. To this end, suppose $\Ph(P)=I$. If $f,g\in\R[X]\setminus\{0\}$, then
 one easily verifies that
 \[\frac fg\in P\overset{\R(X)^2\subseteq P}\iff fg\in P\iff\text{the leading coefficient of $fg$ is positive.}\]
 This uniquely determines $P$. Consequently, $\Ph$ is a bijection. We fix the following notation:
\begin{align*}
P_{-\infty}&:=\Ph^{-1}(\emptyset)\\
P_{t-}&:=\Ph^{-1}((-\infty,t))\text{ for $t\in\R$}\\
P_{t+}&:=\Ph^{-1}((-\infty,t])\text{ for $t\in\R$}\\
P_\infty&:=\Ph^{-1}(\R)
\end{align*}
Now $\{P\mid P\text{ order of }\R(X)\}=\{P_{-\infty}\}\cup\{P_{t-},P_{t+}\mid t\in\R\}\cup\{P_\infty\}$.
By easy considerations one obtains,
\begin{align*}
P_{-\infty}&=\{r\in\R(X)\mid\exists c\in\R:\forall x\in(-\infty,c):r(x)\ge0\},\\
P_{t-}&=\{r\in\R(X)\mid\exists\ep\in\R_{>0}:\forall x\in(t-\ep,t):r(x)\ge0\}\qquad(t\in\R),\\
P_{t+}&=\{r\in\R(X)\mid\exists\ep\in\R_{>0}:\forall x\in(t,t+\ep):r(x)\ge0\}\qquad(t\in\R),\\
P_\infty&=\{r\in\R(X)\mid\exists c\in\R:\forall x\in(c,\infty):r(x)\ge0\}.
\end{align*}
None of these orders is Archimedean.
\end{ex}

\section{Real closed fields}\label{sec:rcf}

\begin{pro} Let $K$ be a field. Then the following are equivalent:
\begin{enumerate}[\normalfont(a)]
\item $K$ admits exactly one order.
\item $\sum K^2$ is an order of $K$.
\item $(\sum K^2)\cup(-\sum K^2)=K$ and $-1\notin\sum K^2$
\end{enumerate}
\end{pro}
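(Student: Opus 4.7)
The plan is to prove the cyclic implication (b)$\Rightarrow$(c)$\Rightarrow$(a)$\Rightarrow$(b), or to set up (b)$\Leftrightarrow$(c) directly and then argue (b)$\Leftrightarrow$(a); I will do the latter since (b)$\Leftrightarrow$(c) is essentially just unfolding the definition of an order in the preorder language of \ref{unaryrem}(a).

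For (b)$\Leftrightarrow$(c), I would invoke that $\sum K^2$ is always a preorder by \ref{sqsm}(a). Being an order just means adding the two conditions $P\cup-P=K$ and $-1\notin P$ (the latter being equivalent to properness; note $T\cap-T=\{0\}$ comes for free on a field by \ref{propfield} once properness is known). So (b) and (c) say literally the same thing.

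For (b)$\Rightarrow$(a), suppose $\sum K^2$ is an order. Any order $Q$ of $K$ is in particular a preorder (by \ref{sqsm}(b)), hence contains the smallest preorder $\sum K^2$. But $\sum K^2$ is by \ref{orderpreorder} a maximal proper preorder, so $Q=\sum K^2$. Hence $\sum K^2$ is the unique order.

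For (a)$\Rightarrow$(b), suppose $K$ has exactly one order $P$. In particular $K$ is real, so by \ref{realchar} we have $-1\notin\sum K^2$, i.e., $\sum K^2$ is a proper preorder. Now apply the second part of \ref{artin-schreier} to $T:=\sum K^2$: since every order of $K$ automatically contains $\sum K^2$ (being a preorder), the intersection ranges over \emph{all} orders of $K$, which is just $\{P\}$. Therefore $\sum K^2=P$ is an order. The main obstacle, if any, is remembering to invoke Artin--Schreier in the right form here; otherwise the argument is a direct unpacking of the results already established.
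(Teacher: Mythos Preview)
Your proof is correct and uses exactly the same ingredients as the paper (namely \ref{sqsm}, \ref{propfield}, \ref{orderpreorder}, and \ref{artin-schreier}); the only difference is that the paper organizes the argument as a cycle (a)$\Rightarrow$(b)$\Rightarrow$(c)$\Rightarrow$(a) while you establish (b)$\Leftrightarrow$(c) directly and then (a)$\Leftrightarrow$(b), which amounts to the same thing.
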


\begin{proof}
\underline{(a)$\implies$(b)}\quad Suppose $P$ is the unique order of $K$. By \ref{sqsm} and \ref{artin-schreier}, we then get $\sum K^2=P$.

\smallskip
\underline{(b)$\implies$(c)} is trivial.

\smallskip
\underline{(c)$\implies$(a)}\quad Suppose (c) holds. Using \ref{unaryrem}(a) and \ref{propfield}, we see that $\sum K^2$ is an order of $K$, and it is the only one by
\ref{sqsm} and \ref{orderpreorder}(b).
\end{proof}

\begin{ex}
$\Q$ and $\R$ possess exactly one order.
\end{ex}

\begin{convention}\label{convention}
If $K$ is a field admitting exactly one order, then we will often understand $K$ as an ordered field, that is we speak of $K$ but mean $(K,\sum K^2)$.
\end{convention}

\begin{df}\label{dfeuclid}
A field $K$ is called \emph{Euclidean} if $K^2$ is an order of $K$.
\end{df}

\begin{rem}\label{euclidunique}
If $K$ is Euclidean, then $K^2$ is the unique order of $K$.
\end{rem}

\begin{ex}
$\R$ is Euclidean but not $\Q$.
\end{ex}

\begin{notrem}\label{notremsqrt}
\begin{enumerate}[(a)]
\item Let $(K,\le)$ be an ordered field. If $a,b\in K$ such that $a=b^2$, then we write
$\sqrt a:=|b|\in K_{\ge0}$ [$\to$ \ref{introabssgn}] (this is obviously well-defined). If $a\in K\setminus K^2$, we continue to denote by $\sqrt a\in\overline K\setminus K$ an
arbitrary but fixed square root of $a$ in the algebraic closure $\overline K$ of $K$. One shows easily that $a\le b\iff\sqrt a\le\sqrt b$ for all $a,b\in K^2$.
\item If $K$ is a Euclidean field (with order $\le$ [$\to$ \ref{convention}, \ref{euclidunique}]), then in particular $\sqrt a\in K_{\ge0}$ and $(\sqrt a)^2=a$ for
\emph{all} $a\in K_{\ge0}=K^2=\sum K^2$.
\item We write $\ii:=\sqrt{-1}$. If $K$ is a real field, then $K(\ii)=K\oplus K\ii$ as a $K$-vector space
\end{enumerate}
\end{notrem}

\begin{pro}\label{cheuclid}
Let $K$ be a real field. Then the following are equivalent:
\begin{enumerate}[\normalfont(a)]
\item $K$ is Euclidean.
\item $K=-K^2\cup K^2$
\item $K(\ii)=K(\ii)^2$
\item Every polynomial of degree $2$ in $K(\ii)[X]$ has a root in $K(\ii)$.
\end{enumerate}
\end{pro}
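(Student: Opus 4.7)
The plan is to close the cycle (a)$\Rightarrow$(b)$\Rightarrow$(a), (a)$\Rightarrow$(c)$\Leftrightarrow$(d), and (c)$\Rightarrow$(b). Throughout, $K$ is real, so by \ref{qembeds} we have $\chara K=0$ and in particular $\frac12\in K$. The implication (a)$\Rightarrow$(b) is immediate from \ref{unaryrem}(a): any order $P$ of $K$ satisfies $P\cup-P=K$. Conversely, for (b)$\Rightarrow$(a) I verify that $P:=K^2$ meets the four conditions of \ref{unaryrem}(a). Closure under multiplication is trivial and $K^2\cup-K^2=K$ is exactly (b). For $K^2\cap-K^2=\{0\}$, if $a^2=-b^2$, then $a^2+b^2=0$, which by \ref{realchar}(c) forces $a=b=0$. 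For $K^2+K^2\subseteq K^2$: by (b), any $a^2+b^2$ lies in $K^2\cup-K^2$, but $a^2+b^2=-c^2$ gives $a^2+b^2+c^2=0$, again forcing $a=b=c=0$ via \ref{realchar}(c), so in either case $a^2+b^2\in K^2$.

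The heart of the argument is the square-root construction in $K(\ii)$ used for (a)$\Rightarrow$(c). Write an arbitrary element as $a+b\ii$ with $a,b\in K$. The Euclidean property yields $r:=\sqrt{a^2+b^2}\in K_{\ge 0}$, and since $r^2\ge a^2$ both $\frac{r+a}2$ and $\frac{r-a}2$ are nonnegative, hence squares. Setting $c:=\sqrt{\frac{r+a}2}$ and $d:=(\sgn b)\sqrt{\frac{r-a}2}$, a direct expansion using $\ii^2=-1$ gives $(c+d\ii)^2=(c^2-d^2)+2cd\,\ii=a+b\ii$, the identity $(2cd)^2=r^2-a^2=b^2$ pinning down the sign of $d$. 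This is the main obstacle of the whole proposition: it is the only step where one must produce the right nested square root more or less out of thin air, whereas the other implications amount to formal manipulations.

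The equivalence (c)$\Leftrightarrow$(d) is routine: (d)$\Rightarrow$(c) follows by applying (d) to $X^2-z$ for any given $z\in K(\ii)$, while (c)$\Rightarrow$(d) follows from the quadratic formula, since $\chara K\ne 2$ and (c) provides the required square root of the discriminant. Finally, for (c)$\Rightarrow$(b), given $a\in K$ use (c) to write $a=(c+d\ii)^2=(c^2-d^2)+2cd\,\ii$ with $c,d\in K$. Comparing components in $K(\ii)=K\oplus K\ii$ (see \ref{notremsqrt}(c)) yields $2cd=0$, and since $\chara K\ne 2$ we get $c=0$ or $d=0$, whence $a=-d^2\in-K^2$ or $a=c^2\in K^2$. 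This closes the cycle.
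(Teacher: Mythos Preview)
Your proof is essentially the same as the paper's: the same nested square-root construction for (a)$\Rightarrow$(c), the same decomposition argument for (c)$\Rightarrow$(b), and the same appeal to \ref{realchar}(c) for (b)$\Rightarrow$(a). The paper arranges the implications as the cycle (d)$\Rightarrow$(c)$\Rightarrow$(b)$\Rightarrow$(a)$\Rightarrow$(c)$\Rightarrow$(d), whereas you close (a)$\Leftrightarrow$(b) directly first; this difference is purely cosmetic.

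There is one small gap in your (a)$\Rightarrow$(c). With $d:=(\sgn b)\sqrt{\frac{r-a}2}$ the formula fails when $b=0$ and $a<0$: by \ref{introabssgn} one has $\sgn 0=0$, so $d=0$, and since $r=|a|=-a$ also $c=\sqrt{\frac{r+a}2}=0$, giving $(c+d\ii)^2=0\ne a$. The paper sidesteps this by taking $d:=\sqrt{\frac{r-a}2}$ unsigned and computing $(c\pm d\ii)^2=a\pm|b|\ii$, so that one of the two sign choices always works. Your computation is correct for $b\ne0$, and the case $b=0$ is trivial anyway via the already-established (b), so the fix is immediate.
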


\begin{proof}
\underline{(d)$\implies$(c)} is trivial.

\smallskip
\underline{(c)$\implies$(b)}\quad Suppose (c) holds and let $a\in K$. Write $a=(b+\ii c)^2$ for some $b,c\in K$. Then $a=b^2-c^2$ and $bc=0$
[$\to$ \ref{notremsqrt}(c)]. Therefore $a=b^2$ or $a=-c^2$.

\smallskip
\underline{(b)$\implies$(a)}\quad Suppose (b) holds. It suffices to show $K^2+K^2\subseteq K^2$. For this purpose, let $a,b\in K$. To show: $a^2+b^2\in K^2$.
If we had $a^2+b^2\notin K^2$, then $a^2+b^2\in-K^2$, say $a^2+b^2+c^2=0$ for some $c\in K$ and \ref{realchar}(c) would imply $c=0\ \lightning$.

\smallskip
\underline{(a)$\implies$(c)}\quad Suppose (a) holds and let $a,b\in K$. By \ref{notremsqrt}(c), we have to show $a+b\ii\in K(\ii)^2$. Set $r:=\sqrt{a^2+b^2}\in K_{\ge0}$
according to \ref{notremsqrt}(b). Then $r^2=a^2+b^2\ge a^2=|a|^2$ and therefore $r\ge|a|$ by \ref{notremsqrt}(a), i.e., $r\pm a\ge0$. It follows that
$\sqrt{\frac{r+a}2},\sqrt{\frac{r-a}2}\in K_{\ge0}$ and the calculation
\[\left(\sqrt{\frac{r+a}2}\pm\sqrt{\frac{r-a}2}\ii\right)^2=\frac{r+a}2\pm 2\sqrt{\frac{r^2-a^2}4}\ii-\frac{r-a}2=a\pm 2\left|\frac b2\right|\ii=a\pm|b|\ii\]
shows $a+b\ii\in K(\ii)^2$.

\smallskip
\underline{(c)$\implies$(d)} follows from $X^2+bX+c=(X+\frac b2)^2+(c-\frac{b^2}4)$ for $b,c\in K(\ii)$.
\end{proof}

\begin{df}\label{dfrealclosed}
Let $R$ be a field. Then $R$ is called \emph{real closed} if $R$ is Euclidean [$\to$~\ref{dfeuclid}, \ref{cheuclid}] and every polynomial of \emph{odd} degree from $R[X]$
has a root in $R$.
\end{df}

\begin{ex}
$\R$ is real closed by the intermediate value theorem from calculus and by \ref{dfeuclid}.
\end{ex}

\begin{rem} We now generalize the fundamental theorem of algebra from $\C=\R(\ii)$ to $R(\ii)$ for any real closed field $R$. The usual Galois theoretic proof
goes through as we will
see immediately.
\end{rem}

\begin{thm}[``generalized fundamental theorem of algebra'']\label{fund}
Let $R$ be a real closed field. Then $C:=R(\ii)$ is algebraically closed.
\end{thm}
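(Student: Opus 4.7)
The plan is the standard Galois-theoretic argument, adapted to the hypotheses the paper makes available. I want to show that $C$ has no proper finite extension; by the usual reduction this implies algebraic closedness.

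First I would let $L/C$ be an arbitrary finite extension and aim to prove $L=C$. Since $R$ has characteristic $0$ (as an ordered field, by \ref{qembeds}), the extension $L/R$ is separable, so I can enlarge it to a finite Galois extension $M/R$ containing $L$ (take the normal closure inside an algebraic closure of $C$). Set $G:=\mathrm{Gal}(M/R)$.

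Next I would exploit the two special properties of real closed fields. Let $H\le G$ be a $2$-Sylow subgroup and consider the fixed field $M^H$. Then $[M^H:R]=[G:H]$ is odd. By the primitive element theorem $M^H=R(\al)$ for some $\al$, and the minimal polynomial of $\al$ over $R$ is irreducible of odd degree equal to $[M^H:R]$. But every odd-degree polynomial in $R[X]$ has a root in $R$ by the definition of real closed [$\to$ \ref{dfrealclosed}], so an irreducible polynomial of odd degree over $R$ must have degree $1$. Hence $M^H=R$, which forces $H=G$, i.e., $|G|$ is a power of $2$.

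Now I would pass to $H':=\mathrm{Gal}(M/C)\le G$, which is also a $2$-group. If $H'$ were nontrivial, then $p$-group theory (a nontrivial finite $p$-group has a subgroup of index $p$) would produce a subgroup $H''\le H'$ of index $2$, and by the Galois correspondence the fixed field $M^{H''}$ would be an extension of $C$ of degree $2$, hence of the form $C(\be)$ where $\be$ has an irreducible quadratic minimal polynomial over $C$. However, \ref{cheuclid}(d) (applicable because the real closed $R$ is Euclidean) says that every polynomial of degree $2$ in $C[X]=R(\ii)[X]$ already has a root in $C$, so no irreducible quadratic exists. This is the contradiction, so $H'=1$, giving $M=C$ and therefore $L=C$.

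The only real obstacle is to make sure the two invocations go through cleanly: verifying that ``no proper odd-degree extension of $R$ exists'' really follows from \ref{dfrealclosed} via the primitive element theorem (which is cheap in characteristic $0$), and checking that \ref{cheuclid}(d) rules out quadratic extensions of $C$. Everything else is routine Sylow theory and the Galois correspondence.
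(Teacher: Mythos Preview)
Your proof is correct and is essentially the same Galois-theoretic argument as the paper's: both pass to a finite Galois extension of $R$, use a $2$-Sylow subgroup together with \ref{dfrealclosed} to show the Galois group is a $2$-group, and then invoke \ref{cheuclid}(d) to exclude a degree-$2$ extension of $C$. The only cosmetic differences are that the paper starts from an element of $\overline C$ rather than a finite extension of $C$, and argues $L^H=R$ element-by-element instead of via the primitive element theorem.
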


\begin{proof}
Let $z\in\overline C$. To show: $z\in C$. Choose an intermediate field $L$ of $\overline C|C$ with $z\in L$ such that
$L|R$ is a finite Galois extension (e.g., the splitting field of $(X^2+1)\irr_Rz$ over $R$). We show $L=C$.
Choose a $2$-Sylow subgroup $H$ of the Galois group $G:=\Aut(L|R)$. From Galois theory we know that $[L^H:R]=[G:H]$ is odd. Hence $L^H=R$ since
every element of $L^H$ has over $R$ a minimal polynomial of odd degree which has a root in $R$ and therefore must be linear. Galois theory then implies $G=H$.
Hence $G$ is a $2$-group. Therefore the subgroup $I:=\Aut(L|C)$ of $G$ is also a $2$-group. By Galois theory, it is enough to show $I=\{1\}$. If we had $I\ne\{1\}$, then
there would exist, as one knows from algebra, a subgroup $J$ of $I$ with $[I:J]=2$. From this we get with Galois theory
$[L^J:C]=[L^J:L^I]=[I:J]=2$, contradicting \ref{cheuclid}(d).
\end{proof}

\begin{thm}\label{rcchar}
Let $R$ be a field. Then the following are equivalent:
\begin{enumerate}[\normalfont(a)]
\item $R$ is real closed.
\item $R\ne R(\ii)$ and $R(\ii)$ is algebraically closed.
\item $R$ is real but there is no real extension field $L\ne R$ of $R$ such that $L|R$ is algebraic.
\end{enumerate}
\end{thm}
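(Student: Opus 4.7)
The plan is to establish the cycle (a) $\implies$ (b) $\implies$ (c) $\implies$ (a). The forward implication (a) $\implies$ (b) is essentially immediate: if $R$ is real closed then $R$ is Euclidean, hence real (so $-1 \notin R^2$, giving $R \ne R(\ii)$), and algebraic closedness of $R(\ii)$ is precisely Theorem \ref{fund}.

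For (b) $\implies$ (c) I would first show that $R$ is real. Suppose for contradiction $-1 = a_1^2 + \dots + a_n^2$ with $n$ minimal and $a_i \in R$. If $n = 1$ then $-1 \in R^2$, whence $R(\ii) = R$, contradicting (b). If $n \ge 2$, then by the algebraic closedness of $R(\ii)$ the element $a_1 + a_2 \ii$ has a square root $b + c \ii$ in $R(\ii)$; expanding gives $b^2 - c^2 = a_1$ and $2bc = a_2$, whence $a_1^2 + a_2^2 = (b^2 + c^2)^2$, collapsing the representation to one with $n-1$ squares and contradicting minimality. Once $R$ is known to be real, any real algebraic extension $L$ of $R$ embeds into the algebraic closure $R(\ii)$; since $[R(\ii):R] = 2$, the only possibilities are $L = R$ or $L = R(\ii)$, and the latter contains $\ii$ with $\ii^2 = -1$ and so fails to be real by \ref{realchar}.

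For (c) $\implies$ (a) I would fix an order $P$ on $R$ (which exists since $R$ is real) and apply the extension theorems. To show that $R$ is Euclidean, take $a \in R$; by $P \cup -P = R$ I may assume $a \in P$ (otherwise replace $a$ by $-a$), and Theorem \ref{extend2} then extends $P$ to an order on $R(\sqrt a)$, making $R(\sqrt a)$ real, so by hypothesis $R(\sqrt a) = R$ and $a \in R^2$. This yields $P \subseteq R^2$, and combined with $R^2 \subseteq P$ from \ref{squares} gives $P = R^2$; in particular $R^2$ is an order, so $R$ is Euclidean. For the odd-degree condition, let $f \in R[X]$ have odd degree. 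A factorization of $f$ into irreducible factors over $R$ must contain some factor $g$ of odd degree (a sum of even numbers is even); taking a root $\alpha$ of $g$ in some extension, Theorem \ref{extendodd} extends $P$ to $R(\alpha)$, which is therefore real, so by hypothesis $R(\alpha) = R$, forcing $\deg g = 1$ and $f$ to have a root in $R$.

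The main obstacle I anticipate is the sum-of-squares reduction in (b) $\implies$ (c): one must exploit not just the abstract algebraic closedness of $R(\ii)$, but the concrete consequence that arbitrary elements $a_1 + a_2 \ii$ admit square roots, in order to derive the key identity $a_1^2 + a_2^2 = (b^2 + c^2)^2$ and compress sums of squares. Everything else is a routine application of the extension machinery already developed earlier in the chapter.
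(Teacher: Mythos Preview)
Your proof is correct and follows essentially the same route as the paper: the same cycle (a)$\implies$(b)$\implies$(c)$\implies$(a), the same square-root identity $a_1^2+a_2^2=(b^2+c^2)^2$ from $R(\ii)$ to establish realness in (b)$\implies$(c) (the paper phrases it as showing $\sum R^2=R^2$ rather than via a minimal counterexample, but the content is identical), and the same use of \ref{extend2} and \ref{extendodd} in (c)$\implies$(a). The only cosmetic difference is that for the ``no proper real algebraic extension'' step the paper argues via $L(\ii)=R(\ii)$ rather than embedding $L$ directly into $R(\ii)$, but both arguments amount to the same thing.
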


\begin{proof}
\underline{(a)$\implies$(b)} follows from \ref{fund}.

\smallskip
\underline{(b)$\implies$(c)}\quad Suppose (b) holds. In order to show that $R$ is real, it is enough to show by Theorem \ref{realchar} that $\sum R^2=R^2$ since
$-1\notin R^2$ because $R\ne R(\ii)$. To this end, let $a,b\in R$. To show: $a^2+b^2\in R^2$. Since $R(\ii)$ is algebraically closed, we have $a+b\ii\in R(\ii)^2$, that is
there are $c,d\in R$ such that $a+b\ii=(c+d\ii)^2$ and it follows that
$a^2+b^2=(a+b\ii)(a-b\ii)=(c+d\ii)^2(c-d\ii)^2=((c+d\ii)(c-d\ii))^2=(c^2+d^2)^2\in R^2$. Now let $L|R$ be an algebraic field extension and suppose $L$ is real.
To show: $L=R$. Since $L(\ii)|R(\ii)$ is again algebraic and $R(\ii)$ is algebraically closed, we obtain $L(\ii)=R(\ii)$. For this reason $L$ is a real intermediate field of
$R(\ii)|R$ and it follows that $L=R$.

\smallskip
\underline{(c)$\implies$(a)}\quad Suppose (c) holds. Choose an order $P$ of $R$ according to Definition \ref{dfreal}. For all $d\in P$, $R(\sqrt d)$ is real
by \ref{extend2} and therefore $R(\sqrt d)=R$. It follows that $P\subseteq R^2\subseteq P$ and hence $P=R^2$, i.e., $R$ is Euclidean. According to Definition
\ref{dfrealclosed} it remains to show that each polynomial $f\in R[X]$ of odd degree has a root in $R$. Let $f\in R[X]$ be of odd degree. Choose an irreducible divisor
$g$ of $f$ in $R[X]$ of odd degree. Choose a root $a$ of $g$ in an extension field of $R$. Since $[R(a):R]=\deg g$ is odd, $R(a)$ is real by \ref{extendodd} and therefore
$R(a)=R$. Thus $a\in R$ satisfies $g(a)=0$ and hence $f(a)=0$.
\end{proof}

\begin{thm}[``real version of the generalized fundamental theorem of algebra'']\label{realfund}
Let $R$ be a field. Then the following are equivalent:
\begin{enumerate}[\normalfont(a)]
\item $R$ is real closed.
\item
$\{f\in R[X]\mid\text{$f$ is irreducible and monic}\}=\\
\{X-a\mid a\in R\}\cup\{(X-a)^2+b^2\mid a,b\in R,b\ne 0\}
$
\end{enumerate}
\end{thm}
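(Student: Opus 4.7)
The plan is to prove the two implications separately, relying on Theorem \ref{fund} and the characterization \ref{rcchar}.

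For (a)$\implies$(b), the key is that by \ref{fund}, $C:=R(\ii)$ is algebraically closed and $[C:R]=2$. I would first check that polynomials of the form $(X-a)^2+b^2$ with $b\ne 0$ are irreducible in $R[X]$: having degree $2$, they are reducible iff they have a root in $R$, but $(x-a)^2=-b^2$ would force $-1\in R^2$, contradicting realness of $R$ [$\to$ \ref{realchar}]. For the reverse inclusion, I would take a monic irreducible $f\in R[X]$ and pick a root $\al\in C$; then $\deg f=[R(\al):R]$ is $1$ or $2$. In the degree-$2$ case, writing $\al=a+b\ii$ (necessarily with $b\ne 0$, else $\al\in R$) and applying the nontrivial $R$-automorphism $\ii\mapsto -\ii$ of $C$ supplies the conjugate root $a-b\ii$, so
\[f=(X-a-b\ii)(X-a+b\ii)=(X-a)^2+b^2.\]

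For (b)$\implies$(a), I would verify the hypotheses of \ref{rcchar}(b): that $R\ne R(\ii)$ and that $R(\ii)$ is algebraically closed. Since $X^2+1=(X-0)^2+1^2$ is irreducible by (b), $-1$ is not a square in $R$, so $\ii\notin R$ and $R\ne R(\ii)$. To see that $R(\ii)$ is algebraically closed, I would take $\al$ algebraic over $R(\ii)$, hence over $R$, and inspect $\irr_R\al$: by (b) this has degree $\le 2$, and in either case its roots lie in $R(\ii)$ (in the degree-$2$ case $\irr_R\al=(X-a)^2+b^2$ has roots $a\pm b\ii$). Thus $\al\in R(\ii)$, and by \ref{rcchar}(b)$\implies$(a) we conclude that $R$ is real closed.

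No step strikes me as a serious obstacle. The only place that needs a moment of care is the use of the nontrivial $R$-automorphism of $C$ (``complex conjugation'') to pair the two roots of a degree-$2$ monic irreducible into the form $(X-a)^2+b^2$ in the forward direction; once this is observed, the reverse direction is essentially a direct application of \ref{rcchar}.
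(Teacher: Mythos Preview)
Your proposal is correct. The direction (a)$\implies$(b) matches the paper's argument essentially verbatim: use Theorem~\ref{fund} to get a root in $R(\ii)$, apply the nontrivial $R$-automorphism of $R(\ii)$ to produce the conjugate root, and conclude.

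For (b)$\implies$(a), both you and the paper verify condition~\ref{rcchar}(b), and both deduce $R\ne R(\ii)$ from the irreducibility of $X^2+1$. The difference lies in how you show that $R(\ii)$ is algebraically closed. You pass to an arbitrary element $\al$ algebraic over $R(\ii)$, observe that $\al$ is then algebraic over $R$, and use (b) to bound $\deg\irr_R\al\le 2$, whence $\al\in R(\ii)$. The paper instead works directly with an arbitrary $f\in R(\ii)[X]$ of positive degree and produces a root in $R(\ii)$ by forming $f^*f\in R[X]$ (where $*$ is complex conjugation on coefficients), factoring this over $R$ using (b), and arguing that one of $a\pm b\ii$ must be a root of $f$. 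Your route is shorter and avoids the $f^*f$ trick entirely; the paper's route has the minor advantage of not needing to introduce an ambient extension of $R(\ii)$ in which $\al$ lives, but this is a cosmetic point at most.
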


\begin{proof}
\underline{(a)$\implies$(b)}\quad Suppose (a) holds.

``$\supseteq$'' is clear since $R$ is real.

``$\subseteq$'' Let $f\in R[X]$ be irreducible and monic of degree $\ge2$. Since $R(\ii)$ is algebraically closed by \ref{fund}, there are $a,b\in R$ such that $f(a+b\ii)=0$.
Due to $R\ne R(\ii)$ we can apply the automorphism of the field extension $R(\ii)|R$ given by $\ii\mapsto-\ii$ in order to obtain $f(a-b\ii)=0$.
By observing $a+b\ii\ne a-b\ii$ (since $b\ne0$ because $f\in R[X]$ is irreducible of degree $\ge2$), we get
\[f=\underbrace{(X-(a+b\ii))(X-(a-b\ii))}_{(X-a)^2+b^2\in R[X]}g\] for some $g\in R(\ii)[X]$. But then $g\in R[X]$ and hence even $g=1$.

\smallskip
\underline{(b)$\implies$(a)}\quad Suppose (b) holds. We will show \ref{rcchar}(b), i.e., that $R\ne R(\ii)$ and $R(\ii)$ is algebraically closed. The first claim $R\ne R(\ii)$ follows
from the irreducibility of $X^2+1=(X-0)^2+1^2\in R[X]$. Now suppose $f\in R(\ii)[X]$ is of degree $\ge1$.
Consider the ring automorphism
\[R(\ii)[X]\to R(\ii)[X],\ p\mapsto p^*\]
given by $a^*=a$ for $a\in R$, $\ii^*=-\ii$ and $X^*=X$. Then $f^*f\in R[X]$. If $f^*f$ has a root $a\in R$,
then $f(a)=0$ or $f^*(a)=0$ and then again $f(a)=0$. Suppose therefore that $f^*f$ has no root in $R$. Then there exist $a,b\in R$ with $b\ne0$
such that $(X-a)^2+b^2$ divides $f^*f$ in $R[X]$. Because of $(X-a)^2+b^2=(X-(a+b\ii))(X-(a-b\ii))$, $a+b\ii$ is a root of $f$ or of $f^*$.
If $f^*(a+b\ii)=0$, then $f(a-\ii b)=f^{**}((a+\ii b)^*)=(f^*(a+\ii b))^*=0^*=0$. Therefore $a+\ii b$ or $a-\ii b$ is a root of $f$ in $R(\ii)$.
\end{proof}

\begin{notterm}\label{intervals}
Let $(K,\le)$ be an ordered field.
\begin{enumerate}[(a)]
\item We extend the order $\le$ in the obvious way to the set $\{-\infty\}\cup K\cup\{\infty\}$ by declaring $-\infty<a<\infty$ for all $a\in K$.
\item We adopt the usual notation for intervals
\begin{align*}
(a,b)&:=(a,b)_K:=\{x\in K\cup\{\pm\infty\}\mid a<x<b\}\qquad(a,b\in K\cup\{\pm\infty\})\\
&\text{(``interval from $a$ to $b$ without endpoints'')}\\
[a,b)&:=[a,b)_K:=\{x\in K\cup\{\pm\infty\}\mid a\le x<b\}\qquad(a,b\in K\cup\{\pm\infty\})\\
&\text{(``interval from $a$ to $b$ with $a$ and without $b$'')}
\end{align*}
and so forth.
\item We use terminology like
\begin{align*}
\text{$f\ge0$ on $S$}&:\iff\forall x\in S:f(x)\ge0\qquad(f\in K[X_1,\dots,X_n],S\subseteq K^n)\\
&\text{(``$f$ is nonnegative on $S$'')}\\
\text{$f>0$ on $S$}&:\iff\forall x\in S:f(x)>0\qquad(f\in K[X_1,\dots,X_n],S\subseteq K^n)\\
&\text{(``$f$ is positive on $S$'').}
\end{align*}
\end{enumerate}
\end{notterm}

\begin{cor}[``intermediate value theorem for polynomials'']\label{intermediate}
Let $R$ be a real closed field, $f\in R[X]$ and $a,b\in R$ such that $a\le b$ and $\sgn(f(a))\ne\sgn(f(b))$. Then there is $c\in[a,b]_R$ with $f(c)=0$.
\end{cor}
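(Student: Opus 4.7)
The plan is to use the factorization Theorem \ref{realfund} to write $f$ as a product of linear factors and strictly positive quadratic factors over $R$, and then track signs to locate a root between $a$ and $b$. This replaces the analytic intermediate value theorem used in the proof over $\R$ by a purely algebraic argument.

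First, I reduce to the case $f(a) \ne 0 \ne f(b)$, since otherwise $c := a$ or $c := b$ already works. Under this reduction, $\sgn(f(a)), \sgn(f(b)) \in \{-1,1\}$ and the two signs are opposite. Next I apply \ref{realfund} to write
\[
f = u \prod_{i=1}^{m}(X-c_i)^{e_i} \prod_{j=1}^{n}\left((X-\alpha_j)^2 + \beta_j^2\right)^{d_j},
\]
with $u \in R^{\times}$, $c_i, \alpha_j \in R$, $\beta_j \in R \setminus \{0\}$, and $e_i, d_j \in \N$. Since $R$ is Euclidean [$\to$ \ref{dfrealclosed}], $\beta_j^2 > 0$, so each factor $(x-\alpha_j)^2 + \beta_j^2$ is strictly positive at every $x \in R$. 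Therefore, whenever $x \in R$ satisfies $f(x) \ne 0$,
\[
\sgn(f(x)) = \sgn(u) \prod_{i=1}^{m} \sgn(x - c_i)^{e_i},
\]
and only indices $i$ with $e_i$ odd can influence the value of this product.

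Forming the quotient of the two expressions for $\sgn(f(a))$ and $\sgn(f(b))$, and using $\sgn(f(a)) = -\sgn(f(b))$, I obtain
\[
\prod_{i\colon e_i \text{ odd}} \sgn(a - c_i)\sgn(b - c_i) = -1,
\]
so an odd number of these factors equal $-1$. For a fixed $i$ one has $\sgn(a - c_i) \ne \sgn(b - c_i)$ iff $a < c_i < b$ (using $a \le b$: the alternative $b < c_i < a$ is impossible). Hence at least one such $c_i$ satisfies $c_i \in (a,b) \subseteq [a,b]_R$, and this $c := c_i$ is the desired root.

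There is no serious obstacle here; the work is done by Theorem \ref{realfund}, which supplies the global structural information about $R[X]$ that substitutes for continuity. After that it only remains to observe that a linear factor $(X-c)^e$ with $e$ odd is the sole mechanism by which the sign of $f$ can change and that it changes exactly at $c$.
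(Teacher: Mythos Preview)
Your proof is correct and follows essentially the same approach as the paper: both invoke Theorem~\ref{realfund} to factor $f$ into linear factors and strictly positive quadratics, then use that only the linear factors can change sign to locate a root in $[a,b]$. The paper phrases the endgame slightly differently---arguing that $f$ has constant sign on each open interval between consecutive real roots, so $a$ and $b$ cannot lie in the same interval---while you compute the product of sign-quotients directly, but this is only a cosmetic difference in bookkeeping.
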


\begin{proof}
WLOG $f$ is monic. By \ref{realfund}, all nonlinear monic irreducible polynomials in $R[X]$ are positive on $R$. Hence
$f=g\prod_{i=1}^k(X-a_i)^{\al_i}$ with $k\in\N_0$, $a_i\in R$, $\al_i\in\N$, $a_1<\dots<a_k$ and some $g\in R[X]$ that is positive on $R$. On the sets
$(-\infty,a_1)$, $(a_1,a_2)$, \dots, $(a_{k-1},a_k)$ and $(a_k,\infty)$ each $X-a_i$ and therefore $f$ has constant sign. Hence $a$ and $b$ cannot lie
both in the same such set. Consequently, there is an $i\in\{1,\dots,k\}$ such that $a_i\in[a,b]$. Set $c:=a_i$.
\end{proof}

\begin{cor}[``Rolle's theorem for polynomials'']\label{rolle}
Suppose $R$ is a real closed field, $f\in R[X]$ and $a,b\in R$ with $a<b$ and $f(a)=f(b)$.
Then there exists a $c\in(a,b)_R$ such that $f'(c)=0$.
\end{cor}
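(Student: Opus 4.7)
The plan is to reduce to the case $f(a)=f(b)=0$, pass to two consecutive roots of $f$ in $[a,b]$, and use the intermediate value theorem (Corollary \ref{intermediate}) twice: first to control the sign of the cofactor that remains after pulling out the multiplicities at the endpoints, and then to locate a zero of the resulting reduced polynomial.

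First I would replace $f$ by $f-f(a)$, which leaves $f'$ unchanged, so we may assume $f(a)=f(b)=0$. If $f=0$, then $f'=0$ and any $c\in(a,b)_R$ works; otherwise $f$ has only finitely many roots, which I enumerate in $[a,b]$ as $a=c_0<c_1<\dots<c_k=b$ with $k\ge1$. Fix any two consecutive ones $c_i<c_{i+1}$; it suffices to find $c\in(c_i,c_{i+1})_R$ with $f'(c)=0$.

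Next I would write $f=(X-c_i)^m(X-c_{i+1})^n q$ with $m,n\ge1$ and $q(c_i),q(c_{i+1})\ne 0$. By choice of $c_i,c_{i+1}$ the polynomial $q$ has no zero in the closed interval $[c_i,c_{i+1}]_R$, so Corollary \ref{intermediate} applied to $q$ forces $q(c_i)$ and $q(c_{i+1})$ to have the same sign. Differentiating by the product rule and pulling out the common factor gives
\[
f'=(X-c_i)^{m-1}(X-c_{i+1})^{n-1}H,
\]
where $H:=m(X-c_{i+1})q+n(X-c_i)q+(X-c_i)(X-c_{i+1})q'$.

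Evaluating at the endpoints yields $H(c_i)=m(c_i-c_{i+1})q(c_i)$ and $H(c_{i+1})=n(c_{i+1}-c_i)q(c_{i+1})$. Since $c_i-c_{i+1}<0<c_{i+1}-c_i$ and $q(c_i),q(c_{i+1})$ share their sign, $H(c_i)$ and $H(c_{i+1})$ have opposite signs. A second application of Corollary \ref{intermediate}, now to $H$, produces some $c\in(c_i,c_{i+1})_R\subseteq(a,b)_R$ with $H(c)=0$, and hence $f'(c)=0$.

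The main obstacle is handling the multiplicities $m,n\ge1$: naively, $f'$ itself vanishes at both $c_i$ and $c_{i+1}$, so a direct sign-change argument for $f'$ between these endpoints is unavailable. The fix is to divide out the common factor $(X-c_i)^{m-1}(X-c_{i+1})^{n-1}$ and apply the intermediate value theorem not to $f'$ but to the reduced factor $H$, whose signs at the endpoints are controlled by the signs of $q(c_i),q(c_{i+1})$ — and those are in turn pinned down by applying the intermediate value theorem to $q$ on a root-free interval.
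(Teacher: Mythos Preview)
Your proof is correct and follows essentially the same approach as the paper: reduce to the case of two consecutive roots of $f$ with $f$ nonvanishing in between, factor out the multiplicities at the two endpoints, and apply the intermediate value theorem \ref{intermediate} first to the cofactor $q$ (to pin down its signs at the endpoints) and then to the reduced factor $H$ of $f'$ to locate a zero strictly between them. The paper simply states the reduction as ``WLOG $f\ne0$, $f(a)=0=f(b)$ and $\nexists x\in(a,b):f(x)=0$'' rather than explicitly enumerating consecutive roots, but otherwise the argument, including the factorization and the formula for $H$, is identical.
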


\begin{proof} WLOG $f\ne0$, $f(a)=0=f(b)$ and $\nexists x\in(a,b):f(x)=0$. Write \[f=(X-a)^\al(X-b)^\be g\] for some $\al,\be\in\N$ and $g\in R[X]$ with
$\forall x\in[a,b]:g(x)\ne0$. We find
\begin{align*}
f'&=(X-a)^\al\be(X-b)^{\be-1}g+\al(X-a)^{\al-1}(X-b)^\be g+(X-a)^\al(X-b)^\be g'\\
&=(X-a)^{\al-1}(X-b)^{\be-1}h
\end{align*}
where $h:=\be(X-a)g+\al(X-b)g+(X-a)(X-b)g'$. Hence it suffices to find $c\in(a,b)$ such that $h(c)=0$. We can apply the intermediate value theorem \ref{intermediate} because
\[h(a)=\al(a-b)g(a)\quad\text{and}\quad h(b)=\be(b-a)g(b)\] and again by \ref{intermediate} we have $\sgn(g(a))=\sgn(g(b))$.
\end{proof}

\begin{cor}[``mean value theorem for polynomials'']\label{mean}
Let $R$ be a real closed field, $f\in R[X]$ and $a,b\in R$ with $a<b$. Then there is some $c\in(a,b)_R$ satisfying $f'(c)=\frac{f(b)-f(a)}{b-a}$.
\end{cor}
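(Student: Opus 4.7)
The plan is to mimic the classical calculus proof by reducing to Rolle's theorem \ref{rolle} via an auxiliary polynomial that vanishes at both endpoints.

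More precisely, I would set
\[
g := f - \frac{f(b)-f(a)}{b-a}(X-a) \in R[X],
\]
which makes sense because $b - a \neq 0$ in the field $R$. A direct computation gives $g(a) = f(a)$ and $g(b) = f(b) - (f(b)-f(a)) = f(a)$, so $g(a) = g(b)$. Since $a < b$ and $g \in R[X]$, Rolle's theorem \ref{rolle} yields some $c \in (a,b)_R$ with $g'(c) = 0$.

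Finally, differentiating termwise gives $g' = f' - \frac{f(b)-f(a)}{b-a}$ (the constant $f(a)$ subtracted by $-\frac{f(b)-f(a)}{b-a}\cdot a$ vanishes under the formal derivative, and the formal derivative of $\frac{f(b)-f(a)}{b-a}X$ is $\frac{f(b)-f(a)}{b-a}$). Evaluating at $c$ yields $f'(c) = \frac{f(b)-f(a)}{b-a}$, as required.

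There is no real obstacle here: the only thing to double-check is that the formal derivative on $R[X]$ behaves linearly and sends $X - a$ to $1$, which is immediate from the definition of $'$ as a $R$-linear operator on $R[X]$. Everything else is bookkeeping, and Rolle does all the work.
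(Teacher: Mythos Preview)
Your proof is correct and follows essentially the same approach as the paper: both construct an auxiliary polynomial with equal values at $a$ and $b$ and invoke Rolle's theorem \ref{rolle}. The paper uses $g:=(X-a)(f(b)-f(a))-(b-a)(f-f(a))$, which is just your $g$ multiplied by $-(b-a)$ and shifted by a constant, so the zero of $g'$ coincides in both arguments.
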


\begin{proof}
Setting $g:=(X-a)(f(b)-f(a))-(b-a)(f-f(a))$, we get $g(a)=0=g(b)$ and $g'=f(b)-f(a)-(b-a)f'$.
Rolle's theorem \ref{rolle} yields $c\in(a,b)$ such that $g'(c)=0$.
\end{proof}

\begin{df}\label{monodef}
\begin{enumerate}[(a)]
\item Let $(M,\le_1)$ and $(N,\le_2)$ be ordered sets. A map $\ph\colon M\to N$ is called \emph{anti-monotonic} [$\to$ \ref{ordhom}] if
\[a\le_1b\implies\ph(a)\ge_2\ph(b)\]
for all $a,b\in M$.
\item If $(K,\le)$ is an ordered field, $f\in K[X]$ and $I\subseteq K$, then we say that
$f$ is \alalal{monotonic}{injective}{anti-monotonic} on $I$ if $I\to K,\ x\mapsto f(x)$ is \alalal{monotonic}{injective}{anti-monotonic}.
\end{enumerate}
\end{df}

\begin{cor}\label{dermono}
Let $R$ be a real closed field, $f\in R[X]$ and $a,b\in R$.
If $\malal{f'\ge0}{f'\le0}$ on $(a,b)$ \emph{[$\to$ \ref{intervals}(c)]}, then $f$ is \alal{}{anti-} monotonic on $[a,b]$.
If $f'$ has no root on $(a,b)$, then $f$ is injective on $[a,b]$.
\end{cor}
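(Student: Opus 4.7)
The plan is to derive both statements directly from the mean value theorem for polynomials (Corollary \ref{mean}).

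First I would handle the monotonicity claim. Suppose $f' \ge 0$ on $(a,b)$ and pick arbitrary $x, y \in [a,b]$ with $x < y$. Since $x \ge a$ and $y \le b$, we have $(x,y) \subseteq (a,b)$. Applying \ref{mean} on the interval $[x,y]$ produces some $c \in (x,y) \subseteq (a,b)$ with
\[
f'(c) = \frac{f(y) - f(x)}{y - x}.
\]
By hypothesis $f'(c) \ge 0$, and since $y - x > 0$ we conclude $f(y) \ge f(x)$, so $f$ is monotonic on $[a,b]$. The anti-monotonic case is identical after replacing $f$ by $-f$ (or by flipping the inequality throughout).

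For the injectivity statement, assume $f'$ has no root on $(a,b)$ and suppose for contradiction that there exist $x, y \in [a,b]$ with $x < y$ and $f(x) = f(y)$. Then $(x,y) \subseteq (a,b)$ as before, and Rolle's theorem \ref{rolle} applied on $[x,y]$ supplies some $c \in (x,y) \subseteq (a,b)$ with $f'(c) = 0$, contradicting the hypothesis. Hence $f$ is injective on $[a,b]$.

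There is no real obstacle: the only point that requires a moment's attention is the verification that $(x,y) \subseteq (a,b)$ whenever $x,y \in [a,b]$ with $x<y$, which ensures that the point $c$ produced by \ref{mean} or \ref{rolle} lies in the open interval on which the hypothesis on $f'$ is given. Everything else is a routine invocation of the previously established real-closed-field analogues of the mean value and Rolle theorems.
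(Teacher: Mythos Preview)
Your proof is correct and follows essentially the same approach as the paper, which simply states that the claim is vacuous for $a>b$, trivial for $a=b$, and follows from the mean value theorem \ref{mean} for $a<b$. Your version just spells out the details, and your direct appeal to Rolle \ref{rolle} for the injectivity part is equivalent to invoking \ref{mean} with $f(x)=f(y)$.
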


\begin{proof}
The statement is empty in case $a>b$, trivial in the case $a=b$ and it follows from the mean value theorem \ref{mean} in case $a<b$.
\end{proof}

\section{Descartes' rule of signs}\label{sec:descartes}

\begin{notation}\label{degnot}
Let $A$ be a commutative ring with $0\ne1$ and $d\in\R$. We denote
\[A[X_1,\dots,X_n]_d:=\{f\in A[X_1,\dots,X_n]\mid\deg f\le d\}\]
(where $\deg0:=-\infty$).
\end{notation}

\begin{pro}[``Taylor formula for polynomials'']
Suppose $K$ is a field of characteristic $0$, $d\in\N_0$, $f\in K[X]_d$ and $a\in K$. Then
\[f=\sum_{k=0}^d\frac{f^{(k)}(a)}{k!}(X-a)^k.\]
\end{pro}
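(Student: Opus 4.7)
The plan is to exploit the fact that the shifted monomials $(X-a)^0,(X-a)^1,\dots,(X-a)^d$ form a basis of the $K$-vector space $K[X]_d$, and then to read off the coefficients of $f$ in this basis by repeated differentiation at $a$.

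First I would note that the polynomials $(X-a)^k$ for $k=0,\dots,d$ have pairwise distinct degrees (namely $0,1,\dots,d$), hence are linearly independent in $K[X]$. Since $K[X]_d$ has dimension $d+1$ over $K$ and we have $d+1$ such elements, they form a basis. Consequently, there exist unique $c_0,\dots,c_d\in K$ with
\[
f=\sum_{k=0}^d c_k(X-a)^k.
\]
It remains to identify $c_j$ with $f^{(j)}(a)/j!$ for each $j\in\{0,\dots,d\}$.

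Next I would compute the $j$-th formal derivative. Using linearity of differentiation and the straightforward computation $\frac{d^j}{dX^j}(X-a)^k=\frac{k!}{(k-j)!}(X-a)^{k-j}$ for $k\ge j$ (and zero otherwise, which is immediate by induction on $j$ from the product/chain rule, or simply from $\frac{d}{dX}(X-a)^k=k(X-a)^{k-1}$), I obtain
\[
f^{(j)}=\sum_{k=j}^d c_k\,\frac{k!}{(k-j)!}\,(X-a)^{k-j}.
\]
Evaluating at $X=a$ annihilates every term with $k>j$, leaving only $f^{(j)}(a)=c_j\cdot j!$.

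Finally, the hypothesis $\chara K=0$ ensures $j!\in K^\times$ for every $j\in\N_0$, so I can solve $c_j=f^{(j)}(a)/j!$ and substitute back into the expansion of $f$. This is essentially the only place where characteristic zero is used, and it is really the crux of the statement: in positive characteristic, the factorial in the denominator could fail to be invertible and the formula as stated would not make sense. There is no substantial obstacle; the argument is just a clean linear-algebra identification of coefficients via the derivative-evaluation functionals $f\mapsto f^{(j)}(a)$, which are dual to the basis $\bigl((X-a)^j/j!\bigr)_{j=0}^d$.
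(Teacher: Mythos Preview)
Your proof is correct and takes essentially the same approach as the paper. The only cosmetic difference is that the paper first reduces to the case $a=0$ by noting that differentiation commutes with the shift automorphism $p\mapsto p(X+a)$ and then reads off the coefficients from the definition of the formal derivative, whereas you work directly in the basis $((X-a)^k)_k$; these are the same computation, and your version is simply more explicit.
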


\begin{proof}
Since $K[X]\to K[X],\ p\mapsto p'$ commutes with the ring automorphism $K[X]\to K[X],\ p\mapsto p(X+a)$, we can WLOG suppose
$a=0$. But then the claim follows from the definition of the (formal) derivative.
\end{proof}

\begin{lem}\label{sgnbounds}
Suppose $(K,\le)$ is an ordered field, $k\in\N$, $c_1,\dots,c_k\in K^\times$, $\al_1,\dots,\al_k\in\N_0$,
$\al_1<\ldots<\al_k$ and $f=\sum_{i=1}^kc_iX^{\al_i}$.
\begin{enumerate}[(a)]
\item $\sgn(f(x))=(\sgn x)^{\al_k}\sgn(c_k)$ for all $x\in K$ satisfying $|x|>\max\left\{1,\frac{|c_1|+\ldots+|c_{k-1}|}{|c_k|}\right\}$
\item $\sgn(f(x))=(\sgn x)^{\al_1}\sgn(c_1)$ for all $x\in K^\times$ satisfying $\frac1{|x|}>\max\left\{1,\frac{|c_2|+\ldots+|c_{k}|}{|c_1|}\right\}$
\end{enumerate}
\end{lem}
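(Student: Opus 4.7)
The plan is to prove both parts by the standard ``dominant term'' argument: factor the leading (respectively trailing) monomial out of $f$ and show that the remaining correction is strictly between $-1$ and $1$, so that the sign of $f(x)$ is forced by the factored-out monomial.

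For (a), write
\[
f(x) = c_k x^{\al_k}\left(1 + \sum_{i=1}^{k-1} \frac{c_i}{c_k}\, x^{\al_i - \al_k}\right)
\]
for $x \in K^\times$ (note that the hypothesis $|x|>1$ in particular forces $x \ne 0$). Call the sum $y$. Since $\al_k-\al_i \ge 1$ and $|x|>1$, we have $|x|^{\al_k-\al_i} \ge |x|$, and therefore
\[
|y| \le \sum_{i=1}^{k-1} \frac{|c_i|}{|c_k|\,|x|^{\al_k-\al_i}} \le \frac{1}{|x|}\cdot\frac{|c_1|+\dots+|c_{k-1}|}{|c_k|} < 1
\]
by the second hypothesis on $|x|$. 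Hence $1+y > 0$, and the sign rules from \ref{introabssgn} give
\[
\sgn(f(x)) = \sgn(c_k)\,\sgn(x^{\al_k}) = \sgn(c_k)\,(\sgn x)^{\al_k}.
\]

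For (b), factor instead the smallest monomial:
\[
f(x) = c_1 x^{\al_1}\left(1 + \sum_{i=2}^{k} \frac{c_i}{c_1}\, x^{\al_i - \al_1}\right),
\]
call the sum $z$. The hypothesis $\frac{1}{|x|}>1$ means $|x|<1$, and together with $\al_i - \al_1 \ge 1$ we get $|x|^{\al_i-\al_1} \le |x|$, so
\[
|z| \le |x|\sum_{i=2}^k \frac{|c_i|}{|c_1|} = \frac{1}{\tfrac{1}{|x|}}\cdot\frac{|c_2|+\dots+|c_k|}{|c_1|} < 1
\]
by the second hypothesis. Again $1+z > 0$ and we conclude $\sgn(f(x)) = \sgn(c_1)(\sgn x)^{\al_1}$.

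There is really no hard step here; the only subtlety is carefully using both halves of the $\max$-condition (the ``$>1$'' part is needed to reduce all the exponents $|x|^{\al_k-\al_i}$ or $|x|^{\al_i-\al_1}$ to a uniform $|x|^{\pm 1}$, while the other part supplies the actual bound on the coefficient sum). One should double-check implicitly that the algebraic identities
$\sgn(ab)=\sgn(a)\sgn(b)$ and $|ab|=|a||b|$ from \ref{introabssgn}, together with the triangle inequality, are exactly what is needed to turn the two bounds $|y|<1$ and $|z|<1$ into the claimed sign formulas.
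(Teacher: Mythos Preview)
Your proof is correct and is essentially the same dominant-term argument as the paper's: the paper shows directly that $\bigl|\sum_{i\ne k}c_ix^{\al_i}\bigr|<|c_kx^{\al_k}|$ (respectively $\bigl|\sum_{i\ne 1}c_ix^{\al_i}\bigr|<|c_1x^{\al_1}|$), which is exactly your bound $|y|<1$ (respectively $|z|<1$) after dividing through by the leading term. The two presentations are equivalent up to this cosmetic factoring.
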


\begin{proof}
\begin{enumerate}[(a)]
\item For all $x\in K$ with $|x|>\max\left\{1,\frac{|c_1|+\ldots+|c_{k-1}|}{|c_k|}\right\}$, we have
\[\left|\sum_{i=1}^{k-1}c_ix^{\al_i}\right|\overset{\ref{introabssgn}}\le
\sum_{i=1}^{k-1}|c_i||x|^{\al_i}\overset{1\le|x|}\le
\sum_{i=1}^{k-1}|c_i||x|^{\al_{k}-1}=
|c_k|\left(\frac{\sum_{i=1}^{k-1}|c_i|}{|c_k|}\right)|x|^{\al_k-1}<|c_kx^{\al_k}|.\]
\item For all $x\in K^\times$ with $\frac1{|x|}>\max\left\{1,\frac{|c_2|+\ldots+|c_{k}|}{|c_1|}\right\}$, we have
\[\left|\sum_{i=2}^kc_ix^{\al_i}\right|\overset{\ref{introabssgn}}\le
\sum_{i=2}^k|c_i||x|^{\al_i}\overset{|x|\le1}\le
\sum_{i=2}^k|c_i||x|^{\al_1+1}=
|c_1|\left(\frac{\sum_{i=2}^k|c_i|}{|c_1|}\right)|x|^{\al_1+1}<|c_1x^{\al_1}|.\]
\end{enumerate}
\end{proof}

\begin{reminder}\label{multiplicity}
Let $K$ be a field, $f\in K[X]$ and $a\in K$. Then
\[
\mu(a,f):=\sup\{k\in\N_0\mid\text{$(X-a)^k$ divides $f$ in $K[X]$}\}\in\N_0\cup\{\infty\}
\]
is called the \emph{multiplicity} of $a$ in $f$. We have
\[\mu(a,f)=\infty\iff f=0\]
and
\[\mu(a,f)\ge1\iff f(a)=0.\]
We call $a$ a \emph{multiple} root of $f$ if $\mu(a,f)\ge2$ and we call it a $k$-fold root of $f$ ($k\in\N$) if
$\mu(a,f)=k$. In case $\chara K=0$, one has
\[\mu(a,f)=\sup\{k\in\N_0\mid f^{(0)}(a)=\ldots=f^{(k-1)}(a)=0\}\]
as one can see easily.
\end{reminder}

\begin{df}\label{defst}
Let $(K,\le)$ be an ordered field and $0\ne f\in K[X]$.
\begin{enumerate}[(a)]
\item The \emph{number of positive roots counted with multiplicity} of $f$ is
\[\mu(f):=\sum_{a\in K_{>0}}\mu(a,f)\in\N_0.\]
Writing $f=g\prod_{i=1}^m(X-a_i)$ with $a_1,\dots,a_m\in K_{>0}$ and $g\in K[X]$ with $g(x)\ne0$ for all $x\in K_{>0}$, we
therefore have $\mu(f)=m$.
\item Writing $f=\sum_{i=1}^kc_iX^{\al_i}$ with $c_1,\dots,c_k\in K^\times$ and $\al_1,\dots,\al_k\in\N_0$ such that
\[\al_1<\ldots<\al_k,\]
we define the \emph{number of sign changes in the coefficients} of $f$
\[\si(f):=\#\{i\in\{1,\dots,k-1\}\mid \sgn(c_i)\ne\sgn(c_{i+1})\}\in\N_0.\]  
\end{enumerate}
\end{df}

\begin{pro}\label{parity}
Let $R$ be a real closed field and $f\in R[X]\setminus\{0\}$. Then $\mu(f)$ and $\si(f)$ have the same parity.
\end{pro}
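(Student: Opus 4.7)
The plan is to compare the sign of $f(x)$ at small positive $x$ with its sign at large positive $x$, using two different expressions for $f$: the coefficient expansion (which controls the ``boundary'' signs via Lemma \ref{sgnbounds}) and the factorization according to positive roots (which controls the parity via $\mu(f)$).

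More precisely, I would write $f = \sum_{i=1}^k c_i X^{\alpha_i}$ as in Definition \ref{defst}(b) and simultaneously factor $f = g \prod_{i=1}^m (X - a_i)$ with $a_1, \ldots, a_m \in R_{>0}$ pairwise distinct and $g \in R[X]$ having no positive roots, where $m = \mu(f)$ (suppressing multiplicities into repeated $a_i$'s for clarity). By the intermediate value theorem \ref{intermediate}, since $g$ has no root on $R_{>0}$, $g$ has constant sign $\varepsilon \in \{-1, +1\}$ on $R_{>0}$. For $x > \max_i a_i$ we have $\prod_i(x - a_i) > 0$, while for $0 < x < \min_i a_i$ we have $\prod_i (x - a_i) = (-1)^m \prod_i (a_i - x)$ which has sign $(-1)^m = (-1)^{\mu(f)}$.

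Next I would apply Lemma \ref{sgnbounds}: part (a) gives $\sgn(f(x)) = \sgn(c_k)$ for sufficiently large $x > 0$, and part (b) gives $\sgn(f(x)) = \sgn(c_1)$ for sufficiently small $x > 0$ (taking the exponents at $x$ to be $\alpha_1 \ge 0$ so the factor $(\sgn x)^{\alpha_1}$ equals $1$). Comparing both expressions at large $x$ yields $\varepsilon = \sgn(c_k)$, and at small positive $x$ yields $(-1)^{\mu(f)} \varepsilon = \sgn(c_1)$. Multiplying, this gives
\[
\sgn(c_1)\sgn(c_k) = (-1)^{\mu(f)}.
\]

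Finally, I would observe the elementary identity
\[
\sgn(c_1)\sgn(c_k) = \prod_{i=1}^{k-1} \sgn(c_i)\sgn(c_{i+1}) = (-1)^{\sigma(f)},
\]
since each factor equals $-1$ exactly when there is a sign change between $c_i$ and $c_{i+1}$. Combining the last two displays gives $(-1)^{\mu(f)} = (-1)^{\sigma(f)}$, which is precisely the parity claim. The only subtle point is justifying that $g$ has constant sign on $R_{>0}$ and handling the fact that Lemma \ref{sgnbounds}(b) applies to $x$ of \emph{either} sign — here I specialize to $x > 0$ so all sign-of-$x$ factors equal $1$; no real obstacle arises, just bookkeeping on exponents.
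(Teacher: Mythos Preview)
Your proof is correct and follows essentially the same approach as the paper: compare the signs of the extreme nonzero coefficients of $f$ (via Lemma~\ref{sgnbounds}) with the sign contribution of the linear factors corresponding to positive roots. The paper organizes this slightly differently---it normalizes to $g>0$ on $R_{>0}$ and applies Lemma~\ref{sgnbounds} to $g$ rather than to $f$---but the content is the same; one small slip in your write-up is the phrase ``pairwise distinct,'' which contradicts your parenthetical and should be dropped since the $a_i$ are listed with multiplicity.
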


\begin{proof}
Write
$f=\sum_{i=1}^kc_iX^{\al_i}=g\prod_{i=1}^m(X-a_i)$ with $c_1,\dots,c_k\in R^\times$, $\al_1,\dots,\al_k\in\N_0$,
$a_1,\dots,a_m\in R_{>0}$ and $g\in R[X]$ such that $\al_1<\ldots<\al_m$ and $g(x)\ne0$ for all $x\in R_{>0}$.
Since $R$ is real closed, WLOG $g(x)>0$ for all $x\in R_{>0}$ by the intermediate value theorem \ref{intermediate}.
But then by Lemma \ref{sgnbounds}, both the lowest and highest coefficient of $g$ is positive.
Now the claim follows from
$\mu(f)=m$, $\sgn(c_1)=(-1)^m$ and $\sgn(c_k)=1$.
\end{proof}

\begin{lem}\label{loose1}
Let $R$ be a real closed field and $f\in R[X]\setminus R$. Then
$\mu(f)\le\mu(f')+1$ and $\si(f)\le\si(f')+1$.
\end{lem}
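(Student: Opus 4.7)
The plan is to handle the two inequalities separately, the first via Rolle's theorem \ref{rolle} applied to positive roots, and the second by a direct inspection of how differentiation affects the coefficient sequence. Note that because $R$ is real closed, $\chara R = 0$ by \ref{qembeds}, and since $f \in R[X] \setminus R$ is non-constant, $f' \ne 0$, so $\mu(f')$ and $\si(f')$ are well-defined.

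For the first inequality, I would write $f = g\prod_{i=1}^k(X-a_i)^{m_i}$, where $a_1 < a_2 < \ldots < a_k$ are the distinct positive roots of $f$ with multiplicities $m_i = \mu(a_i,f) \ge 1$, so that $\mu(f) = m_1 + \cdots + m_k$. In characteristic $0$ [$\to$ \ref{multiplicity}], each $a_i$ is a root of $f'$ of multiplicity at least $m_i - 1$. Moreover, applying Rolle's theorem \ref{rolle} to $f$ on each interval $[a_i, a_{i+1}]$ (where $f$ vanishes at both endpoints) yields, for $i = 1, \dots, k-1$, a point $b_i \in (a_i, a_{i+1})$ with $f'(b_i) = 0$; these $b_i$ are all distinct and distinct from the $a_j$'s. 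Summing contributions to $\mu(f')$, I get
\[
\mu(f') \;\ge\; \sum_{i=1}^k (m_i - 1) \;+\; (k-1) \;=\; \mu(f) - 1,
\]
which is exactly the claim $\mu(f) \le \mu(f') + 1$. The main subtlety here is to keep the multiplicity bookkeeping straight, but it works out cleanly.

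For the second inequality, I would write $f = \sum_{i=1}^k c_i X^{\al_i}$ with $c_i \in R^\times$ and $0 \le \al_1 < \cdots < \al_k$. Then
\[
f' \;=\; \sum_{\substack{i=1 \\ \al_i \ge 1}}^{k} c_i \al_i \, X^{\al_i - 1},
\]
and since $\chara R = 0$, the coefficients $c_i \al_i$ are nonzero and $\sgn(c_i\al_i) = \sgn(c_i)$. If $\al_1 \ge 1$, then $f'$ has the same sequence of coefficient signs as $f$, so $\si(f') = \si(f)$. If $\al_1 = 0$, then the coefficient sequence of $f'$ is $(c_2\al_2, \dots, c_k\al_k)$, having the same signs as $(c_2, \dots, c_k)$; dropping the first entry $c_1$ from the sign sequence $(c_1, c_2, \dots, c_k)$ decreases the number of sign changes by at most $1$, so $\si(f) \le \si(f') + 1$. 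Either way, the desired inequality holds.

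I don't expect a significant obstacle: the first part is a clean Rolle-plus-multiplicities counting argument, and the second is purely combinatorial once one observes that differentiation in characteristic $0$ preserves the signs of nonzero coefficients and, at worst, deletes the constant term.
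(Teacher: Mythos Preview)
Your proof is correct and matches the paper's approach essentially line for line: the paper uses Rolle's theorem between consecutive positive roots together with the multiplicity drop $\mu(a_i,f')\ge\mu(a_i,f)-1$ for the first inequality, and simply declares the second inequality ``easy to prove'' --- your explicit case split on whether $\al_1=0$ is exactly the intended easy argument.
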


\begin{proof}
The second statement is easy to prove. For the first statement, suppose $a_1,\dots,a_m\in R$ are the positive roots of $f$ and
$a_1<\ldots<a_m$. Since $R$ is real closed, there exist roots $b_1,\dots,b_{m-1}\in R$ of $f'$ such that
$a_1<b_1<a_2<\ldots<b_{m-1}<a_m$ by Rolle's Theorem \ref{rolle}. Now $\mu(f')=\sum_{a\in K_{>0}}\mu(a,f')\ge
\sum_{i=1}^m\mu(a_i,f')+\sum_{i=1}^{m-1}\mu(b_i,f')\ge\sum_{i=1}^m\mu(a_i,f')+m-1=
\sum_{i=1}^m(\mu(a_i,f)-1)+m-1=\sum_{i=1}^m\mu(a_i,f)-1=\mu(f)-1$.
\end{proof}

\begin{rem}\label{loo}
In the situation of Lemma \ref{loose1}, $\si(f')\le\si(f)$ holds trivially but
$\mu(f')\le\mu(f)$ fails in general as the example $f=(X-1)^2+1$ shows.
\end{rem}

\begin{thm}[Descartes' rule of signs]\label{descartes}
Let $R$ be a real closed field. Then $\mu(f)\le\si(f)$ for all $f\in R[X]\setminus\{0\}$.
\end{thm}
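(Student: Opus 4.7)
The plan is to induct on $\deg f$, using the derivative as the inductive object and the parity Proposition \ref{parity} to rule out the off-by-one gap.

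\emph{Base case.} If $\deg f = 0$, then $f$ is a nonzero constant, so $f$ has no positive roots and its coefficient sequence has a single nonzero entry; hence $\mu(f) = 0 = \si(f)$.

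\emph{Inductive step.} Suppose $\deg f \ge 1$. Since $\chara R = 0$, we have $f \in R[X] \setminus R$ and $f' \in R[X] \setminus \{0\}$ with $\deg f' < \deg f$. By the induction hypothesis, $\mu(f') \le \si(f')$. Applying Lemma \ref{loose1} for the derivative bound on $\mu$ and Remark \ref{loo} for the trivial inequality $\si(f') \le \si(f)$, I would chain
\[
\mu(f) \;\le\; \mu(f') + 1 \;\le\; \si(f') + 1 \;\le\; \si(f) + 1.
\]
This yields $\mu(f) \le \si(f) + 1$, i.e., the bound up to a possible off-by-one.

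\emph{Closing the gap.} By Proposition \ref{parity}, $\mu(f)$ and $\si(f)$ have the same parity, so $\mu(f) - \si(f)$ is an even integer that is at most $1$; hence $\mu(f) - \si(f) \le 0$, which is exactly the desired inequality. There is no real obstacle here — the two previously proved lemmas do the work almost mechanically; the only conceptual point is to notice that the parity proposition is precisely what is needed to upgrade the loose bound $\mu(f) \le \si(f) + 1$ (which is all the derivative-based induction gives) to the sharp one.
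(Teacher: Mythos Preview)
Your proof is correct and follows exactly the same approach as the paper's: induction on the degree, using Lemma~\ref{loose1} and Remark~\ref{loo} to obtain $\mu(f)\le\si(f)+1$ via the derivative, then invoking the parity Proposition~\ref{parity} to close the off-by-one gap.
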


\begin{proof}
Induction on $d:=\deg f\in\N_0$.

\smallskip
\underline{$d=0$}\qquad $\mu(f)=0=\si(f)$

\smallskip
\underline{$d-1\to d$\quad$(d\in\N)$}\qquad
$\mu(f)\overset{\ref{loose1}}\le\mu(f')+1\overset{\text{induction}}{\underset{\text{hypothesis}}\le}\si(f')+1\overset{\ref{loo}}\le\si(f)+1$
and therefore $\mu(f)\le\si(f)$ by Proposition \ref{parity}.
\end{proof}

\begin{ex}
Let $R$ be a real closed field and $f:=X^4-5X^3-21X^2+115X-150\in R[X]$. Then $\si(f)=3$ and therefore
$\mu(f)\in\{1,3\}$ by \ref{descartes} and \ref{parity}. For $f(-X)=X^4+5X^3-21X^2-115X-150$, we have
$\si(f(-X))=1$ and therefore $\mu(f(-X))=1$. One can verify that $\si((1+X)^{22}f)=1$ from which we get
$\mu(f)=\mu((1+X)^{22}f)=1$. Hence $f$ has exactly two roots in $R$, namely two simple (i.e., $1$-fold
[$\to$ \ref{multiplicity}]) ones,
one positive and one negative.
\end{ex}

\begin{df}\label{defrr}
Let $R$ be a real closed field. We call a polynomial $f\in R[X]$ \emph{real-rooted} if it has no root in $R(\ii)\setminus R$
[$\to$ \ref{fund}].
\end{df}

\begin{pro}\label{rrr}
Let $R$ be real closed field and $f\in R[X]$. Then the following are equivalent:
\begin{enumerate}[\normalfont(a)]
\item $f$ is real-rooted.
\item There are $d\in\N_0$, $c\in R^\times$ and $a_1,\dots,a_d\in R$ such that $f=c\prod_{i=1}^d(X-a_i)$.
\end{enumerate}
\end{pro}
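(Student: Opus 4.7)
The plan is to deduce both implications directly from the generalized fundamental theorem of algebra (Theorem \ref{fund}), together with the fact that $R\neq R(\ii)$ coming from \ref{rcchar}.

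For (b)$\Rightarrow$(a), suppose $f=c\prod_{i=1}^d(X-a_i)$ with $c\in R^\times$ and $a_1,\dots,a_d\in R$. Any root $z\in R(\ii)$ of $f$ satisfies $\prod_{i=1}^d(z-a_i)=0$ in the field $R(\ii)$, hence $z=a_i$ for some $i$, so $z\in R$. Thus $f$ has no root in $R(\ii)\setminus R$.

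For (a)$\Rightarrow$(b), first dispose of $f=0$: since $R\neq R(\ii)$ by \ref{rcchar}, the element $\ii\in R(\ii)\setminus R$ is a root of the zero polynomial, contradicting real-rootedness. So $f\neq 0$; set $d:=\deg f\in\N_0$ and let $c\in R^\times$ be its leading coefficient. By Theorem \ref{fund}, $R(\ii)$ is algebraically closed, so $f$ splits completely in $R(\ii)[X]$ as $f=c\prod_{i=1}^d(X-z_i)$ with $z_1,\dots,z_d\in R(\ii)$. Real-rootedness forces each $z_i\in R$, and setting $a_i:=z_i$ gives the required factorization.

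There is no serious obstacle here: once Theorem \ref{fund} is available, the statement is essentially a restatement of the fact that a polynomial over a field splits into linear factors over its algebraic closure, combined with the observation that $[R(\ii):R]\leq 2$ so that the non-real roots are precisely those lying in $R(\ii)\setminus R$.
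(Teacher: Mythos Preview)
Your proof is correct and follows the same route as the paper, which simply points to the fundamental theorem \ref{fund} (or \ref{realfund}) for (a)$\Rightarrow$(b) and leaves (b)$\Rightarrow$(a) implicit. Your explicit handling of the case $f=0$ and of the trivial direction is fine but not strictly needed.
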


\begin{proof}
For (a)$\implies$(b) use the fundamental theorem \ref{fund} or \ref{realfund}.
\end{proof}

\begin{thm}\emph{[$\to$ \ref{loo}]}\label{derrr}
Suppose $R$ is a real closed field and $f\in R[X]\setminus R$ is real-rooted. Then $f'$ is real-rooted and $\mu(f')\le\mu(f)$.
\end{thm}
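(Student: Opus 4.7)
My plan is to use the factorization from Proposition \ref{rrr} and then combine a direct computation of $f'$ at the multiple roots of $f$ with Rolle's theorem \ref{rolle} to exactly locate all roots of $f'$.

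Since $f$ is real-rooted and nonconstant, write $f = c \prod_{i=1}^{k} (X - a_i)^{m_i}$ with $c \in R^\times$, distinct $a_1 < a_2 < \ldots < a_k$ in $R$, and multiplicities $m_i \in \N$ summing to $d := \deg f$. A direct computation (product rule) shows that each $a_i$ is a root of $f'$ of multiplicity exactly $m_i - 1$, which accounts for $\sum_i (m_i - 1) = d - k$ roots of $f'$ counted with multiplicity. By Rolle's theorem \ref{rolle}, each open interval $(a_i, a_{i+1})$ contains at least one root of $f'$; this supplies at least $k - 1$ further roots. Since $R$ has characteristic $0$ we have $\deg f' = d - 1 = (d - k) + (k - 1)$, so the Rolle roots must be exactly one per interval, each of multiplicity $1$, and $f'$ has no other roots in any extension field. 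In particular $f'$ is real-rooted.

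To establish $\mu(f') \le \mu(f)$, let $b_1 < \ldots < b_j$ denote the distinct positive elements among $a_1, \ldots, a_k$ with multiplicities $n_1, \ldots, n_j$, so that $\mu(f) = n_1 + \ldots + n_j$. The positive roots of $f'$ split into two types: first, the $b_r$ themselves contribute $\sum_{r=1}^{j}(n_r - 1) = \mu(f) - j$ (using that a simple root of $f$ is not a root of $f'$); second, the Rolle roots lying in $R_{>0}$. A Rolle root in $(a_i, a_{i+1})$ can only be positive if $a_{i+1} > 0$, i.e., $a_{i+1} \in \{b_1, \ldots, b_j\}$, giving at most $j$ such intervals and therefore at most $j$ positive Rolle roots. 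Since each Rolle root has multiplicity $1$, this yields
\[
\mu(f') \;\le\; (\mu(f) - j) + j \;=\; \mu(f),
\]
as desired.

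The main obstacle is the bookkeeping in the counting step: one has to be careful not to double-count the positive roots of $f'$ that coincide with positive roots of $f$ versus those arising only from Rolle's theorem, and to handle correctly the possibly present ``boundary'' interval $(a_i, a_{i+1})$ with $a_i \le 0 < a_{i+1}$ whose Rolle root may or may not be positive. The key observation that makes the bookkeeping go through is that the degree count $d - 1 = (d - k) + (k - 1)$ forces every Rolle root to be simple, so positive Rolle roots are counted in $\mu(f')$ with multiplicity exactly $1$.
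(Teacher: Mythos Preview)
Your proof is correct and follows essentially the same approach as the paper: factor $f$ over $R$, observe that each root $a_i$ of multiplicity $m_i$ becomes a root of $f'$ of multiplicity exactly $m_i-1$, apply Rolle's theorem \ref{rolle} to get a root of $f'$ in each gap, and then use the degree count $\deg f'=d-1=(d-k)+(k-1)$ to force each Rolle root to be simple and to exclude any further roots in $R(\ii)$; the bound $\mu(f')\le\mu(f)$ then follows by the same interval bookkeeping the paper does (your parameter $j$ plays the role of the paper's $n-k+1$).
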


\begin{proof}
Using \ref{rrr}, write $f=c\prod_{i=1}^n(X-a_i)^{\al_i}$ with $c,a_1,\dots,a_n\in R$ and $\al_1,\dots,\al_n\in\N$ such that $c\ne0$ and
\[a_1<\ldots<a_n.\]
Since $R$ is real closed, there exist roots $b_1,\dots,b_{n-1}\in R$ of $f'$ such that
\[a_1<b_1<a_2<\ldots<b_{n-1}<a_n\] by Rolle's Theorem \ref{rolle}. We have $\mu(a_i,f)=\al_i$ and therefore
\[\mu(a_i,f')=\al_i-1\] for all $i\in\{1,\dots,n\}$. It follows that
\begin{align*}
\deg(f')&\ge\sum_{i=1}^n\mu(a_i,f')+\sum_{i=1}^{n-1}\mu(b_i,f')\ge\sum_{i=1}^n\mu(a_i,f')+n-1\\
&=\sum_{i=1}^n(\al_i-1)+n-1=\deg(f)-1=\deg(f'),
\end{align*}
whence
\[\deg(f')=\sum_{i=1}^n\mu(a_i,f')+\sum_{i=1}^{n-1}\mu(b_i,f')\]
and
\[\mu(b_i,f')=1\]
for all $i\in\{1,\dots,n-1\}$.
It follows that
\[\{x\in R(\ii)\mid f'(x)=0\}\subseteq\{a_1,b_1,a_2,\dots,b_{n-1},a_n\}\subseteq R,\]
in particular $f'$ is real-rooted.
Choose $k\in\{1,\dots,n+1\}$ such that $a_k,\dots,a_n$ are the positive roots of $f$. Then
\[\{x\in R\mid f'(x)=0,x>0\}\subseteq\begin{cases}\{b_{k-1},a_k,\dots,b_{n-1},a_n\}&\text{if }k\ge2\\\{a_1,b_1,\dots,b_{n-1},a_n\}
&\text{if }k=1\end{cases}.\]
If $k\ge2$, then
\[\mu(f')\le\sum_{i=k}^n(\underbrace{\mu(b_{i-1},f')}_{=1}+\underbrace{\mu(a_i,f')}_{=\mu(a_i,f)-1})=\mu(f).\]
If $k=1$, then one sees similarly that $\mu(f')=\mu(f)-1\le\mu(f)$.
\end{proof}

\begin{thm}[Descartes' rule of signs for real-rooted polynomials]\label{descartesrr}
Let $R$ be a real closed field. Then $\mu(f)=\si(f)$ for all real-rooted $f\in R[X]$.
\end{thm}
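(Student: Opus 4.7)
The plan is to prove the converse inequality $\si(f) \le \mu(f)$ on top of Descartes' rule \ref{descartes}, and then to pin down equality using the parity constraint \ref{parity}. I would proceed by induction on $d := \deg f \in \N_0$. The base case $d = 0$ is trivial: then $f$ is a nonzero constant and $\mu(f) = 0 = \si(f)$.

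For the inductive step $d \ge 1$, I would first observe that $f \notin R$, so (because $\chara R = 0$) the derivative $f'$ is a nonzero polynomial of strictly smaller degree. Since $f$ is real-rooted, Theorem \ref{derrr} gives that $f'$ is also real-rooted and that
\[
\mu(f') \le \mu(f).
\]
By the induction hypothesis applied to $f'$, we have $\si(f') = \mu(f')$. Combining this with the trivial inequality $\si(f) \le \si(f') + 1$ from Lemma \ref{loose1} (see Remark \ref{loo}), we obtain
\[
\si(f) \;\le\; \si(f') + 1 \;=\; \mu(f') + 1 \;\le\; \mu(f) + 1.
\]

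On the other hand, Descartes' rule \ref{descartes} gives the reverse inequality $\mu(f) \le \si(f)$. So $\si(f) - \mu(f) \in \{0,1\}$. The key final step is to invoke Proposition \ref{parity}, which says that $\mu(f)$ and $\si(f)$ have the same parity; since their difference is nonnegative and at most $1$ and also even, it must equal $0$. Hence $\mu(f) = \si(f)$, completing the induction.

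I do not really expect a main obstacle here: all the heavy lifting has been done in \ref{descartes}, \ref{parity}, \ref{loose1}, and \ref{derrr}, and the argument is essentially a two-line squeeze once those are in place. The only point to be slightly careful about is the fact that the induction runs cleanly because $\deg f' = \deg f - 1$ in characteristic zero and because $f'$ remains real-rooted, which is exactly what \ref{derrr} guarantees; without that lemma, the induction hypothesis would not apply to $f'$.
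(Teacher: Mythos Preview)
Your proof is correct and is essentially identical to the paper's: both run induction on the degree, use \ref{derrr} to know that $f'$ is real-rooted with $\mu(f')\le\mu(f)$, apply the induction hypothesis to $f'$, use $\si(f)\le\si(f')+1$ from \ref{loose1}, and finish with the parity constraint \ref{parity} on top of the general Descartes bound \ref{descartes}. The only cosmetic difference is that the paper states the induction as proving the inequality $\mu(f)\ge\si(f)$ while you prove the equality directly, but the chain of estimates is the same.
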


\begin{proof}
By Theorem \ref{descartes}, it is enough to show $\mu(f)\ge\si(f)$ for all real-rooted $f\in R[X]$ by
induction on $d:=\deg f\in\N_0$.

\smallskip
\underline{$d=0$}\qquad $\mu(f)=0=\si(f)$

\smallskip
\underline{$d-1\to d$\quad$(d\in\N)$}\qquad
$\mu(f)\overset{\ref{derrr}}\ge\mu(f')
\overset{
\substack{\text{induction}\\{\text{hypothesis}}}}{\underset{\ref{derrr}}
\ge}\si(f')\overset{\ref{loose1}}\ge\si(f)-1$
and therefore $\mu(f)\ge\si(f)$ by Proposition \ref{parity}.
\end{proof}

\begin{ex}\label{symmex}
\begin{multline*}
\det\begin{pmatrix}1-X&0&1\\0&-2-X&1\\1&1&-X\end{pmatrix}=(1-X)(2+X)X+2+X+X-1\\
=(2+X-2X-X^2)X+2X+1=-X^3-X^2+4X+1\in\R[X]
\end{multline*}
is real-rooted since it is the characteristic polynomial
of a symmetric matrix. By Descartes' rule \ref{descartesrr}, it has exactly one positive root.
\end{ex}

\section{Counting real zeros with Hermite's method}\label{sec:hermite}

\begin{reminder}\label{longremi}
\begin{enumerate}[(a)]
\item Let $A$ be a commutative ring with $0\ne1$
and $f\in A[X_1,\dots,X_n]$. Then $f$ is called \emph{homogeneous} if $f$ is a an
$A$-linear combination of
monomials of the same degree. Moreover, $f$ is called a \emph{$k$-form} ($k\in\N_0$) if $f$ is an $A$-linear combination
of monomials of degree
$k$ (i.e., if $f=0$ or $f$ is homogeneous of degree $k$). One often says \emph{linear form} instead of $1$-form and
\emph{quadratic form} instead of $2$-form.
\item If $K$ is a field, one can identify the $K$-vector subspace of $K[X_1,\dots,X_n]$ consisting of the
\alal{linear}{quadratic} forms introduced in (a) via the isomorphism $f\mapsto(x\mapsto f(x))$ with the $K$-vector space
$\malal{(K^n)^*}{Q(K^n)}$ introduced in linear algebra. Hence the notion of a linear or quadratic form introduced in (a) differs only
insignificantly from the corresponding notion from linear algebra.
\item Let $A$ be a set and $M=(a_{ij})_{\substack{1\le i\le m\\1\le j\le n}}\in A^{m\times n}$ a matrix. Then
$M^T:=(a_{ij})_{\substack{1\le j\le n\\1\le i\le m}}\in A^{n\times m}$ is called the \emph{transpose} of $M$. The elements of
$SA^{n\times n}:=\{M\in A^{n\times n}\mid M=M^T\}$ are called \emph{symmetric} matrices.
\item Let $K$ be a field. Then $(a_1,\dots,a_n)\mapsto a_1X_1+\ldots+a_nX_n$ ($a_i\in K$) defines an isomorphism between
$K^{1\times n}\cong K^n$ and the $K$-vector space of the linear forms in $K[X_1,\dots,X_n]$. If $\chara K\ne2$, then
$(a_{ij})_{1\le i,j\le n}\mapsto\sum_{i,j=1}^na_{ij}X_iX_j$ ($a_{ij}\in K$) defines an isomorphism between $SK^{n\times n}$
and the $K$-vector space of the quadratic forms in $K[X_1,\dots,X_n]$. If $f\in K[X_1,\dots,X_n]$ is a linear or quadratic form,
then we call the preimage $M(f)$ of $f$ under the respective isomorphism the \emph{representing matrix} of $f$. This is the
representing matrix of $f$ in the sense of linear algebra with respect to the canonical bases.
\item Suppose $K$ is a field satisfying $\chara K\ne2$, $q\in K[X_1,\dots,X_n]$ a quadratic form,
$\ell_1,\dots,\ell_m\in K[X_1,\dots,X_n]$ linear forms and $\la_1,\dots,\la_m\in K$. Then
\[q=\sum_{k=1}^m\la_k\ell_k^2\iff M(q)=P^T
\begin{pmatrix}~
\begin{tikzpicture}[inner sep=0]
\node (a1) {$\la_1$};
\node (an) at (1.5,-1.5) [anchor=north west] {$\la_m$};
\node[scale=3.2] at (1.3,-0.4) {$0$};
\node[scale=3.2] at (0.2,-1.2) {$0$};
\draw[loosely dotted,very thick,dash phase=3pt] (a1)--(an);
\end{tikzpicture}
\end{pmatrix}
P
\]
where
\[P:=\begin{pmatrix}M(\ell_1)\\\vdots\\M(\ell_m)\end{pmatrix}\in K^{m\times n}.\]
Here $P$ is invertible if and only if $m=n$ and $\ell_1,\dots,\ell_m$ are linearly independent.
\item Let $K$ be a field satisfying $\chara K\ne2$ and $q\in K[X_1,\dots,X_n]$ a quadratic form. One can \emph{easily} calculate
linearly independent linear forms $\ell_1,\dots,\ell_m\in K[X_1,\dots,X_n]$ and $\la_1,\dots,\la_m\in K$ such that
$q=\sum_{k=1}^m\la_k\ell_k^2$. Indeed, one can write
\[X_1^2+a_2X_1X_2+\dots+a_nX_1X_n\qquad(a_i\in K)\]
as
$\Big(\underbrace{X_1+\frac{a_2}2X_2+\dots+\frac{a_n}2X_n}_{\ell_1}\Big)^2-
\underbrace{\left(\frac{a_2}2X_2+\dots+\frac{a_n}2X_n\right)^2}_{\in K[X_2,\dots,X_n]}$ and
\[X_1X_2+a_3X_1X_3+\ldots+a_nX_1X_n+b_3X_2X_3+\ldots+b_nX_2X_n\]
as
\begin{multline*}
\big(\underbrace{X_1+b_3X_3+\ldots+b_nX_n}_{h_1}\big)\big(\underbrace{X_2+a_3X_3+\ldots+a_nX_n}_{h_2}\big)\\
-
\underbrace{\left(a_3X_3+\ldots+a_nX_n\right)\left(b_3X_3+\ldots+b_nX_n\right)}_{\in K[X_3,\dots,X_n]}
\end{multline*}
where
$h_1h_2=\Big(\underbrace{\frac{h_1+h_2}2}_{\ell_1}\Big)^2-\Big(\underbrace{\frac{h_1-h_2}2}_{\ell_2}\Big)^2$
[$\to$ \ref{diffsquare}]. In this way one can in each step place one or two variables in one or two squares and the arising linear forms
are obviously linearly independent.  Consider $q:=2X_1X_2+2X_1X_3+2X_2X_3+2X_3X_4$ as an example:
\begin{align*}
q&:=2(X_1X_2+X_1X_3+X_2X_3)+2X_3X_4\\
&=2((\underbrace{X_1+X_3}_{h_1})(\underbrace{X_2+X_3}_{h_2}))-2X_3^2+2X_3X_4\\
&=\underbrace{\frac12}_{\la_1=-\la_2}((\underbrace{h_1+h_2}_{\ell_1})^2-(\underbrace{h_1-h_2}_{\ell_2})^2)
\underbrace{-2}_{\la_3}\Big(\underbrace{X_3-\frac12X_4}_{\ell_3}\Big)^2+\underbrace{\frac24}_{\la_4}{\underbrace{X_4}_{\ell_4}}^2.
\end{align*}
Hence $q=\sum_{k=1}^4\la_k\ell_k^2=\frac12(X_1+X_2+2X_3)^2-\frac12(X_1-X_2)^2-2(X_3-\frac12X_4)^2+\frac12X_4^2$
and by (e)
\[\begin{pmatrix}0&1&1&0\\1&0&1&0\\1&1&0&1\\0&0&1&0\end{pmatrix}=P^TDP\]
where
\[P:=\begin{pmatrix}1&1&2&0\\1&-1&0&0\\0&0&1&-\frac12\\0&0&0&1\end{pmatrix}\qquad\text{and}\qquad
D:=\begin{pmatrix}\frac12&0&0&0\\0&-\frac12&0&0\\0&0&-2&0\\0&0&0&\frac12\end{pmatrix}.
\]
\item Translating (f) into the language of matrices, one obtains for each field $K$ with $\chara K\ne2$ and each $M\in SK^{n\times n}$
the following:
One can \emph{easily} find a $P\in\GL_n(K)=(K^{n\times n})^\times$ and a diagonal matrix $D\in K^{n\times n}$ such that
$M=\underline{\underline{P^T}}DP$. This is the diagonalization of $M$ as a quadratic form which is much simpler than the
diagonalization of $M$ as an endomorphism where one wants to reach $M=\underline{\underline{P^{-1}}}DP$
(in case $K=\R$ perhaps even with $P^{-1}=P^T$).
\item Let $K$ be a Euclidean field [$\to$ \ref{dfeuclid}] and $q\in K[X_1,\dots,X_n]$ a quadratic form. According to (f),
one can then \emph{easily} compute \emph{linearly independent} linear forms \[\ell_1,\dots,\ell_s,\ell_{s+1},\dots,\ell_{s+t}\in
K[X_1,\dots,X_n]\] satisfying $q=\sum_{i=1}^s\ell_i^2-\sum_{j=1}^t\ell_{s+j}^2$. By completing $\ell_1,\dots,\ell_{s+t}$ to a
basis $\ell_1,\dots,\ell_n$ of the vector space of all linear forms in $K[X_1,\dots,X_n]$ and by writing
$q=1\cdot\sum_{i=1}^s\ell_i^2+(-1)\sum_{j=1}^t\ell_{s+j}^2+0\cdot\sum_{k=t+1}^n\ell_k^2$, one sees for the \emph{rank}
$\rk(q):=\rk M(q)$ of $q$ that
$\rk(q)\overset{(e)}=s+t$.
We define the \emph{signature} of $q$ as $\sg(q):=s-t$. This is well-defined by \emph{Sylvester's law of inertia}:
If $\ell_1',\dots,\ell_{s'}',\ell_{s'+1}',\dots,\ell_{s'+t'}'\in K[X_1,\dots,X_n]$ are other linearly independent linear forms satisfying
$q=\sum_{i=1}^{s'}\ell_i'^2-\sum_{j=1}^{t'}\ell_{s'+j}'^2$, then $s'+t'=\rk(q)=s+t$ and one sees again by completing to a basis and (e)
that there are subspaces $U$, $W$, $U'$, $W'$ of $K^n$ such that
$q(U)\subseteq K_{\ge0}$, $\dim U=n-t$, $q(W\setminus\{0\})\subseteq K_{<0}$, $\dim W=t$,
$q(U')\subseteq K_{\ge0}$, $\dim U'=n-t'$, $q(W'\setminus\{0\})\subseteq K_{<0}$, $\dim W'=t'$.
One deduces $U\cap W'=\{0\}$ and $U'\cap W=\{0\}$, whence
$(n-t)+t'\le n$ and $(n-t')+t\le n$. Therefore $t=t'$ and thus $s=s'$.
\item Let $K$ be a field and $f=X^d+a_{d-1}X^{d-1}+\ldots+a_0\in K[X]$ with $d\in\N_0$ and $a_i\in K$. The
\emph{companion matrix} $C_f$ of $f$ is the representing matrix of the $K$-vector space endomorphism
\[K[X]/(f)\to K[X]/(f),\ \overline p\mapsto\overline{Xp}\qquad(p\in K[X])\] with respect to the basis $\overline 1,\dots,\overline{X^{d-1}}$,
i.e.,
\[C_f=
\begin{tikzpicture}[loosely dotted,thick,baseline]
\matrix (m) [matrix of math nodes,nodes in empty cells,right delimiter=),left delimiter=(,column sep=1em]{
0&0&&&&0&-a_0\\
1&0&&&&  &-a_1\\
0&1&&&&  &-a_2\\
0&0\\
\\
 &   &&&&0\\
0&0&&&0&1&-a_{d-1}\\
} ;
\draw (m-1-2)-- (m-1-6);
\draw (m-2-2)-- (m-6-6);
\draw (m-3-2)-- (m-7-6);
\draw (m-4-2)-- (m-7-5);
\draw (m-1-6)-- (m-6-6);
\draw (m-3-7)-- (m-7-7);
\draw (m-4-1)-- (m-7-1);
\draw (m-4-2)-- (m-7-2) -- (m-7-5);
\end{tikzpicture}\in K^{d\times d}.
\]
One sees easily that $f$ is the minimal polynomial and therefore for degree reasons, up to a sign, also the characteristic polynomial of $C_f$.
Now suppose furthermore that $f$ splits into linear factors, i.e.,
\[f=\prod_{k=1}^m(X-x_k)^{\al_k}\]
for some $m\in\N_0$, $\al_1,\ldots,\al_m\in\N$ and $x_1,\dots,x_m\in K$
(here the $x_i$ do not yet have to be pairwise distinct so that one could take $\al_k=1$ but to avoid a confusing change of notation in view of the proof of
Theorem \ref{hermite1}, we allow here and in Proposition \ref{herm} that $\al_k\in\N$).
Then $C_f$ is similar to a triangular matrix with diagonal entries
\[\underbrace{x_1,\dots,x_1}_{\al_1},\quad\underbrace{x_2,\dots,x_2}_{\al_2},\quad\dots\quad,\quad\underbrace{x_m,\dots,x_m}_{\al_m}.\]
Then $C_f^i$ is for every $i\in\N_0$ similar to a triangular matrix whose diagonal entries are
\[\underbrace{x_1^i,\dots,x_1^i}_{\al_1},\quad\underbrace{x_2^i,\dots,x_2^i}_{\al_2},\quad\dots\quad,\quad\underbrace{x_m^i,\dots,x_m^i}_{\al_m}.\]
In particular, we have $\tr(C_f^i)=\sum_{k=1}^m\al_kx_k^i$ for all $i\in\N_0$ and consequently
\[\tr(g(C_f))=\sum_{k=1}^m\al_kg(x_k)\] for all $g\in K[X]$.
\item If $K$ is a field and $x_1,\dots,x_m\in K$ are pairwise distinct, then the \emph{Vandermonde} matrix
\[
\begin{pmatrix}
1&x_1&\dots&x_1^{m-1}\\
\vdots&\vdots&&\vdots\\
1&x_m&\dots&x_m^{m-1}
\end{pmatrix}\in K^{m\times m}
\]
is invertible since it is the representing matrix of the injective and therefore bijective linear map
\[K[X]_{m-1}\to K^m,\ p\mapsto\begin{pmatrix}p(x_1)\\\vdots\\p(x_m)\end{pmatrix}\]
[$\to$ \ref{degnot}] with respect to the canonical bases.
\item Let $K$ be a field and let $x_1,\dots,x_m\in K$ be pairwise distinct. Furthermore, let $d\in\N_0$ with $m\le d$.
Consider for $k\in\{1,\dots,m\}$ the linear forms $\ell_k:=\sum_{i=1}^dx_k^{i-1}T_i\in K[T_1,\dots,T_d]$. Then
$\ell_1,\dots,\ell_m$ are linearly independent. Indeed, because of (d) this is equivalent to the linear independence of
the vectors $(x_k^0,\dots,x_k^{d-1})$ ($k\in\{1,\dots,m\}$) in $K^d$. But already the truncated vectors $(x_k^0,\dots,x_k^{m-1})$
($k\in\{1,\dots,m\}$) are linearly independent by (j).
\end{enumerate}
\end{reminder}

\begin{df}\label{hermite}
Let $K$ be a field and $f,g\in K[X]$ where $f$ is monic of degree $d$. Then the quadratic form
\[H(f,g):=\sum_{i,j=1}^d\tr(g(C_f)C_f^{i+j-2})T_iT_j\in K[T_1,\dots,T_d]\] is called
the \emph{Hermite form} of $f$ with respect to $g$.
The quadratic form $H(f):=H(f,1)$ is simply called the Hermite form of $f$.
\end{df}

\begin{rem}\label{hankel}
Let $K$ be a field with $\chara K\ne2$ and let $f,g\in K[X]$ where $f$ is monic of degree $d$. Then $M(H(f,g))$
[$\to$ \ref{longremi}(d)] is called the \emph{Hermite matrix of $f$ with respect to $g$}. This is a Hankel matrix, i.e., of the form
\[\begin{pmatrix}
\begin{tikzpicture}[scale=0.2,thick]
{
\draw(6+0.05,1+0.05)--(6-0.05,1-0.05);
\draw(6+0.05,2+0.05)--(5-0.05,1-0.05);
\draw(6+0.05,3+0.05)--(4-0.05,1-0.05);
\draw(6+0.05,4+0.05)--(3-0.05,1-0.05);
\draw(6+0.05,5+0.05)--(2-0.05,1-0.05);
\draw(4+0.05,7+0.05)--(0-0.05,3-0.05);
\draw(3+0.05,7+0.05)--(0-0.05,4-0.05);
\draw(2+0.05,7+0.05)--(0-0.05,5-0.05);
\draw(1+0.05,7+0.05)--(0-0.05,6-0.05);
\draw(0+0.05,7+0.05)--(0-0.05,7-0.05);
\draw[thick,dotted] (2+0.2,5-0.2)--(4-0.2,3+0.2);
}
\end{tikzpicture}
\end{pmatrix}.
\]
Furthermore, $M(H(f))$ is called the \emph{Hermite matrix of $f$}.
\end{rem}

\begin{pro}\label{herm}
Let $K$ be a field and $f,g\in K[X]$. Suppose $x_1,\dots,x_m\in K$ and $\al_1,\dots,\al_m\in\N_0$ such that
$f=\prod_{k=1}^m(X-x_k)^{\al_k}$ and $d:=\deg f$. Then
\[H(f,g)=\sum_{i,j=1}^d\left(\sum_{k=1}^m\al_kg(x_k)x_k^{i+j-2}\right)T_iT_j=\sum_{k=1}^m\al_kg(x_k)\left(\sum_{i=1}^dx_k^{i-1}T_i\right)^2.\]
\end{pro}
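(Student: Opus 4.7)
The statement is essentially a packaging of the trace identity recorded in \ref{longremi}(i) together with an expansion of a square, so the plan splits cleanly into two independent calculations, one for each equality.

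For the first equality, my plan is to read off the coefficient $\tr(g(C_f)C_f^{i+j-2})$ in the definition \ref{hermite} and to apply \ref{longremi}(i) to the single polynomial $h := gX^{i+j-2} \in K[X]$. Since $f = \prod_{k=1}^m (X-x_k)^{\al_k}$ splits into linear factors, \ref{longremi}(i) yields
\[
\tr\bigl(h(C_f)\bigr) = \sum_{k=1}^m \al_k h(x_k) = \sum_{k=1}^m \al_k g(x_k) x_k^{i+j-2}
\]
for each pair $(i,j) \in \{1,\dots,d\}^2$. Substituting this into the definition of $H(f,g)$ gives the first equality.

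For the second equality, I would simply expand the square and interchange the order of summation: for fixed $k$,
\[
\left(\sum_{i=1}^d x_k^{i-1}T_i\right)^2 = \sum_{i,j=1}^d x_k^{i-1}x_k^{j-1} T_iT_j = \sum_{i,j=1}^d x_k^{i+j-2} T_iT_j,
\]
and then summing over $k$ with weights $\al_k g(x_k)$ and interchanging the (finite) sums yields
\[
\sum_{k=1}^m \al_k g(x_k) \left(\sum_{i=1}^d x_k^{i-1}T_i\right)^2 = \sum_{i,j=1}^d \left(\sum_{k=1}^m \al_k g(x_k) x_k^{i+j-2}\right) T_i T_j,
\]
matching the middle expression.

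There is no real obstacle here; the only subtlety worth flagging is to be sure that \ref{longremi}(i) is applicable, i.e., that $f$ is monic of degree $d = \sum_k \al_k$ and splits as given, which is exactly the hypothesis. Note also that the $x_k$ are not required to be pairwise distinct, but this is harmless because \ref{longremi}(i) was formulated in that same generality (it relies only on the fact that $C_f^i$ is similar to an upper triangular matrix with diagonal entries $x_k^i$ repeated with multiplicity $\al_k$).
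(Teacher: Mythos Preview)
Your proof is correct and follows exactly the approach of the paper, which simply cites \ref{hermite} and \ref{longremi}(i); you have merely spelled out the application of the trace identity and the square expansion in detail.
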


\begin{proof}
\ref{hermite} and \ref{longremi}(i).
\end{proof}

\begin{thm}[Counting roots with one side condition]\label{hermite1}
Let $R$ be a real closed field, $C:=R(\ii)$,
$f,g\in R[X]$ and $f$ monic. Then
\begin{align*}
\rk H(f,g)&=\#\{x\in C\mid f(x)=0,g(x)\ne0\}\qquad\text{and}\\
\sg H(f,g)&=\#\{x\in R\mid f(x)=0,g(x)>0\}\\
&\,-\#\{x\in R\mid f(x)=0,g(x)<0\}.
\end{align*}
\end{thm}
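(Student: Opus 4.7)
The plan is to apply Proposition \ref{herm} over the algebraic closure $C$ and then combine complex conjugate pairs of terms into a diagonalization over the Euclidean field $R$, from which rank and signature can simply be read off.

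First, by the generalized fundamental theorem of algebra \ref{fund}, $f$ splits over $C$, so I would write $f = \prod_{k=1}^m (X-x_k)^{\alpha_k}$ with pairwise distinct $x_k \in C$, $\alpha_k \in \N$ and $\sum_k \alpha_k = d$. Applying \ref{herm} with $K = C$ gives
\[
H(f,g) = \sum_{k=1}^m \alpha_k\, g(x_k)\, \ell_k^2, \qquad \ell_k := \sum_{i=1}^d x_k^{i-1} T_i \in C[T_1,\dots,T_d].
\]
Since $m \le d$, Reminder \ref{longremi}(l) shows that $\ell_1,\dots,\ell_m$ are $C$-linearly independent.

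Next I would use $f \in R[X]$ to order the roots so that $x_1,\dots,x_s \in R$ and the remaining roots form conjugate pairs $(x_{s+j}, \bar x_{s+j})$ for $j = 1,\dots,t$, with common multiplicity $\alpha_{s+j}$. Each real-root term $\alpha_k g(x_k)\ell_k^2$ already lies in $R[T_1,\dots,T_d]$ and contributes a single square with sign $\sgn g(x_k)$ (or nothing if $g(x_k)=0$). Each conjugate pair combines, using $g(\bar x_{s+j}) = \overline{g(x_{s+j})}$ and the fact that the linear form attached to $\bar x_{s+j}$ is $\bar\ell_{s+j}$, into
\[
2\alpha_{s+j}\,\mathrm{Re}\!\big(g(x_{s+j})\ell_{s+j}^2\big) = 2\alpha_{s+j}\big(p_j(u_j^2 - v_j^2) - 2 q_j u_j v_j\big),
\]
with $g(x_{s+j}) = p_j + q_j\ii$ and $\ell_{s+j} = u_j + \ii v_j$ where $p_j, q_j \in R$, $u_j, v_j \in R[T_1,\dots,T_d]$. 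The symmetric matrix of the bracketed form has trace $0$ and determinant $-(p_j^2 + q_j^2) = -|g(x_{s+j})|^2$, so it has rank $0$ when $g(x_{s+j}) = 0$, and otherwise diagonalizes over the Euclidean field $R$ as one positive minus one negative square, i.e.\ rank $2$ and signature $0$.

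The crucial gluing step is that $\ell_1,\dots,\ell_s,u_1,v_1,\dots,u_t,v_t$ remain $R$-linearly independent: their $C$-span equals the $C$-span of $\ell_1,\dots,\ell_s,\ell_{s+1},\bar\ell_{s+1},\dots,\ell_{s+t},\bar\ell_{s+t}$, which has dimension $m = s+2t$ by the Vandermonde argument. Hence the individual diagonalizations assemble into a single diagonalization of $H(f,g)$ in $s+2t$ pairwise $R$-linearly independent real linear forms, and Reminder \ref{longremi}(h)(i) yields
\begin{align*}
\rk H(f,g) &= \#\{k \le s : g(x_k) \ne 0\} + 2\cdot\#\{j \le t : g(x_{s+j}) \ne 0\} = \#\{x \in C : f(x)=0,\ g(x)\ne 0\},\\
\sg H(f,g) &= \#\{k \le s : g(x_k) > 0\} - \#\{k \le s : g(x_k) < 0\},
\end{align*}
as claimed (using $g(\bar x) = 0 \iff g(x) = 0$ to match the rank count over $C$).

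The main obstacle I anticipate is the conjugate-pair bookkeeping: verifying that each real $2\times 2$ block has signature $0$ exactly when $g$ does not vanish at the pair, and that the real linear forms produced by diagonalizing the various pairs remain jointly linearly independent from the $\ell_k$ attached to the real roots. Both points reduce to the Vandermonde independence over $C$ together with the Euclidean character of $R$.
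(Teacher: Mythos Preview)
Your argument is correct and follows essentially the same route as the paper: apply Proposition~\ref{herm} over $C$, separate real roots from conjugate pairs, and use the Vandermonde independence from \ref{longremi}(k) (your reference to ``(l)'' is a typo) to read off rank and signature once everything is diagonalized over~$R$. The only notable difference is in the handling of each conjugate pair: the paper absorbs the complex coefficient $\be_t g(z_t)$ into the linear form by taking a square root in $C$ via \ref{cheuclid}(c), obtaining directly $(g_t+\ii h_t)^2+(g_t-\ii h_t)^2=2g_t^2-2h_t^2$, whereas you keep the coefficient out and analyze the resulting real $2\times2$ block by its trace and determinant---both give rank~$2$ and signature~$0$ when $g$ does not vanish, so the conclusions coincide.
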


\begin{proof} Denote by $p\mapsto p^*$ again the ring automorphism of $C[X]$ with $x^*=x$ for all $x\in R$,
$\ii^*=-\ii$ and $X^*=X$.
Using the fundamental theorem of algebra \ref{realfund} and this automorphism, we can write
\[f=\prod_{k=1}^m(X-x_k)^{\al_k}\prod_{t=1}^n(X-z_t)^{\be_t}\prod_{t=1}^n(X-z_t^*)^{\be_t}\]
for some $m,n\in\N_0$ $\al_k,\be_t\in\N$, $x_k\in R$, $z_t\in C\setminus R$ and
$x_1,\dots,x_m,z_1,\dots,z_n,z_1^*,\dots,z_n^*$ pairwise distinct. By renumbering the $z_t$, we can find $r\in\{0,\dots,n\}$ such
that $g(z_1)\ne0,\dots,g(z_r)\ne0$ and $g(z_{r+1})=0,\dots,g(z_n)=0$.
By \ref{herm},  \ref{longremi}(k) and \ref{cheuclid}(c), we obtain linear forms
$\ell_1,\dots,\ell_m,g_1,\dots,g_r,h_1,\dots h_r\in R[T_1,\dots,T_d]$ where $d:=\deg f$ such that
\begin{align*}
H(f,g)&=\sum_{k=1}^m\al_kg(x_k)\ell_k^2+\sum_{t=1}^r(g_t+\ii h_t)^2+\sum_{t=1}^r(g_t-\ii h_t)^2\\
&=\sum_{k=1}^m\al_kg(x_k)\ell_k^2+2\sum_{t=1}^rg_t^2-2\sum_{t=1}^rh_t^2
\end{align*}
where $\ell_1,\dots,\ell_m,g_1+\ii h_1,g_1-\ii h_1,\dots,g_r+\ii h_r,g_r-\ii h_r\in C[T_1,\dots,T_d]$ are linearly independent.
Due to $C(g_i+\ii h_i)+C(g_i-\ii h_i)=Cg_i+Ch_i$, we have that \[\ell_1,\dots,\ell_m,g_1,\dots,g_r,h_1,\dots,h_r\] are also
linearly independent in $C[T_1,\dots,T_d]$ and therefore also in $R[T_1,\dots,T_d]$. It follows that
\begin{align*}
\rk H(f,g)&=\#\{k\in\{1,\dots,m\}\mid g(x_k)\ne0\}+2r\\
&=\#\{k\in\{1,\dots,m\}\mid g(x_k)\ne0\}+2\#\{t\in\{1,\dots,n\}\mid g(z_t)\ne0\}\\
&=\#\{x\in C\mid f(x)=0,g(x)\ne0\}\qquad\text{and}\\
\sg H(f,g)&=\#\{k\in\{1,\dots,m\}\mid g(x_k)>0\}-\#\{k\in\{1,\dots,m\}\mid g(x_k)<0\}+r-r\\
&=\#\{x\in R\mid f(x)=0,g(x)>0\}-\#\{x\in R\mid f(x)=0,g(x)<0\}.
\end{align*}
\end{proof}

\begin{cor}[Counting roots without side conditions]\label{crwsc}
Let $R$ be a real closed field, $C:=R(\ii)$ and suppose
$f\in R[X]$ is monic. Then
\begin{align*}
\rk H(f)&=\#\{x\in C\mid f(x)=0\}\qquad\text{and}\\
\sg H(f)&=\#\{x\in R\mid f(x)=0\}.
\end{align*}
\end{cor}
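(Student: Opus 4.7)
The plan is to deduce this as an immediate special case of Theorem \ref{hermite1} by taking $g := 1$. By the definition in \ref{hermite}, we have $H(f) = H(f,1)$, so the two identities to prove follow by evaluating the right-hand sides of the formulas in \ref{hermite1} when $g$ is the constant polynomial $1$.

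More explicitly, for any $x \in C$ the condition $g(x) \ne 0$ is automatically satisfied since $g \equiv 1$, so the set $\{x \in C \mid f(x) = 0, g(x) \ne 0\}$ reduces to $\{x \in C \mid f(x) = 0\}$, yielding the first identity. For the signature, the condition $g(x) > 0$ holds for every $x \in R$ and the condition $g(x) < 0$ holds for no $x \in R$, so the first cardinality in \ref{hermite1} collapses to $\#\{x \in R \mid f(x) = 0\}$ and the second cardinality is $0$. This gives the second identity.

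There is really no obstacle here; the statement is a direct specialization. The only thing worth double-checking is that $g = 1$ genuinely lies in $R[X]$ and that the Hermite form $H(f,1)$ matches the object $H(f)$ introduced in \ref{hermite}, both of which are immediate from the definitions.
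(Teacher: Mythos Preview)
Your proof is correct and is exactly the intended approach: the paper states this as a corollary immediately after Theorem~\ref{hermite1} without giving an explicit proof, precisely because it is the specialization $g=1$ that you carried out.
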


\begin{cor}[Counting roots with several side conditions]\label{several}
Let $R$ be a real closed field, $m\in\N_0$, $f,g_1,\dots,g_m\in R[X]$ and $f$ monic. Then
\[
\frac1{2^m}\sum_{\al\in\{1,2\}^m}\sg H(f,g_1^{\al_1}\dots g_m^{\al_m})=\#\{x\in R\mid f(x)=0,g_1(x)>0,\dots,g_m(x)>0\}
\]
\end{cor}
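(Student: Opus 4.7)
The plan is to reduce the identity to Theorem \ref{hermite1} (counting roots with one side condition) applied to the single polynomial $g := g_1^{\al_1}\cdots g_m^{\al_m}$, then swap the order of summation and exploit a product-expansion trick.

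First I would apply Theorem \ref{hermite1} to each $\al \in \{1,2\}^m$ to rewrite
\[
\sg H(f,g_1^{\al_1}\cdots g_m^{\al_m}) \;=\; \sum_{\substack{x\in R\\ f(x)=0\\ g_1(x),\dots,g_m(x)\ne 0}} \sgn\!\bigl(g_1(x)^{\al_1}\cdots g_m(x)^{\al_m}\bigr),
\]
observing that the term $\sgn(g_1^{\al_1}\cdots g_m^{\al_m}(x))$ vanishes exactly on those real roots of $f$ where some $g_i$ vanishes, and otherwise equals $+1$ or $-1$ according to the factorization of the sign. Because $\al_i \in \{1,2\}$, for $\al_i = 2$ the factor $\sgn(g_i(x))^{\al_i}$ equals $1$ whenever $g_i(x)\ne 0$, while for $\al_i=1$ it equals $\sgn(g_i(x))$ itself; thus the summand simplifies to $\prod_{i:\al_i=1}\sgn(g_i(x))$.

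Next I would interchange the two summations to obtain
\[
\sum_{\al\in\{1,2\}^m} \sg H(f,g_1^{\al_1}\cdots g_m^{\al_m}) \;=\; \sum_{\substack{x\in R\\ f(x)=0\\ g_i(x)\ne 0\ \forall i}} \;\sum_{\al\in\{1,2\}^m}\prod_{i:\al_i=1}\sgn(g_i(x)).
\]
The inner sum is the key identity: writing $s_i:=\sgn(g_i(x))\in\{-1,+1\}$, it factors as $\prod_{i=1}^m(1+s_i)$, since summing over $\al_i\in\{1,2\}$ independently for each $i$ gives a contribution $s_i$ from $\al_i=1$ and $1$ from $\al_i=2$. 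Each factor $1+s_i$ is $2$ when $g_i(x)>0$ and $0$ when $g_i(x)<0$, so $\prod_i(1+s_i)=2^m$ precisely when $g_1(x),\dots,g_m(x)$ are all positive and is $0$ otherwise. Substituting and dividing by $2^m$ yields the claimed identity.

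There is no real obstacle; the only mild subtlety is justifying that real roots of $f$ where some $g_i$ vanishes contribute nothing to the double sum. This is automatic because such roots are dropped from the inner summation of Theorem \ref{hermite1} for every choice of $\al$, and they are also excluded from the right-hand side count (which requires strict positivity $g_i(x)>0$).
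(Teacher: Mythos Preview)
Your proof is correct and follows essentially the same approach as the paper: apply Theorem~\ref{hermite1} to rewrite each signature as $\sum_{x:f(x)=0}\sgn(g_1(x)^{\al_1}\cdots g_m(x)^{\al_m})$, swap the order of summation, and factor the inner sum over $\al$ as $\prod_k(\sgn(g_k(x))+\sgn(g_k(x))^2)$. The only cosmetic difference is that the paper keeps the roots with some $g_i(x)=0$ in the sum (where they contribute $0$ automatically via $\sgn(0)+\sgn(0)^2=0$), whereas you exclude them explicitly upfront.
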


\begin{proof} The left hand side equals
\[
\frac1{2^m}\sum_{\al\in\{1,2\}^m}
\sum_{\substack{x\in R\\f(x)=0}}\sgn((g_1^{\al_1}\dots g_m^{\al_m})(x))\\
=\frac1{2^m}\sum_{\substack{x\in R\\f(x)=0}}\prod_{k=1}^m
(\underbrace{\sgn(g_k(x))+(\sgn(g_k(x)))^2}_{\textstyle=\begin{cases}0&\text{if }g_k(x)\le0\\2&\text{if }g_k(x)>0\end{cases}}).
\]
\end{proof}

\section{The real closure}

\begin{df}\label{dfrealclosure}
Let $(K,P)$ be an ordered field. An extension field $R$ of $K$ is called a \emph{real closure} of $(K,P)$ if
$R$ is real closed, $R|K$ is algebraic and the order of $R$ [$\to$ \ref{convention}, \ref{dfeuclid}] is an extension of $P$
[$\to$ \ref{oexf}].
\end{df}

\begin{pro}\label{maxordered}
Let $(R,P)$ be an ordered field. Then $R$ is real closed if and only if there is no ordered extension field $(L,Q)$ of $(R,P)$ such that
$L\ne R$ and $L|R$ is algebraic.
\end{pro}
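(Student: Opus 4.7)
The plan is to verify the two directions using the characterizations of real closed fields already developed, in particular Definition \ref{dfrealclosed} together with Theorems \ref{extend2} and \ref{extendodd}.

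For the forward direction, assume $R$ is real closed. Suppose $(L,Q)$ is an ordered extension of $(R,P)$ with $L|R$ algebraic. Then $L$ is a real field (since it carries an order), and it is an algebraic extension of $R$. By Theorem \ref{rcchar}, the real closed field $R$ admits no real algebraic proper extension, hence $L = R$. This direction is essentially a one-line consequence of \ref{rcchar}.

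For the backward direction, assume that $(R,P)$ admits no ordered algebraic proper extension. I will verify the two clauses of Definition \ref{dfrealclosed}. First, to see that $R$ is Euclidean, note that by \ref{squares} we already have $R^2 \subseteq P$. Conversely, let $d \in P$. By Theorem \ref{extend2}, $P$ extends to an order $Q$ of the algebraic extension $R(\sqrt d)$; by hypothesis this forces $R(\sqrt d) = R$, so $\sqrt d \in R$ and $d \in R^2$. Therefore $P = R^2$, so $R^2$ is an order of $R$ and $R$ is Euclidean by Definition \ref{dfeuclid}.

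Second, let $f \in R[X]$ be of odd degree; I need to exhibit a root in $R$. Choose an irreducible factor $g$ of $f$ in $R[X]$ of odd degree (at least one exists, since the product of the degrees of the irreducible factors is odd), and pick a root $a$ of $g$ in some extension field of $R$. Then $[R(a):R] = \deg g$ is odd, so Theorem \ref{extendodd} lets us extend $P$ to an order of $R(a)$. The hypothesis again forces $R(a) = R$, so $a \in R$ is a root of $f$. Combining both steps, $R$ is real closed by Definition \ref{dfrealclosed}.

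The main conceptual point — and the only place where anything non-trivial happens — is that the hypothesis is stated for \emph{ordered} algebraic extensions, whereas the characterizations from \ref{rcchar} and \ref{dfrealclosed} concern arbitrary real or algebraic extensions; Theorems \ref{extend2} and \ref{extendodd} are precisely the tools that let one produce the order on the required extension in the two cases ($R(\sqrt d)$ for Euclidean, $R(a)$ for odd-degree roots), so no obstacle arises beyond invoking them.
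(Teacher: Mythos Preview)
Your proof is correct and follows essentially the same approach as the paper: the forward direction via \ref{rcchar}(c), and the backward direction by verifying Definition \ref{dfrealclosed} using \ref{extend2} for the Euclidean property and \ref{extendodd} for odd-degree roots. One tiny slip: in your parenthetical, the degree of $f$ is the \emph{sum} (not product) of the degrees of its irreducible factors, which is what guarantees an odd-degree factor exists.
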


\begin{proof}
One direction follows from \ref{rcchar}(c). Conversely, suppose that every ordered extension field $(L,Q)$ of $(R,P)$
with $L|R$ algebraic satisfies $L=R$. To show:
\begin{enumerate}[(a)]
\item $R$ is Euclidean.
\item Every polynomial of odd degree from $R[X]$ has a root in $R$.
\end{enumerate}
For (a), we show $P=R^2$. To this end, let $a\in P$. By \ref{extend2}, we can extend $P$ to $R(\sqrt a)$. Due to the hypothesis, this
implies $R(\sqrt a)=R$ and therefore $a=(\sqrt a)^2\in R^2$.

To show (b), let $f\in R[X]$ be of odd degree. Choose in $R[X]$ an irreducible divisor $g$ of $f$ of odd degree. Choose a root $x$ of
$g$ in some extension field of $R$. Then $R(x)$ is an extension field of $R$ with odd $[R(x):R]$ so that
$P$ can be extended to $R(x)$
by \ref{extendodd}. By hypothesis, this gives $R(x)=R$. In particular, $g$ and therefore $f$ has a root in $R$.
\end{proof}

\begin{thm}\label{existsrc}
Every ordered field has a real closure.
\end{thm}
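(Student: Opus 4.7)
The plan is to apply Zorn's lemma inside a fixed algebraic closure of $K$ to produce a maximal ordered algebraic extension of $(K,P)$, and then invoke Proposition \ref{maxordered} to see that this maximal object is actually real closed.

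Fix an algebraic closure $\overline K$ of $K$ and consider the collection
\[
\mathcal S:=\{(L,Q)\mid K\subseteq L\subseteq\overline K,\ L|K\text{ algebraic},\ Q\text{ is an order of }L\text{ extending }P\},
\]
partially ordered by $(L_1,Q_1)\le(L_2,Q_2):\iff L_1\subseteq L_2\text{ and }Q_1\subseteq Q_2$. The set $\mathcal S$ is nonempty since it contains $(K,P)$. For a chain $((L_i,Q_i))_{i\in I}$ in $\mathcal S$, the field $L:=\bigcup_iL_i$ is an algebraic extension of $K$ inside $\overline K$ and $Q:=\bigcup_iQ_i$ is easily checked to be an order of $L$ extending $P$ (sums and products stay in some $Q_i$, and $Q\cup-Q=L$, $Q\cap-Q=\{0\}$ are inherited level by level), giving an upper bound in $\mathcal S$. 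Zorn's lemma yields a maximal element $(R,Q)\in\mathcal S$.

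It remains to show that $R$ is real closed; by Proposition \ref{maxordered} it suffices to prove that every ordered extension $(L',Q')$ of $(R,Q)$ with $L'|R$ algebraic satisfies $L'=R$. Given such $(L',Q')$, the extension $L'|K$ is algebraic by transitivity, so since $\overline K$ is algebraically closed we can extend the inclusion $R\hookrightarrow\overline K$ to a field embedding $\sigma\colon L'\to\overline K$ (embed $L'$ as an algebraic extension of $R$ into $\overline K$). Then $\sigma(L')$ is an algebraic extension of $K$ inside $\overline K$, and transporting $Q'$ gives $\sigma(Q')$, an order of $\sigma(L')$; it extends $P$ because $\sigma$ fixes $K$ pointwise, and it extends $Q$ because $\sigma|_R=\id_R$ and $Q\subseteq Q'$. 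Thus $(\sigma(L'),\sigma(Q'))\in\mathcal S$ dominates $(R,Q)$, so by maximality $\sigma(L')=R$, whence $L'=R$ by injectivity of $\sigma$. By Proposition \ref{maxordered}, $R$ is real closed, and by construction the order of $R$ extends $P$ with $R|K$ algebraic, so $R$ is a real closure of $(K,P)$.

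The main conceptual step is the embedding $\sigma\colon L'\to\overline K$ over $R$: it is what converts the abstract algebraic extension $L'|R$ into a concrete algebraic extension of $R$ inside $\overline K$, which is precisely what is needed to contradict the maximality of $(R,Q)$ in $\mathcal S$. Apart from that, everything is routine: verifying the chain condition for Zorn and checking that transporting orders along field embeddings preserves the extension relation.
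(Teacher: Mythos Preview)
Your proof is correct and follows essentially the same approach as the paper's: apply Zorn's lemma to ordered algebraic extensions inside a fixed algebraic closure and invoke Proposition~\ref{maxordered}. You are slightly more explicit about the embedding $\sigma\colon L'\to\overline K$ over $R$ where the paper simply remarks that every algebraic extension of $R$ is, up to $R$-isomorphy, an intermediate field of $\overline K|R$; this is the same argument spelled out.
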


\begin{proof} Let $(K,P)$ be an ordered field. Consider the algebraic closure $\overline K$ of $K$ and the set
\[M:=\{(L,Q)\mid L\text{ subfield of }\overline K, Q\text{ order of }L,(K,P)\text{ is an ordered subfield of }(L,Q)\}\]
which is partially ordered by declaring
\begin{align*}
(L,Q)\preceq(L',Q')&:\iff(L,Q)\text{ is an ordered subfield of }(L',Q')\\
&\overset{\text{\ref{unaryrem}(b)}}\iff(L\subseteq L'\et Q\subseteq Q')
\end{align*}
for all $(L,Q),(L',Q')\in M$. In $M$ every chain possesses an upper bound: The empty chain has $(K,P)$ as an upper bound.
A nonempty chain $C\subseteq M$ has \[\left(\bigcup\{L\mid(L,Q)\in C\},\bigcup\{Q\mid(L,Q)\in C\}\right)\in M\]
as an upper bound. By Zorn's lemma, $M$ possesses a maximal element $(R,Q)$. Of course, $\overline K$ is also the algebraic
closure of $R$ and therefore each algebraic extension of $R$ is (up to $R$-isomorphy) an intermediate field of $\overline K|R$.
The maximality of $(R,Q)$ in $M$ signifies by \ref{maxordered} just that $R$ is real closed. Because of $(R,Q)\in M$, the field
extension $R|K$ is algebraic and the order $Q$ is an extension of $P$.
\end{proof}

\begin{lem}\label{sameno}
Let $(K,P)$ be an \emph{ordered} subfield of the real closed fields $R$ and $R'$ [$\to$~\ref{convention}] and
$f\in K[X]$. Then $f$ has the same number of roots in both $R$ and $R'$.
\end{lem}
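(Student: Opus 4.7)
My plan is to use Hermite's theorem to reduce counting roots to a signature computation, and then argue that the signature in question depends only on signs of elements of $K$.

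First I would reduce to the case where $f$ is monic: dividing $f$ by its leading coefficient (which lies in $K^\times$) does not change its set of roots in any extension of $K$. So assume $f\in K[X]$ is monic of degree $d$.

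Next I would form the Hermite quadratic form $H(f)\in K[T_1,\dots,T_d]$ as in Definition~\ref{hermite}. Crucially, the matrix entries $\tr(C_f^{i+j-2})$ are polynomial expressions in the coefficients of $f$, hence lie in $K$, so $H(f)$ is a quadratic form over $K$ (not just over $R$ or $R'$). By the diagonalization procedure of Reminder~\ref{longremi}(f), which works over any field of characteristic $\ne 2$, I can write
\[
H(f)=\sum_{i=1}^r\la_i\ell_i^2
\]
with $r\in\N_0$, $\la_i\in K^\times$, and $\ell_1,\dots,\ell_r\in K[T_1,\dots,T_d]$ linearly independent linear forms over $K$. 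By \ref{longremi}(d), linear independence of these forms over $K$ is equivalent to linear independence of their coefficient vectors in $K^d$, which is preserved under any field extension; hence $\ell_1,\dots,\ell_r$ remain linearly independent over both $R$ and $R'$.

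Now applying Corollary~\ref{crwsc} in $R$: the number of roots of $f$ in $R$ equals $\sg_R H(f)$. Viewing the above decomposition over $R$ and using that $R$ is Euclidean, I can absorb each $\la_i$ into the linear form up to a sign by writing $\la_i\ell_i^2=\sgn_R(\la_i)(\sqrt{|\la_i|_R}\,\ell_i)^2$, where $\sqrt{|\la_i|_R}\in R$ by \ref{notremsqrt}(b). The resulting linear forms remain linearly independent, so by \ref{longremi}(h),
\[
\sg_R H(f)=\#\{i\mid\la_i\in P\}-\#\{i\mid-\la_i\in P\}.
\]
The exact same argument over $R'$ yields the identical right-hand side, since $P=Q\cap K=Q'\cap K$ where $Q,Q'$ are the unique orders of $R,R'$ and both restrict to $P$ on $K$. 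Thus $\sg_R H(f)=\sg_{R'}H(f)$, and the number of roots of $f$ in $R$ equals the number of roots of $f$ in $R'$.

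The only subtle point is making sure the signature computation really reduces to the signs of the $\la_i\in K$: this rests on (i) the $K$-rationality of the Hermite diagonalization, and (ii) the preservation of linear independence under scalar extension. Neither is an obstacle, just bookkeeping. A cleaner alternative formulation would be to observe directly that the signature of a quadratic form over an ordered field $(K,P)$ is well defined (Sylvester's law of inertia extends to ordered fields via the subspace-dimension argument in \ref{longremi}(i), using only positivity, not square roots), and that it is invariant under ordered extensions; but the argument above is self-contained given what the text has already established.
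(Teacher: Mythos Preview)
Your proposal is correct and follows essentially the same approach as the paper: reduce to monic $f$, invoke \ref{crwsc} to identify the root count with $\sg H(f)$, and observe via the diagonalization of \ref{longremi}(f)(h) that this signature can already be computed over $(K,P)$. The paper compresses all of this into a single sentence, whereas you have spelled out the details of why the signature is determined by data in $K$; the content is the same.
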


\begin{proof}
WLOG $f$ is monic. The number in question is by \ref{crwsc} equal to the signature of $H(f)$ that can be calculated already over
$(K,P)$ [$\to$ \ref{longremi}(f)(h)].
\end{proof}

\begin{thm}\label{unic}
Let $(K,P)$ be an ordered subfield of $(L,Q)$ such that $L|K$ is algebraic. Let $\ph$ be a homomorphism of ordered fields
from $(K,P)$ into a real closed field $R$.
Then there is exactly one homomorphism $\ps$ of ordered fields from $(L,Q)$ to $R$ with $\ps|_K=\ph$.
\end{thm}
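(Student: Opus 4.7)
The argument splits into existence and uniqueness, both resting on Hermite root-counting (Corollary~\ref{several}) and the invariance principle Lemma~\ref{sameno}.

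\textbf{Uniqueness.} Suppose $\ps_1,\ps_2\colon(L,Q)\to R$ are two order-preserving extensions of $\ph$. Fix $a\in L$ with $f:=\irr_K(a)\in K[X]$, and write $f^\ph\in R[X]$ for its coefficient-wise image under $\ph$. Each $\ps_i(a)$ is a root of $f^\ph$, and for every $g\in K[X]_{<\deg f}$ the chain
\[
\sgn(g(a))=\sgn(\ps_i(g(a)))=\sgn(\ph_*(g)(\ps_i(a)))
\]
shows that $\ps_1(a)$ and $\ps_2(a)$ are real roots of $f^\ph$ realizing the same sign pattern on $\ph_*(K[X]_{<\deg f})$ as $a$ does on $K[X]_{<\deg f}$. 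The conclusion then follows from the key claim that distinct real roots of an irreducible $f\in K[X]$ in a real closed extension are separated by the sign of some polynomial in $K[X]$; equivalently, the map (real root) $\mapsto$ (sign vector on $K[X]_{<\deg f}$) is injective.

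\textbf{Existence.} Apply Zorn's lemma to the poset of order-preserving homomorphisms $\ps_M\colon(M,Q\cap M)\to R$ extending $\ph$, where $M$ ranges over intermediate fields of $L|K$ ordered by restriction. The poset is nonempty (take $M=K$) and chains have unions as upper bounds, so a maximal element $(M_0,\ps_{M_0})$ exists. It suffices to show $M_0=L$. If not, pick $a\in L\setminus M_0$, set $g:=\irr_{M_0}(a)\in M_0[X]$, choose a real closure $\tilde L$ of $(L,Q)$ by Theorem~\ref{existsrc}, and let $R_0\subseteq R$ be a real closure of $(\ps_{M_0}(M_0),\ps_{M_0}(Q\cap M_0))$ inside $R$ (which exists by Proposition~\ref{maxordered} applied to the algebraic closure of $\ps_{M_0}(M_0)$ in $R$). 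Lemma~\ref{sameno} equates the real root counts of $g$ in $\tilde L$ and of $\ps_{M_0*}(g)$ in $R_0$, and Corollary~\ref{several} further matches the multisets of sign patterns on $M_0[X]_{<\deg g}$. Invoking the key claim above, choose $\beta\in R_0$ as the unique root of $\ps_{M_0*}(g)$ whose sign pattern matches that of $a$, and declare $\ps_{M_0(a)}(a):=\beta$. A routine verification shows this defines an order-preserving extension of $\ps_{M_0}$ to $M_0(a)$, contradicting maximality.

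\textbf{Main obstacle.} The technical heart is the sign-pattern separation claim. If the order on $K$ is Archimedean, any two distinct real roots admit an element of $\Q\subseteq K$ strictly between them, and a linear polynomial $X-c$ separates them. The non-Archimedean case requires the Hermite formalism of Section~\ref{sec:hermite}: if two roots shared every sign pattern on $K[X]$, then iterating Corollary~\ref{several} against a $K$-basis of $K[X]/(f)$ would produce individual sign-class counts incompatible with the total from Corollary~\ref{crwsc}. Equivalently, distinct real embeddings of $K[X]/(f)$ into a real closure of $K$ induce distinct orderings of the field $K[X]/(f)$, a fact provable via Theorem~\ref{extend2} and the Artin--Schreier machinery of Theorem~\ref{artin-schreier}.
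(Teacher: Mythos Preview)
Your outline takes a genuinely different route from the paper and, as written, has a gap at the ``key claim.''

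\textbf{The gap.} Everything hinges on showing that two distinct roots $\alpha,\beta$ of an irreducible $f\in K[X]$ in a real closed extension of $(K,P)$ are separated by the sign of some $g\in K[X]$. This is true, but neither of your proposed justifications works as stated. Corollary~\ref{several} counts how many roots lie in each sign-class on a given finite list $g_1,\dots,g_s$; it does not by itself force these counts to be at most one, and your sentence about ``incompatible counts'' does not unpack into an argument. The second formulation (``distinct embeddings induce distinct orderings on $K[X]/(f)$'') is literally the uniqueness assertion of the present theorem specialized to $L=K[X]/(f)$, so invoking it is circular unless you supply an independent proof; I do not see how Theorems~\ref{extend2} or~\ref{artin-schreier} yield one. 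The clean fix is Thom's lemma: the sign vector $(\sgn f^{(k)}(\alpha))_{1\le k\le\deg f}$ determines $\alpha$ uniquely among the real roots of $f$. Since each $f^{(k)}\in K[X]$, this gives your claim, and the lemma itself is a short induction on $\deg f$ using that $f$ is monotone on intervals where $\sgn f'$ is constant (Corollary~\ref{dermono}). With this patch your scheme goes through. (A side note: in your existence step you write ``the \emph{unique} root $\beta$ whose sign pattern matches that of $a$,'' but for existence you only need \emph{some} such $\beta$, and that follows from a pigeonhole on the finitely many roots $\beta_1,\dots,\beta_m$ without Thom: if each $\beta_j$ disagreed with $a$ on some $p_j\in M_0[X]$, then on the finite list $p_1,\dots,p_m$ no $\beta_j$ would realize $a$'s sign vector, contradicting the multiset equality from Corollary~\ref{several}. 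So the key claim is genuinely needed only for the uniqueness half.)

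\textbf{How the paper does it.} The paper avoids Thom's lemma by reversing the logical order. It proves \emph{existence first}: reducing via Zorn to $[L:K]<\infty$, it assumes every $K$-embedding $\psi_i\colon L\to R$ fails to preserve order, picks $b_i\in Q$ with $\psi_i(b_i)\notin R^2$, adjoins $\sqrt{b_1},\dots,\sqrt{b_m}$ inside a real closure $R'$ of $(L,Q)$, and uses only the root-count invariance of Lemma~\ref{sameno} to embed this larger field into $R$; the restriction to $L$ is some $\psi_i$, but now $\psi_i(b_i)$ is a square, a contradiction. Then \emph{uniqueness bootstraps from existence}: any order-preserving $\psi\colon(L,Q)\to R$ extends (by the existence just established) to some $\rho\colon R'\to R$, which is automatically monotone and hence sends the $i$-th root of $f$ in $R'$ to the $i$-th root in $R$; this pins down $\psi(a)$. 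The paper's argument is shorter and needs only the coarse root count rather than the full sign-pattern matching.
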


\begin{proof}
Choose a real closure $R'$ of $(L,Q)$ according to \ref{existsrc}.

Existence: Using Zorn's lemma, one reduces easily to the case where $L|K$ is finite.
Since $\ph\colon K\to\ph(K)\subseteq R$ is an isomorphism of ordered fields,
we can suppose WLOG that $(K,P)$ is an ordered subfield of $R$ and $\ph=\id_K$.
We denote the different $K$-homo\-morphisms
from $L$ to $R$ by $\ps_1,\dots,\ps_m$ ($m\in\N_0$). Assume that none of these is a homomorphism of ordered
fields from $(L,Q)$ to $R$ (for example if $m=0$). Then there are $b_1,\dots,b_m\in Q$ such that $\ps_1(b_1)\notin R^2,\dots,
\ps_m(b_m)\notin R^2$. By the primitive element theorem there exists
\[a\in L':=L(\sqrt{b_1},\dots,\sqrt{b_m})\overset{b_i\in Q\subseteq R'^2}\subseteq R'\]
such that $L'=K(a)$. The minimal polynomial of $a$ over $K$ has by \ref{sameno} the same number of roots in $R'$ and $R$
and therefore in particular a root in $R$. Hence there is a $K$-homomorphism $\ps\colon L'\to R$.
Choose $i\in\{1,\dots,m\}$ with $\ps|_L=\ps_i$ (in particular $m>0$). Then $\ps_i(b_i)=\ps(b_i)=(\ps(\sqrt{b_i}))^2\in R^2$
$\lightning$.

Unicity: Let $a\in L$. Choose $f\in K[X]\setminus\{0\}$ with $f(a)=0$. Choose $a_1,\dots,a_m\in R'$ with $a_1<\ldots<a_m$
such that $\{x\in R'\mid f(x)=0\}=\{a_1,\dots,a_m\}$. Again WLOG $\ph|_K=\id_K$
and hence $(K,P)$ is an ordered subfield of $R$. By \ref{sameno} there are $b_1,\dots,b_m\in R$
such that $b_1<\ldots<b_m$ and
$\{x\in R\mid f(x)=0\}=\{b_1,\dots,b_m\}$. Choose now $i\in\{1,\dots,m\}$ such that $a=a_i$. We show that each
homomorphism $\ps$ of ordered fields from $(L,Q)$ to $R$ with $\ps|_K=\id$ satisfies $\ps(a)=b_i$. To this end, fix such a $\ps$.
By the already proved existence statement, there is a homomorphism of ordered fields $\rh\colon R'\to R$ such that
$\rh|_L=\ps$. Since $\rh$ is an embedding, we have $\{\rh(a_1),\dots,\rh(a_m)\}=\{b_1,\dots,b_m\}$ and by the monotonicity
we even get $\rh(a_j)=b_j$ for all $j\in\{1,\dots,m\}$. We deduce $\ps(a)=\ps(a_i)=\rh(a_i)=b_i$.
\end{proof}

\begin{cor}\label{genauein}
Let $R$ and $R'$ be real closures of the ordered field $(K,P)$. Then there is exactly one $K$-isomorphism from $R$ to $R'$.
\end{cor}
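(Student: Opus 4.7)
The plan is to read Corollary \ref{genauein} as an immediate application of Theorem \ref{unic}, used twice.

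First, I would set up existence and (tentative) uniqueness in one stroke. Since $R$ is a real closure of $(K,P)$, the extension $R\,|\,K$ is algebraic and the unique order of $R$ is $R^2$, which extends $P$, so $(K,P)$ is an ordered subfield of $(R,R^2)$. Applying Theorem \ref{unic} with $(L,Q)=(R,R^2)$ and with $\ph$ the inclusion $K\hookrightarrow R'$ (which is a homomorphism of ordered fields because $R'$ is a real closure of $(K,P)$) produces exactly one homomorphism of ordered fields $\ps\colon R\to R'$ with $\ps|_K=\id_K$.

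Before worrying about surjectivity, I would observe that for this corollary one has to talk about $K$-isomorphisms, not \emph{a priori} homomorphisms of ordered fields, but the two notions coincide here: any field homomorphism $\si\colon R\to R'$ between real closed fields satisfies $\si(R^2)\subseteq R'^2$, hence is automatically a homomorphism of ordered fields when $R$ and $R'$ are viewed with their canonical orders [$\to$ \ref{convention}, \ref{euclidunique}]. So the uniqueness already delivered by \ref{unic} is uniqueness of the $K$-homomorphism as well.

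It remains to show that this unique $\ps$ is bijective. Injectivity is automatic since $\ps$ is a field homomorphism. For surjectivity, swap the roles of $R$ and $R'$: the same argument yields a unique $K$-homomorphism of ordered fields $\ps'\colon R'\to R$. Then $\ps'\circ\ps\colon R\to R$ is a homomorphism of ordered fields extending $\id_K$, and so is $\id_R$; the uniqueness clause of \ref{unic} applied to $(K,P)\subseteq(R,R^2)$ with $\ph=\id_K\colon K\hookrightarrow R$ forces $\ps'\circ\ps=\id_R$, and symmetrically $\ps\circ\ps'=\id_{R'}$. Hence $\ps$ is a $K$-isomorphism, and it is the only one.

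I do not expect any real obstacle here; the whole content of the corollary is packaged into \ref{unic}. The only subtlety worth flagging explicitly in the write-up is that a $K$-homomorphism between real closures is automatically order-preserving, so that \ref{unic} applies without extra hypothesis and uniqueness transfers from the ordered-field setting to the bare field setting.
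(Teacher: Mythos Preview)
Your proof is correct and follows essentially the same approach as the paper: reduce $K$-isomorphisms to ordered-field homomorphisms over $K$, then invoke \ref{unic} for existence and uniqueness, handling surjectivity via the uniqueness of the $K$-endomorphism of $R$ (and of $R'$). The paper also mentions \ref{rcchar}(c) as an alternative route to surjectivity, but your symmetric-inverse argument is exactly one of the two options it suggests.
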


\begin{proof}
The $K$-isomorphisms from $R$ to $R'$ are obviously exactly the isomorphisms of ordered fields from $R$ to $R'$ whose
restriction to $K$ is the identity. For this reason, the claim follows easily from \ref{unic} (for the surjectivity in the existence part
use either \ref{rcchar}(c) or the unicity of $K$-automorphisms of $R$ and of $R'$ [$\to$ \ref{unic}]).
\end{proof}

\begin{notterm}\label{theclosure}
Because of \ref{genauein}, we speak of \emph{the} real closure $\overline{(K,P)}$ of
$(K,P)$. It contains by \ref{unic} (up to $K$-isomorphy)
every ordered field extension $(L,Q)$ of $(K,P)$ with $L|K$ algebraic.
\end{notterm}

\begin{thm}\label{bijext}
Suppose $(K,P)$ is an ordered field, $L|K$ an algebraic extension, $R$ a real closed field and $\ph$ a homomorphism of ordered
fields from $(K,P)$ to $R$. Then
\begin{align*}
\{\ps\mid\ps\colon L\to R\text{ homomorphism},\ps|_K=\ph\}&\to\{Q\mid\text{$Q$ is an extension of $P$ to $L$}\}\\
\ps&\mapsto\ps^{-1}(R^2)
\end{align*}
is a bijection.
\end{thm}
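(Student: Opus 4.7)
The plan is to reduce the statement to Theorem \ref{unic} by recognizing that under the convention \ref{convention} the real closed field $R$ carries its unique order $R^2$, so a field homomorphism $\ps\colon L\to R$ extending $\ph$ is an \emph{ordered} field homomorphism from $(L,\ps^{-1}(R^2))$ to $R$, and conversely an extension $Q$ of $P$ to $L$ makes $(L,Q)$ into an ordered extension of $(K,P)$ to which Theorem \ref{unic} can be applied.

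First I would verify that the map is well defined. Given a field homomorphism $\ps\colon L\to R$ with $\ps|_K=\ph$, I would check that $Q_\ps:=\ps^{-1}(R^2)$ is an order of $L$ using the characterization \ref{unaryrem}(a): closedness under addition and multiplication transfers from $R^2$ since $\ps$ is a ring homomorphism; $Q_\ps\cup-Q_\ps=L$ follows from $R^2\cup-R^2=R$ (as $R$ is Euclidean, \ref{cheuclid}); and $Q_\ps\cap-Q_\ps=\{0\}$ follows from $R^2\cap-R^2=\{0\}$ together with the injectivity of $\ps$ (field homomorphism). The extension property $P\subseteq Q_\ps$ is equivalent to $\ph(P)\subseteq R^2$, which holds because $\ph$ is a homomorphism of ordered fields into $(R,R^2)$.

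Next I would establish surjectivity. Let $Q$ be any extension of $P$ to $L$. Then $(L,Q)$ is an ordered extension field of $(K,P)$ with $L|K$ algebraic, so Theorem \ref{unic} produces a (unique) homomorphism of ordered fields $\ps\colon(L,Q)\to(R,R^2)$ with $\ps|_K=\ph$. The monotonicity condition $\ps(Q)\subseteq R^2$ means $Q\subseteq\ps^{-1}(R^2)=Q_\ps$. Both $Q$ and $Q_\ps$ are orders of $L$; since an order is a maximal proper preorder by Theorem \ref{orderpreorder}, the inclusion $Q\subseteq Q_\ps$ forces $Q=Q_\ps$.

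Finally, for injectivity, suppose $\ps_1,\ps_2\colon L\to R$ both extend $\ph$ and satisfy $\ps_1^{-1}(R^2)=\ps_2^{-1}(R^2)=:Q$. Then both $\ps_1$ and $\ps_2$ are homomorphisms of ordered fields from $(L,Q)$ into $(R,R^2)$ extending the homomorphism of ordered fields $\ph\colon(K,P)\to(R,R^2)$, so the uniqueness statement in Theorem \ref{unic} yields $\ps_1=\ps_2$. No step here is really an obstacle; the only thing one has to be careful about is not to confuse ``$\ps$ is a field homomorphism with $\ph(P)\subseteq R^2$'' with ``$\ps$ is monotonic for a \emph{prescribed} order $Q$'' — the point of the bijection is precisely that every $\ps$ induces its own preferred order $Q_\ps$, which is exactly the one making $\ps$ monotonic.
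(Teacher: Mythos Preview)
Your proof is correct and follows essentially the same approach as the paper: both reduce everything to Theorem \ref{unic}, using that $\ps(Q)\subseteq R^2$ forces $Q=\ps^{-1}(R^2)$ since two orders of $L$ with one contained in the other must coincide. The paper packages surjectivity and injectivity together as ``for each $Q$ there is exactly one $\ps$'' and runs the maximality argument on the image side $\ps(L)$, whereas you separate the two and argue on the $L$ side, but this is only a cosmetic difference.
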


\begin{proof}
The well-definedness is easy to see. To verify the bijectivity, let $Q$ be an extension of $P$ to $L$. We have to show that
there is exactly one homomorphism $\ps\colon L\to R$ with $\ps|_K=\ph$ fulfilling the condition $\ps^{-1}(R^2)=Q$ that is equivalent
to $\ps$ being a homomorphism of ordered fields from $(L,Q)$ to $R$ since
\begin{align*}
\ps^{-1}(R^2)=Q&\iff\ps^{-1}(R^2\cap\ps(L))=Q\overset{\ps\colon L\to\ps(L)}{\underset{\text{bijective}}\iff} R^2\cap\ps(L)=\ps(Q)\\
&\overset{R^2\cap\ps(L)}{\underset{\text{order of $\ps(L)$}}\iff}\ps(Q)\subseteq R^2\cap\ps(L)\iff\ps(Q)\subseteq R^2.
\end{align*}
Hence we get the unicity and existence of $\ps$ from \ref{unic}.
\end{proof}

\begin{cor} Suppose $(K,P)$ is an ordered field, $R:=\overline{(K,P)}$ and $L|K$ a finite extension.
Let $a\in L$ with $L=K(a)$ and $f$ be the minimal polynomial of $a$ over $K$. Then
\begin{align*}
\{x\in R\mid f(x)=0\}&\to\{Q\mid\text{$Q$ is an extension of $P$ to $L$}\}\\
x&\mapsto\{g(a)\mid g\in K[X],g(x)\in R^2\}
\end{align*}
is a bijection.
\end{cor}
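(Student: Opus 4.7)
The plan is to derive this directly from Theorem \ref{bijext} applied to the inclusion $\iota\colon K\hookrightarrow R$, combined with the standard identification of $K$-homomorphisms $K(a)\to R$ with roots of $\irr_K a=f$ in $R$.

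First, observe that since $R=\overline{(K,P)}$ is by definition a real closure of $(K,P)$, the inclusion $\iota\colon K\hookrightarrow R$ is a homomorphism of ordered fields $(K,P)\hookrightarrow(R,R^2)$ (the order on $R$ being the unique one on a real closed field, namely $R^2$; see \ref{dfrealclosure}, \ref{convention}, \ref{dfeuclid}). Applying Theorem \ref{bijext} to $\ph=\iota$, we obtain a bijection
\[
\Psi\colon\{\ps\colon L\to R\mid\ps\text{ field homomorphism},\ \ps|_K=\iota\}\to\{Q\mid Q\text{ extension of }P\text{ to }L\},\quad\ps\mapsto\ps^{-1}(R^2).
\]

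Next, I would invoke the universal property of $K[X]/(f)\cong L$, $\overline g\mapsto g(a)$: the $K$-homomorphisms $\ps\colon L\to R$ are in bijection with the roots of $f$ in $R$ via
\[
\Phi\colon\{x\in R\mid f(x)=0\}\to\{\ps\colon L\to R\mid\ps|_K=\iota\},\quad x\mapsto\ps_x,
\]
where $\ps_x$ is the unique $K$-homomorphism determined by $\ps_x(a)=x$, explicitly $\ps_x(g(a))=g(x)$ for $g\in K[X]$. (Well-definedness of $\ps_x$ is immediate: since $f(x)=0$, the evaluation $K[X]\to R$, $g\mapsto g(x)$ factors through $K[X]/(f)\cong L$; conversely, any $K$-homomorphism $\ps\colon L\to R$ sends $a$ to a root of $f$, and this root determines $\ps$ because $L=K[a]$.)

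Finally, I compose the two bijections. For a root $x\in R$ of $f$, the corresponding order is
\[
\Psi(\Phi(x))=\ps_x^{-1}(R^2)=\{y\in L\mid\ps_x(y)\in R^2\}=\{g(a)\mid g\in K[X],\ g(x)\in R^2\},
\]
which is precisely the map described in the statement. Hence $\Psi\circ\Phi$ is the desired bijection.

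No genuine obstacle arises here: the content is entirely packaged in Theorem \ref{bijext} and in the elementary description of $K$-homomorphisms out of $K(a)$. The only thing to double-check is that the composed formula literally matches the displayed assignment, which it does.
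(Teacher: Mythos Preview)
Your proof is correct and follows essentially the same approach as the paper: apply Theorem \ref{bijext} to the inclusion $K\hookrightarrow R$, combine with the standard bijection between roots of $f$ in $R$ and $K$-homomorphisms $L\to R$, and check that the composite map has the stated form. The paper's own proof is terser but identical in substance.
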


\begin{proof}
By \ref{bijext} it is enough to see that
\begin{align*}
\{x\in R\mid f(x)=0\}&\to\{\ps\mid\text{$\ps\colon L\to R$ is a $K$-homomorphism}\}\\
x&\mapsto(g(a)\mapsto g(x))\qquad(g\in K[X])
\end{align*}
is a bijection. This is easy to see.
\end{proof}

\begin{ex} Let $(K,P)$ be an ordered field with $2\notin K^2$. Denote by $\sqrt 2$ one of the two
square roots of $2$ in the algebraic closure $\overline K$ of $K$ [$\to$ \ref{notremsqrt}(a)].
Then there are exactly $2$ orders of
$K(\sqrt2)$ that extend $P$, namely the two induced by the field embeddings $K(\sqrt2)\hookrightarrow\overline{(K,P)}$
(in one of which $\sqrt2$ is positive and in one of which it is negative). In particular, this is true if $(K,P)$ is
not Archimedean [$\to$ \ref{unaryrem}(d)] and in this case we cannot argue with $\R$ instead of $\overline{(K,P)}$
as we did in \ref{sqrt2}.
\end{ex}

\begin{pro}\label{relcl}
Let $R$ be a real closed field and $K$ a subfield of $R$ that is (relatively) algebraically closed in $R$ (i.e., no element of
$R\setminus K$ is algebraic over $K$). Then $K$ is real closed.
\end{pro}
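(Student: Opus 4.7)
The plan is to realize $K$ as its own real closure by invoking the uniqueness provided by Theorem \ref{unic}. Since $R$ is real closed and thus Euclidean [$\to$ \ref{dfeuclid}, \ref{dfrealclosed}], its unique order $R^2$ restricts to an order $P := R^2 \cap K$ on $K$. This makes $(K,P)$ an ordered field, and the inclusion $K \hookrightarrow R$ is by construction a homomorphism of ordered fields.

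By Theorem \ref{existsrc}, the ordered field $(K,P)$ has a real closure $R'$; by Definition \ref{dfrealclosure}, $R'$ is real closed, $R'|K$ is algebraic, and the (unique) order ${R'}^2$ of $R'$ extends $P$. Applying Theorem \ref{unic} to the pair $(K,P) \subseteq (R',{R'}^2)$ together with the homomorphism of ordered fields $\varphi\colon (K,P) \hookrightarrow R$, we obtain a homomorphism of ordered fields $\psi \colon R' \to R$ extending $\varphi$.

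The image $\psi(R')$ is a subfield of $R$ that is algebraic over $K$ (since $R'|K$ is algebraic). By the hypothesis that $K$ is relatively algebraically closed in $R$, we conclude $\psi(R') \subseteq K$; since $\psi$ extends the inclusion, equality holds. As $\psi$ is injective, $R' = K$. Because $R'$ is real closed by definition of a real closure, so is $K$.

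The only subtle step is recognizing that one must equip $K$ with the specific order \emph{induced from $R$}, so that the inclusion $K \hookrightarrow R$ qualifies as a homomorphism of ordered fields and Theorem \ref{unic} can be brought to bear. Once this choice is made, no competing orders on $K$ ever enter the argument and the rest is essentially bookkeeping.
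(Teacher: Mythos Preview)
Your proof is correct and follows essentially the same route as the paper: equip $K$ with the induced order $P=R^2\cap K$, use Theorem~\ref{unic} to embed an algebraic ordered extension of $(K,P)$ into $R$, and then invoke the relative algebraic closedness of $K$ in $R$ to collapse that extension back to $K$. The paper phrases this via the criterion of Proposition~\ref{maxordered} (checking \emph{all} algebraic ordered extensions $(L,Q)$ of $(K,P)$), whereas you apply the same embedding argument directly to the real closure $R'$ of $(K,P)$ and conclude $R'=K$; this is only a cosmetic difference.
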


\begin{proof}
Apply the criterion from \ref{maxordered}: Every ordered extension field $(L,Q)$ of $(K,R^2\cap K)$
such that $L|K$ is algebraic is contained
in $R$ up to $K$-isomorphy [$\to$ \ref{unic}, \ref{theclosure}] and therefore equal to $K$.
\end{proof}

\begin{ex}\label{ralg}
The field $\R_{\text{alg}}:=\{x\in\R\mid\text{$x$ algebraic over $\Q$}\}$ of \emph{real algebraic numbers}
is the algebraic closure of $\Q$ in $\R$. By \ref{relcl}, $\R_{\text{alg}}$ is real closed and therefore the real closure
of $\Q$ [$\to$ \ref{convention}]. Hence $\R_{\text{alg}}$ is uniquely embeddable in every real closed field by \ref{unic}.
In this sense, $\R_{\text{alg}}$ is the smallest real closed field.
\end{ex}

\section{Real quantifier elimination}\label{sec:qe}

\begin{rem}\label{unionsection}
Let $M$, $I$ and $J_i$ for each $i\in I$ be sets and suppose $A_{ij}\subseteq M$ for all $i\in I$ and $j\in J_i$. Defining
the empty intersection as $M$ (that is $\bigcap_{i\in\emptyset}\ldots:=\bigcap\emptyset:=M$), one has
\begin{align*}
\bigcup_{i\in I}\bigcap_{j\in J_i}A_{ij}&=\bigcap_{(j_i)_{i\in I}\in\prod_{i\in I}J_i}\bigcup_{i\in I}A_{ij_i},\\
\bigcap_{i\in I}\bigcup_{j\in J_i}A_{ij}&=\bigcup_{(j_i)_{i\in I}\in\prod_{i\in I}J_i}\bigcap_{i\in I}A_{ij_i},\\
\complement\bigcup_{i\in I}\bigcap_{j\in J_i}A_{ij}&=\bigcap_{i\in I}\bigcup_{j\in J_i}\complement A_{ij}\qquad\text{and}\\
\complement\bigcap_{i\in I}\bigcup_{j\in J_i}A_{ij}&=\bigcup_{i\in I}\bigcap_{j\in J_i}\complement A_{ij}
\end{align*}
where the \emph{complement} of $A\subseteq M$ is given by
$\complement A:=\complement_MA:=M\setminus A$.
\end{rem}

\begin{dfpro}\label{booleanalgebra}
Let $M$ be a set and $\pow(M)$ its power set.
\begin{enumerate}[\normalfont(a)]
\item
We call $\mathcal S\subseteq\pow(M)$ a \emph{Boolean algebra} on $M$ if
\begin{itemize}
\item $\emptyset\in\mathcal S$,
\item $\forall S\in\mathcal S:\complement S\in\mathcal S$,
\item $\forall S_1,S_2\in\mathcal S:S_1\cap S_2\in\mathcal S$ and
\item $\forall S_1,S_2\in\mathcal S:S_1\cup S_2\in\mathcal S$.
\end{itemize}
\item
Let $\mathcal G\subseteq\pow(M)$. Then the set of all finite \alal{unions}{intersections} of finite \alal{intersections}{unions} of
elements of $\mathcal G$ and their complements (with $\bigcap\emptyset:=M$) is obviously the smallest Boolean algebra $\mathcal S$
on $M$ with $\mathcal G\subseteq\mathcal S$. It is called the Boolean algebra \emph{generated} by $\mathcal G$ (on $M$). Its
elements are called the \emph{Boolean combinations} of elements of $\mathcal G$.
\end{enumerate}
\end{dfpro}

\begin{dfrem}\label{introsemialg}
In the sequel, we let $(K,P)$ always be an ordered field, for example $(K,P)=(\Q,\Q_{\ge0})$ unless otherwise stated.
Moreover, we let $\mathcal R$ be a set of real closed fields containing $(K,P)$ as an ordered subfield. For $n\in\N_0$,
we set \[\mathcal R_n:=\{(R,x)\mid R\in\mathcal R,x\in R^n\}.\]  Thereby we have $R^0=\{\emptyset\}=\{0\}$ and
we identify $\mathcal R_0$ with $\mathcal R$. A Boolean combination of sets of the form
\[\{(R,x)\in\mathcal R_n\mid p(x)\ge0\text{ (in $R$)}\}\qquad(p\in K[X_1,\dots,X_n])\]
is called a
\begin{itemize}
\item \emph{$K$-semialgebraic set in $R^n$} if $\mathcal R=\{R\}$, and
\item an \emph{$n$-ary $(K,P)$-semialgebraic class}
if $\mathcal R$ is ``potentially very big'' (in any case big enough to contain all real closed ordered extension fields of $(K,P)$ that
are currently in the game).
\end{itemize}
We identify $K$-semialgebraic sets in $R^n$ with subsets of $R^n$. Thus these are simply the subsets
of $R^n$ that can be defined by combining finitely many polynomial inequalities with coefficients in $K$ by the logical connectives
``not'', ``and'' and ``or''.
A \emph{semialgebraic set} in $R^n$ is an $R$-semialgebraic set in $R^n$. A \emph{semialgebraic class} is a
$\Q$-semialgebraic class.
\end{dfrem}

\begin{rem}\label{rcfclass}
\begin{enumerate}[(a)]
\item On the first reading, the reader might want to think of $\mathcal R=\{R\}$ or even of $\mathcal R=\{\R\}$ in order to have
a good geometric perception. Initially one can therefore think of $(K,P)$-semialgebraic classes as $K$-semialgebraic sets.
\item One can conceive $\mathcal R$ as the ``set'' of all real closed ordered extension fields of $(K,P)$. Unfortunately, this is not a set
(otherwise Zorn's lemma would yield real closed fields having no proper real closed extension field in contradiction to
\ref{rtlfct} combined with \ref{existsrc}) but a proper \emph{class}. But we do not want to get into the formal notion of a class
and instead adopt a naïve point of view from which sets and classes are synonymous where ``big'' sets often
tend to be called classes.
\item Whoever gets vertiginous from (b), has several ways out: Our resort here is that $\mathcal R$ is a honest set that is
at any one time  sufficiently big (often $\#\mathcal R=1$ is enough and almost always  $\#\mathcal R=2$ is enough).
Alternatively, one could learn the subtle non-naïve handling of sets and classes. As a third option, one could work,
instead of with $(K,P)$-semialgebraic classes, with formulas of first-order logic in the language of ordered fields with
additional constants for the elements of $K$. The last two options are technically very involved.
\end{enumerate}
\end{rem}

\begin{rem}\label{nothingorall}
Obviously, $\emptyset$ and $\mathcal R$ are the only $0$-ary $(K,P)$-semialgebraic classes. Note that this uses heavily that $(K,P)$ is an ordered subfield of every $R\in\mathcal R$.
\end{rem}

\begin{pro}\label{sanf}
Every $(K,P)$-semialgebraic class is of the form
\[\bigcup_{i=1}^k\left\{(R,x)\in\mathcal R_n\mid f_i(x)=0,g_{i1}(x)>0,\dots,g_{im}(x)>0\right\}\]
for some $n,k,m\in\N_0$, $f_i,g_{ij}\in K[X_1,\dots,X_n]$.
\end{pro}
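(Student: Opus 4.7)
The plan is to start from the definition of $(K,P)$-semialgebraic class as a Boolean combination of half-spaces $\{p\ge0\}$ and successively normalize the expression into the stated form.

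First, I would apply the standard disjunctive normal form procedure for Boolean algebras (essentially \ref{unionsection}): any element of the Boolean algebra generated by the sets $A_p:=\{(R,x)\in\mathcal R_n\mid p(x)\ge0\}$ can be written as a finite union $\bigcup_i\bigcap_j B_{ij}$ where each $B_{ij}$ is either some $A_p$ or its complement $\complement A_p=\{p<0\}=\{-p>0\}$. Hence every $(K,P)$-semialgebraic class is a finite union of finite intersections of sets of the two forms $\{p\ge0\}$ and $\{p>0\}$ (allowing $p$ and $-p$ to range over $K[X_1,\dots,X_n]$).

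Next I would replace each non-strict factor by a disjunction of strict and equation factors via the tautology
\[\{p\ge0\}=\{p>0\}\cup\{p=0\},\]
and then distribute the newly introduced unions over the surrounding intersections. This produces a finite union of finite intersections where each factor is of the form $\{p>0\}$ or $\{p=0\}$.

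The final step merges several equations into a single one, using the fact that every $R\in\mathcal R$ is real closed, hence real: by \ref{realchar}(c), for $a_1,\dots,a_r\in R$ one has $a_1^2+\cdots+a_r^2=0\iff a_1=\cdots=a_r=0$. Thus an intersection $\{f_1=0\}\cap\cdots\cap\{f_r=0\}$ equals $\{f_1^2+\cdots+f_r^2=0\}$; if no equation factor occurs, I adjoin the trivial factor $\{0=0\}=\mathcal R_n$. At this point each term of the union has exactly one equation factor and some number of strict inequality factors. To obtain a common number $m$ as in the statement I pad shorter terms with the redundant factor $\{1>0\}=\mathcal R_n$. No step here is genuinely hard; the only thing to be careful about is that the sum of squares trick really needs $R$ to be real, which is precisely guaranteed by $R\in\mathcal R$ being real closed, and that padding with $1>0$ and $0=0$ does not change the class because $1>0$ and $0=0$ hold in every ordered field.
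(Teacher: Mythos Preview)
Your proof is correct and follows essentially the same route as the paper: disjunctive normal form, then splitting each non-strict inequality $\{p\ge0\}$ into $\{p>0\}\cup\{p=0\}$ (the paper indexes the resulting cases by $\de\in\{0,1\}^s$, which is just another way of writing the distribution step), and finally merging the equations via a sum of squares.
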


\begin{proof} By \ref{introsemialg} and \ref{booleanalgebra}(b) such a class is a finite union of classes of the form
\begin{multline*}
\left\{(R,x)\in\mathcal R_n\mid h_1(x)\ge0,\dots,h_s(x)\ge0,h_{s+1}(x)<0,\dots,h_{s+t}(x)<0\right\}\\
=\bigcup_{\de\in\{0,1\}^s}\left\{(R,x)\in\mathcal R_n\mid
\begin{aligned}
\sgn(h_1(x))=\de_1,\dots,\sgn(h_s(x))=\de_s,\\
-h_{s+1}(x)>0,\dots,-h_{s+t}(x)>0
\end{aligned}
\right\}\\
=\bigcup_{\de\in\{0,1\}^s}\left\{(R,x)\in\mathcal R_n\mid
\begin{aligned}
\left(\sum_{\substack{i=1\\\de_i=0}}^sh_i^2\right)(x)=0,
\Et_{\substack{i=1\\\de_i=1}}^sh_i(x)>0,\\
-h_{s+1}(x)>0,\dots,-h_{s+t}(x)>0
\end{aligned}
\right\}
\end{multline*}
for some $s,t\in\N_0$ and $h_i\in K[X_1,\dots,X_n]$
\end{proof}

\begin{pro}\label{sapre}
Let $m,n\in\N_0$, $h_1,\dots,h_m\in K[X_1,\dots,X_n]$ and $S\subseteq\mathcal R_m$ a $(K,P)$-semialgebraic
class. Then
$\{(R,x)\in\mathcal R_n\mid(R,(h_1(x),\dots,h_m(x)))\in S\}$ is a $(K,P)$-semialgebraic class.
\end{pro}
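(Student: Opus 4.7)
The plan is to set $\Ph\colon\mathcal R_n\to\mathcal R_m,\ (R,x)\mapsto(R,(h_1(x),\dots,h_m(x)))$ and observe that the statement to prove is exactly that $\Ph^{-1}(S)$ is $(K,P)$-semialgebraic whenever $S\subseteq\mathcal R_m$ is. So I would introduce the collection
\[\mathcal B:=\{T\subseteq\mathcal R_m\mid\Ph^{-1}(T)\text{ is }(K,P)\text{-semialgebraic}\}\]
and prove that $\mathcal B$ is a Boolean algebra on $\mathcal R_m$ that contains the generating family
\[\mathcal G:=\{\{(R,y)\in\mathcal R_m\mid p(y)\ge0\}\mid p\in K[Y_1,\dots,Y_m]\}\]
used in \ref{introsemialg}. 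Once this is shown, \ref{booleanalgebra}(b) immediately yields $S\in\mathcal B$.

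For the Boolean algebra part, the routine observation is that preimages commute with complements, finite unions, and finite intersections: $\Ph^{-1}(\complement T)=\complement\Ph^{-1}(T)$, $\Ph^{-1}(T_1\cup T_2)=\Ph^{-1}(T_1)\cup\Ph^{-1}(T_2)$, and similarly for intersections. Since the $(K,P)$-semialgebraic classes themselves form a Boolean algebra on $\mathcal R_n$ by definition \ref{introsemialg}, this shows that $\mathcal B$ is closed under the required operations, and $\emptyset\in\mathcal B$ is trivial.

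For the containment $\mathcal G\subseteq\mathcal B$, let $p\in K[Y_1,\dots,Y_m]$ and set $q:=p(h_1,\dots,h_m)\in K[X_1,\dots,X_n]$, which is a polynomial with coefficients in $K$ since $K$ is closed under the ring operations. Then for every $R\in\mathcal R$ and every $x\in R^n$ we have $p(h_1(x),\dots,h_m(x))=q(x)$ in $R$, hence
\[\Ph^{-1}(\{(R,y)\in\mathcal R_m\mid p(y)\ge0\})=\{(R,x)\in\mathcal R_n\mid q(x)\ge0\},\]
which is a $(K,P)$-semialgebraic class by definition. Therefore $\mathcal G\subseteq\mathcal B$ and we are done.

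There is no real obstacle here; the only thing to be careful about is that the polynomial $q$ obtained by composing $p$ with $(h_1,\dots,h_m)$ genuinely has coefficients in $K$ (not merely in some $R$), so that the resulting preimage is $(K,P)$-semialgebraic and not only $R$-semialgebraic. This is automatic since the $h_i$ are assumed in $K[X_1,\dots,X_n]$ and $p$ in $K[Y_1,\dots,Y_m]$.
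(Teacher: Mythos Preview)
Your proof is correct and follows essentially the same idea as the paper's: the key observation in both is that substituting $h_1,\dots,h_m\in K[\x]$ into a polynomial $p\in K[Y_1,\dots,Y_m]$ yields a polynomial with coefficients in $K$. The paper first puts $S$ into the normal form of \ref{sanf} and then substitutes, whereas you work directly with the generating family $\{p(y)\ge0\}$ and a Boolean-algebra closure argument; the difference is purely organizational.
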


\begin{proof}
If $S=\bigcup_{i=1}^k\{(R,y)\in\mathcal R_m\mid f_i(y)=0,g_{i1}(y)>0,\dots,g_{i\ell}(y)>0\}$ with $m,k,\ell\in\N_0$,
$f_i,g_{ij}\in K[Y_1,\dots,Y_m]$ so that
\begin{multline*}
\{(R,x)\in\mathcal R_n\mid(h_1(x),\dots,h_m(x))\in S\}\\
=\bigcup_{i=1}^k\left\{(R,x)\in\mathcal R_n\mid
(f_i(h_1,\dots,h_m))(x)=0,\right.\qquad\qquad\qquad\qquad\qquad\ \;\\
\left.(g_{i1}(h_1,\dots,h_m))(x)>0,\dots,
(g_{i\ell}(h_1,\dots,h_m))(x)>0\right\}.
\end{multline*}
\end{proof}

\begin{cor}\label{preimagesa}
Let $R$ be a real closed field. Preimages of semialgebraic subsets of $R^m$ under polynomial maps $R^n\to R^m$ are again
semialgebraic in $R^n$. 
\end{cor}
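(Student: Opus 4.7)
The plan is to deduce this immediately from Proposition \ref{sapre} by specializing the setup to the case where the base ordered field is $(R,R^2)$ itself and the class $\mathcal{R}$ of real closed fields consists of just $\{R\}$.

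First I would note that, since $R$ is real closed, $R$ is Euclidean [$\to$ \ref{dfrealclosed}], so $R^2$ is the unique order of $R$ [$\to$ \ref{euclidunique}]. Thus $(R,R^2)$ is a legitimate choice for the ordered field $(K,P)$ in the framework of \ref{introsemialg}, and $\{R\}$ is a legitimate choice for $\mathcal{R}$ (it contains $(R,R^2)$ as an ordered subfield trivially). Under this specialization, a $(K,P)$-semialgebraic class in $\mathcal{R}_n$ is, after identification of $\mathcal{R}_n = \{R\} \times R^n$ with $R^n$, precisely a Boolean combination of sets of the form $\{x \in R^n \mid p(x) \ge 0\}$ with $p \in R[X_1,\dots,X_n]$, i.e., an $R$-semialgebraic set in $R^n$ in the sense of \ref{introsemialg}.

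Next, let $\varphi\colon R^n \to R^m$ be a polynomial map, so that there exist $h_1,\dots,h_m \in R[X_1,\dots,X_n]$ with $\varphi(x) = (h_1(x),\dots,h_m(x))$ for all $x \in R^n$. Let $S \subseteq R^m$ be a semialgebraic set. Then $S$ corresponds to a $(R,R^2)$-semialgebraic class in $\mathcal{R}_m$ under the above identification, and the preimage $\varphi^{-1}(S) = \{x \in R^n \mid (h_1(x),\dots,h_m(x)) \in S\}$ corresponds to exactly the class considered in Proposition \ref{sapre}.

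Applying \ref{sapre} directly yields that $\varphi^{-1}(S)$ is a $(R,R^2)$-semialgebraic class, hence semialgebraic in $R^n$. There is no real obstacle here; the only thing to be a little careful about is the bookkeeping of the identification between semialgebraic sets in $R^n$ and $(R,R^2)$-semialgebraic classes in $\mathcal{R}_n$ when $\mathcal{R} = \{R\}$, which is immediate from the definitions.
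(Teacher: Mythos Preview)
Your proof is correct and takes exactly the approach the paper intends: the corollary is stated immediately after Proposition~\ref{sapre} with no explicit proof, as it is just the specialization $(K,P)=(R,R^2)$, $\mathcal R=\{R\}$ that you spell out.
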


\begin{lem}\label{sigsa}
For every $s\in\N_0$, \[\left\{(R,x)\in\mathcal R_{d+1}\mid\si\left(\sum_{i=0}^d x_iT^i\right)=s\text{ with respect to $R[T]$}\right\}\]
is a semialgebraic class.
\end{lem}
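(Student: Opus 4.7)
\medskip
\noindent\textbf{Proof proposal.}
The key observation is that $\sigma(f)$ depends only on the sign pattern of the coefficients of $f$. More precisely, recall from Definition \ref{defst} that when one writes $f = \sum_{i=1}^k c_i T^{\alpha_i}$ with $c_i \in R^\times$ and $\alpha_1 < \dots < \alpha_k$, then $\sigma(f)$ is the number of indices $i$ with $\sgn(c_i) \neq \sgn(c_{i+1})$. For $f = \sum_{i=0}^d x_i T^i$, this is purely combinatorial in the tuple $(\sgn(x_0), \ldots, \sgn(x_d)) \in \{-1, 0, 1\}^{d+1}$: delete the zero entries, then count the number of adjacent pairs of opposite signs in the remaining sequence.

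Accordingly, for each $\varepsilon = (\varepsilon_0, \ldots, \varepsilon_d) \in \{-1, 0, 1\}^{d+1}$, define $\sigma(\varepsilon) \in \mathbb{N}_0$ by the same combinatorial recipe applied to $\varepsilon$. The plan is then to write
\[
\left\{(R,x) \in \mathcal R_{d+1} \mid \sigma\left(\sum_{i=0}^d x_i T^i\right) = s\right\}
= \bigcup_{\substack{\varepsilon \in \{-1,0,1\}^{d+1}\\ \sigma(\varepsilon) = s}} S_\varepsilon,
\]
where
\[
S_\varepsilon := \{(R,x) \in \mathcal R_{d+1} \mid \sgn(x_i) = \varepsilon_i \text{ for all } i = 0, \ldots, d\}.
\]
Each $S_\varepsilon$ is defined by the conjunction of the conditions ``$x_i > 0$'', ``$x_i < 0$'', or ``$x_i = 0$'' (one for each $i$, according to whether $\varepsilon_i$ is $1$, $-1$, or $0$), so $S_\varepsilon$ is a $(\mathbb{Q}, \mathbb{Q}_{\ge 0})$-semialgebraic class by Definition \ref{introsemialg} applied to the polynomials $\pm X_i \in \mathbb{Q}[X_0, \ldots, X_d]$.

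Since the union is finite (the index set is a subset of $\{-1, 0, 1\}^{d+1}$), and since the collection of semialgebraic classes is closed under finite unions by Definition and Proposition \ref{booleanalgebra}, the displayed class is semialgebraic. There is no real obstacle here: once the combinatorial remark that $\sigma$ factors through the sign pattern is made, everything else is assembling finitely many atomic semialgebraic conditions.
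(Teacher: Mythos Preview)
Your proof is correct and essentially identical to the paper's: the paper writes the class as the union over all sign patterns $\de\in\{-1,0,1\}^{d+1}$ with $\si\left(\sum_{i=0}^d\de_iT^i\right)=s$ (computed in $\R[T]$) of the classes cut out by the conditions $\sgn_R(x_i)=\de_i$, which is exactly your decomposition into the $S_\varepsilon$.
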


\begin{proof}
The class in question equals
\[\bigcup_{\substack{\de\in\{-1,0,1\}^{d+1}\\\si\left(\sum_{i=0}^d\de_iT^i\right)=s\text{ with respect to $\R[T]$}}}
\left\{(R,x)\in\mathcal R_{d+1}\mid\sgn_R(x_0)=\de_0,\dots,\sgn_R(x_{d})=\de_d\right\}.\]
\end{proof}

\begin{rem}\label{advertisetarski}
We will now need the \emph{simultaneous} diagonalization of a symmetric matrix as a quadratic form and as an endomorphism
[$\to$ \ref{longremi}(g)]. The reader should know this over $\R$ from linear algebra but we will now
need it more generally over an arbitrary real closed field. Later in this chapter, we will provide methods from which it becomes
immediately clear that, for each fixed matrix size, the class of all fields $R\in\mathcal R$, over which the corresponding
statement is true, is a $0$-ary semialgebraic class. Since the statement is true over $\R$, it must then by \ref{nothingorall} also
hold true over every real closed field. In a similar way, we will soon be able to carry over
a great many statements from $\R$ to all real closed
fields. Unfortunately, we are not that far yet and therefore we have to check if the proof from linear algebra goes through over
an arbitrary real closed field. Some of the proofs of the diagonalization in question use however proper analysis instead of
just the fundamental theorem of algebra. Since the whole analysis is built on the completeness of $\R$
[$\to$ \ref{introduce-the-reals}], those proofs do not generalize without further ado. Thus we give a compact ad-hoc-proof.
\end{rem}

\begin{thm}\label{eucliddiag}
Let $R$ be a real closed field and $M\in SR^{n\times n}$. Then there is some $P\in\GL_n(R)$ satisfying $P^TP=I_n$ such that
$P^TMP$ is a diagonal matrix.
\end{thm}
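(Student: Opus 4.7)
The plan is to mimic the classical spectral theorem proof from real linear algebra, but with due care at the two places that usually rely on properties specific to $\R$: the existence of a real eigenvalue and the ability to normalize vectors. Both issues are handled by what we already have available over a real closed field $R$, namely the generalized fundamental theorem of algebra (\ref{fund}) and the fact that $R$ is Euclidean (\ref{dfrealclosed}, \ref{cheuclid}).

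I would proceed by induction on $n$. The case $n=1$ is trivial. For the inductive step, the key claim is that $M$ admits an eigenvector $v \in R^n$ with $v^T v = 1$. To produce a real eigenvalue, by \ref{fund} the characteristic polynomial of $M$ has a root $\lambda = a + b\ii \in C := R(\ii)$, and there is some nonzero $v \in C^n$ with $Mv = \lambda v$. Write $v = u + w\ii$ with $u,w \in R^n$, and let $\bar v := u - w\ii$, $\bar\lambda := a - b\ii$. Since $M$ has entries in $R$ and is symmetric,
\[
\lambda(v^T\bar v) = (Mv)^T\bar v = v^T M \bar v = v^T(\bar\lambda \bar v) = \bar\lambda(v^T\bar v).
\]
Now $v^T\bar v = \sum_i (u_i^2 + w_i^2) \in R$, and since $v \neq 0$ at least one $u_i$ or $w_i$ is nonzero, so by \ref{realchar}(c) this sum is strictly positive in $R$. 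Hence $\lambda = \bar\lambda$, i.e., $b = 0$ and $\lambda \in R$. Then $\ker(M - \lambda I_n)$ is a nonzero $R$-subspace of $R^n$, so pick $0 \neq v' \in R^n$ with $Mv' = \lambda v'$. Because $R$ is Euclidean, $v'^T v' = \sum (v'_i)^2 \in R_{>0}$ has a positive square root in $R$ (\ref{notremsqrt}(b)), so $v := v'/\sqrt{v'^T v'}$ is a unit eigenvector.

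Now I would extend $v$ to an orthonormal basis of $R^n$ via Gram--Schmidt: the process requires only division by norms $\sqrt{w^T w}$ of nonzero vectors, and these square roots exist in $R$ for the same reason. Assembling the basis into $P_1 \in R^{n \times n}$ with $P_1^T P_1 = I_n$ and first column $v$, one has
\[
P_1^T M P_1 = \begin{pmatrix} \lambda & 0 \\ 0 & M' \end{pmatrix}
\]
where $M' \in SR^{(n-1)\times(n-1)}$; the zeros in the first row follow from $M$-invariance of $v^\perp$, i.e., $w \perp v \Rightarrow \langle Mw,v\rangle = \langle w, Mv\rangle = \lambda \langle w,v\rangle = 0$, combined with symmetry of $P_1^T M P_1$. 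By the inductive hypothesis there is an orthogonal $P_2' \in \GL_{n-1}(R)$ diagonalizing $M'$; padding to $P_2 := \mathrm{diag}(1, P_2') \in \GL_n(R)$ and setting $P := P_1 P_2$ gives $P^T P = I_n$ and $P^T M P$ diagonal, as desired.

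The main obstacle is the real-eigenvalue step, since over an arbitrary real closed field we cannot appeal to analytic arguments (compactness of the unit sphere, maximization of Rayleigh quotients, etc.) that make this transparent over $\R$. The conjugation trick above is precisely what replaces those arguments: it only uses \ref{fund} to produce a candidate root in $R(\ii)$, the realness of $R$ (via \ref{realchar}(c)) to guarantee $v^T\bar v \neq 0$, and the identity $(Mv)^T \bar v = v^T M\bar v$ coming from $M = M^T$. Everything else in the argument is purely linear-algebraic and works verbatim over any field with enough square roots, which $R$ supplies by virtue of being Euclidean.
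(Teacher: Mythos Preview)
Your proof is correct. The overall architecture matches the paper's: induction on $n$, invariance of $v^\perp$ under a self-adjoint map, and orthonormal extension (the paper phrases the last two abstractly for Euclidean $R$-vector spaces, you do them concretely via Gram--Schmidt). The one genuinely different move is how you secure a real eigenvalue.

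The paper argues by contradiction: if $f$ had no eigenvalue in $R$, then by \ref{realfund} and Cayley--Hamilton some $(f-a\id)^2+b^2\id$ with $b\ne0$ has nontrivial kernel; picking $v$ in that kernel and setting $g:=f-a\id$, self-adjointness gives $0\le\langle g(v),g(v)\rangle=\langle g^2(v),v\rangle=-b^2\langle v,v\rangle<0$. You instead pass to $C=R(\ii)$ via \ref{fund}, take a complex eigenvector $v$, and use the Hermitian-form identity $\lambda(v^T\bar v)=\bar\lambda(v^T\bar v)$ together with $v^T\bar v=\sum_i(u_i^2+w_i^2)>0$ (by \ref{realchar}(c)) to force $\lambda\in R$. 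Both are standard; your route avoids Cayley--Hamilton and is a touch more direct, while the paper's stays entirely inside $R$ and showcases the real factorization \ref{realfund} explicitly. Either fits the surrounding narrative about working over real closed fields rather than $\R$.
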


\begin{proof}
Call a symmetric bilinear form $V\times V\to R,\ (v,w)\mapsto\langle v,w\rangle$ on an $R$-vector space $V$
positive definite if $\langle v,v\rangle>0$ for all $v\in V\setminus\{0\}$. Call an $R$-vector space together with a positive definite
symmetric bilinear form a Euclidean $R$-vector space. Call an endomorphism $f$ of a Euclidean $R$-vector space $V$
self-adjoint if $\langle f(v),w\rangle=\langle v,f(w)\rangle$ for all $v,w\in V$.

\medskip
\textbf{Claim 1:} Let $V$ be a Euclidean $R$-vector space, $f\in\End(V)$ self-adjoint and $v$ an eigenvector of $f$. Then
$U:=\{u\in V\mid\langle u,v\rangle=0\}$ is a subspace of $V$ with $v\notin U$ and $f(U)\subseteq U$.

\smallskip
\emph{Explanation.} Choose $\la\in R$ with $f(v)=\la v$ and let $u\in U$. Then
$\langle f(u),v\rangle=\langle u,f(v)\rangle=\langle u,\la v\rangle=\la\langle u,v\rangle=\la0=0$.

\medskip
\textbf{Claim 2:}
Let $V\ne\{0\}$ be a finite-dimensional Euclidean $R$-vector space and $f\in\End(V)$ self-adjoint. Then $f$ possesses an eigenvalue
in $R$.

\smallskip
\emph{Explanation.} Assume $f$ has no eigenvalue. By Cayley-Hamilton and the fundamental theorem \ref{realfund}, it is easy to show that there are
$a,b\in R$ with $b\ne0$ such that \[(f-a\id_V)^2+b^2\id_V\] has a non-trivial kernel. Since $f$ is self-adjoint, $g:=f-a\id_V$ is so.
Choose $v\in V$ with $g^2(v)=-b^2v$. Then $0\le\langle g(v),g(v)\rangle=\langle g^2(v),v\rangle=\langle-b^2v,v\rangle
=-b^2\langle v,v\rangle<0$. $\lightning$

\medskip
\textbf{Claim 3:}
Let $V$ be a finite-dimensional Euclidean $R$-vector space and $f\in\End(V)$ self-adjoint. Then there is an eigenbasis
$v_1,\dots,v_n$ for $f$ with $(\langle v_i,v_j\rangle)_{1\le i,j\le n}=I_n$.

\smallskip
\emph{Explanation.} Use Claim 1, Claim 2 and induction over the dimension $V$.

\smallskip\noindent
In virtue of $\langle x,y\rangle:=\sum_{i=1}^nx_iy_i$ ($x,y\in R^n$), $R^n$ is a Euclidean $R$-vector space and
$f\colon R^n\to R^n,\ x\mapsto Mx$ is self-adjoint. By Claim 3, there is an eigenbasis $v_1,\dots,v_n$ for $f$ such that
$(\langle v_i,v_j\rangle)_{1\le i,j\le n}=I_n$. Set $P:=(v_1\ldots v_n)\in\GL_n(R)$. Then
\[P^TP=\begin{pmatrix}v_1^T\\\vdots\\v_n^T\end{pmatrix}\begin{pmatrix}v_1&\ldots&v_n\end{pmatrix}=I_n\]
and $P$ is the change-of-basis matrix from $(v_1,\dots,v_n)$ to the standard basis. It follows that
$P^TMP=P^{-1}MP$ is the representing matrix of $f$ with respect to $(v_1,\dots,v_n)$.
\end{proof}

\begin{cor}[Determination of the signature using Descartes' rule of signs]\label{combining}
Let $R$ be a real closed field, $q\in R[T_1,\dots,T_d]$ a quadratic form and $h:=\det(M(q)-XI_d)\in R[X]$ the characteristic
polynomial of the representing matrix \emph{[$\to$ \ref{longremi}(d)]} of $q$. Then we have:
\begin{enumerate}[\normalfont(a)]
\item $h$ is real-rooted \emph{[$\to$ \ref{defrr}]}
\item $\sg q=\mu(h)-\mu(h(-X))$ \emph{[$\to$ \ref{longremi}(h), \ref{defst}(a)]}
\item $\sg q=\si(h)-\si(h(-X))$ \emph{[$\to$ \ref{defst}(b)]}
\end{enumerate}
\end{cor}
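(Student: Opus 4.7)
The plan is to reduce all three statements to a single orthogonal diagonalization of $M(q)$ provided by Theorem \ref{eucliddiag}. Pick $P\in\GL_d(R)$ with $P^TP=I_d$ such that $D:=P^TM(q)P=\operatorname{diag}(\la_1,\dots,\la_d)$ is diagonal. Because $P^T=P^{-1}$, the matrix $D$ is simultaneously the congruence diagonalization of $M(q)$ (as a quadratic form) and the similarity diagonalization of $M(q)$ (as an endomorphism). Consequently, up to sign, $h=\prod_{i=1}^d(\la_i-X)$, and in the sense of Reminder \ref{longremi}(h) the multiset $\{\la_1,\dots,\la_d\}$ is exactly the multiset of roots of $h$ in $R$, with algebraic multiplicities.

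For part (a), the display above shows that $h$ splits into linear factors over $R$, so by Proposition \ref{rrr} it is real-rooted.

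For part (b), observe that $q=\sum_{i=1}^d\la_i\ell_i^2$ where the $\ell_i$ are the linear forms whose representing matrix is $P$; since $P$ is invertible, these are linearly independent. Because $R$ is Euclidean, each term with $\la_i>0$ can be absorbed as a plus-square $(\sqrt{\la_i}\,\ell_i)^2$ and each term with $\la_i<0$ as a minus-square $-(\sqrt{-\la_i}\,\ell_i)^2$, so by the definition of $\sg q$ (and Sylvester's law of inertia, Reminder \ref{longremi}(h)) we get
\[
\sg q=\#\{i\mid\la_i>0\}-\#\{i\mid\la_i<0\}.
\]
The first term equals $\mu(h)$ by the matching of multisets noted above, and the second equals $\mu(h(-X))$ since the roots of $h(-X)$ are $-\la_1,\dots,-\la_d$ (with the same algebraic multiplicities). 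This gives (b).

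For part (c), both $h$ and $h(-X)$ are real-rooted (the roots of $h(-X)$ still lie in $R$), so Descartes' rule of signs for real-rooted polynomials (Theorem \ref{descartesrr}) yields $\mu(h)=\si(h)$ and $\mu(h(-X))=\si(h(-X))$; substituting into (b) gives (c). I do not expect a serious obstacle: the only point requiring a small amount of care is insisting that the diagonalization be \emph{orthogonal}, since only then do the entries of $D$ simultaneously encode the eigenvalues of $M(q)$ (needed to identify them with the roots of $h$ with correct multiplicity) and the congruence invariants (needed for $\sg q$); this is precisely the content of Theorem \ref{eucliddiag}.
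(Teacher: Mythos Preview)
Your proof is correct and follows essentially the same route as the paper: orthogonally diagonalize $M(q)$ via Theorem~\ref{eucliddiag}, read off the $\la_i$ simultaneously as eigenvalues (giving the factorization of $h$) and as congruence invariants (giving $\sg q$), and then apply Theorem~\ref{descartesrr} to pass from $\mu$ to $\si$. The only cosmetic difference is that the paper makes the computation $h=\det(P^T(M(q)-XI_d)P)=\prod_i(\la_i-X)$ explicit rather than saying ``up to sign''; since $\det(P)^2=\det(P^TP)=1$, there is in fact no sign ambiguity.
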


\begin{proof}
Using \ref{eucliddiag}, choose $P\in\GL_d(R)$ such that $P^TP=I_d$ and $P^TM(q)P$ is diagonal, say
\[P^TM(q)P=
\begin{pmatrix}~
\begin{tikzpicture}[inner sep=0]
\node (a1) {$\la_1$};
\node (an) at (1.5,-1.5) [anchor=north west] {$\la_d$};
\node[scale=3.2] at (1.3,-0.4) {$0$};
\node[scale=3.2] at (0.2,-1.2) {$0$};
\draw[loosely dotted,very thick,dash phase=3pt] (a1)--(an);
\end{tikzpicture}
\end{pmatrix}
\]
with $\la_i\in R$. We have
\begin{align*}
h&=h\det(P^TP)\\
&=(\det(P^T))(\det(M(q)-XI_d))(\det P)\\
&=\det(P^TM(q)P-XP^TP)=\det
\begin{pmatrix}~
\begin{tikzpicture}[inner sep=0]
\node (a1) {$\la_1-X$};
\node (an) at (1.5,-1.5) [anchor=north west] {$\la_d-X$};
\node[scale=3.2] at (1.8,-0.4) {$0$};
\node[scale=3.2] at (0.2,-1.2) {$0$};
\draw[loosely dotted,very thick,dash phase=3pt] (a1)--(an);
\end{tikzpicture}
\end{pmatrix}=\prod_{i=1}^d(\la_i-X),
\end{align*}
from which (a) follows immediately. Because of
\[M(q)=(P^T)^T
\begin{pmatrix}~
\begin{tikzpicture}[inner sep=0]
\node (a1) {$\la_1$};
\node (an) at (1.5,-1.5) [anchor=north west] {$\la_d$};
\node[scale=3.2] at (1.3,-0.4) {$0$};
\node[scale=3.2] at (0.2,-1.2) {$0$};
\draw[loosely dotted,very thick,dash phase=3pt] (a1)--(an);
\end{tikzpicture}
\end{pmatrix}
P^T\]
and $P^T\in\GL_d(R)$, it follows from \ref{longremi}(e) that
\[
\sg q=\#\{i\in\{1,\dots,d\}\mid\la_i>0\}-\#\{i\in\{1,\dots,d\}\mid\la_i<0\}=\mu(h)-\mu(h(-X)),
\]
which proves (b). Finally, (c) follows from (a) and (b) due to the exactness of Descartes' rule of signs for
real rooted polynomials [$\to$ \ref{descartesrr}]. 
\end{proof}

\begin{rem} Combining \ref{combining} with \ref{several}, one can reduce the count of real roots of polynomials without multiplicity
with side conditions by means of the Hermite method from §\ref{sec:hermite} to the count of roots of real-rooted polynomials with
multiplicity by means of Descartes' rule from §\ref{sec:descartes}.
\end{rem}

\begin{lem}\label{elim1}
Let $m,n,d\in\N_0$ and $f,g_1,\dots,g_m\in K[X_1,\dots,X_{n+1}]$. Then
\begin{align*}
\{(R,x)\in\mathcal R_n\mid&\deg f(x,X_{n+1})=d\quad\et\\
&\exists x_{n+1}\in R\colon(f(x,x_{n+1})=0\et g_1(x,x_{n+1})>0\et\ldots\et g_m(x,x_{n+1})>0)\}
\end{align*}
is a $(K,P)$-semialgebraic class.
\end{lem}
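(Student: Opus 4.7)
The plan is to reduce the existential quantifier to a count of real roots computable by the Hermite method of §\ref{sec:hermite}, and then express this count as a $(K,P)$-semialgebraic condition by combining Corollary \ref{combining} with Lemma \ref{sigsa} and Proposition \ref{sapre}.

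First I would write $f = \sum_{i=0}^D f_i(X_1,\dots,X_n) X_{n+1}^i$ with $f_i \in K[X_1,\dots,X_n]$ and $D \ge d$. The degree condition $\deg f(x,X_{n+1})=d$ translates to the manifestly $(K,P)$-semialgebraic condition $f_d(x) \ne 0 \et f_{d+1}(x) = 0 \et \ldots \et f_D(x) = 0$, so it suffices to conjoin this with the existential part and argue that the latter defines a $(K,P)$-semialgebraic class on the locus where the degree condition holds. On that locus $F_x(T) := f(x,T)/f_d(x) \in R[T]$ is monic of degree $d$ with the same roots as $f(x,T)$, and by Corollary \ref{several} the existence of $x_{n+1} \in R$ with $f(x,x_{n+1})=0$ and $g_k(x,x_{n+1})>0$ for all $k$ is equivalent to
\[\sum_{\alpha \in \{1,2\}^m} \sg H(F_x, G_{\alpha,x}) > 0, \qquad G_{\alpha,x} := \prod_{k=1}^m g_k(x,T)^{\alpha_k}.\]

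It therefore suffices, for each fixed $\alpha$ and each $s \in \Z$, to show that $\sg H(F_x,G_{\alpha,x})=s$ defines a $(K,P)$-semialgebraic class in $x$; one then takes the finite disjunction over tuples $(s_\alpha)$ with $\sum_\alpha s_\alpha > 0$. By Definition \ref{hermite}, the entries of $M(H(F_x,G_{\alpha,x}))$ are polynomial expressions in the coefficients of $F_x$ (which are rational in $x$ with denominator a power of $f_d(x)$) and of $G_{\alpha,x}$ (which are polynomial in $x$); consequently each entry is a rational function of $x$ whose denominator is some power $f_d(x)^e$. Choosing an even exponent $E$ large enough and multiplying the matrix by the strictly positive scalar $f_d(x)^E$ clears all denominators without altering the signature. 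The rescaled matrix $\widetilde M_\alpha(x) \in K[x]^{d\times d}$ has characteristic polynomial $h_\alpha(X) := \det(\widetilde M_\alpha(x) - XI_d) \in K[x][X]$ of degree $d$, and Corollary \ref{combining}(c) yields
\[\sg H(F_x,G_{\alpha,x}) = \si(h_\alpha) - \si(h_\alpha(-X)).\]

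The coefficients of $h_\alpha$ and of $h_\alpha(-X)$ are polynomials in $x$ with coefficients in $K$, so by Lemma \ref{sigsa} combined with Proposition \ref{sapre} (pulling back via the polynomial map sending $x$ to these coefficients) the conditions $\si(h_\alpha)=s_1$ and $\si(h_\alpha(-X))=s_2$ each cut out a $(K,P)$-semialgebraic class in $x$. Taking a finite disjunction over pairs $(s_1,s_2)$ with $s_1-s_2=s$ yields the desired semialgebraic description of the level set $\{\sg H(F_x,G_{\alpha,x})=s\}$, and the theorem follows. The main obstacle is pure bookkeeping: ensuring that the non-monic nature of $f(x,T)$ does not spoil the polynomial dependence of the Hermite data on $x$, which is precisely the reason for the even-power rescaling step.
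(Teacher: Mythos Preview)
Your proof is correct and follows essentially the same route as the paper's: both reduce the existential condition via Corollary~\ref{several} to a positivity constraint on a sum of Hermite signatures, then compute each signature through the characteristic polynomial and Descartes sign-change counts using Corollary~\ref{combining}, and finally invoke Lemma~\ref{sigsa} with Proposition~\ref{sapre} to verify semialgebraicity. The only cosmetic difference is in clearing denominators: you rescale the Hermite matrix by an even power $f_d(x)^E$ before taking the characteristic polynomial, whereas the paper forms the Hermite matrix generically over $K(X_1,\dots,X_n)$, takes its characteristic polynomial $h_\alpha$, and then multiplies $h_\alpha$ by a power of the leading coefficient to land in $K[X_1,\dots,X_n,X]$---and the paper flags (with its warning sign) the commutation of evaluation at $x$ with forming the Hermite matrix, a point you absorb into the observation that the entries depend rationally on~$x$.
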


\begin{proof}
Write $f=\sum_{i=0}^Dh_iX_{n+1}^i$ for some $D\in\N_0$, $D\ge d$ and $h_i\in K[X_1,\dots,X_n]$. WLOG $h_d\ne0$.
Then \[f_0:=\sum_{i=0}^d\frac{h_i}{h_d}X_{n+1}^i\in K(X_1,\dots,X_n)[X_{n+1}]\] is monic of degree $d$. For every $\al\in\{1,2\}^m$,
we consider also $g_1^{\al_1}\dotsm g_m^{\al_m}$ as a polynomial in $X_{n+1}$ with coefficients from the field
$K(X_1,\dots,X_n)$ and set
\[h_\al:=\det(M(H(f_0,g_1^{\al_1}\dotsm g_m^{\al_m}))-XI_d)\in K(X_1,\dots,X_n)[X].\]
By construction [$\to$ \ref{longremi}(i), \ref{hermite}, \ref{hankel}], there is some $N\in\N$ such that
\[h_d^Nh_\al\in K[X_1,\dots,X_n,X]\] for all $\al\in\{1,2\}^m$. Now the class from the claim can be written by
\ref{several} as
\begin{align*}
\Bigg\{(R,x)\in\mathcal R_n\mid&h_D(x)=\ldots=h_{d+1}(x)=0\ne h_d(x)\quad\et\\
&\sum_{\al\in\{1,2\}^m}\sg H(f_0(x,X_{n+1}),(g_1^{\al_1}\dotsm g_m^{\al_m})(x,X_{n+1}))>0\Bigg\}.
\end{align*}
But
\begin{multline*}
\left\{(R,x)\in\mathcal R_n\mid h_d(x)\ne0\et
\sum_{\al\in\{1,2\}^m}\sg H(f_0(x,X_{n+1}),(g_1^{\al_1}\dotsm g_m^{\al_m})(x,X_{n+1}))>0\right\}\\
\overset{\ref{combining}}{\underset{\warningsign}=}\left\{(R,x)\in\mathcal R_n\mid h_d(x)\ne0\et
\sum_{\al\in\{1,2\}^m}(\si(h_\al(x,X))-\si(h_\al(x,-X)))>0\right\}\\
=\bigcup_{\substack{(s_{\al})_{\al\in\{1,2\}^m},(t_{\al})_{\al\in\{1,2\}^m}\in\{0,\dots,d\}^{\{1,2\}^m}\\\sum_{\al\in\{1,2\}^m}(s_\al-t_\al)>0}}
\bigcap_{\al\in\{1,2\}^m}\left\{(R,x)\in\mathcal R_n\mid
\begin{aligned}
&h_d(x)\ne0,\\
&\si((h_d^Nh_\al)(x,X))=s_\al,\\
&\si((h_d^Nh_\al)(x,-X))=t_\al
\end{aligned}\right\}
\end{multline*}
is $(K,P)$-semialgebraic by \ref{sigsa} and \ref{sapre}.
Here the warning sign $\warningsign$ indicates where
an important argument flows in:
\[h_\al(x,X)=\det(M(H(f_0(x,X_{n+1}),(g_1^{\al_1}\dotsm g_m^{\al_m})(x,X_{n+1})))-XI_d)\]
since evaluating in $x$ commutes with building companion matrices, Hermite forms and with taking determinants
[$\to$ \ref{hermite}, \ref{longremi}(i)].
\end{proof}

\begin{lem}\label{adjust}
Let $R$ be a real closed field, $m\in\N_0$ and $g_1,\dots,g_m\in R[X]$. Setting $g:=g_1\dotsm g_m$ and $f:=(1-g^2)g'$, we have
\begin{enumerate}[(a)]
\item There is an $x\in R$ satisfying $g_1(x)>0,\dots,g_m(x)>0$ if and only if there is such an $x\in R$ satisfying in addition
$f(x)=0$.
\item If $f=0$ and $g_1\ne0,\dots,g_m\ne0$, then $g_1,\dots,g_m\in R$.
\end{enumerate}
\end{lem}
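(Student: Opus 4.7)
The easy direction of (a) is trivial: any $x$ with $g_i(x)>0$ and $f(x)=0$ certainly witnesses the left-hand side. For the forward direction, suppose $x_0\in R$ satisfies $g_i(x_0)>0$ for all $i$, and let $S_0\subseteq R$ be the maximal open interval containing $x_0$ on which all $g_i$ are positive (so $S_0$ is a connected component of $\{x\in R\mid g_1(x)>0,\dots,g_m(x)>0\}$). Write $S_0=(a,b)$ with $a\in R\cup\{-\infty\}$ and $b\in R\cup\{+\infty\}$. Since $f=(1-g^2)g'$, it suffices to produce $c\in S_0$ with either $g(c)=\pm1$ or $g'(c)=0$.

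The plan is a case split on whether the endpoints are finite. If $a,b\in R$, then the maximality of $S_0$ together with continuity of the polynomials $g_i$ forces some $g_i(a)=0$ and some $g_j(b)=0$, so $g(a)=0=g(b)$; Rolle's theorem \ref{rolle} then produces $c\in(a,b)=S_0$ with $g'(c)=0$, so $f(c)=0$. If exactly one endpoint is infinite, say $a=-\infty$ and $b\in R$, then $g(b)=0$ and $g>0$ on $S_0$; since $g$ is a nonconstant polynomial (else $g=0$, contradicting $g(x_0)>0$) and $g(x)>0$ for $x\to-\infty$, the leading-term analysis gives $g(x)\to+\infty$ as $x\to-\infty$, so $g(x_1)>1$ for some $x_1<b$; combined with $g(b)=0<1$, the intermediate value theorem \ref{intermediate} yields $c\in(x_1,b)\subseteq S_0$ with $g(c)=1$, hence $f(c)=0$. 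The case $a\in R$, $b=+\infty$ is symmetric. Finally, if $a=-\infty$ and $b=+\infty$, then $S_0=R$ and $g>0$ on all of $R$; either $g$ is constant (and then $f=0$ everywhere, so $x_0$ itself works), or $\deg g\ge 1$, and since $g>0$ on $R$ forces $\deg g$ to be even (a nonconstant polynomial of odd degree in a real closed field changes sign), $g'$ has odd positive degree and hence a root $c\in R$ by the definition of a real closed field.

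For part (b), the argument is purely algebraic: since $R[X]$ is an integral domain, $f=(1-g^2)g'=0$ forces $g'=0$ or $g^2=1$. In the first case $g$ is constant; in the second, $g$ is the constant $\pm 1$. Either way $g=g_1\cdots g_m$ is a nonzero constant, and since the degrees of the factors must add up to $0$ in $R[X]$, each $g_i$ is itself a constant, i.e., $g_i\in R$.

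The main obstacle is the infinite-endpoint analysis in (a): once one correctly invokes the appropriate substitute for the extreme value theorem — Rolle's theorem for finite intervals, the intermediate value theorem for semi-infinite intervals, and the odd-degree root property of real closed fields for $S_0=R$ — the proof is transparent. The subtlety to watch is that in a general real closed field one cannot appeal to compactness arguments, so each case must be handled by the polynomial-specific tools \ref{intermediate}, \ref{rolle}, and the defining property of real closedness.
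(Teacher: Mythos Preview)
Your proof is correct and essentially parallel to the paper's: both use Rolle between finite endpoints, the intermediate value theorem on unbounded rays (you find a point where $g=1$, the paper equivalently finds a root of $1-g^2$), and the odd-degree root property when the interval is all of $R$; the paper additionally proves the slightly stronger fact that $f$ has a root in \emph{every} interval between consecutive roots of $g$, whereas you focus only on the interval $S_0$ containing $x_0$. One small caveat: the parenthetical ``connected component'' is misleading over a non-Archimedean real closed field (the order topology is then totally disconnected), but your primary definition as the maximal open interval containing $x_0$ is perfectly well-defined since $g$ has only finitely many roots.
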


\begin{proof}
(b) Suppose $f=0$. Then $g^2=1$ or $g'=0$. In both cases it follows $g\in R$ and thus $g_1,\dots,g_m\in R$ provided that
$g_1\ne0,\dots,g_m\ne0$.

\smallskip
(a) Let $x\in R$ such that $g_1(x)>0,\dots,g_m(x)>0$. Denote by $a_1,\dots,a_r$ where $r\in\N_0$ and $a_1<\ldots<a_r$ the
roots of $g$ in $R$.

First consider the case where $r=0$. By the intermediate value theorem \ref{intermediate} each of the $g_i$ is positive on $R$. It suffices therefore to show that $f$ has a root in $R$. By Definition \ref{dfrealclosed}, $g$ has even degree.
If $g$ has degree $0$, then $g'=0$ and we are done. So suppose now $\deg g\ge 2$. Then the degree of $g'$ is odd so that $g'$
and in particular $f$ has a root in $R$ by Definition \ref{dfrealclosed}.

From now on suppose that $r>0$.
By the intermediate value theorem \ref{intermediate} each of the $g_i$ has constant sign on each of the
intervals $(-\infty,a_1),(a_1,a_2),\dots,(a_{r-1},a_r),(a_r,\infty)$. It is therefore enough to show that $f$ possesses
in each of these sets a root.
By Rolle's theorem \ref{rolle}, $g'$ and therefore $f$ has on each of the sets $(a_i,a_{i+1})$
($1\le i\le r-1$) a root. WLOG $f\ne0$. Then $g'\ne0$ and $g$ has degree $\ge1$. Consequently, $1-g^2$ has a leading
monomial of even degree with a negative leading coefficient. By Lemma \ref{sgnbounds}(a), $(1-g^2)(y)<0$ for all $y\in R$
with $|y|$ sufficiently big. On the other hand, $(1-g^2)(a_1)=1=(1-g^2)(a_r)$. By the intermediate value theorem \ref{intermediate},
$1-g^2$ and therefore $f$ has a root on each of the sets $(-\infty,a_1)$ and $(a_r,\infty)$. 
\end{proof}

\begin{lem}\label{elim2}
Let $m,n\in\N_0$ and $g_1,\dots,g_m\in K[X_1,\dots,X_{n+1}]$. Then
\[\{(R,x)\in\mathcal R_n\mid\exists x_{n+1}\in R:(g_1(x,x_{n+1})>0\et\dots\et g_m(x,x_{n+1})>0)\}\] is a $(K,P)$-semialgebraic
class.
\end{lem}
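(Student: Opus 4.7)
The plan is to use Lemma \ref{adjust} to reduce the situation to one that Lemma \ref{elim1} already handles. Set $g := g_1 \cdots g_m$ and $f := (1 - g^2)\, \partial_{X_{n+1}} g$ in $K[X_1, \dots, X_{n+1}]$. For each $(R, x) \in \mathcal R_n$, Lemma \ref{adjust}(a), applied to the univariate polynomials $g_i(x, X_{n+1}) \in R[X_{n+1}]$, yields that the existence of $x_{n+1} \in R$ with $g_i(x, x_{n+1}) > 0$ for all $i$ is equivalent to the same statement conjoined with $f(x, x_{n+1}) = 0$. Thus the class from the claim coincides with
\[A := \{(R, x) \in \mathcal R_n \mid \exists x_{n+1} \in R : f(x, x_{n+1}) = 0 \et g_1(x, x_{n+1}) > 0 \et \dots \et g_m(x, x_{n+1}) > 0\}.\]

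Next I would apply Lemma \ref{elim1} to $A$. The obstruction is that Lemma \ref{elim1} requires $\deg f(x, X_{n+1})$ to equal a fixed non-negative integer $d$. So I would partition $A$ according to $\deg f(x, X_{n+1}) \in \{-\infty, 0, 1, \dots, D\}$, where $D$ is the degree of $f$ viewed in $K[X_1, \dots, X_n][X_{n+1}]$. For each $d \in \{0, 1, \dots, D\}$, the condition $\deg f(x, X_{n+1}) = d$ is semialgebraic in $x$ (vanishing of the coefficients of degrees $D, D-1, \dots, d+1$ together with non-vanishing of the coefficient of degree $d$), and so the corresponding piece of $A$ is $(K,P)$-semialgebraic directly by Lemma \ref{elim1}.

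The remaining case $d = -\infty$, meaning $f(x, X_{n+1}) = 0$, is the main obstacle and is exactly where Lemma \ref{adjust}(b) earns its keep. Writing $g_i = \sum_j h_{ij} X_{n+1}^j$ with $h_{ij} \in K[X_1, \dots, X_n]$, I would argue as follows. Suppose $f(x, X_{n+1}) = 0$ and some $x_{n+1}$ satisfies $g_i(x, x_{n+1}) > 0$ for all $i$. Then in particular no $g_i(x, X_{n+1})$ is the zero polynomial, so Lemma \ref{adjust}(b) forces $g_i(x, X_{n+1}) \in R$ for every $i$, i.e.\ $h_{ij}(x) = 0$ for all $i$ and all $j \ge 1$; the existential then simply says $h_{i0}(x) > 0$ for all $i$. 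Conversely, whenever $h_{ij}(x) = 0$ for $j \ge 1$ and $h_{i0}(x) > 0$ for all $i$, each $g_i(x, X_{n+1})$ is a positive constant, hence $g(x, X_{n+1})$ is too, so $\partial_{X_{n+1}} g$ vanishes at $(x, \cdot)$, giving $f(x, X_{n+1}) = 0$ and a trivial witness for the existential (say $x_{n+1} = 0$). Therefore the $d = -\infty$ piece equals
\[\{(R, x) \in \mathcal R_n \mid h_{ij}(x) = 0 \text{ for all } i \text{ and all } j \ge 1 \et h_{i0}(x) > 0 \text{ for all } i\},\]
which is visibly $(K,P)$-semialgebraic. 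Taking the union over all possible values of $d$ shows that $A$, and hence the class in the claim, is $(K,P)$-semialgebraic.
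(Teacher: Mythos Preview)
Your proof is correct and follows essentially the same route as the paper: introduce $f=(1-g^2)\partial_{X_{n+1}}g$, use Lemma~\ref{adjust}(a) to add the condition $f(x,x_{n+1})=0$, split according to $\deg f(x,X_{n+1})$, and invoke Lemma~\ref{elim1} on each piece with $d\ge0$. The only cosmetic difference lies in the $d=-\infty$ case: the paper writes that piece simply as $\{(R,x)\mid f(x,X_{n+1})=0\et g_1(x,0)>0\et\dots\et g_m(x,0)>0\}$, whereas you unpack it via the coefficients $h_{ij}$ into the equivalent description $\{(R,x)\mid h_{ij}(x)=0\text{ for }j\ge1\et h_{i0}(x)>0\text{ for all }i\}$; both rely on Lemma~\ref{adjust}(b) in the same way.
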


\begin{proof}
Set $g:=g_1\dotsm g_m$ and $f:=(1-g^2)\frac{\partial g}{\partial X_{n+1}}$. Denote by $D:=\deg_{X_{n+1}}f\in\{-\infty\}\cup\N_0$
the degree of $f$ considered as a polynomial in $X_{n+1}$ with coefficients from $K[X_1,\dots,X_n]$. The class in question equals
because of \ref{adjust}
\begin{multline*}
\bigcup_{d=0}^D\left\{(R,x)\in\mathcal R_n\mid\deg f(x,X_{n+1})=d\et\exists x_{n+1}\in R:\left(
\begin{aligned}
f(x,x_{n+1})&=0\ \et\\
g_1(x,x_{n+1})&>0\ \et\\
&\ \ \vdots\\
g_m(x,x_{n+1})&>0
\end{aligned}
\right)
\right\}\\
\cup\{(R,x)\in\mathcal R_n\mid f(x,X_{n+1})=0\et g_1(x,0)>0\et\dots\et g_m(x,0)>0\}
\end{multline*}
and therefore is $(K,P)$-semialgebraic by \ref{elim1}.
\end{proof}

\begin{thm}[Real quantifier elimination]\label{elim}
Suppose $n\in\N_0$ and $S$ is an $(n+1)$-ary $(K,P)$-semialgebraic class. Then
$\{(R,x)\in\mathcal R_n\mid\exists x_{n+1}\in R:(R,(x,x_{n+1}))\in S\}$ and
$\{(R,x)\in\mathcal R_n\mid\forall x_{n+1}\in R:(R,(x,x_{n+1}))\in S\}$ are $n$-ary $(K,P)$-semialgebraic classes.
\end{thm}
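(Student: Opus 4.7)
The plan is to reduce the universal case to the existential one by complementation, and then to handle the existential case by decomposing $S$ according to Proposition \ref{sanf} and invoking Lemmas \ref{elim1} and \ref{elim2}.

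Since $(K,P)$-semialgebraic classes form a Boolean algebra, and since
\[
\{(R,x)\in\mathcal R_n\mid\forall x_{n+1}\colon(R,(x,x_{n+1}))\in S\}=\complement\{(R,x)\in\mathcal R_n\mid\exists x_{n+1}\colon(R,(x,x_{n+1}))\in\complement S\},
\]
the universal assertion follows once the existential one is established. So I treat only $\exists$. By \ref{sanf}, $S$ can be written as a finite union of sets of the shape
\[
\{(R,(x,x_{n+1}))\in\mathcal R_{n+1}\mid f_i(x,x_{n+1})=0\et g_{i1}(x,x_{n+1})>0\et\ldots\et g_{im}(x,x_{n+1})>0\}
\]
with $f_i,g_{ij}\in K[X_1,\ldots,X_{n+1}]$. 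Existential quantification distributes over unions, so it suffices to handle a single such atomic piece; fix one and write $f_i=\sum_{e=0}^D h_e(X_1,\ldots,X_n)X_{n+1}^e$ where $D:=\deg_{X_{n+1}}f_i$.

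For each $d\in\{0,1,\ldots,D\}$, Lemma \ref{elim1} directly provides that the class of $(R,x)$ for which $f_i(x,X_{n+1})$ has degree precisely $d$ in $X_{n+1}$ and for which the desired $x_{n+1}$ exists is $(K,P)$-semialgebraic. The only case not covered by these finitely many applications of \ref{elim1} is the one in which $f_i(x,X_{n+1})$ degenerates to the zero polynomial; that condition just reads $h_0(x)=\ldots=h_D(x)=0$ and is manifestly $(K,P)$-semialgebraic, while in it the equation $f_i(x,x_{n+1})=0$ imposes no constraint, so the existential question reduces to $\exists x_{n+1}\colon g_{i1}(x,x_{n+1})>0\et\ldots\et g_{im}(x,x_{n+1})>0$, which is $(K,P)$-semialgebraic by Lemma \ref{elim2}. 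Taking the finite union over $d\in\{0,\ldots,D\}$ together with this vanishing case yields the existential projection of the atomic piece as a $(K,P)$-semialgebraic class.

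The substantive work is already packaged into Lemmas \ref{elim1} and \ref{elim2}, which use Hermite's method and Descartes' rule to count real roots with sign conditions on parameter-dependent univariate polynomials. The only thing to watch in assembling the proof is the degree bookkeeping: since \ref{elim1} is stated with a fixed $d$, the case in which $f_i$ degenerates on $x$ has to be split off and routed through \ref{elim2}. No further difficulty arises, and the theorem follows.
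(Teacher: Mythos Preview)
Your proof is correct and follows essentially the same route as the paper: reduce $\forall$ to $\exists$ by complementation, use \ref{sanf} to reduce to a single atomic piece, then split according to $\deg_{X_{n+1}}f_i(x,X_{n+1})$ into the cases $d=0,\ldots,D$ (handled by \ref{elim1}) together with the degenerate case $f_i(x,X_{n+1})\equiv0$ (handled by \ref{elim2}). The only differences are cosmetic.
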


\begin{proof}
Because the second class is the complement of
\[\{(R,x)\in\mathcal R_n\mid\exists x_{n+1}\in R:(R,(x,x_{n+1}))\in\complement S\},\]
it is enough to consider the first class.
By means of \ref{sanf}, one can assume WLOG that $S$ is of the form
\[S=\{(R,(x,x_{n+1})\in\mathcal R_{n+1}\mid f(x,x_{n+1})=0,g_1(x,x_{n+1})>0,\dots,g_m(x,x_{n+1})>0\}\]
for some $f,g_i\in K[X_1,\dots,X_{n+1}]$. Setting $D:=\deg_{X_{n+1}}f$, we obtain
\begin{multline*}
\{(R,x)\in\mathcal R_n\mid\exists x_{n+1}\in R:(R,(x,x_{n+1}))\in S\}\\
=\bigcup_{d=0}^D\left\{(R,x)\in\mathcal R_n\mid\deg f(x,X_{n+1})=d\et\exists x_{n+1}\in R:\left(
\begin{aligned}
f(x,x_{n+1})&=0\ \et\\
g_1(x,x_{n+1})&>0\ \et\\
&\ \ \vdots\\
g_m(x,x_{n+1})&>0
\end{aligned}
\right)\right\}\\
\cup\left(
\begin{aligned}
&\{(R,x)\in\mathcal R_n\mid f(x,X_{n+1})=0\}\cap\\
&\{(R,x)\in\mathcal R_n\mid\exists x_{n+1}\in R:(g_1(x,x_{n+1})>0\et\dots\et g_m(x,x_{n+1})>0)\}
\end{aligned}
\right)
\end{multline*}
which is $(K,P)$-semialgebraic by \ref{elim1} and \ref{elim2}.
\end{proof}

\begin{thm}{}\emph{[$\to$ \ref{preimagesa}]}\label{imagesa}
Let $R$ be a real closed field. Images of semialgebraic subsets of $R^n$ under polynomial maps $R^n\to R^m$ are again
semialgebraic in $R^m$.
\end{thm}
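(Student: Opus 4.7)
The plan is to reduce the statement to an application of the real quantifier elimination theorem \ref{elim}, taking the ordered base field to be $(R,R^2)$ itself and $\mathcal R=\{R\}$, so that $(R,R^2)$-semialgebraic classes in $R^k$ become simply semialgebraic subsets of $R^k$.

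Concretely, let $S\subseteq R^n$ be semialgebraic and let $p=(p_1,\dots,p_m)\colon R^n\to R^m$ be a polynomial map with $p_i\in R[X_1,\dots,X_n]$. First I would form the semialgebraic set
\[
T:=\{(y,x)\in R^{m+n}\mid x\in S\et y_1=p_1(x)\et\dots\et y_m=p_m(x)\}\subseteq R^{m+n}.
\]
This is semialgebraic: the condition $x\in S$ pulled back along the coordinate projection $R^{m+n}\to R^n,\ (y,x)\mapsto x$ is semialgebraic by \ref{sapre} (or \ref{preimagesa}), and each equation $y_i-p_i(x)=0$ is semialgebraic, so their conjunction is semialgebraic by closure under finite intersections.

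Next I would observe that
\[
p(S)=\{y\in R^m\mid\exists x_n\in R:\exists x_{n-1}\in R:\dots:\exists x_1\in R:(y,x_1,\dots,x_n)\in T\}.
\]
Applying Theorem \ref{elim} once (with variables ordered so that $x_n$ appears last) yields that
\[
\{(y,x_1,\dots,x_{n-1})\in R^{m+n-1}\mid\exists x_n\in R:(y,x_1,\dots,x_n)\in T\}
\]
is semialgebraic; iterating this $n$ times (stripping off one existentially quantified coordinate at a time) eliminates all of $x_1,\dots,x_n$ and leaves $p(S)\subseteq R^m$ semialgebraic.

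The main thing to verify carefully is just the bookkeeping of variable orderings: Theorem \ref{elim} eliminates the final coordinate of an $(n+1)$-ary class, so I would list the variables as $(y_1,\dots,y_m,x_1,\dots,x_n)$ and peel off $x_n$, then $x_{n-1}$, etc. There is no genuine obstacle once quantifier elimination is available; the content of the theorem is really contained in \ref{elim}, and \ref{imagesa} is simply the geometric reformulation that the projection of a semialgebraic set is semialgebraic, together with the trivial observation that $p(S)$ is the projection of the semialgebraic graph-type set $T$.
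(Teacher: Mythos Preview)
Your proof is correct and follows essentially the same approach as the paper: write $p(S)$ as the set of $y\in R^m$ for which there exists $x\in R^n$ with $x\in S$ and $y_i=p_i(x)$, then apply Theorem~\ref{elim} $n$ times to eliminate the $x$-coordinates. Your version is simply more explicit about forming the intermediate set $T$ and about the variable ordering.
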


\begin{proof}
Let $S\subseteq R^n$ be semialgebraic and let $h_1,\dots,h_m\in R[X_1,\dots,X_n]$.
We have to show that $\{y\in\R^m\mid\exists x\in R^n:(x\in S \et y_1=h_1(x)\et\ldots\et y_m=h_m(x))\}$
is again semialgebraic. But this follows by applying $n$ times the quantifier elimination \ref{elim}.
\end{proof}

\begin{ex}[Tarski principle]\label{tprinciple}
The real quantifier elimination \ref{elim} can be used together with \ref{nothingorall} to generalize many statements from $\R$ to
other real closed fields. This has already been advertised in \ref{advertisetarski}. To give the reader a sense of the type of statements
admitting such a generalization, we give several examples.
\begin{enumerate}[(a)]
\item(``intermediate value theorem for rational functions'') [$\to$ \ref{intermediate}] From analysis, we know for $R=\R$:
If $f,g\in R[X]$, $a,b\in R$ with $a\le b$, $g(c)\ne0$ for all $c\in[a,b]$ and
$\sgn\left(\frac{f(a)}{g(a)}\right)\ne\sgn\left(\frac{f(b)}{g(b)}\right)$, then there is a $c\in[a,b]$ with $f(c)=0$.
We claim that this is valid even for all real closed fields $R$. To this end, it is enough to show that for each $d\in\N$
\[
S_d:=
\left\{R\in\mathcal R\mid
\begin{aligned}
&\forall x_0,\dots,x_d,y_0,\dots,y_d,a,b\in R:\\
&
\left.
\left(\begin{aligned}
&\left.\left(\begin{aligned}
&\scriptstyle(a\le b~\et~\left(\forall c\in[a,b]:\sum_{i=0}^dy_ic^i\ne0\right)~\et\\
&\scriptstyle\sgn\left(\left(\sum_{i=0}^dx_ia^i\right)\left(\sum_{i=0}^dy_ib^i\right)\right)\ne\sgn\left(\left(\sum_{i=0}^dx_ib^i\right)\left(\sum_{i=0}^dy_ia^i\right)\right)
\end{aligned}\right)\right\}\scriptstyle(*)\\
&\implies\left.\exists c\in[a,b]:\sum_{i=0}^dx_ic^i=0\right\}\scriptstyle(**)
\end{aligned}\right)
\right\}\scriptstyle(***)
\end{aligned}
\right\}
\]
is a semialgebraic class because then $\R\in S_d$ implies by \ref{nothingorall} $S_d=\mathcal R$. Fix $d\in\N$.
Applying the quantifier elimination \ref{elim} $2d+4$ times, it is enough to show that the following class is
semialgebraic:
\begin{multline*}
\{(R,(x_0,\dots,x_d,y_0,\dots,y_d,a,b))\in\mathcal R^{2d+4}\mid(***)\}=\\
\complement
\underbrace{\{(R,(x_0,\dots,x_d,y_0,\dots,y_d,a,b))\in\mathcal R^{2d+4}\mid(*)\}}_{S'}\\
\cup\underbrace{\{(R,(x_0,\dots,x_d,y_0,\dots,y_d,a,b))\in\mathcal R^{2d+4}\mid(**)\}}_{S''}
\end{multline*}
It is thus enough to show that $S'$ and $S''$ are semialgebraic. We accomplish this in each case by applying the
quantifier elimination \ref{elim}. We explicate this only for $S'$ since it is analogous and even simpler for $S''$:
\begin{multline*}
S'=\{(R,(x_0,\dots,x_d,y_0,\dots,y_d,a,b))\mid b-a\ge0\}\cap\\
\{(R,(x_0,\dots,x_d,y_0,\dots,y_d,a,b))\mid\forall c\in R:(\overbrace{c\in[a,b]\implies\sum_{i=0}^dy_ic^i\ne0}^{(****)})\}\cap\\
\bigcup_{\substack{\de,\ep\in\{-1,0,1\}\\\de\ne\ep}}
\left\{(R,(x_0,\dots,x_d,y_0,\dots,y_d,a,b))\mid
\begin{aligned}
\scriptstyle\sgn\left(\left(\sum_{i=0}^dx_ia^i\right)\left(\sum_{i=0}^dy_ib^i\right)\right)=\de,\\
\scriptstyle\sgn\left(\left(\sum_{i=0}^dx_ib^i\right)\left(\sum_{i=0}^dy_ia^i\right)\right)=\ep
\end{aligned}
\right\}.
\end{multline*}
By quantifier elimination it is enough to show that
\[\{(R,(x_0,\dots,x_d,y_0,\dots,y_d,a,b))\mid({*}{*}{*}{*})\}\]
is semialgebraic. But this class equals
\begin{multline*}
\{(R,(x_0,\dots,x_d,y_0,\dots,y_d,a,b))\mid c<a\text{ or }b<c\}~\cup\\
\left\{(R,(x_0,\dots,x_d,y_0,\dots,y_d,a,b,c))\mid \sum_{i=0}^dy_ic^i\ne0\right\}
\end{multline*}
\item Let $R$ be a real closed field and $f\in R[X]$ with $f\ge0$ on $R$. We claim that the sum $g:=f+f'+f''+\dots$ of all
derivatives of $f$ satisfies again $g\ge0$ on $R$. We show this first for $R=\R$: In this case, we have for all $x\in\R$
\[\frac{dg(x)e^{-x}}{dx}=g'(x)e^{-x}-g(x)e^{-x}=(g'(x)-g(x))e^{-x}=-f(x)e^{-x}\le0,\]
from which it follows that $h\colon\R\to\R,\ x\mapsto g(x)e^{-x}$ is
anti-monotonic [$\to$~\ref{monodef}].
From this and the fact that $\lim_{x\to\infty}h(x)=\lim_{x\to\infty}(g(x)e^{-x})=0$, we deduce that $h(x)\ge0$ and therefore
$g(x)\ge0$ for all $x\in\R$. Thus the claim is proved for $R=\R$. To show it for all real closed fields $R$, it is now enough to
show that for all $d\in\N$
\begin{align*}
S_d:=\Bigg\{R\in\mathcal R\mid~&\forall a_0,\dots,a_d\in R:\Bigg(\left(\forall x\in R:\sum_{i=0}^da_ix^i\ge0\right)\implies\\
&\forall x\in R:\sum_{k=0}^d\sum_{i=k}^di(i-1)\dotsm(i-k+1)a_ix^{i-k}\ge0\Bigg)\Bigg\}
\end{align*}
is semialgebraic since then by \ref{nothingorall} $\R\in S_d$ implies $S_d=\mathcal R$. This can be shown for each
$d\in\N$ by applying the quantifier elimination $d+3$ times.
\item We can reprove \ref{eucliddiag} since for $R=\R$ it is already known from linear algebra and it suffices to show for
fixed $n\in\N$ that
\[
S_n:=\left\{R\in\mathcal R\mid
\begin{aligned}
&\forall a_{11},a_{12},\dots,a_{nn}\in R:\\
&\left(
\begin{aligned}
&(\forall i,j\in\{1,\dots,n\}:a_{ij}=a_{ji})\implies\\
&\left(\begin{aligned}
&\exists b_{11},b_{12},\dots,b_{nn}\in R:\\
&\left(\begin{aligned}
\begin{aligned}
&\left(
\forall i,k\in\{1,\dots,n\}:
\sum_{j=1}^nb_{ji}b_{jk}=\de_{ik}
\right)
\et\\
&\forall i,\ell\in\{1,\dots,n\}:\left(i\ne\ell\implies\sum_{j,k=1}^nb_{ji}a_{jk}b_{k\ell}=0\right)
\end{aligned}
\end{aligned}\right)
\end{aligned}\right)
\end{aligned}
\right)
\end{aligned}
\right\}
\]
is semialgebraic. We manage to do so by implementing the quantifications over $i,j,k,\ell$ as finite intersections of semialgebraic
classes and by eliminating the quantification over $a_{11},\dots,b_{nn}$ by applying $2n^2$ times \ref{elim}.
\item By \ref{nothingorall}, $\{R\in\mathcal R\mid R\text{ archimedean}\}$ [$\to$ \ref{archetcdef}(a)] is not a semialgebraic class
(if $\mathcal R$ is big enough) since it contains $\R$ but not $\overline{(\R(X),P)}$ where $P$ is an arbitrary order of $\R(X)$.
\end{enumerate}
\end{ex}

\section{Canonical isomorphisms of Boolean algebras of semialgebraic sets and classes}

In this section, we fix again an ordered field $(K,P)$ and a set $\mathcal R$ of real closed extensions of $(K,P)$
[$\to$ \ref{introsemialg}]. 

\begin{df}\label{bahom}
Let $M_1$ and $M_2$ be sets, $\mathcal S_1$ a Boolean algebra on $M_1$ and $\mathcal S_2$ a Boolean algebra on
$M_2$. Then $\Ph\colon\mathcal S_1\to\mathcal S_2$ is called a \emph{homomorphism of Boolean algebras} if
$\Ph(\emptyset)=\emptyset$, $\Ph\left(\complement S\right)=\complement\Ph(S)$,
$\Ph(S\cap T)=\Ph(S)\cap\Ph(T)$ and $\Ph(S\cup T)=\Ph(S)\cup\Ph(T)$ for all $S,T\in\mathcal S_1$. If $\Ph$ is in addition
\alalal{injective}{surjective}{bijective}, then $\Ph$ is called an \alalal{embedding}{epimorphismus}{isomophism} of Boolean
algebras.
\end{df}

\begin{lem}\label{charemb}
Suppose $\mathcal S_1$ and $\mathcal S_2$ are Boolean algebras and
$\Ph\colon\mathcal S_1\to\mathcal S_2$ is a homomorphism. Then the following are equivalent:
\begin{enumerate}[(a)]
\item $\Ph$ is an embedding.
\item $\forall S\in\mathcal S_1:(\Ph(S)=\emptyset\implies S=\emptyset)$
\end{enumerate}
\end{lem}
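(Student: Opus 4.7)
The implication (a)$\implies$(b) is immediate: assuming $\Phi$ is injective, the equality $\Phi(S)=\emptyset=\Phi(\emptyset)$ forces $S=\emptyset$.

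For (b)$\implies$(a), my plan is to mimic the standard ``trivial kernel implies injective'' argument from group theory, with the symmetric difference playing the role of the kernel test. First I would show that any homomorphism $\Phi$ of Boolean algebras preserves set-theoretic difference: from $\Phi(\complement T)=\complement\Phi(T)$ and $\Phi(S\cap\complement T)=\Phi(S)\cap\complement\Phi(T)$ (both by \ref{bahom}) one reads off
\[\Phi(S\setminus T)=\Phi(S)\setminus\Phi(T)\qquad(S,T\in\mathcal S_1).\]
Taking unions, this also gives $\Phi\bigl((S\setminus T)\cup(T\setminus S)\bigr)=(\Phi(S)\setminus\Phi(T))\cup(\Phi(T)\setminus\Phi(S))$.

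Now suppose $S,T\in\mathcal S_1$ satisfy $\Phi(S)=\Phi(T)$. Then the right-hand side of the previous display is empty, so hypothesis (b) applied to $(S\setminus T)\cup(T\setminus S)\in\mathcal S_1$ forces $(S\setminus T)\cup(T\setminus S)=\emptyset$, which in turn forces both $S\setminus T=\emptyset$ and $T\setminus S=\emptyset$, hence $S=T$. This establishes injectivity of $\Phi$.

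I do not anticipate any real obstacle here: the only point to check carefully is that \ref{bahom} really does entail $\Phi(\complement T)=\complement\Phi(T)$ and the compatibility with $\cap$, which is just the definition, and that the membership reasoning ``$(S\setminus T)\cup(T\setminus S)=\emptyset\iff S=T$'' takes place in $M_1$ rather than in the abstract Boolean algebra---which is fine because $\mathcal S_1\subseteq\pow(M_1)$ in the sense of \ref{booleanalgebra}.
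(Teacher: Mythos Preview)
Your proof is correct and is essentially the paper's own argument: both directions match, and for (b)$\implies$(a) you use the same computation $\Ph(S\setminus T)=\Ph(S)\cap\complement\Ph(T)$ to reduce to the ``trivial kernel'' condition. The only cosmetic difference is that you bundle $S\setminus T$ and $T\setminus S$ into a single symmetric difference before invoking (b), whereas the paper applies (b) to each piece separately.
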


\begin{proof}
\underline{(a)$\implies$(b)}\quad Suppose (a) holds and consider $S\in\mathcal S_1$ such that $\Ph(S)=\emptyset$. Then 
$\Ph(S)=\emptyset=\Ph(\emptyset)$ and hence $S=\emptyset$ by the injectivity of $\Ph$.

\smallskip
\underline{(b)$\implies$(a)}\quad Suppose (b) holds and let $S,T\in\mathcal S_1$ such that $\Ph(S)=\Ph(T)$. Then
$\Ph(S\setminus T)=\Ph\left(S\cap\complement T\right)=\Ph(S)\cap\complement\Ph(T)=\emptyset$ and
therefore $S\setminus T=\emptyset$. Analogously, we obtain $T\setminus S=\emptyset$. Then $S=T$.
\end{proof}

\begin{notation}\label{introsn}
Let $n\in\N_0$. From now on, we denote by $\mathcal S_n$ the Boolean algebra of all $n$-ary
$(K,P)$-semialgebraic classes.
For every $R\in\mathcal R$, we let furthermore $\mathcal S_{n,R}$ denote the Boolean algebra of all
$K$-semialgebraic subsets of $R^n$
(i.e., $\mathcal S_{n,R}=\mathcal S_n$ for $\mathcal R=\{R\}$). We call the
map $\set_R\colon\mathcal S_n\to\mathcal S_{n,R},\ S\mapsto\{x\in R^n\mid(R,x)\in S\}$ the \emph{setification} to $R$ for every $R\in\mathcal R$.
\end{notation}

\begin{thmdef}\label{setification}
Let $n\in\N_0$ and $R\in\mathcal R$. The setification \[\set_R\colon\mathcal S_n\to\mathcal S_{n,R}\] is an isomorphism of
Boolean algebras. We call its inverse map \[\class_R:=\set_R^{-1}\colon\mathcal S_{n,R}\to\mathcal S_n\] the
\emph{classification}.
\end{thmdef}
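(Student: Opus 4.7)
The plan is to verify the three standard requirements of a Boolean algebra isomorphism: that $\set_R$ is a homomorphism, that it is surjective, and that it is injective. Of these, only the last is substantive and will use the real quantifier elimination theorem \ref{elim} together with the ``nothing or all'' principle \ref{nothingorall}.

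First I would observe that $\set_R$ is a homomorphism of Boolean algebras essentially by inspection: the setification commutes with $\emptyset$, $\complement$, $\cap$, and $\cup$ since all these operations on the level of classes are defined pointwise in the $R'$-coordinate. Next, for surjectivity, note that every generator $\{x\in R^n\mid p(x)\ge 0\}$ of $\mathcal{S}_{n,R}$ (with $p\in K[X_1,\dots,X_n]$) equals $\set_R(\{(R',x)\in\mathcal{R}_n\mid p(x)\ge 0\})$. Since $\set_R$ is a Boolean algebra homomorphism, its image is a Boolean subalgebra of $\mathcal{S}_{n,R}$; as it contains the generating set of the target, it must equal all of $\mathcal{S}_{n,R}$.

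The main obstacle is injectivity. By Lemma \ref{charemb}, it suffices to prove that $\set_R(S)=\emptyset$ implies $S=\emptyset$ for every $S\in\mathcal{S}_n$. The idea is to transform the hypothesis and conclusion into statements about $0$-ary $(K,P)$-semialgebraic classes, where the dichotomy \ref{nothingorall} applies. Concretely, I would form the class
\[
T:=\bigl\{R'\in\mathcal{R}\,\big|\,\exists x_1\in R':\dots:\exists x_n\in R':(R',(x_1,\dots,x_n))\in S\bigr\},
\]
which by $n$-fold application of the real quantifier elimination theorem \ref{elim} is a $0$-ary $(K,P)$-semialgebraic class. By \ref{nothingorall} we have either $T=\emptyset$ or $T=\mathcal{R}$. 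The hypothesis $\set_R(S)=\emptyset$ means exactly that $R\notin T$, ruling out $T=\mathcal{R}$, so $T=\emptyset$. But $T=\emptyset$ says that for every $R'\in\mathcal{R}$ there is no $x\in R'^n$ with $(R',x)\in S$, i.e.\ $S=\emptyset$ as a subclass of $\mathcal{R}_n$, as required.

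The hard part is thus really the availability of quantifier elimination \ref{elim}, which was established earlier in this section; once that is in hand, the argument is a short transfer using \ref{nothingorall} and \ref{charemb}. Everything else is bookkeeping about Boolean algebras.
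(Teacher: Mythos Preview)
Your proof is correct and follows essentially the same approach as the paper: both verify that $\set_R$ is an epimorphism by inspection, then use Lemma \ref{charemb} to reduce injectivity to showing that $\set_R(S)=\emptyset\iff S=\emptyset$, and establish this by forming the $0$-ary class $T=\{R'\in\mathcal R\mid\exists x\in R'^n:(R',x)\in S\}$ via quantifier elimination \ref{elim} and applying the dichotomy \ref{nothingorall}. The only cosmetic difference is that the paper argues the contrapositive direction ($S\ne\emptyset\implies\set_R(S)\ne\emptyset$), while you phrase it as ($\set_R(S)=\emptyset\implies S=\emptyset$).
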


\begin{proof}
It is clear that $\set_R$ is an epimorphism. Suppose $\emptyset\ne S\in\mathcal S_n$. By Lemma \ref{charemb}, it suffices to show
$\set_RS\ne\emptyset$. By the quantifier elimination \ref{elim},
\[T:=\{R'\in\mathcal R\mid\exists x\in R'^n:(R',x)\in S\}\] is
$(K,P)$-semialgebraic and hence by  \ref{nothingorall} either empty or $\mathcal R$. From $S\ne\emptyset$, we have of course
$T\ne\emptyset$. Therefore $R\in\mathcal R=T$, i.e., there is some $x\in R^n$ with $(R,x)\in S$. Then $x\in\set_RS$ and thus
$\set_RS\ne\emptyset$.
\end{proof}

\begin{cordef}\label{transfer}
Let $n\in\N_0$ and $R,R'\in\mathcal R$. Then there is exactly one isomorphism of Boolean algebras
$\transfer_{R,R'}\colon\mathcal S_{n,R}\to\mathcal S_{n,R'}$ satisfying
\[\transfer_{R,R'}(\{x\in R^n\mid p(x)\ge0\})=\{x\in R'^n\mid p(x)\ge0\}\]
for all $p\in K[X_1,\dots,X_n]$. We call $\transfer_{R,R'}$ the \emph{transfer} from $R$ to $R'$.
\end{cordef}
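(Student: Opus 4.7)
The plan is to leverage the isomorphism established in Theorem \ref{setification}: since $\set_R\colon\mathcal S_n\to\mathcal S_{n,R}$ and $\set_{R'}\colon\mathcal S_n\to\mathcal S_{n,R'}$ are both isomorphisms of Boolean algebras, I simply define
\[\transfer_{R,R'}:=\set_{R'}\circ\class_R\colon\mathcal S_{n,R}\to\mathcal S_{n,R'}.\]
As a composition of two Boolean algebra isomorphisms, $\transfer_{R,R'}$ is itself an isomorphism of Boolean algebras, which takes care of the structural part of the claim.

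Next I would verify the defining property. Given $p\in K[X_1,\dots,X_n]$, set $S:=\{x\in R^n\mid p(x)\ge0\}$ and consider the $(K,P)$-semialgebraic class $T:=\{(R'',x)\in\mathcal R_n\mid p(x)\ge0\}\in\mathcal S_n$. Then $\set_R(T)=S$ by definition of $\set_R$, so $\class_R(S)=T$ by injectivity of $\set_R$. Applying $\set_{R'}$ yields $\transfer_{R,R'}(S)=\set_{R'}(T)=\{x\in R'^n\mid p(x)\ge0\}$, as required.

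For uniqueness, the key observation is that the sets $\{x\in R^n\mid p(x)\ge0\}$ with $p\in K[X_1,\dots,X_n]$ generate $\mathcal S_{n,R}$ as a Boolean algebra: this is literally the definition of $K$-semialgebraic set in $R^n$ given in \ref{introsemialg}, combined with \ref{booleanalgebra}(b). Since any homomorphism of Boolean algebras is determined by its values on a generating set (its values on arbitrary Boolean combinations are forced by compatibility with $\complement$, $\cap$, $\cup$), any two homomorphisms $\mathcal S_{n,R}\to\mathcal S_{n,R'}$ that agree on all $\{x\in R^n\mid p(x)\ge0\}$ must coincide globally. This gives uniqueness.

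There is no real obstacle here since the difficult content has already been absorbed into Theorem \ref{setification}, whose proof rests on the quantifier elimination \ref{elim} and \ref{nothingorall}. The only thing to watch out for is the subtle point that the uniqueness argument requires the generating family to be stipulated in terms of ``$\ge 0$'' conditions (rather than, say, strict inequalities), which matches what the statement prescribes and what the definition of semialgebraic set supplies.
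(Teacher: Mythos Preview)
Your proof is correct and essentially identical to the paper's: the paper likewise defines $\transfer_{R,R'}:=\set_{R'}\circ\class_R$, verifies the generator condition via the class $\{(R'',x)\in\mathcal R_n\mid p(x)\ge0\}$, and deduces uniqueness from the fact that $\mathcal S_{n,R}$ is generated by the sets $\{x\in R^n\mid p(x)\ge0\}$ [$\to$ \ref{booleanalgebra}(b)].
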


\begin{proof}
The uniqueness is clear since $\mathcal S_{n,R}$ is generated by \[\{\{x\in R^n\mid p(x)\ge0\}\mid p\in K[X_1,\dots,X_n]\}\]
[$\to$ \ref{booleanalgebra}(b)]. Existence is established by setting $\transfer_{R,R'}:=\set_{R'}\circ\class_R$.
Indeed, let $p\in K[X_1,\dots,X_n]$ and set $S:=\{(R'',x)\in\mathcal R_n\mid p(x)\ge0\text{ in $R''$}\}$. Then the claim is
that $\transfer_{R,R'}(\set_R S)=\set_{R'}(S)$ which is clear since
$\transfer_{R,R'}(\set_R S)=(\set_{R'}\circ\class_R)(\set_R S)=\set_{R'}((\underbrace{\class_R\circ\set_R}_{\id_{\mathcal S_n}})(S))$.
\end{proof}

\chapter{Hilbert's 17th problem}

\section{Nonnegative polynomials in one variable}

\begin{thm}\label{so2s}
Suppose $R$ is a real closed field and $f\in R[X]$. Then the following are equivalent:
\begin{enumerate}[\normalfont(a)]
\item $f\ge0$ on $R$ \qquad\emph{[$\to$ \ref{intervals}]}
\item $f$ is a sum of two squares in $R[X]$.
\item $f\in\sum R[X]^2$ \qquad\emph{[$\to$ \ref{divnot}]}
\end{enumerate}
\end{thm}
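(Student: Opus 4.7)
The implication (b)$\Longrightarrow$(c) is immediate, and (c)$\Longrightarrow$(a) follows from $R[X]^2\subseteq R[X]_{\ge 0\text{ on }R}$ (where I use that $R$ is real, so squares of polynomials evaluate to nonnegative values) together with the fact that sums of nonnegative elements of $R$ are nonnegative. So the entire content is (a)$\Longrightarrow$(b).

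The plan for (a)$\Longrightarrow$(b) is to factor $f$ into irreducibles using the real fundamental theorem \ref{realfund}, observe that the sign constraint $f\ge 0$ forces a restrictive shape on the factorization, and then assemble each factor as a sum of two squares and multiply them together using the Brahmagupta--Fibonacci identity. In detail: WLOG $f\ne 0$. By \ref{realfund} we may write
\[
f=c\prod_{i=1}^k(X-a_i)^{\alpha_i}\prod_{j=1}^\ell\bigl((X-b_j)^2+d_j^2\bigr)
\]
with $c\in R^\times$, $a_i,b_j,d_j\in R$, $d_j\ne 0$ and $\alpha_i\in\N$. Each quadratic factor $(X-b_j)^2+d_j^2$ is strictly positive on $R$. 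I claim that $f\ge 0$ on $R$ forces $c>0$ and every $\alpha_i$ to be even. Indeed, the sign of $f$ on a small interval on either side of $a_i$ changes precisely if $\alpha_i$ is odd, so the intermediate value theorem \ref{intermediate} (or just \ref{dermono}) together with $f\ge 0$ rules this out. For the constant $c$, consider $f(x)$ for $|x|$ large and apply \ref{sgnbounds}(a): the sign of $f(x)$ is $(\sgn x)^{\deg f}\sgn(c)$, so $c>0$ (and $\deg f$ must be even, which is automatic from $\alpha_i$ even).

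Now I assemble each piece as a sum of two squares. Since $R$ is real closed, hence Euclidean by \ref{dfrealclosed}, $c=(\sqrt c)^2=(\sqrt c)^2+0^2$ is a sum of two squares. Each linear factor with even multiplicity gives $(X-a_i)^{\alpha_i}=\bigl((X-a_i)^{\alpha_i/2}\bigr)^2+0^2$, again a sum of two squares. Each quadratic factor $(X-b_j)^2+d_j^2$ is already displayed as a sum of two squares. Finally, the Brahmagupta--Fibonacci identity
\[
(p^2+q^2)(r^2+s^2)=(pr-qs)^2+(ps+qr)^2\qquad(p,q,r,s\in R[X])
\]
shows that the product of two sums of two squares in $R[X]$ is again a sum of two squares; iterating, the product of finitely many such is still a sum of two squares. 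Applied to our factorization, this yields $f\in R[X]^2+R[X]^2$, which is (b).

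The only place where genuine content enters is the factorization step, which relies crucially on \ref{realfund}; the rest is the sign bookkeeping described above and the multiplicative identity, which is nothing but the norm identity $|z|^2|w|^2=|zw|^2$ in $R(\ii)$ transcribed into $R[X]$ via the embedding $R[X]\hookrightarrow R(\ii)[X]$. No step presents a serious obstacle once \ref{realfund} is in hand.
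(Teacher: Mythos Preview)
Your proof is correct and follows essentially the same approach as the paper: factor via \ref{realfund}, use the sign constraint to force $c>0$ and even real multiplicities, then combine into a sum of two squares. The only cosmetic difference is in the last step: the paper packages the iterated Brahmagupta--Fibonacci identity by writing $g=\sqrt c\prod_i(X-(a_i+b_i\ii))\prod_j(X-d_j)^{\alpha_j/2}\in R(\ii)[X]$ once and for all and computing $f=g^*g=p^2+q^2$ where $g=p+\ii q$, which is precisely the norm identity you yourself identify at the end.
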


\begin{proof}(b)$\implies$(c)$\implies$(a) is trivial. In order to show (a)$\implies$(b), we
set $C:=R(\ii)$ and consider the ring automorphism
\[C[X]\to C[X],\ p\mapsto p^*\]
given by $a^*=a$ for $a\in R$, $\ii^*=-\ii$ and $X^*=X$. WLOG $f\ne0$. By
the fundamental theorem of algebra \ref{realfund}, there exist
$k,\ell\in\N_0$, $c\in R^\times$, $a_1,\dots,a_k\in R$, $b_1,\dots,b_k\in R^\times$, $\al_1,\dots,\al_\ell\in\N$ and
pairwise different $d_1,\dots,d_\ell\in R$ such that
\begin{align*}
f&=c\left(\prod_{i=1}^k((X-a_i)^2+b_i^2)\right)\prod_{j=1}^\ell(X-d_j)^{\al_j}\\
&=c\left(\prod_{i=1}^k(X-(a_i+b_i\ii))\right)\left(\prod_{i=1}^k(X-(a_i-b_i\ii))\right)\prod_{j=1}^\ell(X-d_j)^{\al_j}.
\end{align*}
Suppose now $f\ge0$ on $R$. Then we have $0\le\sgn(f(x))=(\sgn c)\prod_{j=1}^\ell(\sgn(x-d_j))^{\al_j}$ for all $x\in R$.
From this, we deduce easily $\al_j\in2\N$ and $c\in R^2$. Setting
\[g:=\sqrt c\left(\prod_{i=1}^k(X-(a_i+b_i\ii))\right)\prod_{j=1}^\ell(X-d_j)^{\frac{\al_j}2}\in C[X],\]
we have now $f=g^*g$. Writing $g=p+\ii q$ with $p,q\in R[X]$, this amounts to
$f=(p-\ii q)(p+\ii q)=p^2+q^2$.
\end{proof}

\begin{thm}[Cassels]\label{cassels}
Let $(K,\le)$ be an ordered field. Suppose $\ell\in\N_0$, $f_1,\dots,f_\ell\in K[X]$, $g_1,\dots,g_\ell\in K[X]\setminus\{0\}$ and
$a_1,\dots,a_\ell\in K_{\ge0}$ with $\sum_{i=1}^\ell a_i\left(\frac{f_i}{g_i}\right)^2\in K[X]$. Then there are $p_1,\dots,p_\ell\in K[X]$ such
that \[\sum_{i=1}^\ell a_i\left(\frac{f_i}{g_i}\right)^2=\sum_{i=1}^\ell a_ip_i^2.\]
\end{thm}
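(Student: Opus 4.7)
The plan is to clear denominators to a single $g\in K[X]\setminus\{0\}$, pick the representation with $\deg g$ minimal, and then argue by contradiction that $\deg g=0$: any representation with $\deg g\ge1$ admits a ``Witt reflection'' to a representation with a strictly shorter denominator.

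First I would drop all $i$ with $a_i=0$ (setting $p_i:=0$ for these), so that I may assume $a_1,\dots,a_\ell>0$. Writing $f_i/g_i=h_i/g$ with $g\in K[X]\setminus\{0\}$ a common denominator and $h_i\in K[X]$, I obtain $g^2f=\sum_ia_ih_i^2$, and I fix such a tuple $(h_1,\dots,h_\ell,g)$ minimising $\deg g$. If $\deg g=0$, the polynomials $p_i:=h_i/g$ finish the proof, so it suffices to rule out $\deg g\ge1$.

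Assuming $\deg g\ge1$, I divide $h_i=gq_i+r_i$ with $q_i,r_i\in K[X]$ and $\deg r_i<\deg g$. Expanding $g^2f=\sum_ia_ih_i^2$ shows that $g\mid\sum_ia_ir_i^2$; writing $\sum_ia_ir_i^2=gt$ with $t\in K[X]$, the crucial observation is that $a_i>0$ in the ordered field $K$ forbids any cancellation of highest-degree terms in $\sum_ia_ir_i^2$, so $\deg t<\deg g$. The same expansion (or equivalently, dividing it once more by $g$) supplies an $e\in K[X]$ with $eg=2\sum_ia_iq_ir_i+t$ and $f=\sum_ia_iq_i^2+e$.

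The decisive step is then to verify that $h_i':=tq_i-er_i$ satisfies $\sum_ia_i(h_i')^2=t^2f$. This identity is a direct substitution using $2\sum_ia_iq_ir_i=eg-t$ and $\sum_ia_ir_i^2=gt$, and it exhibits a representation of $f$ with common denominator $t$ of degree $<\deg g$, contradicting minimality; the degenerate case $t=0$ is handled separately, since then positivity of the $a_i$ forces each $r_i=0$, so $g\mid h_i$ and the representation already has denominator $1$. I expect the main obstacle to be guessing the combination $h_i':=tq_i-er_i$: it is (up to the clearing factor $t$) a Witt reflection of the vector $(h_i/g)_i$ through $(r_i)_i$ in the quadratic space $\langle a_1,\dots,a_\ell\rangle$ over $K(X)$, and without this quadratic-form interpretation the choice looks opaque.
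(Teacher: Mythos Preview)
Your argument is correct and is essentially the same proof as in the paper, just packaged slightly differently. The paper performs an explicit descent step rather than choosing a minimal $\deg g$, and parametrises the reflection by $s:=\sum_ia_iq_i^2-f$ and $t_{\text{paper}}:=\sum_ia_ih_iq_i-gf$ instead of your $e$ and $t$; but a short calculation shows its new denominator $G=sg-2t_{\text{paper}}$ equals your $t=\frac1g\sum_ia_ir_i^2$ and its $F_i=sh_i-2t_{\text{paper}}q_i$ equals your $h_i'=tq_i-er_i$, so the two arguments coincide.
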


\begin{proof}
WLOG $a_i>0$ for all $i\in\{1,\dots,\ell\}$ and $g_1=\ldots=g_\ell$. It suffices to show:
Let $h\in K[X]$ a polynomial for which there exists some $g\in K[X]$ of degree $\ge1$ and
$f_1,\dots,f_\ell\in K[X]$ satisfying $hg^2=\sum_{i=1}^\ell a_if_i^2$. Then there is some $G\in K[X]\setminus\{0\}$
with a degree that is smaller than that of $g$ and $F_1,\dots,F_\ell\in K[X]$ satisfying
$hG^2=\sum_{i=1}^\ell a_iF_i^2$.
We prove this: Write $f_i=q_ig+r_i$ with $q_i,r_i\in K[X]$ and $\deg r_i<\deg g$ 
for all $i\in\{1,\dots,\ell\}$. If $r_i=0$ for all $i\in\{1,\dots,\ell\}$, then we set $G:=1$ and $F_i:=q_i$ for all
$i\in\{1,\dots,\ell\}$ and have
\[hG^2=h=\frac1{g^2}(hg^2)=\frac1{g^2}\sum_{i=1}^\ell a_if_i^2=
\sum_{i=1}^\ell a_i\left(\frac{f_i}g\right)^2=\sum_{i=1}^\ell a_iq_i^2=\sum_{i=1}^\ell a_iF_i^2.\]
In the sequel, we suppose that the set $I:=\{i\in\{1,\dots,\ell\}\mid r_i\ne0\}$ is nonempty.
Now we set $s:=\sum_{i=1}^\ell a_iq_i^2-h$, $t:=\sum_{i=1}^\ell a_if_iq_i-gh$,
$F_i:=sf_i-2tq_i$ for $i\in\{1,\dots,\ell\}$ and $G:=sg-2t$. Then we obtain
\begin{align*}
hG^2&=s^2hg^2-4stgh+4t^2h\\
&=s^2\sum_{i=1}^\ell a_if_i^2-4st(t+gh)+4t^2(s+h)\\
&=s^2\sum_{i=1}^\ell a_if_i^2-4st\sum_{i=1}^\ell a_if_iq_i+4t^2\sum_{i=1}^\ell a_iq_i^2\\
&=\sum_{i=1}^\ell a_i(sf_i-2tq_i)^2=\sum_{i=1}^\ell a_iF_i^2.
\end{align*}
It remains to show that $G\ne0$ and $\deg G<\deg g$. To this end, we calculate
\begin{align*}
G&=g\sum_{i=1}^\ell a_iq_i^2-gh-2\sum_{i=1}^\ell a_if_iq_i+2gh\\
&=\frac1g\left(g^2\sum_{i=1}^\ell a_iq_i^2+g^2h-2g\sum_{i=1}^\ell a_if_iq_i\right)\\
&=\frac1g\left(g^2\sum_{i=1}^\ell a_iq_i^2+\sum_{i=1}^\ell a_if_i^2-2g\sum_{i=1}^\ell a_if_iq_i\right)\\
&=\frac1g\sum_{i=1}^\ell a_i(g^2q_i^2-2(gq_i)f_i+f_i^2)\\
&=\frac1g\sum_{i=1}^\ell a_i(gq_i-f_i)^2=\frac1g\sum_{i=1}^\ell a_ir_i^2=\frac1g\sum_{i\in I}a_ir_i^2.
\end{align*}
If we had $G=0$, then this would mean $\sum_{i\in I}a_ir_i^2=0$. Since the leading coefficient of $a_ir_i^2$
is positive for all $i\in I\ne\emptyset$, this is impossible. Hence $G\ne0$. Because of
$\deg r_i<\deg g$ for all $i\in I$, we have $\deg G<2\deg g-\deg g=\deg g$.
\end{proof}

\section{Homogenization and dehomogenization}

\begin{df}\label{introhom}
Let $A$ be commutative ring with $0\ne1$.
\begin{enumerate}[(a)]
\item If $k\in\N_0$ and $f\in A[X_1,\dots,X_n]$, then the sum of all terms (i.e.,
monomials with their coefficients)
of degree $k$ of $f$ is called the \emph{$k$-th homogeneous part} of $f$. This is a $k$-form
[$\to$ \ref{longremi}(a)].
\item If $f\in A[X_1,\dots,X_n]\setminus\{0\}$ and $d:=\deg f$, then the $d$-th homogeneous part of $f$
is called the \emph{leading form} $\lf(f)$ of $f$. We set $\lf(0):=0$.
\item If $f\in A[X_1,\dots,X_n]$, $d:=\deg f\in\N_0$ and $f=\sum_{k=0}^df_k$ with a $k$-form $f_k$ for
all $k\in\{0,\dots,d\}$, then the \emph{homogenization} $f^*\in A[X_0,\dots,X_n]$ of $f$ (with respect to
$X_0$) is
given by \[f^*:=\sum_{k=0}^dX_0^{d-k}f_k\]
which equals $X_0^df\left(\frac{X_1}{X_0},\dots,\frac{X_n}{X_0}\right)$ in case $A$ is a field
(since then the field of rational functions $A(X_0,X_1,\ldots,X_n)$ exists).
We set $0^*:=0$.
\item For homogeneous $f\in A[X_0,\dots,X_n]$, we call $\widetilde f:=f(1,X_1,\dots,X_n)$
the \emph{dehomogenization} of $f$ (with respect to $X_0$).
\end{enumerate}
\end{df}

\begin{rem}\label{homdehom}
Let $A$ be a commutative ring with $0\ne1$.
\begin{enumerate}[(a)]
\item $\lf(f)=f^*(0,X_1,\dots,X_n)$ for all $f\in A[X_1,\dots,X_n]$.
\item For $f,g\in A[X_1,\dots,X_n]$, we have \[(f+g)^*=f^*+g^*\] in case $\deg f=\deg g=\deg(f+g)$ and \[(fg)^*=f^*g^*\]
if $A$ is an integral domain.
\item $A[X_0,\dots,X_n]\to A[X_1,\dots,X_n],\ f\mapsto\widetilde f$ is a ring homomorphism.
\item For all $f,g\in A[X_1,\dots,X_n]$, we have
\[\lf(f+g)=\lf(f)+\lf(g)\] in case $\deg f=\deg g=\deg(f+g)$ and \[\lf(fg)=\lf(f)\lf(g)\]
if $A$ is an integral domain.
\item For all $f\in A[X_1,\dots,X_n]$, we have $\widetilde{f^*\,}=f$.
\item If $f\in A[X_0,\dots,X_n]\setminus\{0\}$ is homogeneous and $m:=\max\{k\in\N_0\mid X_0^k\mid f\}$,
then $X_0^m{\widetilde f\;}^*=f$.
\end{enumerate}
\end{rem}

\begin{lem}\label{pol0}
Suppose $K$ is a field, $n,d\in\N_0$, $f\in K[X_1,\dots,X_n]_d$ and let $I_1,\dots,I_n\subseteq K$
be sets of cardinality at least $d+1$ each
such that $f(x)=0$ for all $x\in I_1\times\ldots\times I_n$. Then $f=0$.
\end{lem}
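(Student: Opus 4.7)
The plan is to prove the statement by induction on $n$, using as base case the familiar fact that a nonzero univariate polynomial of degree at most $d$ over a field has at most $d$ roots.

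For the base case $n=1$, if $f \in K[X_1]_d$ vanishes on $I_1$ with $|I_1| \ge d+1$, then standard polynomial division in $K[X_1]$ (or equivalently, the fact that $K[X_1]$ is an integral domain so that each root contributes a factor of the form $X_1 - a$) forces $f = 0$; otherwise $f$ would have at least $d+1$ roots while $\deg f \le d$.

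For the induction step, assuming the claim for $n-1$ variables, I would write $f$ as a polynomial in the last variable,
\[
f = \sum_{i=0}^{d} f_i\, X_n^{i}, \qquad f_i \in K[X_1,\dots,X_{n-1}]_{d-i} \subseteq K[X_1,\dots,X_{n-1}]_d.
\]
Fix an arbitrary point $(a_1,\dots,a_{n-1}) \in I_1 \times \dots \times I_{n-1}$ and consider the univariate polynomial
\[
g(X_n) := f(a_1,\dots,a_{n-1},X_n) = \sum_{i=0}^{d} f_i(a_1,\dots,a_{n-1})\, X_n^{i} \in K[X_n]_d.
\]
By hypothesis $g$ vanishes on $I_n$, and since $|I_n| \ge d+1$ the base case forces $g = 0$, that is, $f_i(a_1,\dots,a_{n-1}) = 0$ for every $i \in \{0,\dots,d\}$. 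Since the point $(a_1,\dots,a_{n-1}) \in I_1 \times \dots \times I_{n-1}$ was arbitrary and each $f_i$ lies in $K[X_1,\dots,X_{n-1}]_d$, the inductive hypothesis applied to each $f_i$ (the sets $I_1,\dots,I_{n-1}$ still have cardinality at least $d+1$) yields $f_i = 0$ for all $i$, hence $f = 0$.

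There is no real obstacle: the only subtlety is to ensure that the coefficients $f_i$ still lie in the space $K[X_1,\dots,X_{n-1}]_d$ so that the induction hypothesis applies with the same bound $d$, which is clear since $\deg f_i \le d - i \le d$. The case $n = 0$ can either be treated as a vacuous base case (where $K[\,]_d = K$ and the empty product gives the single point at which $f$ must vanish) or simply sidestepped by starting the induction at $n=1$.
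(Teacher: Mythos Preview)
Your proof is correct and follows essentially the same induction on $n$ as the paper's proof, expanding $f$ in the last variable, killing the univariate specialization via the root bound, and applying the induction hypothesis to each coefficient $f_i$. The only cosmetic difference is that the paper starts the induction at $n=0$ (the trivial case you mention at the end) rather than at $n=1$.
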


\begin{proof}
Induction by $n$.

\smallskip
\underline{$n=0$}\quad\checkmark

\smallskip
\underline{$n-1\to n\quad(n\in\N)$}\quad Write $f=\sum_{k=0}^df_kX_n^k$ with $f_k\in K[X_1,\dots,X_{n-1}]_d$.
For all \[(x_1,\dots,x_{n-1})\in I_1\times\ldots\times I_{n-1},\] the polynomial
$f(x_1,\dots,x_{n-1},X_n)=\sum_{k=0}^df_k(x_1,\dots,x_{n-1})X_n^k\in K[X_n]_d$ is
a polynomial with at $d+1$ roots. Thus $f_k(x_1,\dots,x_{n-1})=0$ for all $k\in\{0,\dots,d\}$ and
$(x_1,\dots,x_{n-1})\in I_1\times\ldots\times I_{n-1}$. By induction hypothesis, $f_k=0$ for all
$k\in\{0,\dots,d\}$.
\end{proof}

\begin{rem}\label{soslongrem}
Let $K$ be a real field, $\ell,n\in\N_0$, $p_1,\dots,p_\ell\in K[X_1,\dots,X_n]$ and
\[f:=\sum_{i=1}^\ell p_i^2.\]
\begin{enumerate}[(a)]
\item If $f=0$, then $p_1=\ldots=p_\ell=0$. This follows from \ref{pol0} together with \ref{realchar}(c).
Instead of \ref{pol0}, one can alternatively employ the fact that $K(X_1,\dots,X_n)$ is real which is clear
by applying \ref{rtlfct} $n$ times.
\item If $f\ne0$, then $\deg f=2d$ with $d:=\max\{\deg(p_i)\mid i\in\{1,\dots,\ell\}\}$ since otherwise
$\sum_{i=1,\deg(p_i)=d}^\ell\lf(p_i)^2=0$, contradicting (a).
\item If $d\in\N_0$ and $f$ is a $2d$-form, then every $p_i$ is a $d$-form. This can be seen similarly to
(b) by considering the homogeneous parts of the $p_i$ of smallest (instead of largest) degree.
\item We have $f^*\in\sum K[X_0,\dots,X_n]^2$. More precisely, $f^*$ is a $2d$-form for some $d\in\N_0$
that is a sum of $\ell$ squares of $d$-forms since
\[f^*=X_0^{2d}f\left(\frac{X_1}{X_0},\dots,\frac{X_n}{X_0}\right)=\sum_{i=1}^\ell\left(X_0^dp_i\left(
\frac{X_1}{X_0},\dots,\frac{X_n}{X_0}\right)\right)^2\]
and $X_0^dp_i\left(\frac{X_1}{X_0},\dots,\frac{X_n}{X_0}\right)=X_0^{d-\deg p_i}p_i^*\in K[X_0,\dots,X_n]$
for all $i\in\{1,\dots,\ell\}$ with $p_i\ne0$ (note that $\deg p_i\le d$ by (b)).
\end{enumerate}
\end{rem}

\begin{pro}\label{lf2}
Let $(K,\le)$ be an ordered field and $f\in K[X_1,\dots,X_n]$ with $f\ge0$ on $K^n$. Then $f$ has
an even degree except if $f=0$, and we have $\lf(f)\ge0$ on $K^n$.
\end{pro}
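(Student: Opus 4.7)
The plan is to reduce the multivariate statement to the univariate sign behavior governed by Lemma \ref{sgnbounds}(a), by restricting $f$ to suitable lines through the origin. Assume $f\ne0$ and set $d:=\deg f$, so that $\lf(f)$ is a nonzero $d$-form.

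First I would choose a point $x\in K^n$ with $\lf(f)(x)\ne0$. This exists because $K$ is infinite (it contains $\Q$ by \ref{qembeds}) and $\lf(f)\ne0$: indeed, Lemma \ref{pol0} applied with, say, $I_1=\ldots=I_n=\{0,1,\dots,d\}\subseteq\N_0\subseteq K$ forbids a nonzero polynomial of degree $\le d$ from vanishing on a large enough grid. Now consider the univariate polynomial
\[
g(T):=f(TX_{\mathrm{fix}})=\sum_{k=0}^d f_k(x)\,T^k\in K[T],
\]
where $f=\sum_{k=0}^d f_k$ is the decomposition into homogeneous parts. Its leading coefficient is $f_d(x)=\lf(f)(x)\ne0$, so $\deg g=d$.

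By hypothesis $g(t)=f(tx)\ge0$ for every $t\in K$. I would now invoke Lemma \ref{sgnbounds}(a) on $g$: for all $t\in K$ of sufficiently large absolute value,
\[
\sgn(g(t))=(\sgn t)^d\,\sgn(\lf(f)(x)).
\]
Choosing first $t$ to be a large positive element of $K$ and then a large negative one, both values of $g(t)$ are nonnegative; if $d$ were odd the two signs in the displayed formula would be opposite, forcing $\sgn(\lf(f)(x))=0$, a contradiction. Hence $d$ is even, which proves the first claim.

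Now that $d$ is even, the same formula gives $\sgn(\lf(f)(x))=\sgn(g(t))\ge0$ for $|t|$ large, so $\lf(f)(x)>0$. Thus for every $x\in K^n$ at which $\lf(f)$ does not vanish we in fact have $\lf(f)(x)>0$, while at points where $\lf(f)$ vanishes the inequality $\lf(f)(x)\ge0$ is trivial. This gives $\lf(f)\ge0$ on $K^n$, completing the proof. The only mildly delicate point is securing a witness $x$ with $\lf(f)(x)\ne0$; after that the argument is a direct application of the ``sign at infinity'' lemma to the line restriction.
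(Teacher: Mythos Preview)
Your proof is correct and follows essentially the same route as the paper: restrict $f$ to lines $T\mapsto Tx$, pick $x$ with $\lf(f)(x)\ne0$ via Lemma~\ref{pol0}, and read off both the parity of $d$ and the sign of $\lf(f)(x)$ from Lemma~\ref{sgnbounds}(a). The only cosmetic glitch is the stray notation $f(TX_{\mathrm{fix}})$, which should simply be $f(Tx)$.
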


\begin{proof}
WLOG $f\ne0$. Then $g:=\lf(f)\ne0$. Set $d:=\deg g$. For all $x\in K^n$, $f_x:=f(Tx)\in K[T]$ is a polynomial
in one variable with $f_x\ge0$ on $K$ whose leading coefficient is $g(x)$ in case that $g(x)\ne0$.
Choose $x_0\in K^n$ with $g(x_0)\ne0$ [$\to$ \ref{pol0}]. Then $f_{x_0}$ has degree $d$ and because
of $f_{x_0}\ge0$ on $K$, it follows that $d\in2\N_0$ by \ref{sgnbounds}(a).
Now let $x\in K^n$ be arbitrary such that
$g(x)\ne0$. Again by \ref{sgnbounds}(a), it follows from $f_x\ge0$ on $K$ that $g(x)\ge0$.
\end{proof}

\begin{pro}\label{psdpsdhom}
Let $(K,\le)$ be an ordered field and $f\in K[X_1,\dots,X_n]$.
\begin{enumerate}[\normalfont(a)]
\item $f\ge 0$\text{ on }$K^n\iff f^*\ge0\text{ on }K^{n+1}$
\item $f\in\sum K[X_1,\dots,X_n]^2\iff f^*\in\sum K[X_0,\dots,X_n]^2$
\end{enumerate}
\end{pro}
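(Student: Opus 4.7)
For (a), the direction ``$\Leftarrow$'' is immediate: evaluating $f^* \ge 0$ at $x_0 = 1$ gives $f(x) = f^*(1,x) \ge 0$ for all $x \in K^n$ (using $\widetilde{f^*} = f$ from \ref{homdehom}(e)). The interesting direction is ``$\Rightarrow$''. Assume $f \ge 0$ on $K^n$ and WLOG $f \ne 0$. By Proposition \ref{lf2}, $d := \deg f$ is even and $\lf(f) \ge 0$ on $K^n$. Given $(x_0, x) \in K \times K^n$, split into two cases. If $x_0 = 0$, then from the definition of $f^*$ (or \ref{homdehom}(a)), $f^*(0, x) = \lf(f)(x) \ge 0$. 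If $x_0 \ne 0$, use the identity $f^*(x_0, x) = x_0^d f(x/x_0)$, which follows term-by-term from the homogeneity of the homogeneous parts $f_k$. Since $d$ is even, $x_0^d = (x_0^{d/2})^2 \ge 0$ by \ref{squares}, and $f(x/x_0) \ge 0$ by hypothesis, so $f^*(x_0, x) \ge 0$.

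For (b), the direction ``$\Rightarrow$'' is precisely what is recorded in Remark \ref{soslongrem}(d): if $f = \sum_{i=1}^\ell p_i^2$ with $d := \max_i \deg p_i$, then
\[
f^* = X_0^{2d} f\!\left(\tfrac{X_1}{X_0}, \dots, \tfrac{X_n}{X_0}\right) = \sum_{i=1}^\ell \left(X_0^{d-\deg p_i}\, p_i^*\right)^2 \in \sum K[X_0, \dots, X_n]^2,
\]
noting that $\deg p_i \le d$ by \ref{soslongrem}(b) so the exponents $d - \deg p_i$ are nonnegative. For ``$\Leftarrow$'', write $f^* = \sum_{i=1}^\ell q_i^2$ with $q_i \in K[X_0, \dots, X_n]$ and apply the dehomogenization homomorphism from \ref{homdehom}(c), obtaining
\[
f = \widetilde{f^*} = \sum_{i=1}^\ell \widetilde{q_i}^{\,2} \in \sum K[X_1, \dots, X_n]^2,
\]
where the first equality is \ref{homdehom}(e).

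No serious obstacle is expected; the only point requiring care is in (a) ``$\Rightarrow$'', where one must invoke Proposition \ref{lf2} both to ensure the parity of $d$ (so that $x_0^d$ is a square) and to handle the boundary case $x_0 = 0$ via the leading form. Every other step is either a direct substitution or a routine application of the ring-homomorphism property of dehomogenization.
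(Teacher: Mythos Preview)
Your proof is correct and follows essentially the same approach as the paper's: both directions of (a) via evaluation at $x_0=1$ respectively the case split $x_0=0$ versus $x_0\ne0$ using Proposition~\ref{lf2}, and both directions of (b) via \ref{soslongrem}(d) respectively the dehomogenization homomorphism \ref{homdehom}(c). Your write-up is slightly more explicit in spelling out why $x_0^d\ge0$ (namely that $d$ is even), but otherwise the arguments coincide.
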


\begin{proof}
(a) ``$\Longleftarrow$ '' If $f^*$ is nonnegative on $K^{n+1}$, then also on $\{1\}\times K^n$.

\smallskip
``$\Longrightarrow$'' Suppose $f\ge0$ on $K^n$. WLOG $f\ne0$. By \ref{lf2}, we can write $\deg f=2d$
with $d\in\N_0$.
Due to $f^*\overset{\ref{introhom}(c)}=X_0^{2d}f\left(\frac{X_1}{X_0},\dots,\frac{X_n}{X_0}\right)$,
we deduce $f^*\ge0$ on $K^\times\times K^n$. It remains to show $f^*\ge0$ on $\{0\}\times K^n$
which is equivalent by \ref{homdehom}(a) to $\lf(f)\ge0$ on $K^n$. The latter holds by \ref{lf2}.

\smallskip
(b) ``$\Longrightarrow$'' has been shown in \ref{soslongrem}(d).

\smallskip
``$\Longleftarrow$ '' follows from \ref{homdehom}(c).
\end{proof}

\section{Nonnegative quadratic polynomials}

\begin{df}\label{psdpd}
Let $(K,\le)$ be an ordered field.
\begin{enumerate}[(a)]
\item If $f\in K[X_1,\dots,X_n]$ is homogeneous [$\to$ \ref{longremi}(a)], then $f$ is called\\
\alal{\emph{positive semidefinite (psd)}}{\emph{positive definite (pd)}} (over $K$) if
$f\malal{\ge0\text{ on }K^n}{>0\text{ on }K^n\setminus\{0\}}$.
\item If $M\in SK^{n\times n}$, then $M$ is called \alal{psd}{pd} (over K) if the quadratic form
represented by $M$  [$\to$ \ref{longremi}(d)] is \alal{psd}{pd}, i.e., $x^TMx\malal{\ge0\text{ for all }x\in K^n}{>0\text{ for all }x\in K^n\setminus\{0\}}$.
\end{enumerate}
\end{df}

\begin{pro}\label{sospsd2}
Let $K$ be a Euclidean field and $q\in K[X_1,\dots,X_n]$ a quadratic form. Then the
following are equivalent:
\begin{enumerate}[\normalfont(a)]
\item $q$ is psd \emph{[$\to$ \ref{psdpd}(a)]}
\item $q\in\sum K[X_1,\dots,X_n]^2$ \emph{[$\to$ \ref{divnot}]}
\item $q$ is a sum of $n$ squares of linear forms \emph{[$\to$ \ref{longremi}(a)]}.
\item $\sg q=\rk q$ \emph{[$\to$ \ref{longremi}(h)]}.
\end{enumerate}
\end{pro}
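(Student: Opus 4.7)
The plan is to prove the cycle \textnormal{(c)$\Longrightarrow$(b)$\Longrightarrow$(a)$\Longrightarrow$(d)$\Longrightarrow$(c)}. Two of the implications are immediate: \textnormal{(c)$\Longrightarrow$(b)} is trivial since $n$-tuples of squares are sums of squares, and \textnormal{(b)$\Longrightarrow$(a)} follows because in any ordered field squares are nonnegative by \ref{squares}, so $q(x)=\sum p_i(x)^2\ge0$ for all $x\in K^n$. Both statements use only that $K$ is ordered, not that it is Euclidean.

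The substantive direction is \textnormal{(a)$\Longrightarrow$(d)}. The plan is to invoke the explicit diagonalization from \ref{longremi}(h): since $K$ is Euclidean, we can write
\[
q=\sum_{i=1}^s\ell_i^2-\sum_{j=1}^t\ell_{s+j}^2
\]
with linearly independent linear forms $\ell_1,\dots,\ell_{s+t}\in K[X_1,\dots,X_n]$, so that $\rk q=s+t$ and $\sg q=s-t$. We want to show $t=0$. By linear independence, the linear map $K^n\to K^{s+t}$, $x\mapsto(\ell_1(x),\dots,\ell_{s+t}(x))$ is surjective (its transpose is injective by \ref{longremi}(d)), so in particular there exists $x_0\in K^n$ with $\ell_1(x_0)=\ldots=\ell_s(x_0)=0$, $\ell_{s+1}(x_0)=1$ and $\ell_{s+2}(x_0)=\ldots=\ell_{s+t}(x_0)=0$, provided $t\ge1$. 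Then $q(x_0)=-1<0$, contradicting (a). Hence $t=0$ and $\sg q=s=\rk q$.

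For \textnormal{(d)$\Longrightarrow$(c)}, we again apply \ref{longremi}(h) to obtain $s$ and $t$ as above; the hypothesis $\sg q=\rk q$ forces $s-t=s+t$, hence $t=0$, so $q=\sum_{i=1}^s\ell_i^2$ with $s=\rk q\le n$. Padding with $n-s$ copies of the zero linear form yields $q$ as a sum of exactly $n$ squares of linear forms, which is (c).

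The main obstacle is making sure the diagonalization from \ref{longremi}(h) really is available with \emph{linearly independent} linear forms and with the signs $\pm1$ (as opposed to arbitrary scalars), which is precisely where the hypothesis that $K$ is Euclidean (so every element of $K_{\ge0}=K^2$ has a square root in $K$) is used; apart from this, the argument is a short bookkeeping exercise combining the extraction of a suitable $x_0$ from linear independence with the fact that on such $x_0$ a single negative square produces a negative value of $q$.
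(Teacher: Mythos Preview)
Your proof is correct and takes essentially the same approach as the paper: both use the diagonalization from \ref{longremi}(h) and the surjectivity of $x\mapsto(\ell_1(x),\dots,\ell_{s+t}(x))$ to produce a point where $q$ takes the value $-1$ when $t\ge1$. The only cosmetic difference is the ordering of the cycle---the paper treats (d)$\Rightarrow$(c)$\Rightarrow$(b)$\Rightarrow$(a) as trivial and then proves $\neg$(d)$\Rightarrow\neg$(a), whereas you spell out (d)$\Rightarrow$(c) separately---but the substantive argument is identical.
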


\begin{proof}
(d)$\implies$(c)$\implies$(b)$\implies$(a) is trivial. Now suppose that (d) does not hold. We show that
then (a) also fails. Write $q=\sum_{i=1}^s\ell_i^2-\sum_{j=1}^t\ell_{s+j}^2$ with $s,t\in\N_0$ and
linearly independent linear forms $\ell_1,\dots,\ell_s,\ell_{s+1},\dots,\ell_{s+t}\in K[X_1,\dots,X_n]$.
Since $s-t=\sg q\ne\rk q=s+t$, we have $t\ge1$. By linear algebra,
\[\ph\colon K^n\to K^{s+t},\ x\mapsto\begin{pmatrix}\ell_1(x)\\\vdots\\\ell_{s+t}(x)\end{pmatrix}\]
is surjective. Choose $x\in K^n$ with $\ph(x)=\begin{pmatrix}0\\\vdots\\0\\1\end{pmatrix}$. Then
$q(x)=-1<0$.
\end{proof}

\begin{pro}\label{psdeq}
Let $K$ be a Euclidean field and $M\in SK^{n\times n}$. Then the following are equivalent:
\begin{enumerate}[\normalfont(a)]
\item $M$ is psd \emph{[$\to$ \ref{psdpd}(b)]}.
\item $\exists s\in\N_0:\exists A\in K^{s\times n}:M=A^TA$
\item $\exists A\in K^{n\times n}:M=A^TA$
\item All eigenvalues of $M$ in the real closure $\overline{(K,K^2)}$ are nonnegative.
\item All coefficients of $\det(M+XI_n)\in K[X]$ are nonnegative.
\item If $M=(a_{ij})_{1\le i,j\le n}$, then for all $I\subseteq\{1,\dots,n\}$, we have
$\det((a_{ij})_{(i,j)\in I\times I})\ge0$.
\end{enumerate}
\end{pro}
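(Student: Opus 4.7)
The plan is to establish $\text{(a)}\iff\text{(b)}\iff\text{(c)}$ using Proposition~\ref{sospsd2}, then the chain $\text{(a)}\iff\text{(d)}\iff\text{(e)}$ via orthogonal diagonalization over the real closure, and finally $\text{(a)}\implies\text{(f)}\implies\text{(e)}$ by a restriction argument together with the classical formula expressing the coefficients of the characteristic polynomial as sums of principal minors.

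For (a)$\iff$(b)$\iff$(c), I identify the equation $M=A^{T}A$ with $A\in K^{s\times n}$ having rows $v_1^T,\dots,v_s^T$ with the representation $x^TMx=\sum_{i=1}^s(v_i^Tx)^2$ of the quadratic form $q:=x^TMx$ as a sum of $s$ squares of linear forms. Under this correspondence \ref{sospsd2}((a)$\iff$(b)) yields (a)$\iff$(b) immediately, while the sharpening \ref{sospsd2}(c) --- representability of a psd form by \emph{exactly} $n$ squares --- forces (b)$\implies$(c); the reverse (c)$\implies$(b) is trivial.

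For (a)$\iff$(d), set $R:=\overline{(K,K^2)}$ [$\to$ \ref{theclosure}]. Theorem~\ref{eucliddiag} produces $P\in\GL_n(R)$ with $P^TP=I_n$ and $P^TMP=D$ diagonal; since $P^T=P^{-1}$, the diagonal entries of $D$ are the eigenvalues $\lambda_1,\dots,\lambda_n\in R$ of $M$, and the substitution $x\mapsto Py$ shows $M$ psd over $R$ $\iff$ all $\lambda_i\ge0$. To transfer between $R$ and $K$, I observe that psd-ness of $M$ over $K$ and over $R$ coincide: the nontrivial direction uses the representation $M=A^TA$ furnished by (c), so that $x^TMx=\sum_i(Ax)_i^2\ge 0$ holds for \emph{any} $x\in R^n$ since $R$ is Euclidean. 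Then (d)$\iff$(e) follows from
\[
\det(M+XI_n)=\prod_{i=1}^n(X+\lambda_i)=\sum_{k=0}^n e_k(\lambda_1,\dots,\lambda_n)\,X^{n-k},
\]
whose coefficients are the elementary symmetric polynomials in the $\lambda_i$: ``only if'' is immediate, and conversely nonnegativity of the coefficients implies $\det(M+xI_n)>0$ for every $x\in R_{>0}$ (the leading coefficient being $1$), hence no $-\lambda_i$ is positive.

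For the remaining part, (a)$\implies$(f) follows by extending $x\in K^I$ to $K^n$ by zero coordinates: each principal submatrix $M_I:=(a_{ij})_{(i,j)\in I\times I}$ is psd, whence the already-established (a)$\implies$(d) applied to $M_I$ gives $\det M_I=\prod_j\mu_j\ge 0$, where $\mu_j$ are the eigenvalues of $M_I$ in $R$. For (f)$\implies$(e) I invoke the classical identity
\[
\det(M+XI_n)=\sum_{k=0}^n\biggl(\,\sum_{\substack{I\subseteq\{1,\dots,n\}\\ |I|=k}}\det M_I\biggr)X^{n-k},
\]
provable by a direct Leibniz-formula expansion (or by induction on $n$). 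The main technical hurdle is the back-and-forth between $K$ and its real closure $R$: \ref{eucliddiag} is stated only over a real closed field, whereas (a)--(c) and (e)--(f) are statements about $M\in SK^{n\times n}$, so the brief sum-of-squares argument via (c) is essential to legitimize the passage; everything else is routine linear algebra or direct computation.
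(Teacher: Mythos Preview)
Your proof is correct and follows essentially the same route as the paper: $(a)\iff(b)\iff(c)$ via \ref{sospsd2}, then the remaining conditions are linked through \ref{eucliddiag} and the principal-minor expansion of $\det(M+XI_n)$. The only differences are organizational---the paper runs the single cycle $(a)\Rightarrow(f)\Rightarrow(e)\Rightarrow(d)\Rightarrow(a)$ and, for $(a)\Rightarrow(f)$, uses $(a)\Rightarrow(c)$ on the submatrix to get $\det M_I=(\det B)^2\ge0$---and your explicit transfer between $K$ and its real closure (via the already obtained $M=A^TA$) spells out what the paper abbreviates as ``follows easily from \ref{eucliddiag}''.
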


\begin{proof} Using \ref{longremi}(e) and \ref{soslongrem}(c), one sees that (a), (b) and (c) are nothing else than the corresponding statements in \ref{sospsd2}.

\smallskip
\underline{(a)$\implies$(f)}\quad follows from applying (a)$\implies$(c) to the submatrices of $M$ in question.

\smallskip
\underline{(f)$\implies$(e)}\quad Each coefficients of $\det(M+XI_n)$ is a sum of certain determinants
appearing in (f).

\smallskip
\underline{(e)$\implies$(d)} is trivial.

\smallskip
\underline{(d)$\implies$(a)} follows easily from \ref{eucliddiag}.
\end{proof}

\begin{term}{}[$\to$ \ref{degnot}, \ref{longremi}(a)]\label{quintic}
Let $A$ be a commutative ring with $0\ne1$.
Polynomials from $A[X_1,\dots,X_n]_d$ [$\to$ \ref{degnot}] are called
\emph{constant} for $d=0$, \emph{linear} for $d=1$, \emph{quadratic} for $d=2$, \emph{cubic} for $d=3$,
\emph{quartic} for $d=4$, \emph{quintic} for $d=5$, \dots
\end{term}

\begin{pro}\label{son1s}
Let $K$ be a Euclidean field and $q\in K[X_1,\dots,X_n]_2$.
The following are equivalent:
\begin{enumerate}[\normalfont(a)]
\item $q\ge0$ on $K^n$
\item $q\in\sum K[X_1,\dots,X_n]^2$
\item $q$ is a sum of $n+1$ squares of linear polynomials.
\end{enumerate}
\end{pro}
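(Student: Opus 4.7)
The plan is to reduce the statement to the homogeneous case (Proposition \ref{sospsd2}) via homogenization. The implications (c)$\implies$(b)$\implies$(a) are trivial, so the only real content is (a)$\implies$(c), and the key observation is that homogenizing a quadratic polynomial in $n$ variables yields a quadratic \emph{form} in $n+1$ variables to which \ref{sospsd2} applies directly.

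In detail, I would start by writing $q = q_0 + q_1 + q_2$ where $q_k$ is the $k$-form part of $q$, so that its homogenization $q^* \in K[X_0,\ldots,X_n]$ is given by $q^* = X_0^2 q_0 + X_0 q_1 + q_2$, which is a quadratic form in the $n+1$ variables $X_0,\ldots,X_n$. Assuming (a), Proposition \ref{psdpsdhom}(a) yields that $q^* \ge 0$ on $K^{n+1}$, i.e., $q^*$ is psd in the sense of \ref{psdpd}(a). Applying \ref{sospsd2} to the quadratic form $q^*$ in $n+1$ variables then produces linear forms $\ell_1,\ldots,\ell_{n+1} \in K[X_0,\ldots,X_n]$ such that $q^* = \sum_{i=1}^{n+1} \ell_i^2$.

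Finally, I would dehomogenize: since $\widetilde{f^*\,} = f$ for every polynomial $f$ by \ref{homdehom}(e), and since $f \mapsto \widetilde f$ is a ring homomorphism by \ref{homdehom}(c), setting $X_0 = 1$ gives
\[
q \;=\; \widetilde{q^*\,} \;=\; \sum_{i=1}^{n+1} \widetilde{\ell_i}^{\,2},
\]
where each $\widetilde{\ell_i} \in K[X_1,\ldots,X_n]$ is a linear polynomial (in the sense of \ref{quintic}). This proves (c), closing the cycle.

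There is no serious obstacle here; the only thing to be mindful of is the bookkeeping on variables: \ref{sospsd2}(c) gives $n$ squares of linear forms when the quadratic form lives in $n$ variables, so applying it to $q^*$ (which lives in $n+1$ variables) is exactly what produces the bound $n+1$ in (c), matching the statement precisely.
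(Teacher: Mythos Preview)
Your proof is correct and follows essentially the same route as the paper: homogenize via \ref{psdpsdhom}(a), apply \ref{sospsd2} to the resulting quadratic form in $n+1$ variables, then dehomogenize using \ref{homdehom}. The paper records this as a single chain of implications, while you have written out the dehomogenization step explicitly; the content is identical.
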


\begin{proof}
(a)$\overset{\text{\ref{psdpsdhom}(a)}}\implies q^*\ge0\text{ on $K^{n+1}$}\overset{\ref{sospsd2}}\implies$(c)$\implies$(b)
$\implies$(a)
\end{proof}

\section{The Newton polytope}

\begin{dfpro}\label{dfconv}
Let $(K,\le)$ be an ordered field, $V$ a $K$-vector space and $A\subseteq V$. Then $A$ is called
\emph{convex} if $\forall x,y\in A:\forall\la\in[0,1]_K:\la x+(1-\la)y\in A$.
The smallest convex superset of $A$ is obviously
\[\conv A:=\left\{\sum_{i=1}^m\la_ix_i\mid m\in\N,\la_i\in K_{\ge0},x_i\in A,\sum_{i=1}^m\la_i=1\right\},\]
called the convex set \emph{generated} by $A$ or the \emph{convex hull} of $A$.
We call finitely generated convex sets, i.e., convex hulls of finite sets, \emph{polytopes}.
A polytope is thus of the form
\[\conv\{x_1,\dots,x_m\}=\left\{\sum_{i=1}^m\la_ix_i\mid\la_i\in K_{\ge0},\sum_{i=1}^m\la_i=1\right\}\]
for some $m\in\N_0$ and $x_1,\dots,x_m\in V$. If $A$ is a convex set, then a point $x\in A$ is called
an \emph{extreme point} of $A$ if there are no $y,z\in A$ such that $y\ne z$ and $x=\frac{y+z}2$.
Extreme points of polytopes are also called \emph{vertices} of the polytope.
\end{dfpro}

\begin{exo}\label{extremeexo}
Suppose $(K,\le)$ is an ordered field, $V$ a $K$-vector space, $A\subseteq V$ convex,
$x\in A$ and $\la\in(0,1)_K$.
Then the following are equivalent:
\begin{enumerate}[\normalfont(a)]
\item $x$ is an extreme point of $A$.
\item There are no $y,z\in A$ such that $y\ne z$ and $x=\la y+(1-\la)z$.
\end{enumerate}
\end{exo}

\begin{lem}\label{nonredisvertex}
Let $(K,\le)$ be an ordered field, $V$ a $K$-vector space, $m\in\N_0$,
$x_1,\dots,x_m\in V$, $P:=\conv\{x_1,\dots,x_m\}$ and suppose
$P\ne\conv(\{x_1,\dots,x_m\}\setminus\{x_i\})$ for all $i\in\{1,\dots,m\}$. Then $P$ is a polytope
and $x_1,\dots,x_m$ are its vertices.
\end{lem}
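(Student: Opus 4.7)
The plan is to establish two things: every $x_i$ is an extreme point of $P$, and conversely every extreme point of $P$ equals some $x_i$. Note first that under the nonredundancy hypothesis all the $x_i$ must be pairwise distinct, since if $x_i = x_j$ with $i \ne j$ then $x_i \in \conv(\{x_1,\dots,x_m\} \setminus \{x_i\})$, giving $P = \conv(\{x_1,\dots,x_m\} \setminus \{x_i\})$.

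First I would show that each $x_i$ is extreme. Suppose for contradiction that $x_i = \frac{y+z}{2}$ with $y, z \in P$ and $y \ne z$. Writing $y = \sum_{j=1}^m \mu_j x_j$ and $z = \sum_{j=1}^m \nu_j x_j$ as convex combinations and setting $\lambda_j := \frac{\mu_j + \nu_j}{2}$, we obtain $x_i = \sum_{j=1}^m \lambda_j x_j$ with $\lambda_j \in K_{\ge 0}$ and $\sum \lambda_j = 1$. If $\lambda_i = 1$, then $\mu_i = \nu_i = 1$ (since each is $\le 1$), forcing $y = z = x_i$, a contradiction. Hence $\lambda_i < 1$, and rearranging gives
\[
x_i = \sum_{j \ne i} \frac{\lambda_j}{1-\lambda_i} x_j \in \conv(\{x_j \mid j \ne i\}).
\]
A short substitution argument then shows $P = \conv(\{x_j \mid j \ne i\})$, contradicting the hypothesis.

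Next I would show that every extreme point $x \in P$ is one of the $x_i$. Write $x = \sum_{j=1}^m \lambda_j x_j$ with $\lambda_j \in K_{\ge 0}$ and $\sum \lambda_j = 1$. Pick any index $j_1$ with $\lambda_{j_1} > 0$. If $\lambda_{j_1} = 1$, then $x = x_{j_1}$ and we are done. Otherwise $\lambda_{j_1} \in (0,1)_K$, so setting
\[
y := \frac{1}{1-\lambda_{j_1}} \sum_{j \ne j_1} \lambda_j x_j \in P
\]
yields $x = \lambda_{j_1} x_{j_1} + (1-\lambda_{j_1}) y$ with $x_{j_1}, y \in P$. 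Applying \ref{extremeexo} with $\lambda = \lambda_{j_1}$ forces $x_{j_1} = y$, and hence $x = x_{j_1}$.

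The only nontrivial step is the first direction (extremality of the $x_i$), where the main subtlety is the passage from ``$x_i$ appears with coefficient $< 1$ in some convex combination of the $x_j$'' to ``$x_i \in \conv(\{x_j \mid j \ne i\})$''; this is pure algebra, as is the verification that convex hulls are unchanged when a redundant generator is removed. Everything else is bookkeeping with convex combinations.
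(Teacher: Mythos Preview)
Your proof is correct and follows essentially the same approach as the paper's: both establish the two directions separately, using \ref{extremeexo} to show that any extreme point must equal some $x_i$, and using the nonredundancy hypothesis to show that each $x_i$ is extreme (by arguing that otherwise $x_i$ would lie in $\conv(\{x_j\mid j\ne i\})$). Your treatment of the case $\lambda_{j_1}\in(0,1)_K$ via \ref{extremeexo} is in fact slightly more explicit than the paper's, which leaves the possibility $x_1=y$ implicit.
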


\begin{proof} To show:
\begin{enumerate}[(a)]
\item Every vertex of $P$ equals one of the $x_i$.
\item Every $x_i$ is a vertex of $P$.
\end{enumerate}
For (a), let $x$ be a vertex of $P$. Write $x=\sum_{i=1}^m\la_ix_i$ with $\la_i\in K_{\ge0}$ and
$\sum_{i=1}^m\la_i=1$. WLOG $\la_1\ne0$. Then $\la_1=1$ for otherwise
$\mu:=\sum_{i=2}^m\la_i=1-\la_1>0$ and
$x=\la_1x_1+\mu\Bigg(\underbrace{\sum_{i=2}^m\frac{\la_i}\mu x_i}_{\rlap{$\scriptstyle\in\conv\{x_2,\dots,x_m\}$}}\Bigg)$,
contradicting \ref{extremeexo}(b).

\noindent\smallskip
To prove (b), we let $y,z\in P$ with $x_1=\frac{y+z}2$. To show: $y=z$. Write $y=\sum_{i=1}^m\la_ix_i$
and $z=\sum_{i=1}^m\mu_ix_i$ with $\la_i,\mu_i\in K_{\ge0}$ and
$\sum_{i=1}^m\la_i=1=\sum_{i=1}^m\mu_i$. We show that $\la_1=1=\mu_1$. It is enough to show
$\frac{\la_1+\mu_1}2=1$. If we had $\frac{\la_1+\mu_1}2<1$, then it would follow from
$(1-\frac{\la_1+\mu_1}2)x_1=\sum_{i=2}^m\frac{\la_i+\mu_i}2x_i$ that $x_1\in\conv\{x_2,\dots,x_m\}$
and therefore $P=\conv\{x_1,\dots,x_m\}=\conv\{x_2,\dots,x_m\}\ \lightning$.
\end{proof}

\begin{cor}\label{polyisconvexhull}
Every polytope is the convex hull of its finitely many vertices.
\end{cor}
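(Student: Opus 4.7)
The plan is to apply Lemma \ref{nonredisvertex} after reducing any given finite generating set to a minimal one. That lemma already produces the entire conclusion provided we can exhibit a representation $P=\conv\{x_1,\dots,x_m\}$ in which no $x_i$ is redundant; so the whole task is to argue that such a representation always exists.

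I would proceed by induction on the number of generators. Fix a polytope $P$ and, by definition of ``polytope'' in \ref{dfconv}, pick some representation $P=\conv\{x_1,\dots,x_m\}$ with $m\in\N_0$. The case $m=0$ is vacuous: $P=\emptyset$ has no vertices and $\conv\emptyset=\emptyset$. For $m\ge 1$, consider whether some $x_i$ is redundant, i.e.\ whether $P=\conv(\{x_1,\dots,x_m\}\setminus\{x_i\})$ holds for at least one $i$. If yes, the induction hypothesis applied to this smaller representation shows that $P$ is the convex hull of its finitely many vertices. If no, then the hypothesis of Lemma \ref{nonredisvertex} is satisfied, so $x_1,\dots,x_m$ are exactly the vertices of $P$ and in particular $P=\conv\{x_1,\dots,x_m\}$ is the convex hull of its vertex set, which is finite.

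There is essentially no hard step here; the content is entirely in Lemma \ref{nonredisvertex}, and the only thing to arrange is that the induction indeed terminates with a non-redundant representation, which it does because $m$ strictly decreases at each reduction. The one subtlety worth double-checking is that in the inductive step one must be certain that the set of vertices of $P$ produced by \ref{nonredisvertex} (applied to a reduced representation) does not depend on which redundant generator one chose to discard — but this is automatic, since ``vertex'' is defined intrinsically in terms of $P$ in \ref{dfconv} and the lemma identifies the vertex set as a subset of any non-redundant generating set.
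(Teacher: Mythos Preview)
Your argument is correct and is precisely the intended one: the paper states this as an immediate corollary of Lemma~\ref{nonredisvertex} without spelling out a proof, and the reduction to a non-redundant generating set that you make explicit is exactly the step being left to the reader.
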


\begin{dfpro}\label{minkowskisum}
Suppose $(K,\le)$ is an ordered field, $V$ is a $K$-vector space and let $A$ and $B$ be subsets of
$V$. Then $A+B:=\{x+y\mid x\in A,y\in B\}$ is called the \emph{Minkowski sum} of $A$ and $B$. We
have $(\conv A)+(\conv B)=\conv(A+B)$. Let now $A$ and $B$ be convex. Then $A+B$ is also convex.
If $z$ is an extreme point of $A+B$, then there are uniquely determined $x\in A$ and $y\in B$ such that
$z=x+y$, and $x$ is an extreme point of $A$ and $y$ is one of $B$.
\end{dfpro}

\begin{proof}
``$\subseteq$'' Let $x_1,\dots,x_m\in A$, $y_1,\dots,y_n\in B$, $\la_1,\dots,\la_m\in K_{\ge0}$,
$\mu_1,\dots,\mu_n\in K_{\ge0}$ and $\sum_{i=1}^m\la_i=1=\sum_{j=1}^n\mu_j$. Then
$\sum_{i=1}^m\sum_{j=1}^n\la_i\mu_j=\left(\sum_{i=1}^m\la_i\right)(\sum_{j=1}^n\mu_j)=1\cdot1=1$
and
\[
\sum_{i=1}^m\la_ix_i+\sum_{j=1}^n\mu_jy_j=\left(\sum_{j=1}^n\mu_j\right)\sum_{i=1}^m\la_ix_i
+\left(\sum_{i=1}^m\la_i\right)\sum_{j=1}^n\mu_jy_j
=\sum_{i=1}^m\sum_{j=1}^n\la_i\mu_j(x_i+y_j).
\]

\smallskip
``$\supseteq$'' is trivial.

\smallskip
Let now $A$ and $B$ be convex. Then $A+B=(\conv A)+(\conv B)=\conv(A+B)$ is convex. Finally,
let $z$ be
an extreme point of $A+B$ and let $x\in A$ and $y\in B$ with $z=x+y$. Then $x$ is an extreme point of
$A$ since if we had $x=\frac{x_1+x_2}2$ with different $x_1,x_2\in A$, then it would follow
that $z=\frac{(x_1+y)+(x_2+y)}2$ and $x_1+y\ne x_2+y\ \lightning$. In the same way, $y$ is an extreme point
of $B$. Suppose now that $x'\in A$ and $y'\in B$ such that $z=x'+y'$. Then
$z=\frac{x+x'}2+\frac{y+y'}2$ and $\frac{x+x'}2$ is also an extreme point of $A$ which is possible only for
$x=x'$. Analogously, $y=y'$. 
\end{proof}

\begin{notation}\label{monomnotation}
Suppressing $n$ in the notation, we denote by $\x:=(X_1,\dots,X_n)$ a tuple of variables and set
$A[\x]:=A[X_1,\dots,X_n]$ for every commutative ring $A$ with $0\ne1$ in $A$.
For $\al\in\N_0^n$, we write $|\al|:=\al_1+\dots+\al_n$ and $\x^\al:=X_1^{\al_1}\dotsm X_n^{\al_n}$.
\end{notation}

\begin{df}
Let $K$ be a field and $f\in K[\x]$. Write $f=\sum_{\al\in\N_0^n}a_\al\x^\al$ with $a_\al\in K$. Then the finite set $\supp(f):=\{\al\in\N_0^n\mid a_\al\ne0\}$ is called
the \emph{support} of $f$ and its convex hull $N(f):=\conv(\supp(f))\subseteq\R^n$ the
\emph{Newton polytope} of $f$.
\end{df}

\begin{df} Let $K$ be a field, $f\in K[\x]$ and $a\in K$. We say that $a$ is a
\emph{vertex coefficient} of $f$ if there is a vertex $\al$ of $N(f)$ such that
$a\x^\al$ is a term of $f$.
\end{df}

\begin{rem}\label{vcnon0}
Since every vertex of the Newton polytope of a polynomial lies by
\ref{nonredisvertex} in the support of the polynomial, vertex coefficients are always $\ne0$.
\end{rem}

\begin{thm}\label{newtontimes}
Let $K$ be a field and $f,g\in K[\x]$. Then $N(fg)=N(f)+N(g)$ and every vertex coefficient of $fg$
is the product of a vertex coefficient of $f$ with a vertex coefficient of $g$.
\end{thm}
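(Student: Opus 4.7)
The plan is to establish $N(fg) \subseteq N(f) + N(g)$ from a direct inspection of supports, and then leverage the uniqueness statement in \ref{minkowskisum} about extreme points of Minkowski sums to obtain the reverse inclusion and the vertex coefficient identity in one stroke.

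First, I would expand $fg$: writing $f = \sum_\alpha a_\alpha \x^\alpha$ and $g = \sum_\beta b_\beta \x^\beta$, every term of $fg$ has exponent of the form $\alpha + \beta$ with $\alpha \in \supp(f)$ and $\beta \in \supp(g)$. Hence $\supp(fg) \subseteq \supp(f) + \supp(g)$. Taking convex hulls and using the identity $(\conv A) + (\conv B) = \conv(A + B)$ from \ref{minkowskisum}, this yields $N(fg) \subseteq N(f) + N(g)$.

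For the reverse inclusion (and simultaneously the vertex coefficient statement), I would invoke \ref{polyisconvexhull}: since $N(f) + N(g)$ is a polytope, it suffices to show that every vertex $\gamma$ of $N(f) + N(g)$ lies in $\supp(fg)$. Fix such a vertex $\gamma$. By \ref{minkowskisum}, there exist \emph{unique} $\alpha \in N(f)$ and $\beta \in N(g)$ with $\gamma = \alpha + \beta$, and $\alpha$ is a vertex of $N(f)$ while $\beta$ is a vertex of $N(g)$. In particular $\alpha \in \supp(f)$ and $\beta \in \supp(g)$ by \ref{vcnon0}. Now the coefficient of $\x^\gamma$ in $fg$ equals
\[
\sum_{\substack{(\alpha',\beta') \in \supp(f) \times \supp(g) \\ \alpha' + \beta' = \gamma}} a_{\alpha'} b_{\beta'},
\]
but by the uniqueness just invoked (applied to $\supp(f) \subseteq N(f)$ and $\supp(g) \subseteq N(g)$), the only such pair is $(\alpha, \beta)$. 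Hence the coefficient equals $a_\alpha b_\beta$, which is nonzero because $a_\alpha$ and $b_\beta$ are vertex coefficients of $f$ and $g$ (so nonzero by \ref{vcnon0}) and $K$ is a field. This proves $\gamma \in \supp(fg)$, giving $N(f) + N(g) \subseteq N(fg)$; moreover, it identifies the vertex coefficient $a_\alpha b_\beta$ of $fg$ at $\gamma$ as a product of a vertex coefficient of $f$ with a vertex coefficient of $g$.

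The main conceptual obstacle is recognizing that, a priori, many pairs $(\alpha',\beta')$ could contribute to the coefficient at a vertex $\gamma$ and cause cancellation; the crucial tool eliminating this danger is the uniqueness of the decomposition of an extreme point of a Minkowski sum in \ref{minkowskisum}, which reduces the sum to a single nonzero term. Everything else amounts to bookkeeping with supports and convex hulls.
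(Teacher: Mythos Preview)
Your proof is correct and follows essentially the same approach as the paper: both establish $N(fg)\subseteq N(f)+N(g)$ via $\supp(fg)\subseteq\supp(f)+\supp(g)$, and then for the reverse inclusion reduce via \ref{polyisconvexhull} to showing each vertex $\gamma$ of $N(f)+N(g)$ lies in $\supp(fg)$, invoking the uniqueness in \ref{minkowskisum} to see that the coefficient at $\gamma$ is a single product $a_\alpha b_\beta\ne0$. Your write-up is slightly more explicit about the coefficient sum, but the argument is the same.
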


\begin{proof}``$\subseteq$'' $\supp(fg)\subseteq\supp(f)+\supp(g)\subseteq N(f)+N(g)$ and therefore
$N(fg)=\conv(\supp(fg))\subseteq N(f)+N(g)$ since $N(f)+N(g)$ is convex by \ref{minkowskisum}.

\smallskip
``$\supseteq$'' By \ref{minkowskisum}, $N(f)+N(g)$ is a polytope. By virtue of \ref{polyisconvexhull}, it suffices
to show that its vertices lie in $N(fg)$. Consider therefore a vertex $\ga$ of $N(f)+N(g)$. We even show that
$\ga\in\supp(fg)$. By \ref{minkowskisum}, there are uniquely determined $\al\in N(f)$ and $\be\in N(g)$ such that
$\ga=\al+\be$, and $\al$ is a vertex of $N(f)$ and $\be$ a vertex of $N(g)$. By \ref{vcnon0}, we have
$\al\in\supp(f)$ and $\be\in\supp(g)$. Because of unicity of $\al$ and $\be$, the coefficient of
$\x^\ga$ in $fg$ equals the product of the respective coefficients of
$\x^\al$ and $\x^\be$ in $f$ and $g$, respectively, and hence is in particular $\ne0$.
Thus $N(fg)=N(f)+N(g)$ is shown. Also the extra claim follows from the above.
\end{proof}

\begin{pro}\label{newtoncontainment}
Let $K$ be a field and $f,g\in K[\x]$. Then $N(f+g)\subseteq\conv(N(f)\cup N(g))$.
\end{pro}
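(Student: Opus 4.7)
The plan is to reduce the claim to the obvious set-theoretic inclusion $\supp(f+g) \subseteq \supp(f) \cup \supp(g)$ and then apply monotonicity of the convex hull operator. Concretely, writing $f = \sum_\al a_\al \x^\al$ and $g = \sum_\al b_\al \x^\al$, one has $f+g = \sum_\al (a_\al + b_\al)\x^\al$, and if $\al \notin \supp(f) \cup \supp(g)$ then $a_\al = 0 = b_\al$, hence $a_\al + b_\al = 0$, so $\al \notin \supp(f+g)$. Cancellations can only shrink the support; they cannot create new multi-indices.

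From this containment I would chain the standard monotonicity of $\conv$ with the fact that $\supp(h) \subseteq N(h)$ for any $h \in K[\x]$:
\[
N(f+g) = \conv(\supp(f+g)) \subseteq \conv(\supp(f) \cup \supp(g)) \subseteq \conv(N(f) \cup N(g)).
\]
The first inclusion uses monotonicity of $\conv$ applied to $\supp(f+g) \subseteq \supp(f) \cup \supp(g)$, and the second uses monotonicity again together with $\supp(f) \subseteq N(f)$ and $\supp(g) \subseteq N(g)$ (both by definition of the Newton polytope in \ref{dfconv}).

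There is no real obstacle here; the only subtlety worth remarking is that the inclusion $\supp(f+g) \subseteq \supp(f) \cup \supp(g)$ can be strict (when terms cancel), which is why the statement of the proposition is phrased as an inclusion rather than an equality — in contrast to Theorem \ref{newtontimes} for products, where cancellation of leading vertex terms cannot occur.
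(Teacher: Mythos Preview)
Your proof is correct and follows exactly the same route as the paper: the paper also observes $\supp(f+g)\subseteq\supp(f)\cup\supp(g)\subseteq N(f)\cup N(g)$ and then applies $\conv$ to conclude. Your version just spells out the support inclusion and the monotonicity of $\conv$ in slightly more detail.
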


\begin{proof}
$\supp(f+g)\subseteq\supp(f)\cup\supp(g)\subseteq N(f)\cup N(g)$ implies
\[N(f+g)=\conv(\supp(f+g))\subseteq\conv(N(f)\cup N(g)).\]
\end{proof}

\begin{thm}\label{novertexcancellation}
Let $(K,\le)$ be an ordered field and $f,g\in K[\x]$ such that all vertex coefficients of $f$ and $g$
have the same sign. Then $N(f+g)=\conv(N(f)\cup N(g))$ and all vertex coefficients of $f+g$ also have this
sign.
\end{thm}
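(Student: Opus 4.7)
The plan is to reduce the problem to a vertex-by-vertex check that no cancellation occurs. Since the hypothesis and conclusion are both unchanged under simultaneously negating $f$ and $g$, I would first assume without loss of generality that all vertex coefficients of $f$ and $g$ are positive. The inclusion $N(f+g)\subseteq\conv(N(f)\cup N(g))$ is already provided by \ref{newtoncontainment}, so what remains is the reverse inclusion together with positivity of the vertex coefficients of $f+g$. Set $P:=\conv(N(f)\cup N(g))=\conv(\supp(f)\cup\supp(g))$, which is a polytope. By \ref{polyisconvexhull} it suffices to show that every vertex of $P$ lies in $\supp(f+g)$ with a positive coefficient, because then $P\subseteq N(f+g)$, forcing equality and identifying the vertices of $N(f+g)$ with those of $P$.

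The crux is the following ``vertex inheritance'' observation I would record as a small lemma: if polytopes $Q\subseteq P'$ share a point $\al$ and $\al$ is a vertex of $P'$, then $\al$ is also a vertex of $Q$. Indeed, any decomposition $\al=\frac{\be_1+\be_2}{2}$ with distinct $\be_1,\be_2\in Q$ would occur inside $P'$, contradicting extremality of $\al$ in $P'$. Now fix a vertex $\al$ of $P$. By \ref{nonredisvertex} applied to the generators $\supp(f)\cup\supp(g)$, we have $\al\in\supp(f)\cup\supp(g)$. Applying the inheritance observation with $Q=N(f)$ or $Q=N(g)$, I conclude that whenever $\al$ lies in $N(f)$ (respectively in $N(g)$), it is in fact a vertex of $N(f)$ (respectively of $N(g)$). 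Hence the coefficient of $\x^\al$ in $f$, if nonzero, is a vertex coefficient of $f$, and similarly for $g$; by hypothesis, both are strictly positive whenever they occur.

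The coefficient of $\x^\al$ in $f+g$ is therefore a sum of one or two strictly positive numbers (according as $\al$ belongs to one or both of $\supp(f)$ and $\supp(g)$), hence positive and in particular nonzero. This shows $\al\in\supp(f+g)\subseteq N(f+g)$, which establishes $P\subseteq N(f+g)$ and, combined with \ref{newtoncontainment}, gives $N(f+g)=P$. The same computation shows that every vertex coefficient of $f+g$ is positive, as required. The only mildly delicate step is the vertex inheritance observation; once that is in hand, the rest is routine bookkeeping with supports, Newton polytopes, and the sign hypothesis.
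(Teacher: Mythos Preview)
Your proof is correct and follows essentially the same approach as the paper: establish $\subseteq$ via \ref{newtoncontainment}, then show each vertex $\al$ of $\conv(\supp(f)\cup\supp(g))$ lies in $\supp(f+g)$ with the right sign by arguing that $\al$ must be a vertex of whichever of $N(f),N(g)$ contain it. Your explicit ``vertex inheritance'' observation is exactly what the paper uses implicitly when it says ``then $\al$ is a vertex of both $\conv(\supp(f))=N(f)$ and $\conv(\supp(g))=N(g)$''; you have simply made this step more transparent and applied it uniformly to both the one-support and two-support cases.
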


\begin{proof}
``$\subseteq$'' is \ref{newtoncontainment}

\smallskip
``$\supseteq$'' We have that $\conv(N(f)\cup N(g))=\conv(\supp(f)\cup\supp(g))$ is a polytope.
Let $\al$ be one of its vertices. By \ref{polyisconvexhull}, it is enough to show that $\al\in N(f+g)$.
We even show that $\al\in\supp(f+g)$. By \ref{nonredisvertex}, $\al$ lies in at least one of the sets
$\supp(f)$ and $\supp(g)$. If $\al$ lies only in one of these two, then the claim is clear. If on the other hand
$\al$ lies in both, then $\al$ is a vertex of both $\conv(\supp(f))=N(f)$ and $\conv(\supp(g))=N(g)$
and the coefficients of $\x^\al$ in $f$ and in $g$ and hence also in $f+g$ have the same sign, from which
it follows again that $\al\in\supp(f+g)$. Thus $N(f+g)=\conv(N(f)\cup N(g))$ is proven. The extra
claim follows from what was shown.
\end{proof}

\begin{lem}\label{aa2a}
Let $(K,\le)$ be an ordered field, $V$ a $K$-vector space and $A$ a convex subset of $V$.
Then $A+A=2A:=\{2x\mid x\in A\}$.
\end{lem}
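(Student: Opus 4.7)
The plan is to prove both inclusions directly, with the forward inclusion using convexity via the midpoint and the reverse inclusion being essentially trivial.

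For the inclusion $A+A \subseteq 2A$, I would take $z \in A+A$ and write $z = x + y$ with $x,y \in A$. Since $(K,\le)$ is an ordered field, Proposition \ref{qembeds} gives $\Q \hookrightarrow K$ as ordered fields, so in particular $\frac{1}{2} \in K$ and $0 \le \frac{1}{2} \le 1$. By convexity of $A$ (Definition \ref{dfconv}), the midpoint $m := \frac{1}{2}x + (1 - \frac{1}{2})y = \frac{x+y}{2}$ lies in $A$. Then $z = x+y = 2m \in 2A$.

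For the reverse inclusion $2A \subseteq A+A$, let $z \in 2A$, so $z = 2x$ for some $x \in A$; then $z = x + x \in A+A$, with no convexity needed.

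There is no real obstacle here — the only subtlety is ensuring $\frac{1}{2} \in K$, which is automatic from $(K,\le)$ being an ordered field via \ref{qembeds}. The statement is a warm-up lemma that will presumably be used later (e.g., to relate Newton polytopes of sums of squares to those of the summands), so the short proof suffices.
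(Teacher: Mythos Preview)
Your proof is correct and follows essentially the same approach as the paper: the paper's proof for ``$\subseteq$'' simply writes $x+y=2\frac{x+y}2\in 2A$ using convexity, and declares ``$\supseteq$'' trivial. Your added justification that $\frac12\in K$ via \ref{qembeds} is a reasonable extra detail but not something the paper bothers to spell out.
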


\begin{proof}
``$\supseteq$'' trivial

\smallskip
``$\subseteq$'' Let $x,y\in A$. Then $x+y=2\frac{x+y}2\in 2A$.
\end{proof}

\begin{thm}\label{vertexsquare}
Let $(K,\le)$ be an ordered field and $f\in K[\x]$. Then $N(f^2)=2N(f)$ and all vertex
coefficients of $f^2$ are squares of vertex coefficients of $f$ and therefore positive.
\end{thm}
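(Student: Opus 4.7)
The plan is to obtain both assertions directly from Theorem \ref{newtontimes}, specialized to the case $g = f$, together with the uniqueness of extreme-point decompositions in Minkowski sums.

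First I would apply \ref{newtontimes} with $g := f$ to conclude $N(f^2) = N(f) + N(f)$. Since $N(f)$ is convex by the definition in \ref{dfconv}, Lemma \ref{aa2a} then gives $N(f) + N(f) = 2N(f)$, yielding $N(f^2) = 2N(f)$. That takes care of the first claim.

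For the statement about vertex coefficients, the subtle point is that \ref{newtontimes} by itself only tells us that each vertex coefficient of $f^2$ is a product of two vertex coefficients of $f$; we need the symmetry of $f \cdot f$ to force the two factors to coincide. Concretely, let $\ga$ be a vertex of $N(f^2) = 2N(f)$. Applying \ref{minkowskisum} with $A = B = N(f)$, there exist unique $\al, \be \in N(f)$ with $\al + \be = \ga$, and both $\al$ and $\be$ are vertices of $N(f)$. Because the pair $(\be, \al)$ is also a valid decomposition, uniqueness forces $\al = \be = \ga/2$, so $\ga/2$ is a vertex of $N(f)$. Writing $f = \sum_{\de \in \N_0^n} a_\de \x^\de$, the coefficient of $\x^\ga$ in $f^2$ is $\sum_{\al' + \be' = \ga} a_{\al'} a_{\be'}$, where any nonzero contributing pair satisfies $\al', \be' \in \supp(f) \subseteq N(f)$; by the uniqueness just established, only the pair $(\ga/2, \ga/2)$ contributes, so this coefficient equals $a_{\ga/2}^2$, the square of a vertex coefficient of $f$. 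Finally, $a_{\ga/2} \ne 0$ by \ref{vcnon0}, and squares in $K$ are nonnegative by \ref{squares}, so $a_{\ga/2}^2 > 0$.

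The main (and essentially only) obstacle is the conceptual step of noticing that the symmetry $f \cdot f$ combined with the uniqueness clause of \ref{minkowskisum} forces the two vertex factors in \ref{newtontimes} to be equal; everything else is bookkeeping with previously established results.
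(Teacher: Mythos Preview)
Your proof is correct and follows essentially the same approach as the paper: both deduce $N(f^2)=2N(f)$ from \ref{newtontimes} and \ref{aa2a}, and both extract $\al=\be$ from the uniqueness clause in \ref{minkowskisum} applied to the symmetric decomposition $\ga=\al+\be=\be+\al$. Your version spells out the coefficient computation and the positivity step in more detail, but the argument is the same.
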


\begin{proof}
$N(f^2)=2N(f)$ follows from \ref{newtontimes} and \ref{aa2a}.
Suppose $\ga$ is a vertex of $N(f^2)\overset{\ref{newtontimes}}=N(f)+N(f)$.
By \ref{minkowskisum}, there are uniquely determined $\al,\be\in N(f)$ with $\ga=\al+\be$.
Due to $\ga=\be+\al$, it follows that $\al=\be$. But then the coefficient of $\x^\ga$ in $f^2$
is just the coefficient belonging to $\x^\al$ in $f$ squared.
\end{proof}

\begin{thm}\label{sosvc}
Let $(K,\le)$ be an ordered field, $\ell\in\N_0$, $p_1,\dots,p_\ell\in K[\x]$ and $f:=\sum_{i=1}^\ell p_i^2$.
Then $N(f)=2\conv(N(p_1)\cup\ldots\cup N(p_\ell))$ and all vertex coefficients of $f$ are positive.
\end{thm}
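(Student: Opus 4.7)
The plan is to proceed by induction on $\ell$, combining Theorem \ref{vertexsquare} (which handles the squaring step) with Theorem \ref{novertexcancellation} (which handles the summing step with no cancellation of vertex terms). The induction is set up so that the ``all vertex coefficients of $f$ are positive'' claim feeds into the next induction step via \ref{novertexcancellation}.

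For the base case $\ell=0$, we have $f=0$ and $N(0)=\conv(\emptyset)=\emptyset$, while the right-hand side is $2\conv(\emptyset)=\emptyset$ as well, and there are no vertex coefficients to check. Before the inductive step I would observe that summands $p_i=0$ can be discarded: they contribute nothing to $f$ and contribute $\emptyset\subseteq N(p_j)$ (any $j$) to the union on the right-hand side, so WLOG all $p_i\ne 0$.

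For the inductive step, write $f=g+p_{\ell+1}^2$ with $g:=\sum_{i=1}^\ell p_i^2$. By the induction hypothesis, $N(g)=2\conv(N(p_1)\cup\ldots\cup N(p_\ell))$ and every vertex coefficient of $g$ is positive. By \ref{vertexsquare}, $N(p_{\ell+1}^2)=2N(p_{\ell+1})$ and every vertex coefficient of $p_{\ell+1}^2$ is a square in $K^\times$, hence positive. Since all vertex coefficients of $g$ and of $p_{\ell+1}^2$ share the sign $+$, Theorem \ref{novertexcancellation} applies to give
\[
N(f)=\conv\bigl(N(g)\cup N(p_{\ell+1}^2)\bigr)=\conv\!\left(2\conv(N(p_1)\cup\ldots\cup N(p_\ell))\cup 2N(p_{\ell+1})\right),
\]
and every vertex coefficient of $f$ is positive.

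It remains to simplify the right-hand side. Using the elementary identities $2A\cup 2B=2(A\cup B)$, $\conv(2X)=2\conv(X)$ (immediate from the definition of $\conv$ as a set of convex combinations), and the standard absorption $\conv(\conv(X)\cup Y)=\conv(X\cup Y)$, we obtain
\[
\conv\!\left(2\conv(N(p_1)\cup\ldots\cup N(p_\ell))\cup 2N(p_{\ell+1})\right)=2\conv\bigl(N(p_1)\cup\ldots\cup N(p_{\ell+1})\bigr),
\]
which closes the induction. There is no substantial obstacle here; the only delicate point is making sure that the ``positive vertex coefficients'' hypothesis is preserved through the induction so that \ref{novertexcancellation} is applicable at every step, which is why both conclusions of the theorem must be proved simultaneously rather than first establishing the Newton polytope identity and then deriving positivity as an afterthought.
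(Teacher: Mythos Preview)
Your proof is correct and follows essentially the same approach as the paper: apply Theorem~\ref{vertexsquare} to each square $p_i^2$ and then use Theorem~\ref{novertexcancellation} to sum them up. The only difference is that you make the induction on $\ell$ explicit, whereas the paper applies \ref{novertexcancellation} in one stroke to all $\ell$ summands $p_1^2,\dots,p_\ell^2$ simultaneously (tacitly relying on the obvious iterated version) and then simplifies $\conv(2N(p_1)\cup\ldots\cup2N(p_\ell))=2\conv(N(p_1)\cup\ldots\cup N(p_\ell))$ directly.
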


\begin{proof}
For each $i\in\{1,\dots,\ell\}$, we have by \ref{vertexsquare} that $N(p_i^2)=2N(p_i)$ and that all
vertex coefficients of $p_i^2$ are positive. By \ref{novertexcancellation},
\begin{align*}
N(f)&=\conv(N(p_1^2)\cup\ldots\cup N(p_\ell^2))=\conv(2N(p_1)\cup\ldots\cup2N(p_\ell))\\
&=2\conv(N(p_1)\cup\ldots\cup N(p_\ell))
\end{align*}
and all vertex coefficients of $f$ are positive.
\end{proof}

\begin{ex}\label{motzkin}
For the \emph{Motzkin polynomial} $f:=X^4Y^2+X^2Y^4-3X^2Y^2+1\in\R[X,Y]$, we have
$f\ge0$ on $\R^2$ but $f\notin\sum\R[X,Y]^2$. At first we show $f\ge0$ on $\R^2$ in three different ways:
\begin{enumerate}[(1)]
\item From the inequality of arithmetic and geometric means known from analysis, it follows that
$\sqrt[3]{abc}\le\frac13(a+b+c)$ for all $a,b,c\in\R_{\ge0}$. Setting here $a:=x^4y^2$, $b:=x^2y^4$
and $c:=1$ for arbitrary $x,y\in\R$, we deduce $x^2y^2\le\frac13(x^4y^2+x^2y^4+1)$.
\item
\begin{align*}
(1+X^2)f&=X^4Y^2+X^2Y^4-3X^2Y^2+1+X^6Y^2+X^4Y^4-3X^4Y^2+X^2\\
&=1-2X^2Y^2+X^4Y^4+X^2-2X^2Y^2+X^2Y^4+X^2Y^2-2X^4Y^2+X^6Y^2\\
&=(1-X^2Y^2)^2+X^2(1-Y^2)^2+X^2Y^2(1-X^2 )^2\in\sum\R[X,Y]^2
\end{align*}
\item
\begin{align*}
f(X^3,Y^3)&=X^{12}Y^6+X^6Y^{12}-3X^6Y^6+1\\
&=X^4Y^2-X^8Y^4-X^6Y^6+\frac14X^{12}Y^6+\frac12X^{10}Y^8+\frac14X^8Y^{10}\\
&\quad+X^2Y^4-X^6Y^6-X^4Y^8+\frac14X^{10}Y^8+\frac12X^8Y^{10}+\frac14X^6Y^{12}\\
&\quad+1-X^4Y^2-X^2Y^4+\frac14X^8Y^4+\frac12X^6Y^6+\frac14X^4Y^8\\
&\quad+\frac34X^8Y^4-\frac32X^6Y^6+\frac34X^4Y^8\\
&\quad+\frac34X^{10}Y^8-\frac32X^8Y^{10}+\frac34X^6Y^{12}\\
&\quad+\frac34X^{12}Y^6-\frac32X^{10}Y^8+\frac34X^8Y^{10}\\
&=\left(X^2Y-\frac12X^4Y^5-\frac12X^6Y^3\right)^2\\
&\quad+\left(XY^2-\frac12X^3Y^6-\frac12X^5Y^4\right)^2\\
&\quad+\left(1-\frac12X^2Y^4-\frac12X^4Y^2\right)^2\\
&\quad+\frac34\left(X^2Y^4-X^4Y^2\right)^2\\
&\quad+\frac34(X^3Y^6-X^5Y^4)^2\\
&\quad+\frac34(X^4Y^5-X^6Y^3)^2
\end{align*}
Now we show $f\notin\sum\R[X,Y]^2$:
\begin{align*}
N(f)&=\conv(\supp(f))=\conv\{(4,2),(2,4),(2,2),(0,0)\}\\
&=\conv\{(4,2),(2,4),(0,0)\}.
\end{align*}
Assume $f=\sum_{i=1}^\ell p_i^2$ with $\ell\in\N_0$ and $p_1,\dots,p_\ell\in\sum\R[X,Y]$. Then
\[N(p_i)\subseteq\conv(N(p_1)\cup\ldots\cup N(p_\ell))=\frac12N(f)=\conv\{(2,1),(1,2),(0,0)\}\]
by \ref{sosvc} and hence
$\supp(p_i)\subseteq\N_0^2\cap N(p_i)\subseteq\N_0^2\cap\conv\{(2,1),(1,2),(0,0)\}=
\{(0,0),(1,1),(2,1),(1,2)\}$ for all $i\in\{1,\dots,\ell\}$.
The coefficient of $X^2Y^2$ in $p_i^2$ is therefore the coefficient of $XY$ in $p_i$ squared and therefore
nonnegative. Then the coefficient of $X^2Y^2$ in $f$ is also nonnegative $\lightning$.
This shows $f\notin\sum\R[X,Y]^2$.  Thus one can neither generalize \ref{so2s}(a)$\implies$(c)
to polynomials in several variables nor \ref{son1s}(a)$\implies$(b) to polynomials of arbitrary degree.
Note also that exactly the same proof shows even $f+c\notin\sum\R[X,Y]^2$ for all $c\in\R$. By 
\ref{psdpsdhom}, the \emph{Motzkin form} $f^*:=X^4Y^2+X^2Y^4-3X^2Y^2Z^2+Z^6$ is psd
[$\to$ \ref{psdpd}] but is likewise no sum of squares of polynomials. Again by \ref{psdpsdhom},
the dehomogenizations $f^*(1,Y,Z)=Y^2+Y^4-3Y^2Z^2+Z^6$ and
$f^*(X,1,Z)=X^4+X^2-3X^2Z^2+Z^6$ are also polynomials that are $\ge0$ on $\R^2$ but that are no
sums of squares of polynomials.
\end{enumerate}
\end{ex}

\section{Artin's solution to Hilbert's 17th problem}

\begin{lem}\label{pqpos}
Let $R$ be a real closed field and 
$f,p,q\in R[\x]$.
Suppose $q\ne0$, $f=\frac pq$, $p\ge0$ on $R^n$ and $q\ge0$ on $R^n$.
Then $f\ge0$ on $R^n$.
\end{lem}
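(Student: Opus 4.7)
The plan is to reduce the claim to a one-variable statement and exploit the factorization of univariate polynomials over $R$ provided by \ref{realfund}. The first step is the trivial observation: at any $x \in R^n$ where $q(x) > 0$, nonnegativity of $f(x)$ follows at once from $f(x) = p(x)/q(x)$ together with $p(x) \ge 0$. Hence the only content lies in checking $f(x_0) \ge 0$ at points $x_0$ where $q(x_0) = 0$.

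To handle such an $x_0$, I would restrict $f$ and $q$ along a suitably chosen line. Since $q \ne 0$, by \ref{pol0} there is some $y \in R^n$ with $q(y) \ne 0$; setting $v := y - x_0$ and $Q(T) := q(x_0 + Tv)$, $F(T) := f(x_0 + Tv)$ in $R[T]$, one has $Q(1) = q(y) \ne 0$, so $Q$ is a nonzero univariate polynomial with only finitely many roots. At every $t \in R$ that is not a root of $Q$, the hypothesis $q \ge 0$ forces $Q(t) > 0$, and then $F(t) = p(x_0+tv)/Q(t) \ge 0$. Thus $F \in R[T]$ is a univariate polynomial which is $\ge 0$ on a cofinite subset of $R$, and we want to conclude $F(0) \ge 0$.

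The remaining ingredient is the lemma that any $F \in R[T]$ nonnegative on the complement of a finite set is nonnegative on all of $R$. I would prove this by contradiction: if $F(t_0) < 0$ and $a_1 < \cdots < a_k$ are the roots of $F$ in $R$, then $t_0$ lies in one of the open intervals $(-\infty,a_1), (a_1,a_2),\ldots,(a_k,\infty)$, and by the intermediate value theorem \ref{intermediate} $F$ has constant sign on this interval (a sign change would produce a further root). Hence $F < 0$ on the whole interval, which is infinite because $R$ has characteristic $0$ by \ref{qembeds}; this contradicts the assumption that $F < 0$ at only finitely many points. Applying this at $T = 0$ yields $f(x_0) = F(0) \ge 0$.

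I do not anticipate a real obstacle here; the only point requiring care is that ``finitely many bad points'' really is incompatible with $F$ being negative somewhere, and this is precisely the place where one must use that $R$ is \emph{real closed} (via \ref{intermediate}) rather than just ordered, since one needs each interval between consecutive roots of $F$ to be a set on which $F$ does not change sign.
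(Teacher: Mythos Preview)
Your proof is correct and takes a genuinely different route from the paper. The paper's argument invokes the Tarski principle \ref{tprinciple} to reduce to $R=\R$, then observes that $\{x\in\R^n\mid f(x)<0\}$ is an open subset of the zero set of $q$; if nonempty, this would force $q=0$ by \ref{pol0}. Your argument, by contrast, stays over the given real closed field throughout: you restrict along a line to reduce to a univariate statement and finish with the intermediate value theorem \ref{intermediate}. This avoids the heavy machinery of quantifier elimination at the cost of a slightly longer argument, and it makes transparent exactly where real closedness enters (the one-variable IVT), whereas the paper's proof hides it inside the transfer principle. Both approaches ultimately rest on the same fact --- a polynomial cannot be negative on a ``fat'' set while vanishing there --- but yours unpacks it by hand while the paper packages it into topology over $\R$ plus transfer.
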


\begin{proof}
Using the Tarski principle \ref{tprinciple}, one can reduce to the case $R=\R$. 
But then the subset $\{x\in\R^n\mid f(x)<0\}$ of $\{x\in\R^n\mid q(x)=0\}$ is open in $\R^n$
and therefore empty since otherwise $q=0$ would follow from \ref{pol0}.
\end{proof}

\noindent
In the year 1900, Hilbert presented his famous list of 23 seminal problems
at the International Congress of Mathematicians in Paris.
In 1927, Artin gave a positive solution to the 17th of these problems.
This corresponds to the case $K=\R$ in the following theorem.

\begin{thm}[Artin]\label{artin}
Suppose $R$ is a real closed field and $(K,\le)$ an ordered subfield of $R$. Let $f\in K[\x]$.
Then the following are equivalent:
\begin{enumerate}[\normalfont(a)]
\item $f\ge0$ on $R^n$
\item $f\in\sum K_{\ge0}K(\x)^2$
\end{enumerate}
\end{thm}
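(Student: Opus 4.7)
The easy direction (b)$\implies$(a) will follow from Lemma \ref{pqpos}. Writing $f=\sum_{i=1}^\ell a_i(p_i/q_i)^2$ with $a_i\in K_{\ge0}$ and $p_i,q_i\in K[\x]$, $q_i\ne0$, I put everything over the common denominator $Q:=(q_1\dotsm q_\ell)^2$. Then $f=P/Q$ where $P:=\sum_i a_ip_i^2(q_1\dotsm q_\ell/q_i)^2\in K[\x]$ is nonnegative on $R^n$ (the $a_i$ remain nonnegative in $R$ since $(K,\le)$ is an ordered subfield) and $Q=(q_1\dotsm q_\ell)^2\ge0$ on $R^n$. Lemma \ref{pqpos} then yields $f\ge0$ on $R^n$.

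For the hard direction (a)$\implies$(b), I argue the contrapositive: assume $f\notin T$, where
\[
T:=\sum K_{\ge0}K(\x)^2=\left\{\sum_{i=1}^\ell a_ig_i^2\;\middle|\;\ell\in\N_0,\;a_i\in K_{\ge0},\;g_i\in K(\x)\right\},
\]
and produce a point of $R^n$ where $f$ is negative. First I verify that $T$ is a preorder of $K(\x)$: it contains $K(\x)^2$ (take $a_i=1$), is closed under addition by definition, and closed under multiplication because $K_{\ge0}$ is closed under multiplication in the ordered field $(K,\le)$. Next, $T$ is proper: if $-1=\sum a_ig_i^2\in T$, clearing denominators by some $h\in K[\x]\setminus\{0\}$ gives $-h^2=\sum a_i(hg_i)^2$ in $K[\x]$. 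Evaluating at any point of $R^n$ yields $-h(y)^2\ge0$, hence $h$ vanishes on $R^n$; since $R$ is infinite this forces $h=0$ by Lemma \ref{pol0}, contradicting $h\ne0$.

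Now apply Artin--Schreier (Theorem \ref{artin-schreier}) to obtain an order $P$ of $K(\x)$ with $T\subseteq P$ and $f\notin P$. Since $K_{\ge0}\subseteq T\subseteq P$, the natural inclusion $K\hookrightarrow K(\x)$ is an embedding of ordered fields $(K,\le)\hookrightarrow(K(\x),P)$. Let $R':=\overline{(K(\x),P)}$ be its real closure (Theorem \ref{existsrc}); then $R'$ is a real closed ordered extension field of $(K,\le)$. The point $\xi:=(X_1,\dots,X_n)\in R'^n$ satisfies $f(\xi)=f$, which is $\le_P0$ (since $f\notin P$ forces $-f\in P$) and nonzero (since $0\in T$), so $f(\xi)<0$ in $R'$.

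It remains to transfer this to $R$, and this is the place where the framework of §\ref{sec:qe} does exactly what we need. Take $\mathcal{R}:=\{R,R'\}$, both viewed as real closed ordered extensions of $(K,\le)$. By real quantifier elimination (Theorem \ref{elim}) applied $n$ times, the class
\[
S:=\{R''\in\mathcal{R}\mid\exists y\in R''^n:f(y)<0\}
\]
is a $0$-ary $(K,\le)$-semialgebraic class, hence is either $\emptyset$ or $\mathcal{R}$ by Remark \ref{nothingorall}. Since $R'\in S$, we get $S=\mathcal{R}$, so $R\in S$, i.e., there exists $y\in R^n$ with $f(y)<0$, contradicting (a). The main step conceptually is the use of Artin--Schreier to manufacture an evaluation point in some real closed field; the only mild technical obstacle is verifying that $T$ is a proper preorder, which I handled above by clearing denominators and using Lemma \ref{pol0}. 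Every other step is a direct appeal to results already established in the text.
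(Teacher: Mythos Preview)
Your proof is correct and follows essentially the same route as the paper's: contrapose, use Artin--Schreier on the preorder $T=\sum K_{\ge0}K(\x)^2$ to get an order $P$ with $f\notin P$, pass to the real closure $R'$ where the generic point witnesses $f<0$, and then transfer back to $R$ via quantifier elimination. The only notable difference is that you prove $T$ is proper by clearing denominators and invoking Lemma~\ref{pol0}, whereas the paper simply observes that $f\notin T$ already gives $T\ne K(\x)$, hence $T$ is proper by Corollary~\ref{preproper}; your argument is correct but the paper's shortcut is cheaper.
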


\begin{proof}
(b)$\implies$(a) follows from Lemma \ref{pqpos}. We show (a)$\implies$(b) by contraposition.
Suppose $f\notin\sum K_{\ge0}K(\x)^2$. To show: $\exists x\in R^n:f(x)<0$.
Since $\sum K_{\ge0}K(\x)^2$ is now a proper preorder of $K(\x)$ [$\to$ \ref{defpreorder}, \ref{preproper}],
there is by \ref{artin-schreier} an order $P$ of $K(\x)$ with $f\notin P$. Set $R':=\overline{(K(\x),P)}$.
Then there is an $x\in R'^n$ with $f(x)<0$ namely $x:=(X_1,\dots,X_n)$ since
$f(x)=f<0$ in $R'$. Due to $K_{\ge0}\subseteq P\subseteq R'^2$, $(K,\le)$ is an \emph{ordered} subfield
of $R'$. Since the $K$-semialgebraic set $\{x\in R'^n\mid f(x)<0\}$ is nonempty, its transfer
$\{x\in R^n\mid f(x)<0\}$ to $R$ [$\to$ \ref{transfer}] is also nonempty.
\end{proof}

\begin{cor}{}\emph{[$\to$ \ref{cassels}]}
Suppose $R$ is a real closed field and $(K,\le)$ an ordered subfield of $R$. Let $f\in K[X]$.
Then the following are equivalent:
\begin{enumerate}[\normalfont(a)]
\item $f\ge0$ on $R$
\item $f\in\sum K_{\ge0}K[X]^2$
\end{enumerate}
\end{cor}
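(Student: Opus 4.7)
The direction (b)$\implies$(a) is immediate: any element of $\sum K_{\ge 0}K[X]^2$ evaluates at a point of $R$ to a sum of nonnegative elements of $K$ multiplied by squares in $R$, which is nonnegative in $R$ by \ref{squares} (noting that the order of $K$ is induced from that of $R$).

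For (a)$\implies$(b), the plan is to combine Artin's theorem \ref{artin} with Cassels' theorem \ref{cassels}. Assume $f \ge 0$ on $R$. By \ref{artin} applied to the one-variable case, we obtain a representation
\[
f = \sum_{i=1}^{\ell} a_i \left(\frac{f_i}{g_i}\right)^2
\]
with $\ell \in \N_0$, $a_i \in K_{\ge 0}$, $f_i \in K[X]$, and $g_i \in K[X]\setminus\{0\}$. Since $f \in K[X]$, the left-hand side lies in $K[X]$. Cassels' theorem \ref{cassels} (which is stated exactly for ordered fields $(K,\le)$ and univariate polynomials) then produces polynomials $p_1,\dots,p_\ell \in K[X]$ such that
\[
f = \sum_{i=1}^{\ell} a_i\, p_i^2,
\]
witnessing $f \in \sum K_{\ge 0} K[X]^2$.

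The only step that requires care is verifying that the hypotheses of \ref{cassels} are satisfied with the weights $a_i \in K_{\ge 0}$ coming out of \ref{artin}; but this is exactly what \ref{cassels} asserts. The main conceptual point is simply that Artin's theorem gives a denominator-laden sum-of-weighted-squares representation over $K(X)$, and Cassels' theorem is the univariate tool that lets us clear denominators while preserving the weights and the positivity of the weights. No further obstacle arises.
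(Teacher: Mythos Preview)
Your proof is correct and follows exactly the same approach as the paper: the backward direction is trivial, and the forward direction is obtained by combining Artin's theorem \ref{artin} (to get a weighted sum-of-squares representation in $K(X)$) with Cassels' theorem \ref{cassels} (to clear denominators). The paper's proof is in fact just the two-line reference to \ref{artin} and \ref{cassels}; your write-up merely unpacks it.
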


\begin{proof}
(b)$\implies$(a) is trivial.

\smallskip
(a)$\implies$(b) follows from \ref{artin} and \ref{cassels}.
\end{proof}

\section{The Gram matrix method}

\begin{thm}\label{gram}
Let $K$ be a Euclidean field, $f\in K[\x]$ and
$\frac12N(f)\cap\N_0^n\subseteq\{\al_1,\dots,\al_m\}\subseteq\N_0^n$
\emph{(for instance set $\{\al_1,\dots,\al_m\}$ equal to $\frac12N(f)\cap\N_0^n$ or to
$\{\al\in\N_0^n\mid 2|\al|\le\deg f\}$)}. Set
$v:=\begin{pmatrix}\x^{\al_1}\\\vdots\\\x^{\al_m}\end{pmatrix}$. Then the following are equivalent:
\begin{enumerate}[\normalfont(a)]
\item $f\in\sum K[\x]^2$
\item There is a \emph{psd} matrix \emph{[$\to$ \ref{psdpd}(b)]} $G\in SK^{m\times m}$ (``Gram matrix'')
satisfying $f=v^TGv$.
\item $f$ is a sum of $m$ squares in $K[\x]$.
\end{enumerate}
\end{thm}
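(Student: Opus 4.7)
The implication (c)$\implies$(a) is immediate from the definition of $\sum K[\x]^2$, so the substance lies in the circular argument (a)$\implies$(b)$\implies$(c).

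For (b)$\implies$(c), I would invoke \ref{psdeq}: since $K$ is Euclidean and $G\in SK^{m\times m}$ is psd, there exists $A\in K^{m\times m}$ with $G=A^TA$. Writing $w:=Av\in K[\x]^m$, one gets
\[
f=v^TGv=v^TA^TAv=w^Tw=\sum_{i=1}^m w_i^2,
\]
exhibiting $f$ as a sum of $m$ squares in $K[\x]$.

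The main content is (a)$\implies$(b). Given a representation $f=\sum_{i=1}^\ell p_i^2$ with $p_i\in K[\x]$, I would apply Theorem \ref{sosvc} to conclude
\[
N(p_i)\subseteq\conv(N(p_1)\cup\ldots\cup N(p_\ell))=\tfrac12 N(f)
\]
for each $i$. In particular, $\supp(p_i)\subseteq\tfrac12 N(f)\cap\N_0^n\subseteq\{\al_1,\dots,\al_m\}$, so each $p_i$ can be written as $p_i=a_i^Tv$ for a unique $a_i\in K^m$. Collecting the $a_i$ as the rows of a matrix $A\in K^{\ell\times m}$, the identity
\[
f=\sum_{i=1}^\ell(a_i^Tv)^2=\sum_{i=1}^\ell v^T(a_ia_i^T)v=v^T\!\left(\sum_{i=1}^\ell a_ia_i^T\right)\!v=v^T(A^TA)v
\]
holds in $K[\x]$, and $G:=A^TA\in SK^{m\times m}$ is psd by \ref{psdeq}. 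Note that the two concrete choices of index set suggested in the statement indeed contain $\tfrac12N(f)\cap\N_0^n$: for $\{\al\in\N_0^n\mid 2|\al|\le\deg f\}$ this follows because any $\al\in\tfrac12N(f)$ satisfies $2\al\in N(f)$ and hence $2|\al|\le\deg f$.

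The only place any real work happens is the support containment $\supp(p_i)\subseteq\tfrac12N(f)\cap\N_0^n$, and this is handed to us by \ref{sosvc}, which was proved via the key no-cancellation-of-vertex-coefficients phenomenon for sums of squares. Everything else is a bookkeeping translation between the scalar identity $f=\sum p_i^2$, the matrix identity $f=v^TGv$, and the Cholesky-type factorization $G=A^TA$ over a Euclidean field.
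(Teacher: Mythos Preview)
Your proof is correct and follows essentially the same approach as the paper: the cycle (a)$\implies$(b)$\implies$(c)$\implies$(a) is carried out exactly via \ref{sosvc} for the support containment in (a)$\implies$(b) and via the factorization $G=A^TA$ from \ref{psdeq} for (b)$\implies$(c). Your additional remark verifying that the two suggested index sets indeed contain $\tfrac12N(f)\cap\N_0^n$ is a helpful clarification not made explicit in the paper.
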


\begin{proof}
\underline{(a)$\implies$(b)}\quad Let $\ell\in\N_0$ and $p_1,\dots,p_\ell\in K[\x]$ with
$f=\sum_{i=1}^\ell p_i^2$.
By \ref{sosvc}, we have $\supp(p_i)\subseteq\frac12N(f)\cap\N_0^n\subseteq\{\al_1,\dots,\al_m\}$.
Hence there is an $A\in K^{\ell\times m}$ such that
\[Av=\begin{pmatrix}p_1\\\vdots\\p_\ell\end{pmatrix}.\]
It follows that $f=\begin{pmatrix}p_1&\dots&p_\ell\end{pmatrix}
\begin{pmatrix}p_1\\\vdots\\p_\ell\end{pmatrix}=(Av)^TAv=v^TA^TAv=v^TGv$ where
$G:=A^TA\in SK^{m\times m}$. By \ref{psdeq}, $G$ is psd.

\smallskip
\underline{(b)$\implies$(c)}\quad
Let $G\in SK^{m\times m}$ be psd with $f=v^TGv$. Choose according to \ref{psdeq} an $A\in K^{m\times m}$
satisfying $G=A^TA$. Write \[Av=\begin{pmatrix}p_1\\\vdots\\p_m\end{pmatrix}.\]
Then $p_1,\dots,p_m\in K[\x]$ and \[v^TGv=v^TA^TAv=(Av)^TAv=
\begin{pmatrix}p_1&\dots&p_m\end{pmatrix}\begin{pmatrix}p_1\\\vdots\\p_m\end{pmatrix}
=\sum_{i=1}^mp_i^2.\]

\smallskip
\underline{(c)$\implies$(a)} is trivial.
\end{proof}

\begin{ex}
Let $K$ be a Euclidean field and $f:=2X_1^4+5X_2^4-X_1^2X_2^2+2X_1^3X_2\in K[X_1,X_2]$.
Then $N(f)=\conv\{(4,0),(0,4)\}$ and therefore \[\frac12N(f)\cap\N_0^2=\{(2,0),(1,1),(0,2)\}.\]
Set $v:=\begin{pmatrix}X_1^2\\X_1X_2\\X_2^2\end{pmatrix}$. From
$\{G\in SK^{3\times 3}\mid f=v^TGv\}=\left\{\begin{pmatrix}2&1&a\\1&-2a-1&0\\a&0&5\end{pmatrix}\mid
a\in K\right\}$, we obtain
\[f\in\sum K[X_1,X_2]^2\iff\exists a\in K:\begin{pmatrix}2&1&a\\1&-2a-1&0\\a&0&5\end{pmatrix}
\text{ psd.}\]
For all $a\in K$, we have
\begin{align*}
&\det\begin{pmatrix}2+T&1&a\\1&T-2a-1&0\\a&0&5+T\end{pmatrix}
=(2+T)(T-2a-1)(5+T)-a^2(T-2a-1)-5-T\\
&=(T^2-2aT+T-4a-2)(5+T)-(1+a^2)T+2a^3+a^2-5\\
&=T^3-2aT^2+T^2-4aT-2T+5T^2-10aT+5T-20a-10-(1+a^2)T+2a^3+a^2-5\\
&=T^3+(6-2a)T^2+(2-14a-a^2)T-15-20a+a^2+2a^3
\end{align*}
and by \ref{psdeq}(e), we obtain
\[\begin{pmatrix}2&1&a\\1&-2a-1&0\\a&0&5\end{pmatrix}\text{ psd}\iff
\begin{array}[c]{rl}
2a^3+a^2-20a-15\ge0\\
\et\ -a^2-14a+2\ge0\\
\et\ -2a+6\ge0&.
\end{array}
\]
Set $a:=-3$. Then $2a^3+a^2-20a-15=-2\cdot27+9+60-15=-54+9+60-15=0$,
$-a^2-14a+2=-9+42+2=35\ge0$ and $-2a+6=12\ge0$.
For this reason $f\in\sum K[X_1,X_2]^2$. The quadratic form
\[q:=\begin{pmatrix}T_1&T_2&T_3\end{pmatrix}
\begin{pmatrix}2&1&a\\1&-2a-1&0\\a&0&5\end{pmatrix}
\begin{pmatrix}T_1\\T_2\\T_3\end{pmatrix}\in K[T_1,T_2,T_3]\]
obviously satisfies
\[q(X_1^2,X_1X_2,X_2^3)=v^T\begin{pmatrix}2&1&a\\1&-2a-1&0\\a&0&5\end{pmatrix}v=f.\]
Because of
\[\sg q\overset{\text{\ref{sospsd2}(d)}}=\rk q=\rk\begin{pmatrix}2&1&-3\\1&5&0\\-3&0&5\end{pmatrix}=2,\]
$q$ is a sum of $2$ squares of
linear forms in $K[T_1,T_2,T_3]$ and thus $f$ a sum of $2$ squares of polynomials. To compute this
representation explicitely, we employ the procedure from \ref{longremi}(f):
\begin{align*}
q&=2T_1^2+2T_1T_2-6T_1T_3+5T_2^2+5T_3^2\\
&=2\Big(\underbrace{T_1+\frac12T_2-\frac32T_3}_{\ell_1}\Big)^2-2\Big(\frac12T_2-\frac32T_3\Big)^2+5T_2^2+5T_3^2\\
&=2\ell_1^2+\frac92T_2^2+3T_2T_3+\frac12T_3^2\\
&=2\ell_1^2+\frac92\Big(\underbrace{T_2+\frac13T_3}_{\ell_2}\Big)^2=2\ell_1^2+\frac92\ell_2^2\\
&=\frac12(2T_1+T_2-3T_3)^2+\frac12(3T_2+T_3)^2.
\end{align*}
Hence $f=\frac12(2X_1^2+X_1X_2-3X_2^2)^2+\frac12(3X_1X_2+X_2^2)^2$.
\end{ex}

\chapter{Prime cones and real Stellensätze}

\section{The real spectrum of a commutative ring}

In this section, we let $A$, $B$ and $C$ always be commutative rings.

\begin{reminder}\label{specfunctor}
An ideal $\p$ of $A$ is called a prime ideal of $A$ if
\[1\notin\p\quad\text{ and }\quad\forall a,b\in A:(ab\in\p\implies(a\in\p\text{ or }b\in\p)).\] We call
$\spec A=\{\p\mid\p\text{ prime ideal of $A$}\}$
the \emph{spectrum} of $A$. If $I$ is an ideal of $A$, then
\[I\in\spec A\iff A/I\text{ is an integral domain.}\]
Because every integral domain extends to a field (e.g., to its quotient field) and every field to an
algebraically closed field (e.g., to its algebraic closure), $\spec A$ consists exactly of the kernels of
ring homomorphisms of $A$ in \alalal{integral domains}{fields}{algebraically closed fields}.
Every ring homomorphism $\ph\colon A\to B$ induces a map
\[
\spec\ph\colon\spec B\to\spec A, \q\mapsto\ph^{-1}(\q),
\]
for if $\q\in\spec B$, then $\p:=\ph^{-1}(\q)\in\spec A$ since $\ph$ induces an
embedding
$A/\p\hookrightarrow B/\q$ by the homomorphism theorem. If $\ph\colon A\to B$ and
$\ps\colon B\to C$ are ring homomorphisms, then
\[\spec(\ps\circ\ph)=(\spec\ph)\circ(\spec\ps).\]
\end{reminder}

\begin{notation} If $A$ is an integral domain, then
\[\qf A:=(A\setminus\{0\})^{-1}A=\left\{\frac ab\mid a,b\in A,b\ne0\right\}\]
denotes its quotient field.
\end{notation}

\begin{df}\label{introrealspectrum}
We call $\sper A:=\{(\p,\le)\mid\p\in\spec A,\ \text{$\le$ order of }\qf(A/\p)\}$ the \emph{real spectrum} of $A$.
\end{df}

\begin{rem}\label{sperfunctor}
Every ring homomorphism $\ph\colon A\to B$ induces a map
\[\sper\ph\colon\sper B\to\sper A,\ (\q,\le)\mapsto(\ph^{-1}(\q),\le'),\]
where $\le'$ denotes the order of $\qf(A/\p)$ with $\p:=\ph^{-1}(\q)$ which makes
the canonical embedding $\qf(A/\p)\hookrightarrow\qf(B/\q)$ into an embedding
$(\qf(A/\p),\le')\hookrightarrow(\qf(B/\q),\le)$ of ordered fields.
If $\ph\colon A\to B$ and
$\ps\colon B\to C$ are ring homomorphisms, then we have again
\[\sper(\ps\circ\ph)=(\sper\ph)\circ(\sper\ps).\]
\end{rem}

\begin{ex}\label{specsperex}
Since $\R[X]$ is a principal ideal domain, the fundamental theorem \ref{realfund} implies
\[\spec\R[X]=\{(0)\}\cup\{(X-a)\mid a\in\R\}\cup
\{(\underbrace{(X-a)^2+b^2}_{\rlap{$\scriptstyle=(X-(a+b\ii))(X-(a-b\ii))$}})\mid a,b\in\R,b\ne0\}\]
where $((X-a)^2+b^2)=((X-a')^2+b'^2)\iff(a=a'\et|b|=|b'|)$ for all $a,a',b,b'\in\R$. The spectrum of
$\R[X]$ therefore can be seen as consisting of
\begin{itemize}
\item one ``generic point'',
\item the real numbers, and
\item the unordered pairs of two distinct conjugated complex numbers.
\end{itemize}
Because of $\qf(\R[X]/(0))\cong\qf(\R[X])=\R(X)$,
$\qf(\R[X]/(X-a))=\R[X]/(X-a)\cong\R$ for all $a\in\R$ and
$\qf(\R[X]/((X-a)^2+b^2))\cong\R[X]/((X-a)^2+b^2)\cong\C$ for $a,b\in\R$ with $b\ne0$, we obtain
in the notation of \ref{ordersrx} (and with the identification $\R[X]/(0)=\R[X]$)
\begin{align*}
\sper\R[X]=\{((0),P_{-\infty}),((0),P_\infty)\}&\cup\{((0),P_{a-})\mid a\in\R\}\cup\{((0),P_{a+})\mid a\in\R\}\\
&\cup\{((X-a),(\R[X]/(X-a))^2)\mid a\in\R\}.
\end{align*}
The \emph{real} spectrum of $\R[X]$ thus corresponds to an accumulation consisting of
\begin{itemize}
\item the two points at infinity,
\item for each real number two points infinitely close, and
\item the real numbers.
\end{itemize}
\end{ex}

\begin{df}\label{supportmap}
We call $\supp\colon\sper A\to\spec A,\ (\p,\le)\mapsto\p$ the \emph{support map}.
\end{df}

\begin{df}{}[$\to$ \ref{unary-order}(a), \ref{specfunctor}]\label{dfprimecone}
A subset $P$ of $A$ is called a \emph{prime cone}
of $A$ if $P+P\subseteq P$, $PP\subseteq P$, $P\cup-P=A$, $-1\notin P$ and
$\forall a,b\in A:(ab\in P\implies(a\in P\text{ or }-b\in P))$.
\end{df}

\begin{pro}\label{primeconeispreorder}
Every prime cone of $A$ is a proper preorder of $A$ \emph{[$\to$ \ref{defpreorder}]}.
\end{pro}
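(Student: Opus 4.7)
The claim is almost a matter of unwinding definitions. Comparing \ref{defpreorder} with \ref{dfprimecone}, what we are given for a prime cone $P$ already delivers $P+P\subseteq P$, $PP\subseteq P$, and $-1\notin P$ (so properness is immediate once $P$ is shown to be a preorder). The only condition left to verify is $A^2\subseteq P$.

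To establish $A^2\subseteq P$, I would fix an arbitrary $a\in A$ and split on the dichotomy $P\cup-P=A$. If $a\in P$, then $a^2=a\cdot a\in PP\subseteq P$. If $a\notin P$, then $-a\in P$, and again $a^2=(-a)(-a)\in PP\subseteq P$. Either way, $a^2\in P$, so $A^2\subseteq P$.

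Combining: $P$ satisfies all three defining conditions of a preorder from \ref{defpreorder}, and the extra hypothesis $-1\notin P$ in \ref{dfprimecone} makes it proper. There is essentially no obstacle here; the main point is simply noticing that $A^2\subseteq P$ is a consequence of $P\cup-P=A$ together with $PP\subseteq P$, rather than an independent axiom. (Note that the implication axiom $ab\in P\implies(a\in P\text{ or }-b\in P)$, which distinguishes prime cones from arbitrary proper preorders, plays no role in this step; it will only become relevant in later statements that compare prime cones to maximal proper preorders or to orders modulo the support.)
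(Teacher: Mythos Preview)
Your proof is correct and essentially identical to the paper's: both verify $A^2\subseteq P$ by splitting on $P\cup-P=A$ and using $PP\subseteq P$, with the remaining preorder conditions and properness read off directly from \ref{dfprimecone}.
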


\begin{proof}
Suppose $P$ is a prime cone of $A$ and $a\in A$. To show: $a^2\in P$. Due to $a\in A=P\cup-P$, we have
$a\in P$ or $-a\in P$. In the first case we get $a^2=aa\in PP\subseteq P$ and in the second
$a^2=(-a)^2=(-a)(-a)\in PP\subseteq P$.
\end{proof}

\begin{pro}\label{primeconechar}
Suppose $P\subseteq A$ satisfies $P+P\subseteq P$, $PP\subseteq P$ and $P\cup-P=A$.
Then the following are equivalent:
\begin{enumerate}[\normalfont(a)]
\item $P$ is a prime cone of $A$.
\item $-1\notin P$ and $\forall a,b\in A:(ab\in P\implies(a\in P\text{ or }-b\in P))$
\item $P\cap-P$ is a prime ideal of $A$
\end{enumerate}
\end{pro}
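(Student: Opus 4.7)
The strategy is to observe that (a) $\iff$ (b) is immediate from Definition \ref{dfprimecone}, and then to prove the substantive equivalence (b) $\iff$ (c) directly. Throughout I will use freely that the hypotheses on $P$ already force $0,1 \in P$ (from $0 \in A = P \cup -P$ and $-0 = 0$; and $1 = 1^2 \in PP \subseteq P$ via $1 \in P \cup -P$ handled by squaring) as well as the useful restatement of the condition in (b): $xy \in P \implies -x \in P$ or $y \in P$, obtained by applying the original form to $(-x)(-y) = xy$.

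For (b) $\implies$ (c), I first verify that $I := P \cap -P$ is an ideal of $A$. It contains $0$, is closed under addition because $P + P \subseteq P$ (apply it to $P$ and to $-P$), and is stable under negation by construction. For absorption, let $a \in A$ and $x \in I$; using $A = P \cup -P$, consider the cases $a \in P$ and $-a \in P$ separately and use $PP \subseteq P$ to check that both $ax$ and $-ax$ land in $P$. Next I show $I$ is prime: $1 \notin I$ since $-1 \notin P$ (given in (b)), so $1 \notin -P$. For the prime condition, suppose $ab \in I$ with $a \notin I$; then either (Case A) $a \in P$, $-a \notin P$, or (Case B) $-a \in P$, $a \notin P$. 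Apply (b) in its two forms to each of $ab \in P$ and $-ab = a(-b) \in P$; in Case A the hypotheses $ab \in P$ and $-ab \in P$ yield $-b \in P$ and $b \in P$ respectively (via the forms of (b) that discard the $-a \in P$ or $a \notin P$ alternatives using $-a \notin P$), hence $b \in I$. Case B is dual. This is the only delicate step, but once one lays out all four consequences of the two forms of (b) applied to $ab \in P$ and $-ab \in P$, the case analysis is mechanical.

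For (c) $\implies$ (b), I first get $-1 \notin P$: otherwise $1 = (-1)^2 \in P$ and $-1 \in P$ give $1 \in P \cap -P$, contradicting that the prime ideal $I$ is proper. For the main implication, assume $ab \in P$ but $a \notin P$ and $-b \notin P$; then $-a \in P$ and $b \in P$ by $A = P \cup -P$, so $-ab = (-a)b \in PP \subseteq P$, hence $ab \in P \cap -P = I$. Primality of $I$ forces $a \in I$ or $b \in I$: the first contradicts $a \notin P$, the second contradicts $-b \notin P$.

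The main obstacle, as noted, is the primality half of (b) $\implies$ (c): one must carefully exploit both directions of the ``prime cone'' condition (the original $xy \in P \implies x \in P \vee -y \in P$ and its $(-x)(-y)$-variant) applied to both $ab$ and $-ab$, and combine this with the trichotomy $a \in P \setminus -P$, $a \in -P \setminus P$, or $a \in P \cap -P$. Everything else is routine bookkeeping with the subset arithmetic of $P$.
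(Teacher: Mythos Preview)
Your proof is correct and follows essentially the same route as the paper: (a)$\iff$(b) by definition, then (b)$\implies$(c) and (c)$\implies$(b) directly, with the same key steps (ideal property of $P\cap-P$ from $PP\subseteq P$ and $P\cup-P=A$; primality using the ``prime cone'' implication; and the converse via $-ab=(-a)b\in P$ forcing $ab\in P\cap-P$). The one minor difference is in the primality argument for (b)$\implies$(c): the paper proves the contrapositive and uses a sign-swap WLOG (from $a,b\notin P\cap-P$ reduce to $a\notin P$, $-b\notin P$, whence $ab\notin P$ by (b) in one shot), whereas you prove the direct form via a two-case analysis applying both versions of (b) to both $ab$ and $-ab$; both arguments are fine.
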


\begin{proof}
\underline{(a)$\iff$(b)} is Definition \ref{dfprimecone}.

\smallskip
\underline{(b)$\implies$(c)}\quad Suppose (b) holds and set $\p:=P\cap-P$. Then $\p$ is obviously a
subgroup of $A$ and we have $A\p=(P\cup-P)\p=P\p\cup-P\p=P(P\cap-P)\cup-P(P\cap-P)
\subseteq(PP\cap-PP)\cup(-PP\cap PP)\subseteq(P\cap-P)\cup(-P\cap P)=P\cap-P=\p$, i.e.,
$\p$ is an ideal of $A$ (if $\frac12\in A$ this follows alternatively from \ref{primeconeispreorder} and
\ref{supportideal}). From $-1\notin P$ we get $1\notin\p$. It remains to show
$\forall a,b\in A\colon(ab\in\p\implies(a\in\p\text{ or }b\in\p))$. To this end, let $a,b\in A$ with $a\notin\p$
and $b\notin\p$. To show: $ab\notin\p$. WLOG $a\notin P$ and $-b\notin P$
(otherwise replace $a$ by $-a$ and/or $-b$ by $b$, taking into account $-\p=\p$).
By hypothesis, we obtain then $ab\notin P$ and thus $ab\notin\p$.

\smallskip
\underline{(c)$\implies$(b)}\quad Suppose (c) holds. Due to $P\cup-P=A$, we have $1\in P$ or
$-1\in P$. If $-1\in P$, then again $1=(-1)(-1)\in PP\subseteq P$. Hence $1\in P$. If we had
$-1\in P$, then $1\in\p:=P\cap-P\in\spec A$ $\lightning$. Thus $-1\notin P$. Let now $a,b\in A$ such that
$a\notin P$ and $-b\notin P$. To show: $ab\notin P$. Because of $P\cup-P=A$, we have $a\in-P$ and
$b\in P$ from which $-ab=(-a)b\in PP\subseteq P$. If we had in addition $ab\in P$, then
$ab\in\p$ and thus $a\in\p\subseteq P$ or $b\in\p\subseteq-P$ $\lightning$.
Hence $ab\notin P$.
\end{proof}

\begin{rem}\label{primeconefield}
If $K$ is a field, then \ref{primeconechar} signifies because of $\spec K=\{(0)\}$ just that the
prime cones of $K$ are exactly the orders of $K$ [$\to$ \ref{unaryrem}].
\end{rem}

\begin{lem}\label{primeconeinfield}
Let $P$ be a prime cone of $A$ and $\p:=P\cap-P$ [$\to$ \ref{primeconechar}(c)]. Then
\[P_\p:=\left\{\frac{\cc a\p}{\cc s\p}\mid a\in A,s\in A\setminus\p,as\in P\right\}\] is an order (i.e., a prime cone
[$\to$ \ref{primeconefield}]) of $\qf(A/\p)$.
\end{lem}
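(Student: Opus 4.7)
The plan is to verify directly that $P_\p$ satisfies the axioms of an order of the field $\qf(A/\p)$ as listed in \ref{unaryrem}(a). First note that the quotient field $\qf(A/\p)$ makes sense, because $\p = P \cap -P$ is a prime ideal of $A$ by \ref{primeconechar}(c). Every element of $\qf(A/\p)$ admits a representative of the form $\cc a\p / \cc s\p$ with $s \notin \p$, and I will consistently work with such representatives. The membership condition $as \in P$ will be the book-keeping device throughout.

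The closure axioms and the trichotomy $P_\p \cup -P_\p = \qf(A/\p)$ reduce to routine calculations that only use that $P$ is a preorder (by \ref{primeconeispreorder}) and that $\p$ is prime. For sums, $\frac{\cc a\p}{\cc s\p} + \frac{\cc b\p}{\cc t\p} = \frac{\cc{at+bs}\p}{\cc{st}\p}$, and if $as, bt \in P$ then $(at+bs)(st) = (as)t^2 + (bt)s^2 \in P$, while $st \notin \p$ by primality. For products, $(ab)(st) = (as)(bt) \in P$. Since $(as)^2 \in P$, squares lie in $P_\p$, so $P_\p$ is indeed a preorder of $\qf(A/\p)$. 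For the trichotomy, given $\cc a\p / \cc s\p$, the identity $A = P \cup -P$ applied to $as$ immediately places the fraction in $P_\p$ or $-P_\p$.

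The one delicate point, which I expect to be the main obstacle, is showing that $P_\p$ is proper, i.e.\ $-1 \notin P_\p$. Suppose for contradiction that $-1 = \cc a\p / \cc s\p$ with $as \in P$ and $s \notin \p$. Then $a + s \in \p$, so multiplying by $s$ and using that $\p$ is an ideal yields $as + s^2 \in \p \subseteq -P$. Combined with $as \in P$, this forces $-s^2 = (-as - s^2) + as \in P$; together with $s^2 \in P$ we obtain $s^2 \in P \cap -P = \p$, and primality of $\p$ forces $s \in \p$, contradicting the choice of $s$. Hence $P_\p$ is a proper preorder of $\qf(A/\p)$ with $P_\p \cup -P_\p = \qf(A/\p)$, and \ref{orderpreorder}~((b)$\Rightarrow$(a)) identifies it as an order of $\qf(A/\p)$, as claimed.
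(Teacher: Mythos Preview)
Your proof is correct and follows essentially the same route as the paper: both verify closure under sum and product and the condition $P_\p\cup-P_\p=\qf(A/\p)$ by the same computations. The only minor difference is that the paper proves $P_\p\cap-P_\p=\{0\}$ directly (a computation structurally identical to your $-1\notin P_\p$ argument, just carried out for a general element), whereas you prove properness and then invoke \ref{orderpreorder}(b)$\Rightarrow$(a), which amounts to citing \ref{propfield} for the equivalence of the two conditions.
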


\begin{proof} To show [$\to$ \ref{unaryrem}(a)]:
\begin{enumerate}[(a)]
\item $P_\p+P_\p\subseteq P_\p$,
\item $P_\p P_\p\subseteq P_\p$,
\item $P_\p\cup-P_\p=\qf(A/\p)$, and
\item $P_\p\cap-P_\p=(0)$.
\end{enumerate}

(a) Suppose that $a,b\in A$ and $s,t\in A\setminus\p$ with $as,bt\in P$ define arbitrary elements
$\frac{\overline a}{\overline s},\frac{\overline b}{\overline t}\in P_\p$.
Then
\[\frac{\cc a\p}{\cc s\p}+\frac{\cc b\p}{\cc t\p}=\frac{\cc{at}\p}{\cc{st}\p}+\frac{\cc{bs}\p}{\cc{st}\p}=
\frac{\cc{at+bs}\p}{\cc{st}\p}\in P_\p,\]
since $at+bs\in A$, $st\in A\setminus\p$ and $(at+bs)st=ast^2+bts^2\in PA^2+PA^2\subseteq PP+PP
\subseteq P+P\subseteq P$.

\smallskip
(b) Let again $a,b\in A$ and $s,t\in A\setminus\p$ satisfy $as,bt\in P$. Then
\[\frac{\cc a\p}{\cc s\p}\frac{\cc b\p}{\cc t\p}=\frac{\cc{ab}\p}{\cc{st}\p}\in P_\p\]
since $ab\in A$, $st\in A\setminus\p$ and $abst=(as)(bt)\in PP\subseteq P$.

\smallskip
(c) Let $a\in A$ and $s\in A\setminus\p$ define an arbitrary element
$\frac{\overline a}{\overline s}\in\qf(A/\p)$. Because of $P\cup-P=A$, we have
$as\in P$ or $-as\in P$, i.e.,
$-\frac{\overline a}{\overline s}=\frac{\overline{-a}}{\overline s}\in P_\p$ or
$\frac{\overline a}{\overline s}\in P_\p$.

\smallskip
(d) Suppose $a,b\in A$ and $s,t\in A\setminus\p$ with $as,bt\in P$ satisfy
\[\frac{\cc a\p}{\cc s\p}=-\frac{\cc b\p}{\cc t\p}.\]
Then $at+bs\in\p$ and therefore $ast^2+bts^2=st(at+bs)\in\p\subseteq-P$, i.e.,
$-ast^2-bts^2\in P$. From $ast^2=(as)t^2\in PA^2\subseteq P$ and $bts^2=(bt)s^2\in PA^2\subseteq P$ we
deduce $-ast^2,-bts^2\in P$. Consequently, $ast^2,bts^2\in\p$ and thus $a,b\in\p$. We obtain
\[\frac{\cc a\p}{\cc s\p}=0=\frac{\cc b\p}{\cc t\p}\] as desired.
\end{proof}

\begin{lem}{}[$\to$ \ref{unary-order}]\label{makeintoprimecone}
Let $(\p,\le)\in\sper A$. Then $\{a\in A\mid\cc a\p\ge0\}$ is a prime cone
of $A$.
\end{lem}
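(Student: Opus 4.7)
The plan is to observe that $P$ is exactly the preimage of the nonnegative cone under the composition $\pi\colon A \to A/\p \hookrightarrow \qf(A/\p)$, which is a ring homomorphism, and to then verify each of the five defining clauses of Definition \ref{dfprimecone} by pulling back the corresponding property from the order $\le$ on $\qf(A/\p)$, using the characterization of an order as a unary subset satisfying $(*)$ from Proposition \ref{unary-order}.

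Concretely, I would first record that $P = \pi^{-1}(Q)$ where $Q := \qf(A/\p)_{\ge 0}$ is the order $\le$ viewed unarily [$\to$ \ref{unary-order}]. Then the closure properties $P+P\subseteq P$ and $PP \subseteq P$ are immediate: for $a,b\in P$, we have $\cc{a+b}\p = \cc a\p + \cc b\p \in Q+Q\subseteq Q$ and $\cc{ab}\p = \cc a\p\cc b\p\in QQ\subseteq Q$ since $\pi$ is a ring homomorphism and $Q$ is an order. Similarly, $P\cup -P = A$ follows from $Q\cup -Q = \qf(A/\p)$, because for any $a\in A$ either $\cc a\p\in Q$ or $-\cc a\p=\cc{-a}\p\in Q$. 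For $-1\notin P$, note that $\cc{-1}\p = -1$ in $\qf(A/\p)$ (this uses $1\notin\p$, which holds because $\p\in\spec A$), and $-1\notin Q$ since $Q$ is an order of the field $\qf(A/\p)$.

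The only slightly less automatic clause is the ``prime cone'' condition: $ab\in P \Longrightarrow (a\in P \text{ or } -b\in P)$. I would verify this directly in the ordered field: assuming $\cc a\p\cc b\p = \cc{ab}\p\in Q$, a case distinction on whether $\cc a\p\in Q$ settles it. If $\cc a\p\in Q$, then $a\in P$. Otherwise, by $Q\cup -Q=\qf(A/\p)$ we have $-\cc a\p\in Q\setminus\{0\}$, and if we also had $-b\notin P$, i.e., $\cc{-b}\p\notin Q$, then $\cc b\p\in Q\setminus\{0\}$; multiplying within the order would give $-\cc a\p\cc b\p\in Q\setminus\{0\}$, so $\cc{ab}\p\in -Q\setminus\{0\}$, contradicting $\cc{ab}\p\in Q$ together with $Q\cap -Q=\{0\}$.

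There is no real obstacle here; the ``main point'' is simply bookkeeping — translating the unary/binary definitions back and forth and noting that $\pi$ is a ring homomorphism. One could alternatively invoke Proposition \ref{primeconechar}(c)$\Longleftrightarrow$(a): the first three closure properties are immediate as above, and $P\cap -P = \pi^{-1}(Q\cap -Q) = \pi^{-1}(\{0\}) = \p$, which is a prime ideal of $A$ by hypothesis, so \ref{primeconechar} yields that $P$ is a prime cone. This second route is arguably cleaner and is the version I would write up.
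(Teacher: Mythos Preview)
Your proposal is correct, and the second route you describe at the end---verifying $P+P\subseteq P$, $PP\subseteq P$, $P\cup-P=A$, and $P\cap-P=\p\in\spec A$, then invoking \ref{primeconechar}(c)---is exactly the paper's proof. The first route (direct verification of all five clauses of \ref{dfprimecone}) also works but is, as you note, less economical.
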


\begin{proof}
Set $P:=\{a\in A\mid\cc a\p\ge0\}$. Then $P+P\subseteq P$, $PP\subseteq P$, $P\cup-P=A$ and
$P\cap-P=\p\in\spec A$. Now $P$ is a prime cone of $A$ by \ref{primeconechar}(c).
\end{proof}

\begin{pro}{}\emph{[$\to$ \ref{unary-order}(c)]}\label{sperprimecones}
The correspondence
\begin{align*}
(\p,\le)&\mapsto\{a\in A\mid\cc a\p\ge0\}\\
(P\cap-P,P_{P\cap-P})&\mapsfrom P
\end{align*}
defines a bijection between $\sper A$ and the set of all prime cones of $A$.
\end{pro}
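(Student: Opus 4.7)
The plan is to observe that Lemmas \ref{makeintoprimecone} and \ref{primeconeinfield} already show both maps are well-defined; so it only remains to check that they are mutual inverses. The strategy is to unwind both compositions directly using the definitions.

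First I would tackle the composition starting from $(\p,\le)\in\sper A$. Setting $P:=\{a\in A\mid\cc a\p\ge0\}$, antisymmetry of $\le$ immediately gives $P\cap-P=\{a\in A\mid\cc a\p=0\}=\p$, so the support is recovered. For the order, unfold the definition: $P_\p$ consists of the quotients $\cc a\p/\cc s\p$ with $s\notin\p$ and $as\in P$, i.e.\ $\cc a\p\cc s\p\ge0$ in $\qf(A/\p)$; multiplying both sides by the square $1/\cc s\p^2>0$ this is equivalent to $\cc a\p/\cc s\p\ge0$. Hence $P_\p$ coincides with the set of nonnegative elements of $\qf(A/\p)$ under $\le$, which by \ref{unary-order}(c) determines $\le$ uniquely.

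Next I would do the other composition. Given a prime cone $P$, set $\p:=P\cap-P$ and $Q:=P_\p$, and verify that $\widetilde P:=\{a\in A\mid\cc a\p\ge_Q0\}$ equals $P$. The inclusion $P\subseteq\widetilde P$ is immediate: for $a\in P$ write $\cc a\p=\cc a\p/\cc 1\p$ with $a\cdot 1=a\in P$, so $\cc a\p\in Q$. For the reverse inclusion, let $a\in\widetilde P$, so $\cc a\p=\cc b\p/\cc s\p$ for some $b\in A$, $s\in A\setminus\p$ with $bs\in P$. Then $as-b\in\p=P\cap-P\subseteq P$, so $(as-b)s\in P$, and consequently
\[
as^2=(as-b)s+bs\in P+P\subseteq P.
\]
The main (mildly subtle) step is to deduce $a\in P$ from $as^2\in P$. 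Here I invoke the prime cone property of Definition \ref{dfprimecone}: $as^2\in P$ forces $a\in P$ or $-s^2\in P$. In the latter case, $s^2\in A^2\subseteq P$ together with $-s^2\in P$ yields $s^2\in\p$, and primeness of the ideal $\p$ (\ref{primeconechar}(c)) gives $s\in\p$, contradicting the choice of $s$. Hence $a\in P$.

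The only place any real work is required is this last implication $as^2\in P\Rightarrow a\in P$; everything else is routine bookkeeping of the definitions of $P_\p$ and of the prime cone $\{a\mid\cc a\p\ge0\}$. Once both compositions are confirmed to be the identity, bijectivity follows.
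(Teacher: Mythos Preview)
Your proof is correct and follows essentially the same route as the paper: well-definedness from the two preparatory lemmas, then verification that both compositions are the identity, with the only nontrivial step being the deduction $as^2\in P\Rightarrow a\in P$ via the prime cone axiom and primeness of $\p$. One small remark: in the line ``$as-b\in\p=P\cap-P\subseteq P$, so $(as-b)s\in P$'' you are implicitly using that $\p$ is an \emph{ideal} (so $(as-b)s\in\p\subseteq P$), not merely that $as-b\in P$; the paper makes this explicit by writing $as^2\in bs+\p\subseteq P+P\subseteq P$.
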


\begin{proof}
The well-definedness of both maps follows from Lemmata \ref{primeconeinfield} and \ref{makeintoprimecone}.
Now first let $(\p,\le)\in\sper A$ and $P:=\{a\in A\mid\cc a\p\ge0\}$. We show
$(\p,\le)=(P\cap-P,P_{P\cap-P})$. It is clear that $\p=P\cap-P$. Finally,
\begin{align*}
P_{P\cap-P}=P_\p
&=\left\{\frac{\cc a\p}{\cc s\p}\mid a\in A,s\in A\setminus\p,as\in P\right\}\\
&=\left\{\frac{\cc a\p}{\cc s\p}\mid a\in A,s\in A\setminus\p,\cc{as}\p\ge0\right\}\\
&=\left\{\frac{\cc a\p}{\cc s\p}\mid a\in A,s\in A\setminus\p,\frac{\cc a\p}{\cc s\p}\ge0\right\}=
\{x\in\qf(A/\p)\mid x\ge0\}.
\end{align*}
Conversely, suppose that $P$ is a prime cone of $A$ and $\p:=P\cap-P$. We show
\[P=\{a\in A\mid\cc a\p\in P_\p\}.\] Here ``$\subseteq$'' is trivial. To show ``$\supseteq$'', let $a\in A$
such that $\cc a\p\in P_\p$. Then there are $b\in A$ and $s\in A\setminus\p$  such that $bs\in P$ and
$\overline a=\frac{\overline b}{\overline s}$. It follows that $\cc{as^2}\p=\cc{bs}\p$ and thus
$as^2\in bs+\p\subseteq P+\p\subseteq P+P\subseteq P$. Since $P$ is a prime cone, we deduce $a\in P$
or $-s^2\in P$. If we had $-s^2\in P$, then $s^2\in P\cap-P=\p$ (since $s^2\in A^2\subseteq P$) and therefore
$s\in\p\ \lightning$.
\end{proof}

\begin{rem}{}[$\to$ \ref{unaryrem}]\label{newlang}
As a result of \ref{sperprimecones}, we can see elements of the real spectrum as prime cones. We 
reformulate some of the above in this new language:
\begin{enumerate}[(a)]
\item Remark \ref{sperfunctor}: Let $\ph\colon A\to B$ be a ring homomorphism. Then $\ph$ induces the map
$\sper\ph\colon\sper B\to\sper A,\ Q\mapsto\ph^{-1}(Q)$. Suppose namely that $Q\in\sper B$,
$\q:=Q\cap-Q$, $P:=\ph^{-1}(Q)$ and $\p:=P\cap-P$. Then $\ph^{-1}(\q)=\ph^{-1}(Q)\cap-\ph^{-1}(Q)=
P\cap-P=\p$ and the embedding $\qf(A/\p)\hookrightarrow\qf(B/\q)$ induced by $\ph$ is an embedding of
ordered fields $(\qf(A/\p),P_\p)\hookrightarrow(\qf(B/\q),Q_\q)$ because for $a\in A$ and $s\in A\setminus\p$
with $as\in P$ we have $\ph(a)\in B$, $\ph(s)\in B\setminus\q$, $\ph(a)\ph(s)=\ph(as)\in\ph(P)\subseteq Q$.
\item Definition \ref{supportmap}: The support map is $\supp\colon\sper A\to\spec A,\ P\mapsto P\cap-P$
[$\to$ \ref{sperprimecones}]. In particular, the Definitions \ref{supportmap} and \ref{supportideal} are
compatible.
\end{enumerate}
\end{rem}

\begin{df}\label{realrep}
For every $(\p,\le)\in\sper A$, we call the real closed field \[R_{(\p,\le)}:=\overline{(\qf(A/\p),\le)}\]
the \emph{representation field} of $(\p,\le)$ and the ring homomorphism
\[\rh_{(\p,\le)}\colon A\to R_{(\p,\le)},\ a\mapsto\cc a\p\]
the \emph{representation} of $(\p,\le)$.
\end{df}

\begin{pro}\label{kernelrep}
Let $P\in\sper A$. Then $P=\rh_P^{-1}(R_P^2)$ and $\supp P=\ker\rh_P$.
\end{pro}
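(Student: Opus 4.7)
The plan is to check both equalities directly by unwinding the definitions of the support map, the representation field $R_P$, the representation homomorphism $\rho_P$, and by invoking the bijection of Proposition~\ref{sperprimecones}. There is no real obstacle here; the main point is simply to track how $P$, $P_\p$ and $R_P^2$ correspond.

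\textbf{Support equality.} Write $\p:=\supp P=P\cap-P$ (using the new language from Remark~\ref{newlang}(b)). By Definition~\ref{realrep}, $\rho_P$ is the composite
\[
A\twoheadrightarrow A/\p\hookrightarrow\qf(A/\p)\hookrightarrow R_P,
\]
and the last two arrows are injective. Hence $\ker\rho_P$ equals the kernel of the first arrow, which is $\p=\supp P$.

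\textbf{Preimage equality.} Since $R_P$ is real closed, it is Euclidean by Definition~\ref{dfrealclosed}, so by Remark~\ref{euclidunique} its unique order is $R_P^2=(R_P)_{\ge 0}$. Moreover, the real closure construction (Definition~\ref{dfrealclosure}) turns $\qf(A/\p)\hookrightarrow R_P$ into an embedding of ordered fields with respect to the order $P_\p$ on $\qf(A/\p)$ given by Lemma~\ref{primeconeinfield}. Therefore, for $a\in A$,
\[
\rho_P(a)\in R_P^2\iff\rho_P(a)\ge 0\text{ in }R_P\iff\overline{a}^{\p}\ge 0\text{ in }\qf(A/\p)\iff\overline{a}^{\p}\in P_\p.
\]

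To close the loop, I would then invoke the bijection of Proposition~\ref{sperprimecones}: recovering $P$ from the pair $(\p,P_\p)$ yields exactly $P=\{a\in A\mid\overline{a}^{\p}\in P_\p\}$. Combining this with the chain of equivalences above gives $P=\rho_P^{-1}(R_P^2)$, completing the proof.
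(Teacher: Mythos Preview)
Your proof is correct and follows essentially the same route as the paper: the key step, establishing $P=\rho_P^{-1}(R_P^2)$ via the chain $\rho_P(a)\ge0\iff\overline a^{\,\p}\in P_\p$ together with Proposition~\ref{sperprimecones}, is identical. The only cosmetic difference is that you prove $\supp P=\ker\rho_P$ directly from the factorization of $\rho_P$, whereas the paper instead derives it afterwards from the first equality via $\supp P=P\cap-P=\rho_P^{-1}(R_P^2)\cap-\rho_P^{-1}(R_P^2)=\rho_P^{-1}(\{0\})$.
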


\begin{proof}
$\rh_P^{-1}(R_P^2)=\{a\in A\mid\rh_P(a)\ge0\text{ in }R_P\}=\{a\in A\mid\cc a{\supp P}\in P_{\supp P}\}
\overset{\ref{sperprimecones}}=P$ and therefore \[\supp P=P\cap-P=\rh_P^{-1}(R_P^2)\cap-\rh_P^{-1}(R_P^2)
=\rh_P^{-1}(R_P^2\cap-R_P^2)=\rh_P^{-1}(\{0\})=\ker\rh_P.\]
\end{proof}

\begin{pro}{}\emph{[$\to$ \ref{specfunctor}]}\label{sperviahom}
Let $P$ be a set. Then the following are equivalent:
\begin{enumerate}[\normalfont(a)]
\item $P\in\sper A$
\item There is an ordered field $(K,\le)$ and a ring homomorphism $\ph\colon A\to K$ such that
$P=\ph^{-1}(K_{\ge0})$.
\item There exists a real closed field $R$ and a ring homomorphism $\ph\colon A\to R$ such that
$P=\ph^{-1}(R^2)$.
\end{enumerate}
\end{pro}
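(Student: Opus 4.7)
The plan is to show a cycle of implications: (a)$\Rightarrow$(c)$\Rightarrow$(b)$\Rightarrow$(a).

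For (a)$\Rightarrow$(c), I will simply take $R:=R_P$ and $\ph:=\rh_P$ from Definition \ref{realrep}. Then $R$ is real closed by construction, $\rh_P$ is a ring homomorphism, and by Proposition \ref{kernelrep} we have $P=\rh_P^{-1}(R_P^2)$, which is exactly what is required.

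For (c)$\Rightarrow$(b), I will observe that a real closed field $R$ is in particular an Euclidean ordered field [$\to$ \ref{dfrealclosed}, \ref{convention}], and that its unique order satisfies $R_{\ge 0}=R^2$ [$\to$ \ref{notremsqrt}(b)]. So I just take $(K,\le):=R$ with its canonical order and the same $\ph$; then $\ph^{-1}(K_{\ge0})=\ph^{-1}(R^2)=P$.

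The main work is in (b)$\Rightarrow$(a). Suppose $\ph\colon A\to K$ is a ring homomorphism into an ordered field $(K,\le)$ and $P=\ph^{-1}(K_{\ge0})$. I first let $\p:=\ker\ph$; since $K$ is a field (hence an integral domain), $\p\in\spec A$ by the standard argument in \ref{specfunctor}. The homomorphism theorem then produces an embedding $A/\p\hookrightarrow K$, which extends uniquely to a field embedding $\io\colon\qf(A/\p)\hookrightarrow K$. Pulling back the order on $K$ through $\io$ yields an order $\le'$ on $\qf(A/\p)$, so $(\p,\le')\in\sper A$. It remains to verify that $P$ equals the prime cone associated to $(\p,\le')$ under the bijection of \ref{sperprimecones}, i.e., that
\[
P=\{a\in A\mid\cc a\p\ge' 0\}.
\]
This is just unwinding the definitions: for $a\in A$, we have $a\in P\iff\ph(a)\ge0\iff\io(\cc a\p)\ge0\iff\cc a\p\ge'0$, where the last equivalence holds because $\io$ is an embedding of ordered fields.

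I do not expect any real obstacle: the entire proposition is essentially a reformulation of the already-established Proposition \ref{sperprimecones} together with the existence of the real closure \ref{existsrc}. The only point requiring mild care is to ensure in (b)$\Rightarrow$(a) that the induced order $\le'$ on $\qf(A/\p)$ is well-defined (which is automatic because $\io$ is a field embedding, hence injective and order-reflecting when we define $\le'$ as the pullback).
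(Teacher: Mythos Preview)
Your proof is correct and follows the same cycle (a)$\Rightarrow$(c)$\Rightarrow$(b)$\Rightarrow$(a) as the paper, with identical arguments for (a)$\Rightarrow$(c) and (c)$\Rightarrow$(b). For (b)$\Rightarrow$(a) the paper simply cites the functoriality of $\sper$ from \ref{newlang}(a) (since $K_{\ge0}\in\sper K$ by \ref{primeconefield}, one has $P=\ph^{-1}(K_{\ge0})=(\sper\ph)(K_{\ge0})\in\sper A$), whereas you unwind this functoriality explicitly via $\qf(A/\p)$ and \ref{sperprimecones}; both amount to the same thing.
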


\begin{proof}
(a)$\overset{\ref{kernelrep}}\implies$(c)$\overset{\text{trivial}}\implies$(b)$\overset{\text{\ref{newlang}(a)}}\implies$(a)
\end{proof}

\section{Preorders and maximal prime cones}

Throughout this section, let $A$ be a commutative ring.

\begin{pro}\label{preorderprimecone}
Let $T$ be a proper preorder of $A$ \emph{[$\to$ \ref{defpreorder}]}.
Then the following are equivalent:
\begin{enumerate}[\normalfont(a)]
\item $T$ is a prime cone of $A$.
\item $\forall a,b\in A:(ab\in T\implies(a\in T\text{ or }-b\in T))$
\end{enumerate}
\end{pro}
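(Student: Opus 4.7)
The plan is to show that (a)$\Longrightarrow$(b) is immediate from the definition of a prime cone (\ref{dfprimecone}), since condition (b) is literally one of the clauses in that definition. So the whole content lies in the implication (b)$\Longrightarrow$(a).

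For (b)$\Longrightarrow$(a), assume $T$ is a proper preorder satisfying (b). I want to verify all five requirements of \ref{dfprimecone}. Four of them come for free from the hypotheses: $T+T\subseteq T$ and $TT\subseteq T$ are part of being a preorder (\ref{defpreorder}), $-1\notin T$ is properness, and the prime-cone condition $\forall a,b\in A\colon(ab\in T\Rightarrow(a\in T\text{ or }-b\in T))$ is exactly assumption (b). The only nontrivial thing left is the linearity clause $T\cup-T=A$.

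To establish $T\cup-T=A$, let $a\in A$ be arbitrary. Since $T$ is a preorder, $A^2\subseteq T$, so in particular $a^2\in T$. Now write $a^2=a\cdot a$ and apply (b) with both factors equal to $a$: we conclude $a\in T$ or $-a\in T$. Thus $A\subseteq T\cup-T$, and the reverse inclusion is obvious.

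There is no real obstacle here; the ``trick'' is simply the observation that feeding $ab=a\cdot a$ into condition (b) forces the linearity of the relation, using only that squares lie in $T$. One should just double-check that (b) is stated symmetrically enough so that the conclusion $a\in T\text{ or }-a\in T$ (rather than something like $a\in T\text{ or }-a\in T$ with the wrong sign) actually drops out; with $b:=a$, the conclusion reads ``$a\in T$ or $-a\in T$,'' which is what is needed.
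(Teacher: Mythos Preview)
Your proof is correct and follows essentially the same approach as the paper's: the only nontrivial point is $T\cup -T=A$, and both you and the paper obtain it by applying (b) to $a\cdot a=a^2\in T$ (using $A^2\subseteq T$ from the preorder axiom) to conclude $a\in T$ or $-a\in T$. Your write-up is slightly more explicit about which axioms of a prime cone come for free, but the argument is the same.
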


\begin{proof}
\underline{(a)$\implies$(b)} is trivial by Definition \ref{dfprimecone}.

\smallskip\underline{(b)$\implies$(a)}\quad Suppose (b) holds. By Definition \ref{dfprimecone},
it suffices to show $T\cup-T=A$. But for all $a\in A$ it follows from (b) that
$a\in T$ or $-a\in T$ because of $aa=a^2\in T$.
\end{proof}

\begin{thm}{}\emph{[$\to$ \ref{orderpreorder}]}\label{maxpreorder}
Suppose $T$ is a maximal proper preorder
\emph{[$\to$ \ref{defpreorder}]} of $A$. Then $T$ is a prime cone of $A$.
\end{thm}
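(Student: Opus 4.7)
The plan is to use Proposition \ref{preorderprimecone}, so it suffices to verify the implication $ab\in T\implies(a\in T\text{ or }-b\in T)$ for all $a,b\in A$. I will prove this by contradiction along exactly the same lines as the field-case argument in \ref{orderpreorder}(c)$\implies$(a), with $a$ and $-b$ playing the role of the two elements that must be shown to lie in $T$.

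First I would fix $a,b\in A$ with $ab\in T$ and assume, for contradiction, that $a\notin T$ and $-b\notin T$. By Lemma \ref{againpreorder}, both $T+aT$ and $T-bT=T+(-b)T$ are preorders of $A$, and they properly contain $T$ (since $a=0+a\cdot1\in T+aT\setminus T$ and $-b=0+(-b)\cdot1\in T-bT\setminus T$). By the maximality of $T$ among proper preorders, neither $T+aT$ nor $T-bT$ is proper, so there exist $s,s',t,t'\in T$ with
\[
-1=s+as'\qquad\text{and}\qquad -1=t-bt'.
\]

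Next I would extract the two key consequences: $-as'=1+s\in T$, so $as'\in -T$, and $bt'=1+t\in T$. Multiplying gives
\[
-abs't'=(-as')(bt')=(1+s)(1+t)\in T,
\]
i.e.\ $abs't'\in -T$. On the other hand, the assumption $ab\in T$ together with $s't'\in T$ yields $abs't'\in T$. Thus $abs't'\in T\cap-T$, which means $-(1+s+t+st)\in T$.

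Finally, writing $-1=-(1+s+t+st)+s+t+st\in T+T+T+T\subseteq T$ gives $-1\in T$, contradicting the properness of $T$. The proof is essentially a direct transcription of \ref{orderpreorder}(c)$\implies$(a); the only subtle point (which is the main ``obstacle'', though it is mild) is recognizing that one does \emph{not} need the support $T\cap-T$ to be an ideal (we lack $\frac12\in A$ in general), because the desired contradiction $-1\in T$ already pops out directly from the identity $(1+s)(1+t)\in T\cap-T$ by adding back $s+t+st\in T$.
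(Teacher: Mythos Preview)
Your proof is correct and follows essentially the same approach as the paper's. The only cosmetic difference is that the paper argues by contrapositive (assuming $a\notin T$ and $-b\notin T$, it derives $-1\in T+abT$ and concludes $ab\notin T$), whereas you assume $ab\in T$ from the outset and reach the contradiction $-1\in T$ directly; the underlying computation $(1+s)(1+t)=-abs't'$ is identical.
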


\begin{proof} We show \ref{preorderprimecone}(b). For this purpose let $a,b\in A$ satisfy
$a\notin T$ and $-b\notin T$. Then $T+aT$ and $T-bT$ are preorders of $A$ [$\to$ \ref{againpreorder}] that
properly contain $T$. Due to the maximality of $T$, therefore neither $T+aT$ nor $T-bT$ is proper as a
preorder, i.e., $-1\in T+aT$ and $-1\in T-bT$. Choose $s,t\in T$ such that $-as\in1+T$ and $bt\in1+T$.
Then $-abst\in(1+T)(1+T)\subseteq1+T$ and thus $-1\in T+abst\subseteq T+abT$. Since $T$ is proper,
we conclude that $ab\notin T$ as desired.
\end{proof}

\begin{cor}\label{inmaxprimecone}
Every proper preorder of $A$ is contained in a maximal prime cone of $A$.
\end{cor}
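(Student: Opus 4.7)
The plan is to apply Zorn's lemma to the poset of proper preorders of $A$ containing the given proper preorder $T$, ordered by inclusion, and then to invoke Theorem \ref{maxpreorder} together with the fact that prime cones are proper preorders (Proposition \ref{primeconeispreorder}) to conclude that the maximal element we find is actually a maximal prime cone.

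More concretely, I would first verify that Zorn's lemma applies. The poset is nonempty since $T$ belongs to it. Given a nonempty chain $\mathcal C$ of proper preorders containing $T$, the union $U:=\bigcup\mathcal C$ is again a preorder: $A^2\subseteq T\subseteq U$, and the conditions $U+U\subseteq U$ and $UU\subseteq U$ follow from the fact that any two elements of $U$ already lie together in a single member of the chain. Moreover $U$ is proper, since otherwise $-1\in U$ would force $-1$ to lie in some member of $\mathcal C$, contradicting that each of these is proper. Hence every chain has an upper bound, and Zorn's lemma yields a maximal element $T'$ of this poset.

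By Theorem \ref{maxpreorder}, $T'$ is a prime cone of $A$, and clearly $T\subseteq T'$. It remains to check that $T'$ is maximal as a prime cone. Let $P$ be any prime cone of $A$ with $T'\subseteq P$. By Proposition \ref{primeconeispreorder}, $P$ is a proper preorder of $A$, and it obviously contains $T$. The maximality of $T'$ in the poset of proper preorders containing $T$ then forces $P=T'$, which establishes the desired maximality.

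The only mildly delicate point is the verification that the union of a chain of proper preorders is again a proper preorder (in particular that properness is preserved); everything else is formal. No serious obstacle is expected.
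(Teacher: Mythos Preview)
Your proof is correct and follows exactly the approach the paper intends: the paper's proof reads simply ``Use \ref{maxpreorder} and Zorn's lemma,'' and you have spelled this out in full detail. The only point worth making explicit is that a maximal element $T'$ of your poset is automatically a maximal proper preorder of $A$ (since any proper preorder containing $T'$ also contains $T$), which is what \ref{maxpreorder} requires; but this is trivial and implicit in your argument anyway.
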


\begin{proof} Use \ref{maxpreorder} and Zorn's lemma.
\end{proof}

\begin{pro}\label{primeconeinclusion}
Let $P,Q\in\sper A$ such that $P\subseteq Q$ and set $\q:=\supp Q$. Then $Q=P\cup\q$.
\end{pro}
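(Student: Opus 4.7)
The plan is to verify both inclusions separately, with the reverse inclusion being entirely trivial and the forward inclusion following immediately from the key property $P \cup -P = A$ of a prime cone.

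For the inclusion $Q \supseteq P \cup \q$, I would simply note that $P \subseteq Q$ is the hypothesis, while $\q = Q \cap -Q \subseteq Q$ by definition of the support [$\to$ \ref{primeconechar}(c), \ref{newlang}(b)]. Hence $P \cup \q \subseteq Q$.

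For the inclusion $Q \subseteq P \cup \q$, let $a \in Q$ be arbitrary. If $a \in P$, we are done. Otherwise $a \notin P$, and since $P$ is a prime cone we have $P \cup -P = A$ [$\to$ \ref{dfprimecone}], so $-a \in P \subseteq Q$. Combined with $a \in Q$, this gives $a \in Q \cap -Q = \q$. Therefore $a \in P \cup \q$ in either case.

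There is no real obstacle here; the entire content is the observation that $P \cup -P = A$ forces every element of $Q$ that fails to lie in $P$ to satisfy $-a \in P \subseteq Q$, which puts $a$ into the support of $Q$. The statement is essentially a restatement of the fact that between two nested prime cones of $A$, the larger one is obtained from the smaller by adjoining (some portion of) the support of the larger.
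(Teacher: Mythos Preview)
Your proof is correct and follows exactly the same approach as the paper's proof: the reverse inclusion is trivial, and for the forward inclusion you take $a\in Q\setminus P$, use $P\cup -P=A$ to get $-a\in P\subseteq Q$, and conclude $a\in Q\cap -Q=\q$. The paper's version is just slightly more terse.
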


\begin{proof}
``$\supseteq$'' is trivial.

``$\subseteq$'' Let $a\in Q\setminus P$. To show: $a\in\q$. From $-a\in P\subseteq Q$ we get
$a\in Q\cap-Q=\q$.
\end{proof}

\begin{proterm}\label{spear}
Let $P\in\sper A$. Then ``the spear'' \[\{Q\in\sper A\mid P\subseteq Q\}\] is a chain
in the partially ordered set $\sper A$ that possesses a largest element (``a spearhead'').
\end{proterm}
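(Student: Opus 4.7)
The plan is to prove two things separately: that the set $\mathcal{S}:=\{Q\in\sper A\mid P\subseteq Q\}$ is totally ordered by inclusion (the chain property), and that $\mathcal{S}$ admits a largest element (the spearhead). Once the chain property is established, any maximal element of $\mathcal{S}$ is automatically the largest, so for the existence part it suffices to invoke Zorn's lemma.

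For the chain property, I would argue by contradiction. Suppose $Q_1,Q_2\in\mathcal{S}$ are incomparable, and pick $a\in Q_1\setminus Q_2$ and $b\in Q_2\setminus Q_1$. Since $P\subseteq Q_2$, the element $a\notin Q_2$ cannot lie in $P$, so $-a\in P$ by $P\cup-P=A$; symmetrically $-b\in P$. In particular $-a\in P\subseteq Q_2$ combines with $b\in Q_2$ to give $b-a\in Q_2$, and $-b\in P\subseteq Q_1$ combines with $a\in Q_1$ to give $a-b\in Q_1$. Now $A=P\cup-P$ forces either $a-b\in P$ or $b-a\in P$. In the first case, $a-b\in P\subseteq Q_2$ together with $b-a\in Q_2$ places $a-b\in Q_2\cap -Q_2=\supp Q_2$, hence $a=(a-b)+b\in Q_2$, contradicting $a\notin Q_2$. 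The other case is symmetric and contradicts $b\notin Q_1$. This proves that any two elements of $\mathcal{S}$ are comparable.

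For the existence of a largest element, I would verify the hypotheses of Zorn's lemma on $\mathcal{S}$. The set $\mathcal{S}$ is nonempty, as $P\in\mathcal{S}$. Given a nonempty chain $\mathcal{C}\subseteq\mathcal{S}$, I would set $Q^*:=\bigcup\mathcal{C}$ and check routinely that $Q^*$ satisfies the defining conditions of a prime cone from \ref{dfprimecone}: the conditions $Q^*+Q^*\subseteq Q^*$, $Q^*Q^*\subseteq Q^*$, and the prime cone implication $ab\in Q^*\Rightarrow a\in Q^*$ or $-b\in Q^*$ all reduce to finding a single member of $\mathcal{C}$ containing the finitely many elements involved, which is possible because $\mathcal{C}$ is a chain; the conditions $Q^*\cup-Q^*=A$ and $-1\notin Q^*$ are immediate from the corresponding properties of each $Q\in\mathcal{C}$. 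So $Q^*\in\sper A$, and $P\subseteq Q^*$ since $\mathcal{C}$ is nonempty, giving $Q^*\in\mathcal{S}$ as an upper bound. Zorn then yields a maximal element of $\mathcal{S}$, which by the chain property proved above must be the (unique) largest element of $\mathcal{S}$.

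The only real obstacle is the chain property; once phrased as above, the key trick is to recognize that $P\subseteq Q_i$ lets one move the ``positive part'' $-a,-b$ between the two prime cones freely and combine these with $a,b$ to land inside a support, where the ideal property finishes the argument. The Zorn step is standard and presents no difficulty.
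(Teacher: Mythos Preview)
Your proof is correct and uses the same underlying ideas as the paper. For the chain property, the paper's argument is slightly more streamlined: instead of a contradiction with $a\in Q_1\setminus Q_2$ and $b\in Q_2\setminus Q_1$, it fixes $a\in Q_1\setminus Q_2$ and shows directly that every $b\in Q_2$ lies in $Q_1$ (since $a-b\notin Q_2\supseteq P$ forces $b-a\in P\subseteq Q_1$, whence $b\in Q_1$)---no detour through $\supp Q_2$ is needed. For the spearhead, the paper simply invokes Corollary~\ref{inmaxprimecone} (every proper preorder lies in a maximal prime cone), which already packages the Zorn argument you spell out; your direct verification that a union of a chain of prime cones is again a prime cone is fine, just redundant given that earlier result.
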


\begin{proof}
Let $Q_1,Q_2\in\sper A$ with $P\subseteq Q_1$ and $P\subseteq Q_2$. Suppose
$Q_1\not\subseteq Q_2$. To show: $Q_2\subseteq Q_1$. Choose $a\in Q_1\setminus Q_2$. Let $b\in Q_2$.
To show $b\in Q_1$. We have $a-b\notin Q_2$ (or else $a\in Q_2\ \lightning$) and thus $a-b\notin P$ because of
$P\subseteq Q_2$. Then $b-a\in P\subseteq Q_1$ and thus
$b\in Q_1$. The existence
of the ``spearhead'' follows now from \ref{inmaxprimecone}.
\end{proof}

\section{Quotients and localization}

Throughout this section, we let $A$ be a commutative ring.

\begin{pro} \alal{Preimages}{Images} of preorders \emph{[$\to$ \ref{defpreorder}]}
under \alal{homomorphisms}{epimorphisms} of 
commutative rings are again preorders.
\end{pro}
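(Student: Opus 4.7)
The plan is to verify the three defining axioms of a preorder (containment of all squares, closure under addition, closure under multiplication, see Definition \ref{defpreorder}) for each of the two constructions separately. Both verifications are essentially bookkeeping: ring homomorphisms respect addition, multiplication, and squaring, so they transport the relevant closure properties back and forth.

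For the preimage direction, I would take an arbitrary ring homomorphism $\varphi\colon A\to B$ and a preorder $T\subseteq B$, and set $S:=\varphi^{-1}(T)$. To see $A^2\subseteq S$, for $a\in A$ observe $\varphi(a^2)=\varphi(a)^2\in B^2\subseteq T$. Closure under addition and multiplication follows from $\varphi(a+b)=\varphi(a)+\varphi(b)$ and $\varphi(ab)=\varphi(a)\varphi(b)$ together with $T+T\subseteq T$ and $TT\subseteq T$. Surjectivity of $\varphi$ is \emph{not} needed here.

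For the image direction, let $\varphi\colon A\to B$ be surjective and $T\subseteq A$ a preorder. The closure of $\varphi(T)$ under addition and multiplication is immediate from the corresponding closures of $T$ and the fact that $\varphi$ is a ring homomorphism: if $s,t\in T$, then $\varphi(s)+\varphi(t)=\varphi(s+t)\in\varphi(T)$ and analogously for products. The one step where surjectivity actually enters, and which is the only genuinely non-trivial point, is verifying $B^2\subseteq\varphi(T)$: for $b\in B$ pick $a\in A$ with $\varphi(a)=b$ (possible by surjectivity), and then $b^2=\varphi(a)^2=\varphi(a^2)\in\varphi(A^2)\subseteq\varphi(T)$.

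There is no real obstacle; the only thing worth flagging is why the surjectivity hypothesis is necessary in the image direction (and only there): without it one cannot guarantee that every square in $B$ is hit by a square of $A$, so $B^2\subseteq\varphi(T)$ can fail. A one-line remark to this effect, plus the six short verifications above, will complete the proof.
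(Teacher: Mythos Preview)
Your proof is correct and is exactly the routine verification the paper has in mind; the paper itself simply writes ``Exercise'' for this proposition, and your six short checks (three axioms for each direction, with surjectivity used precisely where it must be) are the intended solution.
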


\begin{proof} Exercise.
\end{proof}

\begin{pro} Let $I$ be an ideal of $A$. The correspondence
\begin{align*}
T&\mapsto\cc TI:=\{\cc aI\mid a\in T\}\\
\{a\in A\mid\cc aI\in P\}&\mapsfrom P
\end{align*}
defines a bijection between the set of
\alal{preorders \emph{[$\to$ \ref{defpreorder}]}}
{prime cones \emph{[$\to$ \ref{dfprimecone}]}} $T$ of $A$ with
$I\subseteq T$ and the set of \alal{preorders}{prime cones} of $A/I$.
\end{pro}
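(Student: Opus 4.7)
The plan is to verify that the two displayed maps $\Phi\colon T\mapsto\cc TI$ and $\Psi\colon P\mapsto\{a\in A\mid\cc aI\in P\}$ are well-defined on both the preorder and the prime cone level, and that they are mutually inverse. The preceding proposition already does most of the heavy lifting: since the quotient map $\pi\colon A\to A/I$ is a ring epimorphism, $\Phi$ sends preorders of $A$ to preorders of $A/I$, and $\Psi$ (being the preimage under a homomorphism) sends preorders of $A/I$ to preorders of $A$. Moreover $\Psi(P)\supseteq\pi^{-1}(\{0\})=I$ automatically, because $0=0^2$ lies in every preorder and hence in every prime cone.

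First I would treat the prime cone clause of the forward map. Assume $T$ is a prime cone with $I\subseteq T$. Surjectivity of $\pi$ together with $T\cup-T=A$ gives $\cc TI\cup-\cc TI=A/I$. If $\cc{-1}I\in\cc TI$, pick $t\in T$ with $\cc tI=\cc{-1}I$; then $t+1\in I\subseteq T$, so $-1=(-(t+1))+t\in(-I)+T\subseteq T+T\subseteq T$ (using $-I=I\subseteq T$), contradicting $-1\notin T$. For the prime property, if $\cc aI\cc bI=\cc{ab}I\in\cc TI$, choose $t\in T$ with $ab-t\in I\subseteq T$; then $ab=t+(ab-t)\in T$, so by the prime cone axiom for $T$, either $a\in T$ or $-b\in T$, i.e.\ either $\cc aI\in\cc TI$ or $-\cc bI\in\cc TI$. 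The backward direction is easier: if $P$ is a prime cone of $A/I$, the usual pullback argument (applied to $\pi$) shows $\Psi(P)$ is a prime cone of $A$, and we already noted $I\subseteq\Psi(P)$.

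Finally I would check $\Psi\circ\Phi=\id$ and $\Phi\circ\Psi=\id$. Given a preorder (or prime cone) $T$ of $A$ with $I\subseteq T$, we have $\Psi(\Phi(T))=\{a\in A\mid\exists t\in T:\cc aI=\cc tI\}$. The inclusion $T\subseteq\Psi(\Phi(T))$ is immediate. Conversely, if $\cc aI=\cc tI$ with $t\in T$, then $a-t\in I\subseteq T$, so $a=t+(a-t)\in T+T\subseteq T$; this is the one and only place where the hypothesis $I\subseteq T$ is indispensable. For $\Phi\circ\Psi$, given a preorder $P$ of $A/I$, an element $\cc aI\in A/I$ lies in $\Phi(\Psi(P))$ precisely when there is $b\in A$ with $\cc bI\in P$ and $\cc aI=\cc bI$, i.e.\ precisely when $\cc aI\in P$.

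The only genuinely substantive step is the verification that the hypothesis $I\subseteq T$ is both necessary and sufficient for the round-trip $\Psi\circ\Phi$ to return $T$; everything else is formal manipulation with the definitions and the quotient map, and the preceding proposition takes care of the purely preorder-theoretic content. So I expect no real obstacle — just the bookkeeping of separating the preorder statement from the strictly stronger prime cone statement.
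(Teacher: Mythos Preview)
Your argument is correct and complete; the paper itself leaves this proposition as an exercise, so there is no proof to compare against. What you have written is exactly the standard verification one would expect, with the key observation (that $I\subseteq T$ is precisely what makes $\Psi\circ\Phi$ return $T$) correctly identified.
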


\begin{proof}
Exercise.
\end{proof}

\begin{lem}\label{preorderlocalize}
Let $S\subseteq A$ be multiplicative and $T\subseteq A$ a preorder. 
Let \[\io\colon A\to S^{-1}A,\ a\mapsto\frac a1\]
denote the canonical homomorphism. Then the preorder
generated by $\io(T)$ in $S^{-1}A$ equals $S^{-2}T=\left\{\frac a{s^2}\mid a\in T,s\in S\right\}$.
This preorder is proper if and only if $T\cap -S^2=\emptyset$.
\end{lem}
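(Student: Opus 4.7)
The plan is to prove the two assertions separately: first identify the generated preorder explicitly, then analyze properness by tracking what an equality ``$-1=a/s^{2}$'' in $S^{-1}A$ really means in $A$.

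For the first part, I would show directly that $S^{-2}T$ is a preorder of $S^{-1}A$ containing $\iota(T)$, and that every preorder of $S^{-1}A$ containing $\iota(T)$ contains $S^{-2}T$. Containment of $\iota(T)$ is immediate, since $\iota(t)=t/1^{2}$. The preorder axioms are routine: squares $(a/s)^{2}=a^{2}/s^{2}$ lie in $S^{-2}T$ because $A^{2}\subseteq T$; closure under multiplication is obvious; and closure under addition follows from
\[
\frac{a}{s^{2}}+\frac{b}{t^{2}}=\frac{at^{2}+bs^{2}}{(st)^{2}}
\]
using $at^{2},bs^{2}\in TA^{2}\subseteq TT\subseteq T$. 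Conversely, if $T'$ is any preorder of $S^{-1}A$ with $\iota(T)\subseteq T'$, then for every $a\in T$ and $s\in S$ we have $a/s^{2}=\iota(a)\cdot(1/s)^{2}\in T'\cdot(S^{-1}A)^{2}\subseteq T'$, so $S^{-2}T\subseteq T'$. This part is entirely routine.

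The main step is the properness characterization; this is where the subtlety of the localization equivalence relation appears. The ``if'' direction is easy: a witness $t=-r^{2}$ with $t\in T$ and $r\in S$ gives $t/r^{2}=-1$ in $S^{-1}A$, showing $-1\in S^{-2}T$. For the converse, suppose $-1=a/s^{2}$ in $S^{-1}A$ with $a\in T$, $s\in S$. By the definition of equality in a localization, this means there exists $u\in S$ with $u(a+s^{2})=0$ in $A$, equivalently $ua=-us^{2}$. The trick is now to multiply by $u$ (which lies in $S$ but not necessarily anywhere useful a priori): one obtains
\[
u^{2}a=-u^{2}s^{2}=-(us)^{2}.
\]
The left-hand side lies in $T$ since $a\in T$ and $u^{2}\in A^{2}\subseteq T$, while the right-hand side lies in $-S^{2}$ since $us\in S$. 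Hence $u^{2}a\in T\cap(-S^{2})$, proving $T\cap(-S^{2})\neq\emptyset$. The whole maneuver of multiplying by $u$ to convert a raw annihilation relation into a statement inside $T\cap(-S^{2})$ is the one place where care is needed; everything else is bookkeeping.
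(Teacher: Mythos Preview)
Your proof is correct and complete. The paper leaves this lemma as an exercise, so there is no proof to compare against; your argument is exactly the intended straightforward verification, and your handling of the properness direction---multiplying the annihilator relation $u(a+s^2)=0$ through by $u$ to land $u^2a=-(us)^2$ in $T\cap(-S^2)$---is the right maneuver.
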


\begin{proof}
Exercise.
\end{proof}

\begin{pro}\label{sperlocalize}
Let $S\subseteq A$ be multiplicative. The correspondence
\begin{align*}
P&\mapsto S^{-2}P\\
\left\{a\in A\mid\frac a1\in Q\right\}&\mapsfrom Q
\end{align*}
gives rise to a bijection between $\{P\in\sper A\mid(\supp P)\cap S=\emptyset\}$ and $\sper(S^{-1}A)$.
\end{pro}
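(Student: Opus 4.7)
The plan is to establish that the two maps $\Phi\colon P \mapsto S^{-2}P$ and $\Psi\colon Q \mapsto \io^{-1}(Q)$ are well-defined and mutually inverse. The cleanest way to control $\Phi$ is to route it through the representation field $R_P$ of a prime cone $P$, taking advantage of the universal property of localization; this lets me sidestep a direct verification that $S^{-2}P$ satisfies the prime-cone axiom in \ref{dfprimecone}.

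First, I would dispatch $\Psi$ by functoriality. Given $Q\in\sper(S^{-1}A)$, the preimage $\io^{-1}(Q)$ lies in $\sper A$ by \ref{newlang}(a). The condition $(\supp\io^{-1}(Q))\cap S=\emptyset$ follows because $\supp\io^{-1}(Q)=\io^{-1}(\supp Q)$ is a prime ideal of $A$ that cannot meet $S$: if $s\in S$ were in it, then $\frac{s}{1}\in\supp Q$ would be a unit inside a prime ideal of $S^{-1}A$, a contradiction.

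For $\Phi$, given $P\in\sper A$ with $S\cap\supp P=\emptyset$, I first note that $P\cap-S^2=\emptyset$: indeed, $-s^2\in P$ (together with $s^2\in A^2\subseteq P$) would put $s^2$ in the prime ideal $\supp P$, forcing $s\in\supp P\cap S$, contradiction. By \ref{preorderlocalize}, $S^{-2}P$ is then a proper preorder of $S^{-1}A$. To upgrade to a prime cone I use the representation $\rh_P\colon A\to R_P$. Since $\ker\rh_P=\supp P$ is disjoint from $S$, every $s\in S$ maps to a nonzero, hence invertible, element of the field $R_P$. By the universal property of localization, $\rh_P$ extends uniquely to a ring homomorphism $\widetilde\rh_P\colon S^{-1}A\to R_P$, and then $\widetilde\rh_P^{-1}(R_P^2)\in\sper(S^{-1}A)$ by \ref{sperviahom}. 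I would then verify $\widetilde\rh_P^{-1}(R_P^2)=S^{-2}P$: the inclusion ``$\supseteq$'' is immediate because $\widetilde\rh_P(\frac{a}{s^2})=\frac{\rh_P(a)}{\rh_P(s)^2}$ has nonnegative numerator (by $a\in P$ and \ref{kernelrep}) and strictly positive denominator (since $\rh_P(s)\ne 0$); for ``$\subseteq$'', rewrite $\frac{a}{s}=\frac{as}{s^2}$ and observe that $\widetilde\rh_P(\frac{a}{s})\ge 0$ forces $\rh_P(as)\ge 0$, i.e.\ $as\in P$ by \ref{kernelrep}.

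Finally I would check that the two maps invert each other. For $\Psi\circ\Phi$, the above identification gives
\[
\io^{-1}(S^{-2}P)=\bigl\{a\in A\mid\widetilde\rh_P(\tfrac{a}{1})\ge 0\bigr\}=\bigl\{a\in A\mid\rh_P(a)\ge 0\bigr\}=P
\]
by \ref{kernelrep}. For $\Phi\circ\Psi$, with $P:=\io^{-1}(Q)$, the inclusion $S^{-2}P\subseteq Q$ is clear (if $a\in P$ then $\frac{a}{1}\in Q$, while $\frac{1}{s^2}=(\frac{1}{s})^2\in Q$, so $\frac{a}{s^2}\in Q$); for the reverse, given $\frac{a}{s}\in Q$, the identity $\frac{as}{1}=(\frac{s}{1})^2\cdot\frac{a}{s}\in Q\cdot Q\subseteq Q$ shows $as\in P$, whence $\frac{a}{s}=\frac{as}{s^2}\in S^{-2}P$. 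The main obstacle in this proof is exactly the point at which I invoked the universal property: proving directly that $S^{-2}P$ satisfies the prime-cone implication $xy\in S^{-2}P\Rightarrow x\in S^{-2}P\text{ or }-y\in S^{-2}P$ by elementary manipulation of fractions is surprisingly fiddly, because one has to reconcile the clearing-of-denominators factor coming from the localization equivalence relation with the square denominator shape; the detour through $\widetilde\rh_P$ eliminates this headache entirely.
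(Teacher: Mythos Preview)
Your proof is correct, and it takes a genuinely different route from the paper for the key step. The paper establishes directly that $S^{-2}P$ is a prime cone of $S^{-1}A$ by verifying Condition \ref{preorderprimecone}(b) through an explicit element chase: given $\frac a{s^2}\cdot\frac b{s^2}\in S^{-2}P$, it picks $c\in P$, $u\in S$ with $\frac{ab}{s^4}=\frac c{u^2}$, clears the localization witness $v\in S$ to get $(au^2)(bv^2)\in P$, and then uses that $P$ is a prime cone. You instead factor the representation $\rh_P$ through the localization to obtain $\widetilde\rh_P\colon S^{-1}A\to R_P$ and identify $S^{-2}P$ with $\widetilde\rh_P^{-1}(R_P^2)$, whence membership in $\sper(S^{-1}A)$ is immediate from \ref{sperviahom}. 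Your identification $S^{-2}P=\widetilde\rh_P^{-1}(R_P^2)$ also streamlines the verification of $\Psi\circ\Phi=\id$, which the paper does by another element chase. What your approach buys is exactly what you say: it bypasses the fraction bookkeeping (common denominators, localization witnesses) that the paper carries out by hand; what the paper's approach buys is that it stays at the level of the axioms in \ref{dfprimecone} and \ref{preorderprimecone}, without invoking the representation-field apparatus of \ref{realrep}--\ref{sperviahom}. For $\Phi\circ\Psi=\id$ your argument and the paper's coincide.
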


\begin{proof}
Let $P\in\sper A$ with $(\supp P)\cap S=\emptyset$. By \ref{preorderlocalize}, $S^{-2}P$ is a proper
preorder of $S^{-1}A$ since $P\cap-S^2\subseteq(P\cap-A^2)\cap(-S)\subseteq(P\cap-P)\cap(-S)=
(\supp P)\cap-S=-((\supp P)\cap S)=-\emptyset=\emptyset$. To show that $S^{-2}P$ is a prime cone of
$S^{-1}A$, we verify the condition from \ref{preorderprimecone}(b) where we use that for any two
fractions in $S^{-1}A$, one can find a common denominator from $S^2$. Let $a,b\in A$ and $s\in S$ with
$\frac a{s^2}\cdot\frac b{s^2}\in S^{-2}P$. To show: $\frac a{s^2}\in S^{-2}P$ or $-\frac b{s^2}\in S^{-2}P$.
Choose $c\in P$ and $u\in S$ with $\frac{ab}{s^4}=\frac c{u^2}$. Then there is $v\in S$ such that
$abu^2v=cs^4v$ and therefore $(au^2)(bv^2)=abu^2v^2=cs^4v^2\in P$. Since $P$ is a prime cone,
it follows that $au^2\in P$ or $-bv^2\in P$. Hence $\frac a{s^2}=\frac{au^2}{(su)^2}\in S^{-2}P$ or
$-\frac b{s^2}=\frac{-bv^2}{(sv)^2}\in S^{-2}P$.

Conversely, let $Q\in\sper(S^{-1}A)$. For
$\io\colon A\to S^{-1}A,\ a\mapsto\frac a1$, we have [$\to$ \ref{newlang}(a)]
\[\left\{a\in A\mid\frac a1\in Q\right\}
=(\sper\io)(Q)\in\sper A.\]
If we had $s\in S$ with $\frac s1\in Q\cap-Q$, then
$1=\frac ss=\frac1s\cdot\frac s1\in S^{-1}A(\supp Q)\subseteq\supp Q\ \lightning$.

\smallskip
It remains to show that the maps are inverse to each other:
\begin{enumerate}[(a)]
\item If $P\in\sper A$ with $(\supp P)\cap S=\emptyset$, then $P=\left\{a\in A\mid\frac a1\in S^{-2}P\right\}$.
\item If $Q\in\sper(S^{-1}A)$, then $Q=\left\{\frac a{s^2}\mid a\in A,\frac a1\in Q,s\in S\right\}$.
\end{enumerate}

\smallskip
To show (a), let $P\in\sper A$ with $(\supp P)\cap S=\emptyset$.

``$\subseteq$'' is trivial.

``$\supseteq$'' Let $a\in A$ with $\frac a1\in S^{-2}P$. Choose $b\in P$ and $s\in S$ with
$\frac a1=\frac b{s^2}$.
Then there is $t\in S$ such that $as^2t=bt$ and thus $as^2t^2=bt^2\in P$. It follows that $a\in P$ or
$-s^2t^2\in P$. The latter would lead to $s^2t^2\in(\supp P)\cap S \lightning$. Hence $a\in P$.

\smallskip
To show (b), consider an arbitrary $Q\in\sper(S^{-1}A)$.

``$\supseteq$'' is trivial.

``$\subseteq$'' Let $b\in A$ and $s\in S$ with $\frac bs\in Q$. Then for $a:=sb\in A$, we have
$\frac bs=\frac{sb}{s^2}=\frac a{s^2}$ and $\frac a1=\frac{sb}1=\left(\frac s1\right)^2\frac bs\in Q$.
\end{proof}

\section{Abstract real Stellensätze}

\begin{df} Let $A$ be a commutative ring. We call the ring homomorphism
\[A\to\prod_{(\p,\le)\in\sper A}R_{(\p,\le)},\ a\mapsto\left(\widehat a\colon(\p,\le)\mapsto\cc a\p\right)\]
the \emph{real representation} of $A$. For $a\in A$, we say that $\widehat a$ is the \emph{function represented}
by $a$ on the real spectrum of $A$.
\end{df}

\begin{thm}[abstract real Stellensatz \cite{kri,ste,pre}]\label{abstractstellensatz}
Suppose $A$ is a commutative ring, $I\subseteq A$ an ideal, $S\subseteq A$ a multiplicative set and
$T\subseteq A$ a preorder. Then the following conditions are equivalent:
\begin{enumerate}[\normalfont(a)]
\item There does \emph{not} exist any $P\in\sper A$ satisfying
\begin{align*}
\forall a\in I&:\widehat a(P)=0,\\
\forall s\in S&:\widehat s(P)\ne0\quad\text{and}\\
\forall t\in T&:\widehat t(P)\ge0.
\end{align*}
\item There are $a\in I$, $s\in S$ and $t\in T$ such that $a+s^2+t=0$.
\end{enumerate}
\end{thm}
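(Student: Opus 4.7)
The plan is to prove the easy direction (b)$\implies$(a) by direct evaluation in a representation field, and to prove (a)$\implies$(b) by contraposition, combining the ideal $I$ with the preorder $T$ into a single preorder, localizing at the multiplicative set $S$, and then extending to a maximal prime cone via \ref{inmaxprimecone}. The bijection between prime cones of the localization and prime cones of $A$ whose support avoids $S$ (Proposition \ref{sperlocalize}) will then produce the desired element of $\sper A$.

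For (b)$\implies$(a), suppose we have $a\in I$, $s\in S$, $t\in T$ with $a+s^2+t=0$, and assume for contradiction that some $P\in\sper A$ satisfies the three conditions. Applying the representation $\rh_P\colon A\to R_P$ yields, in the real closed field $R_P$, the identity $\rh_P(a)+\rh_P(s)^2+\rh_P(t)=0$. But $\rh_P(a)=0$ (since $\widehat a(P)=0$), $\rh_P(t)\ge 0$ (since $\widehat t(P)\ge 0$), and $\rh_P(s)^2>0$ (since $\rh_P(s)=\widehat s(P)\ne 0$), giving a strictly positive sum equal to zero.

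For (a)$\implies$(b), I will prove the contrapositive. Assume no equation $a+s^2+t=0$ with $a\in I$, $s\in S$, $t\in T$ exists; I must produce a prime cone $P$ witnessing the failure of (a). First I form
\[
T':=T+I=\{t+a\mid t\in T,\ a\in I\}.
\]
This is a preorder of $A$: closure under addition and the inclusion $A^2\subseteq T\subseteq T'$ are immediate, while $T'T'\subseteq TT+TI+IT+II\subseteq T+I=T'$ uses that $I$ is an ideal. Moreover, the hypothesis translates exactly as
\[
T'\cap(-S^2)=\emptyset,
\]
because an element of that intersection would give $t+a=-s^2$ with $t\in T$, $a\in I$, $s\in S$, i.e.\ $a+s^2+t=0$. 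Let $\io\colon A\to S^{-1}A$ denote the canonical map. By Lemma \ref{preorderlocalize}, the preorder $S^{-2}T'$ generated by $\io(T')$ in $S^{-1}A$ is proper.

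Now I apply Corollary \ref{inmaxprimecone} to find a (maximal) prime cone $Q$ of $S^{-1}A$ with $S^{-2}T'\subseteq Q$. By Proposition \ref{sperlocalize}, the set $P:=\io^{-1}(Q)=\{a\in A\mid\tfrac{a}{1}\in Q\}$ is an element of $\sper A$ with $(\supp P)\cap S=\emptyset$. It remains to verify the three conditions of (a). For every $t\in T$ we have $\tfrac{t}{1}\in S^{-2}T\subseteq S^{-2}T'\subseteq Q$, hence $t\in P$, so $T\subseteq P$ and therefore $\widehat t(P)\ge 0$. For every $a\in I$ both $a=0+a$ and $-a=0+(-a)$ lie in $T+I=T'$, hence $\tfrac{a}{1},\tfrac{-a}{1}\in Q$, so $a\in P\cap(-P)=\supp P$, giving $\widehat a(P)=0$. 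Finally, for every $s\in S$ the condition $(\supp P)\cap S=\emptyset$ means $s\notin\supp P=\ker\rh_P$, so $\widehat s(P)\ne 0$. Thus $P$ contradicts (a), completing the proof.

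The main (and only nontrivial) obstacle is the correct construction of the auxiliary preorder $T'=T+I$ and the realization that the contrapositive hypothesis is literally the statement $T'\cap(-S^2)=\emptyset$; once this translation is in place, Lemma \ref{preorderlocalize}, Corollary \ref{inmaxprimecone} and Proposition \ref{sperlocalize} combine mechanically.
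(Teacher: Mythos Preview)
Your proof is correct and follows essentially the same route as the paper: form the preorder $T+I$, localize at $S$ via Lemma~\ref{preorderlocalize}, extend to a prime cone by Corollary~\ref{inmaxprimecone}, and pull back to $\sper A$ through Proposition~\ref{sperlocalize}. You simply spell out in more detail the verifications (that $T+I$ is a preorder, that the contrapositive hypothesis reads $T'\cap(-S^2)=\emptyset$, and the three conditions on $P$) that the paper compresses into a single line.
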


\begin{proof}
\underline{(b)$\implies$(a)} is trivial.

\smallskip
\underline{(a)$\implies$(b)}\quad Replacing $T$ by the preorder $T+I$, we can suppose WLOG $I=(0)$.
Suppose (b) does not hold. By \ref{preorderlocalize}, $S^{-2}T$ is then a proper preorder of $S^{-1}A$.
Consequently, $S^{-2}T$ is contained in a prime cone $Q$ of $S^{-1}A$ by \ref{inmaxprimecone}. Now
\ref{sperlocalize} yields $P:=\left\{a\in A\mid\frac a1\in Q\right\}\in\sper A$ and $(\supp P)\cap S=\emptyset$.
For all $s\in S$, we have $\widehat s(P)=\cc s{\supp P}\ne0$ in $R_P$
[$\to$ \ref{realrep}, \ref{sperprimecones}] since $s\notin\supp P$. For all $t\in T$, we have $\widehat t(P)\ge0$
because $t\in P$.
\end{proof}

\begin{termnot}\label{preorderedring}
\begin{enumerate}[(a)]
\item We call a pair $(A,T)$ consisting of a commutative ring $A$ and a preorder $T$ of $A$ a
\emph{preordered ring}.
\item If $(A,T)$ is a preordered ring, then we define its \emph{real spectrum}
\[\sper(A,T):=\{P\in\sper A\mid T\subseteq P\}.\]
\item{}[$\to$ \ref{intervals}(c)] If $A$ is a commutative ring, $a\in A$ and $S\subseteq\sper A$, then we write
\begin{align*}
\widehat a\ge0\text{ on $S$} &:\iff\forall P\in S:\widehat a(P)\ge0,\\
\widehat a>0\text{ on $S$} &:\iff\forall P\in S:\widehat a(P)>0,
\end{align*}
and so forth.
\end{enumerate}
\end{termnot}

\begin{cor}[abstract Positivstellensatz]\label{abstractpositivstellensatz}
Let $(A,T)$ be a preordered ring and $a\in A$. Then the following are equivalent:
\begin{enumerate}[\normalfont(a)]
\item $\widehat a>0$ on $\sper(A,T)$
\item $\exists t\in T:ta\in1+T$
\item $\exists t\in T:(1+t)a\in1+T$.
\end{enumerate}
\end{cor}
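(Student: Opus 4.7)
The plan is to establish the equivalences by closing the cycle (c) $\Rightarrow$ (b) $\Rightarrow$ (a) and proving (a) $\Rightarrow$ (b) $\Rightarrow$ (c), invoking the abstract real Stellensatz \ref{abstractstellensatz} only for the step (a) $\Rightarrow$ (b). The other three implications reduce to short algebraic manipulations inside the preorder.

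First I would dispatch (c) $\Rightarrow$ (b): setting $t' := 1 + t$ gives $t' \in T$ because $1 = 1^2 \in A^2 \subseteq T$ and $T+T \subseteq T$, and then $t'a = (1+t)a \in 1+T$ by hypothesis. For (b) $\Rightarrow$ (a), suppose $ta = 1+s$ with $s,t \in T$ and let $P \in \sper(A,T)$. Applying the representation $\rh_P$ of Definition \ref{realrep}, the identity becomes $\hat t(P)\hat a(P) = 1 + \hat s(P)$ in the real closed field $R_P$; since $\hat t(P), \hat s(P) \ge 0$, the right-hand side is $\ge 1$, forcing $\hat a(P) > 0$. For (b) $\Rightarrow$ (c), I would set $t' := t + s \in T$ and compute
\[
(1+t')a \;=\; a + ta + sa \;=\; a + (1+s) + sa \;=\; 1 + s + a(1+s) \;=\; 1 + s + a(ta) \;=\; 1 + s + ta^2,
\]
and the remainder $s + ta^2$ lies in $T$ because $s \in T$ and $ta^2 = t \cdot a^2 \in T \cdot A^2 \subseteq T \cdot T \subseteq T$.

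The substantive step is (a) $\Rightarrow$ (b). I would apply \ref{abstractstellensatz} to $I := (0)$, $S := \{1\}$, and the preorder $T' := T + (-a)T$ (a preorder by \ref{againpreorder}). For any $P \in \sper A$, $T' \subseteq P$ iff $T \subseteq P$ and $-a \in P$ iff $P \in \sper(A,T)$ and $\hat a(P) \le 0$; hypothesis (a) forbids any such $P$, while the two remaining conditions of \ref{abstractstellensatz}(a) are vacuous for these choices of $I$ and $S$. Hence \ref{abstractstellensatz}(b) gives $0 + 1^2 + t_0 = 0$ for some $t_0 \in T + (-a)T$; writing $t_0 = v - aw$ with $v,w \in T$ yields $aw = 1 + v \in 1+T$, which is (b).

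The main obstacle I anticipate is the (a) $\Rightarrow$ (b) direction: one has to recognize that the right object to feed into the Stellensatz is precisely $T - aT$, and verify carefully that the geometric ``$\hat a > 0$ on $\sper(A,T)$'' is equivalent to the non-existence clause \ref{abstractstellensatz}(a). The ensuing (b) $\Rightarrow$ (c) identity is brief but relies on the not-immediate substitution $a(1+s) = a\cdot ta = ta^2$, which simultaneously uses the hypothesis $ta = 1+s$ and the defining inclusion $A^2 \subseteq T$.
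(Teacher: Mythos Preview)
Your proof is correct and follows essentially the same approach as the paper. The paper runs the single cycle (b)$\implies$(c)$\implies$(a)$\implies$(b), declaring (c)$\implies$(a) trivial, whereas you run two overlapping chains; but your (b)$\implies$(c) computation $(1+t+s)a=1+s+ta^2$ is literally the paper's, and your application of \ref{abstractstellensatz} with $I=(0)$, $S=\{1\}$, $T'=T-aT$ for (a)$\implies$(b) is exactly what the paper does.
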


\begin{proof}
\underline{(b)$\implies$(c)}\quad If $t,t'\in T$ satisfy $ta=1+t'$, then
\[(1+t+t')a=ta+(1+t')a=1+t'+ta^2\in1+T.\]

\smallskip\underline{(c)$\implies$(a)} is trivial.

\smallskip\underline{(a)$\implies$(b)} follows by applying \ref{abstractstellensatz} on the ideal $(0)$, the
multiplicative set $\{1\}$ and the preorder $T-aT$.
\end{proof}

\begin{cor}[abstract Nichtnegativstellensatz]\label{abstractnichtnegativstellensatz}
Let $(A,T)$ be a preordered ring and $a\in A$. Then the following are equivalent:
\begin{enumerate}[\normalfont(a)]
\item $\widehat a\ge0$ on $\sper(A,T)$
\item $\exists t\in T:\exists k\in\N_0:ta\in a^{2k}+T$
\end{enumerate}
\end{cor}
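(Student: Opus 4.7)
The easy direction (b)$\implies$(a) is a direct evaluation. Suppose $t, s \in T$ and $k \in \N_0$ satisfy $ta = a^{2k} + s$, and pick $P \in \sper(A,T)$. Passing to the representation field $R_P$ via $\rh_P$, we get $\widehat t(P)\widehat a(P) = \widehat a(P)^{2k} + \widehat s(P)$ with $\widehat t(P), \widehat s(P), \widehat a(P)^{2k} \ge 0$. If we had $\widehat a(P) < 0$, the left side would be $\le 0$ while the right side is $\ge 0$, forcing $\widehat a(P)^{2k} = 0$; since $R_P$ is a field, this gives $\widehat a(P) = 0$, contradicting $\widehat a(P) < 0$.

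For the hard direction (a)$\implies$(b), the plan is to apply the abstract real Stellensatz \ref{abstractstellensatz} to suitably chosen data. Take
\[
I := (0), \qquad S := \{a^k \mid k \in \N_0\}, \qquad T' := T - aT,
\]
where $S$ is a multiplicative subset of $A$ and $T'$ is a preorder of $A$ by \ref{againpreorder}. I claim condition~(a) of \ref{abstractstellensatz} holds for this triple: suppose for contradiction that some $P \in \sper A$ satisfies $\widehat t(P) \ge 0$ for all $t \in T'$ and $\widehat s(P) \ne 0$ for all $s \in S$. Then on the one hand $T \subseteq T' \subseteq P$, so $P \in \sper(A,T)$ and hence $\widehat a(P) \ge 0$ by hypothesis (a). On the other hand $-a \in T'$, so $\widehat{(-a)}(P) \ge 0$, i.e.\ $\widehat a(P) \le 0$. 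Thus $\widehat a(P) = 0$, but then $a \in \supp P$, giving $\widehat{a^1}(P) = 0$, contradicting $\widehat s(P) \ne 0$ for all $s \in S$.

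Consequently \ref{abstractstellensatz}(b) applies: there exist $i \in I = (0)$, $s = a^k \in S$, and $t' = t_1 - at_2 \in T'$ (with $t_1, t_2 \in T$) such that $i + s^2 + t' = 0$, which reads
\[
a^{2k} + t_1 - at_2 = 0, \quad \text{i.e.,} \quad at_2 = a^{2k} + t_1 \in a^{2k} + T.
\]
Setting $t := t_2 \in T$ yields the desired relation $ta \in a^{2k} + T$, completing the proof.

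The only conceptual obstacle is guessing the right data to feed into \ref{abstractstellensatz}; once $S = \{a^k\}$ and $T' = T - aT$ are chosen, everything falls out mechanically. The multiplicative set $S$ exactly prevents $a$ from lying in the support of the bad prime cone (so that $\widehat a$ is actually nonzero there), while $T'$ encodes both the preorder constraint $T \subseteq P$ and the sign constraint $\widehat a(P) \le 0$ that together with hypothesis~(a) force the contradiction.
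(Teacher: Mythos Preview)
Your proof is correct and follows exactly the paper's approach: apply the abstract real Stellensatz \ref{abstractstellensatz} with $I=(0)$, $S=\{1,a,a^2,\dots\}$, and the preorder $T-aT$. The paper's own proof is a one-liner stating just this choice of data; you have simply written out the verification that the paper leaves implicit.
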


\begin{proof}
\underline{(b)$\implies$(a)} is trivial.

\underline{(a)$\implies$(b)} follows by applying \ref{abstractstellensatz} on the ideal $(0)$, the
multiplicative set $\{1,a,a^2,\dots\}$ and the preorder $T-aT$.
\end{proof}

\begin{cor}[abstract real Nullstellensatz \cite{kri,du2,ris,efr}]\label{abstractrealnullstellensatz}
Let $A$ be a commutative ring, $I\subseteq A$ an ideal and $a\in A$. Then the following are equivalent:
\begin{enumerate}[\normalfont(a)]
\item $\widehat a=0$ on $\{P\in\sper A\mid I\subseteq\supp P\}$
\item $\exists k\in\N_0:\exists s\in\sum A^2:a^{2k}+s\in I$
\end{enumerate}
\end{cor}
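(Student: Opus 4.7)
The proof will be a direct application of the abstract real Stellensatz (Theorem \ref{abstractstellensatz}), analogous to how the abstract Positivstellensatz and Nichtnegativstellensatz were derived in Corollaries \ref{abstractpositivstellensatz} and \ref{abstractnichtnegativstellensatz}.

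For the easy direction (b)$\implies$(a), suppose $k \in \N_0$ and $s \in \sum A^2$ are such that $a^{2k} + s \in I$. Let $P \in \sper A$ with $I \subseteq \supp P$. Then $a^{2k} + s \in \supp P$, so applying the representation $\rho_P$ we obtain $\rho_P(a)^{2k} + \rho_P(s) = 0$ in $R_P$. Since $\rho_P(a)^{2k} \ge 0$ and $\rho_P(s) \ge 0$ (as $\rho_P$ sends sums of squares into $R_P^2 \subseteq R_{P,\ge 0}$), both summands must vanish in the real closed field $R_P$. In particular $\rho_P(a)^{2k} = 0$, and since $R_P$ is a field we get $\widehat{a}(P) = \rho_P(a) = 0$.

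For the hard direction (a)$\implies$(b), I would apply Theorem \ref{abstractstellensatz} with the given ideal $I$, the multiplicative set $S := \{1, a, a^2, \ldots\}$, and the preorder $T := \sum A^2$ (which is the smallest preorder of $A$ by \ref{sqsm}(a)). The hypothesis (a) says that no $P \in \sper A$ satisfies simultaneously $I \subseteq \supp P$ and $\widehat{a}(P) \ne 0$; equivalently, no $P \in \sper A$ satisfies $\widehat{b}(P) = 0$ for all $b \in I$, $\widehat{s}(P) \ne 0$ for all $s \in S$ (this is just $\widehat{a}(P) \ne 0$, since $\widehat{a^k} = \widehat{a}^k$), and $\widehat{t}(P) \ge 0$ for all $t \in T$ (automatic). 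Thus condition (a) of the Stellensatz holds, and so does its condition (b): there exist $b \in I$, $s_0 \in S$ and $t \in T$ with $b + s_0^2 + t = 0$. Writing $s_0 = a^k$ for some $k \in \N_0$, this becomes $a^{2k} + t = -b \in I$, which is exactly statement (b) with $s := t \in \sum A^2$.

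The main work is essentially bookkeeping: one has to recognize that the multiplicative set should be the powers of $a$ (so that the Stellensatz produces an even power $a^{2k}$ via the square $s_0^2$), while the preorder is taken as small as possible, namely $\sum A^2$, since the Nullstellensatz makes no reference to positivity beyond sums of squares. There is no real obstacle; the only subtlety is noting that $\widehat{a}(P) \ne 0$ in the real closed field $R_P$ is equivalent to $\widehat{a^k}(P) \ne 0$ for every (hence some) $k \in \N_0$, which is what allows the condition ``$\widehat{s}(P) \ne 0$ for all $s \in S$'' in the Stellensatz to encode exactly $\widehat{a}(P) \ne 0$.
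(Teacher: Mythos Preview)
Your proof is correct and follows exactly the paper's approach: apply the abstract real Stellensatz \ref{abstractstellensatz} with the ideal $I$, the multiplicative set $\{1,a,a^2,\dots\}$, and the preorder $\sum A^2$. You have spelled out in full the details (in particular the easy direction (b)$\implies$(a)) that the paper leaves as ``trivial''.
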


\begin{proof}
\underline{(b)$\implies$(a)} is trivial.

\smallskip
\underline{(a)$\implies$(b)} follows by applying \ref{abstractstellensatz} on the ideal $I$, the
multiplicative set $\{1,a,a^2,\dots\}$ and the preorder $\sum A^2$.
\end{proof}

\section{The real radical ideal}

Throughout this section, we let $A$ be a commutative ring.

\begin{df}{}[$\to$ \ref{realchar}(c)] $A$ is called \emph{real} (or \emph{real reduced})
if \[\forall n\in\N:\forall a_1,\dots,a_n\in A:(a_1^2+\dots+a_n^2=0\implies a_1=0).\]
\end{df}

\begin{rem}
We have
\[A\ne\{0\}\text{ real}\implies-1\notin\sum A^2\overset{\ref{inmaxprimecone}}{\underset{\text{\ref{sqsm}(a)}}\iff}
\sper A\ne\emptyset.\]
Here ``$\Longrightarrow$'' cannot be replaced by ``$\iff$'' (in contrast to the case where
$A$ is a field [$\to$ \ref{realchar}]) as the example of $A=\R[X]/(X^2)$ shows.
\end{rem}

\begin{df}\label{dfrealideal}
An ideal $I\subseteq A$ is called \emph{real} (or \emph{real radical ideal}) if $A/I$ is real, i.e.,
$\forall n\in\N:\forall a_1,\dots,a_n\in A:(a_1^2+\ldots+a_n^2\in I\implies a_1\in I)$.
\end{df}

\begin{pro}\label{realidealchar}
Let $\p\in\spec A$. Then the following are equivalent:
\begin{enumerate}[\normalfont(a)]
\item $\p$ is real \emph{[$\to$ \ref{dfrealideal}]}
\item $\qf(A/\p)$ is real \emph{[$\to$ \ref{dfreal}]}
\item $\exists P\in\sper A:\p=\supp P$ \emph{[$\to$ \ref{newlang}(b)]}
\end{enumerate}
\end{pro}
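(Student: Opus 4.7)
The plan is to prove the cycle (a)$\iff$(b)$\iff$(c), each step being essentially immediate from what has been established.

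For (a)$\iff$(b), I would use that $\p$ prime makes $A/\p$ an integral domain, so $A/\p$ embeds canonically in its quotient field $K := \qf(A/\p)$. The direction (b)$\implies$(a) is then trivial, since a subring of a real field is real (the defining condition $a_1^2+\dots+a_n^2=0\implies a_1=0$ is inherited). For (a)$\implies$(b), suppose $\sum_{i=1}^n x_i^2=0$ in $K$ with $x_1\ne 0$. Choose a common denominator: write $x_i=\cc{a_i}\p/\cc b\p$ with $b\in A\setminus\p$. Multiplying by $\cc b\p^2\ne0$ and using that $A/\p$ is an integral domain gives $\sum_{i=1}^n\cc{a_i}\p^2=0$ in $A/\p$, hence $\cc{a_1}\p=0$ by reality of $A/\p$, so $x_1=0$, contradiction.

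For (b)$\iff$(c), this is essentially the definition. If (b) holds, then by \ref{realchar} the field $K=\qf(A/\p)$ admits an order $\le$, and then $(\p,\le)\in\sper A$ by Definition \ref{introrealspectrum}, with $\supp(\p,\le)=\p$ by \ref{supportmap}. Conversely, if $P\in\sper A$ satisfies $\supp P=\p$, then $P$ corresponds via \ref{sperprimecones} to a pair $(\p,\le)$ where $\le$ comes from the order $P_\p$ on $\qf(A/\p)$; the existence of an order on $K$ then forces $K$ to be real by Definition \ref{dfreal}.

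There is no substantive obstacle here; the only care needed is in (a)$\implies$(b), where one must remember to clear denominators inside $A/\p$ rather than in $A$ directly (so that one exploits $A/\p$ being an integral domain to cancel the square of the common denominator). Everything else is a direct unfolding of definitions together with \ref{realchar}, \ref{introrealspectrum}, \ref{supportmap}, and \ref{sperprimecones}.
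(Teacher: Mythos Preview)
Your proof is correct and essentially matches the paper's argument. The only cosmetic difference is that the paper closes the loop via the cycle (a)$\implies$(b)$\implies$(c)$\implies$(a), proving (c)$\implies$(a) directly using $\widehat a_1(P)^2+\dots+\widehat a_n(P)^2=0$ in the representation field, whereas you split this into (c)$\implies$(b) via \ref{sperprimecones} and the trivial (b)$\implies$(a); both routes are equally immediate.
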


\begin{proof} \underline{(a)$\implies$(b)}\quad Suppose (a) holds and let $n\in\N$, $a_1,\dots,a_n,s\in A/\p$
with $s\ne0$ such that $\left(\frac{a_1}s\right)^2+\ldots+\left(\frac{a_n}s\right)^2=0$. Then
$a_1^2+\ldots+a_n^2=0$. Since $A/\p$ is real, it follows that $a_1=0$ and therefore $\frac{a_1}s=0$.

\smallskip
\underline{(b)$\implies$(c)}\quad Suppose (b) holds. Then $\qf(A/\p)$ possesses an order $\le$.
According to Definition \ref{introrealspectrum}, we have $(\p,\le)\in\sper A$ and of course
$\p=\supp(\p,\le)$ by Definition \ref{supportmap}.

\smallskip
\underline{(c)$\implies$(a)}\quad Suppose $\p=\supp P$ for some $P\in\sper A$. Let $n\in\N$ and
$a_1,\dots,a_n\in A$ satisfy $a_1^2+\ldots+a_n^2\in\p$. Then
$\widehat a_1(P)^2+\ldots+\widehat a_n(P)^2=0$ and thus $\widehat a_1(P)=0$, i.e., $a_1\in\p$.
\end{proof}

\begin{df} The \emph{real radical} $\rrad I$ of an ideal $I\subseteq A$ is defined by
\[\rrad I:=\bigcap\left\{\p\in\rspec A\mid I\subseteq\p\right\}\]
where
$\rspec A:=\{\p\in\spec A\mid\p\text{ is real}\}$ and $\bigcap\emptyset:=A$.
\end{df}

\begin{rem}\label{intersectionreal}
Since every intersection of real ideals of $A$ is obviously again a real ideal of $A$, for every
ideal $I\subseteq A$, the set $\rrad I$ is a real ideal of $I$.
\end{rem}

\begin{thm}{}\emph{[$\to$ \ref{abstractrealnullstellensatz}]}\label{rradchar}
For every ideal $I$ of $A$,
\[\rrad I=\left\{a\in A\mid\exists k\in\N_0:\exists s\in\sum A^2:a^{2k}+s\in I\right\}.\]
\end{thm}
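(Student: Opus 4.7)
The plan is to derive this theorem as a direct corollary of the abstract real Nullstellensatz \ref{abstractrealnullstellensatz} combined with the characterization \ref{realidealchar} of real prime ideals as supports of prime cones.

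First I would unfold the definition of $\rrad I$. An element $a\in A$ lies in $\rrad I$ precisely when $a\in\p$ for every real prime ideal $\p\supseteq I$. By \ref{realidealchar}(a)$\iff$(c), the real primes of $A$ are exactly the ideals of the form $\supp P$ with $P\in\sper A$, so the condition $a\in\rrad I$ translates to: for every $P\in\sper A$ with $I\subseteq\supp P$, one has $a\in\supp P$, i.e., $\widehat a(P)=0$. Thus
\[\rrad I=\{a\in A\mid\widehat a=0\text{ on }\{P\in\sper A\mid I\subseteq\supp P\}\}.\]
Now the inclusion $\supseteq$ in the theorem is exactly the implication (b)$\implies$(a) of \ref{abstractrealnullstellensatz}, and $\subseteq$ is the implication (a)$\implies$(b) applied with the given ideal $I$.

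For the inclusion $\supseteq$ it is worth noting that one does not even need \ref{abstractrealnullstellensatz}: if $a^{2k}+s\in I$ with $s=\sum_i b_i^2\in\sum A^2$ and $\p\in\rspec A$ with $I\subseteq\p$, then passing to $A/\p$ yields $(\overline a^{\,k})^2+\sum_i\overline b_i^{\,2}=0$, and since $A/\p$ is real we get $\overline a^{\,k}=0$; since $\p$ is prime, $A/\p$ is a domain, hence $\overline a=0$, so $a\in\p$. The only delicate case is $k=0$, where the equation $1+s\in I$ forces $-1\in\sum(A/I)^2$, whence $A/I$ admits no homomorphism into a real field, so $\{\p\in\rspec A\mid I\subseteq\p\}=\emptyset$ and $\rrad I=A$ by convention.

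The nontrivial direction $\subseteq$ is the content of \ref{abstractrealnullstellensatz}, which has already been proved. There is therefore no genuine obstacle left: the whole difficulty was absorbed into the abstract real Stellensatz \ref{abstractstellensatz} (via Zorn's lemma and the localization argument in \ref{sperlocalize}), whose specialization to the ideal $I$, the multiplicative set $\{1,a,a^2,\dots\}$ and the preorder $\sum A^2$ gave \ref{abstractrealnullstellensatz}. The present theorem is thus merely a translation of that corollary into the language of the real radical via \ref{realidealchar}.
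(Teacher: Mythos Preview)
Your proposal is correct and follows exactly the paper's route: the paper's entire proof reads ``\ref{realidealchar} shows that this is just a reformulation of \ref{abstractrealnullstellensatz},'' which is precisely your argument of translating $\rrad I$ via \ref{realidealchar} into the condition $\widehat a=0$ on $\{P\in\sper A\mid I\subseteq\supp P\}$ and then invoking \ref{abstractrealnullstellensatz}. Your additional direct verification of the inclusion $\supseteq$ (including the careful handling of $k=0$) is a nice supplement that the paper omits, but it does not change the overall strategy.
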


\begin{proof}
\ref{realidealchar} shows that this is just a reformulation of \ref{abstractrealnullstellensatz}.
\end{proof}

\begin{rem} Let $I\subseteq A$ be an ideal. Then by \ref{intersectionreal}, $\rrad I$ is the smallest real
ideal of $A$ containing $I$.
\end{rem}

\begin{df} We call $\rnil A:=\bigcap\rspec A=\rrad(0)$ the \emph{real nilradical} of $A$.
\end{df}

\begin{cor} We have
\[
\{a\in A\mid\widehat a=0\}=\rnil A=\{a\in A\mid\exists k\in\N:\exists s\in\sum A^2:a^{2k}+s=0\}.\]
\end{cor}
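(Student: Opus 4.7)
The plan is to obtain both equalities by direct unwinding of the definitions together with the previously established results, primarily Proposition~\ref{realidealchar} and Theorem~\ref{rradchar}.

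First I would prove the left equality $\{a\in A\mid\widehat a=0\}=\rnil A$. For any $a\in A$ and $P\in\sper A$, the value $\widehat a(P)=\cc a{\supp P}\in R_P$ vanishes if and only if $a\in\supp P$, because the representation map $\rh_P$ has kernel $\supp P$ by \ref{kernelrep}. Hence $\widehat a=0$ on all of $\sper A$ is equivalent to $a\in\bigcap_{P\in\sper A}\supp P$. By \ref{realidealchar}, the image of $\supp\colon\sper A\to\spec A$ is precisely $\rspec A$, so this intersection equals $\bigcap\rspec A=\rrad(0)=\rnil A$ by definition.

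Next I would handle the right equality, which is essentially Theorem~\ref{rradchar} applied to $I=(0)$, except for the minor discrepancy that \ref{rradchar} allows $k\in\N_0$ while the corollary restricts to $k\in\N$. The only new case in \ref{rradchar} is $k=0$, giving $1+s=0$ with $s\in\sum A^2$, i.e.\ $-1\in\sum A^2$. In that situation, for any $a\in A$ one has $a^2+a^2s=a^2(1+s)=0$ with $a^2s\in A^2\cdot\sum A^2\subseteq\sum A^2$, so the element is also captured by the $k=1$ description. Conversely, any representation with $k\ge1$ is a fortiori a representation with $k\in\N_0$. Thus the two descriptions coincide.

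The argument has essentially no obstacle: everything is a formal consequence of previously stated results, with the only mild subtlety being the $k\in\N_0$ versus $k\in\N$ reconciliation, handled by the multiplication trick above.
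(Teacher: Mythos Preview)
Your proof is correct and follows exactly the route the paper intends; in fact the paper states this corollary without proof, treating both equalities as immediate from the definition of $\rnil A$, Theorem~\ref{rradchar} (with $I=(0)$), and the identification of $\{a\in A\mid\widehat a=0\}$ via \ref{kernelrep} and \ref{realidealchar}. Your handling of the $k\in\N_0$ versus $k\in\N$ discrepancy is a nice bit of care that the paper silently skips.
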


\section{Constructible sets}\label{sec:constructible}

In this section, we let $(A,T)$ always be a preordered ring [$\to$ \ref{preorderedring}(a)]. At the moment it
is a general one but after Proposition \ref{constructiblesetnormalform},
we will further specialize $(A,T)$.

\begin{df}{}[$\to$ \ref{introsemialg}]\label{introconstructible}
A Boolean combination [$\to$ \ref{booleanalgebra}(b)] of sets of the form
\[\{P\in\sper(A,T)\mid a\in P\}\qquad(a\in A)\]
 is called a \emph{constructible subset} of the real spectrum of $(A,T)$. We denote
the Boolean algebra of all constructible sets of $\sper(A,T)$ by $\mathcal C_{(A,T)}$. The analogous
definition remains in force for a commutative ring instead of a preordered ring $(A,T)$.
\end{df}

\begin{pro}{}\emph{[$\to$ \ref{sanf}]}\label{constructiblesetnormalform}
Every constructible subset of $\sper(A,T)$ is of the form
\[\bigcup_{i=1}^k
\left\{P\in\sper(A,T)\mid\widehat a_i(P)=0,\widehat b_{i1}(P)>0,\ldots,\widehat b_{im}(P)>0\right\}\]
for some $k,m\in\N_0$, $a_i,b_{ij}\in A$.
\end{pro}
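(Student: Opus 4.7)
The plan is to imitate the proof of Proposition~\ref{sanf}, reading it as a purely Boolean argument. By Definition~\ref{introconstructible} and \ref{booleanalgebra}(b), every constructible set is a finite union of finite intersections of generators $\{P\mid a\in P\}=\{P\mid\widehat a(P)\ge 0\}$ and their complements. The first step is to rewrite complements as strict positivity conditions: for $P\in\sper(A,T)$, from $P\cup-P=A$ and $P\cap-P=\supp P$ one sees that $a\notin P$ is equivalent to $-a\in P$ and $-a\notin\supp P$, i.e.\ to $\widehat{-a}(P)>0$. Hence any constructible set is already a finite union of sets of the form
\[
\{P\mid\widehat{h_1}(P)\ge 0,\dots,\widehat{h_s}(P)\ge 0,\widehat{h_{s+1}}(P)>0,\dots,\widehat{h_{s+t}}(P)>0\}
\]
with $s,t\in\N_0$ and $h_i\in A$.

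Next I would replace each condition $\widehat{h_i}\ge 0$ by the disjunction ``$\widehat{h_i}=0$ or $\widehat{h_i}>0$'', expanding via a union over $\delta\in\{0,1\}^s$. After this step each piece of the resulting union has the form
\[
\{P\mid\widehat{h_{i_1}}(P)=0,\dots,\widehat{h_{i_r}}(P)=0,\widehat{g_1}(P)>0,\dots,\widehat{g_u}(P)>0\}
\]
for certain $h_{i_k},g_j\in A$. To collapse the $r$ vanishing conditions into a single one, I invoke Proposition~\ref{realidealchar}: since $\supp P$ is a real ideal, we have
\[
\widehat{h_{i_1}}(P)=\dots=\widehat{h_{i_r}}(P)=0\iff\widehat{h_{i_1}^2+\dots+h_{i_r}^2}(P)=0,
\]
so a single equation $\widehat a(P)=0$ with $a:=h_{i_1}^2+\dots+h_{i_r}^2$ (and $a:=0$ if $r=0$) suffices. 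Finally, to enforce a common $m$ in the conclusion, I would pad each $u$-tuple of strict positivities with extra copies of the tautological condition $\widehat 1(P)>0$ up to the maximum value of $u$ appearing among the finitely many pieces of the union.

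The main obstacle, though modest, is the collapsing step: it relies on the support being a real ideal, which is exactly the content of Proposition~\ref{realidealchar}. Everything else is Boolean bookkeeping that follows the pattern of the proof of Proposition~\ref{sanf}, carried out inside $\sper(A,T)$ in place of $\mathcal R_n$.
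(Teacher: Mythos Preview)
Your argument is correct and follows exactly the route the paper intends: its proof is literally ``completely analogous to \ref{sanf}'', and you have spelled out that analogy in detail. The only cosmetic difference is that for the collapsing step the paper (via \ref{sanf}) implicitly uses that $\widehat{h_{i_1}^2+\dots+h_{i_r}^2}(P)=\widehat{h_{i_1}}(P)^2+\dots+\widehat{h_{i_r}}(P)^2$ vanishes in the ordered field $R_P$ iff each summand does, whereas you route this through $\supp P$ being a real ideal via \ref{realidealchar}; both justifications are equivalent.
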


\begin{proof} Completely analogous to \ref{sanf} using that
$a\in P\overset{\ref{sperprimecones}}\iff\widehat a(P)\ge0$ for all $a\in A$ and $P\in\sper A$.
\end{proof}

For the rest of this section, we fix an ordered field $(K,\le)$, denote by $R:=\overline{(K,\le)}$ its
real closure, we let $n\in\N_0$ and set $A:=K[\x]$ and $T:=\sum K_{\ge0}A^2$.
Then $(A,T)$ is a preordered ring and for all $P\in\sper(A,T)$ there is by \ref{unic} exactly
one homomorphism from $R$ to the representation field $R_P$ of $P$ extending $\rh_P|_K$
[$\to$ \ref{realrep}]. In virtue of this homomorphism, which is of course an embedding of ordered
fields, we interpret $R$ as an (ordered) subfield of $R_P$. In particular, we write $R=R_P$ if
it is an isomorphism.

\begin{pront}\label{rnassubsetofsper}
The correspondence
\begin{align*}
P&\mapsto x_P:=(\rh_P(X_1),\ldots,\rh_P(X_n))\\
\{f\in A\mid f(x)\ge0\}=:P_x&\mapsfrom x
\end{align*}
defines a bijection between $\{P\in\sper(A,T)\mid R_P=R\}$ and $R^n$.
\end{pront}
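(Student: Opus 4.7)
The plan is to verify in turn: (a) that $P_x \in \sper(A,T)$ with $R_{P_x}=R$ for every $x \in R^n$, and (b) that the two assignments are mutually inverse. The setup of the Proposition already guarantees $x_P \in R^n$ whenever $R_P = R$, so well-definedness of the forward map is immediate.

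For (a), observe that the evaluation map $\ev_x\colon A \to R,\ f \mapsto f(x)$, is a ring homomorphism, so by \ref{sperviahom} its preimage $P_x = \ev_x^{-1}(R^2)$ is a prime cone of $A$. To see $T \subseteq P_x$, note that every $t \in T$ has the form $\sum a_i f_i^2$ with $a_i \in K_{\ge 0}$ and $f_i \in A$, and since $K$ sits in the Euclidean field $R$ as an ordered subfield we have $K_{\ge 0} \subseteq R^2$ by \ref{notremsqrt}(b); hence $t(x) = \sum a_i f_i(x)^2 \in R^2$. Thus $P_x \in \sper(A,T)$.

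To show $R_{P_x}=R$, set $\p_x := \supp P_x$. Since $f \in P_x \cap -P_x$ if and only if $f(x)=0$, we have $\p_x = \ker \ev_x$, so $\ev_x$ factors through $A/\p_x$ as an injection and extends to an embedding of ordered fields $\io\colon (\qf(A/\p_x),(P_x)_{\p_x}) \hookrightarrow (R, R^2)$. Because $R_{P_x}$ is algebraic over $\qf(A/\p_x)$ by \ref{dfrealclosure}, Theorem \ref{unic} provides a unique ordered-field extension $\widetilde\io\colon R_{P_x} \to R$. The canonical embedding $j\colon R \hookrightarrow R_{P_x}$ (from the paragraph preceding the Proposition) makes $\widetilde\io \circ j\colon R \to R$ an ordered-field embedding restricting to $\id_K$; since $R/K$ is algebraic, the uniqueness half of \ref{unic} forces $\widetilde\io \circ j = \id_R$. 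Thus $\widetilde\io$ is surjective, hence bijective, and so is $j$; this gives $R = R_{P_x}$.

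Finally, for (b), the identification $R = R_{P_x}$ just established means $\rh_{P_x}$ coincides with $\ev_x$, so
\[
x_{P_x} = (\rh_{P_x}(X_1),\dots,\rh_{P_x}(X_n)) = (x_1,\dots,x_n) = x.
\]
Conversely, for $P \in \sper(A,T)$ with $R_P = R$, uniqueness in \ref{unic} forces $\rh_P|_K$ to be the inclusion $K \hookrightarrow R$, so $\rh_P(f) = f(\rh_P(X_1),\dots,\rh_P(X_n)) = f(x_P)$ for every $f \in K[\x]$; then by \ref{kernelrep},
\[
P_{x_P} = \{f \in A \mid f(x_P) \ge 0\} = \rh_P^{-1}(R^2) = P.
\]
The chief technical step is the identification $R_{P_x} = R$, which does not follow from the definitions but genuinely requires the uniqueness-of-extension clause of Theorem \ref{unic} applied to the composition of the two ordered-field maps between $R$ and $R_{P_x}$.
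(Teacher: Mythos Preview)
Your proof is correct and follows essentially the same approach as the paper's: both establish $R_{P_x}=R$ by extending the evaluation-induced embedding $\qf(A/\p_x)\hookrightarrow R$ to a map $R_{P_x}\to R$ via \ref{unic}, then use the uniqueness clause of \ref{unic} to force this map (composed with the canonical $R\hookrightarrow R_{P_x}$) to be the identity on $R$. Your version is slightly more explicit in naming the embedding $j$ and the composition $\widetilde\io\circ j$, whereas the paper writes $\psi|_R$ for the same thing, but the arguments are the same.
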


\begin{proof} We first show that both maps are well-defined.
For every $P\in\sper(A,T)$ with $R_P=R$, we have $x_P\in R^n$ under the identification
of $R_P$ and $R$. Conversely, let $x\in R^n$. Consider the ring homomorphism
\[\ph\colon A\to R,\ f\mapsto f(x).\]
Then $P_x=\ph^{-1}(R^2)=(\sper\ph)(R_{\ge0})\in\sper A$ [$\to$ \ref{sperfunctor}, \ref{newlang}(a)].
Obviously, $K_{\ge0}\subseteq P_x$ and therefore $P_x\in\sper(A,T)$. In order to show
$R_{P_x}=R$, we set $\p:=\supp P_x$ and consider the homomorphism of ordered fields
\[(\qf(A/\p),(P_x)_\p)\to(R,R^2),\ \frac{\cc a\p}{\cc s\p}\mapsto\frac{a(x)}{s(x)}\qquad(a\in A,s\in A\setminus\p)\]
induced by $\ph$ according to \ref{sperfunctor} taking into account \ref{newlang}. Since $R$ is real closed,
this homomorphism extends (uniquely) to a homomorphism of (ordered) fields
\[\ps\colon R_{P_x}=\overline{(\qf(A/\p),(P_x)_\p)}\to R.\]
We obviously have $\ps|_K=\id$ and therefore $\ps|_R$ is a $K$-endomorphism of the real closure $R$ of
$(K,\le)$ which can only be the identity by \ref{unic}. The injectivity of $\ps$ now implies
$R_{P_x}=R$ as desired. For later use we note that $\ps=\id_R$ implies
\begin{equation}
\tag{$*$}\cc f\p=\ps^{-1}(\ps(\cc f\p))=\ps^{-1}(f(x))=f(x)
\end{equation}
for all $f\in A$.

\smallskip
It remains to show that both maps are inverse to each other. This means:
\begin{enumerate}[(a)]
\item $P=P_{x_P}$ for all $P\in\sper(A,T)$ with $R_P=R$
\item $x=x_{P_x}$ for all $x\in R^n$
\end{enumerate}

To show (a), let $P\in\sper(A,T)$ such that $R_P=R$. Then
\begin{align*}
P_{x_P}&=\{f\in A\mid f(\rh_P(X_1),\dots,\rh_P(X_n))\ge0\text{ in $R$}\}\\
&=\{f\in A\mid\rh_P(f)\ge0\text{ in $R_P$}\}=
\{f\in A\mid\widehat f(P)\ge0\}=P.
\end{align*}

To show (b), we let $x\in R^n$. Then $\cc{X_i}{\supp P_x}\overset{(*)}=x_i\in R$ for all $i\in\{1,\dots,n\}$.
Consequently, $x_{P_x}=(\rh_{P_x}(X_1),\dots,\rh_{P_x}(X_n))=
(\cc{X_1}{\supp P_x},\dots,\cc{X_n}{\supp P_x})=(x_1,\dots,x_n)=x$.
\end{proof}

\begin{thmdef}{}\emph{[$\to$ \ref{introsn}, \ref{setification}]}\label{slimfatten}
Let $n\in\N_0$ and denote again
by $\mathcal S_{n,R}$ the Boolean algebra of all $K$-semialgebraic subsets of
$R^n$. Then
\[\slim\colon\mathcal C_{(A,T)}\to\mathcal S_{n,R},\ C\mapsto\{x\in R^n\mid P_x\in C\}\]
is an isomorphism of Boolean algebras. We call $\slim$ the \emph{despectrification} or \emph{slimming}
(in German: \emph{Entspeckung}) 
and \[\fatten:=\slim^{-1}\] the \emph{spectrification} or \emph{fattening}
(in German: \emph{Verspeckung}). For all $f\in A$, one has
\[\slim(\{P\in\sper(A,T)\mid f\in P\})=\{x\in R^n\mid f(x)\ge0\}.\]
\end{thmdef}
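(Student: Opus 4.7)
The plan is to verify the Boolean-algebra homomorphism properties almost for free and then deduce bijectivity from the generators. First I would observe that $\slim$ is automatically a homomorphism of Boolean algebras because it is the pullback $C\mapsto\{x\in R^n\mid P_x\in C\}$ along the map $R^n\to\sper(A,T)$, $x\mapsto P_x$, from \ref{rnassubsetofsper}, and pullbacks commute with $\emptyset$, complementation, intersection and union. The displayed identity
\[
\slim(\{P\in\sper(A,T)\mid f\in P\})=\{x\in R^n\mid f(x)\ge0\}
\]
is immediate from the defining equivalence $f\in P_x\iff f(x)\ge0$ built into \ref{rnassubsetofsper}. Since the right-hand sides generate $\mathcal S_{n,R}$ as a Boolean algebra, this identity simultaneously shows that $\slim$ takes values in $\mathcal S_{n,R}$ and that $\slim$ is surjective.

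For injectivity, by \ref{charemb} it suffices to show that a nonempty $C\in\mathcal C_{(A,T)}$ has $\slim(C)\ne\emptyset$. Using the normal form \ref{constructiblesetnormalform} and picking a nonempty disjunctive constituent, I may assume
\[
C=\{P\in\sper(A,T)\mid\widehat a(P)=0,\widehat{b_1}(P)>0,\dots,\widehat{b_m}(P)>0\}
\]
for some $a,b_1,\dots,b_m\in A$. If $P\in C$, then $y:=(\rh_P(X_1),\dots,\rh_P(X_n))\in R_P^n$ satisfies $a(y)=\widehat a(P)=0$ and $b_j(y)=\widehat{b_j}(P)>0$, so the $K$-semialgebraic subset
\[
S_{R_P}:=\{y\in R_P^n\mid a(y)=0,b_1(y)>0,\dots,b_m(y)>0\}
\]
of $R_P^n$ is nonempty. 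Enlarging the ambient family of real closed fields to contain both $R$ and $R_P$ (the claim is independent of this choice), the transfer isomorphism $\transfer_{R_P,R}$ from \ref{transfer}, which by \ref{charemb} reflects nonemptyness, produces $x\in R^n$ with $a(x)=0$ and $b_j(x)>0$; that is, $P_x\in C$, so $x\in\slim(C)$.

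The main obstacle is exactly this last step: the prime cone $P$ a priori lives over a much larger real closed field $R_P$ (typically non-Archimedean over $R$), and there is no direct way to read off a point of $R^n$ from $P$. The bridging is carried out by the transfer principle \ref{transfer}, whose content in turn rests on real quantifier elimination \ref{elim} together with the fact \ref{nothingorall} that the only $0$-ary semialgebraic classes are $\emptyset$ and the full class. Once this machinery is invoked, everything else is routine bookkeeping.
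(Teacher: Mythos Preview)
Your proof is correct. The homomorphism and surjectivity parts match the paper verbatim, and your injectivity argument via \ref{charemb}, the normal form \ref{constructiblesetnormalform}, and an explicit application of the transfer \ref{transfer} between $R_P$ and $R$ goes through without gaps.

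The paper takes a somewhat different route for bijectivity. Instead of your direct transfer argument, it introduces an auxiliary homomorphism
\[
\Phi\colon\mathcal S_n\to\mathcal C_{(A,T)},\quad S\mapsto\{P\in\sper(A,T)\mid(R_P,(\rh_P(X_1),\dots,\rh_P(X_n)))\in S\}
\]
from the Boolean algebra of $(K,\le)$-semialgebraic \emph{classes}, checks on generators that $\Phi$ is surjective and that $\slim\circ\,\Phi=\set_R$, and then invokes the already-established isomorphism $\set_R$ from \ref{setification} to conclude that $\Phi$, and hence $\slim=(\slim\circ\,\Phi)\circ\Phi^{-1}$, is an isomorphism. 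This avoids unpacking $C$ into a normal form and running transfer pointwise; it packages the whole argument into a single composition identity. Your approach, by contrast, is more concrete and makes the role of the individual prime cone $P$ and its representation field $R_P$ explicit. Both rest on exactly the same underlying input---real quantifier elimination via \ref{setification}/\ref{transfer}---so neither is more elementary; the paper's is terser, yours is more transparent about where the witness point in $R^n$ actually comes from.
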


\begin{proof} It is obvious that $\slim$ is a homomorphism of Boolean algebras [$\to$ \ref{bahom}]
satisfying $\slim(\{P\in\sper(A,T)\mid f\in P\})=\{x\in R^n\mid f\in P_x\}=\{x\in R^n\mid f(x)\ge0\}$ for
all $f\in A$. Let $\mathcal R\supseteq\{R_P\mid P\in\sper(A,T)\}$ be a set of real closed fields
that are ordered extension fields of $(K,\le)$ [$\to$ \ref{rcfclass}(b)]. Let $\mathcal S_n$ again denote the
Boolean algebra of all $(K,\le)$-semialgebraic classes [$\to$ \ref{introsn}] and consider
\[\Ph\colon\mathcal S_n\to\mathcal C_{(A,T)},\ S\mapsto\{P\in\sper(A,T)\mid(R_P,(\rh_P(X_1),\dots,
\rh_P(X_n)))\in S\}.\]
It is obvious that $\Ph$ is a homomorphism of Boolean algebras satisfying
\begin{multline*}
\Ph(\{(R',x)\mid R'\in\mathcal R,x\in R'^n,f(x)\ge0\text{ in }R'\})\\
=\{P\in\sper(A,T)\mid f(\rh_P(X_1),\dots,\rh_P(X_n))\ge0\text{ in }R_P\}\\
=\{P\in\sper(A,T)\mid\rh_P(f)\ge0\text{ in }R_P\}\\
=\{P\in\sper(A,T)\mid\widehat f(P)\ge0\}=\{P\in\sper(A,T)\mid f\in P\}
\end{multline*}
for all $f\in A$. From this one sees, in the first place, that $\Ph$ is surjective and, secondly,
that $\slim\circ\,\Ph=\set_R$ [$\to$ \ref{introsn}]
which is an isomorphism of Boolean algebras by \ref{setification}.
Along with $\set_R$, $\Ph$ is also injective. We conclude that $\Ph$ is an isomorphism and
with it $\slim=(\slim\circ\,\Ph)\circ\;\Ph^{-1}$.
\end{proof}

\begin{ex}\label{sperrx}
In \ref{specsperex}, we have already described $\sper\R[X]$. Now we describe $\sper\R[X]$ as a set of
prime cones [$\to$ \ref{newlang}] while using \ref{ordersrx}: For $t\in\R$, we set
\begin{align*}
P_{t-}&:=\{f\in\R[X]\mid\exists\ep\in\R_{>0}:\forall x\in(t-\ep,t):f(x)\ge0\},\\
P_t&:=\{f\in\R[X]\mid f(t)\ge0\}\quad\text{and}\\
P_{t+}&:=\{f\in\R[X]\mid\exists\ep\in\R_{>0}:\forall x\in(t,t+\ep):f(x)\ge0\}
\end{align*}
Finally, we set
\begin{align*}
P_{-\infty}&:=\{f\in\R[X]\mid\exists c\in\R:\forall x\in(-\infty,c):f(x)\ge0\}\quad\text{and}\\
P_{\infty}&:=\{f\in\R[X]\mid\exists c\in\R:\forall x\in(c,\infty):f(x)\ge0\}.
\end{align*}
Then \[\sper\R[X]=\{P_{-\infty},P_\infty\}\cup\{P_{t-}\mid t\in\R\}\cup\{P_t\mid t\in\R\}\cup
\{P_{t+}\mid t\in\R\}.\] The fattening of the semialgebraic set $[0,1)\subseteq\R$ is the set
\begin{align*}
C:=\{P_0,P_{0+}\}&\cup\{P_{t-}\mid t\in(0,1)\}\cup\{P_t\mid t\in(0,1)\}\\
&\cup\{P_{t+}\mid t\in(0,1)\}\cup\{P_{1-}\}
\subseteq\sper\R[X].
\end{align*}
In particular, $C$ is constructible. In contrast to this, $C':=C\setminus\{P_{1-}\}$ is
not constructible for otherwise $C$ and $C'$ would have the same slimming in contradiction to
\ref{slimfatten}.
\end{ex}

\section{Real Stellensätze}\label{sec:realstellensaetze}

\begin{rem}\label{idealmultiplicativesetpreorder}
Let $A$ be a commutative ring.
\begin{enumerate}[(a)]
\item Since every intersection of \alalal{ideals}{multiplicative sets}{preorders} of $A$ is again
\alalal{an ideal}{a multiplicative set}{a preorder} of $A$, there exists for every subset $E\subseteq A$
\alalal{a smallest ideal}{a smallest multiplicative set}{a smallest preorder} of $A$ containing $E$.
It is called the \alalal{ideal}{multiplicative set}{preorder} \emph{generated} by $E$.
\item \alalal{An ideal}{A multiplicative set}{A preorder} of $A$ is called \emph{finitely generated}
if it is generated by a finite subset of $A$.
\item The \alalal{ideal}{multiplicative set}{preorder} generated by $a_1,\dots,a_m\in A$
(i.e., by $\{a_1,\dots,a_m\}\subseteq A$) is $\malalal{Aa_1+\ldots+Aa_m}
{\{a_1^{\al_1}\dotsm a_m^{\al_m}\mid\al\in\N_0^m\}}{\sum_{\de\in\{0,1\}^m}\sum A^2a_1^{\de_1}\dotsm
a_m^{\de_m}}$.
\item If \alalal{an ideal}{a multiplicative set}{a preorder} of $A$ is generated by $E\subseteq A$, then it is the union
over all \alalal{ideals}{multiplicative sets}{preorders} of $A$ generated by a finite subset of $E$.
\item If \alalal{an ideal $I$}{a multiplicative set $S$}{a preorder $T$}$\,\subseteq A$ is generated by
$E\subseteq A$ and if $P\in\sper A$, then
$\malalal{\forall a\in I:\widehat a(P)=0}{\forall s\in S:\widehat s(P)\ne0}{\forall t\in T:\widehat t(P)\ge0}\iff
\malalal{\forall a\in E:\widehat a(P)=0}{\forall s\in E:\widehat s(P)\ne0}{\forall t\in E:\widehat t(P)\ge0}$.
\end{enumerate}
\end{rem}

\begin{rem}\label{idealmultiplicativesetpreorderpolynomialring}
Let $(L,\le)$ be an ordered field and $K$ a subfield of $L$. If
\alalal{an ideal $I$}{a multiplicative set $S$}{a preorder $T$}$\subseteq K[\x]$ is generated
by $E\subseteq K[\x]$ and if $x\in L^n$, then
\[\malalal{\forall g\in I:g(x)=0}{\forall h\in S:h(x)\ne0}{\forall f\in T:f(x)\ge0}\iff
\malalal{\forall g\in E:g(x)=0}{\forall h\in E:h(x)\ne0}{\forall f\in E:f(x)\ge0}.\]
\end{rem}

\begin{remterm}
\begin{enumerate}[(a)]
\item ``over $B$ generated by $E$'' stands for ``generated by $B\cup E$''
\item ``over $B$ finitely generated'' stands for ``generated by $B\cup E$ for some finite set $E$''
\item If $(K,\le)$ is an ordered field and $n\in\N_0$, then the preorder generated by
$p_1,\dots,p_m\in K[\x]$ over $K_{\ge0}$ equals
$\sum_{\de\in\{0,1\}^m}\sum K_{\ge0}K[\x]^2p_1^{\de_1}\dotsm p_m^{\de_m}$
[$\to$ \ref{idealmultiplicativesetpreorder}(c)].
\end{enumerate}
\end{remterm}

\begin{pro} Let $(K,\le)$ be an ordered field, $R:=\overline{(K,\le)}$ and $n\in\N_0$
\emph{[$\to$ \ref{rnassubsetofsper}]}. Let $I$ be an ideal, $S$ a \emph{finitely generated}
multiplicative set and $T$ a preorder of $K[\x]$ \emph{finitely generated over $K_{\ge0}$}. Then
\[\{P\in\sper K[\x]\mid(\forall g\in I:\widehat g(P)=0),
(\forall h\in S:\widehat h(P)\ne0),(\forall f\in T:\widehat f(P)\ge0)\}\]
is a constructible subset of $\sper(K[\x],\sum K_{\ge0}K[\x]^2)$ whose
slimming is the $K$-semialgebraic set
\[\{x\in R^n\mid(\forall g\in I:g(x)=0),
(\forall h\in S:h(x)\ne0),(\forall f\in T:f(x)\ge0)\}.\]
\end{pro}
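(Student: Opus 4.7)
The plan is to reduce the three universally quantified conditions in the definition of the set—call it $C$—to finitely many conditions on finite generating sets, then read off constructibility and compute the slimming via the isomorphism of \ref{slimfatten}.

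First I would pick finite generating sets. By Hilbert's basis theorem $K[\x]$ is Noetherian, so $I$ is finitely generated, say by $g_1,\dots,g_\ell$; by hypothesis $S$ is generated as a multiplicative set by some $h_1,\dots,h_k$, and $T$ is generated as a preorder over $K_{\ge0}$ by some $p_1,\dots,p_m$. Note that by \ref{idealmultiplicativesetpreorder}(c) the empty product $\delta=0$ already gives $\sum K_{\ge0}K[\x]^2\subseteq T$, so every $P\in C$ contains $\sum K_{\ge0}K[\x]^2$ and hence $C\subseteq\sper(K[\x],\sum K_{\ge0}K[\x]^2)$.

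Next, for each $P\in\sper K[\x]$ I apply \ref{idealmultiplicativesetpreorder}(e) to the three generating sets: the condition $\forall g\in I:\widehat g(P)=0$ is equivalent to $\widehat{g_i}(P)=0$ for $i=1,\dots,\ell$ (since $\supp P$ is an ideal); the condition $\forall h\in S:\widehat h(P)\ne 0$ is equivalent to $\widehat{h_j}(P)\ne 0$ for $j=1,\dots,k$ (since $\supp P$ is prime); and $\forall f\in T:\widehat f(P)\ge 0$, i.e.\ $T\subseteq P$, is equivalent to $p_r\in P$ for $r=1,\dots,m$ (using $\sum K_{\ge0}K[\x]^2\subseteq P$ together with closure of $P$ under addition, multiplication and the explicit shape of $T$ from \ref{idealmultiplicativesetpreorder}(c)). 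Noting that $\{P\mid\widehat a(P)=0\}=\{P\mid a\in P\}\cap\{P\mid -a\in P\}$, this rewrites
\[
C=\bigcap_{i=1}^\ell\bigl(\{P\mid g_i\in P\}\cap\{P\mid-g_i\in P\}\bigr)\cap\bigcap_{j=1}^k\complement\bigl(\{P\mid h_j\in P\}\cap\{P\mid -h_j\in P\}\bigr)\cap\bigcap_{r=1}^m\{P\mid p_r\in P\},
\]
which is a Boolean combination of sets $\{P\mid a\in P\}$ with $a\in K[\x]$ and therefore constructible by \ref{introconstructible}.

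For the slimming I invoke \ref{slimfatten}: $\slim$ is an isomorphism of Boolean algebras sending $\{P\mid a\in P\}$ to $\{x\in R^n\mid a(x)\ge 0\}$. Applying $\slim$ termwise to the Boolean expression above and using that $\slim$ preserves complements and finite intersections gives
\[
\slim(C)=\{x\in R^n\mid g_i(x)=0,\ h_j(x)\ne 0,\ p_r(x)\ge 0\text{ for all }i,j,r\},
\]
and one last application of \ref{idealmultiplicativesetpreorderpolynomialring} converts this finite list back into the quantified form over $I$, $S$, $T$ claimed in the statement. There is no serious obstacle: the argument is pure bookkeeping, with the finite-generation hypotheses on $S$ and $T$ (and Noetherianity for $I$) being precisely what ensures that infinite conjunctions collapse to finite Boolean combinations.
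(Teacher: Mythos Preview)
Your proposal is correct and follows essentially the same approach as the paper's proof, which is a terse one-liner invoking Hilbert's basis theorem together with \ref{idealmultiplicativesetpreorder}, \ref{slimfatten} and \ref{idealmultiplicativesetpreorderpolynomialring}. You have simply spelled out in detail what the paper leaves implicit, including the useful observation that $\sum K_{\ge0}K[\x]^2\subseteq T$ forces $C\subseteq\sper(K[\x],\sum K_{\ge0}K[\x]^2)$.
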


\begin{proof}
By Hilbert's basis theorem, $I$ is finitely generated as well. Now use
\ref{idealmultiplicativesetpreorder}, \ref{slimfatten} and
\ref{idealmultiplicativesetpreorderpolynomialring}.
\end{proof}

\begin{thm}[real Stellensatz \cite{kri,ste,pre}]\emph{[$\to$ \ref{abstractstellensatz}]}\label{stellensatz}
Let $(K,\le)$ be an ordered subfield of the real closed field $R$, $n\in\N_0$,
$I$ an ideal of $K[\x]$, $S$ a \emph{finitely generated}
multiplicative set of $K[\x]$ and $T$ a preorder of $K[\x]$ \emph{finitely generated over $K_{\ge0}$}.
Then the following are equivalent:
\begin{enumerate}[\normalfont(a)]
\item There does \emph{not} exist any $x\in R^n$ satisfying
\begin{align*}
\forall g\in I&:g(x)=0,\\
\forall h\in S&:h(x)\ne0\quad\text{and}\\
\forall t\in T&:t(x)\ge0.
\end{align*}
\item $0\in I+S^2+T$
\end{enumerate}
\end{thm}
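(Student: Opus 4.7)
The implication (b)$\implies$(a) is immediate: if $g + h^2 + t = 0$ for some $g \in I$, $h \in S$ and $t \in T$, and some $x \in R^n$ satisfied the three conditions in (a), evaluating at $x$ would give $0 = g(x) + h(x)^2 + t(x) \ge h(x)^2 > 0$, a contradiction. So all the content is in (a)$\implies$(b), which I plan to prove by contraposition: assuming $0 \notin I + S^2 + T$, I will produce an $x \in R^n$ satisfying the three conditions of (a).

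The first step is to apply the abstract real Stellensatz \ref{abstractstellensatz} to the commutative ring $A := K[\x]$ with the given ideal $I$, multiplicative set $S$ and preorder $T$. Since $0 \notin I + S^2 + T$, condition (b) of \ref{abstractstellensatz} fails, so its condition (a) fails as well, yielding some $P \in \sper K[\x]$ with $\widehat g(P) = 0$ for all $g \in I$, $\widehat h(P) \ne 0$ for all $h \in S$, and $T \subseteq P$. Because $T$ is, by hypothesis, generated as a preorder over $K_{\ge 0}$, we have $\sum K_{\ge 0} K[\x]^2 \subseteq T \subseteq P$, so $P$ in fact belongs to $\sper(K[\x],\, \sum K_{\ge 0} K[\x]^2)$.

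The second step turns this abstract prime cone into a genuine real point via the slimming/fattening isomorphism \ref{slimfatten}. Since $I$ is finitely generated (Hilbert's basis theorem) and $S$ and $T$ are by hypothesis finitely generated (the latter over $K_{\ge 0}$), the set
\[
C := \{Q \in \sper(K[\x],\textstyle\sum K_{\ge 0}K[\x]^2) \mid \widehat g(Q)=0,\ \widehat h(Q)\ne 0,\ \widehat t(Q)\ge 0 \text{ for all } g \in I, h \in S, t \in T\}
\]
is a constructible subset of the real spectrum (by \ref{constructiblesetnormalform} together with \ref{idealmultiplicativesetpreorder}), and the computation outlined in \ref{idealmultiplicativesetpreorderpolynomialring} identifies its slimming with the $K$-semialgebraic set described by the same three conditions in (a). Since $P \in C$, the set $C$ is nonempty, and since $\slim$ is a bijection of Boolean algebras, $\slim(C) \subseteq R^n$ is nonempty as well, yielding the desired $x$.

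The main obstacle is conceptual rather than computational: one must reinterpret the existence statement living inside the abstract real spectrum as a concrete existence statement inside $R^n$. That is exactly what the slimming isomorphism \ref{slimfatten} achieves, and it in turn rests on the quantifier elimination and transfer machinery developed in Sections \ref{sec:qe} and \ref{sec:constructible}. The finite generation hypotheses on $S$ and $T$ (and Noetherianity for $I$) are precisely what guarantee that the relevant subset of the real spectrum is constructible, so that the $\slim$/$\fatten$ correspondence with $K$-semialgebraic subsets of $R^n$ is actually available.
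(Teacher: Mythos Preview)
Your argument is correct and follows essentially the same route as the paper: invoke the abstract Stellensatz \ref{abstractstellensatz} and then translate between the real spectrum and $R^n$ via the $\slim$/$\fatten$ isomorphism of \ref{slimfatten}. One small point you glossed over: the machinery of \S\ref{sec:constructible} and in particular \ref{slimfatten} is set up specifically for $R=\overline{(K,\le)}$, so your slimming a priori lands in $\overline{(K,\le)}^n$, not in the given $R^n$. The paper handles this by first reducing (via \ref{relcl}) to the case $R=\overline{(K,\le)}$; you should insert the same reduction, which is harmless since $\overline{(K,\le)}$ embeds into $R$.
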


\begin{proof}
\underline{(b)$\implies$(a)} is trivial.

\smallskip
\underline{(a)$\implies$(b)}\quad WLOG $R=\overline{(K,\le)}$ [$\to$ \ref{relcl}].
Because the fattening of the empty set is empty by \ref{slimfatten} [$\to$ \ref{bahom}], (a) implies
Condition \ref{abstractstellensatz}(a) from the abstract real Stellensatz applied to
$A:=K[\x]$.
\end{proof}

\begin{cor}[Positivstellensatz]\emph{[$\to$ \ref{abstractpositivstellensatz}]}\label{positivstellensatz}
Let $(K,\le)$ be an ordered subfield of the real closed field $R$, $n\in\N_0$,
$T$ a preorder of $K[\x]$ \emph{finitely generated over $K_{\ge0}$},
\[S:=\{x\in R^n\mid\forall p\in T:p(x)\ge0\}\] and $f\in K[\x]$.
Then the following are equivalent:
\begin{enumerate}[\normalfont(a)]
\item $f>0$ on $S$
\item $\exists t\in T:tf\in1+T$
\item $\exists t\in T:(1+t)f\in1+T$
\end{enumerate}
\end{cor}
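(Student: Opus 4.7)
The plan is to mirror, at the level of polynomials, the proof of the abstract Positivstellensatz \ref{abstractpositivstellensatz}, with the real Stellensatz \ref{stellensatz} taking over the role played there by \ref{abstractstellensatz}. So I will prove (b)$\Longrightarrow$(c)$\Longrightarrow$(a)$\Longrightarrow$(b).

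For (b)$\Longrightarrow$(c), I will simply lift the algebraic identity from the abstract proof: if $tf=1+t'$ with $t,t'\in T$, then
\[
(1+t+t')f=tf+(1+t')f=(1+t')+(1+t')f=1+t'+tf\cdot f=1+t'+tf^2,
\]
and since $f^2\in T$, the term $t'+tf^2$ lies in $T$, giving (c) with $1+t+t'\in 1+T$.

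For (c)$\Longrightarrow$(a), the point is that every element of $T$ is nonnegative on $S$: this holds for the generators by the definition of $S$, and then for an arbitrary element of $T$ by \ref{idealmultiplicativesetpreorderpolynomialring}. Hence if $(1+t)f=1+s$ with $t,s\in T$, then at any $x\in S$ we have $1+t(x)\ge 1$ and $1+s(x)\ge 1$, so $f(x)=(1+s(x))/(1+t(x))>0$.

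The heart of the argument is (a)$\Longrightarrow$(b). I will apply \ref{stellensatz} to the ideal $I:=(0)$, the multiplicative set $S':=\{1\}$, and the preorder $T':=T-fT$ of $K[\x]$. Since $T$ is finitely generated over $K_{\ge 0}$, adding the single generator $-f$ keeps $T'$ finitely generated over $K_{\ge 0}$, so \ref{stellensatz} applies. Condition (a) of \ref{stellensatz} for this data asks that no $x\in R^n$ satisfy $p(x)\ge 0$ for all $p\in T$ and $-f(x)\ge 0$; but such an $x$ would lie in $S$ with $f(x)\le 0$, contradicting our hypothesis. Hence (b) of \ref{stellensatz} holds, yielding $0\in(0)+1+T'=1+T-fT$, i.e.\ $-1=s-fu$ for some $s,u\in T$, which rearranges to $uf=1+s\in 1+T$, proving (b) with $t:=u$.

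The only nontrivial step is the reduction of (a) to the hypothesis of \ref{stellensatz}; everything else is formal. There is no real obstacle, but one must be careful that enlarging $T$ to $T-fT$ preserves the finite-generation-over-$K_{\ge 0}$ hypothesis required by \ref{stellensatz}, which it does because one only adjoins the single generator $-f$.
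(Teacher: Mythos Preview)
Your proof is correct and follows precisely the first of the two approaches the paper indicates (deriving the result from \ref{stellensatz} by mimicking the proof of \ref{abstractpositivstellensatz} from \ref{abstractstellensatz}). The algebraic identity for (b)$\Longrightarrow$(c), the application of \ref{stellensatz} to $I=(0)$, $S'=\{1\}$, $T'=T-fT$ for (a)$\Longrightarrow$(b), and the verification that $T-fT$ remains finitely generated over $K_{\ge0}$ are all exactly as intended.
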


\begin{proof} Alternatively from \ref{stellensatz} (as \ref{abstractpositivstellensatz} from
\ref{abstractstellensatz}) or from \ref{abstractpositivstellensatz}
(as \ref{stellensatz} from \ref{abstractstellensatz} using \ref{slimfatten}).
\end{proof}

\begin{cor}[Nichtnegativstellensatz]\emph{[$\to$ \ref{abstractnichtnegativstellensatz}]}\label{nichtnegativstellensatz}
Let $(K,\le)$ be an ordered subfield of the real closed field $R$, $n\in\N_0$,
$T$ a preorder of $K[\x]$ \emph{finitely generated over $K_{\ge0}$},
\[S:=\{x\in R^n\mid\forall p\in T:p(x)\ge0\}\] and $f\in K[\x]$.
Then the following are equivalent:
\begin{enumerate}[\normalfont(a)]
\item $f\ge0$ on $S$
\item $\exists t\in T:\exists k\in\N_0:tf\in f^{2k}+T$
\end{enumerate}
\end{cor}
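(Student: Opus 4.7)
The plan is to mimic the derivation of the abstract Nichtnegativstellensatz \ref{abstractnichtnegativstellensatz} from the abstract real Stellensatz \ref{abstractstellensatz}, replacing the latter by its concrete polynomial avatar \ref{stellensatz}. Concretely, I would apply \ref{stellensatz} with the data $I := (0)$, the finitely generated multiplicative set $M := \{f^k \mid k \in \N_0\}$, and the preorder $T' := T - fT$. By \ref{againpreorder} the set $T'$ is indeed a preorder, and if $T$ is generated as a preorder over $K_{\ge 0}$ by $p_1,\dots,p_m$, then $T'$ is generated over $K_{\ge 0}$ by $p_1,\dots,p_m,-f$, so the finite-generation hypothesis of \ref{stellensatz} is satisfied.

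Direction (b)$\implies$(a) is handled by direct evaluation: given $tf = f^{2k}+s$ with $t,s\in T$ and $x\in S$, we get $t(x)f(x) = f(x)^{2k}+s(x)$ where both summands on the right are nonnegative and $t(x)\ge 0$. If $f(x)<0$ then the left side is $\le 0$, so both sides must vanish; but then $f(x)^{2k}=0$ forces $f(x)=0$ when $k\ge 1$, while for $k=0$ we would obtain $0 = 1 + s(x) \ge 1$. Either way we reach a contradiction, so $f \ge 0$ on $S$.

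For (a)$\implies$(b), the key step is to recognise that condition \ref{stellensatz}(a) for the triple $(I,M,T')$ chosen above is precisely the assumption $f\ge 0$ on $S$: the ideal condition is vacuous, $h(x)\ne 0$ for every $h\in M$ amounts to $f(x)\ne 0$, and $p(x)\ge 0$ for every $p\in T'$ reduces via \ref{idealmultiplicativesetpreorderpolynomialring} to $p_1(x)\ge 0,\dots,p_m(x)\ge 0,-f(x)\ge 0$, i.e.\ $x\in S$ together with $f(x)\le 0$. The resulting conclusion \ref{stellensatz}(b) then reads $0 = f^{2k} + t_1 - f t_2$ for some $k\in\N_0$ and $t_1,t_2\in T$, which rearranges to $ft_2 = f^{2k}+t_1 \in f^{2k}+T$, and that is exactly (b) with $t := t_2$.

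I do not anticipate any real obstacle: the entire argument is a mechanical transcription of the abstract proof of \ref{abstractnichtnegativstellensatz}. The only points requiring a bit of care are checking that $T - fT$ remains finitely generated over $K_{\ge 0}$ and translating the ``$\exists x \in R^n$'' clause of \ref{stellensatz} into ``$\exists x \in S$ with $f(x) < 0$''; both are routine book-keeping.
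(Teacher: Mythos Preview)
Your proof is correct and follows precisely the first of the two routes the paper indicates: deriving the concrete Nichtnegativstellensatz from the real Stellensatz \ref{stellensatz} by the same specialisation ($I=(0)$, the multiplicative set $\{f^k\mid k\in\N_0\}$, the preorder $T-fT$) that was used in the abstract setting \ref{abstractnichtnegativstellensatz}. All the bookkeeping you flag (that $T-fT$ is the preorder generated over $K_{\ge0}$ by the old generators together with $-f$, and the translation of \ref{stellensatz}(a) into ``no $x\in S$ with $f(x)<0$'') is handled correctly.
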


\begin{proof} Alternatively from \ref{stellensatz} (as \ref{abstractnichtnegativstellensatz} from
\ref{abstractstellensatz}) or from \ref{abstractnichtnegativstellensatz}
(as \ref{stellensatz} from \ref{abstractstellensatz} using \ref{slimfatten}).
\end{proof}

\begin{rem} In the special case $T=\sum K_{\ge0}K[\x]^2$, the Nichtnegativstellensatz
\ref{nichtnegativstellensatz} is obviously a strengthening of Artin's solution \ref{artin}
to Hilbert's 17th problem in which Condition (b) is refined. This refinement has the advantage that
the proof of (b)$\implies$(a) does not require a real argument as it was the case in \ref{artin}.
The proof of \ref{nichtnegativstellensatz} requires prime cones of rings instead of just preorders of fields
and therefore is substantially more difficult as the proof of \ref{artin}.
\end{rem}

\begin{cor}[real Nullstellensatz  \cite{kri,du2,ris,efr}]\emph{[$\to$ \ref{abstractrealnullstellensatz}]}\label{realnullstellensatz}
Let $K$ be a Euclidean subfield of the real closed field $R$, $n\in\N_0$,
$I$ an ideal of $K[\x]$ and
\[V:=\{x\in R^n\mid\forall p\in I:p(x)=0\}.\]
Then $\{f\in K[\x]\mid f=0\text{ on }V\}=\rrad I$.
\end{cor}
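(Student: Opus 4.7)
The plan is to get both inclusions straight from the preceding Stellensätze, using the characterization of $\rrad I$ from \ref{rradchar} and the real Stellensatz \ref{stellensatz}.

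For the inclusion ``$\supseteq$'', I would take $f\in\rrad I$ and invoke \ref{rradchar} to obtain $k\in\N_0$ and $s\in\sum K[\x]^2$ with $f^{2k}+s\in I$. For any $x\in V$, evaluation gives $f(x)^{2k}+s(x)=0$ in $R$. Since $R$ is real closed, $s(x)$ is a nonnegative sum of squares and $f(x)^{2k}=(f(x)^k)^2\ge 0$, forcing $f(x)^{2k}=0$ and hence $f(x)=0$. So $f$ vanishes on $V$.

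For the harder inclusion ``$\subseteq$'', suppose $f\in K[\x]$ vanishes on $V$. I want to apply the real Stellensatz \ref{stellensatz} to the triple consisting of the ideal $I$, the multiplicative set $S$ generated by $f$ (i.e.\ $\{1,f,f^2,\dots\}$, which is finitely generated), and the preorder $T:=\sum K_{\ge0}K[\x]^2$ (which is finitely generated over $K_{\ge0}$ by the empty set). The nonexistence hypothesis \ref{stellensatz}(a) amounts to: there is no $x\in R^n$ with $g(x)=0$ for all $g\in I$, with $f(x)\ne 0$, and with $t(x)\ge 0$ for all $t\in T$; the last condition is vacuous since $T\subseteq\sum R[\x]^2$, and the first two conditions fail by assumption (any such $x$ would lie in $V$ yet satisfy $f(x)\ne 0$). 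So \ref{stellensatz}(b) applies and yields $g\in I$, $k\in\N_0$ and $s\in T\subseteq\sum K[\x]^2$ such that $g+f^{2k}+s=0$, equivalently $f^{2k}+s=-g\in I$. By \ref{rradchar} this says exactly $f\in\rrad I$.

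The key conceptual step is choosing the correct ingredients for the real Stellensatz, namely $S=\{f^k\mid k\in\N_0\}$ and $T=\sum K_{\ge 0}K[\x]^2$; once that is done the argument is almost mechanical, because the conclusion of \ref{stellensatz} is precisely tuned to produce the identity $f^{2k}+s\in I$ that \ref{rradchar} recognizes as membership in $\rrad I$. I do not expect any substantive obstacle beyond verifying the finite-generation hypotheses (trivial) and observing that $K_{\ge 0}\subseteq\sum K^2$ so that a representation with coefficients in $K_{\ge 0}$ is automatically a representation with coefficients that are sums of squares in $K$, which is needed to match the form of $\sum K[\x]^2$ occurring in \ref{rradchar}.
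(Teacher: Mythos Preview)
Your proof is correct and follows exactly the route sketched in the paper: apply the real Stellensatz \ref{stellensatz} with the ideal $I$, the multiplicative set $\{1,f,f^2,\dots\}$, and the preorder $T=\sum K_{\ge0}K[\x]^2$, then match the resulting identity $f^{2k}+s\in I$ against the description of $\rrad I$ in \ref{rradchar}. Your final observation that $K_{\ge0}=K^2$ for Euclidean $K$ (so that $T=\sum K[\x]^2$) is precisely where the Euclidean hypothesis enters, and the paper leaves this implicit.
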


\begin{proof} Using the description of $\rrad I$ from \ref{rradchar}, this follows alternatively
from \ref{stellensatz} (as \ref{abstractrealnullstellensatz} from
\ref{abstractstellensatz}) or from \ref{abstractrealnullstellensatz}
(as \ref{stellensatz} from \ref{abstractstellensatz} using \ref{slimfatten}).
\end{proof}

\begin{df}{}[$\to$ \ref{dfrealclosure}] Let $K$ be field. An extension field $R$ of $K$ is called \emph{a}
real closure of $K$ if $R$ is real closed and $R|K$ is algebraic. 
\end{df}

\begin{rem} For two fields $K$ and $R$, the following are equivalent:
\begin{enumerate}[(a)]
\item $R$ is a real closure of $K$.
\item There is an order $\le$ of $K$ such that $R=\overline{(K,\le)}$.
\end{enumerate}
\end{rem}

\begin{thm}[variant of the real Stellensatz]\emph{[$\to$ \ref{stellensatz}]}\label{altstellensatz}
Let $K$ be a field, $n\in\N_0$, $I$ an ideal of $K[\x]$, $S$ a \emph{finitely generated}
multiplicative set of $K[\x]$ and $T$ a \emph{finitely generated} preorder of $K[\x]$.
Then the following are equivalent:
\begin{enumerate}[\normalfont(a)]
\item There does \emph{not} exist a real closure $R$ of $K$ and an $x\in R^n$ such that
\begin{align*}
\forall g\in I&:g(x)=0,\\
\forall h\in S&:h(x)\ne0\quad\text{and}\\
\forall f\in T&:f(x)\ge0.
\end{align*}
\item $0\in I+S^2+T$
\end{enumerate}
\end{thm}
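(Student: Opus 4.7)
\medskip\noindent
The plan is to reduce \ref{altstellensatz} to the already-proved \ref{stellensatz} (or rather, to the abstract Stellensatz \ref{abstractstellensatz}) by letting the ordering on $K$ be ``discovered'' by a prime cone of $K[\x]$ rather than prescribed in advance. The implication (b)$\implies$(a) is of course immediate: if $a+h^2+t=0$ with $a\in I$, $h\in S$, $t\in T$, then an $x\in R^n$ satisfying the conditions of (a) would force $0=a(x)+h(x)^2+t(x)\ge h(x)^2>0$.

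For (a)$\implies$(b), I argue by contraposition. Assume $0\notin I+S^2+T$. Apply the abstract real Stellensatz \ref{abstractstellensatz} to $A:=K[\x]$ with the given ideal $I$, multiplicative set $S$ and preorder $T$: there exists $P\in\sper K[\x]$ with $\widehat g(P)=0$ for $g\in I$, $\widehat h(P)\ne0$ for $h\in S$, and $\widehat f(P)\ge0$ for $f\in T$. Setting $\p:=\supp P$, the intersection $\p\cap K$ is an ideal of the field $K$ not containing $1$, hence is $(0)$, so $\rh_P|_K\colon K\to R_P$ is an embedding. Pulling back $R_P^2$ gives an order $\le\,:=\,\rh_P|_K^{-1}(R_P^2)=P\cap K$ on $K$.

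Now let $R:=\overline{(K,\le)}$, which is by construction a real closure of $K$ in the sense of the excerpt. By \ref{unic} there is a unique embedding of ordered fields $R\hookrightarrow R_P$ extending $\rh_P|_K$, so both $R$ and $R_P$ become real closed ordered extensions of $(K,\le)$. The tuple $y:=(\rh_P(X_1),\ldots,\rh_P(X_n))\in R_P^n$ witnesses, in $R_P$, the conditions of (a): indeed $g(y)=\rh_P(g)=\widehat g(P)=0$ for $g\in I$, and analogously $h(y)\ne0$ for $h\in S$ and $f(y)\ge0$ for $f\in T$ (using \ref{idealmultiplicativesetpreorderpolynomialring} one only needs this on generators, which is fine since $I$ is finitely generated by Hilbert's basis theorem and $S,T$ by hypothesis).

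The final step is a transfer. Choose finite generating sets of $I$, $S$, and $T$ and consider the $K$-semialgebraic subset
\[
V:=\{z\in R_P^n\mid g(z)=0\ \forall g\in\text{gens}(I),\ h(z)\ne0\ \forall h\in\text{gens}(S),\ f(z)\ge0\ \forall f\in\text{gens}(T)\}
\]
of $R_P^n$, which is nonempty because $y\in V$. By the transfer isomorphism $\transfer_{R_P,R}\colon\mathcal S_{n,R_P}\to\mathcal S_{n,R}$ of Boolean algebras from \ref{transfer}, which sends each basic $K$-semialgebraic set in $R_P^n$ defined by a polynomial inequality to the corresponding one in $R^n$, the image $\transfer_{R_P,R}(V)\subseteq R^n$ is nonempty (a Boolean algebra isomorphism sends $\emptyset$ only to $\emptyset$). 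Any $x\in\transfer_{R_P,R}(V)$ then satisfies all the conditions of (a), contradicting the assumption. The main obstacle is the conceptual one addressed above: the prime cone produced by \ref{abstractstellensatz} lives in $K[\x]$ and gives a point in $R_P$, which is typically transcendental over $K$, so one must use that the order on $K$ extracted from $P$ makes both $R_P$ and the algebraic real closure $R$ into ordered extensions of a common $(K,\le)$, enabling transfer from $R_P^n$ down to $R^n$.
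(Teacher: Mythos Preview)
Your proof is correct and follows essentially the same route as the paper: obtain $P\in\sper K[\x]$ from the abstract Stellensatz \ref{abstractstellensatz}, extract the order $K\cap P$ on $K$, take $R:=\overline{(K,K\cap P)}$, and then use real quantifier elimination to conclude that the $K$-semialgebraic set in question is nonempty over $R$. The only cosmetic difference is that you make the point $y=(\rh_P(X_1),\dots,\rh_P(X_n))\in R_P^n$ explicit and invoke $\transfer_{R_P,R}$ from \ref{transfer}, whereas the paper observes $P\in\fatten(U)$ and uses the $\slim$/$\fatten$ isomorphism of \ref{slimfatten}; both are repackagings of the same transfer machinery.
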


\begin{proof}
\underline{(b)$\implies$(a)} is trivial.

\smallskip
We show \underline{(a)$\implies$(b)} by contraposition. Suppose (b) does not hold. We have to show that
(a) does not hold.
By the abstract real Stellensatz \ref{abstractstellensatz}, there is some
$P\in\sper K[\x]$ such that
$\forall g\in I:\widehat g(P)=0$, $\forall h\in S:\widehat h(P)\ne0$ and $\forall f\in T:\widehat f(P)\ge0$
all hold at the same time. Now consider the real closure $R:=\overline{(K,K\cap P)}$ of $K$
and the preordered ring $(K[\x],\sum(K\cap P)K[\x]^2)$.
The set
\[U:=\left\{x\in R^n\mid(\forall g\in I:g(x)=0),(\forall h\in S:h(x)\ne0),
(\forall f\in T:f(x)\ge0)\right\}\]
is $K$-semialgebraic by \ref{idealmultiplicativesetpreorderpolynomialring} since $I$, $S$ and $T$ are finitely generated.
We will show that $U\ne\emptyset$. We have chosen
$P$ to be an element of the constructible subset
of the real spectrum of this preordered ring which is the
fattening of $U$, i.e., $P\in\fatten(U)$ in the notation of \ref{slimfatten}.
In particular, $\fatten(U)\ne\emptyset$ and thus $U\ne\emptyset$ by \ref{slimfatten}
\end{proof}

\begin{rem}\label{fingenimportant}
Wherever the hypothesis ``finitely generated'' appears in this section, it cannot be omitted.
For instance, assume that the Positivstellensatz \ref{positivstellensatz} holds with the weaker hypothesis
``$K_{\ge0}\subseteq T$'' instead of ``$T$ finitely generated over $K_{\ge0}$''. Consider then $K:=R:=\R$,
$n:=1$ and the preorder of $\R[X]$ generated by \[E:=\{X-N\mid N\in\N\}.\] Then
$S:=\{x\in\R\mid\forall p\in T:p(x)\ge0\}=\emptyset$ and thus $f:=-1>0$ on $S$.
It follows that $\exists t\in T:tf\in1+T$ and thus by \ref{idealmultiplicativesetpreorder}(d) even
$\exists t\in T':tf\in1+T'$ for a preorder $T'\subseteq T$ generated by a finite set $E'\subseteq E$.
The trivial direction of \ref{positivstellensatz} then yields $-1>0$ on
$S':=\{x\in\R\mid\forall p\in T':p(x)\ge0\}\overset{\ref{idealmultiplicativesetpreorderpolynomialring}}=
\{x\in\R\mid\forall p\in E':p(x)\ge0\}=[N,\infty)$ for some $N\in\N$. $\lightning$
\end{rem}

\begin{rem}{}[$\to$ \ref{artin-schreier}] Let $A$ be a commutative ring and $T\subseteq A$ a proper
preorder. Exactly as in the field case, there exists some $P\in\sper A$ such that $T\subseteq P$
[$\to$ \ref{inmaxprimecone}]. In sharp contrast, to the field case we do in general however not have that
$T=\bigcap\sper(A,T)$. As an example, take $A:=\R[X,Y]$, $T:=\sum\R[X,Y]^2$ and consider the Motzkin 
polynomial $f:=X^4Y^2+X^2Y^4-3X^2Y^2+1$. By \ref{motzkin}, we have $f\notin T$ and
$S:=\{(x,y)\in\R^2\mid f(x,y)\ge0\}=\R^2$. By \ref{slimfatten}, the fattening
\[C:=\{P\in\sper A\mid f\in P\}\subseteq\sper A=\sper(A,T)\] of $S$ equals the whole of $\sper A$, i.e.,
$f\in\bigcap\sper(A,T)$.
\end{rem}

\chapter{Schmüdgen's Positivstellensatz}

\section{The abstract Archimedean Positivstellensatz}

\begin{df}{}{[$\to$ \ref{unaryrem}(d)]}
A preordered ring $(A,T)$ is called \emph{Archimedean} if
\[\forall a\in A:\exists N\in\N:N+a\in T,\]
which is equivalent to $T-\N=A$ and also to $T+\Z=A$.
\end{df}

\begin{df}\label{dfarch}
Let $A$ be a commutative ring.
\begin{enumerate}[(a)]
\item A preorder $T$ of $A$ is called Archimedean if $(A,T)$ is Archimedean.
\item $A$ is called Archimedean if $(A,\sum A^2)$ is Archimedean.
\end{enumerate}
\end{df}

\begin{thm}[abstract Archimedean Positivstellensatz \cite{sto,kad,kri,du1}]
\emph{[$\to$ \ref{abstractpositivstellensatz}]}\label{abstractarchimedeanpositivstellensatz}
Let $(A,T)$ be an Archimedean preordered ring and $a\in A$. Then the following are equivalent:
\begin{enumerate}[\normalfont(a)]
\item $\widehat a>0$ on $\sper(A,T)$
\item $\exists N\in\N:Na\in1+T$
\end{enumerate}
\end{thm}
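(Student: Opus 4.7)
Plan. The direction (b)$\Rightarrow$(a) is immediate: if $Na=1+t$ with $t\in T$, then for every $P\in\sper(A,T)$ evaluation yields $N\widehat{a}(P)=1+\widehat{t}(P)\ge 1>0$ in $R_P$, whence $\widehat{a}(P)>0$.

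For (a)$\Rightarrow$(b) I argue by contradiction: suppose $\widehat a>0$ on $\sper(A,T)$ but $Na-1\notin T$ for every $N\in\N$, and construct $P^*\in\sper(A,T)$ with $\widehat a(P^*)\le 0$, contradicting (a). Let $T^*\subseteq A$ be the preorder generated by $T\cup\{1-Na:N\in\N\}$. Assuming $T^*$ is proper, \ref{inmaxprimecone} provides a prime cone $P^*\in\sper A$ with $T^*\subseteq P^*$; then $P^*\in\sper(A,T)$ since $T\subseteq T^*$, and $1-Na\in P^*$ for every $N\in\N$ gives $\widehat a(P^*)\le 1/N$ in $R_{P^*}$ for all $N$. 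The Archimedean hypothesis forces every element of the ordered integral domain $A/\supp P^*$ to be bounded by a natural number, so $A/\supp P^*$ is Archimedean and embeds order-preservingly into $\R$ (the construction mirrors \ref{archsubfieldreals}). Under this embedding $\widehat a(P^*)\le 1/N$ for all $N\in\N$ forces $\widehat a(P^*)\le 0$, which is the desired contradiction.

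The main obstacle is to verify that $T^*$ is proper. Iterating \ref{againpreorder}, the preorder generated over $T$ by a finite family $1-N_1a,\ldots,1-N_ka$ equals $\sum_{\sigma\subseteq\{1,\ldots,k\}}T\cdot\prod_{i\in\sigma}(1-N_ia)$, and $T^*$ is the union of these over all finite subsets of $\N$. An identity $-1=\sum_\sigma t_\sigma\prod_{i\in\sigma}(1-N_ia)$ with $t_\sigma\in T$ must therefore be ruled out. I expect to do this by induction on $k$, peeling off one factor $1-N_ia$ at a time: the abstract Positivstellensatz \ref{abstractpositivstellensatz} applied to the partial expressions combined with the Archimedean bound $M-t\in T$ on each coefficient $t_\sigma$ should convert such a hypothetical identity into a linear certificate $N_0a-1\in T$ for some $N_0\in\{N_1,\ldots,N_k\}$, contradicting our standing assumption. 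This combinatorial-algebraic reduction is the technical heart of the proof; once it is carried out, everything else falls into place.
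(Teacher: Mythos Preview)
Your direction (b)$\Rightarrow$(a) is correct, and the architecture you propose for (a)$\Rightarrow$(b) --- adjoin all $1-Na$ to $T$, show the resulting preorder $T^*$ is proper, extract a prime cone $P^*\supseteq T^*$, and use the Archimedean property to force $\widehat a(P^*)\le 0$ --- is a sound strategy in principle; your argument that $R_{P^*}$ is Archimedean is fine. The fatal gap is exactly where you place it: the properness of $T^*$. Your sketch (``peel off one factor $1-N_ia$ at a time and invoke the abstract Positivstellensatz'') does not work. Already for $k=1$, writing $-1=s+(1-N_1a)t$ with $s,t\in T$ yields $(N_1t)a\in 1+T$; but this is precisely the certificate the abstract Positivstellensatz \ref{abstractpositivstellensatz} hands you anyway, and upgrading a relation $t'a\in 1+T$ with $t'\in T$ to one of the form $N_0a\in 1+T$ with $N_0\in\N$ is exactly the content of the theorem you are proving. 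Your induction never escapes this circle, and the stronger claim that $N_0$ can be chosen from $\{N_1,\ldots,N_k\}$ has no reason to hold.

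The paper's proof is short, constructive, and conceptually different: after localizing at $\N\cdot 1$ so that $\Q\subseteq A$, it applies \ref{abstractpositivstellensatz} \emph{once} to obtain $t\in T$ with $ta\in 1+T$, then chooses $N\in\N$ with $N-t\in T$ and $r\in\N$ with $a+r\in T$, and observes the identity
\[
a+\Bigl(r-\frac1N\Bigr)=\frac1N\Bigl((N-t)(a+r)+(ta-1)+rt\Bigr)\in T\qquad\text{whenever }r\ge 0.
\]
Iterating this decreases $r$ by $\frac1N$ at each step while keeping $a+r\in T$; after finitely many steps one reaches $a-\frac1N\in T$, i.e.\ $Na\in 1+T$. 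This one-line algebraic identity is the genuine idea, and nothing in your outline substitutes for it.
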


\begin{proof} \underline{(b)$\implies$(a)} is trivial

\smallskip\underline{(a)$\implies$(b)}\quad For the multiplicative set $S:=\N\cdot1\subseteq A$,
$(S^{-1}A,S^{-2}T)$ is again an Archimedean preordered ring [$\to$ \ref{preorderlocalize}] and we have
[$\to$ \ref{sperlocalize}]
\[\widehat a>0\text{ on }\sper(A,T)\iff\widehat{\left(\frac a1\right)}>0\text{ on }\sper(S^{-1}A,S^{-2}T).
\]
We can therefore suppose $\N\cdot1\subseteq A^\times$ and therefore have a homomorphism
\[\Q=\N^{-1}\Z\to A,\ \frac pq\mapsto\frac pq\qquad(p\in\Z,q\in\N).\]
Suppose now that (a) holds. By the abstract Positivstellensatz \ref{abstractpositivstellensatz}, there
is some $t\in T$ such that $ta\in1+T$. Since $T$ is Archimedean, there are $N\in\N$ with $N-t\in T$
and $r\in\N$ with $a+r\in T$. Now you can decrease $r\in\frac1N\N_0$ a finite number of times by $\frac1N$
until it gets negative since
\[a+\left(r-\frac1N\right)=\frac N{N^2}((\underbrace{N-t}_{\in T})(\underbrace{a+r}_{\in T})+
(\underbrace{ta-1}_{\in T})+\underbrace{rt}_{\in T})\in T\]
as long as $r\ge0$. It follows $a-\frac1N\in T$ and thus $Na\in1+T$.
\end{proof}

\begin{cor}\label{archmax}
Let $A$ be a commutative ring and $P\in\sper A$. Then the following are equivalent:
\begin{enumerate}[\normalfont(a)]
\item $P$ is Archimedean and a maximal prime cone. \emph{[$\to$ \ref{dfarch}(a)]}
\item $(\qf(A/\p),P_\p)$ is Archimedean where $\p:=\supp P$. \emph{[$\to$ \ref{primeconeinfield}]}
\item $R_P$ is Archimedean. \emph{[$\to$ \ref{realrep}]}
\item There exists a homomorphism $\ph\colon A\to\R$ such that $P=(\sper\ph)(\R_{\ge0})$.
\end{enumerate}
\end{cor}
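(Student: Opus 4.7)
The plan is to prove the four conditions equivalent by establishing the cycle (a)$\Rightarrow$(b)$\Rightarrow$(c)$\Rightarrow$(d)$\Rightarrow$(a), abbreviating $\p:=\supp P$ throughout and freely using \ref{kernelrep} and \ref{sperprimecones}.

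For (a)$\Rightarrow$(b), the key step is to show that every element $\bar a/\bar s\in\qf(A/\p)$ with $\bar s\ne 0$ is bounded by an integer in $(\qf(A/\p),P_\p)$. First I would arrange $s\in P\setminus\p$ (replacing $s$ by $-s$ and $a$ by $-a$ if needed, using $P\cup-P=A$). By maximality, $\sper(A,P)=\{P\}$, so $\widehat s>0$ on $\sper(A,P)$ reduces to $\widehat s(P)>0$, which holds because $s\in P\setminus\p$. The abstract Archimedean Positivstellensatz \ref{abstractarchimedeanpositivstellensatz} then yields $M\in\N$ with $Ms-1\in P$, so $0<1/\bar s\le M$ in $\qf(A/\p)$. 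The Archimedean hypothesis on $P$ itself provides some $N\in\N$ with $N\pm a\in P$, whence $|\bar a|\le N$. Multiplying gives $|\bar a/\bar s|\le NM$.

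For (b)$\Rightarrow$(c) I would invoke \ref{archsubfieldreals} to embed the Archimedean ordered field $(\qf(A/\p),P_\p)$ into $(\R,\R_{\ge0})$; since $R_P$ is its real closure and $\R$ is real closed, \ref{unic} produces an embedding of ordered fields $R_P\hookrightarrow\R$, forcing $R_P$ to be Archimedean. For (c)$\Rightarrow$(d), composing such an embedding $R_P\hookrightarrow\R$ with $\rh_P\colon A\to R_P$ gives a ring homomorphism $\ph\colon A\to\R$, and $\ph^{-1}(\R_{\ge0})=\rh_P^{-1}(R_P^2)=P$ by \ref{kernelrep} (the embedding sends nonnegative elements to nonnegative ones, and vice versa, because it is monotonic and injective).

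The closing arc (d)$\Rightarrow$(a) is the easiest: Archimedean is immediate, since for $a\in A$ one picks $N\in\N$ with $N-\ph(a)\ge 0$ and reads off $N-a\in\ph^{-1}(\R_{\ge0})=P$. Maximality is the slightly sneaky part, but short: assuming some $Q\in\sper A$ strictly contains $P$, \ref{primeconeinclusion} forces $\supp P\subsetneq\supp Q$, so there exists $a\in\supp Q$ with $\ph(a)\ne 0$, say $\ph(a)>0$ after replacing $a$ by $-a$. Choosing $N\in\N$ with $N\ph(a)>1$ gives $Na-1\in P\subseteq Q$, while $a\in\supp Q$ yields $-Na\in Q$ (using $N\in P\subseteq Q$ and $Q\cdot Q\subseteq Q$); adding produces $-1\in Q$, contradicting that $Q$ is a prime cone. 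The only potential obstacle I anticipate is the first step (a)$\Rightarrow$(b), where one must be careful that the abstract Archimedean Positivstellensatz really applies to the preordered ring $(A,P)$ and that maximality collapses $\sper(A,P)$ to $\{P\}$; everything else is essentially routine transfer between the various incarnations of $P$.
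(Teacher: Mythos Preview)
Your proof is correct and follows essentially the same cycle (a)$\Rightarrow$(b)$\Rightarrow$(c)$\Rightarrow$(d)$\Rightarrow$(a) as the paper, with the same key ingredient (the abstract Archimedean Positivstellensatz) in (a)$\Rightarrow$(b) and the same embedding-into-$\R$ arguments for (b)$\Rightarrow$(c)$\Rightarrow$(d). The only cosmetic differences are that in (a)$\Rightarrow$(b) you bound $|\bar a/\bar s|$ two-sidedly instead of one-sidedly, and in (d)$\Rightarrow$(a) you route the maximality argument through $\supp P\subsetneq\supp Q$ (via \ref{primeconeinclusion}) rather than taking $a\in Q\setminus P$ directly; both variants are fine.
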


\begin{proof}
\underline{(a)$\implies$(b)}\quad Suppose (a) holds and let $a\in A$ and $s\in A\setminus\p$.
To show: $\exists N\in\N:\frac{\cc a\p}{\cc s\p}+N\in P_\p$. WLOG $s\in P$. Since $P$ is maximal, we have
$\sper(A,P)=\{P\}$ and thus $\widehat s>0$ on $\sper(A,P)$. By \ref{abstractarchimedeanpositivstellensatz},
there is $N'\in\N$ such that $N's\in1+P$. Choose $N''\in\N$ such that $a+N''\in P$ and set $N:=N'N''$. Then
$a+Ns=a+N'N''s\in a+N''+P\subseteq P+P\subseteq P$ and thus $(a+Ns)s\in PP\subseteq P$. It follows that
$\frac{\cc a\p}{\cc s\p}+N=\frac{\cc{a+Ns}\p}{\cc s\p}\in P_\p$.

\smallskip\underline{(b)$\implies$(c)}\quad If (b) holds, then
$(\qf(A/\p),P_\p)\hookrightarrow(\R,\R_{\ge0})$ by \ref{archsubfieldreals} [$\to$ \ref{ordfieldhom}] and
\[R_P=\overline{(\qf(A/\p),P_\p)}\hookrightarrow(\R,\R_{\ge0})\]
by \ref{unic}.

\smallskip\underline{(c)$\implies$(d)}\quad Suppose that (c) holds. Choose an embedding $\io\colon R_P\hookrightarrow\R$
according to \ref{archsubfieldreals}. We have $\io^{-1}(\R_{\ge0})=(R_P)_{\ge0}$ because
$\io$ is an embedding of ordered fields. Now set $\ph:=\io\circ\rh_P$. Then
$\ph^{-1}(\R_{\ge0})=\rh_P^{-1}(\io^{-1}(\R_{\ge0}))=\rh_P^{-1}((R_P)_{\ge0})\overset{\ref{kernelrep}}=P$.

\smallskip\underline{(d)$\implies$(a)}\quad Suppose $\ph\colon A\to\R$ is a homomorphism with
$P=\ph^{-1}(\R_{\ge0})$. Then $P$ is Archimedean for if $a\in A$, then one can choose $N\in\N$ with
$\ph(a)+N\ge0$ and it follows that $a+N\in\ph^{-1}(\R_{\ge0})=P$. In order to show that $P$ is maximal,
let $Q\in\sper A$ with $P\subseteq Q$. To show: $P=Q$. If we had $a\in Q\setminus P$, then $\ph(a)<0$
and thus $\ph(Na)\le-1$ for some $N\in\N$ from which it would follow that $\ph(-1-Na)\ge0$ and thus
$-1-Na\in P\subseteq Q$ and $-1=(-1-Na)+Na\in Q+Q\subseteq Q$ $\lightning$.
\end{proof}

\section{The Archimedean Positivstellensatz [$\to$ §\ref{sec:realstellensaetze}]}

\begin{lem}\label{evashom}
Suppose $(K,\le)$ is an ordered subfield of $\R$, $n\in\N_0$ and
$K_{\ge0}\subseteq T\subseteq K[\x]$. Then the correspondence
\begin{align*}
x&\mapsto\ev_x\colon K[\x]\to\R,\ p\mapsto p(x)\\
(\ph(X_1),\dots,\ph(X_n))&\mapsfrom\ph
\end{align*}
defines a bijection between $S:=\{x\in\R^n\mid\forall p\in T:p(x)\ge0\}$ and the set of all ring homomorphisms
$\ph\colon K[\x]\to\R$ satisfying $\ph(T)\subseteq\R_{\ge0}$.
\end{lem}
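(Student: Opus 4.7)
The plan is to verify the well-definedness of both assignments and then check that they are mutually inverse.

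For the forward map, given $x\in S$, the evaluation map $\ev_x\colon K[\x]\to\R$ is a ring homomorphism because $K\subseteq\R$ (as $(K,\le)$ is an ordered subfield of $\R$), and the condition $\ev_x(T)\subseteq\R_{\ge0}$ is literally the membership $x\in S$ applied to each $p\in T$.

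For the backward map, given a ring homomorphism $\ph\colon K[\x]\to\R$ with $\ph(T)\subseteq\R_{\ge0}$, I first need the (small but crucial) observation that $\ph|_K$ is the inclusion $K\hookrightarrow\R$. The restriction $\ph|_K$ is a ring homomorphism between fields, hence a field embedding. Since $K_{\ge0}\subseteq T$ by hypothesis, $\ph(K_{\ge0})\subseteq\R_{\ge0}$, so $\ph|_K$ is in fact an embedding of ordered fields $(K,\le)\hookrightarrow(\R,\le)$. Because $(K,\le)$ is an ordered subfield of $\R$ it is Archimedean, and by Lemma \ref{archemb} there is \emph{exactly one} embedding of ordered fields $(K,\le)\hookrightarrow(\R,\le)$. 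The inclusion being such an embedding, we conclude $\ph|_K=\id_K$. Consequently, for every polynomial $p\in K[\x]$, $\ph(p)=p(\ph(X_1),\dots,\ph(X_n))$. Setting $x:=(\ph(X_1),\dots,\ph(X_n))\in\R^n$ and applying this formula to $p\in T$ yields $p(x)=\ph(p)\in\R_{\ge0}$, so $x\in S$.

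Finally, I check that the two maps are inverse to each other. Starting from $x\in S$ and forming $\ev_x$, the $i$-th component of the assigned tuple is $\ev_x(X_i)=x_i$, so we recover $x$. Conversely, starting from $\ph$ and setting $x:=(\ph(X_1),\dots,\ph(X_n))$, the identity $\ev_x(p)=p(x)=p(\ph(X_1),\dots,\ph(X_n))=\ph(p)$, valid for all $p\in K[\x]$ thanks to $\ph|_K=\id_K$, shows $\ev_x=\ph$. The only non-trivial point in the whole argument is the application of Lemma \ref{archemb} to identify $\ph|_K$ with the inclusion; everything else is bookkeeping.
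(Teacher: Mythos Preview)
Your proof is correct and follows exactly the same approach as the paper: the only substantive step is showing $\ph|_K=\id_K$ via the uniqueness statement of Lemma~\ref{archemb}, and the rest is routine. The paper's proof simply compresses the bookkeeping you spell out, declaring it ``obviously enough'' to establish $\ph|_K=\id_K$ and citing \ref{archemb} for that.
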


\begin{proof}
It is obviously enough to show that every ring homomorphism $\ph\colon K[\x]\to\R$ with
$\ph(T)\subseteq\R_{\ge0}$ is the identity on $K$. But this is clear by \ref{archemb} since the identity
is the \emph{only} embedding of ordered fields $(K,\le)\hookrightarrow(\R,\R_{\ge0})$.
\end{proof}

\begin{thm}[Archimedean Positivstellensatz]\emph{[$\to$ \ref{abstractarchimedeanpositivstellensatz},
\ref{positivstellensatz}]}\label{archimedeanpositivstellensatz}
Suppose $(K,\le)$ is an ordered subfield of $\R$, $n\in\N_0$,
$T\subseteq K[\x]$ is an \emph{Archimedean} preorder containing $K_{\ge0}$,
$S:=\{x\in\R^n\mid\forall p\in T:p(x)\ge0\}$ and $f\in K[\x]$. Then the following are equivalent:
\begin{enumerate}[\normalfont(a)]
\item $f>0$ on $S$
\item  $\exists N\in\N:f\in\frac1N+T$
\end{enumerate}
\end{thm}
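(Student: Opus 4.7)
The implication (b)$\implies$(a) is immediate: if $f=\tfrac1N+t$ with $t\in T$ and $x\in S$, then $t(x)\ge0$ by definition of $S$, so $f(x)\ge\tfrac1N>0$.

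For the converse, the plan is to reduce to the abstract Archimedean Positivstellensatz \ref{abstractarchimedeanpositivstellensatz} applied to the preordered ring $(K[\x],T)$. Since $(K[\x],T)$ is Archimedean by hypothesis, it suffices to verify that $\widehat f>0$ on the \emph{entire} abstract real spectrum $\sper(K[\x],T)$. Once this is established, \ref{abstractarchimedeanpositivstellensatz} yields some $N\in\N$ with $Nf\in 1+T$, and then $f=\tfrac1N+\tfrac1N t$ for some $t\in T$; since $\tfrac1N\in K_{\ge0}\subseteq T$ and $TT\subseteq T$, we get $\tfrac1N t\in T$, so $f\in\tfrac1N+T$ as required.

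The core step is therefore to show $\widehat f(P)>0$ for every $P\in\sper(K[\x],T)$. Given such a $P$, the spear \ref{spear} supplies a maximal prime cone $Q\supseteq P$. Since $T\subseteq P\subseteq Q$ and $T$ is Archimedean, so is $Q$. By \ref{archmax}, there is a ring homomorphism $\ph\colon K[\x]\to\R$ with $Q=\ph^{-1}(\R_{\ge0})$. By \ref{evashom}, $\ph=\ev_x$ for a unique $x\in\R^n$, and because $T\subseteq Q=\ev_x^{-1}(\R_{\ge0})$ we have $p(x)\ge0$ for all $p\in T$, i.e.\ $x\in S$. Hypothesis (a) now gives $f(x)>0$, so $-f\notin Q$, and in particular $-f\notin P$. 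Combined with $P\cup -P=K[\x]$ this forces $f\in P\setminus(-P)\subseteq P\setminus\supp P$, which is exactly the statement $\widehat f(P)>0$ in $R_P$.

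The main conceptual obstacle is the one just handled: points of $\sper(K[\x],T)$ are \emph{abstract} prime cones, not real evaluation points, so the hypothesis ``$f>0$ on $S$'' does not a priori control $\widehat f$ on all of $\sper(K[\x],T)$. The bridge is provided by the Archimedean property, which via \ref{archmax} forces every maximal prime cone above $T$ to come from an $\R$-valued evaluation at some $x\in S$; combining this with the spear \ref{spear} propagates positivity from these classical points down to every $P\in\sper(K[\x],T)$. After this, the abstract Archimedean Positivstellensatz does all the remaining work, and the trivial rescaling $Nf\in1+T\rightsquigarrow f\in\tfrac1N+T$ uses only $K_{\ge0}\subseteq T$.
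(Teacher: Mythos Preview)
Your proof is correct and follows essentially the same route as the paper: reduce to the abstract Archimedean Positivstellensatz by showing $\widehat f>0$ on all of $\sper(K[\x],T)$, and do this by passing from an arbitrary $P$ to a maximal prime cone $Q\supseteq P$, which by \ref{archmax} and \ref{evashom} must come from evaluation at some $x\in S$. The only cosmetic differences are that you invoke the spear \ref{spear} where the paper cites \ref{inmaxprimecone}, and your rescaling $Nf\in1+T\Rightarrow f\in\tfrac1N+T$ uses $K_{\ge0}\subseteq T$ while the paper writes $\tfrac1N=N\left(\tfrac1N\right)^2$; both are fine.
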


\begin{proof}
\underline{(b)$\implies$(a)} is trivial.

\smallskip\underline{(a)$\implies$(b)}\quad Suppose that (a) holds. It is enough to show that
$\widehat f>0$ on $\sper(K[\x],T)$ due to the abstract Archimedian Positivstellensatz
\ref{abstractarchimedeanpositivstellensatz} using $\frac1N=N\left(\frac1N\right)^2$.
To this end, let $P\in\sper(K[\x],T)$. We show that $\widehat f(P)>0$, i.e., $\widehat f(P)\not\le0$ which is equivalent to $f\notin-P$. Choose a maximal prime cone $Q$ of $K[\x]$ such that
$P\subseteq Q$ by \ref{inmaxprimecone}. We even show that $f\notin-Q$.
By \ref{archmax}(d) and \ref{evashom}, there is some $x\in S$ satisfying
$Q=\ev_x^{-1}(\R_{\ge0})=\{p\in K[\x]\mid p(x)\ge0\}$. From $f(x)>0$, we deduce now $f\notin-Q$
as desired.
\end{proof}

\begin{rem}
If $T$ is finitely generated over $K_{\ge0}$ in the situation of \ref{archimedeanpositivstellensatz}, then
one can reduce \ref{archimedeanpositivstellensatz} alternatively by fattening to 
\ref{abstractarchimedeanpositivstellensatz}. This ultimately uses unnecessarily the heavy artillery
of real quantifier elimination \ref{elim} and is not applicable if $T$ is not finitely generated over $K_{\ge0}$.
The principal reason why the real quantifier elimination is not needed here is \ref{archsubfieldreals}.
\end{rem}

\section{Schmüdgen's characterization of Archimedean preorders of the polynomial ring}

\begin{dfpro}\label{arithmbounded}
Let $(A,T)$ be a preordered ring. Then
\[B_{(A,T)}:=\{a\in A\mid\exists N\in\N:N\pm a\in T\}\]
is a subring of $A$ which we call the ring of with respect to $T$
\emph{arithmetically bounded} elements of $A$. 
\end{dfpro}

\begin{proof}
One sees immediately that $B_{(A,T)}$ is a subgroup of the additive group of $A$. It is clear that
$1\in B_{(A,T)}$. Finally, we have $B_{(A,T)}B_{(A,T)}\subseteq B_{(A,T)}$ as one sees immediately
from the identity
\[3N^2\pm ab=(N+a)(N\pm b)+N(N-a)+N(N\mp b)\qquad(N\in\N,a,b\in A).\]
\end{proof}

\begin{lem}\label{squarerootsarithmeticallybounded}
Let $(A,T)$ be a preordered ring such that $\frac12\in A$. Then
\[a^2\in B_{(A,T)}\implies a\in B_{(A,T)}\] for all $a\in A$.
\end{lem}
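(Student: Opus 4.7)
\medskip\noindent
\textbf{Plan.} The statement to prove is: if there exists $N \in \N$ with $N - a^2 \in T$ (the bound $N + a^2 \in T$ is automatic since $a^2 \in A^2 \subseteq T$), then there exists $M \in \N$ with $M \pm a \in T$. The plan is to combine the bound on $a^2$ with the squares $(a \pm 1)^2 \in A^2 \subseteq T$ via the identities
\[
(N - a^2) + (a \pm 1)^2 = N + 1 \pm 2a \in T + T \subseteq T,
\]
then to divide the result by $2$ inside $T$. This will make $a$ essentially of the same arithmetic bound as $a^2$, up to a factor of $2$.

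The only subtlety is that ``divide by $2$'' must happen \emph{inside} $T$, i.e.\ as multiplication by $\tfrac12$. First I would observe that $\tfrac12 \in T$: indeed $\tfrac12 = 2 \cdot \bigl(\tfrac12\bigr)^2$, and both factors lie in $T$ (since $2 = 1+1 \in T$ and $\bigl(\tfrac12\bigr)^2 \in A^2 \subseteq T$), hence $\tfrac12 \in TT \subseteq T$. This requires exactly the hypothesis $\tfrac12 \in A$.

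With this in hand I would multiply $N + 1 \pm 2a \in T$ by $\tfrac12 \in T$, using $TT \subseteq T$, to get $\tfrac{N+1}{2} \pm a \in T$. To land on an \emph{integer} bound, I would replace $N$ by $N + 1$ if necessary (note $(N+1) - a^2 = 1 + (N-a^2) \in T$) so that $N$ is odd and $M := \tfrac{N+1}{2} \in \N$. Then $M \pm a \in T$, whence $a \in B_{(A,T)}$.

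The main (very mild) obstacle is making sure every manipulation stays inside $T$ rather than drifting into $A$; the crucial ingredient $\tfrac12 \in T$ handles this, and it is exactly here that the hypothesis $\tfrac12 \in A$ is used.
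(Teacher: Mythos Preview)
Your proof is correct and takes essentially the same approach as the paper: complete the square to cancel $a^2$ and leave a linear term in $a$. The paper's version writes directly $N \pm a = (N-1-a^2) + (\tfrac12 \pm a)^2 + 3(\tfrac12)^2$, which avoids your separate ``multiply by $\tfrac12 \in T$'' step by putting the $\tfrac12$ inside the square, but the idea is the same.
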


\begin{proof}
Choose $N\in\N$ with $(N-1)-a^2\in T$. Then
\[N\pm a=(N-1)-a^2+\left(\frac12\pm a\right)^2+3\left(\frac12\right)^2\in T.\]
\end{proof}

\begin{rem}\label{archb}
If $(A,T)$ is a preordered ring, then $T$ is Archimedean if and only if $B_{(A,T)}=A$.
\end{rem}

\begin{lem}\label{archchar}
Suppose $(K,\le)$ is an ordered subfield of $\R$, $n\in\N_0$ and $T\subseteq K[\x]$
is a preorder containing $K_{\ge0}$. Then the following are equivalent:
\begin{enumerate}[\normalfont(a)]
\item $T$ is Archimedean.
\item $\exists N\in\N:N-\sum_{i=1}^nX_i^2\in T$
\item $\exists N\in\N:\forall i\in\{1,\dots,n\}:N\pm X_i\in T$
\end{enumerate}
\end{lem}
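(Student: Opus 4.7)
The plan is to use the ring $B_{(K[\x],T)}$ of arithmetically bounded elements as the backbone: by Remark \ref{archb}, condition (a) is equivalent to $B_{(K[\x],T)} = K[\x]$, so I only have to check that each of (b), (c) is equivalent to this.

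\smallskip
\textbf{(a)$\Rightarrow$(b).} This is immediate: if $T$ is Archimedean then $\sum_{i=1}^n X_i^2 \in B_{(K[\x],T)}$, so in particular some $N\in\N$ satisfies $N-\sum_{i=1}^n X_i^2\in T$.

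\smallskip
\textbf{(b)$\Rightarrow$(c).} Assume $N-\sum_{i=1}^n X_i^2\in T$. For each $i\in\{1,\dots,n\}$, adding the element $\sum_{j\ne i}X_j^2\in K_{\ge 0}K[\x]^2\subseteq T$ gives $N-X_i^2\in T$. I then quote the identity already used in the proof of Lemma \ref{squarerootsarithmeticallybounded}:
\[
(N+1)\pm X_i \;=\; (N-X_i^2) + \left(\tfrac12\pm X_i\right)^2 + 3\!\left(\tfrac12\right)^2,
\]
where each summand on the right lies in $T$ (using $K_{\ge 0}\subseteq T$ for the constant term). Thus $N':=N+1$ works uniformly for all $i$, proving (c).

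\smallskip
\textbf{(c)$\Rightarrow$(a).} By Proposition \ref{arithmbounded}, $B_{(K[\x],T)}$ is a subring of $K[\x]$, and by (c) it contains each $X_i$. Moreover $K\subseteq B_{(K[\x],T)}$: for any $a\in K$, since $(K,\le)$ is an ordered subfield of $\R$ and hence Archimedean, there exists $N\in\N$ with $N\pm a\in K_{\ge 0}\subseteq T$. Since $K[\x]$ is generated as a ring by $K\cup\{X_1,\dots,X_n\}$, it follows that $B_{(K[\x],T)}=K[\x]$, i.e.\ $T$ is Archimedean by Remark \ref{archb}.

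\smallskip
There is no real obstacle here; the only mildly delicate point is getting a \emph{uniform} $N$ in (c), which is handled cleanly by the explicit identity above rather than a separate application of Lemma \ref{squarerootsarithmeticallybounded} for each variable.
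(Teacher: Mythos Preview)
Your proof is correct and follows essentially the same route as the paper: both use the ring $B_{(K[\x],T)}$ as the organizing device, deduce $N-X_i^2\in T$ from (b) by adding back the remaining squares, obtain (c) via the identity from Lemma~\ref{squarerootsarithmeticallybounded}, and get (c)$\Rightarrow$(a) from the Archimedeanity of $K$ together with $B_{(K[\x],T)}$ being a subring. The only cosmetic difference is that you spell out the square-root identity explicitly rather than just citing \ref{squarerootsarithmeticallybounded}; your remark about uniformity of $N$ is a non-issue, since the paper's argument via \ref{squarerootsarithmeticallybounded} already yields a uniform bound.
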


\begin{proof}
\underline{(a)$\implies$(b)} is trivial.

\smallskip\underline{(b)$\implies$(c)}\quad If (b) holds, then $N-X_i^2\in T$ and thus $X_i^2\in B_{(A,T)}$
for all $i\in\{1,\dots,n\}$. Now apply \ref{squarerootsarithmeticallybounded}.

\smallskip\underline{(c)$\implies$(a)}\quad Since $(K,\le)$ is Archimedean and $K_{\ge0}\subseteq T$,
we have $K\subseteq B_{(A,T)}$. If now moreover (c) holds, then $K[\x]=B_{(A,T)}$.
\end{proof}

\begin{thm}[Schmüdgen's Theorem \cite{sch,bw}]\label{schmuedgen}
Suppose $(K,\le)$ is an ordered subfield of $\R$,
$n\in\N_0$ and $T$ a preorder of $K[\x]$ which is finitely generated \emph{over $K_{\ge0}$}. Write
\[S:=\{x\in\R^n\mid\forall p\in T:p(x)\ge0\}.\] Then
\[\text{$T$ Archimedean}\iff\text{$S$ compact}.\]
\end{thm}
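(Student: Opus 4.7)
The plan is to prove the two directions separately.

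For ($\Rightarrow$), the argument is essentially topological: if $T$ is Archimedean, then Lemma \ref{archchar}(b) provides some $N\in\N$ with $N-\sum_{i=1}^nX_i^2\in T$, so every $x\in S$ satisfies $\sum x_i^2\le N$ and $S$ is bounded. Since $T$ is finitely generated over $K_{\ge0}$, Remark \ref{idealmultiplicativesetpreorderpolynomialring} ensures that $S$ is cut out by only finitely many polynomial inequalities, hence is closed in $\R^n$. Closed and bounded in $\R^n$ means compact.

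For ($\Leftarrow$), suppose $S$ is compact. By Lemma \ref{archchar}(c) it suffices to produce a single $N\in\N$ with $N\pm X_i\in T$ for each $i=1,\ldots,n$. Since $B_{(K[\x],T)}$ is a subring (Proposition \ref{arithmbounded}) that contains $K_{\ge0}$ and is closed under ``extracting square roots'' (Lemma \ref{squarerootsarithmeticallybounded}), this reduces further to showing $X_i^2\in B_{(K[\x],T)}$, i.e., to finding $M\in\N$ with $M-X_i^2\in T$. Since $S$ is compact we may choose $M$ large enough that $M-X_i^2>0$ on $S$, and then the Positivstellensatz \ref{positivstellensatz}(b) furnishes $\sigma,\tau\in T$ with $\sigma(M-X_i^2)=1+\tau$.

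The heart of the theorem, and the main obstacle, is converting this \emph{multiplicative} identity into the genuinely \emph{additive} containment $M'-X_i^2\in T$ for some (possibly larger) $M'\in\N$. My plan is to follow the strategy of Berr and Wörmann \cite{bw}. A key intermediate fact is that compactness of $S$ forces a uniform boundedness of $\widehat{X_i}$ on the whole of $\sper(K[\x],T)$: for every $P\in\sper(K[\x],T)$, the image point $x_P:=(\rh_P(X_1),\ldots,\rh_P(X_n))\in R_P^n$ lies in the transfer $S_{R_P}$ of $S$ from $\R$ to $R_P$, and the first-order statement ``$S$ is contained in the closed ball of radius $\sqrt{M}$'' (valid over $\R$ by compactness of $S$) transfers via the Tarski principle \ref{tprinciple} to $R_P$ and forces $|\rh_P(X_i)|\le\sqrt{M}$ in $R_P$. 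Combined with Corollary \ref{archmax}, this description pins down the maximal prime cones $Q\supseteq T$ as evaluation homomorphisms $K[\x]\to\R$ at actual points of $S$. A delicate induction on the generators of $T$, combining this description with iterated applications of the Positivstellensatz, then synthesizes the desired additive identity $M'-X_i^2\in T$.

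The hardest step, and the one where I expect the real work to be, is this final synthesis: passing from the boundedness information encoded in the abstract Positivstellensätze and the Tarski principle to an actual algebraic membership $M'-X_i^2\in T$. It is precisely here that the hypothesis of finite generation of $T$ over $K_{\ge0}$ is indispensable (cf.\ Remark \ref{fingenimportant}) and it is the technical core of the Berr--Wörmann argument.
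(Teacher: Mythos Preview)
Your direction ($\Rightarrow$) matches the paper.

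For ($\Leftarrow$), you correctly isolate the crux: upgrading the Positivstellensatz certificate $\sigma(M-X_i^2)=1+\tau$ to a genuine membership $M'-X_i^2\in T$. But your proposal does not actually resolve this step. The Tarski/\ref{archmax} paragraph establishes only that maximal prime cones over $T$ correspond to evaluations at points of $S$; this does not by itself produce an element of $T$, and the abstract Archimedean Positivstellensatz \ref{abstractarchimedeanpositivstellensatz} you would presumably want to invoke already \emph{presupposes} Archimedeanity of $T$, which is what you are trying to prove. The closing ``delicate induction on the generators of $T$'' is a placeholder, not an argument, and it is not clear how such an induction would run.

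Ironically, the paper's proof \emph{is} the Berr--W\"ormann argument you cite, and it uses none of the machinery you propose. Working with $r:=\sum_{i=1}^n X_i^2$ rather than a single $X_i^2$, the Positivstellensatz \ref{positivstellensatz} yields $t\in T$ with $(1+t)(N-r)\in 1+T$. The key idea---which your proposal is missing entirely---is to introduce the auxiliary preorder $T':=T+(N-r)T$, which is trivially Archimedean by \ref{archchar} since $N-r\in T'$. One checks $(1+t)T'\subseteq T$ (because $(1+t)T\subseteq T$ and $(1+t)(N-r)\in T$). Archimedeanity of $T'$ then gives $N'\in\N$ with $N'-t\in T'$, and the single identity
\[(1+N')(N'-t)=(1+t)(N'-t)+(N'-t)^2\in(1+t)T'+T\subseteq T\]
forces $N'-t\in T$ after dividing by $1+N'$. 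One more line shows $N(N'+1)-r\in T$. No real spectrum, no Tarski transfer, no induction on generators---just a short purely algebraic trick.
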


\begin{proof} \cite{bw} ``$\Longrightarrow$'' Let $T$ be Archimedean. By \ref{archchar}(b), there is some
$N\in\N$ with $N-\sum_{i=1}^nX_i^2\in T$. Then $S$ is contained in the ball of radius $\sqrt N$
centered at the origin and thus bounded. Anyway $S$ is already closed. Consequently, $S$ is compact.

\smallskip ``$\Longleftarrow$'' Let $S$ be compact. Write $r:=\sum_{i=1}^nX_i^2$ and choose $N\in\N$
such that $N-r>0$ on $S$. By the Positivstellensatz \ref{positivstellensatz}, we find $t\in T$ such that
$(1+t)(N-r)\in1+T\subseteq T$. We know that $T':=T+(N-r)T$ is a preorder of $K[\x]$ that is Archimedean
by \ref{archchar}. We have $(1+t)T'\subseteq T$ and $N-r+Nt=(1+t)(N-r)+tr\in T+T\subseteq T$.
Choose $N'\in\N$ with $N'-t\in T'$. Then
\[(1+N')(N'-t)=(1+t)(N'-t)+(N'-t)^2\in(1+t)T'+T\subseteq T+T\subseteq T\]
from which $N'-t\in T$ follows because of $\frac1{1+N'}=(1+N')\left(\frac1{1+N'}\right)^2\in T$.
We conclude that
\[N(N'+1)-r=NN'+N-r=(N-r+tN)+N(N'-t)\in T+T\subseteq T.\]
Now \ref{archchar} implies that $T$ is Archimedean.
\end{proof}

\begin{cor}[Schmüdgen's Positivstellensatz]\emph{[$\to$ \ref{archimedeanpositivstellensatz}]}
\label{schmuedgenpositivstellensatz}
Suppose $(K,\le)$ is an ordered subfield of $\R$,
$n\in\N_0$, $T$ a preorder of $K[\x]$ which is finitely generated \emph{over $K_{\ge0}$}. Suppose
$S:=\{x\in\R^n\mid\forall p\in T:p(x)\ge0\}$ is \emph{compact} and $f\in K[\x]$. Then the following are equivalent:
\begin{enumerate}[\normalfont(a)]
\item $f>0$ on $S$
\item $\exists N\in\N:f\in\frac1N+T$
\end{enumerate}
\end{cor}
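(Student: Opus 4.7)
The plan is to derive this corollary by directly combining the two results that immediately precede it: Schmüdgen's Theorem \ref{schmuedgen} (which converts the compactness hypothesis into an algebraic one) and the Archimedean Positivstellensatz \ref{archimedeanpositivstellensatz} (which gives the desired representation under that algebraic hypothesis). No new ideas are really needed; the corollary is essentially the concatenation of these two.

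For the direction (b)$\implies$(a), I would argue as follows. Suppose $N \in \N$ and $t \in T$ satisfy $f = \frac{1}{N} + t$. For any $x \in S$ we have $t(x) \ge 0$ by definition of $S$, so $f(x) \ge \frac{1}{N} > 0$. This uses nothing beyond the definitions.

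For the nontrivial direction (a)$\implies$(b), I would first invoke Schmüdgen's Theorem \ref{schmuedgen}: since $T$ is finitely generated over $K_{\ge 0}$ and $S$ is compact, $T$ is Archimedean. At this point the hypotheses of the Archimedean Positivstellensatz \ref{archimedeanpositivstellensatz} are in place: $(K, \le)$ is an ordered subfield of $\R$, $T$ is an Archimedean preorder of $K[\x]$ containing $K_{\ge 0}$, $S$ is the corresponding nonnegativity set, and $f > 0$ on $S$. Applying \ref{archimedeanpositivstellensatz} directly yields $N \in \N$ with $f \in \frac{1}{N} + T$, as required.

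There is essentially no obstacle here, since all the heavy lifting has been done: the combinatorial/geometric content is in \ref{schmuedgen} (bounding $\sum X_i^2$ inside $T$ using the Positivstellensatz \ref{positivstellensatz} and the arithmetic-boundedness calculus of \ref{arithmbounded}--\ref{archchar}), while the passage from strict positivity on $\sper(K[\x], T)$ to a concrete denominator bound is in \ref{archimedeanpositivstellensatz} (via the abstract Archimedean Positivstellensatz \ref{abstractarchimedeanpositivstellensatz}). The only mild subtlety worth flagging is that the finite generation of $T$ over $K_{\ge 0}$ is used in an essential way, since it is needed to apply \ref{schmuedgen} (cf.\ \ref{fingenimportant}); the Archimedean Positivstellensatz itself does not require it.
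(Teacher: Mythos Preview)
Your proof is correct and matches the paper's approach exactly: the paper's proof is simply ``By Schm\"udgen's Theorem \ref{schmuedgen}, $T$ is Archimedean. But then the Archimedean Positivstellensatz \ref{archimedeanpositivstellensatz} proves the equivalence of (a) and (b).'' Your additional commentary on the easy direction and on where the heavy lifting occurs is accurate and adds helpful context without deviating from the intended argument.
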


\begin{proof}
 By Schmüdgen's Theorem \ref{schmuedgen}, $T$ is Archimedean. But then the Archimedean Positivstellensatz
 \ref{archimedeanpositivstellensatz} proves the equivalence of (a) and (b).
\end{proof}

\begin{rem}\label{needep}
\begin{enumerate}[(a)] \item
Exactly as in \ref{fingenimportant}, one sees that the hypothesis ``$T$ finitely
generated over $K_{\ge0}$'' cannot be replaced by the weaker hypothesis ``$K_{\ge0}\subseteq T$''.
\item If one drops the requirement that $S$ is compact, then \ref{schmuedgenpositivstellensatz}
gets wrong as the example $K:=\R$, $n:=1$, $T:=\sum\R[X]^2+\sum\R[X]^2X^3$ and $f:=X+1$ shows:
We have $f>0$ on $S=[0,\infty)$ but $f\notin T$ for degree reasons as one sees from \ref{soslongrem}(b).
\item In the situation of \ref{schmuedgenpositivstellensatz}, we unfortunately do not have in general
\[f\ge0\text{ on }S\iff f\in T.\] 
For this, consider $K:=\R$, $n:=1$, $T:=\sum\R[X]^2+\sum\R[X]^2X^3(1-X)$ and $f:=X$. Then
$f\ge0$ on $S=[0,1]$. Assume $f\in T$. Write $f=\sum_ip_i^2+\sum_jq_j^2X^3(1-X)$ for some
$p_i,q_j\in\R[X]$. Evaluating in $0$, yields $0=\sum_ip_i(0)^2$ and thus $p_i(0)=0$ for all $i$.
Write $p_i=Xp_i'$ for some $p_i'\in\R[X]$. Then $X=f=X^2\left(\sum_ip_i'^2+\sum_jq_j^2X(1-X)\right)\ 
\lightning$.
\end{enumerate}
\end{rem}

\chapter{The real spectrum as a topological space}

\section{Tikhonov's theorem}

\begin{rem}
Any finite intersection of unions of certain sets is a union of finite intersections of such sets
[$\to$ \ref{unionsection}].
\end{rem}

\begin{reminder}\mbox{}[$\to$ \ref{booleanalgebra}]\label{topreminder}
Let $M$ be a set.
\begin{enumerate}[(a)]
\item A set $\O \subseteq\mathcal P(M)$ is called a \emph{topology} on $M$ if
\begin{itemize}
\item $M\in\O $,
\item $\forall A_1,A_2\in\O :A_1\cap A_2\in\O $ and
\item $\forall\mathcal A\subseteq\O :\bigcup\mathcal A\in\O $.
\end{itemize}
In this case, $(M,\O )$ is called a \emph{topological space} and the elements of $\O $ are called
its \emph{open sets}.
\item Let $\mathcal G\subseteq\mathcal P(M)$. Then the set of all unions of finite intersections of elements of
$\mathcal G$ (where $\bigcap\emptyset:=M$) is obviously the smallest topology $\O $ on $M$ such that
$\mathcal G\subseteq\O $. It is called the topology \emph{generated by $\mathcal G$} (on $M$).
\item If $\O $ and $\O '$ are topologies on $M$, then $\O $ is called
\emph{\alal{coarser}{finer}}
 than $\O '$ if $\malal{\O \subseteq\O '}
{\O \supseteq\O '}$.
\item The finest topology on $M$ is the \emph{discrete topology} $\O :=\mathcal P(M)$.
\item The coarsest topology on $M$ is the \emph{trivial topology} (in German: \emph{Klumpentopologie})
$\O :=\{\emptyset,M\}$. 
\end{enumerate}
\end{reminder}

\begin{reminder}\label{conti}
Let $(M,\O )$ and $(N,\mathcal P)$ be topological spaces and $f\colon M\to N$ be a map.
Then $f$ is called \emph{continuous} if $f^{-1}(B)\in\O $ for all $B\in\mathcal P$. If $\mathcal P$ is generated by $\mathcal G$, then $f$ is continuous if and only if
$f^{-1}(B)\in\mathcal\O$ for all $B\in\mathcal G$.
\end{reminder}

\begin{reminder}\label{initialtop}
Let $M$ be a set, $(N_i,\mathcal P_i)_{i\in I}$ a family of topological spaces and $(f_i)_{i\in I}$ a family
of maps $f_i\colon M\to N_i$ ($i\in I$). Then there exists a coarsest topology $\O $ on $M$
making all $f_i$ ($i\in I$) continuous. One calls $\O $ the
\emph{initial topology}
(or \emph{weak topology}) with respect to $(f_i)_{i\in I}$. If $I=\{1,\dots,n\}$, then $\O $ is also called the
initial topology with respect to $f_1,\dots,f_n$. This topology $\O $ is generated by
$\{f_i^{-1}(B_i)\mid i\in I,B\in\mathcal P_i\}$. More generally, the following holds: If $\mathcal P_i$ is generated
by $\mathcal G_i$ for $i\in I$, then $\O $ is generated by $\{f_i^{-1}(B)\mid i\in I,B\in\mathcal G_i\}$.
It holds that $\O $ is the unique topology on $M$ having the following property:
For every topological space $(M',\O ')$ and every $g\colon M'\to M$, the map $g$
is continuous if and only if all the maps $f_i\circ g$ with $i\in I$ are continuous.
\end{reminder}

\begin{ex}\label{subspaceproductspace}
\begin{enumerate}[(a)]
\item Let $(N,\mathcal P)$ be a topological space and $M\subseteq N$. Then one endows $M$ with the
initial topology $\O $ with respect to $M\to N,\ x\mapsto x$. One calls $\O $ the
topology \emph{induced} by $\mathcal P$ on $M$ and $(M,\O )$ a \emph{subspace}
of $(N,\mathcal P)$. We have \[\O =\{M\cap B\mid B\in\mathcal P\}.\]
\item Let $(N_i,\mathcal P_i)_{i\in I}$ be a family of topological spaces. Then there exists a coarsest topology
$\O $ on $N:=\prod_{i\in I}N_i$ making all projections $\pi_i\colon N\to N_i,\ (x_j)_{j\in I}\mapsto x_i$
($i\in I$) continuous. One calls $\O $ the \emph{product topology} of the $\mathcal P_i$ ($i\in I$) on $N$
and $(N,\O )$ the \emph{product space} of the $(N_i,P_i)$ ($i\in I$).
The elements of $\O $ are exactly the unions of sets of the form
$\prod_{i\in I}B_i$ where $B_i\in\mathcal P_i$ for $i\in I$ and $\#\{i\in I\mid B_i\ne N_i\}<\infty$.
\end{enumerate}
\end{ex}

\begin{rem}\label{inducedproductcommute}
The constructions (a) and (b) in Example \ref{subspaceproductspace} commute
in the following sense:
Let $(N_i,\mathcal P_i)_{i\in I}$ be a family of topological spaces and
$(N,\O )$ its product. Furthermore, let
$(M_i)_{i\in I}$ be a family of sets such that $M_i\subseteq N_i$ for each $i\in I$
and set $M:=\prod_{i\in I}M_i$. Then $\O $ induces on $M$
the product topology of the topologies induced on the $M_i$ by the $\mathcal P_i$.
\end{rem}

\begin{df}\label{dffilter}
Let $M$ be a set and $\mathcal S$ a Boolean algebra on $M$ [$\to$ \ref{booleanalgebra}]
(for instance $\mathcal S=\mathcal P(M)$). A set $\mathcal F\subseteq\mathcal S$ is called a
\emph{filter} in $\mathcal S$ (or filter on $M$ in case $\mathcal S=\mathcal P(M)$) if
\begin{itemize}
\item $\emptyset\notin\mathcal F,M\in\mathcal F$,
\item $\forall U,V\in\mathcal F:U\cap V\in\mathcal F$ and
\item $\forall U\in\mathcal F:\forall V\in\mathcal S:(U\subseteq V\implies V\in\mathcal F)$.
\end{itemize}
If in addition $\forall U\in\mathcal S:(U\in\mathcal F\text{ or }\complement U\in\mathcal F)$, then $\mathcal F$
is called an \emph{ultrafilter}.
\end{df}

\begin{pro}
Let $\mathcal S$ be a Boolean algebra on the set $M$ and $\mathcal F$ a filter in $\mathcal S$.
Then the following are equivalent:
\begin{enumerate}[\normalfont(a)]
\item $\mathcal F$ is an ultrafilter.
\item $\forall U,V\in\mathcal S:(U\cup V\in\mathcal F\implies(U\in\mathcal F\text{ or }V\in\mathcal F))$
\end{enumerate}
\end{pro}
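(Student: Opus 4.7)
The plan is to verify both implications directly from the filter axioms of Definition \ref{dffilter} together with the Boolean algebra properties of $\mathcal S$ (in particular $\complement(U\cup V)=\complement U\cap\complement V$ and $U\cup\complement U=M$). Neither direction requires any new machinery beyond these.

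For (a)$\implies$(b), I would argue by contradiction. Assume $\mathcal F$ is an ultrafilter and $U\cup V\in\mathcal F$, but $U\notin\mathcal F$ and $V\notin\mathcal F$. The ultrafilter hypothesis then forces $\complement U\in\mathcal F$ and $\complement V\in\mathcal F$. By closure of $\mathcal F$ under finite intersections, $\complement U\cap\complement V\in\mathcal F$; since $\mathcal S$ is a Boolean algebra this equals $\complement(U\cup V)$. Intersecting once more with $U\cup V\in\mathcal F$ would put $\emptyset\in\mathcal F$, contradicting the first filter axiom.

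For (b)$\implies$(a), the point is simply to test the defining ultrafilter condition on an arbitrary $U\in\mathcal S$. Since $M\in\mathcal F$ and $U\cup\complement U=M$, applying (b) to $U$ and $\complement U$ immediately yields $U\in\mathcal F$ or $\complement U\in\mathcal F$.

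There really is no substantial obstacle; the only thing to be mildly careful about is that the identities $\complement(U\cup V)=\complement U\cap\complement V$ and $U\cup\complement U=M$ are used implicitly when passing between unions and intersections, and these are available because $\mathcal S$ is assumed to be a Boolean algebra on $M$ (so $M,\complement U\in\mathcal S$ whenever $U\in\mathcal S$). Both directions are thus essentially one-liners.
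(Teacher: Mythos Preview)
Your proof is correct and follows essentially the same approach as the paper. The only cosmetic difference is that for (a)$\implies$(b) the paper argues directly via $(U\cup V)\cap\complement U\subseteq V$ and upward closure rather than by contradiction through De~Morgan, and it dismisses (b)$\implies$(a) as ``trivial'' whereas you spell out the $U\cup\complement U=M$ step.
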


\begin{proof}
\underline{(a)$\implies$(b)}\quad Suppose that (a) holds and let $U,V\in\mathcal S$ such that
$U\cup V\in\mathcal F$ and $U\notin\mathcal F$. To show: $V\in\mathcal F$. Since $\mathcal F$ is an ultrafilter,
we have $\complement U\in\mathcal F$ and thus $(U\cup V)\cap\complement U\in\mathcal F$.
Because of $(U\cup V)\cap\complement U\subseteq V$ it then also holds that $V\in\mathcal F$.

\smallskip\underline{(b)$\implies$(a)} is trivial.
\end{proof}

\begin{ex}\label{neighbor}
Let $(M,\O )$ be a topological space and $x\in M$. Then
\[\mathcal U_x:=\{U\in\mathcal P(M)\mid\exists A\in\O :x\in A\subseteq U\}\]
is a filter on $M$.
One calls $\mathcal U_x$ the \emph{neighborhood filter} of $x$ and its elements the
\emph{neighborhoods} of $x$. In general, $\mathcal U_x$ is not an ultrafilter since
$[-1,1]=[-1,0]\cup[0,1]$ is a neighborhood of $0$ in $\R$ as opposed to $[-1,0]$ and $[0,1]$.
\end{ex}

\begin{df}
Let $(M,\O )$ be a topological space, $\mathcal F$ a filter on $M$ and $x\in M$. One says that
$\mathcal F$ \emph{converges} to $x$ and writes $\mathcal F\to x$ if \[\mathcal U_x\subseteq\mathcal F.\]
If $\mathcal F$ converges to exactly one point $x$, one calls this the \emph{limit} of $\mathcal F$ and writes
\[x=\lim\mathcal F.\]
\end{df}

\begin{df}
Let $(M,\O )$ be a topological space, $(a_n)_{n\in\N}$ a sequence in $M$ and $x\in M$. We call
\[\mathcal F:=\{U\in\mathcal P(M)\mid\exists N\in\N:\{a_n\mid n\ge N\}\subseteq U\}\]
the filter associated to $(a_n)_{n\in\N}$. It is clearly a filter on $M$.
One says that
$(a_n)_{n\in\N}$ \emph{converges} to $x$ and writes \[a_n\overset{n\to\infty}\longrightarrow x\] if $\mathcal F\to x$.
If $\mathcal F$ converges to exactly one point $x$, one calls this the \emph{limit} of $(a_n)_{n\in\N}$ and writes
\[x=\lim_{n\to\infty}a_n.\]
\end{df}

\begin{dflem}\label{ultraimage}
Suppose $f\colon M\to N$ is a map and $\mathcal F$ a filter on $M$. Then the \emph{image filter}
\[f(\mathcal F):=\{V\in\mathcal P(N)\mid\exists U\in\mathcal F:f(U)\subseteq V\}\] is a filter on $N$.
If $\mathcal F$ is an ultrafilter, then so is $f(\mathcal F)$.
\end{dflem}

\begin{proof}
One sees immediately that $f(\mathcal F)$ is a filter. Now let $\mathcal F$ be an ultrafilter.
Suppose $V\subseteq N$ and $V\notin f(\mathcal F)$.
To show: $\complement V\in f(\mathcal F)$. For $U:=f^{-1}(V)$, one has $f(U)\subseteq V$ and thus
$U\notin\mathcal F$.
But then $f^{-1}(\complement V)=\complement U\in\mathcal F$. From
$f(\complement U)\subseteq\complement V$, we obtain thus $\complement V\in f(\mathcal F)$.
\end{proof}

\begin{lem}\label{ultraprodconv}
Let $M$ be a set endowed with the initial topology with respect to a family $(f_i)_{i\in I}$ of maps
$f_i\colon M\to N_i$ into topological spaces $N_i$ ($i\in I$). Let $\mathcal F$ be a filter on $M$ and $x\in M$.
Then \[\mathcal F\to x\iff\forall i\in I:f_i(\mathcal F)\to f_i(x).\]
\end{lem}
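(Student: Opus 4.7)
The plan is to prove the two implications separately, using in each case the description of the initial topology from \ref{initialtop}: the neighborhood filter $\mathcal U_x$ of $x$ in $M$ is generated by finite intersections of sets of the form $f_i^{-1}(B)$ where $i \in I$ and $B$ is an open neighborhood of $f_i(x)$ in $N_i$.

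For the forward direction, assume $\mathcal F \to x$, i.e., $\mathcal U_x \subseteq \mathcal F$. Fix $i \in I$ and a neighborhood $V$ of $f_i(x)$; I need to show $V \in f_i(\mathcal F)$. Pick an open $B \subseteq N_i$ with $f_i(x) \in B \subseteq V$. Since $f_i$ is continuous (by definition of the initial topology), $f_i^{-1}(B)$ is open in $M$ and contains $x$, so $f_i^{-1}(B) \in \mathcal U_x \subseteq \mathcal F$. From $f_i(f_i^{-1}(B)) \subseteq B \subseteq V$ and the definition of the image filter \ref{ultraimage}, we get $V \in f_i(\mathcal F)$, as desired.

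For the reverse direction, assume $f_i(\mathcal F) \to f_i(x)$ for all $i \in I$. To show $\mathcal U_x \subseteq \mathcal F$, it suffices by the filter axioms (closure under finite intersection and supersets) to check that every subbasic neighborhood of $x$ lies in $\mathcal F$, i.e., that $f_i^{-1}(B) \in \mathcal F$ for every $i \in I$ and every open $B \subseteq N_i$ with $f_i(x) \in B$. Fix such $i$ and $B$. Then $B$ is a neighborhood of $f_i(x)$, so by hypothesis $B \in f_i(\mathcal F)$. By definition of $f_i(\mathcal F)$, there exists $U \in \mathcal F$ with $f_i(U) \subseteq B$, hence $U \subseteq f_i^{-1}(B)$, and the upward-closure of $\mathcal F$ gives $f_i^{-1}(B) \in \mathcal F$.

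Neither direction should pose a real obstacle; the only point to be slightly careful about is the reduction in the reverse direction from arbitrary neighborhoods of $x$ to the subbasic ones $f_i^{-1}(B)$, which rests on the explicit description of the initial topology recalled in \ref{initialtop} together with the fact that a filter is closed under finite intersections and supersets.
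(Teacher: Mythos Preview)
Your proof is correct and follows essentially the same approach as the paper's. Both directions match: the forward direction uses continuity of $f_i$ to pull back a neighborhood of $f_i(x)$ to one of $x$, and the reverse direction reduces an arbitrary neighborhood of $x$ to finitely many subbasic ones $f_i^{-1}(B)$ via the description of the initial topology in \ref{initialtop}, then shows each of these lies in $\mathcal F$ using the definition of the image filter.
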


\begin{proof}
``$\Longrightarrow$'' Suppose $\mathcal F\to x$ and let $i\in I$. To show: $f_i(\mathcal F)\to f_i(x)$.
Let $V\in\mathcal U_{f_i(x)}$. To show: $V\in f_i(\mathcal F)$. Since $f_i$ is continuous, we have
$U:=f_i^{-1}(V)\in\mathcal U_x$ and thus $U\in\mathcal F$. From $f_i(U)\subseteq V$, we get
$V\in f_i(\mathcal F)$.

\smallskip
``$\Longleftarrow$'' Suppose $f_i(\mathcal F)\to f_i(x)$ for all $i\in I$. Let $U\in\mathcal U_x$.
To show: $U\in\mathcal F$. Choose $n\in\N_0$, $i_1,\dots,i_n\in I$ and
$V_k$ open in $N_{i_k}$ ($k\in\{1,\dots,n\}$) such that
\[x\in f_{i_1}^{-1}(V_1)\cap\ldots\cap f_{i_n}^{-1}(V_n)\subseteq U.\] Since $\mathcal F$ is a filter, it is enough to
show that $f_{i_k}^{-1}(V_k)\in\mathcal F$ for all $k\in\{1,\dots,n\}$. Fix therefore $k\in\{1,\dots,n\}$.
Since $V_k$ is an (open) neighborhood of $f_{i_k}(x)$ in $N_{i_k}$,
the hypothesis yields $V_k\in f_{i_k}(\mathcal F)$. Hence there is $U_0\in\mathcal F$ such that
$f_{i_k}(U_0)\subseteq V_k$. Now everything follows from
$U_0\subseteq f_{i_k}^{-1}(f_{i_k}(U_0))\subseteq f_{i_k}^{-1}(V_k)$.
\end{proof}

\begin{df}\label{dfcomp}
Let $(M,\O )$ be a topological space. Then $(M,\O )$ is called a \emph{Hausdorff} space
if every two distinct points of $M$ can be separated by disjoint neighborhoods, i.e.,
\[\forall x,y\in M:(x\ne y\implies\exists U\in\mathcal U_x:\exists V\in\mathcal U_y:U\cap V=\emptyset).\]
We call $(M,\O )$ \emph{quasicompact} if every open cover of $M$ possesses a finite subcover, i.e.,
\[\forall\mathcal A\subseteq\O :\left(M=\bigcup\mathcal A\implies
\exists\mathcal B\subseteq\mathcal A:\left(\#\mathcal B<\infty~\et~M=\bigcup\mathcal B\right)\right).\]
Furthermore, we call a quasicompact Hausdorff space \emph{compact}.
\end{df}

\begin{pro}\label{maxultra}
Suppose $M$ is a set, $\mathcal S$ a Boolean algebra on $M$ and $\mathcal U$ a filter in $\mathcal S$.
Then the following are equivalent:
\begin{enumerate}[\normalfont(a)]
\item $\mathcal U$ is an ultrafilter in $\mathcal S$.
\item $\mathcal U$ is a maximal filter in $\mathcal S$.
\end{enumerate}
\end{pro}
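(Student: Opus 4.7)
The plan is to prove each implication directly, using the filter axioms and the Boolean algebra structure.

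For (a)$\implies$(b), I will assume $\mathcal U$ is an ultrafilter and take any filter $\mathcal F$ in $\mathcal S$ with $\mathcal U \subseteq \mathcal F$. To show $\mathcal F \subseteq \mathcal U$, I will pick $U \in \mathcal F$ and argue by contradiction: if $U \notin \mathcal U$, then by the ultrafilter property $\complement U \in \mathcal U \subseteq \mathcal F$; but then $U \cap \complement U = \emptyset \in \mathcal F$, violating the axiom $\emptyset \notin \mathcal F$ from Definition \ref{dffilter}.

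For (b)$\implies$(a), I will assume $\mathcal U$ is maximal and let $U \in \mathcal S$ with $U \notin \mathcal U$; the goal is to produce $\complement U \in \mathcal U$. The strategy is to enlarge $\mathcal U$ by adjoining $\complement U$, setting
\[
\mathcal F := \{ V \in \mathcal S \mid \exists W \in \mathcal U : W \cap \complement U \subseteq V \}.
\]
I will verify that $\mathcal F$ is a filter in $\mathcal S$: it is upward closed and closed under finite intersections by construction, and $M \in \mathcal F$ trivially. The only delicate point is $\emptyset \notin \mathcal F$, which requires $W \cap \complement U \neq \emptyset$ for every $W \in \mathcal U$; equivalently, no $W \in \mathcal U$ satisfies $W \subseteq U$. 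But if some $W \in \mathcal U$ were contained in $U$, then upward closure of $\mathcal U$ would force $U \in \mathcal U$, contradicting our assumption. Since $\mathcal U \subseteq \mathcal F$ (take $W = V$) and $\complement U \in \mathcal F$ (take $W = M$), the maximality of $\mathcal U$ gives $\mathcal F = \mathcal U$, so $\complement U \in \mathcal U$.

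The proof is essentially routine filter manipulation; the main (and only) subtlety is the verification that $\emptyset \notin \mathcal F$ in the second implication, which is precisely where the assumption $U \notin \mathcal U$ is used. No deeper ingredients such as Zorn's lemma are needed, since $\mathcal U$ is given to be maximal from the start.
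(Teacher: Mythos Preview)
Your proof is correct and follows essentially the same approach as the paper: both directions use the same ideas, and in particular for (b)$\implies$(a) the paper defines the identical filter $\mathcal F=\{V\in\mathcal S\mid\exists A\in\mathcal U:A\cap\complement U\subseteq V\}$ and identifies $\emptyset\notin\mathcal F$ as the only nontrivial verification, handled exactly as you do.
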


\begin{proof}
\underline{(a)$\implies$(b)}\quad Suppose that (a) holds and let $\mathcal F$ be a filter in $\mathcal S$
such that $\mathcal U\subseteq\mathcal F$. In order to show $\mathcal F\subseteq\mathcal U$, we fix
$U\in\mathcal F$. If we had $U\notin\mathcal U$, we would get
$\complement U\in\mathcal U\subseteq\mathcal F$ and thus $\emptyset=U\cap\complement U\in\mathcal F$
$\lightning$.

\smallskip\underline{(b)$\implies$(a)}\quad Suppose that (b) holds and let $U\in\mathcal S$ satisfy
$U\notin\mathcal U$. To show: $\complement U\in\mathcal U$. It is enough to show that
$\mathcal F:=\{V\in\mathcal S\mid\exists A\in\mathcal U:A\cap\complement U\subseteq V\}$ is a filter in
$\mathcal S$ because then it follows from $\mathcal U\subseteq\mathcal F$ that
$\complement U\in\mathcal F=\mathcal U$. For this, it suffices to show $\emptyset\notin\mathcal F$.
If we had $\emptyset\in\mathcal F$, then there would be an $A\in\mathcal U$ satisfying
$A\cap\complement U=\emptyset$ and from $A\subseteq U$ it would follow that $U\in\mathcal U$ 
$\lightning$.
\end{proof}

\begin{thm}\label{makeultra}
Let $M$ be a set, $\mathcal S$ a Boolean algebra on $M$ and $\mathcal F$ a filter in $\mathcal S$.
Then there is an ultrafilter $\mathcal U$ in $\mathcal S$ such that $\mathcal F\subseteq\mathcal U$.
\end{thm}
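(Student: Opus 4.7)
The plan is to apply Zorn's lemma to the partially ordered set of all filters in $\mathcal S$ containing $\mathcal F$, and then to invoke Proposition \ref{maxultra} to turn the resulting maximal element into an ultrafilter. This mirrors exactly the strategy used in Theorem \ref{artin-schreier} for maximal proper preorders and in Corollary \ref{inmaxprimecone} for maximal prime cones.

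More concretely, I would consider
\[
\mathcal M:=\{\mathcal G\subseteq\mathcal S\mid\mathcal G\text{ is a filter in $\mathcal S$ with }\mathcal F\subseteq\mathcal G\},
\]
partially ordered by inclusion. First I would check that every chain $\mathcal C\subseteq\mathcal M$ has an upper bound in $\mathcal M$: the empty chain is handled by $\mathcal F\in\mathcal M$, and for a nonempty chain I would take $\mathcal G_0:=\bigcup\mathcal C$ and verify the filter axioms from Definition \ref{dffilter}. The conditions $M\in\mathcal G_0$, $\emptyset\notin\mathcal G_0$, and upward closure in $\mathcal S$ follow directly from the same properties for each $\mathcal G\in\mathcal C$. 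Closure under finite intersection is the only place the chain hypothesis is used: given $U,V\in\mathcal G_0$, pick $\mathcal G_1,\mathcal G_2\in\mathcal C$ with $U\in\mathcal G_1$ and $V\in\mathcal G_2$; since $\mathcal C$ is a chain, WLOG $\mathcal G_1\subseteq\mathcal G_2$, so $U,V\in\mathcal G_2$ and hence $U\cap V\in\mathcal G_2\subseteq\mathcal G_0$.

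Zorn's lemma then yields a maximal element $\mathcal U\in\mathcal M$. Such $\mathcal U$ is in fact a maximal filter in $\mathcal S$ (not merely maximal among those containing $\mathcal F$), because any filter $\mathcal U'\supseteq\mathcal U$ automatically contains $\mathcal F$ and thus lies in $\mathcal M$, forcing $\mathcal U'=\mathcal U$ by maximality in $\mathcal M$. By Proposition \ref{maxultra}, $\mathcal U$ is therefore an ultrafilter in $\mathcal S$, and by construction $\mathcal F\subseteq\mathcal U$.

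The only step requiring a little care is the chain-union argument, and even there the only nontrivial point is closure under pairwise intersection, which is immediate from the chain property. No further obstacle appears, since Proposition \ref{maxultra} has already done the work of identifying ultrafilters with maximal filters.
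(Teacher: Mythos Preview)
Your proof is correct and follows essentially the same approach as the paper's: apply Zorn's lemma to the set of filters in $\mathcal S$ containing $\mathcal F$, using that the union of a nonempty chain of filters is again a filter, and then invoke Proposition~\ref{maxultra}. Your version is more detailed (in particular, you make explicit why a maximal element of $\mathcal M$ is a maximal filter in $\mathcal S$), but the argument is the same.
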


\begin{proof}
By \ref{maxultra}, it suffices to show that the set
$\{\mathcal F'\mid\mathcal F'\text{ Filter in }\mathcal S,\mathcal F\subseteq\mathcal F'\}$
partially ordered by inclusion has a maximal element. This follows from Zorn's lemma since the union of
a nonempty chain of filters in $\mathcal S$ is again a filter in $\mathcal S$.
\end{proof}

\begin{thm}\label{ultracompact}
A topological space $M$ is quasicompact if and only if each ultrafilter on the set $M$ converges in $M$.
\end{thm}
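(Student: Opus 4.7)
The plan is to treat the two implications separately, leveraging the ultrafilter extension theorem \ref{makeultra} and the characterization of quasicompactness via covers.

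\textbf{Forward direction.} Assume $M$ is quasicompact and let $\mathcal U$ be an ultrafilter on $M$. The plan is to argue by contradiction: suppose $\mathcal U$ does not converge to any point, so for every $x\in M$ there is a neighborhood of $x$ not lying in $\mathcal U$, and we may shrink it to be open. This produces an open cover $\{U_x\mid x\in M\}$ of $M$ with $U_x\notin\mathcal U$ for all $x$. By quasicompactness, finitely many of them cover, say $M=U_{x_1}\cup\dots\cup U_{x_n}$. Since $\mathcal U$ is an ultrafilter and $U_{x_i}\notin\mathcal U$, we have $\complement U_{x_i}\in\mathcal U$ for each $i$, hence $\bigcap_{i=1}^n\complement U_{x_i}\in\mathcal U$; but this intersection equals $\complement M=\emptyset$, contradicting the fact that $\emptyset\notin\mathcal U$.

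\textbf{Backward direction.} Assume every ultrafilter on $M$ converges, and suppose for contradiction that $M$ has an open cover $\mathcal A$ without a finite subcover. Then the family $\mathcal G:=\{\complement A\mid A\in\mathcal A\}$ has the finite intersection property, since $\complement A_1\cap\ldots\cap\complement A_n=\complement(A_1\cup\ldots\cup A_n)\ne\emptyset$. The set
\[
\mathcal F:=\{V\subseteq M\mid\exists n\in\N,A_1,\dots,A_n\in\mathcal A:\complement A_1\cap\ldots\cap\complement A_n\subseteq V\}
\]
is therefore a filter on $M$ containing $\mathcal G$. By \ref{makeultra}, there is an ultrafilter $\mathcal U$ on $M$ with $\mathcal F\subseteq\mathcal U$, and by hypothesis $\mathcal U\to x$ for some $x\in M$. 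Since $\mathcal A$ covers $M$, there is $A\in\mathcal A$ with $x\in A$, so $A\in\mathcal U_x\subseteq\mathcal U$. But also $\complement A\in\mathcal G\subseteq\mathcal F\subseteq\mathcal U$, whence $\emptyset=A\cap\complement A\in\mathcal U$, a contradiction.

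\textbf{Main obstacle.} Neither direction involves any real calculation; the main conceptual point is the backward direction, where one has to translate a would-be non-subcoverable open cover into a filter via its complements and then invoke the ultrafilter extension theorem \ref{makeultra} (a Zorn's lemma argument) to produce the convergent ultrafilter that yields the contradiction. The forward direction is purely a matter of using the ``$U\cup V\in\mathcal U\Rightarrow U\in\mathcal U$ or $V\in\mathcal U$'' characterization (equivalently, passing to complements) together with quasicompactness.
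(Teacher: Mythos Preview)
Your proof is correct and follows essentially the same approach as the paper's: both directions are argued by contradiction, building in one case an open cover from neighborhoods avoided by the ultrafilter, and in the other case an ultrafilter from the complements of a cover without finite subcover via \ref{makeultra}. The only cosmetic differences are that the paper frames the argument as the equivalence of the negated statements (``not quasicompact'' $\iff$ ``some ultrafilter fails to converge''), and reindexes the non-subcoverable cover by points of $M$ before forming the filter, whereas you work directly with an arbitrary cover $\mathcal A$.
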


\begin{proof}
Let $M$ be a topological space. We show the equivalence of the following statements:
\begin{enumerate}[(a)]
\item $M$ is not quasicompact.
\item There is an ultrafilter on $M$ that does not converge.
\end{enumerate}

\underline{(a)$\implies$(b)}\quad Suppose that (a) holds. Then for each $x\in M$, there is obviously
an open set $A_x\subseteq M$ with $x\in A_x$ in such a way that
$\bigcup_{x\in M}A_x=M$ and $A_{x_1}\cup\ldots\cup A_{x_n}\ne M$ for all $n\in\N$ and $x_1,\dots,x_n\in M$.
Then
\[\mathcal F:=\left\{U\in\mathcal P(M)\mid\exists n\in\N:\exists x_1,\ldots,x_n\in M:
\left(\complement A_{x_1}\right)\cap
\ldots\cap\left(\complement A_{x_n}\right)\subseteq U\right\}\]
is a filter on $M$ that can be extended by \ref{makeultra} to an ultrafilter $\mathcal U$ on $M$.
Let $x\in M$. We show that $\mathcal U$ does not converge to $x$. If we had $\mathcal U\to x$,
then $A_x\in\mathcal U$ in contradiction to $\complement A_x\in\mathcal U$.

\smallskip\underline{(b)$\implies$(a)}\quad Suppose that (b) holds. Choose an ultrafilter $\mathcal U$ on $M$
that does not converge. Then for every $x\in M$ there is an $U_x\in\mathcal U_x$ such that
$U_x\notin\mathcal U$. WLOG $U_x$ is open for every $x\in M$. Of course $M=\bigcup_{x\in M}U_x$.
If $n\in\N$ and $x_1,\dots,x_n\in M$, then
$\complement(U_{x_1}\cup\ldots\cup U_{x_n})=\left(\complement U_{x_1}\right)\cap\ldots\cap
\left(\complement U_{x_n}\right)\in\mathcal U$ and thus
$\emptyset\ne\complement(U_{x_1}\cup\ldots\cup U_{x_n})$, i.e., $M\ne U_{x_1}\cup\ldots\cup U_{x_n}$.
\end{proof}

\begin{thm}[Tikhonov]\label{tikhonov}
Products of quasicompact topological spaces are quasicompact.
\end{thm}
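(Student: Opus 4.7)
The plan is to combine the ultrafilter characterization of quasicompactness (Theorem \ref{ultracompact}) with the componentwise characterization of convergence in an initial topology (Lemma \ref{ultraprodconv}) and the fact that pushforwards of ultrafilters are ultrafilters (Lemma \ref{ultraimage}). This reduces the theorem to a purely formal manipulation of ultrafilters that completely bypasses any direct argument with open covers.

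Let $(N_i,\mathcal{P}_i)_{i\in I}$ be a family of quasicompact spaces and let $N:=\prod_{i\in I}N_i$ carry the product topology, i.e., the initial topology with respect to the projections $\pi_i\colon N\to N_i$ (see \ref{subspaceproductspace}(b)). By Theorem \ref{ultracompact}, it suffices to show that every ultrafilter $\mathcal{U}$ on $N$ converges. So fix such an $\mathcal{U}$. First I would push $\mathcal{U}$ forward along each projection: by Lemma \ref{ultraimage}, $\pi_i(\mathcal{U})$ is an ultrafilter on $N_i$ for every $i\in I$. Since each $N_i$ is quasicompact, Theorem \ref{ultracompact} applied in the other direction yields, for each $i\in I$, a point $x_i\in N_i$ such that $\pi_i(\mathcal{U})\to x_i$.

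Using the axiom of choice, assemble $x:=(x_i)_{i\in I}\in N$. Since the product topology is the initial topology with respect to the family $(\pi_i)_{i\in I}$, and since $\pi_i(x)=x_i$ by construction, Lemma \ref{ultraprodconv} gives $\mathcal{U}\to x$ in $N$. Thus every ultrafilter on $N$ converges, and a final appeal to Theorem \ref{ultracompact} shows that $N$ is quasicompact.

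There is really no hard step here: all the work has been front-loaded into the three lemmas cited above. The only subtlety worth flagging is the appeal to the axiom of choice when selecting the tuple $(x_i)_{i\in I}$ from the (possibly non-unique) limits of the $\pi_i(\mathcal{U})$. In fact, Tikhonov's theorem in this generality is known to be equivalent to the axiom of choice, and in the argument above choice enters both in the selection of the $x_i$ and, earlier, in the existence of enough ultrafilters guaranteed by Theorem \ref{makeultra}.
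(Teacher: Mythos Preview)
Your proof is correct and follows essentially the same approach as the paper: both use the ultrafilter characterization of quasicompactness from \ref{ultracompact}, push the ultrafilter forward along the projections via \ref{ultraimage}, use quasicompactness of the factors to pick limits $x_i$, and conclude with \ref{ultraprodconv}. Your additional remarks about the role of the axiom of choice are accurate and a nice touch, though the paper's proof does not make this explicit.
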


\begin{proof} Let $(N_i)_{i\in I}$ be a family of quasicompact topological spaces and
$M:=\prod_{i\in I}N_i$ the product space [$\to$ \ref{subspaceproductspace}(b)]. Consider for each $i\in I$
the canonical projection $\pi_i\colon M\to N_i$. According to \ref{ultracompact} it suffices to show that
every ultrafilter on $M$ converges. For this purpose, let $\mathcal U$ be an ultrafilter on $M$.
By \ref{ultraimage}, the image filters $\pi_i(\mathcal U)$ ($i\in I$) are again ultrafilters and therefore converge.
Accordingly, we choose $(x_i)_{i\in I}$ satisfying $\pi_i(\mathcal U)\to x_i$ for each $i\in I$. From
\ref{ultraprodconv}, we now get $\mathcal U\to(x_i)_{i\in I}$.
\end{proof}

\begin{cor}\label{tikhonovcor}
Products of compact spaces are compact.
\end{cor}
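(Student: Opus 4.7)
The plan is to combine Theorem \ref{tikhonov} with the fact that compactness (in the sense of \ref{dfcomp}) means ``quasicompact and Hausdorff'', so what remains is only to check that the Hausdorff property survives products. Let $(N_i)_{i\in I}$ be a family of compact spaces and $M:=\prod_{i\in I}N_i$ endowed with the product topology. By \ref{tikhonov} applied to the family $(N_i)_{i\in I}$, the space $M$ is already quasicompact, so the entire content of the corollary reduces to establishing that $M$ is Hausdorff.

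For the Hausdorff part I would argue directly: given two distinct points $x=(x_i)_{i\in I}$ and $y=(y_i)_{i\in I}$ in $M$, pick an index $j\in I$ with $x_j\ne y_j$, use that $N_j$ is Hausdorff to separate $x_j$ and $y_j$ by disjoint open neighborhoods $U,V\subseteq N_j$, and then pull these back through the canonical projection $\pi_j\colon M\to N_j$ to obtain disjoint open sets $\pi_j^{-1}(U)\ni x$ and $\pi_j^{-1}(V)\ni y$ in $M$. The continuity of $\pi_j$ (which holds by the very definition of the product topology as the initial topology with respect to the projections, see \ref{subspaceproductspace}(b) and \ref{initialtop}) guarantees that $\pi_j^{-1}(U)$ and $\pi_j^{-1}(V)$ are open in $M$, and $\pi_j^{-1}(U)\cap\pi_j^{-1}(V)=\pi_j^{-1}(U\cap V)=\pi_j^{-1}(\emptyset)=\emptyset$.

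There is essentially no obstacle here, since the deep content, namely the preservation of quasicompactness under arbitrary products, has already been dispatched by \ref{tikhonov} using the ultrafilter characterization \ref{ultracompact}. The Hausdorff verification is a one-line pullback argument, so the total proof is just a short assembly: quote \ref{tikhonov} for quasicompactness, give the two-sentence projection argument for Hausdorffness, and invoke \ref{dfcomp} to conclude.
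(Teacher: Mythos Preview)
Your proposal is correct and is exactly the intended argument: the paper states the corollary without proof, leaving it as the obvious combination of Theorem~\ref{tikhonov} for quasicompactness with the elementary fact that a product of Hausdorff spaces is Hausdorff, which you verify cleanly via the projection pullback. Nothing is missing.
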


\begin{rem}
Let $M$ be a topological space. \alal{In \ref{ultracompact}, we have shown}{Using \ref{makeultra}, one shows as
an exercise} that $M$ is \alal{quasicompact}{a Hausdorff space} if and only if every ultrafilter on $M$
converges to \alal{at least}{at most} one point of $M$. Therefrom, $M$ is compact if and only if each
ultrafilter on $M$ converges to exactly one point of $M$.
\end{rem}

\begin{reminder}\label{compactsubspace}
Let $M$ be a topological space and $A\subseteq M$. Then $A$ is called \emph{closed} in $M$
if $\complement A$ is open in $M$. We call $A$ \alal{quasicompact}{compact} if $A$ furnished with
the subspace topology [$\to$ \ref{subspaceproductspace}(a)] is a \alal{quasicompact}{compact}
topological space. Consequently, $A$ is quasicompact if and only if each open cover of $A$ in $M$
possesses a finite subcover, i.e.,
\[\forall\mathcal A\subseteq\O :\left(A\subseteq\bigcup\mathcal A\implies
\exists\mathcal B\subseteq\mathcal A:\left(\#\mathcal B<\infty~\et~A\subseteq\bigcup\mathcal B\right)\right).\]
It follows immediately that closed subsets of \alal{quasicompact}{compact} topological spaces are again
\alal{quasicompact}{compact}.
\end{reminder}

\section{Topologies on the real spectrum}

\begin{df}\label{spectraltop}
Let $A$ be a commutative ring. We call the
topology generated by \[\{\{P\in\sper A\mid\widehat a(P)>0\}\mid a\in A\}\] on $\sper A$
the \emph{spectral topology} (or \emph{Harrison-topology}) on $\sper A$. 
Moreover, we call the topology generated by
$\mathcal C_A$ [$\to$ \ref{introconstructible}] or, equivalently [$\to$ \ref{constructiblesetnormalform}], by
\[\{\{P\in\sper A\mid\widehat a(P)=0\}\mid a\in A\}\cup\{\{P\in\sper A\mid\widehat a(P)>0\}\mid a\in A\},\]
the \emph{constructible topology} on $\sper A$. Unless otherwise indicated, we endow $\sper A$ always with
the spectral topology. It is coarser than the constructible topology.
\end{df}

\begin{reminder}\label{dfhomeo}
Let $M$ and $N$ be topological spaces. A bijection $f\colon M\to N$ is called a \emph{homeomorphism}
if both $f$ and $f^{-1}$ are continuous. One calls $M$ and $N$ \emph{homeomorphic} if there exists a
homeomorphism from $M$ to $N$.
\end{reminder}

\begin{thm}\label{constrcompact}
Let $A$ be a commutative ring. Then $\sper A$ is compact with respect to the constructible topology.
\end{thm}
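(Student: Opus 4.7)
The plan is to exhibit $\sper A$ with the constructible topology as a closed subspace of a product of finite discrete spaces, and then invoke Tikhonov's theorem (\ref{tikhonov}, \ref{tikhonovcor}) together with the fact that closed subspaces of compact spaces are compact (\ref{compactsubspace}).

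First I would set $X := \{0,1\}^A$ endowed with the product topology coming from the discrete topology on $\{0,1\}$. Since $\{0,1\}$ is finite and Hausdorff, it is compact, so $X$ is compact by Tikhonov (\ref{tikhonovcor}). I would then define the injection
\[
\iota\colon \sper A \to X,\qquad P \mapsto \chi_P = (\chi_P(a))_{a \in A},
\]
where $\chi_P(a) = 1$ if $a \in P$ and $0$ otherwise. Injectivity is immediate since a prime cone is determined by its elements. The product topology on $X$ is generated (\ref{initialtop}) by the sets $\{f \in X \mid f(a) = \varepsilon\}$ with $a \in A$ and $\varepsilon \in \{0,1\}$; their preimages under $\iota$ are $\{P \in \sper A \mid a \in P\}$ and $\{P \in \sper A \mid a \notin P\} = \{P \mid -a \in P,\ a \notin P\} \cup \{P \mid -a \notin P\}$, and these generate exactly the constructible topology (\ref{spectraltop}), since $\{P \mid a \in P\}$ and its complement are both in $\mathcal C_A$. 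Hence $\iota$ is a topological embedding, i.e.\ a homeomorphism onto its image endowed with the subspace topology (\ref{subspaceproductspace}).

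Next I would verify that $\iota(\sper A)$ is closed in $X$ by showing that each of the defining conditions for a subset $P \subseteq A$ to be a prime cone (\ref{dfprimecone}, \ref{primeconechar}) cuts out a closed subset. For instance, the condition $P+P \subseteq P$ translates into
\[
\iota(\sper A) \subseteq \bigcap_{(a,b)\in A^2} \complement\{f \in X \mid f(a) = 1,\ f(b) = 1,\ f(a+b) = 0\},
\]
and each set $\{f \mid f(a)=1, f(b)=1, f(a+b)=0\}$ is a basic open set of $X$, so its complement is closed. The analogous translations work for $PP \subseteq P$, for $P \cup -P = A$ (closed because $\{f \mid f(a)=0,\ f(-a)=0\}$ is open for every $a$), for $-1 \notin P$ (the single closed condition $f(-1)=0$), and for the primeness condition $ab \in P \Rightarrow (a \in P \vee -b \in P)$. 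Intersecting all these closed sets gives $\iota(\sper A)$, hence it is closed in the compact space $X$, and therefore compact (\ref{compactsubspace}). Pulling back through the homeomorphism $\iota$ yields that $\sper A$ is compact in the constructible topology; the Hausdorff part is in any case obvious, since two distinct prime cones differ in some element $a \in A$, and $\{P \mid a \in P\}$ and its complement are disjoint clopen sets in the constructible topology separating them.

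The main obstacle is the purely bookkeeping task of writing out each defining axiom of a prime cone as a closed condition in $X$; no single step is conceptually hard once the embedding $\iota$ is set up, but one must be careful to use the characterization \ref{primeconechar}(b) for primeness so that all the conditions are expressible in terms of the values $f(a)$ only (rather than, say, in terms of quantifications over multiplicative sets), so as to make the closedness of each condition manifest.
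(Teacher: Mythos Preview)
Your proposal is correct and follows essentially the same approach as the paper: embed $\sper A$ into $\{0,1\}^A$ via characteristic functions, observe this is a homeomorphism onto its image for the constructible topology, and verify the image is closed by rewriting each axiom from \ref{dfprimecone} as an intersection of closed conditions in the product space. The paper's proof is slightly more terse (it simply asserts the homeomorphism and writes out the full intersection at once), but the structure is identical.
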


\begin{proof}
We endow the two-element set $\{0,1\}$ with the discrete topology [$\to$ \ref{topreminder}(d)]. Then
$\{0,1\}$ is compact and so is $\{0,1\}^A=\prod_{i\in A}\{0,1\}$ with respect to the product topology
by Tikhonov's Theorem \ref{tikhonov}. For every $B\subseteq A$, we denote by
\[1_B\colon A\to\{0,1\},\ a\mapsto\begin{cases}0&\text{if }a\notin B\\1&\text{if }a\in B\end{cases}\]
the corresponding characteristic function. Consider $S:=\{1_P\mid P\in\sper A\}\subseteq\{0,1\}^A$
endowed with the subspace topology of the product topology. Obviously,
\[\sper A\to S,\ P\mapsto 1_P\] is a homeomorphism. Since $\{0,1\}^A$ is compact by \ref{tikhonovcor},
it suffices to show that $S$ is closed in $\{0,1\}^A$ since then $S$ and consequently $\sper A$ is compact
[$\to$ \ref{compactsubspace}]. Encoding \ref{dfprimecone} in characteristic functions, we obtain
\begin{align*}
S=&\bigcap_{a,b\in A}\left\{\ch\in\{0,1\}^A\mid\ch(a)=0\text{ or }\ch(b)=0\text{ or }\ch(a+b)=1\right\}\cap\\
&\bigcap_{a,b\in A}\left\{\ch\in\{0,1\}^A\mid\ch(a)=0\text{ or }\ch(b)=0\text{ or }\ch(ab)=1\right\}\cap\\
&\bigcap_{a\in A}\left\{\ch\in\{0,1\}^A\mid\ch(a)=1\text{ or }\ch(-a)=1\right\}\cap\\
&\left\{\ch\in\{0,1\}^A\mid\ch(-1)=0\right\}\cap\\
&\bigcap_{a,b\in A}\left\{\ch\in\{0,1\}^A\mid\ch(ab)=0\text{ or }\ch(a)=1\text{ or }\ch(-b)=1\right\}.
\end{align*}
Being thus an intersection of closed sets, $S$ is itself closed.
\end{proof}

\begin{cor} Let $A$ be a commutative ring. Then $\sper A$ is quasicompact.
\end{cor}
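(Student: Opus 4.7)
The plan is to deduce this corollary directly from Theorem \ref{constrcompact} by a general topological observation: quasicompactness is preserved when passing to a coarser topology. Recall from Definition \ref{spectraltop} that the spectral topology on $\sper A$ is by construction coarser than the constructible topology. Since Theorem \ref{constrcompact} asserts that $\sper A$ is compact, hence in particular quasicompact (see Definition \ref{dfcomp}), with respect to the constructible topology, it suffices to argue that quasicompactness descends from the finer to the coarser topology.

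Concretely, I would proceed as follows. Let $\mathcal{A}$ be an arbitrary open cover of $\sper A$ in the spectral topology. Since the spectral topology is contained in the constructible topology, every element of $\mathcal{A}$ is also open in the constructible topology, so $\mathcal{A}$ is also an open cover of $\sper A$ in the constructible topology. By Theorem \ref{constrcompact}, there exists a finite subfamily $\mathcal{B} \subseteq \mathcal{A}$ with $\sper A = \bigcup \mathcal{B}$. This $\mathcal{B}$ is the desired finite subcover, as the underlying set equality does not depend on which topology we are working in. Hence $\sper A$ is quasicompact in the spectral topology.

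There is essentially no obstacle here; the content of the statement lies entirely in Theorem \ref{constrcompact}, and this corollary is just the cheap observation that one may weaken the topology without destroying quasicompactness. Note that \emph{compactness} in the sense of Definition \ref{dfcomp} would not descend in this way, since the spectral topology is generally not Hausdorff (as hinted at by the ``spear'' structure of Proposition \ref{spear}), which is precisely why the corollary asserts only quasicompactness.
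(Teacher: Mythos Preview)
Your proof is correct and takes essentially the same approach as the paper: an open cover for the spectral topology is also one for the finer constructible topology, so Theorem \ref{constrcompact} yields a finite subcover. Your additional remark on why only quasicompactness (not Hausdorffness) survives is a nice gloss beyond what the paper writes.
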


\begin{proof} Every open cover of $\sper A$ is in particular an open cover with respect to the finer
constructible topology. By \ref{constrcompact}, it possesses therefore a finite subcover.
\end{proof}

\begin{reminder}\label{interiorclosure}
Let $M$ be a topological space and $A\subseteq M$.
\alal{The \emph{interior} $A^\circ$}{The \emph{closure} $\overline A$}
of $A$ (in $M$) is the \alal{union}{intersection} over all \alal{open subsets}{closed supersets} of $A$ in $M$,
i.e., the \alal{largest open subset}{smallest closed superset} of $A$ in $M$. One shows immediately
\begin{align*}
A^\circ&=\{x\in M\mid\exists U\in\mathcal U_x:U\subseteq A\}\quad\text{and}\\
\overline A&=\{x\in M\mid\forall U\in\mathcal U_x:U\cap A\ne\emptyset\}.
\end{align*}
Therefore one calls the elements
of $\malal{A^\circ}{\overline A}$ also \emph{\alal{interior}{adherent} points} of $A$.
One says that $A$ is \emph{dense}
in $M$ if $\overline A=M$ or, equivalently, if every nonempty open subset of $M$ contains an element of $A$.
\end{reminder}

\begin{rem}
Let $A$ be a commutative ring and let $P,Q\in\sper A$. Then
\begin{align*}
P\subseteq Q&\iff\forall a\in A:(\widehat a(P)\ge0\implies\widehat a(Q)\ge0)\\
&\iff\forall a\in A:(\widehat a(Q)<0\implies\widehat a(P)<0)\\
&\iff\forall a\in A:(\widehat{-a}(Q)<0\implies\widehat{-a}(P)<0)\\
&\iff\forall a\in A:(\widehat a(Q)>0\implies\widehat a(P)>0)\\
&\iff\forall U\in\mathcal U_Q:P\in U\\
&\iff\forall U\in\mathcal U_Q:U\cap\{P\}\ne\emptyset\\
&\iff Q\in\overline{\{P\}}.
\end{align*}
Thus if there are $P,Q\in\sper A$ with $P\subset Q$,
then $\sper A$ ist not a Hausdorff space. For example, $\sper\R[X]$ is not a Hausdorff space
[$\to$ \ref{sperrx}].
\end{rem}

\begin{rem}\label{spercontinuity}
Suppose $A$ and $B$ are commutative rings and $\ph\colon A\to B$ is a ring homomorphism. Then
\[\sper\ph\colon\sper B\to\sper A,\ Q\mapsto\ph^{-1}(Q)\]
is continuous with respect to the spectral topologies on both sides as well as with respect to the constructible
topologies on both sides because for $a\in A$, we have
\begin{align*}
(\sper\ph)^{-1}(\{P\in\sper A\mid\widehat a(P)>0\})&=
\{Q\in\sper B\mid\widehat a((\sper\ph)(Q))>0\}\\
&=\{Q\in\sper B\mid\widehat a(\ph^{-1}(Q))>0\}\\
&=\{Q\in\sper B\mid a\in\ph^{-1}(Q)\setminus-\ph^{-1}(Q)\}\\
&=\{Q\in\sper B\mid a\in\ph^{-1}(Q\setminus-Q)\}\\
&=\{Q\in\sper B\mid \ph(a)\in Q\setminus-Q\}\\
&=\{Q\in\sper B\mid\widehat{\ph(a)}(Q)>0\}
\end{align*}
and analogously
\[(\sper\ph)^{-1}(\{P\in\sper A\mid\widehat a(P)\ge0\})=\{Q\in\sper B\mid\widehat{\ph(a)}(Q)\ge0\}.\]
\end{rem}

\begin{rem}\label{sperpreordcomp}
Let $(A,T)$ be a preordered ring [$\to$ \ref{preorderedring}]. Then
\[\sper(A,T)=\bigcap_{t\in T}\left\{P\in\sper A\mid\widehat t(P)\ge0\right\},\]
as an intersection of closed sets, is again closed in $\sper A$, namely with respect to the spectral but also
with respect to the constructible topology on $\sper A$. By \ref{compactsubspace}, $\sper(A,T)$ is thus
quasicompact with respect to the spectral and compact with respect to the constructible topology.
\end{rem}

\section{The real spectrum of polynomial rings}

As in §\ref{sec:constructible}, we fix in this section an ordered field $(K,\le)$, we denote by
$R:=\overline{(K,\le)}$ its real closure, we let $n\in\N_0$, $A:=K[\x]=K[X_1,\dots,X_n]$ and
$T:=\sum K_{\ge0}A^2$. Moreover, we denote by $\mathcal S:=\mathcal S_{n,R}$ the Boolean
algebra of all $K$-semialgebraic subsets of $R^n$ [$\to$~\ref{introsemialg}, \ref{introsn}] and by
$\mathcal C:=\mathcal C_{(A,T)}$ the Boolean algebra of all constructible subsets of $\sper(A,T)$
[$\to$ \ref{introconstructible}]. Consider again the isomorphisms of Boolean algebras
\[\slim\colon\mathcal C\to\mathcal S, C\mapsto\{x\in R^n\mid P_x\in C\}\]
and $\fatten:=\slim^{-1}$ [$\to$ \ref{slimfatten}].

\begin{thm}\label{fatclo}
Let $S\in\mathcal S$. Then $\fatten(S)$ is the closure of $\{P_x\mid x\in S\}$ in $\sper(A,T)$
(or equivalently in $\sper A$ \emph{[$\to$ \ref{sperpreordcomp}]}) with respect to the constructible topology.
\end{thm}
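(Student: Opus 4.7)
The plan is to establish the two inclusions separately, exploiting the fact that $\slim\colon\mathcal C\to\mathcal S$ is an isomorphism of Boolean algebras (\ref{slimfatten}) and that, by definition, the constructible topology has $\mathcal C$ as a basis of clopen sets; in particular, every element of $\mathcal C$ is both open and closed in the constructible topology.

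First I would check the easy inclusion. Since $\slim(\fatten(S))=S$, the definition of $\slim$ gives $S=\{x\in R^n\mid P_x\in\fatten(S)\}$, so $\{P_x\mid x\in S\}\subseteq\fatten(S)$. Because $\fatten(S)\in\mathcal C$, it is closed in the constructible topology, hence it contains the closure of $\{P_x\mid x\in S\}$.

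For the reverse inclusion, I would fix $P\in\fatten(S)$ and a constructible neighborhood $U\in\mathcal C$ of $P$ (these form a neighborhood basis by the very definition of the constructible topology). Then $U\cap\fatten(S)\in\mathcal C$ contains $P$, so it is nonempty. Since $\slim$ is a Boolean-algebra isomorphism, it maps nonempty sets to nonempty sets, and computing
\[
\slim(U\cap\fatten(S))=\slim(U)\cap\slim(\fatten(S))=\slim(U)\cap S,
\]
we obtain some $x\in\slim(U)\cap S$. Then $P_x\in U$ and $x\in S$, which shows that every constructible neighborhood of $P$ meets $\{P_x\mid x\in S\}$. Hence $P$ lies in the closure, as required.

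The ``equivalently'' clause in the statement requires one small remark: since $\sper(A,T)$ is closed in $\sper A$ (\ref{sperpreordcomp}) and contains $\{P_x\mid x\in S\}$, the closure computed in $\sper A$ coincides with the closure computed in $\sper(A,T)$. I expect no real obstacle in the argument: the content is entirely in the Boolean-algebra isomorphism of \ref{slimfatten}, whose existence in turn relies on the real quantifier elimination. The only point that might require a brief justification is that a Boolean-algebra isomorphism preserves nonemptiness, which follows at once from injectivity together with $\Ph(\emptyset)=\emptyset$ (see \ref{bahom}).
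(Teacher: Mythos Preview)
Your proof is correct and follows essentially the same route as the paper's own argument: both inclusions are obtained directly from the fact that $\slim$ is a Boolean-algebra isomorphism, with the key step being that a nonempty constructible neighborhood of $P$ inside $\fatten(S)$ must have nonempty slimming contained in $S$. Your version is arguably a bit cleaner in explicitly taking $U\in\mathcal C$ from the outset (as a basic clopen neighborhood) rather than a general open set, and in spelling out the ``equivalently'' clause via \ref{sperpreordcomp}.
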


\begin{proof}[Proof (simplified by Jakob Everling)]
For the duration of this proof, we endow $\sper(A,T)$ with the constructible topology.
Since $\complement\fatten(S)\in\mathcal C$ is open, $\fatten(S)$ is closed. Because of
\[S=\slim(\fatten(S))\overset{\ref{slimfatten}}=\{x\in R^n\mid P_x\in\fatten(S)\},\] we have
$\{P_x\mid x\in S\}\subseteq\fatten(S)$ and thus
$\overline{\{P_x\mid x\in S\}}\subseteq\fatten(S)$. In order to show $\fatten(S)\subseteq
\overline{\{P_x\mid x\in S\}}$, we let $P\in\fatten(S)$ and $U\in\mathcal U_P$.
To show: $U\cap\{P_x\mid x\in S\}\ne\emptyset$. WLOG $U$ is open. WLOG $U\subseteq\fatten(S)$
(because $\fatten(S)$ is open and $P\in\fatten(S)$, one can otherwise replace $U$ by
$U\cap\fatten(S)\in\mathcal U_P)$. Since $\slim$ is an isomorphism of Boolean algebras by \ref{slimfatten}, it follows from
 $\emptyset \ne U \subseteq\fatten(S)$ that $\emptyset \ne \slim(U)\subseteq\slim(\fatten(S)) = S$.
But since $\slim(U)= \{ x\in R^n \mid P_x \in U \}$, this means that there is an $x\in S$ such that $P_x \in U$.
\end{proof}

\begin{cor}
$\{P_x\mid x\in R^n\}$ lies dense in $\sper(A,T)$ with respect to the constructible topology and thus also
with respect to the spectral topology.
\end{cor}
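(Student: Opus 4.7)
The plan is to derive the corollary as an immediate consequence of Theorem \ref{fatclo} applied to the maximal semialgebraic set $S = R^n$. First I would observe that $R^n \in \mathcal{S}$ (it is the top element of the Boolean algebra $\mathcal{S}$), and since $\slim\colon\mathcal{C}\to\mathcal{S}$ is a Boolean algebra isomorphism by \ref{slimfatten}, the preimage satisfies $\fatten(R^n) = \sper(A,T)$ (the top element of $\mathcal{C}$).

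Next I would apply Theorem \ref{fatclo} with $S = R^n$ to conclude that $\fatten(R^n) = \sper(A,T)$ is the closure of $\{P_x \mid x \in R^n\}$ in $\sper(A,T)$ with respect to the constructible topology. By the definition of density in \ref{interiorclosure}, this is exactly the assertion that $\{P_x \mid x \in R^n\}$ is dense in $\sper(A,T)$ for the constructible topology.

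Finally, to pass to the spectral topology I would invoke the fact noted in \ref{spectraltop} that the spectral topology is coarser than the constructible topology. A dense subset remains dense when the topology is made coarser: if $D \subseteq M$ has closure equal to $M$ in a finer topology, then its closure in any coarser topology contains the finer closure and hence also equals $M$. Applying this to $D = \{P_x \mid x \in R^n\}$ yields density with respect to the spectral topology.

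There is no real obstacle here; the entire work has already been done in Theorem \ref{fatclo}, and the corollary amounts to specializing to $S = R^n$ plus a one-line monotonicity remark about density under topology refinement. The only thing to be slightly careful about is making explicit that $R^n$ is indeed an element of the Boolean algebra $\mathcal{S}$ (which is immediate since $\mathcal{S}$ contains $M$ by definition \ref{booleanalgebra}) so that $\fatten(R^n)$ is defined and equals $\sper(A,T)$.
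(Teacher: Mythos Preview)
Your proposal is correct and matches the paper's intended approach: the corollary is stated without proof precisely because it is the specialization of Theorem~\ref{fatclo} to $S=R^n$, together with the observation that density survives passage to a coarser topology. Your write-up makes explicit exactly the two steps (identifying $\fatten(R^n)=\sper(A,T)$ via the Boolean algebra isomorphism, and the coarser-topology remark) that the paper leaves implicit.
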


\begin{lem}\label{sectfat}
Let $\mathcal F$ be \alal{a filter}{an ultrafilter} in $\mathcal S$. Then
$\{\fatten(S)\mid S\in\mathcal F\}$ is \alal{a filter}{an ultrafilter} in $\mathcal C$ and
$\bigcap\{\fatten(S)\mid S\in\mathcal F\}$ is \alal{nonempty}{a singleton}.
\end{lem}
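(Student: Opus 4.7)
The plan is to exploit the fact that $\fatten\colon\mathcal S\to\mathcal C$ is an isomorphism of Boolean algebras (by \ref{slimfatten}) to transport the filter/ultrafilter property, and then to invoke the compactness of $\sper(A,T)$ in the constructible topology together with the fact that every element of $\mathcal C$ is clopen in that topology (since $\mathcal C$ itself is a Boolean algebra that generates it).

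First I would observe that $\{\fatten(S)\mid S\in\mathcal F\}$ is a filter (respectively an ultrafilter) in $\mathcal C$. This is immediate from the defining properties of a filter and ultrafilter in \ref{dffilter}: they are phrased entirely in terms of Boolean operations, the empty set, and the top element, all of which are preserved by the Boolean algebra isomorphism $\fatten$ (note that $\fatten(\emptyset)=\emptyset$ and $\fatten(R^n)=\sper(A,T)$, since $\{P_x\mid x\in R^n\}\subseteq\sper(A,T)$).

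Second, for the nonemptiness of $\bigcap\{\fatten(S)\mid S\in\mathcal F\}$, I would argue by compactness. Each $\fatten(S)$ is a constructible subset of $\sper(A,T)$ and is therefore clopen in the constructible topology (as $\mathcal C$ is a Boolean subalgebra of the power set that generates this topology). The filter property gives the finite intersection property: any finite intersection $\fatten(S_1)\cap\dots\cap\fatten(S_k)$ equals $\fatten(S_1\cap\dots\cap S_k)$, and this is nonempty since $S_1\cap\dots\cap S_k\in\mathcal F$ and $\fatten$ maps $\emptyset$ to $\emptyset$ bijectively. By \ref{sperpreordcomp}, $\sper(A,T)$ is compact in the constructible topology, so the finite intersection property forces $\bigcap\{\fatten(S)\mid S\in\mathcal F\}\neq\emptyset$.

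Third, for the ultrafilter case, I would show that the intersection contains at most one point by combining Hausdorffness of the constructible topology on $\sper(A,T)$ with the ultrafilter property. Given $P\neq Q$ in $\sper(A,T)$, there exists $a\in A$ with $a\in P\triangle Q$, so $\{R\in\sper(A,T)\mid\widehat a(R)\ge0\}$ and its complement are disjoint clopen sets separating $P$ and $Q$; in particular the constructible topology is Hausdorff. Now suppose for contradiction that $P\neq Q$ both lie in $\bigcap\{\fatten(S)\mid S\in\mathcal F\}$, and pick a separating $C\in\mathcal C$ with $P\in C\not\ni Q$. Since $\{\fatten(S)\mid S\in\mathcal F\}$ is an ultrafilter in $\mathcal C$, either $C$ or $\complement C$ belongs to it; the first case contradicts $Q$ lying in the intersection, and the second contradicts $P$ lying in the intersection.

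The main obstacle is essentially cosmetic here, since all the serious work (compactness of the real spectrum in the constructible topology and the isomorphism $\fatten$) has already been done in \ref{slimfatten} and \ref{sperpreordcomp}. The only point requiring mild care is recognizing that elements of $\mathcal C$ are genuinely clopen (not merely a subbasis) in the constructible topology, which is true precisely because $\mathcal C$ is itself closed under the Boolean operations.
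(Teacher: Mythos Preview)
Your proof is correct and follows essentially the same approach as the paper: transport the (ultra)filter through the Boolean isomorphism $\fatten$, use compactness of $\sper(A,T)$ in the constructible topology for nonemptiness via the finite intersection property, and use Hausdorffness together with the ultrafilter dichotomy for uniqueness. The only cosmetic difference is that you emphasize constructible sets being clopen whereas the paper only uses that they are closed, and your justification of $\fatten(R^n)=\sper(A,T)$ is better seen simply as a consequence of $\fatten$ being a Boolean isomorphism rather than via the inclusion $\{P_x\mid x\in R^n\}\subseteq\sper(A,T)$.
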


\begin{proof}
The first part follows immediately from the fact that $\fatten$ is according to \ref{slimfatten} an isomorphism
of Boolean algebras combined with the definition of \alal{a filter}{an ultrafilter} \ref{dffilter}.
Since $\fatten(S)$ is for each $S\in\mathcal F$ closed with respect to the constructible topology, it would
follow from
$\bigcap\{\fatten(S)\mid S\in\mathcal F\}=\emptyset$ together with the compactness of $\sper(A,T)$ with
respect to the constructible topology [$\to$ \ref{sperpreordcomp}] that there would be
$n\in\N$ and $S_1,\dots,S_n\in
\mathcal F$ such that $\fatten(S_1)\cap\ldots\cap\fatten(S_n)=\emptyset$, which would imply
$\fatten(S_1\cap\ldots\cap S_n)=\emptyset$ and thus $\emptyset=S_1\cap\ldots\cap S_n\in\mathcal F\
\lightning$. Hence $\bigcap\{\fatten(S)\mid S\in\mathcal F\}\ne\emptyset$.
Finally, let $\mathcal F$ and thus $\{\fatten(S)\mid S\in\mathcal F\}$ be an ultrafilter
and fix $P,Q\in\bigcap\{\fatten(S)\mid S\in\mathcal F\}$.
Assume $P\ne Q$. Since $\sper(A,T)$ is a Hausdorff space with respect to the constructible topology,
there is some $C\in\mathcal C$ such that $P\in C$ but $Q\notin C$.
Since $\{\fatten(S)\mid S\in\mathcal F\}$ is an ultrafilter in $\mathcal C$, we obtain
$C=\fatten(S)$ or $\complement C=\fatten(S)$ for some $S\in\mathcal F$.
In the first case, it follows that $Q\notin\fatten(S)\ \lightning$, in the second that $P\notin\fatten(S)\
\lightning$.
\end{proof}

\begin{lem}\label{ultra2cone}
Let $\mathcal U$ be an ultrafilter in $\mathcal S$. Then
\[P_{\mathcal U}:=\{f\in A\mid\{x\in R^n\mid f(x)\ge 0\}\in\mathcal U\}\in
\sper(A,T)\] and $\bigcap\{\fatten(S)\mid S\in\mathcal U\}=\{P_{\mathcal U}\}$.
\end{lem}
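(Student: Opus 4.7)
The plan is to verify the two assertions in order, using two simple but crucial set-theoretic translations.  Write $S_f:=\{x\in R^n\mid f(x)\ge0\}\in\mathcal S$ for $f\in A$, so that by definition $f\in P_{\mathcal U}\iff S_f\in\mathcal U$.  Note also that $\fatten(S_f)=\{P\in\sper(A,T)\mid\widehat f(P)\ge0\}$ since slimming this constructible set gives back $S_f$ and $\fatten=\slim^{-1}$.

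First I would check that $P_{\mathcal U}$ is a prime cone of $A$ containing $T$ by verifying each axiom from \ref{dfprimecone} and \ref{preorderedring}.  The point is always to exhibit an inclusion of semialgebraic sets, together with the ultrafilter axioms:
\begin{itemize}
\item $P_{\mathcal U}+P_{\mathcal U}\subseteq P_{\mathcal U}$ and $P_{\mathcal U}P_{\mathcal U}\subseteq P_{\mathcal U}$ follow from $S_f\cap S_g\subseteq S_{f+g}$ and $S_f\cap S_g\subseteq S_{fg}$, using that $\mathcal U$ is closed under finite intersections and supersets.
\item $P_{\mathcal U}\cup -P_{\mathcal U}=A$ follows from $S_f\cup S_{-f}=R^n\in\mathcal U$ combined with the ultrafilter property.
\item $-1\notin P_{\mathcal U}$ because $S_{-1}=\emptyset\notin\mathcal U$.
\item The prime cone implication $ab\in P_{\mathcal U}\Rightarrow(a\in P_{\mathcal U}\vee -b\in P_{\mathcal U})$ follows from the key pointwise inclusion $S_{ab}\subseteq S_a\cup S_{-b}$ (case analysis on the signs of $a(x)$ and $b(x)$) together with the ultrafilter property applied to $S_a\cup S_{-b}\in\mathcal U$.
\item $T\subseteq P_{\mathcal U}$ is immediate since every $t\in T=\sum K_{\ge0}A^2$ satisfies $S_t=R^n\in\mathcal U$.
\end{itemize}

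For the second assertion, I invoke Lemma \ref{sectfat} (ultrafilter case) to know that $\bigcap\{\fatten(S)\mid S\in\mathcal U\}$ is already a singleton $\{P\}$; it then suffices to show $P_{\mathcal U}\in\fatten(S)$ for every $S\in\mathcal U$.  I would argue this via the contrapositive in the form $C\in\mathcal V\iff P_{\mathcal U}\in C$, where $\mathcal V:=\{\fatten(S)\mid S\in\mathcal U\}$ is the ultrafilter in $\mathcal C$ induced by the Boolean algebra isomorphism $\fatten$ (injectivity of $\fatten$ turns $C=\fatten(S')\in\mathcal V$ into $S'\in\mathcal U$).  Applied to $C=\fatten(S_f)$ for arbitrary $f\in A$, this yields the equivalences
\[
\widehat f(P_{\mathcal U})\ge0\ \Longleftrightarrow\ P_{\mathcal U}\in\fatten(S_f)\ \Longleftrightarrow\ \fatten(S_f)\in\mathcal V\ \Longleftrightarrow\ S_f\in\mathcal U\ \Longleftrightarrow\ f\in P_{\mathcal U},
\]
where the last is the definition of $P_{\mathcal U}$ and the second-to-last uses injectivity of $\fatten$.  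The third equivalence is exactly the statement $P_{\mathcal U}\in\fatten(S_f)$, so $P_{\mathcal U}$ lies in $\fatten(S)$ for every $S=S_f$ of this form; to conclude for general $S\in\mathcal U$ one writes $S$ in the normal form of \ref{sanf} and distributes, or more elegantly just replays the ultrafilter-membership argument directly on arbitrary $C\in\mathcal C$ by using that $P_{\mathcal U}$ belongs to $C$ iff $C\in\mathcal V$.

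There is no serious obstacle; the only point that requires a moment of thought is the inclusion $S_{ab}\subseteq S_a\cup S_{-b}$ that underlies the prime cone axiom, and the clean identification $\fatten(S_f)=\{P\mid\widehat f(P)\ge0\}$, after which the rest is bookkeeping with ultrafilters and the isomorphism $\slim\leftrightarrow\fatten$ from \ref{slimfatten}.
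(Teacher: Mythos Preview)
Your proof is correct, but it takes a longer route than the paper's. The paper never verifies the prime cone axioms for $P_{\mathcal U}$ directly. Instead it invokes Lemma~\ref{sectfat} \emph{first} to obtain the unique element $Q$ of $\bigcap\{\fatten(S)\mid S\in\mathcal U\}$, and then simply checks $P_{\mathcal U}=Q$ as sets: if $f\in P_{\mathcal U}$ then $S_f\in\mathcal U$, hence $Q\in\fatten(S_f)$, hence $f\in Q$; if $f\notin P_{\mathcal U}$ then $\complement S_f=\{x\mid f(x)<0\}\in\mathcal U$ (ultrafilter), hence $Q\in\fatten(\complement S_f)$, hence $f\notin Q$. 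Both assertions of the lemma then fall out at once, since $Q\in\sper(A,T)$ is already known.

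Your approach---checking the axioms of~\ref{dfprimecone} by hand via the inclusions $S_f\cap S_g\subseteq S_{f+g}$, $S_{ab}\subseteq S_a\cup S_{-b}$, etc., and only afterwards placing $P_{\mathcal U}$ inside the intersection---is perfectly valid and perhaps more transparent about \emph{why} $P_{\mathcal U}$ is a prime cone. The paper's trick buys economy: it gets the prime cone property for free from the already-established $Q$, and avoids the separate step of extending from basic sets $S_f$ to general $S\in\mathcal U$ (which in your write-up is the part that comes out a bit tangled, though the idea of matching two ultrafilters in $\mathcal C$ on generators is sound).
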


\begin{proof}
By Lemma \ref{sectfat}, there is some $Q\in\sper(A,T)$ satisfying
\[\bigcap\{\fatten(S)\mid S\in\mathcal U\}=\{Q\}.\]
We show $P_{\mathcal U}=Q$. If $f\in P_{\mathcal U}$,
then $Q\in\fatten(\{x\in R^n\mid f(x)\ge0\})$, i.e., $\widehat f(Q)\ge0$ and
hence $f\in Q$. If on the other hand $f\in A\setminus P_{\mathcal U}$,
then $\{x\in R^n\mid f(x)<0\}\in\mathcal U$ (since $\mathcal U$ is an ultrafilter)
and thus $Q\in\fatten(\{x\in R^n\mid f(x)<0\})$, i.e., $\widehat f(Q)<0$ and hence
$f\notin Q$.
\end{proof}

\begin{lem}\label{cone2ultra}
Let $P\in\sper(A,T)$. Then
\begin{align*}
\mathcal U_P:=\{S\in\mathcal S\mid~&\exists f\in\supp P:\exists m\in\N:
\exists g_1,\dots,g_m\in P\setminus-P:\\
&\{x\in R^n\mid f(x)=0,g_1(x)>0,\dots,g_m(x)>0\}\subseteq S\}
\end{align*}
is an ultrafilter in $\mathcal S$ and we have
$\{S\in\mathcal S\mid P\in\fatten(S)\}=\mathcal U_P$.
\end{lem}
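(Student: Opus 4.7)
The plan is to reduce the lemma to the isomorphism $\slim\colon\mathcal{C}\to\mathcal{S}$ from \ref{slimfatten} together with the normal form from \ref{constructiblesetnormalform}. Define
\[\mathcal{F}_P:=\{S\in\mathcal{S}\mid P\in\fatten(S)\}.\]
Since $\fatten$ is an isomorphism of Boolean algebras, $\mathcal{F}_P=\fatten^{-1}(\{C\in\mathcal{C}\mid P\in C\})$; and the set of constructible sets containing $P$ is trivially an ultrafilter in $\mathcal{C}$ (containment is preserved under $\cap$ and $\cup$, and for every $C$ either $P\in C$ or $P\in\complement C$). Hence $\mathcal{F}_P$ is automatically an ultrafilter in $\mathcal{S}$. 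The whole task therefore reduces to proving the set-theoretic equality $\mathcal{U}_P=\mathcal{F}_P$; once this is done, the ``ultrafilter'' assertion is free.

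For $\mathcal{U}_P\subseteq\mathcal{F}_P$, suppose $S\in\mathcal{S}$ contains $V_{f,g_1,\dots,g_m}:=\{x\in R^n\mid f(x)=0,g_1(x)>0,\dots,g_m(x)>0\}$ for some $f\in\supp P$ and $g_1,\dots,g_m\in P\setminus-P$. Since $\fatten$ is monotone,
\[\fatten(S)\supseteq\fatten(V_{f,g_1,\dots,g_m})=\{Q\in\sper(A,T)\mid\widehat{f}(Q)=0,\widehat{g_1}(Q)>0,\dots,\widehat{g_m}(Q)>0\},\]
where the last equality comes from applying $\fatten$ to each generator (using $\fatten(\{x\mid p(x)\ge0\})=\{Q\mid p\in Q\}$ from \ref{slimfatten}, and taking intersections/complements). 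By the choice of $f$ and $g_j$, the right-hand set contains $P$, so $S\in\mathcal{F}_P$.

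For the reverse inclusion $\mathcal{F}_P\subseteq\mathcal{U}_P$, let $S\in\mathcal{F}_P$. Apply \ref{constructiblesetnormalform} to the constructible set $\fatten(S)$ to write it as
\[\fatten(S)=\bigcup_{i=1}^k\{Q\in\sper(A,T)\mid\widehat{a_i}(Q)=0,\widehat{b_{i1}}(Q)>0,\dots,\widehat{b_{im}}(Q)>0\}\]
with $a_i,b_{ij}\in A$. Since $P\in\fatten(S)$, fix an index $i$ for which the corresponding disjunct contains $P$, and set $f:=a_i$, $g_j:=b_{ij}$. Then $\widehat{f}(P)=0$ gives $f\in\supp P$, and $\widehat{g_j}(P)>0$ gives $g_j\in P\setminus-P$. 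The $i$-th disjunct is contained in $\fatten(S)$, so applying $\slim$ (again compatibly with $\cap$, $\cup$, $\complement$) yields $V_{f,g_1,\dots,g_m}\subseteq S$, whence $S\in\mathcal{U}_P$.

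The only real subtlety is routine bookkeeping: verifying that the Boolean-algebra isomorphism $\slim/\fatten$ intertwines the ``zero set'' and ``strict positivity set'' generators on both sides — that is, $\slim$ sends $\{Q\mid\widehat{f}(Q)=0\}$ to $\{x\mid f(x)=0\}$ and $\{Q\mid\widehat{g}(Q)>0\}$ to $\{x\mid g(x)>0\}$. This follows directly from $\slim(\{Q\mid f\in Q\})=\{x\mid f(x)\ge0\}$ in \ref{slimfatten} by writing the zero set as $\{f\in Q\}\cap\{-f\in Q\}$ and the strict positivity set as the complement of $\{-g\in Q\}$. No other step is substantive.
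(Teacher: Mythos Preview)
Your proof is correct and follows essentially the same route as the paper: first observe that $\{C\in\mathcal C\mid P\in C\}$ is an ultrafilter in $\mathcal C$ and transport it through the Boolean-algebra isomorphism to get that $\mathcal F_P$ is an ultrafilter in $\mathcal S$, then use a normal form to identify $\mathcal F_P$ with $\mathcal U_P$. The only cosmetic difference is that the paper invokes the semialgebraic normal form \ref{sanf} for $S$, whereas you invoke the dual constructible normal form \ref{constructiblesetnormalform} for $\fatten(S)$; via $\slim/\fatten$ these are the same argument, and your explicit verification of both inclusions and of the generator correspondence is a welcome elaboration of what the paper leaves implicit.
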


\begin{proof}
Since $\{C\in\mathcal C\mid P\in C\}$ is an ultrafilter in $\mathcal C$ and
$\slim\colon\mathcal C\to\mathcal S$ is an isomorphism of Boolean algebras,
$\{S\in\mathcal S\mid P\in\fatten(S)\}$ is an ultrafilter in $\mathcal S$.
From the description of $K$-semialgebraic subsets of $R^n$ implied by
Theorem \ref{sanf}, one gets that this ultrafilter equals
\begin{align*}
\{S\in\mathcal S\mid&\exists S'\subseteq S:\exists f,g_1,\dots,g_m\in A:\\
&S'=\{x\in R^n\mid f(x)=0,g_1(x)>0,\dots,g_m(x)>0\}\et P\in\fatten(S')\}=\mathcal U_P
\end{align*}
since $\fatten$ is an isomorphism of Boolean algebras.
\end{proof}

\begin{thm}[Bröcker's ultrafilter theorem \cite{bro}]\label{broecker}
The correspondence
\[
\begin{array}{rcll}
\mathcal U&\mapsto&P_{\mathcal U}&[\to \ref{ultra2cone}]\\
\mathcal U_P&\mapsfrom&P&[\to \ref{cone2ultra}]
\end{array}
\]
defines a bijection between the set of ultrafilters in $\mathcal S$ and $\sper(A,T)$.
\end{thm}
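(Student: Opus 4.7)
The plan is to verify that the two assignments $\mathcal U \mapsto P_{\mathcal U}$ and $P \mapsto \mathcal U_P$ are mutually inverse; well-definedness of both maps has already been settled by Lemmas \ref{ultra2cone} and \ref{cone2ultra}, so only the two compositions need to be computed. I will handle the composition $P \mapsto \mathcal U_P \mapsto P_{\mathcal U_P}$ first (which boils down to a direct identification via \ref{slimfatten}), and then the composition $\mathcal U \mapsto P_{\mathcal U} \mapsto \mathcal U_{P_{\mathcal U}}$ (where the key leverage is the maximality of ultrafilters from Proposition \ref{maxultra}).

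First I would show $P_{\mathcal U_P} = P$ for every $P \in \sper(A,T)$. The second assertion of Lemma \ref{cone2ultra} rewrites $\mathcal U_P$ as $\{S \in \mathcal S \mid P \in \fatten(S)\}$. On the other hand, Theorem \ref{slimfatten} asserts
\[
\slim(\{Q \in \sper(A,T) \mid f \in Q\}) = \{x \in R^n \mid f(x) \ge 0\},
\]
so $\fatten(\{x \in R^n \mid f(x) \ge 0\}) = \{Q \in \sper(A,T) \mid f \in Q\}$ since $\fatten$ is the inverse isomorphism. Combining these two facts, for every $f \in A$,
\[
f \in P_{\mathcal U_P} \iff \{x \in R^n \mid f(x) \ge 0\} \in \mathcal U_P \iff P \in \fatten(\{x \in R^n \mid f(x) \ge 0\}) \iff f \in P,
\]
which gives $P_{\mathcal U_P} = P$.

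Next I would show $\mathcal U_{P_{\mathcal U}} = \mathcal U$ for every ultrafilter $\mathcal U$ in $\mathcal S$. The second half of Lemma \ref{ultra2cone} states that $\bigcap\{\fatten(S) \mid S \in \mathcal U\} = \{P_{\mathcal U}\}$; in particular $P_{\mathcal U} \in \fatten(S)$ for every $S \in \mathcal U$. By the reformulation of $\mathcal U_{P_{\mathcal U}}$ coming from Lemma \ref{cone2ultra}, this means $S \in \mathcal U_{P_{\mathcal U}}$, so $\mathcal U \subseteq \mathcal U_{P_{\mathcal U}}$. Since both are ultrafilters in $\mathcal S$, Proposition \ref{maxultra} (every ultrafilter is a maximal filter) forces equality, completing the verification that the two maps are mutually inverse.

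The main obstacle has actually been absorbed into the two preparatory lemmas: Lemma \ref{sectfat}, which in turn rests on the compactness of $\sper(A,T)$ in the constructible topology (Remark \ref{sperpreordcomp}, Theorem \ref{constrcompact}), is what guarantees that the intersection $\bigcap\{\fatten(S) \mid S \in \mathcal U\}$ is a singleton rather than just nonempty. Once those ingredients are in place, the bijectivity asserted in \ref{broecker} is a short chase through the Boolean-algebra isomorphism $\fatten \colon \mathcal S \to \mathcal C$ of Theorem \ref{slimfatten}.
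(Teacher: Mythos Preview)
Your proof is correct and follows essentially the same architecture as the paper's: show the two composites are the identity, use the reformulation of $\mathcal U_P$ as $\{S\in\mathcal S\mid P\in\fatten(S)\}$ from Lemma~\ref{cone2ultra}, and for the direction $\mathcal U\mapsto P_{\mathcal U}\mapsto\mathcal U_{P_{\mathcal U}}$ obtain the inclusion $\mathcal U\subseteq\mathcal U_{P_{\mathcal U}}$ from Lemma~\ref{ultra2cone} and conclude by maximality of ultrafilters.

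There is one small difference worth pointing out. For the identity $P_{\mathcal U_P}=P$, the paper argues via the singleton property: by Lemma~\ref{ultra2cone} the intersection $\bigcap\{\fatten(S)\mid S\in\mathcal U_P\}$ consists of the single element $P_{\mathcal U_P}$, and by Lemma~\ref{cone2ultra} the point $P$ lies in this intersection, so $P=P_{\mathcal U_P}$. You instead verify $f\in P_{\mathcal U_P}\iff f\in P$ element-by-element using the explicit description of $\fatten(\{x\in R^n\mid f(x)\ge0\})$ from Theorem~\ref{slimfatten}. Your route is slightly more direct for this direction, since it does not invoke the compactness-based singleton statement; the paper's route has the virtue of being entirely symmetric with the other direction (both reduce to checking that a certain point lies in a certain intersection of fattenings).
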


\begin{proof}
To show: (a) If $\mathcal U$ is an ultrafilter in $\mathcal S$, then
$\mathcal U=\mathcal U_{P_{\mathcal U}}$.

(b) If $P\in\sper(A,T)$, then $P=P_{{\mathcal U}_P}$.

\medskip
In order to show (a), we let $\mathcal U$ be an ultrafilter in $\mathcal S$. By 
\ref{cone2ultra}, we have to show that $\{S\in\mathcal S\mid
P_{\mathcal U}\in\fatten(S)\}=\mathcal U$. Since $\fatten$ is an isomorphism of
Boolean algebras by \ref{slimfatten}, $\{S\in\mathcal S\mid P_{\mathcal U}\in
\fatten(S)\}$ is a filter in $\mathcal S$. Since $\mathcal U$ is a maximal filter in
$\mathcal S$ [$\to$ \ref{maxultra}], it suffices to show that
$\mathcal U\subseteq\{S\in\mathcal S\mid P_{\mathcal U}\in\fatten(S)\}$.
To this end, let $S\in\mathcal U$. Then $\{P_{\mathcal U}\}\subseteq
\fatten(S)$ by \ref{ultra2cone} and thus $P_{\mathcal U}\in\fatten(S)$.

\medskip
For (b), we let $P\in\sper(A,T)$. By \ref{ultra2cone},
$\bigcap\{\fatten(S)\mid S\in\mathcal U_P\}$ consists of exactly one element, namely
$P_{\mathcal U_P}$. Therefore it is enough to show $P\in\bigcap
\{\fatten(S)\mid S\in\mathcal U_P\}$. Thus fix $S\in\mathcal U_P$. By
\ref{cone2ultra}, we then obtain $P\in\fatten(S)$.
\end{proof}

\begin{pro}\label{semialgk}
Every semialgebraic subset of $R^n$ \emph{[$\to$ \ref{introsemialg}]} is even
$K$-semialgebraic.
\end{pro}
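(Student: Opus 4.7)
The idea is to eliminate the $R$-valued coefficients of a defining formula of $S$ using the Hermite-form machinery from §\ref{sec:hermite} together with the image theorem \ref{imagesa}, whose proof via quantifier elimination \ref{elim} preserves $K$-semialgebraicity throughout.

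Let $S\subseteq R^n$ be semialgebraic. By Proposition \ref{sanf} applied to the real closed field $R$, there exist elements $c_1,\ldots,c_m\in R$ and a quantifier-free formula $\Phi(y_1,\ldots,y_m,x_1,\ldots,x_n)$ with coefficients in $\Z\subseteq K$ (obtained by replacing each coefficient of each defining polynomial of $S$ by a fresh variable $y_i$) such that $S=\{x\in R^n:\Phi(c_1,\ldots,c_m,x)\}$. In particular, $T_\Phi:=\{(y,x)\in R^{m+n}:\Phi(y,x)\}$ is $K$-semialgebraic. It will suffice to show that $\{c\}:=\{(c_1,\ldots,c_m)\}\subseteq R^m$ is $K$-semialgebraic, for then $T_\Phi\cap(\{c\}\times R^n)$ is $K$-semialgebraic, its image under the polynomial projection $(y,x)\mapsto x$ is $S$, and by \ref{imagesa} this image is again $K$-semialgebraic.

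To exhibit $\{c_i\}$ as a $K$-semialgebraic subset of $R$, observe that $R=\overline{(K,\le)}$ is algebraic over $K$ (Definition \ref{dfrealclosure}), so each $c_i$ has a monic minimal polynomial $\mu_i\in K[X]$. Let $r_i\in\{1,\ldots,\deg\mu_i\}$ be the rank of $c_i$ in the increasing list of roots of $\mu_i$ in $R$. Then $\{c_i\}$ is cut out by $\mu_i(y_i)=0$ together with the requirement that $\mu_i$ has exactly $r_i-1$ roots strictly smaller than $y_i$. By Corollary \ref{several} applied with $f:=\mu_i$ and $g_1:=y_i-X$, this count equals
\[
\tfrac{1}{2}\bigl(\sg H(\mu_i,y_i-X)+\sg H(\mu_i,(y_i-X)^2)\bigr),
\]
and the Hermite matrices on the right have entries in $K[y_i]$. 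By \ref{combining}, each signature equals $\si(h)-\si(h(-X))$ for the characteristic polynomial $h\in K[y_i][X]$ of the Hermite matrix. Since the sign-change count $\si$ of a polynomial is a $K$-semialgebraic function of its coefficients (Lemma \ref{sigsa}) and polynomial substitution preserves $K$-semialgebraicity (Proposition \ref{sapre}), the condition that this count equals $r_i-1$ defines a $K$-semialgebraic subset of $R$ in the variable $y_i$. Hence $\{c_i\}$, and consequently $\{c\}=\prod_i\{c_i\}$, is $K$-semialgebraic.

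The main obstacle is this preceding step: $c_i$ cannot in general be isolated as the unique root of $\mu_i$ in some interval with endpoints in $K$, because $K$ need not be dense in $R$ (for instance, no element of $\Q(t)$ lies strictly between $\sqrt{t}-t$ and $\sqrt{t}$ in the real closure of $\Q(t)$). Identifying $c_i$ therefore relies on the intrinsic root-counting supplied by the Hermite--Descartes apparatus of §\ref{sec:hermite}; once this is in hand, the projection argument from the first paragraph completes the proof.
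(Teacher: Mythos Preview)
Your proof is correct. The overall architecture matches the paper's: first show that each singleton $\{c\}\subseteq R$ is $K$-semialgebraic, then introduce the $R$-coefficients as existentially quantified parameters and project them away via \ref{elim}. (Your appeal to \ref{imagesa} is fine once one observes, as you do, that its proof through \ref{elim} works verbatim at the $K$-level.)

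The genuine difference lies in how the singleton step is carried out. The paper isolates $c_i$ by existentially quantifying over \emph{all} roots of the minimal polynomial in increasing order and selecting the $j$-th, then eliminates those quantifiers with \ref{elim}; this is short and uses nothing beyond quantifier elimination itself. You instead realize the rank condition via the Hermite--Descartes machinery (\ref{several}, \ref{combining}, \ref{sigsa}, \ref{sapre}), effectively redoing by hand a piece of the proof of \ref{elim1}. Both are valid; the paper's route is lighter on prerequisites, while yours is more explicit and gives a quantifier-free description of $\{c_i\}$ directly, without a further appeal to \ref{elim}. Your closing remark about the impossibility of isolating $c_i$ by $K$-rational interval endpoints correctly identifies why \emph{some} form of intrinsic ordering or counting argument is needed here.
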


\begin{proof}
To begin with, we show that all one-element subsets of $R$ are $K$-semialgebraic.
For this, let $a\in R$. To show: $\{a\}$ is $K$-semialgebraic. Since $R|K$ is algebraic,
there is $f\in K[X]\setminus\{0\}$ with $f(a)=0$. Set $k:=\#\{x\in R\mid f(x)=0\}$
and choose $j\in\{1,\dots,k\}$ such that $a$ is the $j$-th root of $f$ when the
roots of $f$ in $R$ are arranged in increasing order with respect to the order
$\le_R$ of $R$. By applying the real quantifier elimination \ref{elim}
$k$ times, we obtain that
\[\{a\}=\{y\in R\mid\exists x_1,\dots,x_k\in R:
(x_1<_R\ldots<_Rx_k\et f(x_1)=\ldots=f(x_k)=0\et x_j=y)\}\]
is $K$-semialgebraic.
Now consider an arbitrary $p\in R[\x]$. It suffices to show that
$\{x\in R^n\mid p(x)\ge0\}$ is $K$-semialgebraic. Write
$p=\sum_{\substack{\al\in\N^n\\|\al|\le d}}a_\al\x^\al$
[$\to$ \ref{monomnotation}] with
$d:=\deg p$ and $a_\al\in R$. Since all $\{a_\al\}$ are $K$-semialgebraic by
what has already been shown, real quantifier elimination yields that
\begin{align*}
\{x\in R^n\mid p(x)\ge0\}=\Bigg\{x\in R^n\mid&\exists\text{ family }
(y_\al)_{|\al|\le d}\text{ in $R$}:\\
&\left(\biget_{|\al|\le d}y_\al\in\{a_\al\}\et\sum_{|\al|\le d}y_\al x_1^{\al_1}\dotsm
x_n^{\al_n}\ge0\right)\Bigg\}
\end{align*}
is $K$-semialgebraic.
\end{proof}

\begin{thm}\label{rcsper}
$\sper R[\x]\to\sper(A,T),\ P\mapsto P\cap A$ is bijective.
\end{thm}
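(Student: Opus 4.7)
The plan is to reduce the statement to Bröcker's ultrafilter theorem \ref{broecker} applied twice: once with base ordered field $(K,\le)$ and once with base ordered field $(R,R_{\ge0})$. In both applications the relevant Boolean algebra is the same, thanks to Proposition \ref{semialgk}, and the bijections are visibly compatible with intersection with $A$.

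First I would check well-definedness. For $P\in\sper R[\x]$, the set $P\cap A$ is the image of $P$ under $\sper$ applied to the inclusion $A\hookrightarrow R[\x]$, so $P\cap A\in\sper A$. Moreover $R_{\ge 0}=R^2\subseteq P$ and $A^2\subseteq R[\x]^2\subseteq P$, so in particular $K_{\ge 0}\subseteq P$ and thus $T=\sum K_{\ge 0}A^2\subseteq P$, whence $T\subseteq P\cap A$ and $P\cap A\in\sper(A,T)$.

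Next I would set up the two Bröcker bijections. Over $(K,\le)$, Theorem \ref{broecker} together with \ref{slimfatten} gives a bijection between the set $\mathfrak U$ of ultrafilters in the Boolean algebra $\mathcal S_{n,R}$ of $K$-semialgebraic subsets of $R^n$ and $\sper(A,T)$, sending $\mathcal U$ to $P_{\mathcal U}=\{f\in A\mid\{x\in R^n\mid f(x)\ge 0\}\in\mathcal U\}$. Applying the same theorem to $(R,R_{\ge 0})$ (whose real closure is $R$ itself) and the preorder $\sum R_{\ge 0}R[\x]^2=\sum R[\x]^2$ gives a bijection between the set $\mathfrak U'$ of ultrafilters in the Boolean algebra of $R$-semialgebraic subsets of $R^n$ and $\sper(R[\x],\sum R[\x]^2)=\sper R[\x]$ (the equality holds because every prime cone is a preorder containing $\sum R[\x]^2$), sending $\mathcal U$ to $P'_{\mathcal U}=\{g\in R[\x]\mid\{x\in R^n\mid g(x)\ge 0\}\in\mathcal U\}$. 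By Proposition \ref{semialgk}, every $R$-semialgebraic subset of $R^n$ is $K$-semialgebraic, so the two Boolean algebras coincide and $\mathfrak U=\mathfrak U'$.

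Finally I would verify compatibility, which is the key and essentially immediate step: for any $\mathcal U\in\mathfrak U=\mathfrak U'$ one reads off directly from the defining formulas that $P'_{\mathcal U}\cap A=P_{\mathcal U}$. Thus the map $\sper R[\x]\to\sper(A,T),\ P\mapsto P\cap A$ corresponds under the two Bröcker bijections to the identity map $\mathfrak U'\to\mathfrak U$ and is therefore itself bijective. The only subtle point in this plan is that one must know that intersecting with $A$ takes $\sper R[\x]$ into $\sper(A,T)$ (done in paragraph one) and that Proposition \ref{semialgk} genuinely applies so that the ultrafilter sets match; both are in hand, so no essential obstacle remains.
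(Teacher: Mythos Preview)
Your proposal is correct and takes essentially the same approach as the paper: apply Bröcker's ultrafilter theorem \ref{broecker} twice (once over $(K,\le)$, once in the special case $K=R$), use Proposition \ref{semialgk} to identify the two Boolean algebras of semialgebraic subsets of $R^n$, and read off that the map $P\mapsto P\cap A$ corresponds to the identity on ultrafilters. The paper's proof is terser and omits the explicit well-definedness check, but the argument is the same.
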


\begin{proof}
Because of \ref{semialgk}, we obtain from applying
the ultrafilter theorem of Bröcker twice
(once in the special case $K=R$) that
\begin{align*}
\sper R[\x]&\to\sper(A,T)\\
\{f\in R[\x]\mid\{x\in R^n\mid f(x)\ge0\}\in\mathcal U\}&\mapsto
\{f\in A\mid\{x\in R^n\mid f(x)\ge0\}\in\mathcal U\}\\
&\text{($\mathcal U$ an ultrafilter in $\mathcal S$)}
\end{align*}
is a bijection.
\end{proof}

\begin{cor}\label{rcspercor}
$\sper R(\x)\to\sper(K(\x),\sum K_{\ge0}K(\x)^2),\ P\mapsto P\cap K(\x)$ is
bijective.
\end{cor}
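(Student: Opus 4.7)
The plan is to deduce this corollary from its polynomial-ring analogue, Theorem \ref{rcsper}, via localization at $S := K[\x] \setminus \{0\}$. Proposition \ref{sperlocalize} identifies $\sper K(\x)$ with $\{Q \in \sper K[\x] \mid \supp Q = (0)\}$ and $\sper R(\x)$ with $\{P \in \sper R[\x] \mid \supp P = (0)\}$. I would first check that the additional preorder condition carries across: for $Q \in \sper K[\x]$ of support $(0)$, the localized prime cone $S^{-2}Q$ contains $K_{\ge 0}$ iff $Q$ does (since $s^2 \notin \supp Q$ for all $s \in S$), so $\sper(K(\x), \sum K_{\ge 0} K(\x)^2)$ is identified with $\{Q \in \sper(K[\x], T) \mid \supp Q = (0)\}$, where $T = \sum K_{\ge 0} K[\x]^2$. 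The set $\sper R(\x)$ needs no extra condition, as every prime cone of $R[\x]$ automatically contains $R^2 = R_{\ge 0}$.

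Under these identifications, the map $P \mapsto P \cap K(\x)$ of the corollary corresponds to the bijection $\sper R[\x] \to \sper(K[\x], T), P \mapsto P \cap K[\x]$ from Theorem \ref{rcsper}, restricted to the zero-support subsets on both sides. The forward restriction is well-defined because $\supp(P \cap K[\x]) = (\supp P) \cap K[\x]$, and it is automatically injective.

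The crux is surjectivity of the restriction: if $P \in \sper R[\x]$ satisfies $\supp(P \cap K[\x]) = (0)$, then $\supp P = (0)$. For this I would use Bröcker's ultrafilter theorem \ref{broecker} together with \ref{semialgk}: both supports are controlled by the same ultrafilter $\mathcal{U}$ of $K$-semialgebraic (equivalently, $R$-semialgebraic) subsets of $R^n$, with $\supp P = (0)$ iff no nonzero $f \in R[\x]$ has $\{x \in R^n \mid f(x) = 0\} \in \mathcal{U}$, and analogously for $P \cap K[\x]$ with $K[\x]$. Given a nonzero $f \in R[\x]$, let $L$ be the finite subextension of $R|K$ generated by the coefficients of $f$ and $L'|K$ the Galois closure of $L|K$ inside $\overline K$; set $g := \prod_{\sigma \in \Aut(L'|K)} \sigma(f)$, where $\sigma(f)$ denotes the polynomial obtained by applying $\sigma$ to the coefficients of $f$. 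Then $g$ has coefficients in $K$ by Galois invariance, is nonzero since each factor is, and satisfies $\{x \in R^n \mid f(x) = 0\} \subseteq \{x \in R^n \mid g(x) = 0\}$ by the factor $\sigma = \id$. This forces a witness on the $R[\x]$-side to yield a witness on the $K[\x]$-side, giving the needed equivalence of support conditions.

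The main obstacle is precisely this Galois-conjugation trick, which sidesteps the fact that an algebraic hypersurface in $R^n$ cut out by some $f \in R[\x]$ need not itself be the zero locus of a $K$-polynomial, only contained in one. The remaining bookkeeping of the two localization identifications is routine.
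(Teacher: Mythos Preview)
Your proof is correct and follows the same overall architecture as the paper's: reduce to the polynomial-ring bijection \ref{rcsper} via the localization correspondence \ref{sperlocalize}, then handle the support compatibility by a Galois-conjugation trick producing a nonzero polynomial in $K[\x]$ from a nonzero one in $R[\x]$.

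The one genuine difference is in how the Galois step is cashed out. The paper argues directly at the level of ideals: it needs $f\mid h$ in $R[\x]$ (so that $f\in\supp P$ forces $h\in\supp P$), and since a priori one only has $f\mid h$ in $L[\x]\subseteq C[\x]$, it inserts a short linear-algebra argument to descend the divisibility to $(L\cap R)[\x]$. You instead route through Bröcker's ultrafilter description \ref{broecker}: the support membership $f\in\supp P$ becomes $Z(f)\in\mathcal U$, and then the trivial containment $Z(f)\subseteq Z(g)$ (from the factor $\sigma=\id$) together with upward closure of the filter does the job. This sidesteps the descent-of-divisibility step entirely and is a small but clean simplification.
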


\begin{proof}
In the commutative diagram
\begin{center}
\begin{tikzpicture}
  \matrix (m) [matrix of math nodes,row sep=5em,column sep=8em]
  {\sper R(\x) & \sper(K(\x),\sum K_{\ge0}K(\x)^2)\\
     \{P\in\sper R[\x]\mid\supp P=(0)\} &\{P\in\sper(A,T)\mid\supp P=(0)\}\\};
  \path[->]
    (m-1-1) edge node [left] {$P\mapsto P\cap R[\x]$} (m-2-1)
            edge node [above] {$P\mapsto P\cap K(\x)$} (m-1-2)
    (m-2-1) edge node [below] {$P\mapsto P\cap A$} (m-2-2)
    (m-1-2) edge node [right] {$P\mapsto P\cap A$} (m-2-2);
\end{tikzpicture}
\end{center}
both vertical arrows represent bijections by Proposition
\ref{sperlocalize} or by the very definition of the real spectrum \ref{introrealspectrum}. It therefore suffices to show that the lower horizontal arrow
represents a bijection. Because of the bijection from \ref{rcsper},
it therefore suffices to show that every $P\in\sper R[\x]$ with
$\supp P\ne(0)$ satisfies even $\supp(P\cap A)\ne(0)$. Thus fix
$P\in\sper R[\x]$ and $f\in\p:=\supp P$ with $f\ne0$. Since $K$ has
characteristic $0$, there exists an extension field $L$ of $K$ containing
all coefficients of $f$ such that $L|K$ is a finite Galois extension.
If $C$ denotes the algebraic closure of $R$ (and therefore of $K$),
then we can of course suppose that $L$ is a subfield of $C$.
By extending
the action of the Galois group $\Aut(L|K)$ from $L$ to $L[\x]$, we obtain
$h:=\prod_{g\in\Aut(L|K)}gf\in A\setminus\{0\}$. Clearly, $f$ divides $h$ in $L[\x]$
and therefore in $C[X]$. Translating this divisibility into a system of affine linear
equations (whose variables correspond to the coefficients of the corresponding multiplier polynomial),
we see by linear algebra that the same system must have a solution over the field $L\cap R$.
This means that $f$ divides $h$ in $(L\cap R)[\x]$ and therefore in $R[\x]$.
Since $f\in\p$ and $\p$ is an ideal in $R[\x]$, we get now
$h\in\p\cap A=\supp(P\cap A)$.
\end{proof}

\begin{thm}\label{ultrasurjective}
Let $(L,\le')$ be an ordered extension field of $(K,\le)$. Then
\[\sper\left(L[\x],\sum L_{\ge'0}L[\x]^2\right)\to\sper(A,T),\ P\mapsto P\cap A\]
is surjective.
\end{thm}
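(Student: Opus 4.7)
The plan is to use Bröcker's ultrafilter theorem \ref{broecker} to recast the statement as a question about ultrafilters of semialgebraic sets, where the extra freedom provided by the ordered extension $(L,\le')$ can be supplied by Zorn's lemma.

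First I would set $R:=\overline{(K,\le)}$ and $R':=\overline{(L,\le')}$, using \ref{unic} to view $R$ as an ordered subfield of $R'$, so that both $R$ and $R'$ are real closed ordered extension fields of $(K,\le)$ and the transfer map $\transfer_{R,R'}$ of \ref{transfer} is an isomorphism of Boolean algebras between the $K$-semialgebraic subsets of $R^n$ and those of $R'^n$. Given an arbitrary $P\in\sper(A,T)$, I would apply \ref{broecker} (for the ordered field $(K,\le)$ with real closure $R$) to obtain an ultrafilter $\mathcal U_P$ in the Boolean algebra of $K$-semialgebraic subsets of $R^n$ with the property that $P=\{f\in A\mid\{x\in R^n\mid f(x)\ge0\}\in\mathcal U_P\}$.

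Next I would push $\mathcal U_P$ forward to $R'^n$ via $\transfer_{R,R'}$; the image is an ultrafilter in the $K$-semialgebraic subsets of $R'^n$, and hence a filter in the larger Boolean algebra of $L$-semialgebraic subsets of $R'^n$. By \ref{makeultra} this filter extends to an ultrafilter $\mathcal V$ of $L$-semialgebraic subsets of $R'^n$. Applying \ref{broecker} in reverse, but now for the ordered field $(L,\le')$ with real closure $R'$, yields an element $Q\in\sper(L[\x],\sum L_{\ge'0}L[\x]^2)$ represented by $\mathcal V$, i.e.\ $Q=\{g\in L[\x]\mid\{x\in R'^n\mid g(x)\ge0\}\in\mathcal V\}$. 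It remains to check $Q\cap A=P$. For $f\in A$ the set $\{x\in R'^n\mid f(x)\ge0\}$ is $K$-semialgebraic and equals $\transfer_{R,R'}(\{x\in R^n\mid f(x)\ge0\})$ by the defining property of transfer; moreover $\mathcal V\cap\{K\text{-semialgebraic subsets of }R'^n\}$ is an ultrafilter in that smaller Boolean algebra containing $\transfer_{R,R'}(\mathcal U_P)$, so by maximality the two coincide. Tracing membership through the transfer isomorphism then gives $f\in Q\iff f\in P$.

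The main obstacle is not conceptual depth but careful bookkeeping: three Boolean algebras are in play (the $K$-semialgebraic subsets of $R^n$, the $K$-semialgebraic subsets of $R'^n$, and the $L$-semialgebraic subsets of $R'^n$), and I must verify that the Zorn-type enlargement of the filter to the $L$-level cannot corrupt its intersection with the $K$-level, precisely because that intersection is already an ultrafilter in a smaller Boolean algebra and ultrafilters are maximal filters.
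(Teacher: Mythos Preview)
Your proposal is correct and follows essentially the same route as the paper's proof: both apply Br\"ocker's ultrafilter theorem \ref{broecker} on the $(K,\le)$-side, transfer the resulting ultrafilter to $R'^n$, extend it via \ref{makeultra} to an ultrafilter in the larger Boolean algebra of $L$-semialgebraic sets, invoke \ref{broecker} again on the $(L,\le')$-side, and then use the maximality of ultrafilters to verify that the intersection back down to $A$ recovers the original prime cone. One minor imprecision: an ultrafilter in the sub-Boolean algebra of $K$-semialgebraic subsets of $R'^n$ is not literally a filter in the ambient Boolean algebra of $L$-semialgebraic subsets (upward closure can fail), so what you really extend is the filter it \emph{generates} there---the paper writes this out explicitly as $\mathcal F$, but your intent is clear and the argument goes through.
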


\begin{proof}
Let $\mathcal S''$ denote the Boolean algebra of all $L$-semialgebraic subsets
of $R'^n$ where $R':=\overline{(L,L_{\ge'0})}$. The Boolean algebra
$\mathcal S'\subseteq\mathcal S''$ of all $K$-semialgebraic subsets of $R'^n$
is isomorphic to $\mathcal S$ in virtue of the
$\transfer_{R,R'}\colon\mathcal S\to\mathcal S'$ [$\to$ \ref{transfer}].
Now let $Q\in\sper(A,T)$ be given. We show that there is
$P\in\sper(L[\x],\sum L_{\ge'0}L[\x]^2)$ with $Q=P\cap A$.
By \ref{cone2ultra}, $\mathcal U_Q$ is an ultrafilter in
$\mathcal S$. Since $\mathcal U_Q$ is a filter in $\mathcal S$,
\[\mathcal F:=\{S''\in\mathcal S''\mid\exists S\in\mathcal U_Q:
\transfer_{R,R'}(S)\subseteq S''\}\]
is a filter in $\mathcal S''$. Choose by \ref{makeultra} an ultrafilter
$\mathcal U$ in $\mathcal S''$ such that $\mathcal F\subseteq\mathcal U$.
By Bröcker's ultrafilter theorem \ref{broecker}, there is
$P\in\sper(L[\x],\sum L_{\ge'0}L[\x]^2)$ such that
$\mathcal U=\mathcal U_P$. We have
\begin{align*}
Q\overset{\ref{broecker}}=P_{\mathcal U_Q}&=
\{f\in A\mid\{x\in R^n\mid f(x)\ge0\}\in\mathcal U_Q\}\\
&=\{f\in A\mid\transfer_{R,R'}(\{x\in R^n\mid f(x)\ge0\})\in
\{\transfer_{R,R'}(S)\mid S\in\mathcal U_Q\}\}\\
&\overset!=\{f\in A\mid\{x\in R'^n\mid f(x)\ge''0\}\in\mathcal U\}=P_\mathcal U\cap A
=P_{\mathcal U_P}\cap A\overset{\ref{broecker}}=P\cap A
\end{align*}
where $\le''$ denotes the unique order on $R'$ and
the equality flagged with an exclamation mark follows from the claim
\[\mathcal U\cap\mathcal S'=\{\transfer_{R,R'}(S)\mid S\in\mathcal U_Q\}.\]
The inclusion ``$\supseteq$'' in this claim is trivial. The other inclusion
``$\subseteq$'' follows from the fact that
$\{\transfer_{R,R'}(S)\mid S\in\mathcal U_Q\}$ is an ultrafilter and thus a maximal
filter in $\mathcal S'$ and that $\mathcal U\cap\mathcal S'$ is a filter in
$\mathcal S'$.
\end{proof}

\section{The finiteness theorem for semialgebraic classes}

In this section, we fix a real closed field $R_0$ (in the applications, one
mostly has $R_0=\R$ or $R_0=\R_{\text{alg}}$ [$\to$ \ref{ralg}]).
Moreover, we let $\mathcal R$ denote the class of all real closed extension fields
of $R_0$ [$\to$ \ref{rcfclass}(b)] (that is the class of all real closed fields
in case $R_0=\R_{\text{alg}}$). Whoever gets vertiginous from this
[$\to$ \ref{rcfclass}(c)] can take for $\mathcal R$ a set of real closed extension
fields of $R_0$ that is sufficiently large to contain all representation fields
$R_P$ of prime cones $P\in\sper R_0[\x]$ [$\to$ \ref{realrep}]
(which we perceive as an extension fields of $R_0$ in virtue of the
representation $\rh_P$ of $P$, confer the discussion before
\ref{rnassubsetofsper}).

\begin{thm}[Finiteness theorem for semialgebraic classes]
\label{finiteness}
Let $n\in\N_0$ and $\mathcal E$ a set of $n$-ary $R_0$-semialgebraic
classes. Then the following are equivalent:
\begin{enumerate}[\normalfont(a)]
\item $\bigcup\mathcal E=\mathcal R_n$
\item $\exists k\in\N:\exists S_1,\dots,S_k\in\mathcal E:
S_1\cup\ldots\cup S_k=\mathcal R_n$.
\item $\exists k\in\N:\exists S_1,\dots,S_k\in\mathcal E:
\set_{R_0}(S_1)\cup\ldots\cup\set_{R_0}(S_k)=R_0^n$
\quad\emph{[$\to$ \ref{introsn}]}.
\end{enumerate}
\end{thm}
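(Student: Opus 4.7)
The implications (b)$\implies$(a) and (b)$\implies$(c) are immediate from the definitions. For (c)$\implies$(b), the setification $\set_{R_0}\colon\mathcal S_n\to\mathcal S_{n,R_0}$ is an isomorphism of Boolean algebras by Theorem \ref{setification}, so if $\set_{R_0}(S_1\cup\ldots\cup S_k)=R_0^n$ then applying $\set_{R_0}^{-1}=\class_{R_0}$ to $\complement(S_1\cup\ldots\cup S_k)\in\mathcal S_n$ (whose setification is $\emptyset$) yields $\complement(S_1\cup\ldots\cup S_k)=\emptyset$, i.e.\ $S_1\cup\ldots\cup S_k=\mathcal R_n$.

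The content lies in (a)$\implies$(b). Set $A:=R_0[\x]$ and $T:=\sum A^2=\sum R_{0,\ge0}A^2$. By Remark \ref{sperpreordcomp}, $\sper(A,T)=\sper A$ is compact in the constructible topology. Since $R_0$ is real closed, its real closure is itself, so by Theorem \ref{slimfatten} the fattening $\fatten\colon\mathcal S_{n,R_0}\to\mathcal C_{(A,T)}$ is an isomorphism of Boolean algebras. Composing with $\set_{R_0}$, I associate to each $S\in\mathcal E$ the constructible set $C_S:=\fatten(\set_{R_0}(S))$, which is clopen in the constructible topology.

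The crux is the claim $\sper(A,T)=\bigcup_{S\in\mathcal E}C_S$. Fix $P\in\sper(A,T)$. Because $R_{0,\ge0}\subseteq T\subseteq P$ and $P=\rh_P^{-1}(R_P^2)$ by Proposition \ref{kernelrep}, the representation $\rh_P|_{R_0}$ is an embedding of ordered fields, so $R_P\in\mathcal R$. Setting $x_P:=(\rh_P(X_1),\ldots,\rh_P(X_n))\in R_P^n$, we get a point $(R_P,x_P)\in\mathcal R_n$. By hypothesis (a), there exists $S\in\mathcal E$ with $(R_P,x_P)\in S$. Writing $S$ in the normal form of Proposition \ref{sanf}, some index $i$ satisfies $f_i(x_P)=0$ and $g_{ij}(x_P)>0$ in $R_P$ for all $j$. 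But for every $h\in R_0[\x]$ we have $h(x_P)=\rh_P(h)=\widehat h(P)$, so $\widehat{f_i}(P)=0$ and $\widehat{g_{ij}}(P)>0$, which exhibits $P$ as an element of the corresponding constructible piece of $C_S$.

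With the cover established, quasicompactness of $\sper(A,T)$ in the constructible topology delivers $S_1,\ldots,S_k\in\mathcal E$ with $C_{S_1}\cup\ldots\cup C_{S_k}=\sper(A,T)$, and applying the inverse of the Boolean algebra isomorphism $\fatten\circ\set_{R_0}$ gives $S_1\cup\ldots\cup S_k=\mathcal R_n$. The main obstacle is the covering claim, whose verification hinges on identifying $R_P$ as an ordered extension of $R_0$ and on the compatibility $h(x_P)=\widehat h(P)$; everything else is a packaging of the two Boolean algebra isomorphisms together with the compactness already proved.
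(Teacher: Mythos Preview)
Your proof is correct and follows essentially the same route as the paper: the key step (a)$\implies$(b) goes via the Boolean algebra isomorphism $\fatten\circ\set_{R_0}$ (which is exactly the map $\Ph$ the paper invokes from the proof of \ref{slimfatten}), the covering $\sper R_0[\x]=\bigcup_{S\in\mathcal E}C_S$, and compactness in the constructible topology from \ref{constrcompact}. You spell out the covering claim more carefully than the paper, which simply appeals to the definition of $\Ph$; your explicit verification that $(R_P,x_P)\in\mathcal R_n$ and $h(x_P)=\widehat h(P)$ is precisely what underlies that one-line appeal.
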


\begin{proof}
\underline{(b)$\iff$(c)} is clear because the setification
$\set_{R_0}\colon\mathcal S_n\to\mathcal S_{n,R_0}$ [$\to$ \ref{introsn}]
and thus also the classification
$\class_{R_0}=\set_{R_0}^{-1}\colon\mathcal S_{n,R_0}\to\mathcal S_n$ is an
isomorphism of Boolean algebras [$\to$ \ref{setification}].

\smallskip\underline{(b)$\implies$(a)} is trivial.

\smallskip
\underline{(a)$\implies$(b)}\quad Suppose that (a) holds. In the proof of \ref{slimfatten}, we
have shown that
\[\Ph\colon\mathcal S_n\to\mathcal C_{R_0[\x]},\ S\mapsto\{P\in\sper 
R_0[\x]\mid(R_P,(\rh_P(X_1),\dots,
\rh_P(X_n)))\in S\}\]
is an isomorphism of Boolean algebras. Moreover, we have
\[\bigcup\{\Ph(S)\mid S\in\mathcal E\}=\sper R_0[\x]\] by the definition of
$\Ph$.
From \ref{constrcompact}, we get the existence of $k\in\N$ and
$S_1,\dots,S_k\in\mathcal E$ satisfying
$\Ph(S_1)\cup\ldots\cup\Ph(S_k)=\sper R_0[\x]$. Since
$\Ph$ is an isomorphism, we deduce
$S_1\cup\ldots\cup S_k=\mathcal R_n$.
\end{proof}

\begin{cor}\label{finitenesscor}
Let $n\in\N_0$ and $\mathcal E$ a set of
$n$-ary $R_0$-semialgebraic classes satisfying
\[\forall S_1,S_2\in\mathcal E:
\exists S_3\in\mathcal E:S_1\cup S_2\subseteq S_3.\] Then the following
are equivalent:
\begin{enumerate}[\normalfont(a)]
\item $\bigcup\mathcal E=\mathcal R_n$
\item $S=\mathcal R_n$ for some $S\in\mathcal E$
\item $\set_{R_0}(S)=R_0^n$ for some $S\in\mathcal E$
\end{enumerate}
\end{cor}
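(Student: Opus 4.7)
The plan is to derive all three implications (or really, only one nontrivial implication) from Theorem~\ref{finiteness} and Theorem/Definition~\ref{setification}, using the directedness hypothesis on $\mathcal E$ to pass from a finite union of members to a single member.

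First I would dispose of the easy equivalences. The implication (b)$\implies$(a) is immediate. For (b)$\iff$(c), I would invoke Theorem/Definition~\ref{setification}: since $\set_{R_0}\colon\mathcal S_n\to\mathcal S_{n,R_0}$ is an isomorphism of Boolean algebras, in particular it is a bijection, and it obviously sends $\mathcal R_n$ to $R_0^n$; hence an $S\in\mathcal E$ satisfies $S=\mathcal R_n$ if and only if $\set_{R_0}(S)=R_0^n$.

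The main step is (a)$\implies$(b). Assuming (a), Theorem~\ref{finiteness} yields $k\in\N$ and $S_1,\dots,S_k\in\mathcal E$ with $S_1\cup\dots\cup S_k=\mathcal R_n$. Now I would use the directedness hypothesis
\[\forall S_1,S_2\in\mathcal E:\exists S_3\in\mathcal E:S_1\cup S_2\subseteq S_3\]
and induct on $k$ to produce a single $S\in\mathcal E$ satisfying $S_1\cup\dots\cup S_k\subseteq S$. Concretely, for $k=1$ take $S:=S_1$; for the step from $k$ to $k+1$, apply the inductive hypothesis to obtain $S'\in\mathcal E$ with $S_1\cup\dots\cup S_k\subseteq S'$, then apply directedness to $S'$ and $S_{k+1}$ to obtain $S\in\mathcal E$ with $S'\cup S_{k+1}\subseteq S$, whence $S_1\cup\dots\cup S_{k+1}\subseteq S$. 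Finally $\mathcal R_n=S_1\cup\dots\cup S_k\subseteq S\subseteq\mathcal R_n$ forces $S=\mathcal R_n$, giving (b).

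There is really no obstacle here beyond checking that the directedness condition packages exactly the finitely many terms coming out of Theorem~\ref{finiteness} into a single element of $\mathcal E$; the only subtlety is that $\mathcal E$ is \emph{not} assumed to be closed under finite unions (only that such unions are \emph{majorized} inside $\mathcal E$), which is why the short induction above is needed rather than a one-line appeal.
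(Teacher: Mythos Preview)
Your proof is correct and follows exactly the intended argument; the paper leaves this corollary without an explicit proof, as it is an immediate consequence of Theorem~\ref{finiteness} together with the directedness hypothesis, and your write-up spells out precisely this.
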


\begin{rem}
In practice, \ref{finitenesscor} is mostly applied in the following context:
One has a certain true statement about real numbers (for example that
$\R$ is Archimedean [$\to$ \ref{archetcdef}(a)]).
Now one is interested in one of the
following questions:
\begin{enumerate}[(a)]
\item Does the statement hold for all real closed extension fields of $\R$?
(In our example: Is every real closed field extension of $\R$ Archimedean?)
\item Does the statement hold in a strengthened form
(with certain quantitative additional information, so called ``bounds'')
for every real closed extension of $\R$?
(In our example: Is there an $N\in\N$ such that we have for all real closed field
extensions $R$ of $\R$ and all $a\in R$ that $|a|\le N$?)
\item Does the statement hold in the strengthened from (that is ``with bounds'')
for the real numbers? (In our example: Is there some $N\in\N$ such that for all
$a\in\R$ one has $|a|\le N$?)
\end{enumerate}
\ref{finitenesscor} establishes under certain circumstances a connection between
these three questions. For this aim, one tries to express the statement in such a way
that for $n$ numbers a certain ``semialgebraic event'' occurs where the event is the
existence of a bound. The set of events is $\mathcal E$.
\end{rem}

\begin{ex}\label{boundex}
For $n:=1$, $R_0:=\R$ and
$\mathcal E:=\{\{(R,a)\in\mathcal R_1\mid-N\le a\le N\}\mid N\in\N\}$,
\ref{finitenesscor} says that the following are equivalent:
\begin{enumerate}[(a)]
\item For every real closed extension field $R$ of $\R$ and every $a\in R$, there
is some $N\in\N$ with $|a|\le N$, i.e., every real closed extension field $R$
of $\R$ is Archimedean.
\item There is some $N\in\N$ such that for every real closed extension field $R$
of $\R$ and every $a\in R$ we have $|a|\le N$.
\item There is some $N\in\N$ such that for every $a\in\R$ we have $|a|\le N$.
\end{enumerate}
Since (c) obviously fails, we see that (a) also fails. Thus we see (once more)
that there are non-Archimedean real closed (extension) fields (of $\R$).
\end{ex}

\begin{thm}[Existence of degree bounds for Hilbert's 17th problem]
\label{h17bound}
For all $n,d\in\N_0$, there is some $D\in\N$ such that for every real closed field
$R$ and every $f\in R[\x]_d$ \emph{[$\to$ \ref{degnot}]} with $f\ge0$ on $R^n$,
there are
$p_1,\dots,p_D\in R[\x]_D$ and $q\in R[\x]\setminus\{0\}$ with
$f=\sum_{i=1}^D\left(\frac{p_i}q\right)^2$.
\end{thm}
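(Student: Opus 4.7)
The plan is to parametrize polynomials $f\in R[\x]_d$ by their coefficient vectors in $R^N$ with $N:=\binom{n+d}{n}$, and then for each candidate bound $D\in\N$ define an $R_0$-semialgebraic class $E_D$ of coefficient tuples $a$ over all real closed fields for which either $f_a\not\ge0$ on $R^n$ or $f_a$ admits a sum-of-squares-of-rationals representation with numerator count, numerator degrees, and denominator degree all $\le D$. Working with $R_0:=\R_{\text{alg}}$, so that $\mathcal R$ is the class of all real closed fields \emph{[$\to$ \ref{ralg}]}, I would then apply the finiteness theorem \ref{finitenesscor} to the ascending family $(E_D)_{D\in\N}$.

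The first step is to check that each $E_D$ is $R_0$-semialgebraic. Introduce parameter tuples $\pi=(\pi^{(i)}_\be)_{i\le D,|\be|\le D}$ and $\kappa=(\kappa_\be)_{|\be|\le D}$ in $R$, forming $p_i:=\sum_\be\pi^{(i)}_\be\x^\be$ and $q:=\sum_\be\kappa_\be\x^\be$. The condition ``$\kappa\ne 0$ and $f_aq^2=\sum_{i=1}^Dp_i^2$ as an identity in $R[\x]$'' becomes, upon comparing coefficients of each monomial, a finite conjunction of polynomial equations in $(a,\pi,\kappa)$ conjoined with the disjunction $\bigvee_\be(\kappa_\be\ne0)$. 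Combined with the $R_0$-semialgebraic condition ``$\forall x\in R^n:f_a(x)\ge0$ (or its negation)'' and then existentially eliminating $(\pi,\kappa)$ through the real quantifier elimination \ref{elim}, one obtains that $E_D$ is $R_0$-semialgebraic. Monotonicity $E_D\subseteq E_{D+1}$ is immediate by padding with the zero square $p_{D+1}:=0$, so $\{E_D\}$ is closed under finite unions.

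The key step is to verify $\bigcup_{D\in\N}E_D=\mathcal R_N$. Given $(R,a)\in\mathcal R_N$, the case $f_a\not\ge0$ on $R^n$ trivially lands in $E_1$. Otherwise, apply Artin's theorem \ref{artin} with $K:=R$ (so $K_{\ge0}=R^2$) to obtain $f_a=\sum_{i=1}^mr_i^2$ with $r_i\in R(\x)$; clearing a common denominator yields $p_i,q\in R[\x]$ with $q\ne0$ and $f_aq^2=\sum_{i=1}^mp_i^2$, and taking $D:=\max(m,\deg q,\max_i\deg p_i)$ places $(R,a)\in E_D$. By the finiteness theorem \ref{finitenesscor}, there is a single $D\in\N$ with $E_D=\mathcal R_N$, and this $D$ is the desired uniform bound: applied to any real closed $R$ and any $f\in R[\x]_d$ with $f\ge0$ on $R^n$, the second disjunct in the definition of $E_D$ must hold, producing $p_1,\dots,p_D\in R[\x]_D$ and $q\in R[\x]\setminus\{0\}$ with $f=\sum_{i=1}^D(p_i/q)^2$.

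The conceptual heart of the argument is that Artin only gives an $R$-dependent representation, whereas the quasicompactness of the real spectrum hidden in \ref{finitenesscor} (ultimately \ref{constrcompact} via Tikhonov) forces a uniform parameter across \emph{all} real closed fields. The main technical hurdle is the bookkeeping in step one: one must verify rigorously that the polynomial identity $f_aq^2=\sum p_i^2$, once expanded coefficient-by-coefficient, really is a polynomial system in the parameters $(a,\pi,\kappa)$ so that the quantifier elimination \ref{elim} applies; everything else is a routine, if slightly fiddly, application of the machinery already built.
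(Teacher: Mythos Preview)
Your proposal is correct and follows essentially the same route as the paper: parametrize $f$ by its coefficient vector, define for each $D$ an $R_0$-semialgebraic class (with $R_0=\R_{\text{alg}}$) encoding the existence of a representation with all data bounded by $D$, invoke Artin \ref{artin} to see that these classes exhaust $\mathcal R_N$, and conclude by the finiteness theorem \ref{finitenesscor}. The paper's proof is the same argument with only cosmetic differences in notation.
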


\begin{proof}
Let $n,d\in\N_0$. Set $N:=\dim\R[\x]_d$ and write
$\{\al\in\N_0^n\mid|\al|\le d\}=\{\al_1,\dots,\al_N\}$. Set $R_0:=\R_{\text{alg}}$
and
\[S_D:=\left\{(R,(a_1,\dots,a_N))\in\mathcal R_N~\middle|~
\begin{aligned}
&\left(\forall x\in R^n:\sum_{i=1}^Na_ix_1^{\al_{i1}}\dotsm x_n^{\al_{in}}\ge0\right)\implies\\
&\text{There are families $(b_{i\al})_{\substack{1\le i\le D\\|\al|\le D}}$
and $(c_\al)_{|\al|\le D}\ne0$ in $R$}\\
&\text{such that}\\
&\left(\sum_{i=1}^Na_i\x^{\al_i}\right)\left(\sum_{|\al|\le D}c_\al\x^\al\right)^2
=\sum_{i=1}^D\left(\sum_{|\al|\le D}b_{i\al}\x^\al\right)^2
\end{aligned}
\right\}
\]
for each $D\in\N$. Obviously, $S_D$ is for each $D\in\N$ an
$R_0$-semialgebraic class
since the polynomial identity in the last part of its specification can for example
be expressed by finitely many polynomial equations in the $a_i$, $b_{i\al}$
and $c_\al$, the requirement on the existence of the two finite families and the
quantification ``$\forall x\in R^n$'' is allowed because of the real
quantifier elimination \ref{elim}. Set $\mathcal E:=\{S_D\mid D\in\N\}$ and
observe that $\forall D_1,D_2\in\N:\exists D_3\in\N:S_{D_1}\cup S_{D_2}\subseteq
S_{D_3}$ (take $D_3:=\max\{D_1,D_2\}$). By Artin's solution to Hilbert's 17th
problem \ref{artin}, we have $\bigcup\mathcal E=\mathcal R_N$. Now
\ref{finitenesscor} yields $S_D=\mathcal R_N$ for some $D\in\N$.
\end{proof}

\begin{rem} Recently, Lombardi, Perrucci and Roy \cite{lpr}
managed to prove that
one can choose in \ref{h17bound}
\[D:=2^{2^{2^{d^{4^n}}}}.\]
We will neither use nor prove this in this lecture.
\end{rem}

\begin{dfpro}\label{ordval}
Let $(K,\le)$ be an ordered extension field of $\R$. Then
\[\O_{(K,\le)}:=B_{(K,K_{\ge0})}=\{a\in K\mid\exists N\in\N:|a|\le N\}\]
is a subring of $K$ \emph{[$\to$ \ref{arithmbounded}]}
with a single maximal ideal
\[\m_{(K,\le)}:=\left\{a\in K\mid\forall N\in\N:|a|\le\frac 1N\right\}\]
with group of units
\[\O_{(K,\le)}^\times=\O_{(K,\le)}\setminus\m_{(K,\le)}
=\left\{a\in K\mid\exists N\in\N:\frac 1N\le|a|\le N\right\}.\]
We call the elements of
$\malalal{\O_{(K,\le)}}{\m_{(K,\le)}}{K\setminus\O_{(K,\le)}}$
the \emph{\alalal{finite}{infinitesimal}{infinite}} elements of $(K,\le)$. For every
$a\in\O_{(K,\le)}$, there is exactly one $\st(a)\in\R$, called
the \emph{standard part} of $a$, such that \[a-\st(a)\in\m_{(K,\le)}.\]
The map $\O_{(K,\le)}\to\R,\ a\mapsto\st(a)$ is a ring homomorphism with
kernel $\m_{(K,\le)}$. If $a,b\in\O_{(K,\le)}$ satisfy $\st(a)<\st(b)$, then
$a<b$. The standard part $\st(p)$ of a polynomial $p\in\O_{(K,\le)}[\x]$ arises
by replacing each coefficient of $p$ by its standard part. Also
$\O_{(K,\le)}[\x]\to\R[\x],\ p\mapsto\st(p)$ is a ring homomorphism.
\end{dfpro}

\begin{proof}
The existence of the standard part follows easily from the completeness of $\R$
[$\to$~\ref{introduce-the-reals}] and its uniqueness is trivial.
The rest is also easy. We show exemplarily:
\begin{enumerate}[(a)]
\item $\st(ab)=(\st(a))(\st(b))$ for all $a,b\in\O_{(K,\le)}$
\item $\st(a)<\st(b)\implies a<b$ for all $a,b\in\O_{(K,\le)}$
\end{enumerate}

To show (a), let $a,b\in\O_{(K,\le)}$. Because of
$a-\st(a),b-\st(b)\in\m_{(K,\le)}$, we have
\begin{align*}
ab-(\st(a))(\st(b))&=
(ab-(\st(a))b)+((\st(a))b-(\st(a))(\st(b)))\\
&=(a-\st(a))b+(\st(a))(b-\st(b))\in\m_{(K,\le)}+\m_{(K,\le)}\subseteq\m_{(K,\le)}
\end{align*}

For (b), we fix again $a,b\in\O_{(K,\le)}$ with $\st(a)<\st(b)$. Choose
$N\in\N$ with \[\st(b)-\st(a)>\frac1N.\]
Then $|a-\st(a)|\le\frac1{2N}$ and $|b-\st(b)|\le\frac1{2N}$ and thus
\begin{align*}
a&=a-\st(a)+\st(a)\le|a-\st(a)|+\st(a)\le\frac1{2N}+\st(a)-\st(b)+\st(b)\\
&<\frac1{2N}-\frac1N+\st(b)=-\frac1{2N}+\st(b)-b+b\\
&\le-\frac1{2N}+|b-\st(b)|+b\le-\frac1{2N}+\frac1{2N}+b=b
\end{align*}
\end{proof}

\begin{ex}[Nonexistence of degree bounds for Schmüdgen's
Positivstellensatz {[$\to$ \ref{schmuedgenpositivstellensatz}]}]\label{nonexdegbounds}
For every $\ep\in\R_{>0}$, we have $X+\ep>0$ on $[0,1]$ so that
Schmüdgen's Positivstellensatz
\ref{schmuedgenpositivstellensatz} together with \ref{so2s}(b) yields
$p_1,p_2,q_1,q_2\in\R[X]$ such that
\[(*)\qquad X+\ep=p_1^2+p_2^2+(q_1^2+q_2^2)X^3(1-X).
\]
One can ask the question
if there is in analogy to \ref{h17bound} a $D\in\N$ such that
for all $\ep\in\R_{>0}$ there are $p_1,p_2,q_1,q_2\in\R[X]_D$ satisfying
$(*)$. To this end, consider for each $D\in\N$
\[S_D:=\left\{(R,\ep)\in\mathcal R_1~\middle|~
\begin{aligned}
&\ep>0\implies
\exists b_0,\dots,b_D,b_0',\dots,b_D',c_0,\dots,c_D,c_0',\dots,c_D'\in R:
\\
&X+\ep=
\left(\sum_{i=0}^Db_iX^i\right)^2+\left(\sum_{i=0}^Db_i'X^i\right)^2+\\
&\qquad\qquad\qquad\qquad\left(\left(\sum_{i=0}^Dc_iX^i\right)^2+\left(\sum_{i=0}^Dc_i'X^i\right)^2\right)
X^3(1-X)
\end{aligned}
\right\}
\]
As in the proof of \ref{h17bound}, one shows that $S_D$ is for each $D\in\N$
an $\R$-semialgebraic class. Set $\mathcal E:=\{S_D\mid D\in\N\}$. We claim
that the answer to the above question is no. Assume it would be yes.
Then $\set_\R(S_D)=\R$ for some $D\in\N$ and thus
$\bigcup\mathcal E=\mathcal R_1$ by \ref{finitenesscor}.
Choose a non-Archimedean real closed extension field $R$ of $\R$ and
an $\ep>0$ which is infinitesimal in $R$. Then there are
$p_1,p_2,q_1,q_2\in R[X]$ satisfying $(*)$. It suffices to show that all
coefficients of these four polynomials are finite in $R$ [$\to$ \ref{ordval}] since
then
$X=\st(X+\ep)=(\st(p_1))^2+(\st(p_2))^2+((\st(q_1))^2+(\st(q_2))^2)X^3(1-X)$
in contradiction to \ref{needep}. It therefore suffices to show that
the coefficient $c$ of biggest absolute value among all coefficients
of the four polynomials is finite. Assume it were infinite. Then $\frac 1c$ would
be infinitesimal and
\begin{align*}
0&=\st\left(\frac{X+\ep}{c^2}\right)=\st\left(\left(\frac{p_1}c\right)^2+
\left(\frac{p_2}c\right)^2+\left(\left(\frac{q_1}c\right)^2+\left(\frac{q_2}c\right)^2
\right)X^3(1-X)\right)\\
&=\Big(\underbrace{\st\left(\frac{p_1}c\right)}_{\widetilde p_1}\Big)^2+
\Big(\underbrace{\st\left(\frac{p_2}c\right)}_{\widetilde p_2}\Big)^2+
\Big(\Big(\underbrace{\st\left(\frac{q_1}c\right)}_{\widetilde q_1}\Big)^2+
\Big(\underbrace{\st\left(\frac{q_2}c\right)}_{\widetilde q_2}\Big)^2\Big)X^3(1-X).
\end{align*}
It follows that $\widetilde p_1=\widetilde p_2=\widetilde q_1=\widetilde q_2=0$
on $(0,1)$ and thus $\widetilde p_1=\widetilde p_2=\widetilde q_1=
\widetilde q_2=0$, contradicting the choice of $c$ $\lightning$.
\end{ex}

\begin{rem} Completely analogous to \ref{h17bound}, one can prove the existence
of degree bounds for the real Stellensätze \ref{stellensatz}, \ref{positivstellensatz}
and \ref{nichtnegativstellensatz} \emph{in the case $K=R$}.
\end{rem}

\chapter{Semialgebraic geometry}

Throughout this chapter, we let $R$ be a real closed field and $K$ a subfield of
$R$. Moreover, $\mathcal S_n$ denotes for each $n\in\N_0$ the Boolean algebra
of all $K$-semialgebraic subsets of $R^n$ [$\to$ \ref{introsn}, \ref{introsemialg}].

\section{Semialgebraic sets and functions}

\begin{reminder}{}[$\to$ \ref{sanf}, \ref{rcfclass}(a)]\label{sasanf}
Every $K$-semialgebraic
subset of $R^n$ is of the form
\[\bigcup_{i=1}^k\left\{x\in R^n\mid f_i(x)=0,g_{i1}(x)>0,\dots,g_{im}(x)>0\right\}\]
for some $k,m\in\N_0$, $f_i,g_{ij}\in K[X_1,\dots,X_n]$.
\end{reminder}

\begin{reminder}{}[$\to$ \ref{elim}]\label{saproj}
For all $n\in\N_0$ and $S\in\mathcal S_{n+1}$,
\[\{x\in R^n\mid\exists y\in R:(x,y)\in S\},\{x\in R^n\mid\forall y\in R:(x,y)\in S\}\in
\mathcal S_n.\]
\end{reminder}

\begin{df} Let $m,n\in\N_0$ and $A\subseteq R^m$. A map $f\colon A\to R^n$
is called \emph{$K$-semialgebraic} if its graph
\[\Ga_f:=\{(x,y)\in A\times R^n\mid y=f(x)\}\subseteq R^{m+n}\]
is $K$-semialgebraic. We
say ``semialgebraic'' for ``$R$-semialgebraic''.
\end{df}

\begin{rem}\label{domainsa}
The domains of $K$-semialgebraic functions are
$K$-semialgebraic. Indeed, if $A\subseteq R^m$ and $f\colon A\to R^n$
is $K$-semialgebraic, then by \ref{saproj} also
\[\{x\in R^m\mid\exists y\in R^n:(x,y)\in\Ga_f\}=A\]
is $K$-semialgebraic.
\end{rem}

\begin{df}\label{ordertop}
We equip $R$ with the \emph{order topology} which is generated
[$\to$ \ref{topreminder}(b)] by the intervals $(a,b)_R$ with $a,b\in R$
[$\to$ \ref{intervals}(b)]. Moreover, we endow $R^n$ with the corresponding
product topology [$\to$ \ref{subspaceproductspace}(b)] which is generated
according to \ref{initialtop} by the sets $\prod_{i=1}^n(a_i,b_i)_R$ with
$a_i,b_i\in R$.
\end{df}

\begin{rem}
For $R=\R$, the topology introduced in \ref{initialtop} on $R^n=\R^n$ is obviously
the usual Euclidean topology on $\R^n$.
\end{rem}

\begin{exo}\label{continfnorm}
Let $m,n\in\N_0$, $A\subseteq R^m$ and $f\colon A\to R^n$ a map.
Then $f$ is continuous [$\to$ \ref{conti}, \ref{subspaceproductspace}(a)] if and only
if
\[\forall x\in A:\forall\ep\in R_{>0}:\exists\de\in R_{>0}:\forall y\in A:
(\|x-y\|_\infty<\de\implies\|f(x)-f(y)\|_{\infty}<\ep)\]
where
\[\|x\|_\infty:=
\begin{cases}
0&\text{if $k=0$}\\
\max\{|x_1|,\dots,|x_k|\}&\text{if $k>0$}
\end{cases}
\]
for $x\in R^k$.
\end{exo}

\begin{pro}\label{fieldopsa}
The maps
\begin{align*}
R^2\to R,\ &(a,b)\mapsto a+b,\\
R^2\to R,\ &(a,b)\mapsto ab,\\
R\setminus\{0\}\to R,\ &a\mapsto a^{-1},\\
R\to R,\ &a\mapsto|a|\qquad\emph{\text{[$\to$ \ref{introabssgn}]}},\\
R_{\ge0}\to R,\ &a\mapsto\sqrt a\qquad\emph{\text{[$\to$ \ref{notremsqrt}]}}
\end{align*}
are $\Q$-semialgebraic and continuous.
\end{pro}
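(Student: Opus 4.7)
The proof splits cleanly into two independent verifications for each of the five maps: (i) that the graph is $\Q$-semialgebraic, and (ii) that the map is continuous in the order topology of Definition \ref{ordertop}. My plan is to dispatch the semialgebraicity uniformly by writing down an explicit description of each graph as a Boolean combination of polynomial (in)equalities with integer (hence rational) coefficients, and then to verify continuity via the $\varepsilon$-$\delta$ criterion supplied by Exercise \ref{continfnorm}, imitating the familiar real-analytic arguments but using only axioms that hold in any ordered field.

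For semialgebraicity I would just exhibit the defining formulas:
\begin{align*}
\Gamma_+&=\{(a,b,c)\in R^3\mid a+b-c=0\},\\
\Gamma_\cdot&=\{(a,b,c)\in R^3\mid ab-c=0\},\\
\Gamma_{(\cdot)^{-1}}&=\{(a,b)\in R^2\mid ab-1=0\},\\
\Gamma_{|\cdot|}&=\{(a,b)\in R^2\mid b\ge 0~\&~a^2-b^2=0\},\\
\Gamma_{\sqrt{\cdot}\,}&=\{(a,b)\in R^2\mid a\ge0~\&~b\ge0~\&~a-b^2=0\}.
\end{align*}
Each is a Boolean combination of sets of the form $\{x\mid p(x)\ge0\}$ with $p\in\Q[\x]$ (indeed with $p\in\Z[\x]$), hence $\Q$-semialgebraic by Definition/Remark \ref{introsemialg}. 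Note that the domain of inversion is automatically $\{a\in R\mid a\ne0\}$ by Remark \ref{domainsa}, and similarly for square root.

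For continuity I would apply Exercise \ref{continfnorm} pointwise. Addition is trivial from $|(a+b)-(a'+b')|\le|a-a'|+|b-b'|$, taking $\delta:=\varepsilon/2$. For multiplication at $(a,b)$, I would use the standard estimate
\[
|ab-a'b'|\le|a||b-b'|+|b'||a-a'|\le|a||b-b'|+(|b|+|b-b'|)|a-a'|,
\]
choose $\delta:=\min\{1,\,\varepsilon/(|a|+|b|+1)\}$, and check the inequality using only the monotonicity of addition/multiplication and Proposition \ref{introabssgn}. For inversion at $a\ne0$, the identity $|a^{-1}-a'^{-1}|=|a-a'|/|aa'|$ combined with the choice $\delta:=\min\{|a|/2,\,\varepsilon|a|^2/2\}$ (which forces $|a'|\ge|a|/2$) does the job. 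Continuity of $|\cdot|$ follows from the reverse triangle inequality $||a|-|b||\le|a-b|$, which is immediate from Proposition \ref{introabssgn}.

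The only mildly delicate point will be the continuity of $\sqrt{\cdot}$ at the boundary point $0$; this is the step I expect to be the main obstacle, since away from $0$ one simply uses the identity $|\sqrt a-\sqrt b|=|a-b|/(\sqrt a+\sqrt b)$ together with the lower bound $\sqrt a+\sqrt b\ge\sqrt a>0$. At $a=0$ I would argue directly: given $\varepsilon\in R_{>0}$, set $\delta:=\varepsilon^2$; then for $b\in R_{\ge0}$ with $b<\delta$ we have $(\sqrt b)^2=b<\varepsilon^2$, so by the monotonicity of the square root noted in Notation and Remark \ref{notremsqrt}(a) we conclude $\sqrt b<\varepsilon$. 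Assembling the two cases yields continuity of $\sqrt{\cdot}$ on all of $R_{\ge0}$, completing the proof.
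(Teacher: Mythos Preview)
Your proof is correct. For the semialgebraicity part you make explicit what the paper simply declares ``clear'', so there is no substantive difference there. For continuity, however, you take a genuinely different route: you verify the $\varepsilon$-$\delta$ criterion of Exercise~\ref{continfnorm} directly, using only inequalities valid in any ordered field (and, for the square root, the fact that $R$ is Euclidean via \ref{notremsqrt}). The paper instead invokes the Tarski transfer principle: it observes that, for each of the five maps, continuity in the sense of \ref{continfnorm} is expressible by a first-order sentence in the language of ordered fields, so by real quantifier elimination \ref{elim} and \ref{nothingorall} the class of real closed fields over which the map is continuous is either empty or everything; since the claim is known from elementary analysis for $R=\R$, it holds for every real closed $R$. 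Your approach is more elementary and self-contained---it does not rely on the quantifier-elimination machinery and would work over any Euclidean field---at the cost of five separate hand computations. The paper's approach is terser and illustrates the transfer principle in action, which is thematically important since the same pattern recurs throughout the chapter (e.g., in the proof of \ref{compactimage}).
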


\begin{proof}
It is clear that these maps are $\Q$-semialgebraic. Because of the real
quantifier elimination \ref{elim}, the class of all real closed fields for which the
claim holds is semialgebraic [$\to$ \ref{introsemialg}]. Since the claim is known to
hold for $R=\R$, it holds also for all real closed fields
[$\to$ \ref{nothingorall}].
\end{proof}

\begin{cor}\label{polycont}
Polynomial maps $R^m\to R^n$ are continuous.
\end{cor}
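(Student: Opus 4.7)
The plan is to reduce to a single polynomial via the universal property of the product topology and then proceed by induction on the polynomial's construction, leveraging Proposition \ref{fieldopsa} to handle the arithmetic steps.

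First I would observe that a map $f = (f_1, \dots, f_n)\colon R^m \to R^n$ into a product is continuous if and only if each component $f_i\colon R^m \to R$ is continuous; this is the universal property of the product topology [$\to$~\ref{initialtop}, \ref{subspaceproductspace}(b)] applied to the projections $\pi_i\colon R^n \to R$, which are continuous by definition of the product topology. Thus it suffices to show that every polynomial map $p\colon R^m \to R$ with $p \in R[X_1,\dots,X_m]$ is continuous.

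Next I would establish two base cases and two closure properties. The coordinate projections $\pi_j\colon R^m \to R$, $(x_1,\dots,x_m)\mapsto x_j$, are continuous (again by the product topology). Constant maps $R^m \to R$, $x \mapsto c$, are trivially continuous since the preimage of any set is either $\emptyset$ or $R^m$. For the closure properties, if $f, g\colon R^m \to R$ are continuous, then $(f,g)\colon R^m \to R^2$ is continuous by the product topology, and hence the compositions $f+g = \mathrm{add}\circ(f,g)$ and $fg = \mathrm{mult}\circ(f,g)$ are continuous, using that addition and multiplication $R^2 \to R$ are continuous by Proposition~\ref{fieldopsa} and that compositions of continuous maps are continuous.

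Finally I would proceed by induction on the structure of $p \in R[X_1,\dots,X_m]$: every polynomial is obtained from constants and the variables $X_1,\dots,X_m$ by finitely many applications of addition and multiplication, so the previous paragraph gives continuity of $p$ as a map $R^m \to R$.

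There is no real obstacle here; the only point that might be worth flagging is that the argument relies crucially on having already proved in Proposition~\ref{fieldopsa} that $+$ and $\cdot$ are continuous on $R^2$ for an \emph{arbitrary} real closed field $R$ (rather than only $\R$), which in turn used the Tarski transfer principle~\ref{nothingorall}. Given that, the present corollary is a short and formal consequence and requires no further appeal to real quantifier elimination.
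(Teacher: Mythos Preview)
Your proof is correct and is precisely the argument the paper has in mind: the corollary is stated without proof immediately after Proposition~\ref{fieldopsa}, and the intended reasoning is exactly to build polynomials from constants and projections using the continuity of addition and multiplication supplied there.
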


\begin{cor}\label{normcont}
$R^n\to R,\ x\mapsto\|x\|:=\|x\|_2:=\sqrt{x_1^2+\ldots+x_n^2}$ is continuous.
\end{cor}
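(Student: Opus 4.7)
The plan is simply to exhibit the Euclidean norm as a composition of two maps each of which has already been established to be continuous, and then invoke the (standard and purely topological) fact that a composition of continuous maps is continuous.

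First I would write $\|\cdot\|_2\colon R^n\to R$ as the composition $\sqrt{\phantom x}\circ q$, where $q\colon R^n\to R_{\ge0}$ is the polynomial map $x\mapsto x_1^2+\ldots+x_n^2$ (a well-defined map to $R_{\ge0}$ by \ref{squares} applied in the Euclidean field $R$). The map $q$ is continuous by \ref{polycont}, and the square-root map $R_{\ge0}\to R,\ a\mapsto\sqrt a$ is continuous by \ref{fieldopsa}.

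Then it suffices to note that the composition of two continuous maps between subsets of $R^m$'s (with their order topologies as in \ref{ordertop}) is continuous; this is a direct consequence of the characterization by preimages of open sets given in \ref{conti} together with the definition of continuity in \ref{subspaceproductspace}(a), and needs no real-closed-field input at all. Concatenating the $\ep$--$\de$ statements of \ref{continfnorm} would give an equally quick alternative argument.

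The only (very minor) point to verify is that the codomain restriction is legitimate, i.e., that $q(R^n)\subseteq R_{\ge0}$ so that $\sqrt{\phantom x}\circ q$ is defined on all of $R^n$; this is immediate from the fact that sums of squares in a Euclidean field lie in $R_{\ge0}=R^2$. There is no real obstacle here — the whole content has already been done in \ref{fieldopsa} and \ref{polycont}.
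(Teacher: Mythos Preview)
Your proposal is correct and is exactly the intended argument: the paper states this as a corollary with no proof, immediately after \ref{fieldopsa} and \ref{polycont}, precisely because the norm factors as the polynomial map $x\mapsto x_1^2+\ldots+x_n^2$ followed by $\sqrt{\phantom x}$, both of which are continuous by those results. There is nothing to add.
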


\begin{rem}\label{inf22}
Because of \ref{normcont} and \ref{continfnorm}, there is to every
$\ep\in R_{>0}$ some $\de\in R_{>0}$ such that
$\forall x\in R^n:(\|x\|_\infty<\de\implies\|x\|<\ep)$.
On the other hand, $\|x\|_\infty\le\|x\|$ for all $x\in R^n$.
It follows that the topology on $R^n$ is also generated by the open balls
$\{x\in R^n\mid\|x-y\|<\ep\}$ ($y\in R^n,\ep>0$) and that \ref{continfnorm}
holds also with $\|.\|$ instead of $\|.\|_\infty$.
\end{rem}

\begin{rem}\label{toprn}
\begin{enumerate}[(a)]
\item By \ref{polycont}, $R^n$ is obviously endowed with the
initial topology with respect to all maps $R^n\to R,\ x\mapsto p(x)$
$(p\in R[\x]$) [$\to$ \ref{initialtop}].
\item Because of (a), the topology on $R^n$ is obviously generated by the sets
\[\{x\in R^n\mid p(x)>0\}\qquad(p\in R[\x]).\]
\item Viewing $R^n$ in virtue of the injective map
\[R^n\to\sper R[\x],\ x\mapsto P_x=\{f\in R[\x]\mid f(x)\ge0\}\]
as a subset of $\sper R[\x]$, the topology on $R^n$ is due to (b) induced
by the spectral topology [$\to$ \ref{spectraltop}]  on $\sper R[\x]$.
\end{enumerate}
\end{rem}

\begin{thm}\label{saclo}
\begin{enumerate}[\normalfont(a)]
\item If $A\subseteq R^m$ and $f\colon A\to R^n$ is $K$-semialgebraic, then
$f(B)\in\mathcal S_n$ for all $B\in\mathcal S_m$ with $B\subseteq A$
and $f^{-1}(C)\in\mathcal S_m$ for all $C\in\mathcal S_n$.
\item If $A\subseteq R^\ell$, $B\subseteq R^m$, $f\colon A\to B$ and
$g\colon B\to R^n$ are $K$-semialgebraic, then $g\circ f\colon A\to R^n$
is again $K$-semialgebraic.
\item If $A\in\mathcal S_n$, then the $K$-semialgebraic functions
$A\to R$ form a subring of the ring $R^A$ of all functions $A\to R$.
\end{enumerate}
\end{thm}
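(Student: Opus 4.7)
The plan is to reduce all three assertions to the real quantifier elimination (Theorem \ref{elim}), or equivalently, to its immediate consequence Reminder \ref{saproj} that projections of $K$-semialgebraic sets are $K$-semialgebraic. The normal form from Reminder \ref{sasanf} will be used implicitly to know that polynomial conditions defining graphs and sets really do produce $K$-semialgebraic classes in $R^N$ for any $N$, and Proposition \ref{fieldopsa} will supply the $K$-semialgebraicity of the field operations needed for (c).

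For part (a), I would write the preimage as
\[f^{-1}(C)=\{x\in R^m\mid\exists y\in R^n:(x,y)\in\Gamma_f\et y\in C\},\]
where $\Gamma_f\in\mathcal S_{m+n}$ by hypothesis and $R^m\times C\in\mathcal S_{m+n}$ (after identifying coordinates); then $n$-fold application of \ref{saproj} to the intersection $\Gamma_f\cap(R^m\times C)$ gives $f^{-1}(C)\in\mathcal S_m$. Symmetrically,
\[f(B)=\{y\in R^n\mid\exists x\in R^m:(x,y)\in\Gamma_f\et x\in B\},\]
and $m$ applications of \ref{saproj} yield $f(B)\in\mathcal S_n$. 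In particular, taking $B=A$ and using Remark \ref{domainsa}, the domain $A$ itself is $K$-semialgebraic, which is consistent with (a).

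For part (b), the graph of $g\circ f$ admits the description
\[\Gamma_{g\circ f}=\{(x,z)\in R^\ell\times R^n\mid\exists y\in R^m:(x,y)\in\Gamma_f\et(y,z)\in\Gamma_g\},\]
where the set on the right of the quantifier is the intersection (after the obvious coordinate identifications) of two $K$-semialgebraic subsets of $R^{\ell+m+n}$. Eliminating the $m$ variables $y$ via \ref{saproj} gives $\Gamma_{g\circ f}\in\mathcal S_{\ell+n}$.

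For part (c), it suffices to exhibit the constant functions $0,1$ and to show that if $f,g\colon A\to R$ are $K$-semialgebraic, then so are $f+g$, $-f$ and $fg$. The constants $A\to R,\ x\mapsto c$ for $c\in K$ have graph $A\times\{c\}\in\mathcal S_{n+1}$. For the operations, I would first form the pairing $(f,g)\colon A\to R^2$, whose graph
\[\Gamma_{(f,g)}=\{(x,y_1,y_2)\in R^n\times R\times R\mid(x,y_1)\in\Gamma_f\et(x,y_2)\in\Gamma_g\}\]
is clearly $K$-semialgebraic, so $(f,g)$ is $K$-semialgebraic. Composing with the polynomial maps $R^2\to R$ given by $(a,b)\mapsto a+b$ and $(a,b)\mapsto ab$, which are $K$-semialgebraic by Proposition \ref{fieldopsa}, and applying (b), yields $f+g$ and $fg$ as $K$-semialgebraic functions; additive inverses arise by composition with $a\mapsto -a$. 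The only mildly delicate point — and the step I would expect to need the most care — is (c), because one has to avoid a circular appeal to (b): one must check that the pairing $(f,g)$ really is a single $K$-semialgebraic map $A\to R^2$ before composing. Once that is in place, the whole theorem is a mechanical consequence of quantifier elimination.
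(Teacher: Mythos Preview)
Your proposal is correct and follows essentially the same approach as the paper: parts (a) and (b) are proved by writing images, preimages, and the composite graph as projections of intersections of $K$-semialgebraic sets and invoking \ref{saproj}, and part (c) is obtained by first checking directly that the pairing $(f,g)\colon A\to R^2$ has $K$-semialgebraic graph and then composing with the field operations from \ref{fieldopsa} via (b). Your worry about circularity in (c) is unfounded---the pairing step is verified independently of (b), exactly as you do---so the argument goes through without issue.
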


\begin{proof}
\begin{enumerate}[(a)]
\item Let $A\subseteq R^m$ and $f\colon A\to R^n$ be $K$-semialgebraic. By
\ref{saproj}, with $\Ga_f$ also
$f(B)=\{y\in R^n\mid\exists x\in R^m:(x\in B\et (x,y)\in\Ga_f)\}$ is
for all $B\in\mathcal S_m$ with
$B\subseteq A$ $K$-semialgebraic, and
$f^{-1}(C)=\{x\in R^m\mid\exists y\in R^n:(y\in C\et (x,y)\in\Ga_f)\}$ is for all
$C\in\mathcal S_n$ also $K$-semialgebraic.
\item Suppose $A\subseteq R^\ell$, $B\subseteq R^m$ and $f\colon A\to B$
as well as $g\colon B\to R^n$ are $K$-semialgebraic.
Then $\Ga_f\in\mathcal S_{\ell+m}$ and $\Ga_g\in\mathcal S_{m+n}$ and thus
\[\Ga_{g\circ f}=\{(x,z)\in A\times R^n\mid\exists y\in R^m:((x,y)\in\Ga_f\et(y,z)\in
\Ga_g)\}\in\mathcal S_{\ell+n}.\]
Hence $g\circ f$ is $K$-semialgebraic.
\item If $A\in\mathcal S_n$ and $f_1,f_2\colon A\to R$ are $K$-semialgebraic,
then also \[A\to R^2,\ x\mapsto(f_1(x),f_2(x))\]
is $K$-semialgebraic. Now apply
\ref{fieldopsa} and (b).
\end{enumerate}
\end{proof}

\begin{ex}
If $R$ is a non-Archimedean (real closed) extension of $\R$,
then $[0,1]_R$ is not compact
[$\to$ \ref{dfcomp}]. Indeed, if $\ep\in\m_R$ [$\to$ \ref{ordval}]
with $\ep>0$, then
\[[0,1]_R\subseteq\bigcup_{a\in[0,1]_R}(a-\ep,a+\ep)_R,\]
but there is no $N\in\N$ and $a_1,\dots,a_N\in[0,1]_R$ with
$[0,1]_R\subseteq\bigcup_{k=1}^N(a_k-\ep,a_k+\ep)_R$
(for otherwise $[0,1]_\R=\st([0,1]_R)\subseteq\{\st(a_1),\dots,\st(a_N)\}\ \lightning$).
\end{ex}

\begin{df}\label{dfsacompact}
Let $A\subseteq R^n$. We call $A$ \emph{bounded} if there is
$b\in R$ with $\|x\|\le b$ for all $x\in A$ [$\to$ \ref{normcont}]. Moreover, $A$ is called
\emph{$K$-semialgebraically compact} if $A\in\mathcal S_n$ and $A$
is bounded and closed. We simply say ``semialgebraically compact'' instead
of ``$R$-semialgebraically compact''.
\end{df}

\begin{rem}\label{sacompreals}
From analysis, one knows for $R=\R$: A $K$-semialgebraic set
$A\subseteq\R^n$ is compact if and only if it is $K$-semialgebraically compact.
\end{rem}

\begin{pro}\label{ksemalgcomp}
Let $A\in\mathcal S_n$. Then the following are equivalent:
\begin{enumerate}[\normalfont(a)]
\item $A$ is bounded.
\item $\exists b\in R:\forall x\in A:\|x\|\le b$
\item $\exists b\in R:\forall x\in A:\|x\|_\infty\le b$
\item $\exists b\in K:\forall x\in A:\|x\|\le b$
\item $\exists b\in K:\forall x\in A:\|x\|_\infty\le b$
\end{enumerate}
\end{pro}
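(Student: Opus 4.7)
The equivalence (a)$\iff$(b) is the definition \ref{dfsacompact}, and the implications (d)$\Rightarrow$(b), (e)$\Rightarrow$(c) are trivial. For (b)$\iff$(c) and (d)$\iff$(e) I would use the elementary estimates $\|x\|_\infty\le\|x\|$ and $\|x\|^2=\sum_{i=1}^nx_i^2\le n\|x\|_\infty^2$, the second of which, for $n\ge 1$, gives $\|x\|\le\sqrt n\,\|x\|_\infty\le n\|x\|_\infty$; so a bound $b\in K$ on $\|\cdot\|_\infty$ yields the bound $nb\in K$ on $\|\cdot\|$, and conversely the same $b$ works in the other direction. The case $n=0$ is trivial since $R^0=\{0\}$. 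Only the substantive implication (b)$\Rightarrow$(d) remains.

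Since $x\mapsto\|x\|=\sqrt{x_1^2+\cdots+x_n^2}$ is $K$-semialgebraic by \ref{fieldopsa} and \ref{saclo}(b), the set $\ph(A):=\{\|x\|:x\in A\}$ is a $K$-semialgebraic subset of $R$ by \ref{saclo}(a), and is bounded above in $R$ by hypothesis. It therefore suffices to prove the one-variable statement: every $K$-semialgebraic subset $S\subseteq R$ that is bounded above admits an upper bound in $K$.

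To this end, I would apply the normal form \ref{sasanf}: $S$ is a finite union of sets $S_i=\{t\in R:f_i(t)=0,\,g_{i1}(t)>0,\ldots,g_{im_i}(t)>0\}$ with $f_i,g_{ij}\in K[X]$. If $f_i\ne 0$, then $S_i$ is contained in the finite set of roots of $f_i$; if $f_i=0$, then $S_i$ is an intersection of finitely many sets $\{g_{ij}>0\}$, each of which is a finite union of open intervals whose finite endpoints are roots of $g_{ij}$. Because $S$ is bounded above, no interval occurring in this description extends to $+\infty$, so every point of $S$ is at most some root $c\in R$ of a nonzero polynomial $p\in K[X]$ appearing in the description. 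The Cauchy bound then finishes the argument: if $p=a_dX^d+\cdots+a_0\in K[X]$ with $a_d\ne 0$ satisfies $p(c)=0$, then splitting into the cases $|c|\le 1$ and $|c|>1$ (where the latter yields $|a_d||c|\le|a_{d-1}|+\cdots+|a_0|$) shows $|c|\le 1+|a_d|^{-1}(|a_{d-1}|+\cdots+|a_0|)$. Because $K$ inherits its order from $R$, the right-hand side lies in $K$, giving the desired $b\in K$ with $\|x\|\le b$ for all $x\in A$.

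The main potential obstacle is the structural claim in the third paragraph: it is essentially a consequence of \ref{sasanf} and the finiteness of roots of univariate polynomials, but requires care to verify that no interval in the description of $S$ extends to $+\infty$ (here the boundedness of $S$ is crucial). Everything else is routine calculation.
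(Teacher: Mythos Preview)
Your proof is correct and follows essentially the same route as the paper: reduce to the one-variable set $S=\{\|x\|:x\in A\}\subseteq R$ and find a bound in $K$ by controlling the roots of the defining polynomials. The paper streamlines your third paragraph by invoking Lemma~\ref{sgnbounds}(a) directly: choose $b\in K_{>1}$ so large that every defining polynomial of $S$ has constant sign on $(b,\infty)_R$; then $(b,\infty)_R\cap S$ is either empty or all of $(b,\infty)_R$, and boundedness rules out the latter. This sidesteps your case analysis and the (harmless but unaddressed) edge case where no defining polynomial has a root in $R$.
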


\begin{proof}
WLOG $A\ne\emptyset$. We have
(a)$\overset{\ref{dfsacompact}}\iff$(b)$\overset{\ref{inf22}}\iff$(c) $\Longleftarrow$ (e) $\Longleftarrow$ (d).
It remains to show (b)$\implies$(d). Suppose therefore that (b) holds.
The set
\[S:=\{\|x\|\mid x\in A\}\subseteq R_{\ge0}\]
is $K$-semialgebraic [$\to$ \ref{saproj}].
Hence $S$ can be defined by finitely many polynomials [$\to$ \ref{sanf}] with coefficients in $K$ and
by Lemma \ref{sgnbounds}(a) we find some $b\in K_{>1}$ such that each of these polynomials
has constant sign on the interval $(b,\infty)_R$. Then either $(b,\infty)_R\cap S=\emptyset$ or
$(b,\infty)_R\subseteq S$. But the latter is impossible due to (b). Hence $\forall x\in A:\|x\|\le b$.
\end{proof}

\begin{thm}\label{compactimage}
Let $A\subseteq R^m$ and suppose $f\colon A\to R^n$ is
$K$-semialgebraic and continuous. Then for every $K$-semialgebraically compact
set $B\subseteq A$, the set $f(B)$ is also $K$-semialgebraically compact.
\end{thm}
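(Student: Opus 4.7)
The claim $f(B) \in \mathcal{S}_n$ is immediate from Theorem \ref{saclo}(a). The substance of the theorem is therefore that $f(B)$ is bounded and closed, and my plan is to deduce both by the Tarski transfer principle from their classical analogues over $\R$, in the spirit of Example \ref{tprinciple}.

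First I put $B$ and $\Ga_f$ into the normal form of \ref{sasanf} as finite unions of sets cut out by polynomial (in)equalities with coefficients in $K$; fixing bounds on the number of polynomials involved and their degrees, I collect all of their coefficients into a single tuple $c \in R^N$ for some $N \in \N$ depending only on these bounds. Work now inside a sufficiently large set $\mathcal{R}$ of real closed extensions of $\Q$ containing both $\R$ and $R$ (cf.\ \ref{rcfclass}), and consider the class
\[
T := \left\{(R',c) \in \mathcal{R}_N \;\middle|\; \alpha(R',c) \et \beta(R',c) \Longrightarrow \gamma(R',c)\right\},
\]
where $\alpha(R',c)$ is the assertion ``the set $B_c \subseteq R'^m$ encoded by $c$ is bounded and closed'', $\beta(R',c)$ is ``the set $\Ga_c \subseteq R'^{m+n}$ encoded by $c$ is the graph of a continuous function $B_c \to R'^n$'', and $\gamma(R',c)$ is ``the image of this function is bounded and closed in $R'^n$''. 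Each of $\alpha,\beta,\gamma$ is a first-order formula in the entries of $c$: boundedness is of the form $\exists b\,\forall x$, closedness is the negation of $\exists y\,\forall\delta\,\exists x$, and continuity of a function prescribed by its graph is the $\varepsilon$-$\delta$ condition from Exercise \ref{continfnorm}. Iterated application of the real quantifier elimination \ref{elim} eliminates the quantifiers over $R'$ and shows that $T$ is a $(\Q,\Q_{\ge 0})$-semialgebraic class.

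By Remark \ref{sacompreals} together with the classical topological fact that continuous images of compact subsets of $\R^m$ are compact in $\R^n$, every $c \in \R^N$ satisfies $\alpha(\R,c)\et\beta(\R,c)\Longrightarrow\gamma(\R,c)$; that is, $\set_\R(T) = \R^N$. Since $\set_\R$ is an isomorphism of Boolean algebras by Theorem \ref{setification}, this forces $T = \mathcal{R}_N$, so in particular $(R,c) \in T$. The hypotheses of the theorem (with \ref{dfsacompact}) guarantee $\alpha(R,c)$ and $\beta(R,c)$, whence $\gamma(R,c)$: the set $f(B)$ is bounded and closed in $R^n$, as required.

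The main obstacle is the routine but somewhat tedious bookkeeping to check that $\alpha$, $\beta$, $\gamma$ are genuinely first-order in $c$ with quantifiers ranging over $R'$ (so that \ref{elim} applies); once this is done, no further analytic input is needed and the transfer machinery delivers the conclusion.
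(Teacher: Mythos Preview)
Your proposal is correct and follows essentially the same route as the paper: after observing that $f(B)\in\mathcal S_n$ via \ref{saclo}(a), both you and the paper encode $B$ and $\Ga_f$ by the finitely many polynomial coefficients (of bounded complexity), formulate the implication ``hypotheses $\Rightarrow$ image closed and bounded'' as a first-order condition in these coefficients, eliminate the quantifiers by \ref{elim}, and transfer from the known case $R=\R$ via \ref{sacompreals}. The only cosmetic difference is that the paper first reduces to $K=R$ and then phrases the statement as a $0$-ary semialgebraic class (with the coefficients universally quantified), whereas you keep the coefficients as the point $c$ of an $N$-ary class; this is the same argument in slightly different packaging.
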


\begin{proof}
If $B\in\mathcal S_m$ with $B\subseteq A$, then $f(B)\in\mathcal S_n$ by
\ref{saclo}(a) since $f$ is $K$-semialgebraic. For the rest of the claim we
can suppose that $K=R$. We fix a ``complexity bound'' $N\in\N$ and fix
$m,n\in\N_0$ but no longer fix $A$ and $f$. By \ref{sasanf}, it suffices to show
the following:

$(*)$ For all $f_1,\dots,f_N,g_{11},g_{12},\dots,g_{NN}\in
R[X_1,\dots,X_m,Y_1,\dots,Y_n]_N$ and\\
$\widetilde f_1,\dots,\widetilde f_N,\widetilde g_{11},\widetilde g_{12},\dots,
\widetilde g_{NN}\in
R[X_1,\dots,X_m]_N$, if we set
\begin{align*}
\Ga&:=\bigcup_{i=1}^N\{(x,y)\in R^m\times R^n\mid f_i(x,y)=0,g_{i1}(x,y)>0,\dots,
g_{iN}(x,y)>0\},\\
A&:=\{x\in R^m\mid\exists y\in R^n:(x,y)\in\Ga\}\text{ and}\\
B&:=\bigcup_{i=1}^N\{x\in R^m\mid\widetilde f_i(x)=0,\widetilde g_{i1}(x)>0,
\ldots,\widetilde g_{iN}(x)>0\},
\end{align*}
then
\begin{itemize}
\item $\Ga$ is not the graph of a continuous function from $A$ to $R^n$ or
\item $B$ is not a subset of $A$ or
\item $B$ is not closed in $R^m$ or
\item $B$ is not bounded in $R^m$ or
\item $\{y\in R^n\mid\exists x\in R^m:(x\in B\et(x,y)\in\Ga)\}$ is closed and
bounded in $R^n$.
\end{itemize}
We now in addition no longer fix $R$. One can easily figure out why the class
of all real closed fields $R$ for which $(*)$ holds is semialgebraic. For this aim,
one applies many times the real quantifier elimination \ref{elim}, for example for 
introducing the finitely many coefficients of the $f_i,g_{ij},\widetilde f_i,
\widetilde g_{ij}$ by universal quantifiers. By \ref{nothingorall}, $(*)$ now holds
either for all or for no real closed field $R$. Therefore it is enough to show
$(*)$ for $R=\R$. But we know this from analysis due to \ref{sacompreals}.
\end{proof}

\begin{exo}\label{dim1}
\begin{enumerate}[(a)]
\item The \alal{open}{closed} semialgebraic subsets of $R$ are exactly the
finite unions of pairwise disjoint sets of the form
$\malal{(-\infty,\infty)_R,(-\infty,a)_R,(a,\infty)_R\text{ and }(a,b)_R}
{(-\infty,\infty)_R,(-\infty,a]_R,[a,\infty)_R\text{ and }[a,b]_R}$
with $a,b\in R$.
\item The semialgebraically compact
subsets of $R$ are exactly the finite unions of pairwise disjoint
sets of the form $[a,b]_R$ with $a,b\in R$.
\end{enumerate}
\end{exo}

\section{The \L ojasiewicz inequality}

\begin{pro}\label{polgrowth}
Let $a\in K$ and suppose
$h\colon(a,\infty)_R\to R$ is $K$-semialgebraic. Then there is
$b\in K\cap[a,\infty)_R$ and $N\in\N$ such that
$|h(x)|\le x^N$ for all $x\in(b,\infty)_R$.
\end{pro}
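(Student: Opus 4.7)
The plan is to reduce the problem to a question about roots of a single polynomial and then apply Lemma~\ref{sgnbounds} twice. First I would replace $h$ by $|h|$, which is again $K$-semialgebraic by Proposition~\ref{fieldopsa} and Theorem~\ref{saclo}, so I may assume $h\ge 0$. Using Reminder~\ref{sasanf}, I then write the graph as
\[
\Gamma_h \;=\; \bigcup_{i=1}^{k}\bigl\{(x,y)\in R^2 \mid f_i(x,y)=0,\ g_{i1}(x,y)>0,\ldots,g_{im}(x,y)>0\bigr\}
\]
with $f_i, g_{ij}\in K[X,Y]$. A preliminary observation is that WLOG every $f_i$ is nonzero: if $f_i = 0$ and the $i$-th piece were nonempty, that piece would be an open subset of $R^2$ contained in $\Gamma_h$, contradicting that $\Gamma_h$ is the graph of a function.

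Next I would partition the domain according to which piece the point $(x,h(x))$ belongs to: for $i\in\{1,\dots,k\}$, set
\[
A_i \;:=\; \bigl\{x\in(a,\infty)_R \mid (x,h(x)) \text{ lies in the $i$-th piece}\bigr\}.
\]
By Reminder~\ref{saproj} and Theorem~\ref{saclo}, each $A_i$ is $K$-semialgebraic, and the $A_i$ together cover $(a,\infty)_R$. Each $A_i$ is described by a Boolean combination of sign conditions on finitely many polynomials in $K[X]$; by Lemma~\ref{sgnbounds}(a), I can choose $b_0 \in K\cap[a,\infty)_R$ large enough that all of these polynomials have constant sign on $(b_0,\infty)_R$. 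Consequently each $A_i\cap(b_0,\infty)_R$ is either empty or all of $(b_0,\infty)_R$, and since they cover, at least one $A_i$ contains $(b_0,\infty)_R$.

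Fix such an $i$ and write $f_i = \sum_{j=0}^{d} P_j(X)\,Y^j$ with $P_j\in K[X]$ and $P_d\ne 0$. Enlarging $b_0$ within $K$ if necessary (once more via Lemma~\ref{sgnbounds}(a), to guarantee $P_d$ has constant nonzero sign beyond it), I may assume $P_d(x)\ne 0$ for $x>b_0$. Then, for every $x>b_0$, the scalar $h(x)$ is a root of $f_i(x,Y)\in R[Y]$, which has degree exactly $d$ in $Y$; Lemma~\ref{sgnbounds}(a) applied now in the variable $Y$ yields
\[
|h(x)| \;\le\; \max\!\left\{1,\ \frac{|P_0(x)|+\ldots+|P_{d-1}(x)|}{|P_d(x)|}\right\}.
\]
A final appeal to Lemma~\ref{sgnbounds}(a) (applied to each $P_j$ and to $P_d$) shows that, on a suitable $(b,\infty)_R$ with $b\in K$, each $|P_j(x)|$ is bounded above by a constant multiple of $x^{\deg P_j}$ while $|P_d(x)|$ is bounded below by a positive constant multiple of $x^{\deg P_d}$. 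Hence the right-hand side is dominated by a polynomial in $x$, which in turn is $\le x^N$ for $N$ sufficiently large. The main obstacle is purely bookkeeping: at each step the threshold $b$ must be kept in $K$, which is achieved by applying Lemma~\ref{sgnbounds} only to polynomials with coefficients in $K$.
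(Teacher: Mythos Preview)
Your argument is correct, and the overall strategy---reduce to a single polynomial relation $f_i(x,h(x))=0$ and invoke the root bound from Lemma~\ref{sgnbounds}(a)---is the same as the paper's. The one substantive difference is how you get to that single polynomial. You partition the domain into the sets $A_i$, argue they are $K$-semialgebraic, and use the one-variable structure via Lemma~\ref{sgnbounds}(a) to find a threshold $b_0\in K$ beyond which one piece is cofinal. The paper bypasses this entirely by taking the \emph{product} $\prod_{i=1}^k f_i$: since $(x,h(x))$ always lies in some piece, this product vanishes at $(x,h(x))$ for \emph{every} $x$ in the domain, so the root bound applies uniformly without any partitioning. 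This is shorter, and after rescaling so that the leading coefficient of $p_d$ exceeds $1$, the paper also avoids your lower bound on $|P_d(x)|$, getting directly $|h(x)|\le 1+|p_0(x)|+\ldots+|p_{d-1}(x)|$. Your route works just as well and has the minor advantage of dealing with a polynomial of possibly smaller $Y$-degree; the initial replacement of $h$ by $|h|$ is harmless but not needed.

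One small point worth tightening: you argue only that each $f_i$ is nonzero, but the root bound needs $\deg_Y f_i\ge 1$ for the chosen piece. This is implicitly there---if $d=0$ then $f_i(x,Y)=P_0(x)$ is a nonzero constant for $x>b_0$, so it cannot vanish at $h(x)$---but it would be cleaner to note, as the paper does, that if $\deg_Y f_i=0$ and the $i$-th piece is nonempty then a vertical open segment sits inside $\Gamma_h$, which is impossible for a graph.
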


\begin{proof}
Using \ref{sasanf}, we write
\[\Ga_h=\bigcup_{i=1}^k\{(x,y)\in R^2\mid f_i(x,y)=0,g_{i1}(x,y)>0,\ldots,
g_{im}(x,y)>0\}\]
with $k,m\in\N_0$ and $f_i,g_{ij}\in K[X,Y]$ where we suppose each of the $k$
sets contributing to this union to be nonempty. We must have $k>0$ and
$\deg_Yf_i>0$ for all $i\in\{1,\dots,k\}$
(for otherwise there would be $x,c,d\in R$ with $c<d$ and $\{x\}\times
(c,d)_R\subseteq\Ga_h$ which is impossible since $\Ga_h$ is the graph of a
function). Write $\prod_{i=1}^kf_i=\sum_{i=0}^dp_iY^i$ with
$d>0$, $p_0,\ldots,p_d\in K[X]$ and $p_d\ne0$.
By rescaling one of the $f_i$ if necessary, we can suppose that the leading coefficient of $p_d$ is
greater than $1$. Choose $c\in K\cap[a,\infty)_R$ such
that $p_d>1$ on $(c,\infty)_R$ [$\to$ \ref{sgnbounds}(a)]. Because of
$\sum_{i=0}^dp_i(x)h(x)^i=0$ and $p_d(x)\ne0$ for all $x\in(c,\infty)_R$, we
have
\[|h(x)|\le\max\left\{1,\frac{|p_0(x)|+\ldots+|p_{d-1}(x)|}{|p_d(x)|}
\right\}\le1+|p_0(x)|+\ldots+|p_{d-1}(x)|
\]
for all $x\in(c,\infty)_R$ [$\to$ \ref{sgnbounds}(a)].
Now the existence of $b$ is easy to see.
\end{proof}

\begin{thm}[\L ojasiewicz inequality]\label{lojasiewicz}
Let $n\in\N_0$ and
suppose $A\subseteq R^n$ is $K$-semialgebraically compact
and $f,g\colon A\to R$ are continuous $K$-semialgebraic functions satisfying
\[\forall x\in A:(f(x)=0\implies g(x)=0).\] Then there is $N\in\N$ and $C\in K_{\ge0}$
such that \[\forall x\in A:|g(x)|^N\le C|f(x)|.\]
\end{thm}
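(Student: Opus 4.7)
\emph{Plan.} I would first reformulate the problem in terms of a single $K$-semialgebraic function of one variable. By Theorem \ref{compactimage}, $|g|(A)$ is $K$-semialgebraically compact; let $M := \max |g|(A) \in R_{\ge 0}$. The case $M = 0$ (that is, $g \equiv 0$ on $A$) is trivial, so assume $M > 0$. For $\sigma \in (0,M]$ the set $B_\sigma := \{x \in A : |g(x)| \ge \sigma\}$ is $K$-semialgebraically compact and nonempty, so
\[
v(\sigma) := \min\{|f(x)| : x \in B_\sigma\}
\]
exists in $R$, and $v(\sigma) > 0$ since $|g(x)| \ge \sigma > 0$ forces $f(x) \ne 0$ by hypothesis. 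By real quantifier elimination (Theorem \ref{elim}), $v$ is $K$-semialgebraic, and it is visibly non-decreasing.

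The conclusion will follow once I show that for some $N \in \N$ the $K$-semialgebraic set
\[
U_N := \{|g(x)|^N/|f(x)| : x \in A,\ f(x) \ne 0\} \subseteq R_{\ge 0}
\]
is bounded in $R$: Proposition \ref{ksemalgcomp} then yields $C \in K_{\ge 0}$ with $U_N \subseteq [0, C]$, giving $|g|^N \le C|f|$ on $\{f \ne 0\}$, and the inequality extends trivially to $\{f = 0\}$ where the hypothesis forces $g = 0$. Since any $x \in A$ with $f(x) \ne 0$ and $\sigma := |g(x)| > 0$ lies in $B_\sigma$ and satisfies $|f(x)| \ge v(\sigma)$, boundedness of $U_N$ reduces to boundedness of $\sigma \mapsto \sigma^N/v(\sigma)$ on $(0,M]$.

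To produce such an $N$, fix any $a \in K_{>0}$ and define a $K$-semialgebraic function $h \colon (a,\infty)_R \to R$ by
\[
h(s) := \sup\{1/|f(x)| : x \in A,\ s|g(x)| \ge 1\} \qquad (\sup \emptyset := 0),
\]
which agrees with $1/v(1/s)$ when $1/s \le M$ and vanishes otherwise. Proposition \ref{polgrowth} supplies $b \in K \cap [a,\infty)_R$ and $N \in \N$ with $h(s) \le s^N$ for $s > b$, which translates to $v(\sigma) \ge \sigma^N$ for all $\sigma \in (0, 1/b) \cap (0, M]$. For the residual range $\sigma \in [1/b, M]$ (vacuous if $1/b > M$), the set $D := \{x \in A : |g(x)| \ge 1/b\}$ is $K$-semialgebraically compact and $|f|$ is continuous and strictly positive on $D$ by hypothesis, so $\delta := \min |f|(D) > 0$ exists, giving $\sigma^N/v(\sigma) \le M^N/\delta$. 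Combining both ranges, $\sigma^N/v(\sigma) \le \max(1, M^N/\delta)$ on $(0,M]$, which shows $U_N$ is bounded in $R$ and completes the proof.

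The main subtlety I anticipate is producing the constant in $K_{\ge 0}$ rather than merely in $R$: writing $C$ explicitly in terms of $M$ and $\delta$ would typically produce an element of $R \setminus K$. This is sidestepped by applying Proposition \ref{ksemalgcomp} to the image set $U_N$, which converts any $R$-bound into a $K$-bound automatically. A minor technical point is that Proposition \ref{polgrowth} requires a left endpoint $a \in K$ for the domain of $h$, forcing the slightly awkward piecewise definition above instead of the cleaner $h(s) = 1/v(1/s)$ on $(1/M, \infty)_R$ (since $1/M$ need not lie in $K$).
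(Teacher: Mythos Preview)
Your proof is correct and follows essentially the same route as the paper's: both reduce to a one-variable semialgebraic function (the paper's $t\mapsto 1/f_t$ on level sets $\{|g|=1/t\}$, your $h$ on superlevel sets $\{|g|\ge 1/s\}$), invoke Proposition~\ref{polgrowth} to control the small-$|g|$ regime, and handle the residual compact range $\{|g|\ge 1/b\}$ directly. The only cosmetic difference is that you extract the constant $C\in K$ by a single final application of Proposition~\ref{ksemalgcomp} to the image $U_N$, whereas the paper applies it already to the residual piece.
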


\begin{proof}
With $A$ also $A_t:=\{x\in A\mid|g(x)|=\frac1t\}$ is $K$-semialgebraically compact
for each $t\in R_{>0}$. Set $I:=\{t\in R_{>0}\mid A_t\ne\emptyset\}$. For each
$t\in I$, \[f_t:=\min\{|f(x)|\mid x\in A_t\}\]
exists by \ref{compactimage} and \ref{dim1}(b).
Apparently, we have to show that there exist
$N\in\N$ and $C\in K_{\ge0}$ such that
$\forall t\in I:\left(\frac1t\right)^N\le Cf_t$.
By hypothesis, we have $f_t>0$ for all $t\in I$. Furthermore,
\[R_{>0}\to R,\ t\mapsto\begin{cases}0&\text{if $t\notin I$}\\
\frac1{f_t}&\text{if $t\in I$}
\end{cases}\]
is $K$-semialgebraic. Thus, by \ref{polgrowth} there are $b\in K_{>0}$ and
$N\in\N$ such that
\[(*)\qquad\frac1{f_t}\le t^N\]
for all $t\in I\cap(b,\infty)_R$. Since
\[B:=\left\{x\in A\mid|g(x)|\ge\frac1b\right\}=\bigcup_{t\in I\cap(0,b]_R}A_t\]
is $K$-semialgebraically compact, we can choose according to \ref{compactimage} and \ref{ksemalgcomp}
some $C\in K_{\ge1}$ satisfying
\[\frac{|g(x)|^N}{|f(x)|}\le C\]
for all $x\in B$ (note that $f(x)\ne0$ for all $x\in B$).
We deduce
\[(**)\qquad\frac1{f_t}\le Ct^N\]
for all $t\in I\cap(0,b]_R$. Together with $(*)$, we obtain $(**)$ even
for all $t\in I$ as desired.
\end{proof}

\begin{lem}(``shrinking map'', in German: ``Schränkungstranformation'')
\label{shrink}
Let $n\in\N_0$,
$B:=\{x\in R^n\mid\|x\|<1\}$ and $S:=\{x\in R^n\mid\|x\|=1\}$.
The maps
\begin{align*}
\ph\colon R^n\to B,\ &x\mapsto\frac x{\sqrt{1+\|x\|^2}}\qquad\text{and}\\
\ps\colon B\to R^n,\ &y\mapsto\frac y{\sqrt{1-\|y\|^2}}
\end{align*}
are $\Q$-semialgebraic, continuous and inverse to each other. For all
$A\in\mathcal S_n$, we have
\[\text{$A$ closed}\iff\text{$\ph(A)\cup S$ is $K$-semialgebraically compact.}\]
\end{lem}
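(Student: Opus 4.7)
First I would verify the routine properties of $\ph$ and $\ps$. A direct computation gives $\|\ph(x)\|^2 = \|x\|^2/(1+\|x\|^2) < 1$, so $\ph$ really maps into $B$, and $\ps$ is well defined on $B$ since $1-\|y\|^2 > 0$ there. The graphs of $\ph$ and $\ps$ are cut out in $R^n \times R^n$ by polynomial conditions with integer coefficients together with the sign conditions $x_i y_i \ge 0$ (the equality $x_i = y_i\sqrt{1+\|x\|^2}$ squares to a polynomial identity, and the $\sqrt{\,\cdot\,}$ is the nonnegative root), so both maps are $\Q$-semialgebraic; alternatively, they are compositions of the $\Q$-semialgebraic continuous operations supplied by \ref{fieldopsa} and \ref{normcont}, so continuity follows from \ref{saclo}(b),(c). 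For the inversion, substitute $y = \ph(x)$ into $1-\|y\|^2 = 1/(1+\|x\|^2)$ to get $\sqrt{1-\|y\|^2} = 1/\sqrt{1+\|x\|^2}$, and hence $\ps(\ph(x)) = x$; the computation $\ph(\ps(y)) = y$ is analogous. Thus $\ph$ and $\ps$ are mutually inverse homeomorphisms between $R^n$ and $B$.

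Next I would set up the closedness equivalence. Let $A \in \mathcal S_n$. The set $\ph(A)$ is $K$-semialgebraic by \ref{saclo}(a) (note that $\Q$-semialgebraic implies $K$-semialgebraic), and $S$ is obviously $K$-semialgebraic, so $\ph(A) \cup S \in \mathcal S_n$ unconditionally. Moreover $\ph(A) \cup S \subseteq \{x \in R^n \mid \|x\| \le 1\}$, so $\ph(A) \cup S$ is always bounded. The claimed equivalence therefore reduces to: $A$ is closed in $R^n$ if and only if $\ph(A) \cup S$ is closed in $R^n$.

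For ``$\Longrightarrow$'', assume $A$ is closed in $R^n$. Since $\ph\colon R^n \to B$ is a homeomorphism, $\ph(A)$ is closed in the subspace $B$. Now any point $z$ in the closure of $\ph(A)$ inside $R^n$ satisfies $\|z\| \le 1$, so either $z \in S$ or $z \in B$; in the latter case $z$ is in the closure of $\ph(A)$ relative to $B$, which equals $\ph(A)$. Hence the closure of $\ph(A)$ in $R^n$ is contained in $\ph(A) \cup S$. Since $S$ itself is closed in $R^n$ (it is the preimage of $\{1\}$ under the continuous map $\|\cdot\|$ from \ref{normcont}), the set $\ph(A) \cup S$ equals its own closure and is therefore closed.

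For ``$\Longleftarrow$'', assume $\ph(A) \cup S$ is closed in $R^n$. Because $B$ is open in $R^n$ and $B \cap S = \emptyset$, the intersection $(\ph(A) \cup S) \cap B = \ph(A)$ is closed in the subspace topology of $B$. Using that $\ps = \ph^{-1}\colon B \to R^n$ is continuous, we conclude that $A = \ps(\ph(A))$ is closed in $R^n$. The one genuinely delicate point of the whole argument is the ``$\Longrightarrow$'' direction, where one must observe that even though $\ph(A)$ is only \emph{a priori} closed inside $B$, its $R^n$-closure can only pick up points on the boundary sphere $S$; everything else is bookkeeping about compositions of semialgebraic continuous maps.
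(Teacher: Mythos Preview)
Your proof is correct and follows essentially the same approach as the paper's. The only cosmetic difference is in the ``$\Longrightarrow$'' direction: the paper picks a closed $C\subseteq R^n$ with $C\cap B=\ph(A)$, replaces $C$ first by $C\cap(B\cup S)$ and then by $C\cup S$, and checks $C=\ph(A)\cup S$; you instead argue directly that $\overline{\ph(A)}\subseteq\ph(A)\cup S$ and combine with the closedness of $S$, which amounts to the same thing.
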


\begin{proof}
From \ref{fieldopsa}, the $\Q$-semialgebraicity and the continuity are clear.
For all $x\in R^n$, we have
\[\ps(\ph(x))=\frac{\frac x{\sqrt{1+\|x\|^2}}}{\sqrt{1-\frac{\|x\|^2}{1+\|x\|^2}}}=
\frac{\frac{x}{\sqrt{1+\|x\|^2}}}{\frac1{\sqrt{1+\|x\|^2}}}=x.\]
For all $y\in B$, we have
\[\ph(\ps(y))=\frac{\frac y{\sqrt{1-\|y\|^2}}}{\sqrt{1+\frac{\|y\|^2}{1-\|y\|^2}}}=
\frac{\frac y{\sqrt{1-\|y\|^2}}}{\sqrt{\frac1{1-\|y\|^2}}}=y.\]
Now let $A\in\mathcal S_n$. To show:
$A$ closed $\iff$ $\ph(A)\cup S$ closed.

\smallskip
``$\Longleftarrow$'' Suppose $\ph(A)\cup S$ is closed. Then
$\ph(A)=(\ph(A)\cup S)\cap B$ is closed in $B$ (with respect to the
topology induced from $R^n$) and thus also $A=\ph^{-1}(\ph(A))$ in $R^n$.

\smallskip
``$\Longrightarrow$'' Let $A$ be closed. Then $\ph(A)=\ps^{-1}(A)$ is closed in $B$
and hence $\ph(A)=C\cap B$ for some closed set $C\subseteq R^n$.
WLOG $C\subseteq B\cup S$ (otherwise replace $C$ by $C\cap(B\cup S)$).
WLOG $S\subseteq C$ (otherwise replace $C$ by $C\cup S$).
Now $\ph(A)\cup S\subseteq C\subseteq(C\cap B)\cup(C\cap S)=
\ph(A)\cup S$. Hence $\ph(A)\cup S=C$ is closed. 
\end{proof}

\begin{cor}\label{lojasiewiczcor}
Let $n\in\N_0$ and suppose that $A\subseteq R^n$ is
closed and
$f,g\colon A\to R$ are continuous $K$-semialgebraic functions satisfying
\[\forall x\in A:(f(x)=0\implies g(x)=0).\]
Then there are $N,k\in\N$ and
$C\in K_{\ge0}$ such that
\[\forall x\in A:|g(x)|^N\le C(1+\|x\|^2)^k|f(x)|.\]
\end{cor}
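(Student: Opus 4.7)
The plan is to reduce the corollary to Theorem \ref{lojasiewicz} via the shrinking map $\ph\colon R^n\to B$ from Lemma \ref{shrink}. Setting $A':=\ph(A)\cup S$, Lemma \ref{shrink} tells us $A'$ is $K$-semialgebraically compact because $A$ is closed. The key identity is the equation
\[
1-\|\ph(x)\|^2=\frac{1}{1+\|x\|^2}\qquad(x\in R^n),
\]
which is exactly what allows the factor $(1+\|x\|^2)^k$ of the target inequality to appear naturally when we pull back via $\ps$.

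First I would establish a polynomial growth bound: there exist $M\in\N$ and $C_0\in K_{\ge0}$ with $|f(x)|,|g(x)|\le C_0(1+\|x\|^2)^M$ for all $x\in A$. The idea is to consider the $K$-semialgebraic function $h\colon R_{\ge0}\to R$ defined by $h(t):=\max\{|f(x)|\mid x\in A,\ \|x\|\le t\}$; this maximum exists by Theorem \ref{compactimage} applied to the $K$-semialgebraically compact set $\{x\in A\mid\|x\|\le t\}$ (intersection of closed $A$ with the closed ball). Proposition \ref{polgrowth} then gives $N_1\in\N$ and $b\in K_{>0}$ such that $h(t)\le t^{N_1}$ for $t>b$, while Proposition \ref{ksemalgcomp} supplies a bound in $K$ for $h(b)$. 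Since $h$ is monotone, this yields the desired polynomial estimate; the same argument applies to $g$.

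Next I would compactify the data. Choose $k\in\N$ with $k>M$ and define on $A'$
\[
\tilde f(y):=\begin{cases}(1-\|y\|^2)^kf(\ps(y))&\text{if }y\in\ph(A),\\0&\text{if }y\in S,\end{cases}\qquad
\tilde g(y):=\begin{cases}(1-\|y\|^2)^kg(\ps(y))&\text{if }y\in\ph(A),\\0&\text{if }y\in S.\end{cases}
\]
Using the identity above together with the growth bound, we get $|\tilde f(\ph(x))|\le C_0(1+\|x\|^2)^{M-k}\to 0$ as $\|x\|\to\infty$, i.e.\ as $\|\ph(x)\|\to 1$; so $\tilde f$ is continuous on $A'$, and similarly $\tilde g$. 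Both are $K$-semialgebraic by Theorem \ref{saclo}, and the implication $\tilde f(y)=0\implies\tilde g(y)=0$ holds on $A'$ (trivially on $S$, and on $\ph(A)$ because $(1-\|y\|^2)^k>0$ reduces the implication to the hypothesis on $f,g$).

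Finally I would apply Theorem \ref{lojasiewicz} to $\tilde f,\tilde g$ on the $K$-semialgebraically compact set $A'$ to obtain $N\in\N$ and $C\in K_{\ge0}$ with $|\tilde g(y)|^N\le C|\tilde f(y)|$ on $A'$. Substituting $y=\ph(x)$ and using $1-\|\ph(x)\|^2=(1+\|x\|^2)^{-1}$ yields $(1+\|x\|^2)^{-kN}|g(x)|^N\le C(1+\|x\|^2)^{-k}|f(x)|$, hence
\[
|g(x)|^N\le C(1+\|x\|^2)^{k(N-1)}|f(x)|,
\]
which is the claim with the new exponent $k(N-1)$. The main obstacle is the polynomial growth step, since \ref{polgrowth} is stated only for univariate $K$-semialgebraic functions on a half-line; one has to package the multivariate growth of $f$ and $g$ correctly via the auxiliary function $h$, which in turn requires Theorems \ref{compactimage} and \ref{ksemalgcomp} to ensure the resulting bound is attained and lies in $K$.
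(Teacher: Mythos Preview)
Your approach is essentially the same as the paper's: establish polynomial growth of $f$ and $g$ via \ref{polgrowth}, push everything to the compact set $\ph(A)\cup S$ through the shrinking map with a damping factor to force continuity at $S$, apply \ref{lojasiewicz} there, and pull back using $1-\|\ph(x)\|^2=(1+\|x\|^2)^{-1}$. The paper writes the damping as $1/(1+\|x\|^2)^{\ell+1}$ on the original side before composing with $\ps$ (and separately disposes of the bounded case), whereas you write it as $(1-\|y\|^2)^k$ on the compact side, but via the identity these are the same thing; just note that your final exponent $k(N-1)$ may be $0$ when $N=1$, so replace it by $\max\{1,k(N-1)\}$ as the paper does.
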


\begin{proof}
By \ref{domainsa}, $A$ is $K$-semialgebraic.
If $A$ is bounded, then $A$ is $K$-semialgebraically compact and
the claim follows (with $k:=1$) from the
\L ojasiewicz inequality \ref{lojasiewicz}. Now suppose that $A$ is unbounded.
Since $\{\|x\|\mid x\in A\}\subseteq R$ is $K$-semialgebraic, there is then some
$a\in K$ such that $(a,\infty)_R\subseteq\{\|x\|\mid x\in A\}$. The functions
\begin{align*}
\mathring f\colon(a,\infty)_R\to R,\ &t\mapsto\max\{|f(x)|\mid x\in A,\|x\|=t\}
\qquad\text{and}\\
\mathring g\colon(a,\infty)_R\to R,\ &t\mapsto\max\{|g(x)|\mid x\in A,\|x\|=t\}
\end{align*}
are semialgebraic. By \ref{polgrowth}, there are $b\in K\cap[a,\infty)_R$ with $b\ge1$
and $\ell\in\N$ such that $\mathring f(t)\le(1+t^2)^\ell$ and $\mathring g(t)\le(1+t^2)^\ell$ for all $t\in(b,\infty)_R
\subseteq R_{\ge1}$. Now consider the continuous $K$-semialgebraic functions
\[f_0\colon A\to R,\ x\mapsto\frac{f(x)}{(1+\|x\|^2)^{\ell+1}}\qquad\text{and}\qquad
g_0\colon A\to R,\ x\mapsto\frac{g(x)}{(1+\|x\|^2)^{\ell+1}}.\]
We have $\forall x\in A:(f_0(x)=0\implies g_0(x)=0)$ and obviously it is enough
to show that there are $N\in\N$ and $C\in K_{\ge0}$ such that
$\forall x\in A:|g_0(x)|^N\le C|f_0(x)|$ (set then
$k:=\max\{1,(N-1)(\ell+1)\}$). The advantage of $f_0$ and $g_0$ over $f$ and $g$
is that there is for all $\ep\in R_{>0}$ a semialgebraically compact set
$B\subseteq A$ such that $|f_0(x)|<\ep$ and $|g_0(x)|<\ep$ for all
$x\in A\setminus B$. With the notation of Lemma \ref{shrink}, the $K$-semialgebraic
functions
\begin{align*}
\widetilde f\colon\ph(A)\cup S\to R,\ &y\mapsto
\begin{cases}
0&\text{if $y\in S$}\\
f_0(\ps(y))&\text{if $y\in\ph(A)$}
\end{cases}
\qquad\text{and}\\
\widetilde g\colon\ph(A)\cup S\to R,\ &y\mapsto
\begin{cases}
0&\text{if $y\in S$}\\
g_0(\ps(y))&\text{if $y\in\ph(A)$}
\end{cases}
\end{align*}
are continuous.
For example, for $\widetilde f$ one sees this as follows:
Since $f_0\circ\ps|_{\ph(A)}$ is continuous and $\ph(A)=(\ph(A)\cup S)\cap B$
is open in $\ph(A)\cup S$, it suffices to show by \ref{continfnorm} and
\ref{inf22} that
\[\forall y_0\in S:\forall\ep\in R_{>0}:\exists\de\in R_{>0}:\forall y\in\ph(A):
(\|y_0-y\|<\de\implies|f_0(\ps(y))|<\ep).\]
To this end, let $y_0\in S$ and $\ep\in R_{>0}$. Choose a semialgebraically
compact set $B\subseteq A$ with $|f_0(x)|<\ep$ for all $x\in A\setminus B$.
Then $\ph(B)$ is semialgebraically compact by \ref{compactimage} and
consequently $S\cup\ph(A\setminus B)=(S\cup\ph(A))\setminus\ph(B)$
is open in $\ph(A)\cup S$. Thus there is $\de\in R_{>0}$ with
$\{y\in\ph(A)\cup S\mid\|y_0-y\|<\de\}\subseteq S\cup\ph(A\setminus B)$, i.e.,
\[\{y\in\ph(A)\mid\|y_0-y\|<\de\}\subseteq\ph(A\setminus B).\] Now let
$y\in\ph(A)$ with $\|y_0-y\|<\de$. Then $y\in\ph(A\setminus B)$ and
thus $\ps(y)\in A\setminus B$. Hence $|f_0(\ps(y))|<\ep$. This shows the
continuity of $\widetilde f$. For all $y\in\ph(A)$, we have obviously
\[\widetilde f(y)=0\implies f_0(\ps(y))=0\implies g_0(\ps(y))=0\implies\widetilde
g(y)=0.\]
Altogether, $\forall y\in\ph(A)\cup S:(\widetilde f(y)=0\implies\widetilde g(y)=0)$.
Since $\ph(A)\cup S$ is $K$-semialgebraically compact by \ref{shrink},
we get from the \L ojasiewicz inequality \ref{lojasiewicz} $N\in\N$ and
$C\in R_{\ge0}$ with $\forall y\in\ph(A)\cup S:|\widetilde g(y)|^N\le C|\widetilde f
(y)|$. In particular, we obtain
$\forall y\in\ph(A):|g_0(\ps(y))|^N\le C|f_0(\ps(y))|$ which means $\forall x\in A:|g_0(x)|^N\le C|f_0(x)|$ as desired.
\end{proof}

\section{The finiteness theorem for semialgebraic sets}

\begin{df}\label{dfbasicopenclosed}
Let $n\in\N_0$. A subset $S$ of $R^n$ is called
\emph{$K$-basic \alal{open}{closed}} if there are $m\in\N_0$ and
$g_1,\dots,g_m\in K[\x]$ satisfying
$S=\{x\in R^n\mid g_1(x)\malal>\ge0,\dots,g_m(x)\malal>\ge0\}$.
\end{df}

\begin{rem}\label{basicclosedisclosed}
Every $K$-basic \alal{open}{closed} subset of $R^n$ is $K$-semialgebraic
and \alal{open}{closed} in $R^n$.
\end{rem}

\begin{thm}[Finiteness theorem for semialgebraic sets]
\label{finthm}
Let $n\in\N_0$ and $S\in\mathcal S_n$ \alal{open}{closed}. Then $S$ is a finite
union of $K$-basic \alal{open}{closed} subsets of $R^n$.
\end{thm}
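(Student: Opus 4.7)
The plan is to establish the open case first and to derive the closed case from it by complementation. Concretely, if $S\subseteq R^n$ is closed and $K$-semialgebraic, then $R^n\setminus S$ is open, and the open case (once proved) yields $R^n\setminus S=\bigcup_{i=1}^k\{x\in R^n\mid g_{i,1}(x)>0,\dots,g_{i,m_i}(x)>0\}$ for some $g_{i,j}\in K[\x]$. Taking complements gives $S=\bigcap_{i=1}^k\bigcup_{j=1}^{m_i}\{x\in R^n\mid-g_{i,j}(x)\ge0\}$, and distributing the finite intersection over the finite unions exhibits $S$ as a finite union of $K$-basic closed sets.

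For the open case, set $A:=K[\x]$, $T:=\sum K_{\ge0}A^2$, $\Sigma:=\sper(A,T)$, and consider $F:=\fatten(S)\subseteq\Sigma$, which is constructible by \ref{slimfatten}. Since $\Sigma$ is compact in the constructible topology \emph{[$\to$ \ref{sperpreordcomp}]}, $F$ is constructibly closed and thus quasicompact in the coarser spectral topology. My strategy is to produce, for each $P\in F$, a $K$-basic open set $W_P\subseteq S$ such that $\fatten(W_P)$ contains $P$ and is spectral-open; because $\slim$ is a Boolean isomorphism and $W_P\subseteq S$, the inclusion $\fatten(W_P)\subseteq F$ follows automatically. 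Spectral quasicompactness of $F$ then extracts finitely many points $P_1,\dots,P_k\in F$ with $F=\fatten(W_{P_1})\cup\dots\cup\fatten(W_{P_k})$, and applying $\slim$ yields $S=W_{P_1}\cup\dots\cup W_{P_k}$, the required finite union of $K$-basic open sets.

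It remains to construct $W_P$ for a fixed $P\in F$. By Bröcker's ultrafilter theorem \ref{broecker} and the description of $\mathcal U_P$ in \ref{cone2ultra}, the membership $S\in\mathcal U_P$ furnishes $f\in\supp P$ and $g_1,\dots,g_m\in P\setminus(-P)$ (so $\widehat f(P)=0$ and $\widehat{g_i}(P)>0$ for all $i$) with the thin slice $V:=\{x\in R^n\mid f(x)=0,\ g_1(x)>0,\dots,g_m(x)>0\}$ contained in $S$. The crux is to \emph{thicken} $V$ to a basic $K$-open subset of $S$, and this is the step that invokes the \L ojasiewicz inequality. Consider the closed $K$-semialgebraic set $A'':=\{x\in R^n\mid g_1(x)\ge0,\dots,g_m(x)\ge0\}\cap(R^n\setminus S)$: on $A''$, if $f(x)=0$ then $x\notin V\subseteq S$ forces some $g_i(x)=0$, so the two continuous $K$-semialgebraic polynomial functions $f^2$ and $\prod_{i=1}^mg_i$ satisfy $f(x)^2=0\implies\prod_{i=1}^mg_i(x)=0$ throughout $A''$.

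By Corollary \ref{lojasiewiczcor} applied on $A''$, there exist $N,k\in\N$ and $C\in K_{\ge0}$ with $\prod_{i=1}^mg_i(x)^N\le C(1+\|x\|^2)^kf(x)^2$ for every $x\in A''$ (using $\prod g_i\ge0$ on $A''$ to drop absolute values). Set $g':=\prod_{i=1}^mg_i^N-C\bigl(1+\sum_{j=1}^nX_j^2\bigr)^kf^2\in K[\x]$ and $W_P:=\{x\in R^n\mid g_1(x)>0,\dots,g_m(x)>0,\ g'(x)>0\}$, which is $K$-basic open. If some $x\in W_P$ did not lie in $S$, then $x\in A''$ and hence $g'(x)\le0$, contradicting $g'(x)>0$; therefore $W_P\subseteq S$. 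Evaluating at $P$ gives $\widehat{g'}(P)=\prod_{i=1}^m\widehat{g_i}(P)^N-C(\cdots)\widehat f(P)^2=\prod_{i=1}^m\widehat{g_i}(P)^N>0$ because $\widehat f(P)=0$ and every $\widehat{g_i}(P)>0$, so $P\in\fatten(W_P)$. The main obstacle of the whole argument is precisely this Łojasiewicz-driven thickening step; once it is in place, the rest is bookkeeping with the real spectrum and its two topologies.
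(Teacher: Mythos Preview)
Your proof is correct and shares with the paper's the key analytical step: invoking Corollary~\ref{lojasiewiczcor} to thicken a thin slice $\{f=0,\,g_1>0,\dots,g_m>0\}\subseteq S$ into a $K$-basic open set still contained in $S$ (the paper's resulting set $U$ and your $W_P$ have essentially the same shape). Where you differ is in the source of the finitely many slices. The paper simply writes $S$ in the normal form of \ref{sasanf} and thickens each of the $\ell$ pieces that already appear there, never touching the real spectrum; you instead produce a slice for every $P\in\fatten(S)$ via the description of $\mathcal U_P$ in \ref{cone2ultra} and then invoke spectral quasicompactness of $\fatten(S)$ to extract finitely many. Your route is more conceptual---it is in effect a direct proof of the hard direction of the abstract version \ref{finthmabstract}---while the paper's is shorter and needs none of the spectrum machinery. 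One caveat: the tools you use ($\fatten$, \ref{cone2ultra}, \ref{broecker}) are set up in the text only for $R=\overline{(K,\le)}$, so to cover the general real closed $R$ of this chapter you should first reduce to that case by transfer, as sketched in \ref{finthmabstractmotivates}.
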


\begin{proof}
\begin{align*}
&\text{$S$ is a finite union of $K$-basic open subsets of $R^n$}\\
\iff&\text{$S$ is a finite union of finite intersections of sets of the form
$\{x\in R^n\mid g(x)>0\}$}\\
&\text{\qquad\qquad\qquad\qquad\qquad\qquad\qquad
\qquad\qquad\qquad\qquad\qquad\qquad\qquad\qquad($g\in K[\x]$)}\\
\iff&\text{$\complement S$ is a finite intersection of finite unions of sets of the form
$\{x\in R^n\mid g(x)\ge0\}$}\\
&\text{\qquad\qquad\qquad\qquad\qquad\qquad\qquad
\qquad\qquad\qquad\qquad\qquad\qquad\qquad\qquad($g\in K[\x]$)}\\
\overset{\ref{unionsection}}\iff&
\text{$\complement S$ is a finite union of finite intersections of sets of the form
$\{x\in R^n\mid g(x)\ge0\}$}\\
&\text{\qquad\qquad\qquad\qquad\qquad\qquad\qquad
\qquad\qquad\qquad\qquad\qquad\qquad\qquad\qquad($g\in K[\x]$)}\\
\iff&\text{$\complement S$ is a finite union of $K$-basic closed subsets of $R^n$.}
\end{align*}
It is thus enough to show the claim for open $S$. Write
\[S=\bigcup_{i=1}^\ell\{x\in R^n\mid f_i(x)=0,g_{i1}(x)>0,\ldots,g_{im}(x)>0\}\]
according to \ref{sasanf} with $\ell,m\in\N_0$, $f_i,g_{ij}\in K[\x]$. Fix
$i\in\{1,\dots,\ell\}$. It is enough to find a $K$-basic open set $U\subseteq R^n$
such that
\[\{x\in R^n\mid f_i(x)=0,g_{i1}(x)>0,\ldots,g_{im}(x)>0\}\subseteq U
\subseteq S.\] Consider the closed set $A:=R^n\setminus S\in\mathcal S_n$
and the continuous $K$-semialgebraic functions
\begin{align*}
f\colon A\to R,&\ x\mapsto(f_i(x))^2\qquad\text{and}\\
g\colon A\to R,&\ x\mapsto\prod_{j=1}^m(|g_{ij}(x)|+g_{ij}(x)).
\end{align*}
We have $\forall x\in A:(f(x)=0\implies g(x)=0)$.
By \ref{lojasiewiczcor}, there thus exist
$N,k\in\N$ and $C\in K_{\ge0}$ such that
$\forall x\in A:|g(x)|^N\le C(1+\|x\|^2)^kf(x)$. For all $x\in A$ satisfying
$g_{i1}(x)>0,\ldots,g_{im}(x)>0$, we thus have
$(2^m\prod_{j=1}^mg_{ij}(x))^N\le C(1+\sum_{j=1}^nx_j^2)^kf_i(x)^2$.
Set
\[U:=\left\{x\in R^n\mid C\left(1+\sum_{j=1}^nx_j^2\right)^kf_i(x)^2<
\left(2^m\prod_{j=1}^mg_{ij}(x)\right)^N,g_{i1}(x)>0,\dots,g_{im}(x)>0\right\}.\]
Then $U\cap A=\emptyset$ and $\{x\in R^n\mid f_i(x)=0,g_{i1}(x)>0,\ldots,
g_{im}(x)>0\}\subseteq U\subseteq S$.
\end{proof}

\begin{ex}
The ``slashed square'' $S:=(-1,1)_R^2\setminus([0,1]_R\times\{0\})$ is
$K$-semialgebraic and open. By \ref{finthm}, it is thus a finite union
of $K$-basic open subsets of $R^2$. Indeed,
\begin{align*}
S=&\{(x,y)\in R^2\mid-1<x<1,-(y+1)y^2(y-1)>0\}\cup\\
&\left\{(x,y)\in R^2\mid\left(x+\frac12\right)^2+y^2<\left(\frac12\right)^2\right\}
\end{align*}
is a union of two $K$-basic open sets. However, $S$ is not $K$-basic open.
To show this, we assume
\[S=\{(x,y)\in R^2\mid g_1(x,y)>0,\ldots,g_m(x,y)>0\}\]
with $m\in\N_0$, $g_i\in K[X,Y]$.
For continuity reasons, we have $g_i(x,0)\ge0$ for all $x\in[0,1]_R$ and
$i\in\{1,\ldots,m\}$. Because of $([0,1]_R\times\{0\})\cap S=\emptyset$, we
have thus $[0,1]_R=\bigcup_{i=1}^m\{x\in[0,1]_R\mid g_i(x,0)=0\}$.
WLOG $\#\{x\in[0,1]_R\mid g_1(x,0)=0\}=\infty$. Then $g_1(X,0)=0$
and consequently $(R\times\{0\})\cap S=\emptyset$ in contradiction to
$(-1,0)_R\times\{0\}\subseteq S$.
\end{ex}

\begin{thm}[Abstract version of the finiteness theorem for semialgebraic sets]
\label{finthmabstract}
Let $R|K$ be algebraic, i.e., $R$ be the real closure of $(K,K\cap R^2)$.
Let $n\in\N_0$ and write $A:=K[\x]$ and $T:=\sum K_{\ge0}A^2$
so that we are in the setting described before \ref{rnassubsetofsper}.
Denote by \[\fatten\colon\mathcal S_n\to\mathcal C:=\mathcal C_{(A,T)}\]
again the fattening \emph{[$\to$ \ref{slimfatten}, \ref{fatclo}]}.
Let $S\in\mathcal S_n$.
Then
\[
S \malal{\text{open}}{\text{closed}} \text{in $R^n$}
\iff
\fatten(S)\malal{\text{open}}{\text{closed}} \text{in $\sper(A,T)$}. 
\]
\end{thm}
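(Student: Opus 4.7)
The plan is to reduce to the closed case using the fact that $\fatten\colon\mathcal S_n\to\mathcal C$ is a Boolean algebra isomorphism (so it commutes with complements). Once the equivalence is established for closed sets, the open case follows: $S$ is open in $R^n$ iff $\complement_{R^n} S$ is closed in $R^n$ iff $\fatten(\complement_{R^n} S)=\complement_{\sper(A,T)}\fatten(S)$ is closed in $\sper(A,T)$ iff $\fatten(S)$ is open in $\sper(A,T)$.

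For the implication ``$S$ closed $\Rightarrow\fatten(S)$ closed'', I plan to invoke the finiteness theorem \ref{finthm}, which provides polynomials $g_{ij}\in K[\x]=A$ such that
\[
S=\bigcup_{i=1}^k\bigcap_{j=1}^{m_i}\{x\in R^n\mid g_{ij}(x)\ge0\}.
\]
From Theorem \ref{slimfatten}, $\fatten(\{x\in R^n\mid g(x)\ge0\})=\{P\in\sper(A,T)\mid g\in P\}$, and because $\fatten$ preserves finite intersections and unions, it follows that
\[
\fatten(S)=\bigcup_{i=1}^k\bigcap_{j=1}^{m_i}\{P\in\sper(A,T)\mid g_{ij}\in P\}.
\]
Each set $\{P\in\sper(A,T)\mid g\in P\}$ is the complement in $\sper(A,T)$ of the basic spectral-open set $\{P\mid\widehat{-g}(P)>0\}$ (see Definition \ref{spectraltop}), hence it is closed. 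Being a finite union of finite intersections of closed sets, $\fatten(S)$ is closed.

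For the converse ``$\fatten(S)$ closed $\Rightarrow S$ closed'', the key observation is that the natural map
\[
\iota\colon R^n\to\sper(A,T),\quad x\mapsto P_x=\{f\in A\mid f(x)\ge0\}
\]
is continuous when $R^n$ is endowed with the order topology and $\sper(A,T)$ with (the subspace of) the spectral topology. Indeed, for each $a\in A$ the preimage under $\iota$ of the basic open set $\{P\in\sper(A,T)\mid\widehat a(P)>0\}$ is precisely $\{x\in R^n\mid a(x)>0\}$, which is open in $R^n$ by Remark \ref{toprn}(b). From the defining formula $\slim(C)=\{x\in R^n\mid P_x\in C\}=\iota^{-1}(C)$ combined with $\slim\circ\fatten=\id_{\mathcal S_n}$, we get $S=\iota^{-1}(\fatten(S))$; if $\fatten(S)$ is closed in $\sper(A,T)$, then so is its preimage $S$ in $R^n$.

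The only step with genuine content is the forward direction for closed sets, which relies decisively on the finiteness theorem \ref{finthm}; without it, one could not express an arbitrary closed semialgebraic set in a form whose fattening is manifestly closed. The reverse direction is essentially formal, hinging only on the continuity of $\iota$, which in turn is an immediate consequence of how the spectral topology is generated.
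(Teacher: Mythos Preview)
Your proof is correct. The hard direction (from $R^n$ to $\sper(A,T)$) coincides with the paper's: both invoke the finiteness theorem \ref{finthm} to obtain a finite Boolean description of $S$ by basic closed (respectively open) sets, whose fattening is then manifestly closed (respectively open). Your choice to work with closed rather than open sets is immaterial.

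For the easy direction, you take a cleaner route than the paper. The paper argues that $\fatten(S)$, being spectral-open, is a union of basic open constructible sets, then invokes compactness of the constructible topology to make this union \emph{finite}, and only then slims. You bypass this by observing directly that $\iota\colon R^n\to\sper(A,T)$ is continuous for the spectral topology --- which needs only that $\{x\in R^n\mid a(x)>0\}$ is open for $a\in K[\x]$, a triviality since polynomials are continuous --- and that $S=\iota^{-1}(\fatten(S))$. The paper itself concedes, in the remark immediately following the theorem, that its compactness detour is avoidable (item (b) of that remark), and even sketches your continuity argument in the special case $R=K$ (item (c)). Item (d) of that remark warns against circularity via \ref{toprn2}(c), but your argument does not fall into that trap: you use only one half of the ``induced topology'' statement (continuity of $\iota$), which is immediate, not the other half (that the spectral topology induces \emph{all} of the order topology on $R^n$), which is what genuinely requires \ref{finthmabstractcor}.
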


\begin{proof}
It is enough to show: $S$ open $\iff$ $\fatten(S)$ open.

\smallskip
``$\Longleftarrow$'' By definition of the spectral topology
[$\to$ \ref{spectraltop}], $\fatten(S)$ is a union of sets of the form
$\{P\in\sper(A,T)\mid\widehat g_1(P)>0,\ldots,\widehat g_m(P)>0\}$
($m\in\N_0,g_1,\dots,g_m\in A$). By \ref{constrcompact} and 
\ref{compactsubspace}, $\fatten(S)$ is quasicompact [$\to$ \ref{dfcomp}]
with respect to the constructible topology [$\to$ \ref{spectraltop}]. Hence
$\fatten(S)$ is a finite union of sets of the described form, i.e.,
\[(**)\qquad \fatten(S)=\bigcup_{i=1}^k\{P\in\sper(A,T)\mid\widehat g_{i1}(P)>0,\ldots,
\widehat g_{im}(P)>0\}\]
with $k,m\in\N_0$, $g_{ij}\in A$. It follows by \ref{slimfatten} that
\[(*)\qquad S=\bigcup_{i=1}^k\{x\in R^n\mid g_{i1}(x)>0,\ldots,g_{im}(x)>0\}.\]
In particular, $S$ is open.

\smallskip
``$\Longrightarrow$''
By the finiteness theorem for semialgebraic sets \ref{finthm},
we can find $k,m\in\N_0$ and $g_{ij}\in A$ such that $(*)$ holds.
It follows that $(**)$ holds.
In particular, $\fatten(S)$ is open.
\end{proof}

\begin{rem} Suppose we are in the situation of Theorem \ref{finthmabstract}.
\begin{enumerate}[(a)]
\item From the very definition of the slimming $\slim$ \ref{slimfatten}, one sees immediately that it is (unlike in general the fattening!)
compatible even with arbitrary unions instead of just finite ones: If $(C_i)_{i\in I}$ is a family of constructible subsets of $\sper(A,T)$ whose union $\bigcup_{i\in I}C_i$ is
again constructible, then
\[\slim\left(\bigcup_{i\in I}C_i\right)=\bigcup_{i\in I}\slim\left(C_i\right).\]
\item From (a) it is clear that the proof of the easy direction ``$\Longleftarrow$'' of Theorem \ref{finthmabstract} could be considerably simplified
by ignoring the quasicompactness of $\fatten(S)$ and instead replacing $(*)$ and $(**)$ by similar conditions with a possibly infinite union instead of a finite one.
\item In the special case $R=K$, the easy direction ``$\Longleftarrow$'' of Theorem \ref{finthmabstract} follows also simply from \ref{toprn}(c) which says that
\[S=\slim(\fatten(S))=\{x\in R^n\mid P_x\in\fatten(S)\}\] is open.
\item If one had already \ref{toprn2}(c) available, the easy direction ``$\Longleftarrow$'' of Theorem \ref{finthmabstract} would follow exactly as in the preceding item (c) also in the general case. However, our proof of \ref{toprn2}(c) will use Corollary \ref{finthmabstractcor} and therefore Theorem \ref{finthmabstract}.
\end{enumerate}
\end{rem}

\begin{rem}\label{finthmabstractmotivates}
The description of \ref{finthmabstract} as an abstract version of \ref{finthm}
is motivated by the fact that one can easily retrieve the latter from
the first: Note first that one can reduce in \ref{finthm} to the case
where $R|K$ is algebraic by using the transfer between $R$ and
$\overline{(K,K\cap R_{\ge0})}$ [$\to$ \ref{transfer}].
For this, one has to argue that this transfer preserves openness which can be
accomplished by real quantifier elimination \ref{elim}. Thus let now $R|K$ be
algebraic, $n\in\N_0$ and $S\in\mathcal S_n$ open (by the first part of the
proof of Theorem \ref{finthm}, it suffices to treat the case of open sets).
We have to show that $S$ is a finite union of $K$-basic open subsets of $R^n$.
As seen in the easy part ``$\Longleftarrow$'' of the proof of \ref{finthmabstract},
for this purpose, it suffices to show that $\fatten(S)$ is open. This follows from
the difficult part ``$\Longrightarrow$'' of \ref{finthmabstract}.
\end{rem}

\begin{cor}[Strengthening of \ref{rcsper}]\label{finthmabstractcor}
Let $R|K$ be algebraic, i.e.,
$R$ be the real closure of $(K,K\cap R_{\ge0})$. Let $n\in\N_0$
and write $A:=K[\x]$ and $T:=\sum K_{\ge0}A^2$. Then
\[\sper R[\x]\to\sper(A,T),\ P\mapsto P\cap A
\]
is a homeomorphism with respect to both, the spectral as well as the constructible
topology on both sides.
\end{cor}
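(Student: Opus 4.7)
The plan is to build on the bijection $\Phi \colon \sper R[\x] \to \sper(A,T)$, $P \mapsto P \cap A$, from Theorem \ref{rcsper}. Since $\Phi = \sper \iota$ for the inclusion $\iota \colon A \hookrightarrow R[\x]$, Remark \ref{spercontinuity} shows that $\Phi$ is continuous with respect to both the spectral and the constructible topology on both sides; it thus suffices to verify that $\Phi$ is open (equivalently, that $\Phi^{-1}$ is continuous) in each case.

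For the constructible topology, I would invoke the standard fact that a continuous bijection from a compact space onto a Hausdorff space is automatically a homeomorphism. The space $\sper R[\x]$ is compact in the constructible topology by Theorem \ref{constrcompact}, while $\sper(A,T)$ is closed in the compact space $\sper A$ (Remark \ref{sperpreordcomp}) and is therefore compact as well. Both spaces are Hausdorff in the constructible topology: given distinct prime cones there is always some ring element lying in one but not the other, so the basic constructible set $\{P \mid a \in P\}$ and its complement separate them.

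For the spectral topology, it suffices to show that $\Phi(U_g)$ is open in $\sper(A,T)$ for every subbasic open set $U_g := \{P \in \sper R[\x] \mid \widehat g(P) > 0\}$ with $g \in R[\x]$. The subset $\{x \in R^n \mid g(x) > 0\}$ is open and $R$-semialgebraic, hence even $K$-semialgebraic by Proposition \ref{semialgk}; by the finiteness theorem for semialgebraic sets \ref{finthm} it decomposes as
\[
\{x \in R^n \mid g(x) > 0\} \;=\; \bigcup_{i=1}^k \{x \in R^n \mid g_{i1}(x) > 0, \ldots, g_{im}(x) > 0\}
\]
with all $g_{ij} \in A$. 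This identity lifts to the real spectrum: taking fattenings (in $\sper R[\x]$) gives $U_g = \bigcup_i \bigcap_j \{P \in \sper R[\x] \mid \widehat{g_{ij}}(P) > 0\}$. Since $\Phi$ is a bijection it commutes with both unions and intersections, and for $f \in A$ the elementary identity $f \in P \setminus -P \iff f \in (P \cap A) \setminus -(P \cap A)$ yields $\Phi(\{P \mid \widehat f(P) > 0\}) = \{Q \in \sper(A,T) \mid \widehat f(Q) > 0\}$. Combining,
\[
\Phi(U_g) \;=\; \bigcup_{i=1}^k \bigcap_{j=1}^m \{Q \in \sper(A,T) \mid \widehat{g_{ij}}(Q) > 0\},
\]
which is open. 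The main obstacle in this step is exactly the reduction from a single polynomial $g$ with coefficients in the possibly much larger field $R$ to finitely many polynomials with coefficients in the subfield $K$, which is precisely what the conjunction of Proposition \ref{semialgk} and the finiteness theorem \ref{finthm} accomplishes.
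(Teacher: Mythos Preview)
Your proof is correct. The spectral-topology argument is essentially the paper's own: both reduce the polynomial $g\in R[\x]$ to finitely many $g_{ij}\in A$ via Proposition~\ref{semialgk} and the finiteness theorem~\ref{finthm}, then transport the resulting Boolean expression through the bijection $\Phi$.

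For the constructible topology you take a genuinely different and cleaner route. The paper argues uniformly for both topologies by showing directly that the image of every constructible subset of $\sper R[\x]$ is constructible in $\sper(A,T)$ (again via slimming, \ref{semialgk}, \ref{sasanf}, and fattening). Your compact--Hausdorff argument bypasses this computation entirely: since $\sper R[\x]$ is compact and $\sper(A,T)$ is Hausdorff in the constructible topology, the continuous bijection $\Phi$ is automatically a homeomorphism. This is shorter and avoids appealing to the finiteness theorem for the constructible case; on the other hand, the paper's uniform argument yields as a by-product the slightly stronger fact that $\Phi$ maps constructible sets to constructible sets (not just closed ones), though that extra strength is not needed for the corollary as stated.
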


\begin{proof}
The map is continuous with respect to both topologies by 
\ref{spercontinuity} and bijective by \ref{rcsper}. According to the definition of
a homeomorphism \ref{dfhomeo} and the definition of both topologies in
\ref{spectraltop}, it suffices to show that for all $C\in\mathcal C_{R[\x]}$ we have
$\{P\cap A\mid P\in C\}\in\mathcal C_{(A,T)}$ and that this latter set is open
in $\sper(A,T)$ whenever $C$ is open in $\sper R[\x]$. For this purpose, let
$C\in\mathcal C_{R[\x]}$. The slimming $\{x\in R^n\mid P_x\in C\}$
[$\to$ \ref{slimfatten}] of $C$ is then a semialgebraic subset of $R^n$
and thus even $K$-semialgebraic by \ref{semialgk} since $R|K$ is algebraic.
By \ref{sasanf}, we thus find $k,m\in\N_0$ and $f_i,g_{ij}\in K[\x]$ such that
\[\{x\in R^n\mid P_x\in C\}=\bigcup_{i=1}^k
\{x\in R^n\mid f_i(x)=0,g_{i1}(x)>0,\ldots,
g_{im}(x)>0\},\]
where one can even choose $f_1=\ldots=f_k=0$ by the finiteness theorem for
semialgebraic sets \ref{finthm} in the case where $C$ is open. Fattening this,
we obtain
\[C=\bigcup_{i=1}^k\{P\in\sper R[\x]\mid\widehat f_i(P)=0,\widehat g_{i1}(P)>0,
\ldots,\widehat g_{im}(P)>0\}\]
and therefore [$\to$ \ref{spercontinuity}]
\begin{multline*}
\{P\cap A\mid P\in C\}=\bigcup_{i=1}^k\{P\in\sper(A,T)\mid\widehat f_i(P)=0,
\widehat g_{i1}(P)>0,\ldots,\widehat g_{im}(P)>0\}\\
\in\mathcal C_{(A,T)}.
\end{multline*}
If $C$ is open, then so is $\{P\cap A\mid P\in C\}$ because of the choice of
$f_i=0$.
\end{proof}

\begin{rem}\label{toprn2}
In the situation of \ref{finthmabstract}, one can now generalize \ref{toprn} as follows:
\begin{enumerate}[(a)]
\item $R^n$ is equipped with the
initial topology with respect to all maps $R^n\to R,\ x\mapsto p(x)$
$(p\in A$).
\item The topology on $R^n$ is generated by the sets
$\{x\in R^n\mid p(x)>0\}$ ($p\in A$).
\item Viewing $R^n$ in virtue of the injective map [$\to$ \ref{rnassubsetofsper}]
\[R^n\to\sper A,\ x\mapsto P_x=\{f\in A\mid f(x)\ge0\}\]
as a subset of $\sper A$, the topology on $R^n$ is induced
by the spectral topology on $\sper A$ [$\to$ \ref{finthmabstractcor}].
\end{enumerate}
Indeed, (a) is again obvious.
Unlike in \ref{finthmabstract}, (b) does not immediately follow from (a) anymore.
Instead one first proves (c) by using the corresponding item from \ref{toprn} together with Corollary \ref{finthmabstractcor}.
Finally, one easily deduces (b) from (c).
\end{rem}

\chapter{Convex sets in vector spaces}

In this chapter, $K$ denotes always a subfield of $\R$ equipped with the order
and the subspace topology [$\to$ \ref{subspaceproductspace}(a)] induced by $\R$
unless otherwise specified.

\section{The isolation theorem for cones}

\begin{df}\label{defcone}
Let $V$ be a $K$-vector space. A subset $C\subseteq V$ is called a
\emph{(convex) cone} (in $V$) if $0\in C$, $C+C\subseteq C$
and $K_{\ge0}C\subseteq C$ [$\to$ \ref{divnot}]. A cone $C\subseteq V$ is called
\emph{proper} if $C\ne V$.
\end{df}

\begin{ex}\label{poiscone}
Let $T$ be a preorder [$\to$ \ref{defpreorder}] of $K[\x]$ with
$K_{\ge0}\subseteq T$. Then $T$ is a cone. Moreover, $T$ is proper as a preorder
[$\to$ \ref{preproper}] if and only if $T$ is proper as a cone.
\end{ex}

\begin{pro}
Let $V$ be a $K$-vector space
and $C\subseteq V$. Then the following are equivalent:
\begin{enumerate}[\normalfont(a)]
\item $C$ is a cone.
\item $C$ is convex \emph{[$\to$ \ref{dfconv}]}, $C\ne\emptyset$ and
$K_{\ge0}C\subseteq C$.
\end{enumerate}
\end{pro}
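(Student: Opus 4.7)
The plan is to establish the two implications separately, in each case simply translating between the two sets of axioms. The key observation is that convexity together with the scaling condition $K_{\ge0}C\subseteq C$ essentially encodes closure under addition, while the cone axioms trivially imply convexity once one notes that coefficients in a convex combination lie in $K_{\ge0}$.

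First I would prove (a)$\implies$(b). Nonemptiness is immediate from $0\in C$. The scaling condition is part of the definition of a cone. For convexity, given $x,y\in C$ and $\lambda\in[0,1]_K$, the scalars $\lambda$ and $1-\lambda$ both lie in $K_{\ge0}$, so $\lambda x\in C$ and $(1-\lambda)y\in C$ by $K_{\ge0}C\subseteq C$, and then $\lambda x+(1-\lambda)y\in C+C\subseteq C$.

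Conversely, for (b)$\implies$(a), I would first derive $0\in C$: since $C$ is nonempty, pick any $x\in C$, and then $0=0\cdot x\in K_{\ge0}C\subseteq C$. The condition $K_{\ge0}C\subseteq C$ is assumed. The only nontrivial point is $C+C\subseteq C$: given $x,y\in C$, convexity with $\lambda=\tfrac12\in[0,1]_K$ yields $\tfrac12 x+\tfrac12 y\in C$, and then multiplying by $2\in K_{\ge0}$ via $K_{\ge0}C\subseteq C$ gives $x+y\in C$.

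Since everything reduces to one-line verifications, there is really no serious obstacle here; the only mild subtlety is remembering to extract $0\in C$ from nonemptiness and the scaling condition in the direction (b)$\implies$(a), rather than trying to get it from convexity alone.
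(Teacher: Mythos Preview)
Your proof is correct and follows essentially the same approach as the paper; the paper's proof is more terse (it just calls (a)$\implies$(b) ``trivial''), but your (b)$\implies$(a) argument is identical to the paper's, obtaining $0\in C$ from $0\cdot x\in C$ and then $C+C\subseteq C$ via convexity at $\lambda=\tfrac12$ followed by scaling by $2$.
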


\begin{proof}
\underline{(a)$\implies$(b)} is trivial.

\smallskip\underline{(b)$\implies$(a)}\quad Suppose that (b) holds.
From $C\ne\emptyset$ and $0C\subseteq C$, we get $0\in C$.
To show: $C+C\subseteq C$. Let $x,y\in C$. Then $\frac x2+\frac y2\in C$
and thus $x+y=2\left(\frac x2+\frac y2\right)\in C$.
\end{proof}

\begin{df}\label{defunit}
Let $C$ be a cone in the $K$-vector space $V$ and $u\in V$.
Then $u$ is called a \emph{unit} for $C$ (in $V$) if for every $x\in V$ there is some
$N\in\N$ with $Nu+x\in C$.
\end{df}

\begin{ex}{}[$\to$ \ref{poiscone}]
Let $T$ be a preorder of $K[\x]$ with $K_{\ge0}\subseteq T$. Then $T$ is
Archimedean [$\to$ \ref{dfarch}(a)] if and only if $1$ is a unit for $T$.
\end{ex}

\begin{pro}\label{unitchar}
Let $C$ be a cone on the $K$-vector space $V$ and $u\in V$. Then the following
are equivalent:
\begin{enumerate}[\normalfont(a)]
\item $u$ is a unit for $C$.
\item $V=C-\N u$
\item $V=C-K_{\ge0}u$
\item $u\in C$ and $V=C+\Z u$
\item $u\in C$ and $V=C+Ku$
\item $\forall x\in V:\exists\ep\in K_{>0}:u+\ep x\in C$
\end{enumerate}
\end{pro}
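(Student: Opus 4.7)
The plan is to establish the cycle
\[
\text{(a)}\iff\text{(b)}\implies\text{(d)}\implies\text{(e)}\implies\text{(f)}\implies\text{(a)}
\]
plus the side equivalence (b)$\iff$(c), so that all six conditions are mutually equivalent. The recurring observation that makes everything work is that conditions (c)–(f) each secretly force $u\in C$, and once that is known the implications become linear algebra with nonnegative scalars using only the defining properties $C+C\subseteq C$ and $K_{\ge0}C\subseteq C$ from \ref{defcone}.

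First, (a)$\iff$(b) is immediate from \ref{defunit}: ``$\exists N\in\N\colon Nu+x\in C$'' is literally ``$x\in C-\N u$''. For (b)$\implies$(d), applying (b) to $x=u$ yields $c\in C$ and $N\in\N$ with $u=c-Nu$, hence $(N+1)u=c\in C$; multiplying by $\frac1{N+1}\in K_{\ge0}$ gives $u\in K_{\ge0}C\subseteq C$, and then $V=C-\N u\subseteq C+\Z u\subseteq V$ forces $V=C+\Z u$. The step (d)$\implies$(e) is trivial since $\Z u\subseteq Ku$.

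For (e)$\implies$(f), given $x\in V$ write $x=c+\lambda u$ with $c\in C$ and $\lambda\in K$, and pick $\ep\in K_{>0}$ small enough that $1+\ep\lambda>0$ (any $\ep>0$ works if $\lambda\ge0$; otherwise choose $0<\ep<1/|\lambda|$). Then
\[
u+\ep x=(1+\ep\lambda)u+\ep c\in K_{\ge0}C+K_{\ge0}C\subseteq C+C\subseteq C,
\]
using $u\in C$. For (f)$\implies$(a): taking $x=0$ in (f) yields $u\in C$. Given $x\in V$, pick $\ep\in K_{>0}$ with $u+\ep x\in C$; then $\frac1\ep u+x=\frac1\ep(u+\ep x)\in K_{\ge0}C\subseteq C$. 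Choose $N\in\N$ with $N\ge\frac1\ep$ and write
\[
Nu+x=\bigl(N-\tfrac1\ep\bigr)u+\bigl(\tfrac1\ep u+x\bigr)\in K_{\ge0}C+C\subseteq C.
\]

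Finally, for the side equivalence: (b)$\implies$(c) is trivial because $\N u\subseteq K_{\ge0}u$. For (c)$\implies$(b), the argument that bootstrapped $u\in C$ from (b) works verbatim from (c): apply (c) to $u$ to get $u=c-\lambda u$ with $c\in C$, $\lambda\in K_{\ge0}$, hence $(1+\lambda)u=c$ and $u=\frac1{1+\lambda}c\in C$. Then for arbitrary $x\in V$, write $x=c-\lambda u$ by (c) and pick $N\in\N$ with $N\ge\lambda$; now $(N-\lambda)u\in K_{\ge0}C\subseteq C$ because $u\in C$, so $x+Nu=c+(N-\lambda)u\in C$, giving (b).

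The only mildly delicate step is the derivation of $u\in C$ from condition (c) (and dually from (b)), since this does not obviously follow from ``$u$ lies in a set of the form $C-K_{\ge0}u$''; once one spots the trick of feeding $u$ itself into the hypothesis to read off $(1+\lambda)u\in C$ and dividing by the positive scalar $1+\lambda$, the rest is a sequence of one-line verifications.
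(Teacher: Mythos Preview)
Your proof is correct and follows essentially the same approach as the paper. The paper proves the single cycle (a)$\implies$(b)$\implies$(c)$\implies$(d)$\implies$(e)$\implies$(f)$\implies$(a), while you separate out (a)$\iff$(b) and (b)$\iff$(c) and then run the shorter cycle (b)$\implies$(d)$\implies$(e)$\implies$(f)$\implies$(a); the individual arguments---bootstrapping $u\in C$ by feeding $u$ into the hypothesis, and the arithmetic with nonnegative scalars---are the same, and your (c)$\implies$(b) is exactly the paper's (c)$\implies$(d) proof.
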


\begin{proof}
\underline{(a)$\implies$(b)$\implies$(c)} is clear.

\smallskip\underline{(c)$\implies$(d)}\quad Suppose that (c) holds.
Then $u\in C-K_{\ge0}u$ and thus
$(1+\la)u\in C$ for some $\la\in K_{\ge0}$ and so $u\in C$. Fix now $x\in V$.
To show: $x\in C+\Z u$. Choose $\la\in K_{\ge0}$ with $x\in C-\la u$.
Choose $N\in\N$ with $\la\le N$. Then $(N-\la)u\in C$ and hence
\begin{multline*}
x=(x-(N-\la)u)+(N-\la)u\in(C-\la u-(N-\la)u)+C\\
\subseteq C-Nu\subseteq C-\N u\subseteq C+\Z u.
\end{multline*}

\smallskip\underline{(d)$\implies$(e)} is trivial.

\smallskip\underline{(e)$\implies$(f)}\quad Suppose that (e) holds and let
$x\in V$. Choose $\la\in K$ such that $x\in C-\la u$. If $\la\le0$,
then $x\in C$ and consequently $u+\ep x=u+x\in C+C\subseteq C$ with $\ep:=1$.
If $\la>0$, then set $\ep:=\frac1\la>0$. Then $u+\ep x\in\ep C\subseteq C$.

\smallskip\underline{(f)$\implies$(a)}\quad Suppose that (f) holds and let $x\in V$.
To show: $\exists N\in\N:Nu+x\in C$. Choose $\ep\in K_{>0}$ with $u+\ep x\in C$.
Choose $N\in\N$ with $\frac1\ep\le N$. From (f), it follows also that $u\in C$
and hence $(N-\frac1\ep)u\in C$. Now $Nu+x=(N-\frac1\ep)u+\frac1\ep u+x\in
C+\frac1\ep(u+\ep x)\subseteq C+\frac1\ep C\subseteq C+C\subseteq C$.
\end{proof}

\begin{cor} Let $u$ be a unit for the cone $C$ in the $K$-vector space
$V$. Then $u\in C$ and $V=C-C$.
\end{cor}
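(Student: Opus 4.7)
The plan is to read both conclusions directly off Proposition \ref{unitchar}, which has already been established. The point is that several of the equivalent characterizations of ``$u$ is a unit for $C$'' already contain the information we need.

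First I would handle $u \in C$. Condition (a) of \ref{unitchar} is our hypothesis, and by the equivalence (a)$\iff$(d), we obtain in particular that $u \in C$, since ``$u \in C$'' is part of the statement of (d). (Alternatively, the same conclusion is built into (e), or one can derive it from (f) by taking $x = 0$, which gives $u = u + \ep \cdot 0 \in C$.)

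For the second assertion $V = C - C$, I would use the equivalence (a)$\iff$(b). Let $x \in V$. By (b), there exist $c \in C$ and $N \in \N$ such that $x = c - Nu$. Since $u \in C$ by the previous paragraph, and $C$ is closed under addition (or more directly under multiplication by nonnegative scalars in $K_{\ge0} \supseteq \N$), we have $Nu \in C$. Therefore $x = c - Nu$ with both $c$ and $Nu$ in $C$, which shows $x \in C - C$. Since $x \in V$ was arbitrary, $V \subseteq C - C$, and the reverse inclusion is trivial since $C \subseteq V$ and $V$ is a vector space.

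There is really no obstacle here, as the whole content of the corollary has been packaged into Proposition \ref{unitchar}; the corollary is essentially a convenient restatement extracting the two most useful consequences. The only mild care needed is to note that $Nu \in C$ for $N \in \N$, which follows immediately from $u \in C$ together with the cone axiom $K_{\ge 0} C \subseteq C$ from Definition \ref{defcone}.
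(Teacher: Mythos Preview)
Your proof is correct and matches the paper's intent: the corollary is stated without proof immediately after Proposition \ref{unitchar}, so it is meant to be read off directly from that proposition, exactly as you do.
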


\begin{rem}\label{unitinteriorpoint}
The units for a cone in $K^n$ are exactly its interior points
[$\to$ \ref{interiorclosure}, \ref{unitchar}(f)].
\end{rem}

\begin{df}\label{defstate}
Let $V$ be a $K$-vector space, $C\subseteq V$ and $u\in V$. A \emph{state}
of $(V,C,u)$ is a $K$-linear function $\ph\colon V\to\R$ satisfying
$\ph(C)\subseteq\R_{\ge0}$ and $\ph(u)=1$. We refer to the set
$S(V,C,u)\subseteq\R^V$ of all states of $(V,C,u)$ as the \emph{state space}
of $(V,C,u)$.
\end{df}

\begin{ex}\label{badexample}
Set $K:=\R$, $V:=\R[X]$, $C:=P_\infty\in\sper\R[X]$. Then the cone $C$ does
not possess a unit in $V$ and we have $S(V,C,u)=\emptyset$ for all $u\in V$.
Indeed, let $u\in V$. Choose $d\in\N$ with $d>\deg u$. Then
$u-\ep X^d\notin C$ for all $\ep>0$. By \ref{unitchar}(f), $u$ is thus not a unit for
$C$. Assume $\ph\in S(V,C,u)$. Then $\ep\ph(X^d)-1=\ph(\ep X^d-u)\in
\ph(C)\subseteq\R_{\ge0}$ for all $\ep>0$ $\lightning$.
\end{ex}

\begin{ex}
Set $K:=\Q$, $V:=\Q^2$, $C:=\{(x,y)\in\Q^2\mid y\ge\sqrt2x\}$. All elements of
$C$ except $0$ are units for $C$ [$\to$ \ref{unitinteriorpoint}]. There is no
$\ph\in V^*\setminus\{0\}$ satisfying $\ph(C)\subseteq\Q_{\ge0}$ but for each
$u\in C\setminus\{0\}$, we have $\#S(V,C,u)=1$.
\end{ex}

\begin{lem}\label{sublinear}
Let $u$ be a unit for a proper cone $C$ in the $K$-vector space $V$. Then
\[\rh\colon V\to\R,\ x\mapsto\sup\{\la\in K\mid x-\la u\in C\}\]
is well-defined  and we have $\rh(x)+\rh(y)\le\rh(x+y)$ as well as
$\rh(\la x)=\la\rh(x)$ for all $x,y\in V$ and $\la\in K_{\ge0}$.
\end{lem}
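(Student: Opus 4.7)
The plan is to verify the three claims in turn: well-definedness (the defining set is nonempty and bounded above), positive homogeneity, and superadditivity. Throughout, I would freely use that $u\in C$, which is part of \ref{unitchar}(d).

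For well-definedness, nonemptiness is immediate: by \ref{unitchar}(a) applied to $-x$, there is $N\in\N$ with $Nu-x\in C$, i.e.\ $x-(-N)u\in C$, so $-N$ belongs to the set $\{\la\in K\mid x-\la u\in C\}$. The hard part is the upper bound, and this is the main obstacle. I would first extract the intermediate fact that $-u\notin C$. Indeed, if $-u\in C$, then for an arbitrary $v\in V$, \ref{unitchar}(a) provides $N\in\N$ with $Nu+v\in C$, whence $v=(Nu+v)+N(-u)\in C+C\subseteq C$, contradicting $C\ne V$. Next, suppose for contradiction that the set of admissible $\la$ is unbounded above; then for every $M\in\N$ there is $\la_M\in K$ with $\la_M\ge M$ and $x-\la_M u\in C$, and since $(\la_M-M)u\in K_{\ge 0}u\subseteq K_{\ge 0}C\subseteq C$, adding gives $x-Mu=(x-\la_M u)+(\la_M-M)u\in C$ for every $M\in\N$. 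Applying \ref{unitchar}(a) to $-x+v$ for an arbitrary $v\in V$ yields $N\in\N$ with $Nu-x+v\in C$, and adding $x-Nu\in C$ produces $v\in C+C\subseteq C$, hence $V=C$, a contradiction.

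For positive homogeneity, consider first $\la=0$: the defining set for $\rh(0)$ contains $0$ since $0\in C$, and if some $\mu>0$ satisfied $-\mu u\in C$ then scaling by $\frac1\mu\in K_{\ge 0}$ would give $-u\in K_{\ge 0}C\subseteq C$, contradicting what was shown. Hence $\rh(0)=0=0\cdot\rh(x)$. For $\la>0$, the bijection $\mu\mapsto\la\mu$ of $K$ onto itself identifies $\{\mu\in K\mid x-\mu u\in C\}$ with $\{\la\mu\in K\mid \la x-\la\mu u\in C\}$ (using $\la\in K_{>0}$ and $K_{\ge 0}C\subseteq C$ in both directions); since multiplication by $\la>0$ is an order isomorphism of $\R$, it carries the sup of the first set to $\la$ times it, yielding $\rh(\la x)=\la\rh(x)$.

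For superadditivity, fix real numbers $a<\rh(x)$ and $b<\rh(y)$, and choose $\la,\mu\in K$ with $a<\la$, $b<\mu$, $x-\la u\in C$, $y-\mu u\in C$ (this uses density of $K$ below the sups, which is ensured because $\Q\subseteq K$). Then
\[
(x+y)-(\la+\mu)u=(x-\la u)+(y-\mu u)\in C+C\subseteq C,
\]
so $a+b<\la+\mu\le\rh(x+y)$. Letting $a\uparrow\rh(x)$ and $b\uparrow\rh(y)$ gives $\rh(x)+\rh(y)\le\rh(x+y)$, completing the proof.
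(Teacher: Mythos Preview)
Your proof is correct and follows essentially the same route as the paper: establish $-u\notin C$, use it to bound the defining set, and then pass to the limit (you via $a\uparrow\rh(x)$, the paper via sequences $\la_n\to\rh(x)$) for superadditivity and homogeneity. One small slip: for nonemptiness you should apply the unit property to $x$, not to $-x$; from $Nu+x\in C$ you get $x-(-N)u\in C$, whereas $Nu-x\in C$ does not say this.
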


\begin{proof}
Let $x,y\in V$ and $\la\in K_{\ge0}$. For the well-definedness of $\rh$, we have to
show that $I:=\{\la\in K\mid x-\la u\in C\}$ is nonempty and bounded from above
[$\to$ \ref{archetcdef}, \ref{introduce-the-reals}]. Since $u$ is a unit for $C$, we
have $I\ne\emptyset$ and furthermore there is $N\in\N$ such that $-x+Nu\in C$.
Then $\la<N+1$ for all $\la\in I$ since otherwise, if $\la\in I$ satisfied $\la\ge N+1$,
then
\begin{align*}
-u&=Nu-(N+1)u=(-x+Nu)+x-(N+1)u\\
&\in C+x-\la u+(\la-(N+1))u\\
&\subseteq C+C+K_{\ge0}u\subseteq C.
\end{align*}
But now $-u\notin C$ for otherwise $C\overset{\text{\ref{unitchar}(b)}}=V$.
Now choose sequences $(\la_n)_{n\in\N}$ and $(\mu_n)_{n\in\N}$ in $K$
such that $x-\la_nu,y-\mu_nu\in C$ for all $n\in\N$ and
$\lim_{n\to\infty}\la_n=\rh(x)$ as well as $\lim_{n\to\infty}\mu_n=\rh(y)$.
Then we have $(x+y)-(\la_n+\mu_n)u\in C+C\subseteq C$ and thus
$\la_n+\mu_n\le\rh(x+y)$ for all $n\in\N$. It follows that
\[\rh(x)+\rh(y)=\left(\lim_{n\to\infty}\la_n\right)+\left(\lim_{n\to\infty}\mu_n\right)
=\lim_{n\to\infty}(\la_n+\mu_n)\le\rh(x+y).\]
Moreover, $\la x-\la\la_n u\in\la C\subseteq C$ and thus $\la\la_n\le\rh(\la x)$ for
all $n\in\N$. It follows that $\la\rh(x)=\la\lim_{n\to\infty}\la_n=\lim_{n\to\infty}\la\la_n
\le\rh(\la x)$ and analogously $\frac1\la\rh(\la x)\le\rh\left(\frac1\la(\la x)\right)$
if $\la\ne0$, i.e., $\la\rh(x)=\rh(\la x)$.
\end{proof}

\begin{thm}[Isolation theorem for cones]\label{isolation}
Let $u$ be a unit for the proper cone $C$
in the $K$-vector space $V$. Then $S(V,C,u)\ne\emptyset$.
\end{thm}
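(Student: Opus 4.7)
The plan is to obtain $\ph\in S(V,C,u)$ by a Hahn-Banach type extension argument: I define a suitable $\R$-valued sublinear functional on $V$, extend the canonical linear functional from the line $Ku$ to all of $V$ while staying dominated by that sublinear functional, and verify that the resulting extension is positive on $C$.

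For the sublinear functional I take
\[
p\colon V\to\R,\qquad p(x):=-\rh(-x),
\]
with $\rh$ the map from Lemma \ref{sublinear}. From the positive homogeneity and superadditivity of $\rh$ it is immediate that $p$ is positively homogeneous (with respect to $K_{\ge0}$) and subadditive. Equivalently, $p(x)=\inf\{\la\in K\mid\la u-x\in C\}$, a description that makes two key features transparent: first, $p(u)=1$ (since $(\la-1)u\in C$ forces $\la\ge1$ because $-u\notin C$, which follows from properness of $C$ combined with $u$ being a unit, cf.\ the proof of Lemma \ref{sublinear}); second, $p(-x)\le0$ for every $x\in C$, because $0\cdot u-(-x)=x\in C$. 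A parallel computation yields $p(-u)=-1$.

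Now set $W:=Ku$ and define $\ps\colon W\to\R$ by $\ps(\la u):=\la$; this is $K$-linear. Using $p(u)=1$, $p(-u)=-1$ and positive homogeneity of $p$, one checks $\ps(w)=p(w)$ for every $w\in W$, so in particular $\ps\le p|_W$. The Hahn-Banach extension theorem then produces a $K$-linear $\ph\colon V\to\R$ with $\ph|_W=\ps$ and $\ph\le p$. Then $\ph(u)=\ps(u)=1$ and, for every $x\in C$,
\[
-\ph(x)\ =\ \ph(-x)\ \le\ p(-x)\ \le\ 0,
\]
so $\ph(C)\subseteq\R_{\ge0}$. Hence $\ph\in S(V,C,u)$.

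The main point requiring a bit of care is the applicability of Hahn-Banach, which is traditionally formulated for $\R$-vector spaces while $V$ is here only a $K$-vector space. The standard proof via Zorn's lemma and the one-dimensional extension step carries over verbatim: given $v\notin W$, extending $\ps$ to $W+Kv$ by choosing a value $c\in\R$ for $\ph(v)$ requires
\[
c\in\left[\,\sup_{w\in W}\bigl(\ps(w)-p(w-v)\bigr),\ \inf_{w\in W}\bigl(p(w+v)-\ps(w)\bigr)\right],
\]
and this interval is non-empty because for all $w_1,w_2\in W$, subadditivity of $p$ combined with $\ps\le p|_W$ gives $\ps(w_1+w_2)\le p(w_1+w_2)\le p(w_1-v)+p(w_2+v)$. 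Only the order of $\R$ and the $K$-vector space structure of $V$ enter, so the argument is independent of whether the scalars form all of $\R$ or merely a subfield. (Equivalently, extending scalars identifies $K$-linear maps $V\to\R$ with $\R$-linear maps $\R\otimes_KV\to\R$ and reduces to the usual statement.)
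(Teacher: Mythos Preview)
Your proof is correct but follows a genuinely different route from the paper's. The paper first applies Zorn's lemma to enlarge $C$ to a cone maximal with respect to the property $-u\notin C$; for such a maximal cone one shows $C\cup(-C)=V$, and then the function $\rh$ of Lemma~\ref{sublinear} is \emph{automatically} $K$-linear (superadditivity of $\rh$ with respect to $C$ combines with superadditivity of $-\rh$ with respect to $-C$ to give additivity), so one can simply take $\ph:=\rh$. You instead leave $C$ as it is, use $p=-\rh(-\cdot)$ as a sublinear dominating functional, and invoke the Hahn--Banach extension argument (inside which Zorn's lemma reappears). Both strategies are classical; the paper's is more self-contained and avoids naming Hahn--Banach, while yours makes the link to standard functional analysis explicit and is cleaner if one is happy to regard the $K$-vector-space version of Hahn--Banach as known. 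Your remark that the one-dimensional extension step only uses the order on $\R$ and the $K$-linear structure is exactly the point needed to justify this.
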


\begin{proof}
Since the union of a nonempty chain of cones in $V$ is again a cone in $V$,
we can use Zorn's lemma to enlarge $C$ to a cone of $V$ that is maximal with
respect to the property of not containing $-u$. WLOG suppose that $C$ has
already this maximality property.

\medskip
\textbf{Claim 1:} $C\cup-C=V$

\smallskip
\emph{Explanation.} Let $x\in V$ with $x\notin-C$. To show: $x\in C$.
Due to the maximality of $C$ it is enough to show that the cone
$C+K_{\ge0}x$ does not contain $-u$. But if we had $-u=y+\la x$
for some $y\in C$ and $\la\in K_{\ge0}$, then $\la>0$ and
$x=\frac1\la(-u-y)\in-C$ $\lightning$.

\medskip\noindent
Consider for each $x\in V$, the sets
\[I_x:=\{\la\in K\mid x-\la u\in C\}\text{ and }J_x:=\{\la\in K\mid x-\la u\in-C\}.\]

\medskip
\textbf{Claim 2:} $\forall x\in V:\forall\la\in I_x:\forall\mu\in J_x:\la\le\mu$

\smallskip
\emph{Explanation.} Let $x\in V$, $\la\in I_x$ and $\mu\in J_x$. Then
$x-\la u\in C$ and $\mu u-x\in C$. Thus,
$(\mu-\la)u=(\mu u-x)+(x-\la u)\in C+C\subseteq C$. If we had $\mu<\la$, then
we had $-u\in C\ \lightning$.

\medskip\noindent
Consider now $\ph\colon V\to\R,\ x\mapsto\sup I_x$ [$\to$ \ref{sublinear}].

\medskip
\textbf{Claim 3:} $-\ph(x)=\sup\{\la\in K\mid x-\la(-u)\in-C\}$ for all $x\in V$

\smallskip
\emph{Explanation.} Let $x\in V$. From $I_x\cup J_x\overset{\text{Claim 1}}
=K$ and Claim 2, we get \[\ph(x)=\sup I_x=\inf J_x\] and hence
\[-\ph(x)=-\inf J_x=\sup\{-\la\mid\la\in K,x-\la u\in -C\}=
\sup\{\la\in K\mid x+\la u\in -C\}.\] 

\medskip\noindent
From \ref{sublinear}, we obtain $\ph(x)+\ph(y)\le\ph(x+y)$ and $\ph(\la x)=\la\ph(x)$
for all $x,y\in V$ and $\la\in K_{\ge0}$. Since $-u$ is a unit for the proper cone
$-C$, \ref{sublinear} and Claim 3 yield also $-\ph(x)-\ph(y)\le-\ph(x+y)$ for all
$x,y\in V$. It follows that \[\ph(x)+\ph(y)\le\ph(x+y)\le\ph(x)+\ph(y)\] and
therefore $\ph(x)+\ph(y)=\ph(x+y)$ for all $x,y\in V$. In particular,
$\ph(x)+\ph(-x)=\ph(0)=0$ and hence $\ph(-x)=-\ph(x)$ for all $x\in V$ from which
we deduce \[\ph((-\la)x)=\ph(-\la x)=-\ph(\la x)=-\la\ph(x)=(-\la)\ph(x)\]
for all $x\in V$ and $\la\in K_{\ge0}$. Altogether, $\ph(\la x)=\la\ph(x)$ for all
$x\in V$ and $\la\in K_{\ge0}\cup K_{\le0}=K$, i.e., $\ph$ is $K$-linear. Obviously,
$\ph(C)\subseteq\R_{\ge0}$ and $\ph(u)=1$. Therefore $\ph\in S(V,C,u)$.
\end{proof}

\begin{lem}\label{xelc}
Let $C$ be a cone in the $K$-vector space $V$ and $x\in V$. Then
\[x\in C\iff x\in C-K_{\ge0}x.\]
\end{lem}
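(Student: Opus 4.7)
The plan is to verify both implications directly from the cone axioms; this is a short bookkeeping argument rather than a deep result.

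For the trivial direction "$\Longrightarrow$", if $x \in C$, then taking $\lambda = 0 \in K_{\ge 0}$ gives $x = x - 0 \cdot x \in C - K_{\ge 0}x$, since $0 \cdot x \in K_{\ge 0}x$.

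For the direction "$\Longleftarrow$", suppose $x \in C - K_{\ge 0}x$. Then I would unpack the definition: there exist $c \in C$ and $\lambda \in K_{\ge 0}$ such that $x = c - \lambda x$, i.e., $(1+\lambda)x = c$. Since $\lambda \ge 0$ in $K \subseteq \R$, we have $1 + \lambda > 0$, so $\frac{1}{1+\lambda} \in K_{> 0} \subseteq K_{\ge 0}$. Multiplying both sides by this scalar yields $x = \frac{1}{1+\lambda} c \in K_{\ge 0} C \subseteq C$, using the defining property $K_{\ge 0} C \subseteq C$ of a cone from Definition \ref{defcone}.

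There is no real obstacle here; the only thing to be careful about is that the scalar $1+\lambda$ is strictly positive (so that it is invertible in $K$ and its inverse lies in $K_{\ge 0}$), which uses that $K \subseteq \R$ is an ordered field and $\lambda \ge 0$. No appeal to the isolation theorem, units, or any deeper machinery of the chapter is needed.
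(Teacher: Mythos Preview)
Your proof is correct and follows exactly the same approach as the paper: for the nontrivial direction, write $x=y-\lambda x$ with $y\in C$ and $\lambda\in K_{\ge0}$, and conclude $x=\frac{1}{1+\lambda}y\in C$. The paper is slightly more terse, but the argument is identical.
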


\begin{proof}
``$\Longrightarrow$'' is trivial.

\smallskip
``$\Longleftarrow$'' Let $x\in C-K_{\ge0}x$, for instance $x=y-\la x$ with
$y\in C$ and $\la\in K_{\ge0}$. Then \[x=\frac1{1+\la}y\in C.\]
\end{proof}

\begin{cor}\label{conemembership}
Suppose $u$ is a unit for the cone $C$ in the $K$-vector space $V$
and $x\in V$. If $\ph(x)>0$ for all $\ph\in S(V,C,u)$, then $x\in C$.
\end{cor}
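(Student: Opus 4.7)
The plan is to prove the contrapositive: assuming $x \notin C$, I will construct a state $\ph \in S(V,C,u)$ with $\ph(x) \le 0$, contradicting the hypothesis that $\ph(x) > 0$ for all states.

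The key idea is to enlarge $C$ by adjoining the ray through $-x$, setting $C' := C - K_{\ge 0}x$. First I would verify that $C'$ is a cone: $0 = 0 - 0\cdot x \in C'$, it is closed under addition since $(c_1 - \la_1 x) + (c_2 - \la_2 x) = (c_1+c_2) - (\la_1+\la_2)x$, and closed under multiplication by $K_{\ge 0}$ by distributivity. Moreover, $C \subseteq C'$ so $u$ is still a unit for $C'$ by Definition \ref{defunit}. The crucial point is that $C'$ is a \emph{proper} cone: by Lemma \ref{xelc}, the hypothesis $x \notin C$ gives $x \notin C - K_{\ge 0}x = C'$, so $C' \ne V$.

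With $C'$ a proper cone in $V$ having $u$ as a unit, I would then invoke the isolation theorem \ref{isolation} to obtain a state $\ph \in S(V, C', u)$. Since $C \subseteq C'$, automatically $\ph(C) \subseteq \ph(C') \subseteq \R_{\ge 0}$, and since $\ph$ is $K$-linear with $\ph(u)=1$, this shows $\ph \in S(V,C,u)$. On the other hand, $-x = 0 - 1\cdot x \in C'$, so $\ph(-x) \ge 0$, i.e., $\ph(x) \le 0$, contradicting the assumption that $\ph(x) > 0$ for every state in $S(V,C,u)$.

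There is no real obstacle here; the proof is essentially a direct application of the isolation theorem to a suitably enlarged cone. The only conceptual point worth highlighting is the role of Lemma \ref{xelc}, which is exactly what guarantees that adjoining $-K_{\ge 0}x$ to $C$ does not accidentally pull $x$ into the enlarged cone — this is what keeps $C'$ proper and allows the isolation theorem to apply.
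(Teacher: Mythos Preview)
Your proof is correct and follows essentially the same approach as the paper: form the enlarged cone $C'=C-K_{\ge0}x$, use Lemma~\ref{xelc} to see it is proper, apply the isolation theorem~\ref{isolation} to obtain a state on $(V,C',u)$, and observe it restricts to a state of $(V,C,u)$ with $\ph(x)\le0$. You have simply filled in a few routine verifications that the paper leaves implicit.
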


\begin{proof}
Suppose $x\notin C$. To show: $\exists\ph\in S(V,C,u):\ph(x)\le0$.
By \ref{xelc}, the cone $C-K_{\ge0}x$ is proper. Since $u$ is a unit for $C$,
it is of course also a unit for $C-K_{\ge0}x$. By the isolation theorem
\ref{isolation}, there is $\ph\in S(V,C-K_{\ge0}x,u)$. We have $\ph\in S(V,C,u)$ and
$\ph(x)\le0$.
\end{proof}

\begin{exo}{}[$\to$ \ref{defstate}]\label{statespacetop}
Let $V$ be a $K$-vector space, $C\subseteq V$
and $u\in V$. We equip the $\R$-vector space $\R^V$ of all functions from $V$ to
$\R$ with the product topology [$\to$ \ref{subspaceproductspace}(b)]. Then
$S(V,C,u)$ is a closed convex subset of $\R^V$ which we equip with the
subspace topology [$\to$ \ref{subspaceproductspace}(a)]. Using \ref{initialtop}, one shows that this topology is
at the same time also the initial topology with
respect to the functions \[S(V,C,u)\to\R,\ \ph\mapsto\ph(x)\qquad(x\in V).\]
\end{exo}

\begin{thm}\label{statespacecompact}
Let $u$ be a unit for the cone $C$ in the $K$-vector space $V$.
Then the state space $S(V,C,u)$ is compact \emph{[$\to$ \ref{dfcomp}]}.
\end{thm}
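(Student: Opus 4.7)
The plan is to realize $S(V,C,u)$ as a closed subset of a product of compact intervals and then invoke Tikhonov's theorem.

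First I would use the unit property of $u$ to bound each coordinate of a state. Fix $x \in V$. Since $u$ is a unit for $C$, by \ref{unitchar} there exist $N_x, M_x \in \N$ such that $N_x u + x \in C$ and $M_x u - x \in C$. For any $\ph \in S(V,C,u)$, applying $\ph$ and using $\ph(u)=1$ together with $\ph(C) \subseteq \R_{\ge 0}$ gives $N_x + \ph(x) \ge 0$ and $M_x - \ph(x) \ge 0$, so $\ph(x) \in [-N_x, M_x] =: I_x$. Hence
\[
S(V,C,u) \subseteq \prod_{x \in V} I_x \subseteq \R^V.
\]

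Next, each $I_x$ is compact in $\R$, so by Tikhonov's theorem (\ref{tikhonov} together with \ref{tikhonovcor}) the product $P := \prod_{x \in V} I_x$ is compact in the product topology. By \ref{inducedproductcommute}, the topology that $\R^V$ induces on $P$ is precisely the product topology on $P$, and the topology it induces on $S(V,C,u)$ is the one described in \ref{statespacetop}.

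Finally, by \ref{statespacetop}, $S(V,C,u)$ is closed in $\R^V$, hence closed in $P$ (intersection of a closed subset of $\R^V$ with $P$). A closed subset of a compact space is compact by \ref{compactsubspace}, so $S(V,C,u)$ is compact as desired. The only real content is the boundedness step, which is immediate from the definition of a unit; there is no genuine obstacle here, just the bookkeeping of matching the three topologies (subspace of $\R^V$, subspace of $P$, and initial topology with respect to the evaluation maps), all of which agree by \ref{inducedproductcommute} and the final sentence of \ref{statespacetop}.
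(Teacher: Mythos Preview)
Your proof is correct and follows essentially the same approach as the paper: bound each coordinate of a state using the unit property, embed $S(V,C,u)$ into a product of compact intervals, apply Tikhonov, and conclude by closedness. The only cosmetic difference is that the paper uses a single $N_x$ with $N_xu\pm x\in C$ (applying the unit property to both $x$ and $-x$) rather than your separate $N_x$ and $M_x$.
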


\begin{proof} Choose for each $x\in V$ an $N_x\in\N$ such that
$\pm x+N_xu\in C$. Then we have for all $\ph\in S(V,C,u)$ and $x\in V$
that $\pm\ph(x)+N_x=\ph(\pm x+N_xu)\ge0$ and thus \[\ph(x)\in[-N_x,N_x].\]
Thus $S(V,C,u)\subseteq\prod_{x\in V}[-N_x,N_x]$. From analysis
(cf. \ref{sacompreals}) and Tikhonov's theorem \ref{tikhonov},
$\prod_{x\in V}[-N_x,N_x]$ is compact with respect to the product topology.
But the product topology on $\prod_{x\in V}[-N_x,N_x]$ is induced
by the topology of $\R^V$ [$\to$ \ref{inducedproductcommute}]. By \ref{statespacetop}, $S(V,C,u)$ is thus closed
in the compact space $\prod_{x\in V}[-N_x,N_x]$ and hence is compact itself
[$\to$ \ref{compactsubspace}].
\end{proof}

\begin{exo}\label{quasicompactimage}
Let $M$ and $N$ be topological spaces and $f\colon M\to N$
be continuous. If $M$ is quasicompact [$\to$ \ref{dfcomp}], then so is $f(M)$
[$\to$ \ref{compactsubspace}]
\end{exo}

\begin{cor}\label{takeson}
Let $M$ be a nonempty quasicompact topological space and $f\colon M\to\R$
be continuous. Then $f$ takes on a minimum and a maximum, i.e., there are
$x,y\in M$ with \[f(x)\le f(z)\le f(y)\] for all $z\in M$.
\end{cor}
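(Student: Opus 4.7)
The plan is to reduce to a statement about $\R$ via the image $f(M)$ and then exploit quasicompactness together with the completeness of $\R$. First I would invoke Exercise \ref{quasicompactimage} applied to the continuous map $f\colon M\to\R$: the image $A:=f(M)\subseteq\R$ is quasicompact (as a subspace of $\R$), and it is nonempty because $M$ is. It therefore suffices to show that a nonempty quasicompact subset $A$ of $\R$ contains its supremum and its infimum; I will give the argument for the maximum, the minimum being entirely analogous.

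Next I would establish that $A$ is bounded. The family $\{(-n,n)\mid n\in\N\}$ is an open cover of $\R$, hence in particular of $A$. By quasicompactness of $A$ (applied as in \ref{compactsubspace}), there is a finite subcover; since the intervals are nested, $A\subseteq(-N,N)$ for some $N\in\N$. By the completeness of $(\R,\le)$ established in \ref{introduce-the-reals}, the supremum $s:=\sup A\in\R$ exists.

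Then I would argue that $s\in A$. Suppose for contradiction that $s\notin A$. Since $s$ is the supremum of $A$, every $y\in A$ satisfies $y<s$, hence $y<s-\tfrac1n$ for some $n\in\N$, so
\[
A\;\subseteq\;\bigcup_{n\in\N}\Bigl(-\infty,s-\tfrac1n\Bigr).
\]
The sets on the right are open in $\R$ (for instance by \ref{topreminder}(b) applied to the order topology on $\R$) and they form a nested chain. Quasicompactness of $A$ yields a finite subcover, and by nestedness, $A\subseteq(-\infty,s-\tfrac1{N'})$ for some $N'\in\N$. But then $s-\tfrac1{N'}$ is an upper bound for $A$ strictly smaller than $s$, contradicting $s=\sup A$. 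Hence $s\in A$, i.e.\ there is $y\in M$ with $f(y)=s$, which is the desired maximum.

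Applying the same reasoning to the open cover $\{(-s',\infty)\mid s'>\inf A\}$, or equivalently to the continuous function $-f$, produces the minimum. The only real subtlety is that quasicompactness alone (without Hausdorff) is enough here: the compactness argument only uses that every open cover of $A$ in $\R$ admits a finite subcover, which follows from $A$ being quasicompact as a subspace of $\R$ in the sense of \ref{compactsubspace}; no separation axioms on $M$ are required.
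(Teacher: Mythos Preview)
Your proof is correct and follows essentially the same strategy as the paper's: apply \ref{quasicompactimage} to get that $f(M)$ is quasicompact in $\R$, establish boundedness so that $\sup$ and $\inf$ exist, and then argue that they are attained. The only difference is in packaging: the paper notes that $f(M)$, being quasicompact in the Hausdorff space $\R$, is compact and hence closed by \ref{comclo}, so the supremum (which lies in the closure) lies in $f(M)$; you instead argue directly with nested open covers $(-\infty,s-\tfrac1n)$, bypassing \ref{comclo} and any appeal to the Hausdorff property. Both arguments are equally valid and of comparable length; yours has the minor advantage of staying entirely within the quasicompactness framework the statement actually assumes.
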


\begin{proof}
$f(M)$ is compact by \ref{quasicompactimage}. Hence $f(M)$ is nonempty,
bounded and closed. From the first two properties, it follows that $\inf f(M),
\sup f(M)\in\R$ exist [$\to$ \ref{archetcdef}(c), \ref{introduce-the-reals}].
The last property yields $\inf f(M)=\min f(M)$ and $\sup f(M)=\max f(M)$.
\end{proof}

\begin{thm}[Strengthening of \ref{conemembership}]\emph{[$\to$ \ref{archimedeanpositivstellensatz}]}\label{conemembershipunit}
Let $u$ be a unit for the cone $C$ in the $K$-vector space $V$ and $x\in V$.
Then the following are equivalent:
\begin{enumerate}[\normalfont(a)]
\item $\forall\ph\in S(V,C,u):\ph(x)>0$
\item $\exists N\in\N:x\in\frac1Nu+C$
\item $x$ is a unit for $C$.
\end{enumerate}
\end{thm}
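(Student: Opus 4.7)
The plan is to prove the cyclic chain (b) $\Rightarrow$ (a), (b) $\Leftrightarrow$ (c), and (a) $\Rightarrow$ (b). The first two directions are essentially bookkeeping built on the existing characterization of units \ref{unitchar}; the compactness of the state space \ref{statespacecompact} combined with \ref{conemembership} will do the heavy lifting for (a) $\Rightarrow$ (b).

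First, for (b) $\Rightarrow$ (a): if $x-\tfrac1N u\in C$ and $\ph\in S(V,C,u)$, then $\ph(x)-\tfrac1N=\ph(x-\tfrac1N u)\ge 0$, so $\ph(x)\ge\tfrac1N>0$. For (b) $\Rightarrow$ (c): write $x=\tfrac1N u+c$ with $c\in C$. Given arbitrary $y\in V$, the unit property of $u$ together with \ref{unitchar}(f) yields some $\ep\in K_{>0}$ with $u+N\ep y\in C$, and then $x+\ep y=\tfrac1N(u+N\ep y)+c\in C$, so $x$ satisfies \ref{unitchar}(f) and is a unit for $C$. For (c) $\Rightarrow$ (b): since $x$ is a unit for $C$, \ref{unitchar}(b) applied to $-u$ furnishes some $N\in\N$ with $Nx-u\in C$, equivalently $x-\tfrac1N u\in\tfrac1N C\subseteq C$.

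The main step is (a) $\Rightarrow$ (b). If $S(V,C,u)=\emptyset$, then by the isolation theorem \ref{isolation} the cone $C$ cannot be proper, so $C=V$ and (b) is trivial. Otherwise, $S(V,C,u)$ is nonempty and compact by \ref{statespacecompact}, and the evaluation map $S(V,C,u)\to\R,\ \ph\mapsto\ph(x)$, is continuous by \ref{statespacetop}. By \ref{takeson} this map attains its minimum $\de\in\R$, which is strictly positive by hypothesis (a). Choose $N\in\N$ with $\tfrac1N<\de$. Then for every $\ph\in S(V,C,u)$, we have $\ph(x-\tfrac1N u)=\ph(x)-\tfrac1N\ge\de-\tfrac1N>0$, and \ref{conemembership} implies $x-\tfrac1N u\in C$, which is exactly (b).

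The only subtlety is the degenerate case where the state space is empty; once it is disposed of via the isolation theorem, the argument becomes a clean compactness-plus-separation package. Everything else reduces to unwinding the definitions of units and states.
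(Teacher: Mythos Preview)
Your proof is correct and follows essentially the same approach as the paper: the key step (a)$\Rightarrow$(b) via compactness of the state space, \ref{takeson}, and \ref{conemembership} is identical, including the handling of the degenerate case $S(V,C,u)=\emptyset$ via \ref{isolation}. The only cosmetic difference is that the paper closes the cycle via (b)$\Rightarrow$(c)$\Rightarrow$(a) (the latter using $Nx-u\in C$ to get $\ph(x)\ge\tfrac1N$ directly), whereas you prove (b)$\Leftrightarrow$(c) using \ref{unitchar}; both are routine.
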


\begin{proof}
\underline{(b)$\implies$(a)} is trivial.

\smallskip
\underline{(a)$\implies$(b)}\quad Suppose that (a) holds.
If $S(V,C,u)=\emptyset$, then $C=V$ by \ref{isolation} and we can choose
$N\in\N$ arbitrarily. Suppose therefore that $S(V,C,u)\ne\emptyset$. Then the
continuous function $S(V,C,u)\to\R,\ \ph\mapsto\ph(x)$ takes on
by \ref{statespacecompact} and \ref{takeson}
a minimum $\mu$ for which $\mu>0$ holds by (a). Choose $N\in\N$ such that
$\frac1N<\mu$. Then $\ph\left(x-\frac1Nu\right)=\ph(x)-\frac1N\ge\mu-\frac1N>0$
for all $\ph\in S(V,C,u)$. Now \ref{conemembership} yields that $x-\frac 1Nu\in C$.

\smallskip
\underline{(b)$\implies$(c)}\quad
Suppose that (b) holds and let $y\in V$. To show: $\exists N\in\N:Nx+y\in C$.
Choose $N',N''\in\N$ with $x\in\frac1{N'}u+C$ and $N''u+y\in C$.
Setting $N:=N'N''$, we obtain
$Nx+y\in N''N'\left(\frac1{N'}u+C\right)+y\subseteq N''(u+C)+y\subseteq
N''u+y+C\subseteq C+C\subseteq C$.

\smallskip
\underline{(c)$\implies$(a)}\quad Suppose that (c) holds and let $\ph\in S(V,C,u)$.
To show: $\ph(x)>0$. Choose $N\in\N$ with $Nx-u\in C$. Then
$N\ph(x)-1=\ph(Nx-u)\ge0$ and thus $\ph(x)\ge\frac1N>0$ for all
$\ph\in S(V,C,u)$.
\end{proof}

\section{Separating convex sets in topological vector spaces}

\begin{df}\label{deftopvs}
A $K$-vector space $V$ together with a topology on $V$ [$\to$
\ref{topreminder}(a)] is called a \emph{topological $K$-vector space}
if $V\times V\to V, (x,y)\mapsto x+y$ and
$K\times V\to V,\ (\la,x)\mapsto\la x$ are continuous and $\{0\}$ is a closed
set in $V$.
\end{df}

\begin{ex}\label{topvsex}
\begin{enumerate}[(a)]
\item If $I$ is a set, then $K^I$ (endowed with the product topology
[$\to$ \ref{subspaceproductspace}(b)]) is a topological $K$-vector space.
\item A $K$-vector space $V$ together with the discrete topology on $V$ is a
topological vector space if and only if $V=\{0\}$. Indeed, if $y\in V\setminus\{0\}$,
then \[\{(\la,x)\in K\times V\mid \la x= y\}=\{(\la,\la^{-1}y)\mid\la\in K^\times\}\]
is not open in $K\times V$.
\item From analysis, one knows that every normed $\R$-vector space,
in particular every $\R$-vector space with scalar product, is a topological
$\R$-vector space.
\end{enumerate}
\end{ex}

\begin{lem}\label{convgencone}
Let $V$ be a $K$-vector space and $A\subseteq V$ be convex.
If $0\notin A\ne\emptyset$, then $A$ generates a proper convex cone, i.e.,
$\sum_{x\in A}K_{\ge0}x\ne V$.
\end{lem}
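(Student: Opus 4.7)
The plan is to exhibit a concrete element of $V$ that fails to lie in the cone $C := \sum_{x \in A} K_{\ge 0} x$. Since $A \ne \emptyset$, I can fix some $x_0 \in A$, and I claim that $-x_0 \notin C$; this immediately gives $C \ne V$.

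To prove the claim, I argue by contradiction. Suppose $-x_0 \in C$, so that
\[
-x_0 = \sum_{i=1}^n \la_i x_i
\]
for some $n \in \N_0$, $\la_1, \dots, \la_n \in K_{\ge 0}$, and $x_1, \dots, x_n \in A$. Rearranging, I get
\[
0 = 1 \cdot x_0 + \sum_{i=1}^n \la_i x_i,
\]
a nonnegative $K$-linear combination of the points $x_0, x_1, \dots, x_n \in A$ whose coefficient sum is $s := 1 + \sum_{i=1}^n \la_i \ge 1 > 0$. Dividing by $s$ expresses $0$ as a convex combination of elements of $A$:
\[
0 = \frac{1}{s} x_0 + \sum_{i=1}^n \frac{\la_i}{s} x_i.
\]

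The final step is to invoke the (standard, easy induction from Definition \ref{dfconv}) fact that any convex combination of points of a convex set $A$ lies in $A$; this forces $0 \in A$, contradicting the hypothesis $0 \notin A$. Hence $-x_0 \notin C$, completing the proof.

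There is really no serious obstacle here; the only thing to be careful about is the rescaling step, which requires the coefficient sum to be strictly positive --- this is automatic because the coefficient of $x_0$ is $1$, irrespective of whether the other $\la_i$ happen to all vanish (or even whether $n = 0$).
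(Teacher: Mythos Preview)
Your proof is correct and is essentially the same argument as the paper's: both fix a point of $A$, assume its negative lies in the cone, and rescale the resulting nonnegative combination summing to $1+\sum\la_i>0$ to exhibit $0$ as a convex combination of points of $A$. The only cosmetic difference is that the paper states it contrapositively (assuming the cone is all of $V$ and deducing $0\in A$), whereas you frame it as showing directly that $-x_0\notin C$.
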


\begin{proof}
Suppose that $A\ne\emptyset$ and $\sum_{x\in A}K_{\ge0}x=V$. We show
$0\in A$. Choose $y\in A$ and write $-y=\sum_{i=1}^m\la_ix_i$ with $\la_1,\dots,
\la_m\in K_{\ge0}$ and $x_1,\dots,x_m\in A$. Setting $\mu:=1+\sum_{i=1}^m
\la_i>0$, we have then $0=\frac1\mu y+\sum_{i=1}^m\frac{\la_i}\mu x_i\in A$
since $\frac1\mu+\sum_{i=1}^m\frac{\la_i}\mu=\frac\mu\mu=1$.
\end{proof}

\begin{lem}{}[$\to$ \ref{unitinteriorpoint}]\label{interiorisunit}
Let $V$ be a topological $K$-vector space, $C\subseteq V$ a convex cone and
$u\in C^\circ$ [$\to$ \ref{interiorclosure}]. Then $u$ is a unit for $C$
[$\to$ \ref{defunit}].
\end{lem}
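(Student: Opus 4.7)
The plan is to verify the criterion from Proposition \ref{unitchar}(f), namely that for every $x\in V$ there exists some $\ep\in K_{>0}$ with $u+\ep x\in C$. Once this is established, \ref{unitchar} immediately yields that $u$ is a unit for $C$.

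Fix $x\in V$. Since $u\in C^\circ$, we can choose an open set $U\subseteq V$ with $u\in U\subseteq C$. Consider the map
\[
f\colon K\to V,\ \la\mapsto u+\la x.
\]
This map is continuous: it factors as $\la\mapsto(\la,x)\mapsto\la x\mapsto u+\la x$, and both the scalar multiplication $K\times V\to V$ and the addition $V\to V,\ y\mapsto u+y$ (a translation, continuous by continuity of addition) are continuous by the definition \ref{deftopvs} of a topological $K$-vector space.

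Therefore $f^{-1}(U)$ is open in $K$ and contains $f^{-1}(u)\ni 0$. Since $K$ is a subfield of $\R$ equipped with the subspace topology, $K$ contains $\Q$ and is in particular dense in $\R$; hence every open neighborhood of $0$ in $K$ contains elements of $K_{>0}$. Pick any $\ep\in K_{>0}\cap f^{-1}(U)$. Then $u+\ep x=f(\ep)\in U\subseteq C$, which is exactly what \ref{unitchar}(f) asks for.

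The argument is short and I do not foresee any genuine obstacle; the only point to be careful about is that the elementary ``density of $\Q$ in $\R$'' argument is what lets us produce a positive $\ep$ in $K$, rather than merely a nonzero one. Note also that the ``$\{0\}$ is closed'' clause in \ref{deftopvs} plays no role here: the conclusion rests solely on the continuity of addition and scalar multiplication together with $u\in C^\circ$.
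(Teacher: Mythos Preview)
Your proof is correct and follows essentially the same argument as the paper: verify criterion \ref{unitchar}(f) by using that $\la\mapsto u+\la x$ is continuous, so the preimage of an open neighborhood of $u$ in $C$ is an open neighborhood of $0$ in $K$ and therefore contains some $\ep\in K_{>0}$. You spell out a bit more detail (why the map is continuous, why a positive $\ep$ exists in the neighborhood), but the strategy is identical.
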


\begin{proof}
We show $\forall x\in V:\exists\ep\in K_{>0}:u+\ep x\in C$ [$\to$ \ref{unitchar}(f)].
For this aim, fix $x\in V$. From Definition \ref{deftopvs}, it follows that
$K\to V, \la\mapsto u+\la x$ is continuous. Choose an open set $A\subseteq V$
such that $u\in A\subseteq C$. Then $\{\la\in K\mid u+\la x\in A\}$ is open and
contains $0$. In particular, there is $\ep\in K_{>0}$ such that $u+\ep x\in A
\subseteq C$.
\end{proof}

\begin{ex}\label{zigzag}
Consider the $\R$-vector space $V:=C([0,1],\R)$ of all
continuous real valued functions on the interval $[0,1]\subseteq\R$
together with the scalar product defined by
\[\langle f,g\rangle:=\int_0^1f(x)g(x)dx\qquad(f,g\in V).\]
By \ref{topvsex}(c), this is a topological vector space. The constant function
$u\colon[0,1]\to\R,\ x\mapsto 1$ is a unit for the cone
$C:=C([0,1],\R_{\ge0})$ of all functions nonnegative on $[0,1]$ by
\ref{takeson} (since $[0,1]$ is compact by \ref{sacompreals}).
But $u$ does not lie in $C^\circ$ since for every $\ep>0$ there is some
$f\in V$ with $\|u-f\|=\sqrt{\int_0^1(u(x)-f(x))^2dx}<\ep$ and $f\notin C$.
\end{ex}

\begin{rem}\label{transhomeo}
From Definition \ref{deftopvs}, it follows that for every topological
$K$-vector space $V$ the maps
$V\to V,\ x\mapsto\la x+y$ ($\la\in K^\times,y\in V)$ are homeomorphisms
[$\to$ \ref{dfhomeo}].
\end{rem}

\begin{lem}\label{conthalfspace}
Suppose $V$ is a topological $K$-vector space
and $\ph\colon V\to\R$ is $K$-linear. Then the following are equivalent:
\begin{enumerate}[(a)]
\item $\ph$ is continuous.
\item $\ph^{-1}(\R_{>0})$ is open.
\item $\ph^{-1}(\R_{\ge0})$ is closed.
\end{enumerate}
\end{lem}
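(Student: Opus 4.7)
The implications (a)$\implies$(b) and (a)$\implies$(c) are both immediate, since $\R_{>0}$ is open and $\R_{\ge 0}$ is closed in $\R$. For (b)$\iff$(c), I would use Remark \ref{transhomeo}: scalar multiplication by $-1$ is a homeomorphism of $V$, so $\ph^{-1}(\R_{<0})=-\ph^{-1}(\R_{>0})$ is open iff $\ph^{-1}(\R_{>0})$ is. Combined with $\ph^{-1}(\R_{\ge0})=V\setminus\ph^{-1}(\R_{<0})$, this gives (b)$\iff$(c).

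The substantive implication is (b)$\implies$(a). The plan is to show $\ph^{-1}((a,b))$ is open for every $a<b$ in $\R$, since these intervals generate the topology on $\R$. Writing $\ph^{-1}((a,b))=\ph^{-1}(\R_{>a})\cap\ph^{-1}(\R_{<b})$, it suffices to show $\ph^{-1}(\R_{>t})$ is open for every $t\in\R$ (the sets $\ph^{-1}(\R_{<t})$ are handled symmetrically, via the already-proved (b)$\iff$(c) applied to $-\ph$). The case $\ph=0$ is trivial, so I may assume there exists $z\in V$ with $\ph(z)>0$.

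Since translation $x\mapsto x+\la z$ is a homeomorphism of $V$ by Remark \ref{transhomeo}, the set $\la z+\ph^{-1}(\R_{>0})=\ph^{-1}(\R_{>\la\ph(z)})$ is open for every $\la\in K$. Now fix $t\in\R$ and $x_0\in\ph^{-1}(\R_{>t})$; the goal is to put an open neighborhood of $x_0$ inside $\ph^{-1}(\R_{>t})$. Using that every subfield of $\R$ contains $\Q$, the set $\{\la\ph(z)\mid\la\in K\}\supseteq\Q\ph(z)$ is dense in $\R$, so I can pick $\la\in K$ with $t<\la\ph(z)<\ph(x_0)$. Then $x_0\in\ph^{-1}(\R_{>\la\ph(z)})\subseteq\ph^{-1}(\R_{>t})$, and the intermediate set is open. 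Thus every point of $\ph^{-1}(\R_{>t})$ is interior, and $\ph$ is continuous.

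The main obstacle to watch out for is precisely the step where I want to translate $\ph^{-1}(\R_{>0})$ to $\ph^{-1}(\R_{>t})$: one cannot simply pick $y\in V$ with $\ph(y)=t$, because $\ph(V)$ is only a $K$-subspace of $\R$ and for a given $t\in\R$ the equation $\ph(y)=t$ need not be solvable (e.g.\ if $K=\Q$ and $\ph(V)=\Q\pi$). The density of $\Q\ph(z)$ in $\R$ lets me sidestep this by translating to an ``intermediate'' value $\la\ph(z)$ strictly between $t$ and $\ph(x_0)$ rather than to $t$ itself.
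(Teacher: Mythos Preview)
Your proof is correct and follows essentially the same approach as the paper's: both use that translations are homeomorphisms (Remark~\ref{transhomeo}) together with the density of $K$ in $\R$ to show that the preimages of all half-lines are open. The only cosmetic difference is that the paper first rescales $\ph$ so that $\ph(u)=1$ for some $u\in V$, after which $\ph^{-1}(\R_{>a})=au+\ph^{-1}(\R_{>0})$ directly for $a\in K$; your version avoids this normalization and instead squeezes in an intermediate value $\la\ph(z)$ via density, which amounts to the same thing.
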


\begin{proof}
\underline{(b)$\iff$(c)} follows from
$\ph^{-1}(\R_{\ge0})=-\ph^{-1}(\R_{\le0})=-(V\setminus\ph^{-1}(\R_{>0}))$ since
$V\to V,\ x\mapsto -x$ is a homeomorphism by \ref{transhomeo}.

\smallskip
\underline{(a)$\implies$(b)} is trivial.

\smallskip
\underline{(b)$\implies$(a)} \quad WLOG $\ph\ne0$. WLOG choose $u\in V$ in
such a way that $\ph(u)=1$ (otherwise scale $\ph$). Suppose that (b) holds.
Then the set $\ph^{-1}(\R_{>a})=au+\ph^{-1}(\R_{>0})$ is open
and hence
$\ph^{-1}(\R_{<-a})=-\ph^{-1}(\R_{>a})$ is open for all $a\in K$
[$\to$ \ref{transhomeo}]. 
So the set $\ph^{-1}((a,b)_\R)=\ph^{-1}(\R_{>a})\cap\ph^{-1}(\R_{<b})$ is open
for all $a,b\in K$. Since every open subset of $\R$ is a union of
intervals $(a,b)_\R$ with $a,b\in K$, the continuity of $\ph$ follows.
\end{proof}

\begin{lem}\label{contiffinterior}
Let $V$ be a topological $K$-vector space and $\ph\colon V\to\R$ be $K$-linear
map. Then $\ph$ is continuous if and only if
$\ph^{-1}(\R_{\ge0})$ has an interior point.
\end{lem}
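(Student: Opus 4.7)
The plan is to split into the two implications, treating the forward direction as a quick observation and focusing effort on the converse, where the main ingredient is Lemma \ref{interiorisunit} together with a standard ``boundedness on a symmetric neighborhood of $0$'' argument.

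For the forward direction ($\Longrightarrow$), I would argue briefly: if $\ph=0$, then $\ph^{-1}(\R_{\ge0})=V$ and every point is an interior point. Otherwise pick any $v\in V$ with $\ph(v)>0$; by continuity of $\ph$ (equivalently by Lemma \ref{conthalfspace}) the set $\ph^{-1}(\R_{>0})$ is open, contains $v$, and is contained in $\ph^{-1}(\R_{\ge0})$, so $v$ is an interior point.

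For the converse ($\Longleftarrow$), let $u$ be an interior point of $C:=\ph^{-1}(\R_{\ge0})$. Since $\ph$ is $K$-linear, $C$ is a convex cone, so Lemma \ref{interiorisunit} tells us $u$ is a unit for $C$ in the sense of \ref{defunit}. Next I would establish the dichotomy ``$\ph=0$ or $\ph(u)>0$'': $u\in C$ gives $\ph(u)\ge0$, and if $\ph(u)=0$ but $\ph\ne0$, one picks $v\in V$ with $\ph(v)<0$; by \ref{unitchar}(c) we may write $v=c-\la u$ with $c\in C$ and $\la\in K_{\ge0}$, whence $\ph(v)=\ph(c)\ge0$, a contradiction. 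The case $\ph=0$ is trivial, so assume $\ph(u)>0$; rescaling $u$ (a unit remains a unit under multiplication by a positive scalar) we may take $\ph(u)=1$.

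The key step is then to produce a symmetric open neighborhood of $0$ on which $\ph$ is bounded. Choose an open set $U\subseteq C$ with $u\in U$ and set $W:=(U-u)\cap(u-U)$; using \ref{transhomeo}, $W$ is open and $0\in W$. For $y\in W$, both $u+y\in U\subseteq C$ and $u-y\in U\subseteq C$ give $\ph(u+y)\ge0$ and $\ph(u-y)\ge0$, i.e.\ $-1\le\ph(y)\le1$. Continuity of $\ph$ at $0$ now follows by scaling: given $\ep>0$, the set $\ep W$ is open (again by \ref{transhomeo}) and $|\ph(y)|\le\ep$ on $\ep W$. Continuity at an arbitrary point $x\in V$ follows by composing with the translation homeomorphism $y\mapsto y+x$ and using $\ph(y+x)=\ph(y)+\ph(x)$.

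The only step requiring genuine care is showing $\ph(u)>0$ rather than merely $\ph(u)\ge0$; everything else is either bookkeeping with \ref{transhomeo} or the standard trick $W=(U-u)\cap(u-U)$. Once $\ph(u)>0$ is in hand, the bounded-on-a-symmetric-neighborhood argument runs automatically and continuity drops out.
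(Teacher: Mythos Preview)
Your argument is correct in spirit but takes a genuinely different route from the paper. The paper does not invoke the unit machinery at all: after reducing via Lemma~\ref{conthalfspace} to showing that $\ph^{-1}(\R_{>0})$ is open, it simply takes an arbitrary $x$ with $\ph(x)>0$, picks an open $B$ with $u\in B\subseteq\ph^{-1}(\R_{\ge0})$, chooses $\la\in K_{>0}$ with $\la\ph(u)<\ph(x)$, and checks that $A:=x+\la(B-u)$ is an open neighborhood of $x$ contained in $\ph^{-1}(\R_{>0})$. In particular the paper never needs the dichotomy ``$\ph=0$ or $\ph(u)>0$''; the single inequality $\ph(u)\ge0$ suffices. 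Your approach via Lemma~\ref{interiorisunit} and the symmetric neighborhood $W=(U-u)\cap(u-U)$ is the classical ``bounded on a neighborhood of $0$ implies continuous'' argument and is perfectly valid, but it is a slight detour compared to the paper's one-line translation-and-scaling trick.

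One small sloppiness to fix: since $V$ is only a $K$-vector space and $\ph$ takes values in $\R$, you cannot in general rescale $u$ by $1/\ph(u)$ to achieve $\ph(u)=1$, nor does ``$\ep W$'' make sense for arbitrary $\ep\in\R_{>0}$. The repair is routine: keep $\ph(u)>0$ as is, note that $|\ph|\le\ph(u)$ on $W$, and for given $\ep>0$ choose $N\in\N$ with $\ph(u)/N<\ep$; then $\tfrac1N W$ is open by \ref{transhomeo} and $|\ph|\le\ph(u)/N<\ep$ there.
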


\begin{proof}
WLOG $\ph\ne0$.
If $\ph$ is continuous, then $\ph^{-1}(\R_{>0})$ is open and because of $\ph\ne0$
nonempty. Conversely, let $u$ be an interior point of $\ph^{-1}(\R_{\ge0})$.
By \ref{conthalfspace}, it is enough to show that $\ph^{-1}(\R_{>0})$ is open.
For this, consider $x\in\ph^{-1}(\R_{>0})$. We have to show that there is an
open set $A\subseteq V$ such that $x\in A\subseteq\ph^{-1}(\R_{>0})$.
Choose an open
set $B\subseteq V$ with $u\in B\subseteq\ph^{-1}(\R_{\ge0})$.
Choose $\la\in K_{>0}$ such that $\la\ph(u)<\ph(x)$.
Then $A:=x+\la(B-u)$ is open by
\ref{transhomeo}, and we have $x=x+\la(u-u)\in A$ and
\[\ph(A)=\ph(x)+\la(\ph(B)-\ph(u))\subseteq\ph(x)+\R_{\ge0}-\la\ph(u)
\subseteq\R_{>0}.\]
\end{proof}

\begin{ex}
Let $V:=C([0,1],\R)$ be the topological $K$-vector space from \ref{zigzag}
and $x\in[0,1]$. Then $V\to\R,\ f\mapsto f(x)$ is not continuous.
\end{ex}

\begin{thm}[Separation theorem for topological vector spaces]
\label{septopvs}
Let $A$ and $B$ be convex sets in the topological $K$-vector space $V$
with $A^\circ\ne\emptyset\ne B$ and $A\cap B=\emptyset$. Then there is
a continuous $K$-linear function $\ph\colon V\to\R$ with $\ph\ne0$ and
$\ph(x)\le\ph(y)$ for all $x\in A$ and $y\in B$.
\end{thm}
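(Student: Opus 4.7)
The plan is to reduce the separation problem to the existence of a state on a suitable cone, which is exactly what the isolation theorem \ref{isolation} together with \ref{conemembership} and \ref{contiffinterior} provides. The key observation is that asking for $\ph(x) \le \ph(y)$ for all $x \in A, y \in B$ is the same as asking that $\ph$ be nonnegative on $B - A$, and hence on the convex cone generated by $B - A$.

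First I would set $D := B - A$, verify that $D$ is convex (being the sum $B + (-A)$ of convex sets), that $0 \notin D$ (equivalent to $A \cap B = \emptyset$), and that $D^\circ \ne \emptyset$. For the last point, pick $a_0 \in A^\circ$ and any $b \in B$; choosing an open set $U \subseteq V$ with $a_0 \in U \subseteq A$, the set $b - U$ is open by \ref{transhomeo} and satisfies $b - a_0 \in b - U \subseteq B - A$, so $b - a_0 \in D^\circ$. Next let $C := K_{\ge 0} D \cup \{0\}$; since $D$ is convex, one checks directly (writing $\la x + \mu y = (\la + \mu)(\frac{\la}{\la + \mu} x + \frac{\mu}{\la + \mu} y)$) that $C$ is a convex cone. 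By Lemma \ref{convgencone} applied to the nonempty convex set $D$ not containing $0$, the cone $C$ is proper. Moreover, any $u \in D^\circ$ lies in $C^\circ$ (since an open neighborhood of $u$ contained in $D$ is contained in $C$), so $C^\circ \ne \emptyset$.

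Pick such a $u \in C^\circ$. By \ref{interiorisunit}, $u$ is a unit for the proper cone $C$. The isolation theorem \ref{isolation} then yields some $\ph \in S(V, C, u)$, i.e., a $K$-linear map $\ph \colon V \to \R$ with $\ph(C) \subseteq \R_{\ge 0}$ and $\ph(u) = 1$; in particular $\ph \ne 0$. Since $B - A \subseteq C$, we have $\ph(y) - \ph(x) = \ph(y - x) \ge 0$ for all $x \in A$ and $y \in B$, which is the required inequality.

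It remains to establish continuity of $\ph$. The half-space $\ph^{-1}(\R_{\ge 0})$ contains $C$ and therefore contains the nonempty open set $C^\circ$, so it has an interior point. Lemma \ref{contiffinterior} then gives continuity of $\ph$, completing the proof. The only step that required any real thought is the passage from $A^\circ \ne \emptyset$ to $D^\circ \ne \emptyset$ (and then to $C^\circ \ne \emptyset$); once that interior point is produced, every other ingredient is a direct citation of the preceding results.
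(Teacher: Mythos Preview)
Your proof is correct and follows essentially the same route as the paper's: both pass to the convex set $B-A$, invoke Lemma~\ref{convgencone} to get a proper cone containing it, observe that this cone has nonempty interior (you spell out the argument with $b-U$ where the paper just cites \ref{transhomeo}), apply \ref{interiorisunit} and the isolation theorem \ref{isolation} to produce the state, and conclude continuity via \ref{contiffinterior}. The only cosmetic difference is that you write out $C=K_{\ge0}D\cup\{0\}$ explicitly (the $\cup\{0\}$ is redundant since $0\in K_{\ge0}D$ whenever $D\ne\emptyset$), whereas the paper leaves the cone implicit in the citation of \ref{convgencone}.
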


\begin{proof}
Since $A$ is convex, also $-A$ is convex and thus the Minkowski sum
$B-A=B+(-A)$ [$\to$ \ref{minkowskisum}] is also convex.
By hypothesis, we have $0\notin B-A\ne\emptyset$, for which reason there is according to \ref{convgencone} a proper
cone $C\subseteq V$ such that $B-A\subseteq C$.
Due to $A^\circ\ne\emptyset$ and $B\ne\emptyset$, \ref{transhomeo}
yields $(B-A)^\circ\ne\emptyset$ and thus $C^\circ\ne\emptyset$. Choose
$u\in C^\circ$. By \ref{interiorisunit}, $u$ is a unit for $C$. By the isolation theorem
\ref{isolation}, there exists a state $\ph$ of $(V,C,u)$. Because of $\ph(u)=1$,
we have $\ph\ne0$ and because of $\ph(B-A)\subseteq\R_{\ge0}$, we have
$\ph(x)\le\ph(y)$ for all $x\in A$ and $y\in B$.
Finally, $\ph$ is continuous by \ref{contiffinterior} since $u$ is an interior point
of $C$ and a fortiori of $\ph^{-1}(\R_{\ge0})$.
\end{proof}

\begin{cor}\label{septopvscor}
Let $A$ and $B$ be convex sets in the topological
$K$-vector space $V$ satisfying $A\cap B=\emptyset$. Suppose $A$ is open.
Then there is a continuous $K$-linear function $\ph\colon V\to\R$
and an $r\in\R$ such that $\ph(x)<r\le\ph(y)$ for all $x\in A$ and $y\in B$.
\end{cor}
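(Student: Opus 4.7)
The plan is to deduce this from the separation theorem \ref{septopvs} and then ``upgrade'' the weak inequality on $A$ to a strict one by exploiting that $A$ is open.

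First I would dispose of the trivial cases. If $A=\emptyset$ or $B=\emptyset$, one may simply take $\ph:=0$ and an appropriate real number $r$ (e.g.\ $r:=0$ in the first case, $r:=1$ in the second), and all required inequalities hold vacuously on one side. So from now on I assume $A\ne\emptyset$ and $B\ne\emptyset$. Since $A$ is open, $A^\circ=A\ne\emptyset$, so Theorem \ref{septopvs} applies and produces a continuous $K$-linear map $\ph\colon V\to\R$ with $\ph\ne 0$ such that $\ph(x)\le\ph(y)$ for all $x\in A$ and $y\in B$.

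Next I would define $r:=\sup\{\ph(x)\mid x\in A\}\in\R\cup\{+\infty\}$. Picking any $y_0\in B$, the inequality from the separation theorem gives $\ph(x)\le\ph(y_0)$ for all $x\in A$, so $r\le\ph(y_0)<\infty$; in particular $r\in\R$, and passing to the infimum over $y\in B$ yields $r\le\ph(y)$ for all $y\in B$.

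The only remaining point, and the crux of the argument, is the strict inequality $\ph(x)<r$ for every $x\in A$. This is where openness of $A$ enters. Since $\ph\ne 0$, I may choose $u\in V$ with $\ph(u)>0$ (replace $u$ by $-u$ if necessary). For $x\in A$, the map $K\to V,\ \la\mapsto x+\la u$ is continuous by Definition \ref{deftopvs}, so its preimage of the open set $A$ is an open subset of $K$ containing $0$; hence there exists $\la\in K_{>0}$ with $x+\la u\in A$. Then
\[
\ph(x)<\ph(x)+\la\ph(u)=\ph(x+\la u)\le r,
\]
as required. This finishes the proof.

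The routine part is the invocation of \ref{septopvs}; the only genuine obstacle is the strict inequality on $A$, which is handled by the one-line translation argument above using the existence of a direction $u$ with $\ph(u)>0$ and the continuity of scalar multiplication.
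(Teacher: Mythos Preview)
Your proof is correct and follows essentially the same route as the paper: apply Theorem~\ref{septopvs}, set $r:=\sup\ph(A)$, and obtain the strict inequality on $A$ by perturbing $x$ along a direction $u$ with $\ph(u)>0$ using continuity of scalar multiplication. The arguments are virtually identical.
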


\begin{proof}
If $A=\emptyset$ or $B=\emptyset$ then we can set $\ph:=0\in V^*$ and choose $r\in\R$ arbitrarily since
the statement $\forall x\in A:\forall y\in B:\ph(x)<r\le\ph(y)$ is empty. WLOG $A\ne\emptyset$ and
$B\ne\emptyset$.
Choose by \ref{septopvs} a continuous $K$-linear function $\ph\colon V\to\R$
with $\ph\ne0$ and $\ph(x)\le\ph(y)$ for all $x\in A$ and $y\in B$. The set
$\{\ph(x)\mid x\in A\}\subseteq\R$ is nonempty because of $A\ne\emptyset$
and bounded from above because of $B\ne\emptyset$. It thus possesses a
supremum $r\in\R$. We have $\ph(x)\le r\le\ph(y)$ for all $x\in A$ and $y\in B$.
Let $x\in A$. It remains to show that $\ph(x)<r$. For this purpose, choose
$z\in V$ such that $\ph(z)>0$. The function $K\to V,\ \la\mapsto x+\la z$ is 
continuous and together with $0$, a whole neighborhood of $0$ lies in the
preimage of $A$ under this function. In particular, there is an $\ep\in K_{>0}$
such that $x+\ep z\in A$. Then $\ph(x)<\ph(x)+\ep\ph(z)=\ph(x+\ep z)\le r$.
\end{proof}

\begin{lem}\label{stayinside}
Let $V$ be a topological $K$-vector space, $A\subseteq V$ be convex,
$x\in A^\circ$, $y\in A$ and $\la\in K$ with $0<\la\le1$. Then
$\la x+(1-\la)y\in A^\circ$.
\end{lem}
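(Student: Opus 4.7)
The plan is to produce an explicit open neighborhood of $z:=\la x+(1-\la)y$ inside $A$ by translating and scaling an open neighborhood of $x$. Since $x\in A^\circ$, there is an open set $U\subseteq V$ with $x\in U\subseteq A$. I would then consider
\[
U':=\la U+(1-\la)y=\{\la u+(1-\la)y\mid u\in U\}.
\]
Clearly $z\in U'$, so it suffices to check that $U'$ is open and that $U'\subseteq A$.

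For openness, I would invoke Remark \ref{transhomeo}: the map $T\colon V\to V$, $v\mapsto\la v+(1-\la)y$ is a homeomorphism because $\la\in K^\times$ (using $\la>0$). Thus $U'=T(U)$ is open in $V$.

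For the inclusion $U'\subseteq A$, I would use the convexity of $A$ directly: any point of $U'$ has the form $\la u+(1-\la)y$ with $u\in U\subseteq A$ and $y\in A$, and since $0<\la\le 1$ this lies in $A$ by \ref{dfconv}. Combining both facts, $U'$ is an open neighborhood of $z$ contained in $A$, so $z\in A^\circ$.

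There is no real obstacle here; the only point to be mildly careful about is that the scaling factor $\la$ is allowed to be $1$, in which case $T$ is simply the identity and the argument degenerates to $z=x\in A^\circ$, and that the hypothesis $\la>0$ (rather than $\la\ge 0$) is exactly what is needed to apply \ref{transhomeo} to guarantee that $T$ is a homeomorphism and not merely a continuous map.
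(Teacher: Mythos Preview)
Your proof is correct and essentially identical to the paper's argument: the paper chooses an open neighborhood $B$ of $x$ inside $A$, sets $C:=z+\la(B-x)$, and checks that $C$ is open and contained in $A$. Since $z+\la(B-x)=\la B+(1-\la)y$, this is precisely your $U'$, just written in a different form.
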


\begin{proof}
Choose an open neighborhood $B$ of $x$ with $B\subseteq A$.
Setting $z:=\la x+(1-\la)y$, $C:=z+\la(B-x)$ is by \ref{transhomeo}
an open neighborhood of $z$. It is enough to show $C\subseteq A$.
To this end, let $c\in C$. Because of $B=x+\frac1\la(C-z)$, we have then
$b:=x+\frac1\la(c-z)\in B\subseteq A$. Consequently,
$c=\la(b-x)+z=\la b-\la x+\la x+(1-\la)y=\la b+(1-\la)y\in A$.
\end{proof}

\begin{pro}\label{intcloconvex}
Suppose $V$ is a topological $K$-vector space and $A\subseteq V$ is convex.
Then both $A^\circ$ and $\overline A$ are convex.
\end{pro}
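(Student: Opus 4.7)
The plan is to treat the two parts separately; the interior case is an immediate consequence of Lemma \ref{stayinside}, while the closure case uses the continuity of the vector space operations together with the characterization of the closure via neighborhoods from \ref{interiorclosure}.

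For the convexity of $A^\circ$, I would take $x,y\in A^\circ$ and $\lambda\in[0,1]_K$. If $\lambda=0$, then $\lambda x+(1-\lambda)y=y\in A^\circ$, while if $\lambda=1$, the result is $x\in A^\circ$. In the remaining case $0<\lambda<1$, I note that $y\in A^\circ\subseteq A$ and apply Lemma \ref{stayinside} directly to conclude $\lambda x+(1-\lambda)y\in A^\circ$. This exhausts all cases.

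For the convexity of $\overline A$, I would fix $x,y\in\overline A$ and $\lambda\in[0,1]_K$ and set $z:=\lambda x+(1-\lambda)y$. To show $z\in\overline A$, I would use the description of the closure from \ref{interiorclosure}: it suffices to show that every neighborhood $U\in\mathcal U_z$ meets $A$. The map
\[
f\colon V\times V\to V,\quad(u,v)\mapsto\lambda u+(1-\lambda)v
\]
is continuous by Definition \ref{deftopvs} (composition of addition and two scalar multiplications by fixed scalars), and $f(x,y)=z$. Hence $f^{-1}(U)$ is an open neighborhood of $(x,y)$ in the product topology, so by \ref{subspaceproductspace}(b) it contains a set of the form $U_1\times U_2$ with $U_1\in\mathcal U_x$ and $U_2\in\mathcal U_y$. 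Since $x,y\in\overline A$, one may pick $a\in U_1\cap A$ and $b\in U_2\cap A$; then by convexity of $A$ the point $\lambda a+(1-\lambda)b=f(a,b)$ lies in $A$, and it also lies in $f(U_1\times U_2)\subseteq U$. Thus $U\cap A\neq\emptyset$, which gives $z\in\overline A$.

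There is no real obstacle here: both statements reduce to tools already developed, namely Lemma \ref{stayinside} for the interior and the two defining axioms of a topological vector space (continuity of addition and of scalar multiplication) for the closure. The only minor point to keep in mind is to handle the boundary values $\lambda\in\{0,1\}$ separately in the first part, since Lemma \ref{stayinside} is stated only for $0<\lambda\le1$.
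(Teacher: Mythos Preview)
Your proof is correct and follows essentially the same approach as the paper: the convexity of $A^\circ$ via Lemma \ref{stayinside}, and the convexity of $\overline A$ via the continuity of $(u,v)\mapsto\lambda u+(1-\lambda)v$ together with the neighborhood characterization of the closure. Your treatment is slightly more explicit (separating out the endpoint cases $\lambda\in\{0,1\}$ and naming the map $f$), but the argument is the same.
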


\begin{proof}
It follows immediately from Lemma \ref{stayinside} that
$A^\circ$ is convex. In order to show that $\overline A$ is convex,
fix $x,y\in\overline A$ and $\la\in[0,1]_K$. To show:
$z:=\la x+(1-\la)y\in\overline A$. Let $B$ be a neighborhood of $z$ in $V$.
To show: $B\cap A\ne\emptyset$. Since \[V\times V\to V,\ (x',y')\mapsto
\la x'+(1-\la)y'\] is continuous, there are neighborhoods $C$ of $x$ and
$D$ of $y$ in $V$ such that \[\la C+(1-\la)D\subseteq B.\] Due to
$x,y\in\overline A$, we find $x_0\in C\cap A$ and $y_0\in D\cap A$.
Then \[z_0:=\la x_0+(1-\la)y_0\in B\cap A.\]
\end{proof}

\begin{df}\label{defbalanced}
Let $V$ be a $K$-vector space and $A\subseteq V$ a set. Then $A$ is called
\emph{balanced} if $\la x\in A$ for all $x\in A$ and $\la\in K$ with $|\la|\le1$.
\end{df}

\begin{pro}\label{balanced}
Suppose $V$ be a topological $K$-vector space
and $B$ is a neighborhood of $0$ in $V$.
Then there is a balanced open neighborhood $A$ of $0$ in $V$ with
$A\subseteq B$.
\end{pro}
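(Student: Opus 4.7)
The plan is to exploit continuity of scalar multiplication at the origin to produce a small open neighborhood of $0$ that, after being rescaled by all sufficiently small scalars, still lies inside $B$; their union will be the desired balanced open neighborhood.

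First, consider the scalar multiplication map $\mu\colon K\times V\to V,\ (\la,x)\mapsto\la x$, which is continuous by Definition \ref{deftopvs}. Since $\mu(0,0)=0\in B$, the preimage $\mu^{-1}(B)$ is a neighborhood of $(0,0)$ in the product topology on $K\times V$. Recalling that $K$ is equipped with the topology induced from $\R$, the sets $\{\la\in K\mid|\la|<\ep\}$ with $\ep\in K_{>0}$ form a neighborhood base of $0$ in $K$. Hence I can choose $\ep\in K_{>0}$ and an open neighborhood $U$ of $0$ in $V$ such that $\la x\in B$ for all $\la\in K$ with $|\la|<\ep$ and all $x\in U$.

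Next, I would define
\[
A:=\bigcup_{\substack{\la\in K\\0<|\la|<\ep}}\la U.
\]
For each $\la\in K^\times$, the map $V\to V,\ x\mapsto\la x$ is a homeomorphism by Remark \ref{transhomeo}, so $\la U$ is open, and consequently $A$ is open as a union of open sets. Since $U$ is a neighborhood of $0$ we have $0\in U$, which gives $0=\la\cdot 0\in\la U\subseteq A$ for any admissible $\la$, so $A$ is a neighborhood of $0$. The inclusion $A\subseteq B$ is immediate from the choice of $\ep$ and $U$.

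Finally, I need to check that $A$ is balanced in the sense of Definition \ref{defbalanced}. Take $y\in A$ and $\mu\in K$ with $|\mu|\le 1$; write $y=\la x$ with $0<|\la|<\ep$ and $x\in U$. Then $\mu y=(\mu\la)x$ and $|\mu\la|\le|\la|<\ep$. If $\mu\la\ne 0$, then $\mu y\in(\mu\la)U\subseteq A$ directly from the definition of $A$; if $\mu\la=0$, then $\mu y=0\in A$ as already noted. Either way $\mu y\in A$, which is exactly balancedness.

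There is no genuine obstacle here; the argument is essentially forced by the continuity axiom in Definition \ref{deftopvs}. The only mild subtlety is ensuring $0\in A$ (so that $A$ is truly a neighborhood of $0$ and closed under multiplication by $\mu=0$), which is handled automatically by $0\in U$.
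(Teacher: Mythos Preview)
Your proof is correct and essentially identical to the paper's: both use continuity of scalar multiplication at $(0,0)$ to find $\ep$ and an open neighborhood $U$ (the paper calls it $C$) with $\la U\subseteq B$ for $|\la|<\ep$, and then take $A=\bigcup_{0<|\la|<\ep}\la U$. You spell out the balancedness verification in more detail than the paper (which simply says ``obviously balanced''), but the argument is the same.
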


\begin{proof}
WLOG $B$ is open [$\to$ \ref{neighbor}]. Since the scalar multiplication is continuous by \ref{deftopvs}, there is an $\ep\in K_{>0}$ and an open
neighborhood $C$ of $0$ in $V$ such that
\[\forall\la\in(-\ep,\ep)_K:\forall x\in C:\la x\in B.\]
By \ref{transhomeo}, each $\la C$ with $\la\in K^\times$ is open. Thus
also $A:=\bigcup_{\la\in(-\ep,\ep)_K\setminus\{0\}}\la C\subseteq B$ is open.
Moreover, we have $0\in A$ and $A$ is obviously balanced.
\end{proof}

\begin{exo}\label{comclo}
In a Hausdorff space [$\to$ \ref{dfcomp}], every compact subset
[$\to$ \ref{compactsubspace}] is closed.
\end{exo}

\begin{df}\label{defvstop}
Let $V$ be a $K$-vector space. We call a topology on $V$ making $V$ into
a topological vector space [$\to$ \ref{deftopvs}] a \emph{vector space topology}
on $V$.
\end{df}

\begin{rem}
Up to now the condition $\overline{\{0\}}=\{0\}$ from Definition \ref{deftopvs} has
been used nowhere. From now on, we will however need it. We will show that
each finite-dimensional $\R$-vector space carries exactly one vector space
topology which would be false without the condition $\overline{\{0\}}=\{0\}$
since otherwise the trivial topology [$\to$ \ref{topreminder}(e)] would also be a
vector space topology.
\end{rem}

\begin{pro}\label{topvshausdorff}
Every topological $K$-vector space is a Hausdorff space.
\end{pro}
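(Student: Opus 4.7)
The plan is to exploit the vector space structure to reduce Hausdorffness of $V$ at an arbitrary pair of distinct points to the existence of a suitable balanced open neighborhood of $0$. This is the standard strategy in any topological vector space: a balanced neighborhood $B$ with $B+B$ contained in a prescribed neighborhood of $0$ gives two disjoint translates that separate the given points.

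First, I would fix $x, y \in V$ with $x \neq y$ and set $z := y - x \neq 0$. Since $\{0\}$ is closed in $V$ by \ref{deftopvs}, the translation homeomorphism $V \to V,\ v \mapsto v + z$ from \ref{transhomeo} shows that $\{z\}$ is closed, so $U := V \setminus \{z\}$ is an open neighborhood of $0$. Applying Proposition \ref{balanced} yields a balanced open neighborhood $A$ of $0$ with $A \subseteq U$, i.e., $z \notin A$.

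Next, by the continuity of addition $V \times V \to V$ at $(0,0)$ together with $0+0 = 0 \in A$, there exist open neighborhoods $B_1, B_2$ of $0$ with $B_1 + B_2 \subseteq A$. Shrinking to $B_1 \cap B_2$, I get an open neighborhood $B_0$ of $0$ satisfying $B_0 + B_0 \subseteq A$, and then a second application of \ref{balanced} produces a balanced open neighborhood $B \subseteq B_0$ of $0$; in particular $B + B \subseteq A$ and $-B = B$ by \ref{defbalanced}.

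Finally, I form the open neighborhoods $x + B$ of $x$ and $y + B$ of $y$ (open by \ref{transhomeo}) and verify they are disjoint: if some $w$ lay in their intersection, there would be $b_1, b_2 \in B$ with $x + b_1 = y + b_2$, giving $z = y - x = b_1 - b_2 \in B + (-B) = B + B \subseteq A$, contradicting $z \notin A$. Hence $(x + B) \cap (y + B) = \emptyset$, which establishes the Hausdorff property. There is no genuine obstacle here; the only subtlety is ensuring balancedness of $B$ (so that $B = -B$), which is exactly what Proposition \ref{balanced} is designed to provide.
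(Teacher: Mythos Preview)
Your proof is correct and follows essentially the same approach as the paper. The paper's version is slightly more direct: instead of invoking Proposition~\ref{balanced} to obtain a balanced $B$ with $-B=B$, it simply uses the continuity of subtraction $(v,w)\mapsto v-w$ at $(0,0)$ to obtain a neighborhood $U$ of $0$ with $U-U\subseteq V\setminus\{z\}$, from which $(x+U)\cap(y+U)=\emptyset$ follows immediately; your first application of \ref{balanced} (producing $A$) is in fact never used.
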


\begin{proof}
Let $V$ be a topological $K$-vector space [$\to$ \ref{deftopvs}] and let $x,y\in V$
with $x\ne y$. Set $z:=x-y\ne0$. By Definition \ref{deftopvs}, $\{0\}$ and thus 
by \ref{transhomeo} also $\{z\}$ is closed. Hence $V\setminus\{z\}$ is an open
neighborhood of $0$. Since $V\times V\to V,\ (v,w)\mapsto v-w$ is continuous by
\ref{deftopvs}, there is a neighborhood $U$ of $0$ such that
$U-U\subseteq V\setminus\{z\}$. Then $(x+U)\cap(y+U)=\emptyset$ for otherwise
there would be $u,v\in U$ with $x+u=y+v$ from which it would follow
$z=x-y=v-u\in U-U$ $\lightning$.
\end{proof}

\begin{pro}\label{onetop}
Let $V$ be a finite-dimensional $\R$-vector space. Then there is
exactly one vector space topology \emph{[$\to$ \ref{defvstop}]} on $V$.
\end{pro}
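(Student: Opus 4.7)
The plan is to fix a basis $e_1,\dots,e_n$ of $V$, transport the standard topology from $\R^n$ to $V$ via the linear bijection $\Phi\colon\R^n\to V,\ (a_1,\dots,a_n)\mapsto\sum_{i=1}^na_ie_i$, call the resulting topology $\tau_0$, and show that any vector space topology $\tau$ on $V$ must equal $\tau_0$. That $\tau_0$ is itself a vector space topology is routine from \ref{topvsex}(a), as addition and scalar multiplication are continuous with respect to the product topology on $\R^n$ and $\{0\}$ is closed there.

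First I would prove the inclusion $\tau\subseteq\tau_0$: since addition and scalar multiplication in $(V,\tau)$ are continuous by \ref{deftopvs}, the linear map $\Phi\colon(\R^n,\tau_0')\to(V,\tau)$ (where $\tau_0'$ denotes the standard topology on $\R^n$) is automatically continuous as a finite sum of continuous maps $a\mapsto a_ie_i$. Since $\Phi$ is a bijection and carries $\tau_0'$-open sets to $\tau_0$-open sets by definition, continuity of $\Phi$ forces every $\tau$-open set $W$ to satisfy $W=\Phi(\Phi^{-1}(W))\in\tau_0$.

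For the hard direction $\tau_0\subseteq\tau$, the core step is to manufacture, inside a given $\tau_0$-neighborhood of $0$, a $\tau$-open neighborhood of $0$. Let $S:=\{x\in\R^n\mid\|x\|=1\}$, which is compact in $\tau_0'$ from analysis. Then $\Phi(S)$ is $\tau$-compact by \ref{quasicompactimage}, hence $\tau$-closed by \ref{topvshausdorff} and \ref{comclo}. Since $0\notin\Phi(S)$ by injectivity of $\Phi$, the set $V\setminus\Phi(S)$ is a $\tau$-open neighborhood of $0$, and by \ref{balanced} it contains a balanced $\tau$-open neighborhood $A$ of $0$. I claim $\Phi^{-1}(A)\subseteq\{x\in\R^n\mid\|x\|<1\}$: if $x\in\Phi^{-1}(A)$ with $\|x\|\ge 1$, then $|1/\|x\||\le 1$, so balancedness of $A$ yields $\Phi(x/\|x\|)=(1/\|x\|)\Phi(x)\in A\cap\Phi(S)=\emptyset$, a contradiction.

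To finish, for any $U\in\tau_0$ and any $v\in U$, write $v=\Phi(x_0)$; then $\Phi^{-1}(U)-x_0$ is a $\tau_0'$-neighborhood of $0$ in $\R^n$, so it contains some open ball $\{y\in\R^n\mid\|y\|<\varepsilon\}$, and by the previous paragraph this ball contains $\varepsilon\,\Phi^{-1}(A)=\Phi^{-1}(\varepsilon A)$. Hence $\varepsilon A\subseteq\Phi(\Phi^{-1}(U)-x_0)=U-v$, so $v+\varepsilon A\subseteq U$, and since $v+\varepsilon A$ is a $\tau$-neighborhood of $v$ by \ref{transhomeo}, this exhibits $U$ as $\tau$-open at $v$. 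Thus $\tau_0\subseteq\tau$ and the two topologies coincide. The main obstacle is the last inclusion, and the decisive ingredients are the compactness of the Euclidean sphere together with Hausdorffness and the existence of balanced neighborhoods provided by \ref{topvshausdorff} and \ref{balanced}.
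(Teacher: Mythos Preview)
Your proof is correct and follows essentially the same route as the paper: both establish continuity of the coordinate isomorphism $\Phi$ by the continuity of the vector space operations, then use compactness of the Euclidean sphere together with \ref{topvshausdorff}, \ref{comclo}, and \ref{balanced} to produce a balanced $\tau$-neighborhood whose $\Phi$-preimage lies inside the unit ball. The only cosmetic difference is that the paper phrases the conclusion as ``$f$ is open'' (reducing via \ref{transhomeo} to showing $0\in(f(B))^\circ$ for the unit ball $B$), whereas you verify $\tau_0\subseteq\tau$ directly at an arbitrary point; the underlying argument is identical.
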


\begin{proof}
Choose a basis $v_1,\dots,v_n$ of $V$. Then
$f\colon\R^n\to V,\ x\mapsto\sum_{i=1}^nx_iv_i$ is a vector space isomorphism.
With $\R^n$ [$\to$ \ref{topvsex}] also $V$ possesses therefore
a vector space topology. This shows existence. For uniqueness, endow now
$V$ with any vector space topology. We show that $f$ is a homeomorphism.
By \ref{deftopvs}, $f$ is certainly continuous. It is enough to show that images of
open sets under $f$ are again open. For this purpose, it suffices to show that for
all open balls in $\R^n$ the image of their center is an interior point of their image
because if $A\subseteq\R^n$ is open then every point in $f(A)$ is the image of
the center of an open ball contained in $A$. Due to \ref{transhomeo}, it
suffices to consider the ball $B:=\{x\in\R^n\mid\|x\|<1\}$ around the origin of
radius $1$. In order to show that $0\in(f(B))^\circ$, we take the sphere
$S:=\{x\in\R^n\mid\|x\|=1\}$. By \ref{sacompreals}, $S$ is compact and hence so is
by \ref{quasicompactimage} and \ref{topvshausdorff} also $f(S)$.
According to \ref{comclo}, $f(S)$ is thus closed in $V$. Hence $V\setminus f(S)$
is a neighborhood of $0$ in $V$. By \ref{balanced}, there is a balanced open
neighborhood $A$ of $0$ in $V$ with $A\subseteq V\setminus f(S)$, i.e.,
$A\cap f(S)=\emptyset$. Since $f$ is continuous, $f^{-1}(A)$ is an open 
neighborhood of $0$ in $\R^n$. Due to the linearity of $f$, with $A$ also
$f^{-1}(A)$ is balanced according to Definition \ref{defbalanced}. Since
$f^{-1}(A)$ is disjoint to $S$, it follows that $f^{-1}(A)\subseteq B$ and thus
$A\subseteq f(B)$. Hence $0\in(f(B))^\circ$ as desired.
\end{proof}

\section{Convex sets in locally convex vector spaces}

\begin{df}\label{defloccon}
A \emph{locally convex $K$-vector space} is a topological
$K$-vector space [$\to$ \ref{deftopvs}] in which for every $x\in V$ each 
neighborhood of $x$ contains a convex neighborhood of $x$.
\end{df}

\begin{rem}
Because of \ref{transhomeo}, one can restrict oneself in \ref{defloccon}
to $x=0$.
\end{rem}

\begin{ex}{}[$\to$ \ref{topvsex}]
\begin{enumerate}[(a)]
\item If $I$ is a set, then $K^I$ is a locally convex $K$-vector space.
\item If a $K$-vector space $V$ is endowed with the initial topology
 [$\to$ \ref{initialtop}] with respect
to a family $(f_i)_{i\in I}$ of $K$-linear functions $f_i\colon V\to\R$ in such a way
that to each $x\in V\setminus\{0\}$ there is some $i\in I$ with $f_i(x)\ne0$,
then $V$ is a locally convex $K$-vector space.
\item Every normed $\R$-vector space $V$, in particular every $\R$-vector space
with scalar product, is a locally convex $\R$-vector space since
\[\|\la x+(1-\la)y\|\le\la\|x\|+(1-\la)\|y\|<\la\ep+(1-\la)\ep=\ep\]
for all $\ep>0$, $x,y\in V$ satisfying $\|x\|,\|y\|<\ep$ (``balls are convex'') and $\la\in[0,1]_\R$.
\end{enumerate}
\end{ex}

\begin{lem}\label{closedminkowskisum}
Suppose $V$ is a topological $K$-vector space, $A\subseteq V$ is closed and
$C\subseteq V$ is compact. Then $A+C$ is closed.
\end{lem}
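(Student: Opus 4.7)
The plan is to show that the complement $V \setminus (A+C)$ is open, which by \ref{topreminder}(a) suffices. Fix an arbitrary $x \in V \setminus (A+C)$; the goal is to produce an open neighborhood $U$ of $x$ with $U \cap (A+C) = \emptyset$. The condition $x \notin A+C$ translates to: for every $c \in C$, the element $x - c$ lies in the open set $V \setminus A$ (using that $A$ is closed).

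The main idea is then to exploit the continuity of subtraction $V \times V \to V$, $(y,z) \mapsto y - z$, from Definition \ref{deftopvs}. For each fixed $c \in C$, since $(x,c) \mapsto x - c \in V \setminus A$ and the latter set is open, continuity of subtraction yields open neighborhoods $U_c$ of $x$ and $W_c$ of $c$ such that $U_c - W_c \subseteq V \setminus A$, i.e.\ $(U_c - W_c) \cap A = \emptyset$. So far this only gives, for each $c$ separately, a neighborhood of $x$ disjoint from $A + W_c$; the task is to merge these into a single neighborhood of $x$ that works for \emph{all} $c \in C$ simultaneously.

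Here is where the compactness of $C$ enters, and this is the only real step in the argument. The family $(W_c)_{c \in C}$ is an open cover of $C$, so by \ref{compactsubspace} there exist finitely many $c_1,\dots,c_n \in C$ with $C \subseteq W_{c_1} \cup \cdots \cup W_{c_n}$. Define
\[
U \;:=\; U_{c_1} \cap \cdots \cap U_{c_n},
\]
which is an open neighborhood of $x$ as a finite intersection of such. For any $c \in C$ there is some index $i$ with $c \in W_{c_i}$, whence
\[
U - \{c\} \;\subseteq\; U_{c_i} - W_{c_i} \;\subseteq\; V \setminus A,
\]
so $(U - c) \cap A = \emptyset$, equivalently $U \cap (A + c) = \emptyset$. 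Taking the union over $c \in C$ gives $U \cap (A + C) = \emptyset$, completing the proof.

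I expect no serious obstacle: the argument is a standard ``tube lemma'' style compactness argument, and the only ingredients used are continuity of subtraction (which is part of Definition \ref{deftopvs}) and compactness of $C$ via finite subcovers. Note that the Hausdorff property proved in \ref{topvshausdorff} is not explicitly needed, only the open-set structure coming from the topological vector space axioms.
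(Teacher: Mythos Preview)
Your proof is correct and follows essentially the same approach as the paper: both use continuity of subtraction to get, for each $c\in C$, local neighborhoods separating $x$ from $A+c$, and then invoke compactness of $C$ to pass to a finite subcover and intersect. The only cosmetic difference is that the paper works with a single neighborhood $U_y$ of the origin (used symmetrically for both $x$ and $y$) whereas you use two separate neighborhoods $U_c$ of $x$ and $W_c$ of $c$; your formulation is arguably slightly cleaner.
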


\begin{proof} Let $x\in V\setminus(A+C)$. We have to show that there is a
neighborhood $U$ of the origin satisfying $(x+U)\cap(A+C)=\emptyset$.

\medskip
\textbf{Claim:}
For each $y\in C$, there exists a neighborhood $U_y$ of the origin such that
\[(x+U_y)\cap(y+U_y+A)=\emptyset.\]

\smallskip
\emph{Explanation.} Let $y\in C$. Then
$V\times V\to V,\ (x',y')\mapsto x-y+x'-y'$ is continuous and $(0,0)$ lies in the
preimage of the open set $V\setminus A$ since
$x-y\notin A$ (otherwise we would have $x\in A+y\subseteq A+C$). Hence there is a
neighborhood $U_y$ with \[x-y+U_y-U_y\subseteq V\setminus A,\]
i.e., $(x+U_y-y-U_y)\cap A=\emptyset$.

\smallskip\noindent
By compactness of $C$, there is a finite subset $D\subseteq C$ such that
$C\subseteq\bigcup_{y\in D}(y+U_y)$. Now $U:=\bigcap_{y\in D}U_y$ is a
neighborhood of the origin. In order to show that \[(x+U)\cap(A+C)=\emptyset,\]
it is enough to prove that $(x+U)\cap(A+y+U_y)=\emptyset$ for all $y\in D$.
For this purpose, it suffices to show that
$(x+U_y)\cap(y+U_y+A)=\emptyset$ for all $y\in D$. But this holds even for all
$y\in C$ by the above claim.
\end{proof}

\begin{thm}[Separation theorem for locally convex vector spaces]{}
\emph{[$\to$ \ref{septopvs}, \ref{septopvscor}]}
\label{seplocconvs}
Let $A$ and $C$ be convex sets in the locally convex $K$-vector space
$V$ with $A\cap C=\emptyset$. Let $A$ be closed and $C$ be compact. Then
there is a continuous $K$-linear function $\ph\colon V\to\R$ and
$r,s\in\R$ with $\ph(x)\le r<s\le\ph(y)$ for all $x\in A$ and $y\in C$.
\end{thm}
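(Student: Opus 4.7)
The plan is to reduce to the situation of Corollary \ref{septopvscor} by separating a suitable convex open neighborhood of the origin from a closed convex set missing it. First, if $A=\emptyset$ or $C=\emptyset$ then $\varphi:=0$ together with $r:=0$, $s:=1$ already does the job vacuously, so I will assume $A\ne\emptyset\ne C$.

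The key observation is that the set $A-C:=A+(-C)$ is closed and does not contain the origin. Indeed, by Remark \ref{transhomeo} the map $x\mapsto -x$ is a homeomorphism, so $-C$ is compact (being a continuous image of a compact set in a Hausdorff space, using \ref{topvshausdorff} and \ref{quasicompactimage} together with \ref{comclo}); then Lemma \ref{closedminkowskisum} gives that $A+(-C)=A-C$ is closed. Moreover $A\cap C=\emptyset$ forces $0\notin A-C$. Hence $V\setminus(A-C)$ is an open neighborhood of $0$, and by local convexity \ref{defloccon} there exists a convex open neighborhood $U$ of $0$ with $U\cap(A-C)=\emptyset$. Note that $A-C$ is itself convex by Definition/Proposition \ref{minkowskisum}.

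Now apply Corollary \ref{septopvscor} to the disjoint convex sets $U$ (which is open and nonempty) and $A-C$ (which is nonempty): there is a continuous $K$-linear map $\varphi\colon V\to\R$ and some $t\in\R$ with
\[
\varphi(u)<t\le\varphi(z)\qquad\text{for all }u\in U,\ z\in A-C.
\]
Substituting $u=0\in U$ gives $0<t$, and for arbitrary $x\in A$, $y\in C$ we have $\varphi(x)-\varphi(y)=\varphi(x-y)\ge t>0$. Replacing $\varphi$ by $\psi:=-\varphi$ reverses the direction of the inequality to $\psi(y)-\psi(x)\ge t$ for all $x\in A$, $y\in C$.

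It remains to extract the constants $r$ and $s$. Fixing any $y_0\in C$ shows $\psi(A)$ is bounded above by $\psi(y_0)-t$, so $r:=\sup\psi(A)\in\R$ exists, and then $\psi(y)\ge r+t$ for every $y\in C$, so $s:=\inf\psi(C)\in\R$ exists as well and satisfies $s\ge r+t>r$. By construction $\psi(x)\le r<s\le\psi(y)$ for all $x\in A$ and $y\in C$, as required. The only genuinely nontrivial ingredient is the closedness of $A-C$, for which Lemma \ref{closedminkowskisum} (exploiting compactness of $C$) is indispensable; everything else is a direct assembly of the previously established separation result and the definition of local convexity.
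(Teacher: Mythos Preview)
Your proof is correct and follows essentially the same route as the paper's: form the Minkowski difference, use Lemma~\ref{closedminkowskisum} to see it is closed and avoids $0$, pick a convex open neighborhood of $0$ by local convexity, and apply Corollary~\ref{septopvscor}. The only cosmetic difference is that you work with $A-C$ instead of the paper's $C-A$, which forces the extra sign flip $\psi:=-\varphi$ at the end.
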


\begin{proof}
If $A=\emptyset$ or $C=\emptyset$, we can take $\ph:=0\in V^*$ and choose $r,s\in\R$ arbitrary since the statement
$\forall x\in A:\forall y\in C:\ph(x)\le r<s\le\ph(y)$ is empty. WLOG $A\ne\emptyset$ and $C\ne\emptyset$. We have that
$B:=C-A$ is by \ref{closedminkowskisum} closed and by hypothesis we have
$0\notin B$. Since $V$ is locally convex, there is in view of \ref{intcloconvex}
a convex open set $D\subseteq V$ with $0\in D$ and $D\cap B=\emptyset$.
Since $B$ is also convex, there is by Corollary \ref{septopvscor} a
continuous $K$-linear function $\ph\colon V\to\R$ and an $\ep\in\R$ such that
$\ph(x)<\ep\le\ph(y)$ for all $x\in D$ and $y\in B$. In particular, $\ep>\ph(0)=0$
and $\ph(x)+\ep\le\ph(y)$ for all $x\in A$ and $y\in C$. Because of
$A\ne\emptyset\ne C$, $r:=\sup\{\ph(x)\mid x\in A\}\in\R$ and
$s:=\inf\{\ph(y)\mid y\in C\}\in\R$ exist. Moreover, we have $r+\ep\le s$, i.e.,
$r<s$.
\end{proof}

\begin{df}\label{dfface}
Let $V$ be a $K$-vector space and $A\subseteq V$ be convex. Then a convex set $F\subseteq A$ is called a \emph{face} of $A$ if for all $x,y\in A$
with $\frac{x+y}2\in F$, we have also $x,y\in F$.
\end{df}

\begin{pro}\label{extremepointface}
Suppose $V$ is a $K$-vector space, $A\subseteq V$ is convex and
$x\in A$. Then $x$ is an extreme point of $A$ \emph{[$\to$ \ref{dfconv}]}
if and only if $\{x\}$ is a face of $A$.
\end{pro}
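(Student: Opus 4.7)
The plan is to unpack both definitions and observe that the statement is essentially a tautology once one notices that a singleton $\{x\}$ is trivially convex and that, for $y,z\in A$ with $\tfrac{y+z}{2}=x$, the condition ``$y,z\in\{x\}$'' is the same as ``$y=z$'' (since $y=z$ together with $\tfrac{y+z}{2}=x$ forces $y=z=x$).

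For the forward direction, I will assume $x$ is an extreme point of $A$. The set $\{x\}$ is convex and contained in $A$, so only the defining implication of a face from \ref{dfface} needs to be checked. Given $y,z\in A$ with $\tfrac{y+z}{2}\in\{x\}$, one has $\tfrac{y+z}{2}=x$; by the definition of an extreme point in \ref{dfconv}, there cannot exist $y,z\in A$ with $y\neq z$ and $x=\tfrac{y+z}{2}$, hence $y=z$, and from $x=\tfrac{y+z}{2}=y=z$ one concludes $y,z\in\{x\}$.

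For the backward direction, I will assume $\{x\}$ is a face of $A$ and suppose, with a view to applying the face property, that $y,z\in A$ satisfy $x=\tfrac{y+z}{2}$. Then $\tfrac{y+z}{2}\in\{x\}$, so the face property of \ref{dfface} yields $y,z\in\{x\}$, i.e.\ $y=z=x$; in particular $y=z$, so no pair with $y\neq z$ can witness a non-trivial midpoint representation of $x$, and $x$ is extreme by \ref{dfconv}.

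There is no real obstacle here: the argument is a direct translation between the two formulations, with the only minor point worth flagging being that the definition of extreme point in \ref{dfconv} is phrased as a non-existence statement (``no $y,z\in A$ with $y\neq z$ and $x=\tfrac{y+z}{2}$''), which has to be recognized as equivalent to the conditional ``$y,z\in A$ and $x=\tfrac{y+z}{2}$ imply $y=z$'', matching the hypothesis needed to invoke the face property.
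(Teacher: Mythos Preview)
Your proof is correct and follows essentially the same approach as the paper: both unwind the definitions and use the trivial observation that, for $y,z\in A$ with $\tfrac{y+z}{2}=x$, the conditions $y=z$ and $y=z=x$ are equivalent. The paper compresses this into a single chain of four $\iff$-steps, while you write out the two directions separately and additionally remark that $\{x\}$ is convex (implicit in the paper); the content is identical.
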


\begin{proof}
\begin{align*}
&x\text{ is an extreme point of }A\\
\overset{\ref{dfconv}}\iff&\nexists y,z\in A:\left(y\ne z\et x=\frac{y+z}2\right)\\
\iff&\forall y,z\in A:\left(x=\frac{y+z}2\implies y=z\right)\\
\iff&\forall y,z\in A:\left(x=\frac{y+z}2\implies y=z=x\right)\\
\iff&\forall y,z\in A:\left(\frac{y+z}2\in\{x\}\implies y,z\in\{x\}\right)
\end{align*}
\end{proof}

\begin{pro}{}\emph{[$\to$ \ref{extremeexo}]}\label{ratiootherthan2}
Suppose $V$ is a $K$-vector space, $A\subseteq V$ is convex, $F\subseteq A$
is convex and $\la\in(0,1)_K$. Then the following are equivalent:
\begin{enumerate}[\normalfont(a)]
\item $F$ is a face of $A$
\item $\forall x,y\in A:(\la x+(1-\la)y\in F\implies x,y\in F)$
\end{enumerate}
\end{pro}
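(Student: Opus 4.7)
For (b)$\implies$(a), I let $x, y \in A$ with $m := \frac{x+y}2 \in F$ and argue that $x, y \in F$. Swapping $x$ and $y$ in the premise of (b) replaces $\la$ by $1-\la$ while leaving the conclusion intact, so ``(b) with $\la$'' and ``(b) with $1-\la$'' coincide; WLOG $\la \in [\frac12, 1)$, whence $\frac1{2\la} \in (0, 1]$. The two points
\[
u := \left(1-\frac1{2\la}\right) x + \frac1{2\la} y, \qquad v := \frac1{2\la} x + \left(1-\frac1{2\la}\right) y
\]
are convex combinations of $x, y$ and hence lie in $A$. A routine computation gives $\la u + (1-\la) x = m = \la v + (1-\la) y \in F$, so applying (b) twice yields $u, x \in F$ and $v, y \in F$, proving the midpoint face property; since $F$ is convex by hypothesis, $F$ is a face.

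For (a)$\implies$(b), let $z := \la x + (1-\la) y \in F$ with $x, y \in A$ and, after possibly swapping $x, y$, $\la \in [\frac12, 1)$. First I obtain $x \in F$ by a reflection: the point $w_1 := 2z - x = (2\la - 1) x + (2 - 2\la) y$ lies in $A$ (both coefficients belong to $[0, 1]$) and $\frac{x + w_1}2 = z \in F$, so the face property delivers $x, w_1 \in F$.

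To obtain $y \in F$, I iterate. Writing $w_n = \mu_n x + (1-\mu_n) y \in F$ with $\mu_1 = 2\la - 1$: if $\mu_n \le \frac12$, then the symmetric reflection $w_n' := 2 w_n - y = 2\mu_n x + (1 - 2\mu_n) y$ lies in $A$, satisfies $\frac{y + w_n'}2 = w_n \in F$, and the face property yields $y \in F$, terminating the argument. Otherwise $\mu_n > \frac12$ and I repeat the first reflection on $w_n$ to produce $w_{n+1} := 2 w_n - x \in F$ with $\mu_{n+1} = 2\mu_n - 1$. A straightforward induction gives $\mu_n = 1 - 2^n(1-\la)$; since $\la < 1$, this sequence drops below $\frac12$ after finitely many steps, forcing termination. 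The main obstacle is organising this iteration cleanly; the key point is that each reflection doubles the gap $1-\mu_n$, so the termination condition $\mu_n \le \frac12$ is guaranteed to hold as soon as $2^n(1-\la) \ge \frac12$.
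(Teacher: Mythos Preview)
Your proof is correct and uses essentially the same reflection--iteration idea as the paper: for (a)$\implies$(b), both arguments hinge on the observation that if $\la\ge\frac12$ then $w:=2z-x=(2\la-1)x+2(1-\la)y\in A$ with $\frac{x+w}2=z\in F$, together with an iteration that doubles a suitable parameter until it lands in the right range. The only organizational difference is that the paper argues by contradiction (WLOG $x\notin F$), which lets it treat just one of the two points and finish in a single reflection after the iteration; you argue directly, which forces a second iteration to catch $y$. Your explicit (b)$\implies$(a) is a nice addition --- the paper leaves that direction as an exercise.
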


\begin{proof}
\underline{(b)$\implies$(a)} is an easy exercise.

\smallskip
\underline{(a)$\implies$(b)} \quad Assume that $F$ is a face of $A$ but there are
$x,y\in A$ such that \[\la x+(1-\la)y\in F\] and WLOG
(otherwise permute $x$ and $y$ and replace $\la$ by $1-\la$)
$x\notin F$. If $\la<\frac12$, one then can replace
$(x,\la)$ by $(x',\la')$ where $x':=\frac{x+y}2$ and $\la':=2\la\in(0,1)_K$ because
we then have $x'\in A\setminus F$ (since $A$ is convex and $F$ is a face of $A$),
$\la'\in(0,1)_K$ and \[\la'x'+(1-\la')y=2\la\frac{x+y}2+(1-2\la)y=\la x+(1-\la)y\in F.\]
By iterating this in case of need finitely many times, one can suppose
$\la\ge\frac12$. Then
\[z:=x+2((\la x+(1-\la)y)-x)=(2\la-1)x+2(1-\la)y\in A\] since
$2\la-1\ge0$, $2(1-\la)\ge0$ and $(2\la-1)+2(1-\la)=1$.
Now \[\frac{x+z}2=x+(\la x+(1-\la)y)-x=\la x+(1-\la)y\in F\] and thus
$x,z\in F$ since $F$ is a face of $A$ $\lightning$.
\end{proof}

\begin{ex}\label{exforfaces}
\begin{enumerate}[(a)]
\item If $V$ is a $K$-vector space and $A\subseteq V$ is convex, then both
$\emptyset$ and $A$ are faces of $A$. We call these the \emph{trivial}
faces of $A$.
\item The faces of $[0,1]^2\subseteq\R^2$ are
$\emptyset$, $\{(0,0)\}$, $\{(0,1)\}$, $\{(1,0)\}$, $\{(1,1)\}$, $\{0\}\times[0,1]$,
$\{1\}\times[0,1]$, $[0,1]\times\{0\}$, $[0,1]\times\{1\}$, $[0,1]^2$.
\item The faces of $B:=\{x\in\R^2\mid\|x\|\le1\}$ are $\emptyset$,
$\{x\}$ ($x\in B\setminus B^\circ$) and $B$.
\item $\{x\in\R^2\mid\|x\|<1\}$ has only the trivial faces.
\end{enumerate}
\end{ex}

\begin{dfpro}\label{defexposed}
Let $V$ be a $K$-vector space and suppose $A\subseteq V$ is convex.
We call $F$ an \emph{exposed face} of $A$ if there is a $K$-linear function
$\ph\colon V\to\R$ such that \[F=\{x\in A\mid\forall y\in A:\ph(x)\le\ph(y)\}.\]
Every exposed face of $A$ is a face of $A$.
\end{dfpro}

\begin{proof}
Let $F$ be an exposed face of $A$. To show: $F$ is a face of $A$.
It is easy to show that $F$ is convex.
Choose a $K$-linear $\ph\colon V\to\R$ such that
$F=\{x\in A\mid\forall y\in A:\ph(x)\le\ph(y)\}$.
Let $x,y\in A$ such that $\frac{x+y}2\in F$. To show: $x,y\in F$.
It is obviously enough to show that $\ph(x)=\ph\left(\frac{x+y}2\right)=\ph(y)$.
We have that
\[\ph(x)+\ph(y)=\ph\left(\frac{x+y}2\right)+\ph\left(\frac{x+y}2\right)
\overset{x,y\in A}{\underset{\frac{x+y}2\in F}\le}\ph(x)+\ph(y)
\]
where the inequality would be strict if one of $\ph(x)$ and $\ph(y)$ were different
from $\ph\left(\frac{x+y}2\right)$.
\end{proof}

\begin{ex}{}[$\to$ \ref{exforfaces}]
\begin{enumerate}[(a)]
\item If $V$ is a $K$-vector space and $A\subseteq V$ is convex, then $A$ is an
exposed face of $A$ while $\emptyset$ might be exposed
[$\to$ \ref{exforfaces}(d)] or non-exposed [\ref{exforfaces}(c)].
\item All faces of $[0,1]^2\subseteq\R^2$ are exposed except $\emptyset$.
\item All faces of $\{x\in\R^2\mid\|x\|\le1\}$ are exposed except $\emptyset$.
\item All faces of $\{x\in\R^2\mid\|x\|<1\}$ are exposed.
\item $((-\infty,0]\times[0,\infty))\cup\{(x,y)\in\R_{\ge0}^2\mid y\ge x^2\}$ has exactly 
one nonexposed face, namely $\{0\}$.
\end{enumerate}
\end{ex}

\begin{pro}\label{faceofface}
Suppose $V$ is a $K$-vector space, $A\subseteq V$ is convex, $F$ is a face of $A$
and $G\subseteq F$. Then the following holds:
\[\text{$G$ is a face of $F$}\iff\text{$G$ is a face of $A$}.\]
\end{pro}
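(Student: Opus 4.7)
The plan is to verify both implications directly from Definition \ref{dfface}, using nothing beyond the transitivity of set membership and the fact that $G \subseteq F \subseteq A$. Convexity of $G$ is the same property in both statements (it does not depend on the ambient convex set), so the only thing to check is the midpoint condition.

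For the direction ``$G$ face of $A$ $\Longrightarrow$ $G$ face of $F$'', I would take arbitrary $x, y \in F$ with $\tfrac{x+y}{2} \in G$. Since $F \subseteq A$, we have $x, y \in A$; applying the defining property of $G$ as a face of $A$ then directly yields $x, y \in G$. This is essentially a one-line argument.

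For the converse ``$G$ face of $F$ $\Longrightarrow$ $G$ face of $A$'', I would take $x, y \in A$ with $\tfrac{x+y}{2} \in G$. Since $G \subseteq F$, the midpoint lies in $F$, so the face property of $F$ in $A$ gives $x, y \in F$; now the face property of $G$ in $F$ applied to the same midpoint relation yields $x, y \in G$. So the proof chains the two face properties in succession.

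There is no real obstacle here; the entire argument is a direct unwinding of \ref{dfface} applied twice. One could alternatively phrase the whole thing via \ref{ratiootherthan2} with $\lambda = \tfrac{1}{2}$, but since the definition is already formulated with midpoints there is no need. The only ``care'' point is to notice that convexity of $G$ is part of the hypothesis on both sides of the equivalence (being a face requires being convex in the sense of \ref{dfface}), so nothing has to be checked there.
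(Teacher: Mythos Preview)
Your proposal is correct and coincides with the paper's proof: both directions are handled by the same midpoint arguments you describe, using first the face property of $F$ in $A$ and then that of $G$ in $F$ for the forward implication, and the single step $F\subseteq A$ for the reverse.
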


\begin{proof}
``$\Longrightarrow$'' Let $G$ be a face of $F$ and let $x,y\in A$ with
$\frac{x+y}2\in G$. To show: $x,y\in G$. Because of $\frac{x+y}2\in G\subseteq F$,
we have $x,y\in F$. Since $G$ is a face of $F$, it follows that $x,y\in G$.

\smallskip
``$\Longleftarrow$'' Let $G$ be a face of $A$ and let $x,y\in F$ with
$\frac{x+y}2\in G$. Because of $x,y\in F\subseteq A$, we then have $x,y\in G$.
\end{proof}

\begin{rem}
Every intersection of faces (except possible $\bigcap\emptyset:=V$) of a convex set in a $K$-vector space $V$ is obviously
again a face of this convex set.
\end{rem}

\begin{lem}\label{hasextreme}
Let $C\ne\emptyset$ be a compact convex
subset of a locally convex $K$-vector
space $V$. Then $C$ possesses an extreme point.
\end{lem}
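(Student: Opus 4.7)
My plan is to apply Zorn's lemma to the family of nonempty closed faces of $C$ ordered by reverse inclusion, obtain a minimal such face $F$, and then use the separation theorem \ref{seplocconvs} to show that $F$ must be a singleton, which by \ref{extremepointface} gives the desired extreme point.

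First, let $\mathcal{F}$ denote the set of all nonempty closed faces of $C$, partially ordered by reverse inclusion (so $F_1 \preceq F_2$ iff $F_2 \subseteq F_1$). The set $\mathcal{F}$ is nonempty since $C \in \mathcal{F}$. To apply Zorn's lemma I need to show every chain in $\mathcal{F}$ has an upper bound; equivalently, for any totally ordered subfamily $(F_i)_{i \in I} \subseteq \mathcal{F}$, the intersection $F := \bigcap_{i \in I} F_i$ is again a nonempty closed face of $C$. Being a closed face is straightforward: a (possibly infinite) intersection of closed sets is closed, and if $x,y \in C$ with $\frac{x+y}{2} \in F$, then $\frac{x+y}{2} \in F_i$ for every $i$, hence $x,y \in F_i$ for every $i$, so $x,y \in F$. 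Nonemptiness follows from compactness: by \ref{compactsubspace} each $F_i$ is compact (closed in the compact set $C$), and any finite subfamily of the chain has nonempty intersection (equal to its smallest member), so by the finite intersection property the intersection is nonempty. Zorn's lemma then yields a minimal element $F \in \mathcal{F}$.

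Next I claim that $F$ is a singleton. Suppose for contradiction that $F$ contains two distinct points $x \ne y$. By \ref{topvshausdorff} the space $V$ is Hausdorff, so $\{x\}$ is closed; both $\{x\}$ and $\{y\}$ are obviously compact, convex, and disjoint. The separation theorem \ref{seplocconvs} for locally convex vector spaces then provides a continuous $K$-linear $\varphi \colon V \to \mathbb{R}$ with $\varphi(x) < \varphi(y)$ (after possibly swapping $x$ and $y$). Since $F$ is a nonempty compact subset of $V$ and $\varphi|_F$ is continuous, \ref{takeson} yields $m := \min \varphi(F) \in \mathbb{R}$, and I set
\[
G := \{z \in F \mid \varphi(z) = m\} = \{z \in F \mid \forall w \in F : \varphi(z) \le \varphi(w)\}.
\]
Then $G$ is nonempty (the minimum is attained), closed in $V$ (as the intersection of $F$ with the closed set $\varphi^{-1}(m)$), and by \ref{defexposed} is an exposed, hence ordinary, face of $F$. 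Applying \ref{faceofface} with the face $F$ of $C$ and the face $G$ of $F$, $G$ is itself a face of $C$, so $G \in \mathcal{F}$. However $y \notin G$ because $\varphi(y) > \varphi(x) \ge m$, so $G \subsetneq F$, contradicting the minimality of $F$ in $\mathcal{F}$.

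Therefore $F = \{x_0\}$ for some $x_0 \in C$, and by \ref{extremepointface} $x_0$ is an extreme point of $C$. The main subtlety in this argument is not any single step but the interplay of three ingredients — compactness (to get nonempty intersections of chains and to attain the minimum of $\varphi$), Hausdorffness of $V$ (to know $\{x\}$ is closed so the separation theorem applies), and the transitivity of the face relation \ref{faceofface} (to promote an exposed face of $F$ to a face of $C$) — so the only real work is checking that \ref{seplocconvs} produces a functional that genuinely cuts $F$ into a proper face.
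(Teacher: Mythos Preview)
Your proof is correct and follows essentially the same approach as the paper: Zorn's lemma on the nonempty closed faces of $C$ (using compactness for the finite intersection property), then the separation theorem \ref{seplocconvs} to produce a continuous linear functional that cuts a minimal face into a smaller exposed face via \ref{defexposed} and \ref{faceofface}. The only cosmetic difference is that you explicitly invoke \ref{topvshausdorff} to justify that singletons are closed before applying \ref{seplocconvs}, whereas the paper applies the separation theorem directly to $\{x\}$ and $\{y\}$.
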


\begin{proof}
Every intersection of a nonempty chain of closed nonempty faces of $C$ is
again a closed nonempty face of $C$. Indeed, if the intersection were empty,
then a finite subintersection would be empty by the compactness of $C$
[$\to$ \ref{dfcomp}] which is impossible since we dealt with a chain. By
Zorn's lemma there is thus a minimal closed nonempty face $F$ of $C$.
Being a closed subset of a compact set, $F$ is compact itself [$\to$ \ref{compactsubspace}]. By
\ref{extremepointface}, it suffices to show that $\#F=1$. Due to
$F\ne\emptyset$, it suffices to exclude $\#F\ge2$. Assume $x,y\in F$ such that
$x\ne y$. By \ref{seplocconvs}, there is a continuous $K$-linear function
$\ph\colon V\to\R$ such that $\ph(x)<\ph(y)$. Then
\[G:=\{v\in F\mid\forall w\in F:\ph(v)\le\ph(w)\}\]
is nonempty by \ref{takeson}
because $F$ is compact and nonempty and $\ph$ is continuous.
According to \ref{defexposed}, $G$ is an (exposed) face of $F$.
Hence $G$ is a face of $C$ by \ref{faceofface}. From the continuity of $\ph$,
we deduce that \[G=F\cap\bigcap_{w\in F}\ph^{-1}((-\infty,\ph(w)])\] is closed.
Moreover, $y\notin G$ since $\ph(y)\not\le\ph(x)$. Therefore $G$ is a closed
nonempty face of $C$ that is properly contained in $F$, contradicting the
minimality of $F$.
\end{proof}

\begin{notation}
Let $A$ be a convex set in a $K$-vector space $V$. Then we write
\[\extr A\] for the set of extreme points of $A$.
\end{notation}

\begin{thm}{}\emph{[$\to$ \ref{takeson}]}\label{takesonextr}
Suppose $C$ is a nonempty compact convex subset of a locally convex
$K$-vector space $V$ and $\ph\colon V\to\R$ is a continuous $K$-linear function.
Then $\ph$ attains on $C$ a minimum and a maximum in an extreme point of $C$.
In other words, there are $x,y\in\extr C$ such that
\[\ph(x)\le\ph(z)\le\ph(y)\]
for all $z\in C$.
\end{thm}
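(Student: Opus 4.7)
The plan is to reduce the statement to the existence of extreme points (Lemma \ref{hasextreme}) via the device of passing to the exposed face on which $\ph$ attains its extremum. I will handle the minimum; the maximum is obtained by replacing $\ph$ with $-\ph$.

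First I would note that $\ph(C)\subseteq\R$ is quasicompact by \ref{quasicompactimage}, hence by \ref{takeson} the infimum $m:=\inf\ph(C)$ is attained. Define
\[
F:=\{z\in C\mid\ph(z)\le\ph(w)\text{ for all }w\in C\}=\{z\in C\mid\ph(z)=m\}.
\]
By \ref{defexposed}, $F$ is an exposed face of $C$, in particular a face of $C$, and $F\ne\emptyset$ because the minimum is attained. Moreover, $F=C\cap\ph^{-1}(\{m\})$ is closed in $C$ (since $\ph$ is continuous), hence $F$ is a closed subset of the compact set $C$ and therefore itself compact by \ref{compactsubspace}. Being a face, $F$ is also convex.

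Next I would apply \ref{hasextreme} to the nonempty compact convex set $F$ (sitting in the locally convex space $V$) to obtain an extreme point $x\in\extr F$. By \ref{extremepointface}, $\{x\}$ is a face of $F$. Since $F$ is a face of $C$, Proposition \ref{faceofface} yields that $\{x\}$ is a face of $C$, and applying \ref{extremepointface} once more gives $x\in\extr C$. By construction $x\in F$, so $\ph(x)\le\ph(z)$ for all $z\in C$, which is the desired minimum statement.

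For the maximum, apply the same argument to the continuous $K$-linear function $-\ph$, producing some $y\in\extr C$ with $-\ph(y)\le-\ph(z)$, i.e.\ $\ph(z)\le\ph(y)$, for all $z\in C$. No single step is really an obstacle here; the only conceptual point to get right is that the exposed face carved out by $\ph$ is itself a compact convex set to which \ref{hasextreme} can be applied, and then Proposition \ref{faceofface} is what promotes ``extreme in $F$'' to ``extreme in $C$.''
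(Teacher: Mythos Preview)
Your proof is correct and follows essentially the same approach as the paper's: pass to the exposed face $F$ on which $\ph$ attains its minimum, observe that $F$ is nonempty, closed in $C$ and hence compact, apply Lemma \ref{hasextreme} to obtain an extreme point of $F$, and then use \ref{extremepointface} together with \ref{faceofface} to conclude it is extreme in $C$. The only cosmetic difference is that you write $F=C\cap\ph^{-1}(\{m\})$ to see closedness while the paper writes $F=C\cap\bigcap_{z\in C}\ph^{-1}((-\infty,\ph(z)]_\R)$.
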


\begin{proof}
Since one could replace $\ph$ by $-\ph$, we show only the existence of
$x\in\extr C$ such that $\ph(x)\le\ph(z)$ for all $z\in C$. By \ref{takeson},
there is $y\in C$ such that $\ph(y)\le\ph(z)$ for all $z\in C$, i.e., the exposed face
[$\to$ \ref{defexposed}]
\[F:=\{y\in C\mid\forall z\in C:\ph(y)\le\ph(z)\}\]
of $C$ is nonempty. Since $\ph$ is continuous,
\[F=C\cap\bigcap_{z\in C}\ph^{-1}((-\infty,\ph(z)]_\R)\]
is a closed subset of the
compact set $C$ and hence compact itself. By Lemma \ref{hasextreme},
$F$ possesses an extreme point $x$ which is by \ref{faceofface} and
\ref{extremepointface} also an extreme point of $C$.
\end{proof}

\begin{cor}[Krein–Milman theorem]\label{kreinmilman}
Suppose $C$ is a compact convex subset of a locally convex $K$-vector space
$V$. Then $C$ is the closure of the convex hull of the set of its extreme points, 
i.e.,
\[C=\overline{\conv(\extr C)}.\]
\end{cor}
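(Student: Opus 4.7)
The plan is to prove the two inclusions of $C = \overline{\conv(\extr C)}$ separately, with the nontrivial direction being a standard separation argument combined with \ref{takesonextr}.

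First I would handle the easy inclusion $\overline{\conv(\extr C)} \subseteq C$. Since $\extr C \subseteq C$ and $C$ is convex, $\conv(\extr C) \subseteq C$. Because $V$ is Hausdorff (by \ref{topvshausdorff}) and $C$ is compact, $C$ is closed in $V$ (by \ref{comclo}), so $\overline{\conv(\extr C)} \subseteq \overline C = C$.

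For the reverse inclusion, set $D := \overline{\conv(\extr C)}$ and argue by contradiction: suppose there exists $x_0 \in C \setminus D$. Note that $D$ is convex (by \ref{intcloconvex}, applied to $\conv(\extr C)$) and closed by construction, while $\{x_0\}$ is trivially compact and convex. Since the two disjoint sets satisfy the hypotheses of the separation theorem \ref{seplocconvs}, there exist a continuous $K$-linear $\ph\colon V \to \R$ and $r,s\in\R$ with
\[
\ph(y) \le r < s \le \ph(x_0) \qquad \text{for all } y \in D.
\]
Now apply \ref{takesonextr} to the nonempty compact convex set $C$ and the continuous linear functional $\ph$: there is an extreme point $y_0 \in \extr C$ at which $\ph$ attains its maximum on $C$. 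Since $x_0 \in C$, this gives $\ph(x_0) \le \ph(y_0)$. But $y_0 \in \extr C \subseteq D$, so $\ph(y_0) \le r$, whence
\[
s \le \ph(x_0) \le \ph(y_0) \le r,
\]
contradicting $r < s$.

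The main obstacle is ensuring that \ref{takesonextr} really delivers an extreme point (not just any maximizer) — but this has already been handled in the paper via the minimal-face-plus-\ref{hasextreme} argument, so here it only needs to be invoked. The only other thing to be careful about is verifying that the empty case ($C = \emptyset$) is handled, in which case $\extr C = \emptyset$, $\conv(\extr C) = \emptyset$, and the equality $C = \overline{\conv(\extr C)}$ holds trivially; so one may assume $C \ne \emptyset$ when applying \ref{takesonextr}. No further subtleties are expected.
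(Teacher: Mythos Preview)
Your proof is correct and follows essentially the same approach as the paper: both directions are handled identically, with the nontrivial inclusion obtained by separating a point outside $\overline{\conv(\extr C)}$ via \ref{seplocconvs} and then invoking \ref{takesonextr} to produce an extreme point that yields the contradiction. The only differences are cosmetic (contradiction versus contrapositive, and your explicit treatment of the case $C=\emptyset$).
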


\begin{proof}
``$\supseteq$'' From $\extr C\subseteq C$ and the convexity of $C$, we get
$\conv(\extr C)\subseteq C$. Being a compact subset of a Hausdorff space
[$\to$ \ref{topvshausdorff}], $C$ is closed [$\to$ \ref{comclo}] which entails even
$\overline{\conv(\extr C)}\subseteq C$.

``$\subseteq$'' WLOG $C\ne\emptyset$.
$A:=\overline{\conv(\extr C)}$ is closed, nonempty by Lemma
\ref{hasextreme} and convex by \ref{intcloconvex}. We show
$V\setminus A\subseteq V\setminus C$. Let $x\in V\setminus A$. By the
separation theorem for locally convex vector spaces \ref{seplocconvs},
there is a continuous $K$-linear function $\ph\colon V\to\R$ such that
$\ph(y)<\ph(x)$ for all $y\in A$. By \ref{takesonextr}, there is
$y\in\extr C\subseteq A$ satisfying $\ph(z)\le\ph(y)$ for all $z\in C$.
It follows that $\ph(z)\le\ph(y)<\ph(x)$ for all $z\in C$. Therefore $x\notin C$.
\end{proof}

\begin{df} Let $V$ be a $K$-vector space, $C\subseteq V$ and $u\in V$.
We call an extreme point [$\to$ \ref{dfconv}] of the state space $S(V,C,u)$
[$\to$ \ref{defstate}, \ref{statespacetop}] a \emph{pure state} of $(V,C,u)$.
\end{df}

\begin{thm}[Strengthening of \ref{conemembershipunit}]
\label{conemembershipunitextreme}
Suppose $u$ is a unit for the cone $C$ in the $K$-vector space $V$ and
let $x\in V$. Then the following are equivalent:
\begin{enumerate}[\normalfont(a)]
\item $\forall\ph\in\extr S(V,C,u):\ph(x)>0$
\item $\forall\ph\in S(V,C,u):\ph(x)>0$
\item $\exists N\in\N:x\in\frac1Nu+C$
\item $x$ is a unit for $C$.
\end{enumerate}
\end{thm}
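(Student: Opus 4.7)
The plan is to reduce the theorem essentially to \ref{conemembershipunit} combined with \ref{takesonextr}. Indeed, \ref{conemembershipunit} already yields the equivalence (b)$\iff$(c)$\iff$(d), so the only new content is the equivalence (a)$\iff$(b). The implication (b)$\implies$(a) is trivial since $\extr S(V,C,u)\subseteq S(V,C,u)$.

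For the non-trivial direction (a)$\implies$(b), I would first dispose of the degenerate case: if $S(V,C,u)=\emptyset$, then (b) holds vacuously. So assume $S(V,C,u)\ne\emptyset$. The plan is to apply \ref{takesonextr} to the evaluation functional
\[
\ev_x\colon\R^V\to\R,\ \ph\mapsto\ph(x),
\]
which is $\R$-linear and continuous (continuity being immediate from the definition of the product topology on $\R^V$). The ambient space $\R^V$, endowed with the product topology, is a locally convex $\R$-vector space. By \ref{statespacetop}, $S(V,C,u)$ is a closed convex subset of $\R^V$, and by \ref{statespacecompact} it is compact. Hence \ref{takesonextr}, applied to the nonempty compact convex set $S(V,C,u)\subseteq\R^V$ and the continuous $\R$-linear functional $\ev_x$, yields some $\ph_0\in\extr S(V,C,u)$ with $\ph_0(x)\le\ph(x)$ for all $\ph\in S(V,C,u)$. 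Since (a) gives $\ph_0(x)>0$, we conclude $\ph(x)>0$ for every $\ph\in S(V,C,u)$, establishing (b).

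There is essentially no obstacle here: the work has already been done in \ref{statespacetop}, \ref{statespacecompact}, \ref{takesonextr} and \ref{conemembershipunit}. The only minor subtlety worth spelling out is the change of scalars — the state space is a set of $K$-linear maps but is itself considered as a convex subset of the $\R$-vector space $\R^V$, and \ref{takesonextr} is applied with $\R$ playing the role of the field (which is fine since $\R\subseteq\R$). Once this is noted, the argument is a one-line application of the minimum-is-attained-at-an-extreme-point principle.
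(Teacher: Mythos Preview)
Your proof is correct and follows essentially the same approach as the paper: reduce to \ref{conemembershipunit} for (b)$\iff$(c)$\iff$(d), note (b)$\implies$(a) is trivial, and for (a)$\implies$(b) apply \ref{takesonextr} to the continuous linear evaluation map $\ph\mapsto\ph(x)$ on the nonempty compact convex state space $S(V,C,u)\subseteq\R^V$ (using \ref{statespacetop} and \ref{statespacecompact}).
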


\begin{proof}
\underline{(b)$\iff$(c)$\iff$(d)} is \ref{conemembershipunit}.

\smallskip
\underline{(b)$\implies$(a)} is trivial.

\smallskip
\underline{(a)$\implies$(b)} \quad WLOG $S(V,C,u)\ne\emptyset$. It suffices to
show that the function \[S(V,C,u)\to\R,\ \ph\mapsto\ph(x)\] attains a minimum in an
extreme point of $S(V,C,u)$. But this follows from \ref{takesonextr} because
$S(V,C,u)$ is a nonempty compact [$\to$ \ref{statespacecompact}] convex
[$\to$ \ref{statespacetop}] subset of the locally convex [$\to$ \ref{topvsex}(a)]
$\R$-vector space $\R^V$ and \[\R^V\to\R,\ \ph\mapsto\ph(x)\] is continuous
[$\to$ \ref{subspaceproductspace}(b)].
\end{proof}

\begin{cor}[Strengthening of \ref{conemembership}]\label{conemembershipextr}
Suppose $u$ is a unit for the cone $C$ in the $K$-vector space $V$ and
let $x\in V$. If $\ph(x)>0$ for all pure states $\ph$ of $(V,C,u)$, then
$x\in C$.
\end{cor}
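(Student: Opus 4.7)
The plan is to deduce this directly from Theorem \ref{conemembershipunitextreme}, which already did the heavy lifting by showing that positivity on pure states implies positivity on all states (and even the much stronger conclusion that $x$ lies in $\frac{1}{N}u + C$ for some $N \in \N$).

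Assume $\varphi(x) > 0$ for every $\varphi \in \extr S(V,C,u)$. This is exactly condition (a) of Theorem \ref{conemembershipunitextreme}, and the theorem then gives us condition (c), namely that there exists some $N \in \N$ with $x \in \frac{1}{N}u + C$. To finish, I would simply note that $u$ itself lies in $C$ (this was recorded in the corollary immediately following \ref{unitchar}), so $\frac{1}{N}u \in K_{\ge 0}C \subseteq C$, and therefore
\[
x \in \frac{1}{N}u + C \subseteq C + C \subseteq C.
\]

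There is no real obstacle here since the work has already been done in Theorem \ref{conemembershipunitextreme}; the only subtle point to invoke is that units lie in the cone, which is needed to absorb the correction term $\frac{1}{N}u$ back into $C$. Alternatively, one could bypass the intermediate step by noting that (a) of the theorem is equivalent to (b), and then citing \ref{conemembership} verbatim — the result is formulated as a corollary precisely because this is essentially a one-line consequence of the theorem it strengthens.
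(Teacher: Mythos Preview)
Your proof is correct and is exactly the intended argument: the paper states this corollary without proof precisely because it is immediate from Theorem \ref{conemembershipunitextreme}. Either route you mention---going via (a)$\implies$(c) and absorbing $\frac1Nu$ into $C$, or via (a)$\implies$(b) and then invoking \ref{conemembership}---is fine.
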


\section{Convex sets in finite-dimensional vector spaces}

\begin{lem}\label{findimunit}
Let $C$ be a cone in a finite-dimensional $K$-vector space $V$.
Then $U:=C-C$ is a subspace of $V$ and $C$ possesses \emph{in $U$}
a unit [$\to$ \ref{defunit}].
\end{lem}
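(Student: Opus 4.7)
The plan is straightforward and rests on two elementary observations.

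First I would verify that $U := C - C$ is a $K$-subspace of $V$. It contains $0 = 0 - 0$, and it is closed under addition since $(c_1-c_2)+(c_3-c_4) = (c_1+c_3)-(c_2+c_4) \in C-C$, using $C+C \subseteq C$. For scalar multiplication, if $\lambda \in K_{\ge 0}$ then $\lambda(c_1-c_2) = \lambda c_1 - \lambda c_2 \in C-C$ because $K_{\ge 0} C \subseteq C$; if $\lambda < 0$, write $\lambda(c_1-c_2) = (-\lambda)c_2 - (-\lambda)c_1 \in C-C$.

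Next I would use finite-dimensionality to extract a finite subset of $C$ that spans $U$. Since $V$ is finite-dimensional, so is $U$; pick a basis $w_1,\dots,w_n$ of $U$ and, using $U = C-C$, write each $w_j = a_j - b_j$ with $a_j,b_j \in C$. Then $c_1,\dots,c_m := a_1,b_1,\dots,a_n,b_n \in C$ span $U$ as a $K$-vector space.

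Finally I would show that $u := c_1+\cdots+c_m$ is a unit for $C$ in $U$. Clearly $u \in C \subseteq U$ by $C+C \subseteq C$. Given any $x \in U$, write $x = \sum_{i=1}^m \lambda_i c_i$ with $\lambda_i \in K$, and choose $N \in \N$ with $N \ge |\lambda_i|$ for all $i$ (using that $K \subseteq \R$ is Archimedean, or just \ref{qdense}). Then
\[
Nu + x = \sum_{i=1}^m (N+\lambda_i) c_i,
\]
and each $N+\lambda_i \in K_{\ge 0}$, so each summand lies in $K_{\ge 0}C \subseteq C$, whence $Nu+x \in C$ by $C+C \subseteq C$. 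This verifies Definition \ref{defunit} with $V$ replaced by $U$.

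The argument has essentially no obstacle; the only point requiring a moment's thought is the passage from ``$U$ is spanned by $C$'' to ``$C$ contains a \emph{finite} spanning set of $U$'', which is exactly where the finite-dimensionality hypothesis enters.
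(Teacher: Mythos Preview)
Your proof is correct and essentially identical to the paper's: both choose a basis of $U$, write each basis vector as a difference of two elements of $C$, take $u$ to be the sum of all these $2n$ elements of $C$, and then verify the unit condition by the same coefficient estimate. The only cosmetic difference is that the paper keeps the two families $v_i,w_i$ separate (obtaining $Nu+x=\sum(N+\la_i)v_i+\sum(N-\la_i)w_i$), while you relabel them as a single spanning family $c_1,\dots,c_m$ and write $Nu+x=\sum(N+\la_i)c_i$.
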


\begin{proof}
On the basis of Definition \ref{defcone}, it is easy to see that $U$ is a subspace of
$V$. Choose a basis $u_1,\dots,u_m$ of $U$ and write $u_i=v_i-w_i$ with
$v_i,w_i\in C$ for $i\in\{1,\dots,m\}$. We show that
$u:=\sum_{i=1}^mv_i+\sum_{i=1}^mw_i\in C$ is a unit for $C$ \emph{in $U$}.
For this purpose, fix $v\in U$. To show: $\exists N\in\N:Nu+v\in C$. Write
$v=\sum_{i=1}^m\la_iu_i$ with $\la_i\in K$ for $i\in\{1,\dots,m\}$. Choose
$N\in\N$ with $N\ge|\la_i|$ for $i\in\{1,\dots,m\}$. Then
\[Nu+v=\sum_{i=1}^m\underbrace{(N+\la_i)}_{\ge0}v_i+\sum_{i=1}^m
\underbrace{(N-\la_i)}_{\ge0}w_i\in C.\]
\end{proof}

\begin{thm}[Finite-dimensional isolation theorem]\emph{[$\to$ \ref{isolation}]}\label{findimisolation}
Let $C$ be a proper cone in the finite-dimensional $K$-vector space $V$.
Then there is a $K$-linear function $\ph\colon V\to\R$ with $\ph\ne0$ and
$\ph(C)\subseteq\R_{\ge0}$.
\end{thm}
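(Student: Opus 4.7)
The plan is to reduce the statement to the general isolation theorem \ref{isolation} by distinguishing two cases depending on whether $C$ spans $V$ as a group.

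First, I would invoke Lemma \ref{findimunit}: since $V$ is finite-dimensional, $U := C - C$ is a $K$-subspace of $V$ and $C$ admits a unit $u$ in $U$. Now split into two cases.

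In the first case, suppose $U = V$. Then $u$ is a unit for $C$ in $V$ itself, and since $C$ is proper by hypothesis, the isolation theorem \ref{isolation} produces a state $\ph \in S(V,C,u)$. By the definition of a state (\ref{defstate}), $\ph$ is $K$-linear, $\ph(C) \subseteq \R_{\ge 0}$, and $\ph(u) = 1$, so in particular $\ph \ne 0$, as required.

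In the second case, suppose $U \subsetneq V$. Here $C \subseteq U$, and it suffices to exhibit any $K$-linear function $\ph\colon V \to \R$ that vanishes on $U$ but is not identically zero, because then automatically $\ph(C) \subseteq \ph(U) = \{0\} \subseteq \R_{\ge 0}$. To construct such a $\ph$, pick a $K$-basis of the nonzero finite-dimensional $K$-vector space $V/U$, send one basis vector to $1 \in \R$ and the others to $0$, and compose with the canonical projection $V \to V/U$; since $K \subseteq \R$, this defines a nonzero $K$-linear map $V \to \R$ vanishing on $U$.

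There is essentially no obstacle here: the only delicate point is to observe that in finite dimensions one can always guarantee a unit ``relatively'' inside the subspace $U = C - C$ (this is precisely the content of \ref{findimunit}), so that the general isolation theorem \ref{isolation} always applies after possibly restricting attention to $U$, and the complementary case where $U$ is a proper subspace is handled by purely linear-algebraic means using the hypothesis $K \subseteq \R$. Note that this side-steps the pathology of Example \ref{badexample}, where a cone in an infinite-dimensional space need not admit any unit at all in the ambient space.
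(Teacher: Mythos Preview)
Your proof is correct and follows essentially the same approach as the paper: both invoke Lemma \ref{findimunit} to obtain a unit in $U=C-C$ and then combine the isolation theorem \ref{isolation} with an elementary linear-algebra argument. The only cosmetic difference is the case split: the paper distinguishes $C=U$ versus $C\ne U$ (applying \ref{isolation} inside $U$ and then extending to $V$ in the second case), whereas you distinguish $U=V$ versus $U\subsetneq V$; both splits are complete and lead to the same ingredients.
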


\begin{proof}
$U:=C-C$ is by \ref{findimunit} a subspace of $V$.

\medskip
\textbf{Case 1:}  $C=U$

\smallskip
Then $U$ is a proper subspace of $V$ and by linear algebra it is easy to see that
there is some $\ph\in V^*\setminus\{0\}$ such that $\ph(U)=\{0\}$. 

\medskip
\textbf{Case 2:}  $C\ne U$

\smallskip
By \ref{findimunit}, there exists a unit $u$ for $C$ in $U$. The isolation theorem
\ref{isolation} provides us with some $\ph_0\in S(U,C,u)$. Extend $\ph_0$
by linear algebra to a $K$-linear function $\ph\colon V\to\R$.
\end{proof}

\begin{rem}
Example \ref{badexample} shows that one cannot omit the hypothesis
$\dim V<\infty$ in \ref{findimunit} and \ref{findimisolation}.
\end{rem}

\begin{thm}[Separation theorem for finite-dimensional vector spaces]
\emph{[$\to$ \ref{septopvs}]}\label{sepfindimvs}
Let $A$ and $B$ be convex sets in the finite-dimensional $K$-vector space $V$
such that $A\ne\emptyset\ne B$ and $A\cap B=\emptyset$. Then there is a
$K$-linear function $\ph\colon V\to\R$ such that $\ph\ne0$ and
$\ph(x)\le\ph(y)$ for all $x\in A$ and $y\in B$.
\end{thm}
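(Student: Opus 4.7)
The plan is to mimic the topological proof \ref{septopvs} but replace the appeal to the isolation theorem \ref{isolation} (which needed a unit for the cone) with the finite-dimensional isolation theorem \ref{findimisolation} (which has no unit hypothesis).

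First I would form the Minkowski difference $C_0 := B - A = B + (-A)$. Since $-A$ is convex with $A$, and the Minkowski sum of two convex sets is convex (\ref{minkowskisum}), $C_0$ is convex. The assumption $A \cap B = \emptyset$ yields $0 \notin C_0$, while $A \ne \emptyset \ne B$ gives $C_0 \ne \emptyset$. Now apply Lemma \ref{convgencone} to $C_0$: the convex cone
\[
C := \sum_{x \in C_0} K_{\ge 0} \, x
\]
satisfies $C \ne V$, i.e., $C$ is a proper cone in $V$ containing $C_0$.

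Next I would invoke the finite-dimensional isolation theorem \ref{findimisolation} on the proper cone $C$ in the finite-dimensional space $V$ to obtain a $K$-linear function $\ph\colon V \to \R$ with $\ph \ne 0$ and $\ph(C) \subseteq \R_{\ge 0}$. Since $B - A \subseteq C_0 \subseteq C$, this gives $\ph(y) - \ph(x) = \ph(y - x) \ge 0$ for all $x \in A$ and $y \in B$, hence $\ph(x) \le \ph(y)$ as required.

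There is no real obstacle: the whole point of having proved \ref{findimisolation} (which in turn relied on \ref{findimunit} to dispose of the need for a unit in all of $V$) is to make this separation statement a direct corollary. The only subtlety is recognizing that \ref{convgencone} is exactly the right tool to produce a proper cone from a nonempty convex set missing the origin, which is what allows us to bypass any topological hypothesis on $A$ or $B$ (unlike in \ref{septopvs}, where an interior-point assumption was used to obtain a unit).
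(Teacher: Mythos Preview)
Your proof is correct and is exactly the argument the paper intends: the paper's proof reads simply ``Completely analogous to the proof of \ref{septopvs}'', meaning one forms $B-A$, uses \ref{convgencone} to produce a proper cone, and then invokes \ref{findimisolation} in place of \ref{isolation} (thereby dispensing with the interior-point hypothesis needed in \ref{septopvs}). You have spelled this out accurately.
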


\begin{proof}
Completely analogous to the proof of \ref{septopvs}.
\end{proof}

\begin{df}{}[$\to$ \ref{dfconv}]\label{affinehull}
Let $V$ be a $K$-vector space and $A\subseteq V$. Then $A$ is called an
\emph{affine subspace of $V$} if $\forall x,y\in A:\forall\la\in K:\la x+(1-\la)y\in A$.
The smallest affine subspace of $V$ containing $A$ is obviously
\[\aff A:=\left\{\sum_{i=1}^m\la_ix_i\mid m\in\N,\la_i\in K,x_i\in A,\sum_{i=1}^m\la_i=1\right\},
\]
called the affine subspace generated by $A$ or the \emph{affine hull} of $A$.
\end{df}

\begin{dfpro}\label{dfdimaff}
Let $V$ be a $K$-vector space. Then for each $A\subseteq V$,
the following are equivalent:
\begin{enumerate}[\normalfont(a)]
\item $A$ is a nonempty affine subspace of $V$.
\item There is an $x\in V$ and a subspace $U$ of $V$ such that $A=x+U$.
\end{enumerate}
If these conditions are met, then $U$ as in \emph{(b)}
is uniquely determined and is called
the \emph{direction} of $A$. Then $\dim A:=\dim U\in\N_0\cup\{\infty\}$ is the
\emph{dimension} of $A$. We set $\dim\emptyset:=-1$.
\end{dfpro}

\begin{proof}
\underline{(b)$\implies$(a)} is easy.

\smallskip
\underline{(a)$\implies$(b)}\quad
Suppose that (a) holds. Choose $x\in A$. Set
$U:=A-x$. To show: $U+U\subseteq U$ and $KU\subseteq U$. Let $u,v\in U$
and $\la\in K$. To show: $u+v\in U$ and $\la u\in U$. Choose $a,b\in A$ such that
$u=a-x$ and $v=b-x$. Then $u+v=(1a+1b+(-1)x)-x\in(\aff A)-x=A-x=U$ and
$\la u=(\la a+(1-\la)x)-x\in(\aff A)-x=A-x=U$.

\smallskip
\underline{Uniqueness claim}\quad Whenever $x,y\in V$  and $U$ and $W$ are
subspaces of $V$ satisfying $x+U=y+W$, then $x-y\in W$ and thus
$U=(y-x)+W=W$.
\end{proof}

\begin{df}\label{dfdimconv}
Let $V$ be a $K$-vector space and $A\subseteq V$ be convex. Then
\[\dim A:=\dim\aff A\in\{-1\}\cup\N_0\cup\{\infty\}\]
is the \emph{dimension} of $A$.
\end{df}

\begin{pro}{}\emph{[$\to$ \ref{ratiootherthan2}]}\label{face234}
Suppose that $V$ is a $K$-vector space, $A\subseteq V$ is convex and
$F$ is a face of $A$. Let $m\in\N$, $x_1,\dots,x_m\in A$ and
$\la_1,\dots,\la_m\in K_{>0}$ such that $\sum_{i=1}^m\la_i=1$ and
$\sum_{i=1}^m\la_ix_i\in F$. Then $x_1,\dots,x_m\in F$.
\end{pro}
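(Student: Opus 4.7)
The plan is to prove the statement by induction on $m$, with the key input being Proposition \ref{ratiootherthan2}, which is exactly the case $m=2$ reformulated.

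First, I would dispense with the trivial base case $m=1$: here $\lambda_1=1$ forces $x_1=\sum_{i=1}^m\lambda_ix_i\in F$ directly. The real base case is $m=2$: given $\lambda_1 x_1 + \lambda_2 x_2 \in F$ with $\lambda_1,\lambda_2 \in K_{>0}$ and $\lambda_1+\lambda_2 = 1$, setting $\lambda := \lambda_1 \in (0,1)_K$ gives $\lambda x_1 + (1-\lambda)x_2 \in F$, and Proposition \ref{ratiootherthan2} (in the direction (a)$\implies$(b)) yields $x_1,x_2\in F$.

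For the inductive step, assume $m\ge 3$ and the statement holds for $m-1$ points. The idea is to collapse two of the $x_i$ into a single convex combination and apply the inductive hypothesis, then split back apart via the case $m=2$. Concretely, set $\mu := \lambda_1+\lambda_2 \in K_{>0}$ and
\[
y := \frac{\lambda_1}{\mu}x_1 + \frac{\lambda_2}{\mu}x_2,
\]
which lies in $A$ by convexity of $A$ since $\frac{\lambda_1}{\mu},\frac{\lambda_2}{\mu} \in K_{>0}$ and $\frac{\lambda_1}{\mu}+\frac{\lambda_2}{\mu}=1$. Then
\[
\sum_{i=1}^m \lambda_i x_i = \mu y + \sum_{i=3}^m \lambda_i x_i \in F,
\]
with $\mu,\lambda_3,\dots,\lambda_m \in K_{>0}$ summing to $1$. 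The inductive hypothesis applied to the $m-1$ points $y,x_3,\dots,x_m\in A$ gives $y,x_3,\dots,x_m \in F$.

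Finally, $y = \frac{\lambda_1}{\mu}x_1 + \frac{\lambda_2}{\mu}x_2 \in F$ with $\frac{\lambda_1}{\mu}\in(0,1)_K$ is exactly the hypothesis of the $m=2$ case already established (equivalently, of Proposition \ref{ratiootherthan2} applied to $F$ as a face of $A$), so $x_1,x_2\in F$. Combined with $x_3,\dots,x_m\in F$, this completes the induction. There is no real obstacle here, since \ref{ratiootherthan2} has already done the substantial work of promoting the face condition from the midpoint ratio $\frac12$ to arbitrary ratios $\lambda\in(0,1)_K$; the only care needed is to make sure $\mu>0$ so that $y$ is well-defined, which is guaranteed by the strict positivity hypothesis on the $\lambda_i$.
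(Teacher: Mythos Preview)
Your proof is correct. Both your argument and the paper's reduce to Proposition~\ref{ratiootherthan2}, but the paper avoids induction: it fixes an index (WLOG $i=1$), groups \emph{all} the remaining points into a single
\[
y:=\sum_{i=2}^m\frac{\la_i}{1-\la_1}x_i\in A,
\]
writes $\sum_{i=1}^m\la_ix_i=\la_1x_1+(1-\la_1)y$, and applies \ref{ratiootherthan2} once to conclude $x_1\in F$. Repeating for each index gives the result directly. Your inductive approach merges two points at a time instead; it is slightly longer but equally valid, and arguably makes the role of the $m=2$ case more explicit.
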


\begin{proof} WLOG $m\ge2$. Let $i\in\{1,\dots,m\}$. To show: $x_i\in F$.
WLOG $i=1$. We have $0<\la_1<1$ and $y:=\sum_{i=2}^m\frac{\la_i}{1-\la_1}x_i
\in A$ since $\sum_{i=2}^m\frac{\la_i}{1-\la_1}=\frac{1-\la_1}{1-\la_1}=1$.
From $\sum_{i=1}^m\la_ix_i=\la_1x_1+(1-\la_1)y$ it follows thus by
\ref{ratiootherthan2} that $x_1,y\in F$.
\end{proof}

\begin{pro}\label{faceaff}
Suppose $V$ is a $K$-vector space, $A\subseteq V$ is convex and $F$ is a
face of $A$. Then $F=(\aff F)\cap A$.
\end{pro}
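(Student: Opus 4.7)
The inclusion $F \subseteq (\aff F) \cap A$ is immediate from $F \subseteq A$ and $F \subseteq \aff F$. The task is therefore the reverse inclusion, and the main tool will be Proposition \ref{face234}, which extends the defining property of a face from midpoints to arbitrary strict convex combinations.

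Fix $x \in (\aff F) \cap A$. By Definition \ref{affinehull}, I can write $x = \sum_{i=1}^m \la_i x_i$ with $x_i \in F$, $\la_i \in K$, and $\sum_{i=1}^m \la_i = 1$. The plan is to rearrange this affine combination so that only nonnegative coefficients appear, at the cost of moving some terms (including $x$ itself) to the opposite side. Split the index set into $I^+ := \{i : \la_i \ge 0\}$ and $I^- := \{i : \la_i < 0\}$, and set $S := \sum_{i \in I^+} \la_i$. Then $S = 1 + \sum_{i \in I^-}(-\la_i) \ge 1 > 0$, and rearranging gives
\[
\frac{1}{S}\, x + \sum_{i \in I^-} \frac{-\la_i}{S}\, x_i \;=\; \sum_{i \in I^+} \frac{\la_i}{S}\, x_i.
\]

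The right-hand side is a convex combination (nonnegative coefficients summing to $1$) of elements of $F$, hence lies in $F$ by the convexity of $F$ (with $I^+ \ne \emptyset$ since $S > 0$). The left-hand side is a strict convex combination, that is, with all coefficients strictly positive, of the points $x \in A$ and the $x_i \in F \subseteq A$ for $i \in I^-$. Applying Proposition \ref{face234} to this latter combination forces each of its vertices to lie in $F$; in particular $x \in F$, as required.

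There is no substantial obstacle here: the only point to watch is that $S > 0$ so that the rescaling is legal and the coefficient of $x$ on the left is genuinely positive, which is why \ref{face234} applies and delivers $x \in F$ even when $I^-$ is empty (in which case the left-hand side is just $\frac{1}{S} x$ with $S = 1$, and the conclusion is already visible without invoking \ref{face234}).
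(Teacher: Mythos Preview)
Your proof is correct and follows essentially the same approach as the paper: separate the affine combination into its positive and negative parts, move the negative part to the side of $x$, rescale so that the right-hand side is a convex combination lying in $F$, and then invoke \ref{face234} to force $x\in F$. The paper's version is notationally slightly different (writing the negative terms as $-\mu_jz_j$ with $\mu_j>0$ from the outset), but the structure is identical.
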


\begin{proof}
``$\subseteq$'' is trivial.

``$\supseteq$'' Let $x\in(\aff F)\cap A$. To show: $x\in F$. Write
$x=\sum_{i=1}^m\la_iy_i-\sum_{j=1}^n\mu_jz_j$ with $m,n\in\N_0$,
$\la_i,\mu_j\in K_{>0}$, $y_i,z_j\in F$ and
$\sum_{i=1}^m\la_i-\sum_{j=1}^n\mu_j=1$. Setting $\la:=\sum_{i=1}^m\la_i$ and
$\mu:=\sum_{j=1}^n\mu_j$, it follows that
$\frac1{1+\mu}x+\sum_{j=1}^n\frac{\mu_j}{1+\mu}z_j=
\sum_{i=1}^m\frac{\la_i}\la y_i\in F$
and thus $x\in F$ by \ref{face234}.
\end{proof}

\begin{pro}\label{dimensiondrop}
Let $V$ be a finite-dimensional $K$-vector space.
\begin{enumerate}[\normalfont(a)]
\item If $A$ and $B$ are affine subspaces of $V$ with $A\subseteq B$, then
\[A=B\iff\dim A=\dim B.\]
\item If $F$ and $G$ are faces of the convex set $A\subseteq V$ with
$F\subseteq G$, then
\[F=G\iff\dim F=\dim G.\]
\end{enumerate}
\end{pro}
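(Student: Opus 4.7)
The plan is to reduce both parts to standard finite-dimensional linear algebra by stripping off basepoints, and then to use Proposition \ref{faceaff} for part (b).

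For part (a), the implication $A = B \Rightarrow \dim A = \dim B$ is immediate from the definition. For the converse, first dispose of the empty case: if $A = \emptyset$ then $\dim A = -1$, so $\dim B = -1$ forces $B = \emptyset$ by \ref{dfdimaff}. Assume therefore $A \ne \emptyset$ and pick $x \in A \subseteq B$. By \ref{dfdimaff}, $U := A - x$ and $W := B - x$ are subspaces of $V$ with $U \subseteq W$ and $\dim U = \dim A = \dim B = \dim W < \infty$. From linear algebra, $U = W$, and translating back gives $A = B$.

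For part (b), again only the nontrivial direction is at issue. If $F = \emptyset$, then $\dim \aff F = -1$ (since $\aff\emptyset = \emptyset$ by \ref{affinehull}), and $\dim G = -1$ forces $G = \emptyset$ as above; so $F = G$. Assume now $F \ne \emptyset$, hence $G \ne \emptyset$. From $F \subseteq G$ we get $\aff F \subseteq \aff G$, both being nonempty affine subspaces of $V$ of the same finite dimension by hypothesis and Definition \ref{dfdimconv}. By part (a) applied to $\aff F \subseteq \aff G$, we conclude $\aff F = \aff G$. Now Proposition \ref{faceaff} applied twice yields
\[
F = (\aff F) \cap A = (\aff G) \cap A = G,
\]
as desired.

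There is no real obstacle here: (a) is pure linear algebra after translating by a basepoint, and (b) is a one-line consequence of (a) together with Proposition \ref{faceaff}. The only care needed is the bookkeeping for the empty face, which is handled by the convention $\dim \emptyset := -1$ from \ref{dfdimaff}.
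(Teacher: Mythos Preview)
Your proof is correct and follows essentially the same approach as the paper's own proof, which simply says that (a) follows from \ref{dfdimaff} by linear algebra and (b) follows from (a) together with \ref{dfdimconv} and \ref{faceaff}. You have merely spelled out the details, including the harmless bookkeeping for the empty case.
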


\begin{proof}
(a) follows from \ref{dfdimaff} by linear algebra and (b) follows hereof by
\ref{dfdimconv} and \ref{faceaff}.
\end{proof}

\begin{rem}\label{topsvs}
Suppose $V$ is a topological $K$-vector space, $K'$ is a subfield of $K$ and $V'$
a $K'$-vector subspace of the $K'$-vector space $V$. Then $V$ induces on
$V'$ a vector space topology. This is easy to see since $V\times V$ induces
on $V'\times V'$ the product topology of the induced topologies and
$K\times V$ induces on $K'\times V'$ the product topology of the induced topologies.
\end{rem}

\begin{dfpro}\label{dfrelint}
Let $A$ be a convex set in the topological $K$-vector space $V$. The interior of
$A$ in the topological space $\aff A$ (endowed with the topology induced by $V$)
is called the \emph{relative interior} of $A$, denoted by
$\relint A$. This is a convex set.
\end{dfpro}

\begin{proof}
WLOG $A\ne\emptyset$. Write $\aff A=x+U$ for some $x\in V$ and some 
subspace $U$ of $V$ [$\to$ \ref{dfdimaff}]. WLOG $x=0$ [$\to$ \ref{transhomeo}].
WLOG $U=V$ [$\to$ \ref{topsvs}]. Then
$\relint A=A^\circ$ is convex by \ref{intcloconvex}.
\end{proof}

\begin{rem}\label{topfindimrem}
Let $V$ be a finite-dimensional $K$-vector space. Choose a basis $v_1,\dots,v_n$
of $V$. Then $f\colon K^n\to V,\ x\mapsto\sum_{i=1}^nx_iv_i$ is a vector space
isomorphism that is continuous with respect to every vector space topology of
$V$ [$\to$ \ref{defvstop}] and that is a homeomorphism with respect to exactly
one vector space topology of $V$ [$\to$ \ref{topvsex}(a)]. Consequently, there
is a finest [$\to$ \ref{topreminder}(c)] vector space topology on $V$
(cf. also \ref{onetop}). With respect to this topology on $V$, we have for all
$A\subseteq V$ that
\[\text{$A$ open in $V$}\iff\text{$f^{-1}(A)$ open in $K^n$},\]
independently of the choice of the basis $v_1,\dots,v_n$. It is easy to see that
$K^n$ carries the initial topology with respect to all
\alal{linear forms on $K^n$}{$K$-linear functions $K^n\to\R$}.
The finest vector space topology on $V$ is therefore also the initial topology
[$\to$ \ref{initialtop}] with respect to all
\alal{linear forms on $V$}{$K$-linear functions $V\to\R$}.
If $U$ is a subspace of $V$, then the finest vector space topology of $V$ induces
on $U$ again the finest vector space topology because one can extend every
linear form on $U$ to one on $V$.
\end{rem}

\begin{ex}
Since $\R$ is a topological $\R$-vector space and thus also a topological
$\Q$-vector space, also $\Q+\sqrt 2\Q\subseteq\R$ is a topological
$\Q$-vector space with respect to the induced topology [$\to$ \ref{topsvs}].
One sees easily that
\[\Q+\sqrt2\Q\to\Q,\ x+\sqrt2y\mapsto x\qquad(x,y\in\Q)\] is not continuous.
\end{ex}

\begin{lem}\label{reddim}
Let $A\subseteq K^n$ be convex. Then
$A^\circ=\emptyset\implies\aff A\ne K^n$.
\end{lem}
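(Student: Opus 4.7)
I will prove the contrapositive: if $\aff A = K^n$, then $A^\circ \ne \emptyset$.

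The plan is to exhibit a simplex inside $A$ and show that this simplex has nonempty interior. First I would show that $A$ contains $n+1$ affinely independent points. Since $\aff A$ has dimension $n$ and is spanned (as an affine set) by $A$, a standard linear algebra argument gives points $x_0, x_1, \dots, x_n \in A$ such that the vectors $x_1 - x_0, \dots, x_n - x_0 \in K^n$ are linearly independent. Indeed, one builds them greedily: start with any $x_0 \in A$ (note $A \ne \emptyset$ since $\aff A = K^n \ne \emptyset$), and at each step pick $x_{i+1} \in A$ with $x_{i+1} - x_0 \notin \mathrm{span}(x_1 - x_0, \dots, x_i - x_0)$, which must exist before reaching dimension $n$, since otherwise $\aff A \subseteq x_0 + \mathrm{span}(x_1 - x_0, \dots, x_i - x_0) \subsetneq K^n$.

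Next, since $A$ is convex, $C := \conv\{x_0, \dots, x_n\} \subseteq A$. It suffices to show $C^\circ \ne \emptyset$. To this end I would consider the affine map
\[
\varphi \colon K^n \to K^n, \quad (\lambda_1, \dots, \lambda_n) \mapsto x_0 + \sum_{i=1}^n \lambda_i (x_i - x_0).
\]
By the linear independence of $x_1 - x_0, \dots, x_n - x_0$, the linear part of $\varphi$ is a $K$-vector space isomorphism of $K^n$; hence $\varphi$ is an affine bijection whose inverse is again an affine map with coefficients in $K$. Both $\varphi$ and $\varphi^{-1}$ are polynomial maps, hence continuous in the product topology on $K^n$ (either by direct inspection, or by applying \ref{polycont} to the ambient $R := \R$ and restricting). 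So $\varphi$ is a homeomorphism of $K^n$ onto itself.

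Now set
\[
U := \Bigl\{\lambda \in K^n \;\Bigm|\; \lambda_1, \dots, \lambda_n > 0,\ \sum_{i=1}^n \lambda_i < 1\Bigr\},
\]
which is open in $K^n$ and contains the point $(1/(n+1), \dots, 1/(n+1))$, so it is nonempty. For $\lambda \in U$, writing $\lambda_0 := 1 - \sum_{i=1}^n \lambda_i > 0$, we have $\varphi(\lambda) = \sum_{i=0}^n \lambda_i x_i \in \conv\{x_0, \dots, x_n\} = C$. Hence $\varphi(U) \subseteq C \subseteq A$, and $\varphi(U)$ is open in $K^n$ since $\varphi$ is a homeomorphism. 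Thus $\varphi(U)$ is a nonempty open subset of $A$, so $A^\circ \ne \emptyset$, proving the contrapositive.

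The only delicate step is the choice of $n+1$ affinely independent points and the verification that $\varphi$ really is a homeomorphism over the possibly proper subfield $K \subseteq \R$; both are routine once one notes that $\varphi$ and $\varphi^{-1}$ are affine maps with coefficients in $K$ and that the topology on $K^n$ is the product topology inherited from $\R^n$.
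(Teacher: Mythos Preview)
Your proof is correct and takes a genuinely different route from the paper. You extract $n+1$ affinely independent points of $A$ by a greedy argument, form their simplex inside $A$, and show this simplex has nonempty interior via an affine homeomorphism of $K^n$. The paper instead never isolates affinely independent points in $A$: it writes each of $e_0=0,e_1,\dots,e_n$ as an affine combination $e_i=\sum_j\la_{ij}x_{ij}$ of points of $A$, takes the uniform barycenter $x:=\frac1{m(n+1)}\sum_{i,j}x_{ij}\in A$, and then shows directly that $x\pm\ep e_i\in A$ for small $\ep\in K_{>0}$ by exploiting the identity $e_i=e_i-e_0$ (whose right-hand side is a combination of the $x_{ij}$ with coefficients summing to zero). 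Your approach is more conceptual and closer to the standard ``a full-dimensional simplex has interior'' argument; the paper's is more hands-on and avoids both the greedy extraction step and any appeal to homeomorphisms or continuity of affine maps, working purely with convex combinations. Both are short and self-contained; yours generalizes more transparently to abstract finite-dimensional $K$-vector spaces once one knows affine isomorphisms are homeomorphisms, while the paper's stays entirely inside coordinates.
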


\begin{proof}
Suppose that $\aff A=K^n$. We show that $A^\circ\ne\emptyset$. Denote by
$e_1,\dots,e_n$ the standard basis of $K^n$ and set $e_0:=0\in K^n$.
Write $e_i=\sum_{j=1}^m\la_{ij}x_{ij}$ with $m\in\N$, $\la_{ij}\in K$,
$x_{ij}\in A$ and $\sum_{j=1}^m\la_{ij}=1$ for $i\in\{0,\dots,n\}$.
We show that \[x:=\sum_{i=0}^n\sum_{j=1}^m\frac1{m(n+1)}x_{ij}\in A^\circ.\]
Since $A$ is convex, we have $x\in A$ and it suffices to show that for each
$i\in\{1,\dots,n\}$, there is an $\ep\in K_{>0}$ such that $x\pm\ep e_i\in A$
(cf. also \ref{unitinteriorpoint}). For this purpose, fix $i\in\{1,\dots,n\}$.
From $e_i=e_i-0=e_i-e_0=
\sum_{j=1}^m\la_{ij}x_{ij}+\sum_{j=1}^m(-\la_{0j})x_{0j}$
and $\sum_{j=1}^m\la_{ij}-\sum_{j=1}^m\la_{0j}=1-1=0$, the existence of such an $\ep>0$ easily follows.
\end{proof}

\begin{thm}\label{relintclosure}
Suppose $V$ is a finite-dimensional topological
$K$-vector space that is equipped
with the finest vector space topology \emph{[$\to$ \ref{topfindimrem}]}
and $A\subseteq V$ is convex. Then
$A\subseteq\overline{\relint A}$.
\end{thm}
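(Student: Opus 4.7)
The statement is trivial for $A=\emptyset$, so assume $A\ne\emptyset$ and let $x\in A$. I want to find, in every neighborhood of $x$, a point of $\relint A$.

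First I would reduce to the case in which $\aff A=V$, so that $\relint A$ is simply the interior $A^\circ$ of $A$ in $V$. To do this, pick $v_0\in A$ and replace $A$ by $A-v_0$ and $V$ by the direction $U$ of $\aff A$, which is a $K$-subspace of $V$ on which the induced topology agrees with the finest vector space topology on $U$ by Remark \ref{topfindimrem}. After this reduction, $A$ is a convex subset of the finite-dimensional topological $K$-vector space $V$ with $\aff A=V$, equipped with the finest vector space topology. Choosing a basis, Remark \ref{topfindimrem} identifies $V$ topologically with $K^n$, so Lemma \ref{reddim} gives $A^\circ\ne\emptyset$, i.e., $\relint A\ne\emptyset$.

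Now pick any $y\in\relint A=A^\circ$ and consider the continuous map
\[
f\colon K\to V,\quad \lambda\mapsto (1-\lambda)x+\lambda y,
\]
which is continuous since $V$ is a topological $K$-vector space (Definition \ref{deftopvs}). Note $f(0)=x$ and, for every $\lambda\in K$ with $0<\lambda\le1$, Lemma \ref{stayinside} (applied with the roles of $x,y$ swapped) gives $f(\lambda)\in A^\circ=\relint A$. Given any neighborhood $W$ of $x$ in $V$, the set $f^{-1}(W)\subseteq K$ is open and contains $0$; since $K\subseteq\R$ contains $\Q$ (which is dense near $0$ in the subspace topology on $K$), there is some $\lambda\in K$ with $0<\lambda\le1$ and $f(\lambda)\in W$. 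Hence $W\cap\relint A\ne\emptyset$, proving $x\in\overline{\relint A}$ by the description of closure in Reminder \ref{interiorclosure}.

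There is no real obstacle in this argument; the only points that need care are the topological reduction to $\aff A=V$ (using that the finest vector space topology restricts to the finest one on any subspace, so that $\relint A$ computed inside $\aff A$ agrees with the interior after the reduction) and the innocuous observation that a $K$-open neighborhood of $0\in K$ always contains arbitrarily small \emph{positive} elements, which is needed to invoke Lemma \ref{stayinside} with $\lambda\in(0,1]_K$ rather than just $\lambda=0$.
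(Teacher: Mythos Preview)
Your proof is correct and follows essentially the same route as the paper: reduce to $\aff A=V$ (using that the finest vector space topology restricts well to subspaces), invoke Lemma \ref{reddim} to obtain a point $y\in A^\circ$, and then use Lemma \ref{stayinside} on the segment from $x$ to $y$ to see that $x$ lies in the closure of $A^\circ$. The paper simply writes ``Obviously, $x\in\overline{\{(1-\la)x+\la y\mid\la\in(0,1]_K\}}\subseteq\overline{A^\circ}$'' where you spell out the continuity argument.
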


\begin{proof}
WLOG $A\ne\emptyset$. Write $\aff A=x+U$ for some $x\in V$ and some
subspace $U$ of $V$. Obviously, $\aff(A-x)\overset{\ref{affinehull}}=
(\aff A)-x=U$, $\relint(A-x)\overset{\ref{transhomeo}}=
(\relint A)-x$ and
$\overline{\relint(A-x)}=
\overline{\relint A}-x$.
Replacing $A$ by $A-x$, we can thus suppose that $\aff A=U$.
Using the last remark from \ref{topfindimrem},
we can therefore suppose that
$\aff A=V$ (otherwise replace $V$ by $U$).
According to \ref{topfindimrem},
we can reduce to the case where $V=K^n$ (with the product topology).
We have to show $A\subseteq
\overline{A^\circ}$. Choose $y\in A^\circ$ with Lemma \ref{reddim}.
Let $x\in A$. To show: $x\in\overline{A^\circ}$. By \ref{stayinside}, we have
$(1-\la)x+\la y\in A^\circ$ for all $\la\in(0,1]_K$. Obviously, we have
$x\in\overline{\{(1-\la)x+\la y\mid\la\in(0,1]_K\}}\subseteq\overline{A^\circ}$.
\end{proof}

\begin{thm}\label{hasaface}
Let $V$ be a finite-dimensional $K$-vector space that is equipped
with the finest vector space topology \emph{[$\to$ \ref{topfindimrem}]}.
Let $A\subseteq V$
be convex and $x\in A\setminus\relint A$. Then there is an exposed face $F$ of
$A$ satisfying $\dim F<\dim A$ and $x\in F$.
\end{thm}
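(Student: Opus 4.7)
The plan is to separate $x$ from $\relint A$ by a continuous linear functional; the exposed face cut out by this functional will contain $x$ and live in a proper affine subspace of $\aff A$.

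First I would reduce to the case $\aff A = V$. Picking any $x_0 \in A$, the translate $U := \aff A - x_0$ is a $K$-linear subspace of $V$ on which $V$ induces the finest vector space topology (by the final remark in \ref{topfindimrem}). A $K$-linear functional on $U$ extends to one on $V$ by choosing a $K$-linear complement, and on $A - x_0 \subseteq U$ the set of minimizers is unchanged; translating back by $x_0$ then shows that any exposed face of $A - x_0$ inside $U$ yields an exposed face of $A$ in $V$ of the same dimension. Under this reduction $\relint A = A^\circ$ and, by \ref{reddim}, $A^\circ \ne \emptyset$.

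Next, since $x \notin A^\circ$ and both $A^\circ$ (open, and convex by \ref{intcloconvex}) and $\{x\}$ are disjoint convex sets, the separation theorem \ref{septopvscor} supplies a continuous $K$-linear $\ph \colon V \to \R$ and $r \in \R$ with $\ph(y) < r \le \ph(x)$ for all $y \in A^\circ$. By \ref{relintclosure} we have $A \subseteq \overline{A^\circ}$, so continuity of $\ph$ forces $\ph(z) \le r$ for all $z \in A$; combined with $x \in A$ and $\ph(x) \ge r$, this pins down $\ph(x) = r = \max \ph(A)$. Hence
\[
F := \{z \in A \mid \ph(z) = r\}
\]
is the exposed face of $A$ defined (via \ref{defexposed}) by the linear functional $-\ph$, and $x \in F$.

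For the dimension, $\ph$ is nonzero because it separates the nonempty set $A^\circ$ from $\{x\}$, so $H := \{z \in V \mid \ph(z) = r\}$ is a proper affine subspace of $V = \aff A$. From $F \subseteq H$ we get $\aff F \subseteq H \subsetneq \aff A$, and \ref{dimensiondrop}(a) (applied once to $H \subsetneq \aff A$ and once to $\aff F \subseteq H$) yields $\dim F < \dim A$. The main thing to verify carefully is the reduction step---that the finest vector space topology passes correctly to the translated affine hull and that extending the linear form preserves the exposed-face property---after which the conclusion drops out of the separation theorem \ref{septopvscor} together with \ref{relintclosure}.
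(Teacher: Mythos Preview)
Your proof is correct and follows essentially the same strategy as the paper: reduce to $\aff A = V$, separate $\{x\}$ from the nonempty open convex set $A^\circ$, extend the inequality to all of $A$ via $A\subseteq\overline{A^\circ}$ from \ref{relintclosure}, and read off the exposed face. The minor differences are cosmetic---the paper invokes the finite-dimensional separation theorem \ref{sepfindimvs} rather than \ref{septopvscor}, and it deduces $\dim F<\dim A$ by showing $F\ne A$ and applying \ref{dimensiondrop}(b), whereas you argue directly via the hyperplane $H$ and \ref{dimensiondrop}(a).
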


\begin{proof} Similarly to the proof of \ref{relintclosure}, we reduce again
to the case $\aff A=V$. Note that $A^\circ$ is convex
[$\to$ \ref{dfrelint}] and nonempty [$\to$~\ref{relintclosure}]. The separation 
theorem \ref{sepfindimvs} yields a $K$-linear function $\ph\colon V\to\R$
with $\ph\ne0$ and $\ph(x)\le\ph(y)$ for all $y\in A^\circ$.
Since $\ph$ is continuous [$\to$ \ref{topfindimrem}], the set
$\ph^{-1}([\ph(x),\infty)_\R)$ is closed and contains with
$A^\circ$ also $\overline{A^\circ}$ and hence by \ref{relintclosure} also
$A$, i.e., $\ph(x)\le\ph(y)$ for all $y\in A$.
In other words, $x$ is an element of the exposed face 
[$\to$~\ref{defexposed}] $F:=\{z\in A\mid\forall y\in A:\ph(z)\le\ph(y)\}$ of $A$.
By \ref{dimensiondrop}, it is enough to show $F\ne A$. If we had
$F=A$, we would have $\ph|_A=\ph(x)$ and hence by linearity of $\ph$
also $\ph=\ph|_{\aff A}=\ph(x)$, i.e., $\ph=0$ $\lightning$.
\end{proof}

\begin{rem}\label{tacitly}
If we use topological notions in a finite-dimensional $\R$-vector space $V$, then
we tacitly furnish $V$ with its unique vector space topology [$\to$ \ref{onetop}]
which is the initial topology with respect to the family of all linear forms on $V$
[$\to$ \ref{topfindimrem}].
\end{rem}

\begin{thm}[Minkowski's theorem]
\emph{[$\to$ \ref{polyisconvexhull}, \ref{kreinmilman}]}\label{minkowski}
Let $V$ be a finite-dimensional $\R$-vector space. Let
$A\subseteq V$ be convex and compact. Then
\[A=\conv(\extr A).\]
\end{thm}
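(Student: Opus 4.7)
The plan is to prove the theorem by induction on $d := \dim A$ (as defined in \ref{dfdimconv}), using Theorem \ref{hasaface} as the main engine. The inclusion $\conv(\extr A) \subseteq A$ is immediate since $\extr A \subseteq A$ and $A$ is convex, so only $A \subseteq \conv(\extr A)$ requires work. The base cases $d \in \{-1, 0\}$ are trivial: either $A$ is empty, or $A = \{x\}$ in which case $x$ is its unique extreme point.

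For the inductive step, assume $d \geq 1$ and that the statement holds for all convex compact subsets of $V$ of strictly smaller dimension. Fix $x \in A$; I want to show $x \in \conv(\extr A)$. First I handle the case $x \in A \setminus \relint A$. By \ref{hasaface}, there is an exposed face $F$ of $A$ with $x \in F$ and $\dim F < \dim A = d$. Since $F$ is exposed, $F = \{y \in A \mid \ph(y) \leq \ph(z) \text{ for all } z \in A\}$ for some $\R$-linear $\ph \colon V \to \R$; by \ref{topfindimrem} such a $\ph$ is automatically continuous, so $F = A \cap \bigcap_{z \in A} \ph^{-1}((-\infty,\ph(z)])$ is closed in the compact set $A$, hence compact itself. $F$ is convex by \ref{defexposed}, so by the induction hypothesis $F = \conv(\extr F)$. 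Combining \ref{faceofface} and \ref{extremepointface}, every extreme point of $F$ is an extreme point of $A$, so $\extr F \subseteq \extr A$ and therefore $x \in \conv(\extr F) \subseteq \conv(\extr A)$.

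It remains to handle $x \in \relint A$. Since $d \geq 1$, the direction of $\aff A$ has positive dimension, so I pick $v \in V \setminus \{0\}$ with $x + \R v \subseteq \aff A$ and set $S := \{t \in \R \mid x + tv \in A\}$. This $S$ is convex (since $A$ is), and it is compact: it is the preimage of $A$ under the continuous map $t \mapsto x + tv$ (closed), and bounded since $A$ is bounded. Because $x \in \relint A$, some open ball in $\aff A$ around $x$ lies in $A$, forcing $S = [a,b]$ with $a < 0 < b$. The endpoints $x + av$ and $x + bv$ lie in $A \setminus \relint A$ (any relative-interior point on the line would extend $S$ beyond $a$ or $b$), so by the previous case each endpoint lies in $\conv(\extr A)$. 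Finally, $x = \tfrac{b}{b-a}(x + av) + \tfrac{-a}{b-a}(x + bv)$ is a convex combination of these two endpoints, so $x \in \conv(\extr A)$, completing the induction.

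The main obstacle is making the line argument in the last paragraph rigorous: one needs compactness of $A$ (not just closedness, as in the infinite-dimensional Krein--Milman theorem \ref{kreinmilman}) to ensure that $S$ is a bounded closed interval with genuine endpoints in $A$, and one needs that a relative-interior point sits strictly inside this interval, which uses that $\relint A$ is open in $\aff A$ (Definition \ref{dfrelint}). The appeal to \ref{hasaface} is what distinguishes the finite-dimensional case from Krein--Milman: it allows us to produce an exposed face of strictly smaller dimension through any non-relative-interior point, driving the induction without having to take a closure.
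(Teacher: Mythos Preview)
Your proof is correct and follows essentially the same approach as the paper's: both argue by induction on $\dim A$, use Theorem~\ref{hasaface} to place any point of $A\setminus\relint A$ in a proper (exposed) face, apply the induction hypothesis to that face (using $\extr F\subseteq\extr A$ via \ref{faceofface} and \ref{extremepointface}), and handle points of $\relint A$ by extending a line segment until both endpoints lie in $A\setminus\relint A$. The only difference is organizational---you split cleanly into the cases $x\notin\relint A$ versus $x\in\relint A$, whereas the paper first reduces to $x\notin\extr A$, writes $x=\frac{y+z}{2}$ for some $y\ne z$, and then pushes $y,z$ to $A\setminus\relint A$ in a single ``WLOG'' step.
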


\begin{proof}
Since $A$ is closed [$\to$ \ref{comclo}], all faces of $A$ are also closed
[$\to$ \ref{faceaff}, \ref{dfdimaff}, \ref{transhomeo}] and therefore compact
[$\to$ \ref{compactsubspace}]. By induction, we can thus
assume that $F=\conv(\extr F)$ for all faces $F$ of $A$ that satisfy
$\dim F<\dim A$.

``$\supseteq$'' is trivial.

``$\subseteq$'' Let $x\in A$. To show: $x\in\conv(\extr A)$. WLOG $x\notin\extr A$.
Choose $y,z\in A$ with $y\ne z$ and $x\in\conv\{y,z\}$. Because of the 
assumptions on $A$, WLOG $y,z\in A\setminus\relint A$. By \ref{hasaface},
there are (exposed) faces $F$ and $G$ of $A$ such that $\dim F<\dim A$,
$\dim G<\dim A$, $y\in F$ and $z\in G$.
From \ref{extremepointface} and \ref{faceofface}, we get $\extr F\subseteq\extr A$
and $\extr G\subseteq\extr A$. Consequently,
$y\in F=\conv(\extr F)\subseteq
\conv(\extr A)$ and
$z\in G=\conv(\extr G)\subseteq
\conv(\extr A)$ where the equalities follow from the induction hypothesis.
Finally, $x\in\conv\{y,z\}\subseteq\conv(\extr A)$.
\end{proof}

\begin{thm}\label{fundthmlin}
Let $(K,\le)$ be an arbitrary ordered field, let $V$ be a $K$-vector space
with $n:=\dim V<\infty$. Suppose that
$E\subseteq V$ is a finite set that generates $V$ and $x\in V$.
Then exactly one of the following conditions occurs:
\begin{enumerate}[\normalfont(a)]
\item $x$ is a nonnegative linear combination of elements of $E$ that form a basis of $V$.
\item There is some $\ell\in V^*$ with $\ell(E)\subseteq K_{\ge0}$ and $\ell(x)<0$
and a linearly independent set $F\subseteq E\cap\ker\ell$ with $\#F=n-1$.
\end{enumerate}
\end{thm}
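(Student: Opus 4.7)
The plan is to establish mutual exclusivity of (a) and (b) first, then prove existence by a case analysis on whether $x$ lies in the cone $C:=\cone(E)=\{\sum_{e\in E}\la_ee\mid\la_e\in K_{\ge0}\}$. Mutual exclusivity is immediate: combining $x=\sum_{i=1}^n\la_ie_i$ with $\la_i\ge 0$ from (a) and $\ell$ as in (b) gives $\ell(x)=\sum\la_i\ell(e_i)\ge 0$, contradicting $\ell(x)<0$.

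If $x\in C$, I would invoke the conic form of Carathéodory's theorem, whose standard proof (take a nonnegative representation of $x$ with minimal support, and use a nontrivial linear dependence among its support, suitably scaled, to strictly shrink the support) works over any ordered field. This produces $x=\sum_{i=1}^k\la_ie_i$ with $\la_i\in K_{\ge0}$ and $e_1,\dots,e_k\in E$ linearly independent. Since $E$ generates $V$, Steinitz exchange extends $\{e_1,\dots,e_k\}$ to a basis $e_1,\dots,e_n$ of $V$ with $e_i\in E$, and setting $\la_{k+1}=\dots=\la_n=0$ yields~(a).

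If $x\notin C$, my plan is two-stage. Stage one: Farkas' lemma provides some $\ell_0\in V^*$ with $\ell_0(E)\subseteq K_{\ge0}$ and $\ell_0(x)<0$; after rescaling I may assume $\ell_0(x)=-1$, so $\ell_0$ lies in $D:=\{\ell\in V^*\mid \ell(E)\subseteq K_{\ge0},\;\ell(x)=-1\}$. Note $D$ is pointed, because any line $\ell+K\ell_1\subseteq D$ would force $\ell_1$ to vanish on $E\cup\{x\}$, hence on $V=\operatorname{span}E$, so $\ell_1=0$. Stage two is a simplex-style pivot: given $\ell\in D$, set $F:=E\cap\ker\ell$ and $r:=\dim\operatorname{span}F$; observe $r\le n-1$ since $\ell(x)=-1$ excludes $x\in\operatorname{span}F$. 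If $r=n-1$, pick any basis of $\operatorname{span}F$ lying in $F$ to obtain the required linearly independent $(n-1)$-element subset for~(b). Otherwise $W:=\{\ell'\in V^*\mid \ell'|_F=0,\;\ell'(x)=0\}$ has dimension $n-r-1\ge 1$; choose $0\ne\ell_1\in W$. Since $E$ spans $V$ and $\ell_1\ne 0$, some $e\in E\setminus F$ satisfies $\ell_1(e)\ne 0$, so after possibly negating $\ell_1$ the finite set $T:=\{e\in E\setminus F\mid \ell_1(e)<0\}$ is nonempty, the number $t^*:=\min\{-\ell(e)/\ell_1(e)\mid e\in T\}$ lies in $K_{>0}$, and $\ell':=\ell+t^*\ell_1$ stays in $D$, still vanishes on all of $F$, and vanishes at any minimizer $e^*\in T$. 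Since $\ell_1(e^*)<0$ while $\ell_1|_{\operatorname{span}F}=0$, we have $e^*\notin\operatorname{span}F$, so $\dim\operatorname{span}(E\cap\ker\ell')\ge r+1$. Iterating at most $n-1$ times yields $\ell\in D$ with $\dim\operatorname{span}(E\cap\ker\ell)=n-1$, from which~(b) follows.

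The hard part will be justifying Farkas' lemma over the arbitrary ordered field $K$, since the isolation and separation theorems~\ref{findimisolation} and~\ref{sepfindimvs} in this chapter have been established only for subfields of~$\R$. I plan to circumvent this by passing to the real closure $R:=\overline{(K,\le)}$: by the Tarski transfer principle~\ref{tprinciple}, Farkas' lemma (a first-order statement in the theory of ordered fields) extends from $\R$ to every real closed field, so a separating $R$-linear functional exists in $V_R^*:=\Hom_R(V\otimes_KR,R)$. Running the entire pivot argument above inside $V_R^*$ produces a vertex $\ell_R$ of the corresponding $R$-polyhedron $D_R$, and this $\ell_R$ is pinned down by the $n$ independent linear equations $\ell_R(f)=0$ for $f\in F$ and $\ell_R(x)=-1$; these are defined over $K$, so Cramer's rule places their unique solution already in $V^*\subseteq V_R^*$, yielding the $\ell\in V^*$ demanded by~(b).
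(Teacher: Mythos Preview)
Your proof is correct and takes a genuinely different route from the paper's. The paper gives a direct, self-contained simplex-style algorithm working entirely over $K$: starting from any basis $B\subseteq E$, it iteratively pivots (using a fixed total order on $E$ as a Bland-type anti-cycling rule) until it either expresses $x$ as a nonnegative combination of the current basis (yielding~(a)) or produces the dual certificate $\ell$ with $F=B\setminus\{u\}$ (yielding~(b)); the substantial part is the termination argument, which is the classical Bland's-rule proof that the simplex method cannot cycle. Your approach instead splits on whether $x\in\cone(E)$: for $x\in\cone(E)$ you use Carathéodory (with its own minimal-support proof, so no circularity with the paper's later~\ref{cara}) plus Steinitz exchange; for $x\notin\cone(E)$ you import Farkas over~$\R$, transfer it to the real closure $R$ via the Tarski principle, run a monotone pivot in $V_R^*$ that terminates trivially because $\dim\operatorname{span}(E\cap\ker\ell)$ strictly increases, and then descend back to~$K$ via Cramer's rule once you sit at a vertex cut out by $n$ independent $K$-rational equations (noting that $\ell(E)\subseteq R_{\ge0}\cap K=K_{\ge0}$). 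Your combinatorial core is cleaner---no cycling analysis---and conceptually separates the ``which alternative'' question from the ``refine the certificate'' question, but the price is the reliance on real quantifier elimination, a heavy import from Chapter~1; the paper's argument is elementary throughout and amounts to a first-principles proof of Farkas' lemma valid over any ordered field.
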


\begin{proof}
It is easy to see that (a) and (b) cannot occur both at the same time. Indeed, from
(a) it follows that $x\in\sum_{v\in E}K_{\ge0}v$ which is not compatible with (b)
because if $\ell\in V^*$ with $\ell(E)\subseteq K_{\ge0}$, then
$\ell(x)\in\ell\left(\sum_{v\in E}K_{\ge0}v\right)\subseteq K_{\ge0}$.

\medskip
We choose an order $\le$ on $E$ [$\to$ \ref{ordered-set}] and a basis
$B\subseteq E$ of $V$. We show that the following algorithm always terminates:
\begin{enumerate}[(1)]
\item Write $x=\sum_{v\in B}\la_vv$ with $\la_v\in K$ for all $v\in B$.
\item If $\la_v\ge0$ for all $v\in B$, then stop since (a) occurs.
\item $u:=\min\{v\in B\mid\la_v<0\}$
\item Define $\ell\in V^*$ by $\ell(u)=1$ and $\ell(v)=0$ for all
$v\in B\setminus\{u\}$ (so that $\ell(x)=\la_u<0$).
\item If $\ell(E)\subseteq K_{\ge0}$, then stop since (b) occurs.
\item $w:=\min\{v\in E\mid\ell(v)<0\}$
\item Replace $B$ by the new basis $(B\setminus\{u\})\cup\{w\}$ and
go to (1).
\end{enumerate}
Observe first of all that in Step (7) the set
$(B\setminus\{u\})\cup\{w\}$ is again a basis
since $B$ is one. Indeed, $w$ does not lie in the subspace generated by
$B\setminus\{u\}$ since $\ell$ vanishes according to its choice in (4) on this
subspace while it does not vanish on $w$ by the choice of $w$ in (6).

\medskip
To show that this algorithm terminates, we assume that this is not the case. Let
then denote by $(B_k,u_k,w_k,\ell_k)$ the value of $(B,u,w,\ell)$ after Step (6)
in the $k$-th iteration of the loop. We first argue that the existence of
$s,t\in\N$ with
\[(*)\qquad u_t\le u_s=w_t\text{ and }
\{v\in B_s\mid v>u_s\}=\{v\in B_t\mid v>u_s\}\]
causes a contradiction. For this purpose, let $x=\sum_{v\in B_s}\la_vv$ with
$\la_v\in K$ for all $v\in B_s$ be the representation of $x$ from the $s$-th
iteration of the loop. We will apply $\ell_t$ to this representation of $x$.
For that matter, observe the following:
\begin{itemize}
\item For all $v\in B_s$ with $v<u_s=w_t$, we have by the assignment to $u_s$
in (3) that $\la_v\ge0$.
\item For all $v\in E\supseteq B_s$ with $v<u_s=w_t$, we have by the
assignment to $w_t$ in (6) that $\ell_t(v)\ge0$.
\item $\la_{u_s}<0$ according to (3)
\item $\ell_t(u_s)=\ell_t(w_t)<0$ according to (6)
\item For all $v\in B_s$ with $v>u_s=w_t$, we have $\ell_t(v)=0$ since for these
$v$ we have by $(*)$ that $v\in B_t\setminus\{u_t\}$
(using that $u_t\le u_s$) and thus $\ell_t(v)=0$ by (4).
\end{itemize}
It thus follows that
\[
0\overset{(4)}>\ell_t(x)=
\underbrace{
\sum_{\substack{v\in B_s\\v<u_s}}\underbrace{\la_v}_{\ge0}\underbrace{\ell_t(v)}_{\ge0}
}_{\ge0}+
\underbrace{
\underbrace{\la_{u_s}}_{<0}\underbrace{\ell_t(u_s)}_{<0}
}_{>0}
+\underbrace{
\sum_{\substack{v\in B_s\\v>u_s}}\la_v\underbrace{\ell_t(v)}_{=0}
}_{=0}
>0
\]
which is the desired contradiction.

\medskip
Finally, we show the existence of $s,t\in\N$ with $(*)$. For clarity, we first
generalize the algorithm by looking at the following more abstract version of it:

\medskip
Suppose $E$ is a finite set, $\le$ an order on $E$ and $B$ a subset of $E$.
\begin{enumerate}[(1')]
\item Choose $u\in B$.
\item Choose $w\in E\setminus B$.
\item Replace $B$ by $(B\setminus\{u\})\cup\{w\}$ and go to (1').
\end{enumerate}
Denote by $(B_k,u_k,w_k)$ the value of $(B,u,w)$ after Step (2')
in the $k$-th iteration of the algorithm. We show the existence of $s,t\in\N$
satisfying $(*)$. Since $E$ is finite, the power set of $E$ is also finite.
Consequently, there are $p,q\in\N$ such that $p<q$ and $B_p=B_q$.
Because of (3'), it then obviously holds that $\{u_s\mid p\le s<q\}
=\{w_t\mid p\le t<q\}$. Set $v_0:=\max\{u_s\mid p\le s<q\}=
\max\{w_t\mid p\le t<q\}$. Then \[\{v\in B_s\mid v>v_0\}=\{v\in B_t\mid v>v_0\}\]
for all $s,t\in\{p,\dots,q-1\}$. Choose $s,t\in\{p,\dots,q-1\}$ with $u_s=v_0=w_t$
(note that $s<t$ or $t<s$ but certainly not $s=t$). Now
$(*)$ holds.
\end{proof}

\begin{cor}\emph{[$\to$ \ref{sospsd2}]}\label{linhomnichtnegativstellensatz}
Let $(K,\le)$ be an arbitrary ordered field. Let $m,n\in\N_0$,
$f,\ell_1,\dots,\ell_m\in K[X_1,\dots,X_n]$ be linear forms
\emph{[$\to$ \ref{longremi}(a)]} and set
\[S:=\{x\in K^n\mid\ell_1(x)\ge0,\ldots,\ell_m(x)\ge0\}.\] Then the following are
equivalent:
\begin{enumerate}[\normalfont(a)]
\item $f\ge0$ on $S$
\item $f\in K_{\ge0}\ell_1+\ldots+K_{\ge0}\ell_m$
\item There are $i_1,\dots,i_s\in\{1,\dots,m\}$ such that $\ell_{i_1},\dots,
\ell_{i_s}$ are linearly independent and
\[f\in K_{\ge0}\ell_{i_1}+\ldots+K_{\ge0}\ell_{i_s}.\]
\end{enumerate}
\end{cor}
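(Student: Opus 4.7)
The implications (c)$\implies$(b)$\implies$(a) will be immediate: (c)$\implies$(b) is a mere inclusion of sets of coefficients, and (b)$\implies$(a) holds because each $\ell_i$ is nonnegative on $S$, so any $K_{\ge0}$-combination $\sum_i\la_i\ell_i$ is nonnegative on $S$ as well. The substance lies in (a)$\implies$(c), which I plan to derive from Theorem \ref{fundthmlin}.

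First I will reduce to the case where $f$ already lies in the $K$-linear span $V:=K\ell_1+\ldots+K\ell_m$ of the $\ell_i$. Assuming (a), if $y\in K^n$ satisfies $\ell_1(y)=\ldots=\ell_m(y)=0$, then both $y$ and $-y$ lie in $S$, forcing $f(y)\ge0$ and $-f(y)=f(-y)\ge0$, hence $f(y)=0$. Thus $\ker L\subseteq\ker f$ for the linear map $L\colon K^n\to K^m$, $x\mapsto(\ell_1(x),\dots,\ell_m(x))$, and by elementary linear algebra $f$ factors as $f=\mu\circ L$ for some linear $\mu\colon K^m\to K$; reading off the matrix of $\mu$ exhibits $f$ as a $K$-linear combination of the $\ell_i$, i.e., $f\in V$.

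Now I will apply Theorem \ref{fundthmlin} to the finite-dimensional $K$-vector space $V$ with $E:=\{\ell_1,\dots,\ell_m\}$ (which generates $V$ by construction) and $x:=f\in V$. In the first alternative, $f$ is a nonnegative $K$-linear combination of elements of $E$ that form a basis of $V$; a basis being linearly independent, this is exactly condition (c). It remains to rule out the second alternative. There one would have $\ph\in V^*$ with $\ph(\ell_i)\ge0$ for each $i$ and $\ph(f)<0$. Extending $\ph$ by linear algebra to a functional $\widetilde\ph$ on the full $n$-dimensional $K$-vector space $W$ of linear forms on $K^n$, and invoking the canonical isomorphism $W^*\cong K^n$ coming from evaluation at a point (equivalently, $(K^n)^{**}\cong K^n$), I obtain some $y\in K^n$ with $g(y)=\widetilde\ph(g)$ for every $g\in W$. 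Then $\ell_i(y)=\ph(\ell_i)\ge0$ for each $i$, so $y\in S$, while $f(y)=\ph(f)<0$, contradicting (a).

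The main obstacle is really just the duality translation in the last step, i.e., passing from a linear functional on the (sub)space $V$ of linear forms back to an actual point $y\in K^n$ that witnesses the failure of nonnegativity on $S$. Once this identification is in place, the corollary is a near-mechanical consequence of Theorem \ref{fundthmlin}; in particular, the argument works over an arbitrary ordered field $K$ with no appeal to completeness, topology, or real closure.
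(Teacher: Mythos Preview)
Your proof is correct and follows essentially the same approach as the paper: establish $f\in V=K\ell_1+\ldots+K\ell_m$, then apply Theorem \ref{fundthmlin} and rule out its alternative (b) by converting a functional on the space of linear forms into a point of $S$. The only cosmetic difference is in the $f\in V$ step: you use the $\pm y$ trick together with $\ker L\subseteq\ker f$ and factorization, while the paper argues by contradiction via a separating functional; these are two phrasings of the same linear-algebra fact.
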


\begin{proof}
\underline{(c)$\implies$(b)$\implies$(a)} is trivial.

\smallskip
\underline{(a)$\implies$(c)}\quad Suppose that (a) holds.

\medskip
\textbf{Claim:} $f\in V:=K\ell_1+\ldots+K\ell_m$

\smallskip
\emph{Explanation.} Assume $f\notin V$. Then there is some
$\ph\in(KX_1+\ldots+KX_n)^*$ such that $\ph(\ell_1)=\ldots=\ph(\ell_m)=0$
and $\ph(f)=-1$. Set $x:=(\ph(X_1),\ldots,\ph(X_n))\in K^n$. Then
$\ell_i(x)=\ph(\ell_i)=0$ for all $i\in\{1,\dots,m\}$. Hence $x\in S$ and
$f(x)=\ph(f)=-1<0$. $\lightning$

\smallskip\noindent Now apply \ref{fundthmlin} to $V$ and
$E:=\{\ell_1,\dots,\ell_m\}$ (taking account of the claim). Then it suffices to show
that for all $\ph\in V^*$ with $\ph(E)\subseteq K_{\ge0}$ also $\ph(f)\ge0$ holds.
Thus let $\ph\in V^*$ with $\ph(E)\subseteq K_{\ge0}$. Choose
$\ps\in(KX_1+\ldots+KX_n)^*$ with $\ps|_V=\ph$.
Set $x:=(\ps(X_1),\ldots,\ps(X_n))\in K^n$. Then
$\ell_i(x)=\ps(\ell_i)=\ph(\ell_i)\ge0$ for all $i\in\{1,\dots,m\}$ and thus
$x\in S$ and $\ph(f)=\ps(f)=f(x)\ge0$.
\end{proof}

\begin{cor}[Linear Nichtnegativstellensatz]
\emph{[$\to$ \ref{son1s}, \ref{nichtnegativstellensatz}]}
\label{linnichtnegativstellensatz}
Let $(K,\le)$ be an arbitrary ordered field.
Let $m,n\in\N_0$, $f,\ell_1,\dots,\ell_m\in K[X_1,\dots,X_n]_1$
\emph{[$\to$ \ref{degnot}]} and suppose
\[S:=\{x\in K^n\mid\ell_1(x)\ge0,\ldots,\ell_m(x)\ge0\}\ne\emptyset.\]
Then the following are equivalent:
\begin{enumerate}[\normalfont(a)]
\item $f\ge0$ on $S$
\item $f\in K_{\ge0}+K_{\ge0}\ell_1+\ldots+K_{\ge0}\ell_m$
\item There are $i_1,\dots,i_s\in\{0,\dots,m\}$ such that $\ell_{i_1},\ldots,\ell_{i_s}$
are linearly independent and \[f\in K_{\ge0}\ell_{i_1}+\ldots+K_{\ge0}\ell_{i_s}\]
where $\ell_0:=1$.
\end{enumerate}
\end{cor}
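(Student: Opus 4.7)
The plan is to reduce (a)$\implies$(c) to the homogeneous Corollary \ref{linhomnichtnegativstellensatz}. For each $g = a_0 + a_1X_1 + \dots + a_nX_n \in K[\x]_1$ I will introduce the degree-$1$ homogenization $g^* := a_0X_0 + a_1X_1 + \dots + a_nX_n \in K[X_0, X_1, \dots, X_n]$, and set $\ell_0 := 1$ so that $\ell_0^* = X_0$. The crucial auxiliary statement to establish is
\[
f^*(x_0, x) \ge 0 \qquad \text{for all } (x_0, x) \in K \times K^n \text{ with } x_0 \ge 0 \text{ and } \ell_1^*(x_0, x), \dots, \ell_m^*(x_0, x) \ge 0.
\]
Granting this, Corollary \ref{linhomnichtnegativstellensatz} applied to $f^*$ and the linear forms $\ell_0^*, \ell_1^*, \dots, \ell_m^*$ in $K[X_0, \dots, X_n]$ will produce indices $i_1, \dots, i_s \in \{0, 1, \dots, m\}$ with $\ell_{i_1}^*, \dots, \ell_{i_s}^*$ linearly independent, together with $c_j \in K_{\ge 0}$ such that $f^* = \sum_j c_j \ell_{i_j}^*$. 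Dehomogenizing via the ring homomorphism $K[X_0, \dots, X_n] \to K[\x]$ that sends $X_0 \mapsto 1$ then yields $f = \sum_j c_j \ell_{i_j}$. A hypothetical $K$-linear relation $\sum_j \la_j \ell_{i_j} = 0$ in $K[\x]$ would force the constant coefficient as well as each $X_k$-coefficient of the sum to vanish, which coefficientwise is exactly the relation $\sum_j \la_j \ell_{i_j}^* = 0$ in $K[X_0, \dots, X_n]$; hence linear independence transfers from the homogenizations to the dehomogenizations, and (c) follows.

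For the displayed auxiliary statement I will split on the sign of $x_0$. If $x_0 > 0$, then $y := x/x_0 \in S$ because $\ell_i(y) = \ell_i^*(x_0, x)/x_0 \ge 0$ for each $i$, so (a) gives $f^*(x_0, x) = x_0 f(y) \ge 0$. The case $x_0 = 0$ is where the hypothesis $S \ne \emptyset$ -- notably absent in \ref{linhomnichtnegativstellensatz} -- must be exploited: I pick any $y_0 \in S$ and observe that for every $t \in K_{\ge 0}$
\[
\ell_i(y_0 + tx) = \ell_i(y_0) + t\,\ell_i^*(0, x) \ge 0 \qquad (i = 1, \dots, m),
\]
so $y_0 + tx \in S$ and by (a)
\[
f(y_0) + t\, f^*(0, x) \;=\; f(y_0 + tx) \;\ge\; 0.
\]
Since $K_{\ge 0}$ is unbounded in the ordered field $K$, any $a, b \in K$ with $a + tb \ge 0$ for all $t \in K_{\ge 0}$ must satisfy $b \ge 0$ (otherwise $-a/b$ would be an upper bound for $K_{\ge 0}$); applying this with $a := f(y_0)$ and $b := f^*(0, x)$ forces $f^*(0, x) \ge 0$.

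The main obstacle is precisely the $x_0 = 0$ case: it is the only step that invokes $S \ne \emptyset$, and it is indispensable -- the homogeneous version can do without this hypothesis because $0$ always lies in the half-space cut out by linear forms. Everything else (homogenization, the invocation of \ref{linhomnichtnegativstellensatz}, dehomogenization, and the transfer of linear independence) is purely formal bookkeeping.
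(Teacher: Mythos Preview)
Your proof is correct and follows essentially the same approach as the paper: homogenize by introducing $X_0$, verify $f^*\ge0$ on the homogeneous cone by a case split on $x_0>0$ versus $x_0=0$ (using $S\ne\emptyset$ in the latter), invoke Corollary~\ref{linhomnichtnegativstellensatz}, and dehomogenize. Your treatment is in fact slightly cleaner than the paper's, since your uniform definition of $g^*$ on all of $K[\x]_1$ sidesteps any degree-$0$ edge cases, and you make the linear-independence transfer explicit where the paper leaves it to the reader via the references to \ref{introhom} and \ref{homdehom}.
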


\begin{proof}
\underline{(c)$\implies$(b)$\implies$(a)} is trivial.

\smallskip
\underline{(a)$\implies$(c)}\quad Suppose that (a) holds. Due to
\ref{linhomnichtnegativstellensatz}, it suffices to show that
$f^*\ge0$ on
$S^*:=\{x=(x_0,\dots,x_n)\in K^{n+1}\mid x_0\ge0,\ell_1^*(x)\ge0,\ldots,
\ell_m^*(x)\ge0\}$ [$\to$ \ref{introhom}(c)(d), \ref{homdehom}(e)].
To this end, let $(x_0,\dots,x_n)\in S^*$.

\medskip
\textbf{Case 1:}  $x_0>0$

\smallskip
Then $\left(1,\frac{x_1}{x_0},\dots,\frac{x_n}{x_0}\right)=
\frac1{x_0}(x_0,\dots,x_n)\in
S^*$ and hence $\left(\frac{x_1}{x_0},\ldots,\frac{x_n}{x_0}\right)\in S$.
From (a), it follows that
$f^*\left(1,\frac{x_1}{x_0},\dots,\frac{x_n}{x_0}\right)=
f\left(\frac{x_1}{x_0},\dots,\frac{x_n}{x_0}\right)\ge0$
and hence also
$f^*(x_0,\ldots,x_n)=x_0f^*\left(1,\frac{x_1}{x_0},\dots,\frac{x_n}{x_0}\right)\ge0$

\medskip
\textbf{Case 2:}  $x_0=0$

\smallskip
Then $(\lf(\ell_i))(x_1,\dots,x_n)\overset{\text{\ref{homdehom}(a)}}=
\ell_i^*(x_0,\dots,x_n)\ge0$ and therefore
\[(\lf(\ell_i))(\la x_1,\dots,\la x_n)\ge0\] for all $i\in\{1,\dots,m\}$ and
$\la\in K_{\ge0}$. Because of $S\ne\emptyset$, we can choose
$(y_1,\dots,y_n)\in S$. Then $\ell_i(y_1+\la x_1,\dots,y_n+\la x_n)\ge0$
for all $\la\in K_{\ge0}$ and $i\in\{1,\dots,m\}$.
Due to (a), we have thus
$f(y_1+\la x_1,\dots,y_n+\la x_n)\ge0$ for all $\la\in K_{\ge0}$.
It follows that $(\lf(f))(x_1,\dots,x_n)\ge0$. Hence
\[f^*(x_0,\dots,x_n)\overset{\text{\ref{homdehom}(a)}}=(\lf(f))(x_1,\dots,x_n)\ge0.\]
\end{proof}

\begin{df}\label{defray}
Let $V$ be a $K$-vector space and $C\subseteq V$ a cone
[$\to$ \ref{defcone}].
\begin{enumerate}[(a)]
\item The sets of the form $K_{\ge0}x$ with $x\in C\setminus\{0\}$ are called
the \emph{rays} of $C$.
\item Rays of $C$ that are at the same time faces [$\to$ \ref{dfface}] of $C$ are
called \emph{extreme rays} of $C$.
\item A set $B\subseteq C\setminus\{0\}$ is called a \emph{base} of $C$,
if for each $x\in C\setminus\{0\}$ there is exactly one $\la\in K_{>0}$ such that
$\la x\in B$, i.e., if every ray of $C$ hits the set $B$ in exactly one point.
\end{enumerate}
\end{df}

\begin{pro}\label{extrbase}
Suppose $V$ is a $K$-vector space and $C\subseteq V$ is a cone with convex
base $B$. Then for all $x\in V$,
\[\text{$K_{\ge0}x$ is an extreme ray of $C$}\iff\exists\la\in K_{>0}:\la x\in\extr B.\]
\end{pro}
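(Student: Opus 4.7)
The plan is to reduce the extreme-ray condition to the extreme-point condition via the unique base representative, after first establishing that $C$ is pointed. For the latter I would argue as follows: if $y \in C \setminus \{0\}$ satisfied $-y \in C$, pick the unique scalars $\la_1, \la_2 \in K_{>0}$ with $b_1 := \la_1 y \in B$ and $b_2 := -\la_2 y \in B$; by convexity of $B$, the midpoint $\frac{b_1 + b_2}{2} = \frac{\la_1 - \la_2}{2} y$ lies in $B$. If $\la_1 = \la_2$ this gives $0 \in B$, contradicting $B \subseteq C \setminus \{0\}$; otherwise the midpoint sits on one of the rays $K_{\ge 0}(\pm y)$ with a coefficient strictly smaller than $\la_1$ or $\la_2$, contradicting uniqueness of the base intersection. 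Thus $C \cap -C = \{0\}$.

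For the forward direction, assume $K_{\ge 0} x$ is an extreme ray, so $x \in C \setminus \{0\}$. Let $\la \in K_{>0}$ be the unique scalar with $b := \la x \in B$; I will show $b \in \extr B$. Suppose $b = \frac{y+z}{2}$ with $y,z \in B$. Since $K_{\ge 0} x$ is a face of $C$ and $y, z \in B \subseteq C$, Definition \ref{dfface} forces $y, z \in K_{\ge 0} x$, so $y = \mu_1 x$ and $z = \mu_2 x$ with $\mu_1, \mu_2 \in K_{>0}$ (they cannot vanish since $0 \notin B$). Uniqueness of the base intersection on the ray $K_{\ge 0} x$ then gives $\mu_1 = \mu_2 = \la$, so $y = z = b$.

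For the backward direction, suppose $b := \la x \in \extr B$ for some $\la \in K_{>0}$. Then $b \ne 0$ and $K_{\ge 0} x = K_{\ge 0} b$ is a ray. To see it is a face of $C$, take $y, z \in C$ with $\frac{y+z}{2} = \mu b$ for some $\mu \in K_{\ge 0}$. If $\mu = 0$, pointedness of $C$ yields $y = z = 0 \in K_{\ge 0} b$. If $\mu > 0$ and, say, $y = 0$, then $z = 2\mu b \in K_{\ge 0} b$. Otherwise $y, z \in C \setminus \{0\}$, so there are $\alpha, \beta \in K_{>0}$ with $y' := \alpha y \in B$ and $z' := \beta z \in B$. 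Setting $s := \frac{1}{2\alpha}$, $t := \frac{1}{2\beta}$, the identity $\mu b = s y' + t z'$ rewrites as $\mu b = (s+t) w$ with $w := \frac{s}{s+t} y' + \frac{t}{s+t} z'\in B$ by convexity. Hence $w$ is a point of the ray $K_{\ge 0} b$ lying in $B$, so $w = b$ by uniqueness of the base intersection. By Proposition \ref{extremepointface} the singleton $\{b\}$ is a face of $B$, and applying Proposition \ref{face234} to this face with strictly positive coefficients $\frac{s}{s+t}, \frac{t}{s+t}$ summing to one gives $y' = z' = b$, so $y, z \in K_{\ge 0} b$.

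The principal obstacle is managing the backward direction cleanly: one must reduce a face condition on an entire ray to the known face condition $\{b\} \subseteq B$ by renormalizing arbitrary $y, z \in C$ to base points, while separately handling the degenerate case $\frac{y+z}{2} = 0$. This degenerate case is precisely what necessitates establishing pointedness of $C$ at the outset, which in turn depends essentially on the convexity of $B$.
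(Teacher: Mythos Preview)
Your proof is correct and follows essentially the same approach as the paper. The forward direction is identical; in the backward direction the paper handles the degenerate case $y+z=0$ inline (showing directly that this would force $0\in B$) rather than isolating pointedness as a preliminary lemma, and it appeals to the extreme-point property directly (via \ref{extremeexo}) rather than routing through \ref{extremepointface} and \ref{face234}, but the substance of the argument---renormalize $y,z$ to base points, use convexity plus uniqueness of the base intersection, then use extremality of $b$---is the same.
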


\begin{proof} Let $x\in V$.

\smallskip
``$\Longrightarrow$'' Let $K_{\ge0}x$ be an extreme ray of $C$. Then it follows
that $x\in C\setminus\{0\}$. Hence there is exactly one $\la\in K_{>0}$ such that
$\la x\in B$. We claim $\la x\in\extr B$. For this purpose, consider $y,z\in B$ with
$\frac{y+z}2=\la x$. To show: $y=z=\la x$. From $y,z\in C$ and
$\frac{y+z}2\in K_{\ge0}x$, we deduce $y,z\in K_{\ge0}x$. Due to $y,z\in B$ and
$0\notin B$, we get $y,z\in K_{>0}x$. Again from $y,z\in B$ and the uniqueness of
$\la$, we get $y=\la x=z$.

\smallskip
``$\Longleftarrow$''  WLOG let $x\in\extr B$. To show: $K_{\ge0}x$ is an extreme
ray of $C$. Since $x\in B\subseteq C\setminus\{0\}$, $K_{\ge0}x$ is a ray of $C$.
Let $y,z\in C$ with $\frac{y+z}2\in K_{\ge0}x$. To show: $y,z\in K_{\ge0}x$.
WLOG $y\ne0$ and $z\ne0$. If we had $y+z=0$, then one could easily show
$0\in B\ \lightning$.
WLOG $y+z=x$. Choose $\mu,\nu\in K_{>0}$ such that $\mu y,\nu z\in B$.
Then
\[
x=y+z=(\mu^{-1}+\nu^{-1})
\underbrace{\left(\frac{\mu^{-1}}{\mu^{-1}+\nu^{-1}}(\mu y)+\frac{\nu^{-1}}{\mu^{-1}+\nu^{-1}}
(\nu z)\right)}_{\in B}
\]
and thus $\mu^{-1}+\nu^{-1}=1$. Since
$x=\mu^{-1}(\mu y)+\nu^{-1}(\nu z)$, $\mu y,\nu z\in B$ and $x\in\extr B$,
we have $\mu y=x=\nu y$.
\end{proof}

\begin{thm}{}\emph{[$\to$ \ref{minkowski}]}\label{conicminkowski}
Every convex cone with compact \emph{[$\to$ \ref{tacitly}]} convex base in a
finite-dimensional $\R$-vector space is the sum of its extreme rays.
\end{thm}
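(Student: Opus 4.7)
The plan is to reduce the problem about the cone $C$ to Minkowski's theorem \ref{minkowski} applied to its base $B$, using Proposition \ref{extrbase} as the bridge between extreme points of $B$ and extreme rays of $C$.

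First, I would dispose of the trivial inclusion: every extreme ray of $C$ is by definition contained in $C$, and $C$ is closed under addition and nonnegative scalar multiplication, so any finite sum of elements taken from extreme rays lies in $C$.

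For the nontrivial inclusion, let $x\in C$. If $x=0$, we take the empty sum. Otherwise $x\in C\setminus\{0\}$ generates a ray of $C$, so by Definition \ref{defray}(c) of a base there is a unique $\la\in\R_{>0}$ with $\la x\in B$. Now $B$ is a compact convex subset of the finite-dimensional $\R$-vector space $V$ (equipped with its unique vector space topology by \ref{onetop} and \ref{tacitly}), so Minkowski's theorem \ref{minkowski} applies and gives $B=\conv(\extr B)$. Hence we may write
\[
\la x=\sum_{i=1}^m\mu_ie_i,\qquad m\in\N,\ \mu_i\in\R_{\ge0},\ \sum_{i=1}^m\mu_i=1,\ e_i\in\extr B.
\]
Dividing by $\la$ yields $x=\sum_{i=1}^m\frac{\mu_i}{\la}e_i$, and by Proposition \ref{extrbase} each $\R_{\ge0}e_i$ is an extreme ray of $C$. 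Therefore $x$ is exhibited as a sum of elements drawn from extreme rays of $C$, completing the proof.

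There is no serious obstacle here: the compactness of $B$ is used precisely to invoke Minkowski's theorem, and the base property of $B$ provides the normalization needed to translate a representation of $\la x\in B$ as a convex combination of extreme points into a representation of $x\in C$ as a sum along extreme rays. The mild technical point is just to verify that the rays $\R_{\ge0}e_i$ produced this way are indeed extreme rays, but this is exactly the content of the ``$\Longleftarrow$'' direction of \ref{extrbase}.
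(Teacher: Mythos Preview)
Your proof is correct and follows essentially the same approach as the paper: reduce to the base $B$, apply Minkowski's theorem \ref{minkowski} to write an element of $B$ as a convex combination of extreme points, then invoke \ref{extrbase} to identify those extreme points with generators of extreme rays. The paper's version is slightly terser (it writes ``WLOG $x\in B$'' instead of scaling explicitly and does not spell out the trivial inclusion or the $x=0$ case), but the argument is the same.
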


\begin{proof}
Suppose $V$ is a finite-dimensional $\R$-vector space and
$C\subseteq V$ is a convex cone with compact convex base $B$. Let $x\in C$.
To show: $x$ is a sum of elements of extreme rays of $C$. WLOG $x\in B$.
By Minkowski's theorem \ref{minkowski}, we have $x\in\conv(\extr B)$, say
$x=\sum_{i=1}^m\la_ix_i$ with $m\in\N$, $\la_1,\dots,\la_m\in K_{\ge0}$,
$\la_1+\ldots+\la_m=1$ and $x_i\in\extr B$. According to \ref{extrbase},
$K_{\ge0}x_i$ is for all $i\in\{1,\dots,m\}$ an extreme ray of $C$.
\end{proof}

\begin{pro}\label{compactbaseclosed}
Every convex cone with compact \emph{[$\to$ \ref{tacitly}]}
base in a finite-dimensional $\R$-vector space is closed.
\end{pro}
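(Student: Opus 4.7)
The plan is to verify that $C$ equals its closure by showing that any limit point of $C$ lies in $C$. Since $V$ is a finite-dimensional $\R$-vector space endowed with its unique vector space topology (by \ref{onetop}), it is homeomorphic to some $\R^n$ and is therefore metrizable, so it suffices to check sequential closedness. Fix an arbitrary norm $\|\cdot\|$ on $V$ (e.g. via a basis, making $V$ a normed $\R$-vector space as in \ref{topvsex}(c)). Let $x_k \in C$ with $x_k \to x$ in $V$; the goal is to prove $x \in C$.

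The case $x = 0$ is immediate since $0 \in C$. Suppose $x \neq 0$. By the definition of a base (\ref{defray}(c)), every element of $C \setminus \{0\}$ has a unique representation $\lambda b$ with $\lambda \in \R_{>0}$ and $b \in B$; if $x_k = 0$ we arbitrarily set $\lambda_k = 0$ and $b_k$ any fixed element of $B$, otherwise we use the unique representation $x_k = \lambda_k b_k$. The key observation is that $B$, being a compact subset of the Hausdorff space $V$ with $0 \notin B$, satisfies
\[
\delta := \inf_{b \in B} \|b\| > 0,
\]
because the continuous function $b \mapsto \|b\|$ attains its minimum on the compact set $B$ by \ref{takeson}, and this minimum must be strictly positive.

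From $x_k \to x$ we get that $(\|x_k\|)_k$ is bounded, say $\|x_k\| \le M$. Then
\[
\lambda_k = \frac{\|x_k\|}{\|b_k\|} \le \frac{M}{\delta},
\]
so $(\lambda_k)_k$ is a bounded sequence in $\R_{\ge 0}$. By Bolzano--Weierstrass we extract a subsequence with $\lambda_{k_j} \to \lambda \in \R_{\ge 0}$, and by compactness of $B$ (again a subsequence) $b_{k_j} \to b \in B$. Continuity of scalar multiplication in the topological vector space $V$ yields $x_{k_j} = \lambda_{k_j} b_{k_j} \to \lambda b$, and by uniqueness of limits in the Hausdorff space $V$ (\ref{topvshausdorff}) we conclude $x = \lambda b \in \R_{\ge 0} B \subseteq C$.

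The only nontrivial ingredient is the lower bound $\delta > 0$ on $\|b\|$ for $b \in B$, which is what prevents the multipliers $\lambda_k$ from escaping to infinity; everything else is routine use of compactness, continuity of the vector space operations, and sequential characterization of closedness in the metrizable space $V$. Without compactness of $B$ this lower bound can fail, illustrating why the hypothesis is essential.
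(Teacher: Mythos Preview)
Your proof is correct. Both your argument and the paper's hinge on the same key observation: since $B$ is compact and $0\notin B$, the quantity $\delta=\min_{b\in B}\|b\|$ is strictly positive, and this prevents the scalars $\lambda_k$ in the representation $x_k=\lambda_k b_k$ from escaping to infinity.

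The execution differs, however. You work sequentially: given $x_k\to x$ in $C$, you bound the $\lambda_k$, pass to convergent subsequences of $(\lambda_k)$ and $(b_k)$, and recover $x=\lambda b\in C$. The paper instead shows directly that the complement of $C$ is open: it introduces the auxiliary compact set $A:=\{\lambda b\mid\lambda\in[0,1],\,b\in B\}$ (the image of $[0,1]\times B$ under scalar multiplication), uses the scaling homeomorphisms of \ref{transhomeo} to reduce to the case $\|x\|<d/2$ with $d:=\delta$, finds an $\ep$-ball around $x$ disjoint from the closed set $A$, and then argues that any $y\in C\setminus A$ has $\|y\|>d$, so this ball is disjoint from all of $C$. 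Your route is arguably more direct and avoids the auxiliary set and the scaling reduction; the paper's route avoids sequences and the appeal to metrizability, working purely with open sets and compactness.
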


\begin{proof}
Suppose $V$ is a finite-dimensional $\R$-vector space and
$C\subseteq V$ is a convex cone with compact base $B$. By
Tikhonov's theorem \ref{tikhonov}, also $[0,1]_\R\times B$ is compact.
From \ref{quasicompactimage} together with the continuity of the
scalar multiplication, we obtain
that \[A:=\{\la x\mid\la\in[0,1]_\R,x\in B\}\]
is again compact. WLOG $V=\R^n$ by \ref{onetop}. WLOG $B\ne\emptyset$.
Set
\[d:=\min\{\|y\|\mid y\in B\}>0.\] In order to show that $C$ is closed,
we now let $x\in V\setminus C$. WLOG $\|x\|<\frac d2$ [$\to$ \ref{transhomeo}].
Since $A$ is closed by \ref{comclo}, there is an $\ep>0$ such that
$\{y\in V\mid\|x-y\|<\ep\}\cap A=\emptyset$. From $0\in A$, we get
$\ep\le\|x\|<\frac d2$. Then $\{y\in V\mid\|x-y\|<\ep\}\cap C=\emptyset$
for if $y\in C\setminus A$, then there is $\la\in K$ with $0<\la<1$ and
$\la y\in B$ and it follows that $\|y\|=\frac1\la\|\la y\|\ge\frac1\la d>d$ which is 
incompatible with $\|x-y\|<\frac d2$ (which would imply contrarily
$\|y\|\le\|y-x\|+\|x\|<\frac d2+\frac d2=d$). This shows that $C$ is closed.
\end{proof}

\section{Application to ternary quartics}

A \emph{ternary quartic} is a $4$-form (also called quartic form [$\to$ \ref{quintic}])
in $3$ variables.

\begin{lem}\label{densarg}
Let $(K,\le)$ be an ordered field and $G\in SK^{m\times m}$.
Then $G$ is psd [$\to$ \ref{psdpd}] if and
only if $x^TGx\ge0$ for all $x\in(K^\times)^m$.
\end{lem}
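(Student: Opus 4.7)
The plan is to prove the nontrivial direction by a polynomial-continuity (``density'') argument, exploiting that the set of $x$ with some vanishing coordinate has a lot of ``room'' around it in $K^m$.

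The ``only if'' direction is immediate from Definition~\ref{psdpd}(b). For the converse, assume $x^TGx\ge 0$ for every $x\in(K^\times)^m$, and let $x\in K^m$ be arbitrary. I set $y:=(1,\dots,1)\in(K^\times)^m$ and form the one-variable polynomial
\[
p(t):=(x+ty)^TG(x+ty)=(y^TGy)\,t^2+2(x^TGy)\,t+x^TGx\in K[t],
\]
so that $p(0)=x^TGx$. The $i$-th coordinate of $x+ty$ vanishes exactly when $t=-x_i$, so the set $F:=\{-x_1,\dots,-x_m\}\subseteq K$ is finite, and for every $t\in K\setminus F$ we have $x+ty\in(K^\times)^m$ and hence $p(t)\ge 0$ by hypothesis.

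Now I argue by contradiction: suppose $p(0)<0$. I claim there is some $\delta\in K_{>0}$ such that $p(t)<0$ for all $t\in K$ with $|t|<\delta$. Writing $p(t)=\sum_{i=0}^2 c_it^i$ and setting $M:=|c_1|+|c_2|+1\in K_{>0}$, a direct estimate gives, for $|t|\le 1$,
\[
|p(t)-p(0)|\;=\;\bigl|t(c_1+c_2t)\bigr|\;\le\;M|t|,
\]
so choosing $\delta:=\min\{1,\,|p(0)|/(2M)\}\in K_{>0}$ yields $p(t)\le p(0)+|p(0)|/2=p(0)/2<0$ for all $t\in K$ with $|t|<\delta$. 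Since $K$ has characteristic $0$ by \ref{qembeds}, the elements $\delta/n$ for $n\in\N$ are distinct and all lie in $(-\delta,\delta)\cap K$, so this interval is infinite. Every such $t$ satisfies $p(t)<0$, forcing $t\in F$; but $F$ is finite, a contradiction. Hence $x^TGx=p(0)\ge 0$.

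The main (and essentially only) subtlety is that we are working over an arbitrary ordered field, with no appeal to real-closedness, completeness, or the Archimedean property; this forces the ``continuity at $0$'' step to be carried out by the elementary coefficient estimate above rather than by any analytic machinery, and it is precisely here that one uses $\Q\subseteq K$ to ensure that the interval $(-\delta,\delta)\cap K$ contains infinitely many elements.
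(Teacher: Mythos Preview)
Your proof is correct and follows essentially the same approach as the paper's: both perturb $x$ to $x+ty$ with $y\in(K^\times)^m$, observe that the resulting quadratic polynomial $p(t)$ is nonnegative for all but finitely many $t\in K$, and deduce $p(0)\ge0$. The only difference is cosmetic: the paper invokes Lemma~\ref{sgnbounds}(b) for the last step, whereas you reprove that special case by hand via the coefficient estimate.
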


\begin{proof}
Suppose $x^TGx\ge0$ for all $x\in(K^\times)^m$. Let $z\in K^m$. We
have to show  that \[z^TGz\ge0.\]
Choose $y\in(K^\times)^m$ arbitrary. Then $z+\la y\in(K^\times)^m$ and
therefore \[z^TGz+2\la y^TGz+\la^2y^TGy=(z+\la y)^TG(z+\la y)\ge0\] for
all but finitely many $\la\in K$. For example, by \ref{sgnbounds}(b) applied to
the polynomial \[z^TGz+2y^TGzT+y^TGyT^2\in K[T],\]
it follows that $z^TGz\ge0$.
\end{proof}

\begin{lem}\label{h1888a}
Let $K$ be a Euclidean field and $f\in K[X,Y,Z]$ a $4$-form. Suppose that there
are linearly independent $v_1,v_2,v_3\in K^3$ such that
$f(v_1)=f(v_2)=f(v_3)=0$. Then the following are equivalent:
\begin{enumerate}[(a)]
\item $f$ is psd [$\to$ \ref{psdpd}(a)]
\item $f\in\sum K[X,Y,Z]^2$
\item $f$ is a sum of $3$ squares of quadratic forms in $K[X,Y,Z]$.
\end{enumerate}
\end{lem}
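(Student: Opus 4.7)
The implications (c) $\Rightarrow$ (b) $\Rightarrow$ (a) are trivial. I turn to (a) $\Rightarrow$ (c). Since $v_1, v_2, v_3$ are linearly independent over $K$, choose $\phi \in \GL_3(K)$ with $\phi(e_i) = v_i$ and set $g := f \circ \phi \in K[X,Y,Z]$. Then $g$ is again a psd $4$-form, now satisfying $g(e_1) = g(e_2) = g(e_3) = 0$; moreover, being a sum of three squares of quadratic forms is preserved under the invertible linear substitution $\phi^{\pm 1}$, so it suffices to prove (c) for $g$ in place of $f$.

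Writing $g = \sum_{\alpha+\beta+\gamma=4} a_{\alpha\beta\gamma} X^\alpha Y^\beta Z^\gamma$, evaluation at $e_i$ gives $a_{400} = a_{040} = a_{004} = 0$. For each $i$, the slice polynomial through $e_i$ (say $g(1, t, s)$ for $i=1$) is $\ge 0$ on $K^2$ and vanishes at the origin. A standard ``smallest-order-term-dominates'' argument in the ordered field $K$ forces its linear part to vanish (eliminating $a_{310}, a_{301}$) and shows that its leading homogeneous quadratic part $a_{220} t^2 + a_{211} ts + a_{202} s^2$ is psd as a binary form. Restricting further to a coordinate axis, $g(1,t,0) = a_{220} t^2 + a_{130} t^3$ forces $a_{130} = 0$ and $a_{220} \ge 0$, and likewise $a_{103} = 0$, $a_{202} \ge 0$; running the same analysis at $e_2$ and $e_3$ eliminates every monomial in which some variable appears with degree $3$, and yields $a_{022} \ge 0$ as well. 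Consequently
\[
g = q(XY, XZ, YZ), \quad q(U,V,W) := a_{220} U^2 + a_{202} V^2 + a_{022} W^2 + a_{211} UV + a_{121} UW + a_{112} VW,
\]
a ternary quadratic form.

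The heart of the argument is showing that $q$ is psd on $K^3$. For $(u,v,w) \in K^3$ with $u, v, w \ne 0$ and $uvw > 0$, the Euclideanness of $K$ supplies $\sqrt{uv/w}, \sqrt{uw/v}, \sqrt{vw/u} \in K_{>0}$, and a short sign bookkeeping (depending on the sign pattern of $(u,v,w)$) produces $(x,y,z) \in K^3$ with $(xy, xz, yz) = (u,v,w)$; hence $q(u,v,w) = g(x,y,z) \ge 0$. For $uvw < 0$ (and all coordinates nonzero), the homogeneity of degree $2$ gives $q(u,v,w) = q(-u,-v,-w)$ with $(-u)(-v)(-w) > 0$, reducing to the previous case. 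For the remaining locus $uvw = 0$, the binary psd-ness of $a_{220} t^2 + a_{211} ts + a_{202} s^2$ (and its symmetric counterparts) established above says exactly that $q \ge 0$ on each coordinate hyperplane.

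With $q$ psd and $K$ Euclidean, \ref{sospsd2} supplies a representation $q = \ell_1^2 + \ell_2^2 + \ell_3^2$ as a sum of three squares of linear forms in $U, V, W$ (padding with zero if $\rk q < 3$). The substitution $U = XY$, $V = XZ$, $W = YZ$ turns each $\ell_i$ into a quadratic form $p_i \in K[X,Y,Z]$, yielding $g = p_1^2 + p_2^2 + p_3^2$, and composing with $\phi^{-1}$ gives the desired representation of $f$. The main obstacle is precisely the psd-ness of $q$: the image of $(x,y,z) \mapsto (xy, xz, yz)$ covers only $\{uvw \ge 0\}$ and moreover misses most of the coordinate hyperplanes, so psd-ness of $g$ on $K^3$ does not formally imply psd-ness of $q$ on $K^3$. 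The Euclidean square-root hypothesis, the degree-$2$ homogeneity trick for $uvw < 0$, and the leading-part analysis on the coordinate hyperplanes are all essential to bridge this gap.
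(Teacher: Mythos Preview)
Your proof is correct and follows the same overall strategy as the paper: reduce to $v_i=e_i$ by a linear change of coordinates, kill the coefficients of $X^4,X^3Y,\ldots$ by looking at the one-variable restrictions, write $g=q(XY,XZ,YZ)$ for a ternary quadratic form $q$, show $q$ is psd, and invoke \ref{sospsd2}.

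The only real difference is in how you verify that $q$ is psd. You split into three cases ($uvw>0$ via square roots, $uvw<0$ via the sign flip $q(u,v,w)=q(-u,-v,-w)$, and $uvw=0$ via the additional two-variable slices $g(1,t,s)$ etc.\ that give psd-ness of $q$ on each coordinate hyperplane). The paper instead invokes the density lemma \ref{densarg} to reduce to $(u,v,w)\in(K^\times)^3$ and then uses the single reciprocal substitution $x:=\frac1w,\ y:=\frac1v,\ z:=\frac1u$, $\la:=uvw$, which gives $(u,v,w)=\la(xy,xz,yz)$ and hence $q(u,v,w)=\la^2 g(x,y,z)\ge0$ uniformly, with no case distinction and no explicit use of square roots. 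Your approach works and has the virtue of making visible where Euclideanness enters; the paper's is shorter and avoids the boundary analysis entirely.
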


\begin{proof}
Denote by $e_1,e_2,e_3$ the standard basis of $K^3$. Set
$A:=\begin{pmatrix}v_1&v_2&v_3\end{pmatrix}
\in\GL_3(K)$ and $g:=f\left(A\left(\begin{smallmatrix}X\\Y\\Z
\end{smallmatrix}\right)\right)\in K[X,Y,Z]$. Then $g$ is a $4$-form
satisfying $g(e_1)=g(e_2)=g(e_3)=0$. Since $A$ defines a permutation
(even a vector space isomorphism)
\[K^3\to K^3,\ 
\left(\begin{smallmatrix}x\\y\\z
\end{smallmatrix}\right)\mapsto A
\left(\begin{smallmatrix}x\\y\\z
\end{smallmatrix}\right)\]
on $K^3$, we have that
\[\text{$f$ is psd}\iff\text{$g$ is psd}.\]
Since $A$ induces on the other hand a ring automorphism
\[K[X,Y,Z]\to K[X,Y,Z],\ h\mapsto h\left(A
\left(\begin{smallmatrix}X\\Y\\Z
\end{smallmatrix}\right)\right),\]
we obtain
\[f\in\sum K[X,Y,Z]^2\iff g\in\sum K[X,Y,Z]^2.\]
Since this ring automorphism permutes the quadratic forms
in $K[X,Y,Z]$, we have that
\[\text{(c)}\iff\text{$g$ is a sum of $3$ squares of quadratic forms}.\]
Replacing $f$ by $g$, we can henceforth suppose that $v_1=e_1$,
$v_2=e_2$ and $v_3=e_3$.

\smallskip
\underline{(c)$\implies$(b)$\implies$(a)} is trivial.

\smallskip
\underline{(a)$\implies$(c)} \quad It is easy to see that each polynomial $g\in K[T]$
with $g\ge0$ on $K$ and $g(0)=0$ lies in the ideal $(T^2)$
[$\to$ \ref{sgnbounds}(b)].
Suppose now that $(a)$ holds. The vanishing at $0$ and the nonnegativity
of the polynomials
\[f(1,T,0),\ f(1,0,T),\ f(T,1,0),\ f(0,1,T),\ f(0,T,1),\ f(T,0,1)\in K[T]\]
therefore forces the coefficients of
\[X^4,\quad X^3Y,\quad X^3Z,\quad Y^4,\quad Y^3X,\quad Y^3Z,
\quad Z^4,\quad Z^3X,\quad Z^3Y\]
in $f$ to vanish. For example, the first polynomial forces
the coefficients of $X^4$ and $X^3Y$ to vanish, and
the second one the coefficients of again $X^4$ and of $X^3Z$.
It follows that
\begin{align*}
N(f)&\subseteq\conv\{(2,2,0),\xcancel{(2,1,1)},(2,0,2),(0,2,2),
\xcancel{(1,2,1)},\xcancel{(1,1,2)}\}\text{, i.e.,}\\
\frac12N(f)&\subseteq\conv\{(1,1,0),(1,0,1),(0,1,1)\}\quad\text{and thus}\\
\frac12N(f)\cap\N_0^3&\subseteq\{(1,1,0),(1,0,1),(0,1,1)\}.
\end{align*}
By the Gram matrix method \ref{gram}, we have to show that there is a
\emph{psd} matrix $G\in SK^{3\times3}$ satisfying
\[(*)\qquad f=\begin{pmatrix}XY&XZ&YZ\end{pmatrix}G
\begin{pmatrix}XY\\XZ\\YZ\end{pmatrix}.\]
Since every monomial occurring in $f$ is a product of two entries
of $\begin{pmatrix}XY&XZ&YZ\end{pmatrix}$, there is certainly a
$G\in SK^{3\times3}$ satisfying $(*)$ (actually one sees easily that there is a
\emph{unique} such $G$ which does however not play an immediate role).
But from $(*)$ it follows
\emph{automatically} that $G$ is \emph{psd} since $f$ is psd.
In order to see this, let $v\in K^3$. We have to show that
$v^TGv\ge0$. Using \ref{densarg}, one reduces to the case
$v\in(K^\times)^3$. Then set $\la:=v_1 v_2v_3$
and $x:=\frac1{v_3}$, $y:=\frac1{v_2}$ and $z:=\frac1{v_1}$.
Now $v=\la\begin{pmatrix}xy\\xz\\yz\end{pmatrix}$ and therefore
\[v^TGv=\la^2\begin{pmatrix}xy&xz&yz\end{pmatrix}G
\begin{pmatrix}xy\\xz\\yz\end{pmatrix}\overset{(*)}=\la^2f(x,y,z)\ge0.\]
\end{proof}

\begin{lem}\label{h1888b}
Let $K$ be a Euclidean field and $f\in K[X,Y,Z]$ a $4$-form.
Suppose there are linearly independent $v_1,v_2,v_3\in K^3$ satisfying
$f(v_1+Tv_2)\in(T^3)$ and $f(v_3)=0$. Then the following are equivalent:
\begin{enumerate}[(a)]
\item $f$ is psd
\item $f\in\sum K[X,Y,Z]^2$
\item $f$ is a sum of $3$ squares of quadratic forms in $K[X,Y,Z]$.
\end{enumerate}
\end{lem}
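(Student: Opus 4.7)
Following the reduction used in Lemma~\ref{h1888a}, I would begin by applying the linear change of variables associated to the matrix $A:=\begin{pmatrix}v_1 & v_2 & v_3\end{pmatrix}\in\GL_3(K)$; this reduces the hypotheses to $f(1,T,0)\in(T^3)\subseteq K[T]$ and $f(e_3)=0$, while permuting each of (a), (b), (c) with its image. Since (c)$\implies$(b)$\implies$(a) is trivial, it remains to prove (a)$\implies$(c). Assume therefore that $f$ is psd.

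The plan is to force so many coefficients of $f$ to vanish that its support is confined to the six monomials $\{Y^4,Y^3Z,Y^2Z^2,X^2Z^2,XY^2Z,XYZ^2\}$, which are precisely the pairwise products of the entries of $v:=(Y^2,XZ,YZ)^T$. First, $f(1,T,0)\in(T^3)$ immediately kills the coefficients of $X^4$, $X^3Y$ and $X^2Y^2$; in particular $f(e_1)=0$. Writing $f(1,T,0)=aT^3+bT^4$, the psd condition $aT^3+bT^4\ge0$ for all $T\in K$ forces $a=0$ by a sign argument of type \ref{sgnbounds}(b) (the sign of the expression for small $T$ of either sign is controlled by $aT^3$), so the coefficient of $XY^3$ vanishes too. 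Next, since $f$ is psd with $f(e_1)=0=f(e_3)$, the univariate polynomial $p(t):=f(e_i+tw)$ (for $i\in\{1,3\}$ and any $w\in K^3$) is nonnegative on $K$ with $p(0)=0$, which algebraically forces $p'(0)=0$ and hence $\nabla f(e_1)=\nabla f(e_3)=0$. This kills the coefficient of $X^3Z$ (from $\partial_Zf(e_1)$) and those of $XZ^3$, $YZ^3$, $Z^4$ (from $\nabla f(e_3)$). Pushing the expansion one order further at $e_1$, the vanishing of $p(0)$ and $p'(0)$ gives $p(t)=\tfrac12(w^TH(e_1)w)t^2+O(t^3)$, so nonnegativity on $K$ forces $H(e_1)$ to be psd as a quadratic form. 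Its $\{Y,Z\}$-principal submatrix is $\begin{pmatrix}0 & c_1\\ c_1 & 2d\end{pmatrix}$, where $c_1$ is the coefficient of $X^2YZ$ and $d$ that of $X^2Z^2$; psd-ness then forces $c_1=0$.

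With the support of $f$ now inside the six designated monomials, the Gram matrix method~\ref{gram} yields a (unique) symmetric $G\in SK^{3\times3}$ with $f=v^TGv$, and (c) follows as soon as $G$ is shown to be psd. This is the main obstacle, because the parametrization $(X,Y,Z)\mapsto(Y^2,XZ,YZ)$ is not surjective onto $K^3$, so $f\ge0$ only directly yields $w^TGw\ge0$ on its image. However, since $K$ is Euclidean, this image contains $\{w\in K^3:w_1>0,\ w_3\ne0\}$: given such a $w$, take $Y:=\sqrt{w_1}$, $Z:=w_3/Y$, $X:=w_2/Z$. Combining with the symmetry $w^TGw=(-w)^TG(-w)$ extends the nonnegativity to all $w$ with $w_1\ne0$ and $w_3\ne0$. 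Fixing $w_2,w_3$ with $w_3\ne0$ and viewing $w\mapsto w^TGw$ as a polynomial in $w_1$ of degree $\le2$ that is nonnegative on $K^\times$, a standard sign-change argument shows it is also nonnegative at $w_1=0$; one further application of the same trick in the $w_3$ variable yields $w^TGw\ge0$ for all $w\in K^3$, i.e., $G$ is psd. Theorem~\ref{gram} then delivers the desired decomposition of $f$ as a sum of three squares of quadratic forms in $K[X,Y,Z]$.
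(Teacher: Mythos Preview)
Your proof is correct and follows the same overall strategy as the paper: reduce to $v_i=e_i$, kill enough coefficients so that $\supp f$ lies in the six products of $\{Y^2,XZ,YZ\}$, then apply the Gram matrix method and verify that the resulting $G$ is psd. The coefficient-killing steps differ only cosmetically (you phrase them via $\nabla f(e_i)=0$ and the Hessian at $e_1$, the paper via the univariate restrictions $f(1,T,0)$, $f(1,0,T)$, $f(0,T,1)$, $f(T,0,1)$ and, for $X^2YZ$, the leading coefficient of $f(X,y,1)$); these are the same vanishing conditions read differently.

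The one genuine difference is the psd verification. You invoke the Euclidean hypothesis to take $Y=\sqrt{w_1}$ and then patch the zero loci by a one-variable continuity argument. The paper instead uses the purely rational substitution $x:=v_1/v_2^2$, $y:=1/v_2$, $z:=1/v_3$ together with the scaling factor $\la:=v_2^2v_3$, so that $\la(xz,yz,y^2)=(v_1,v_2,v_3)$, and then appeals to Lemma~\ref{densarg} to restrict to $v_2,v_3\ne0$. Both arguments are valid; the paper's avoids the square root and finishes in one stroke via \ref{densarg}, while yours makes the density step explicit but leans on the Euclidean property (which is already in the hypotheses, so no harm done).
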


\begin{proof}
Almost exactly as in the proof of \ref{h1888a}, one sees that one can suppose
WLOG $v_1=e_1$, $v_2=e_2$ and $v_3=e_3$.

\smallskip
\underline{(c)$\implies$(b)$\implies$(a)} is again trivial.

\smallskip
\underline{(a)$\implies$(c)} \quad One sees easily that a polynomial
$g\in K[T]$ with $g\ge0$ on $K$ and $g\in(T^{2k-1})$ lies in $(T^{2k})$
for $k\in\N$. Suppose now that (a) holds. By considering the polynomials
\[f(1,T,0),\ f(1,0,T),\ f(0,T,1),\ f(T,0,1)\in K[T],\]
one sees easily that the coefficients of
\[X^4,\quad X^3Y,\quad X^2Y^2,\quad XY^3,\quad X^3Z,\quad Z^4,
\quad Z^3Y,\quad Z^3X\]
in $f$ must vanish.  More precisely, the first polynomial is responsible for the
first four of these coefficients, the second for the coefficients of $X^4$ (again)
and $X^3Z$, the third for the coefficients of $Z^4$ and $Z^3Y$, and the last
for the coefficients of $Z^4$ (again) and $Z^3X$.
It follows that
\begin{align*}
N(f)&\subseteq\conv\{(2,0,2),(2,1,1),\xcancel{(1,1,2)},\xcancel{(1,2,1)},
(0,2,2),\xcancel{(0,3,1)},(0,4,0)\}\text{, i.e.,}\\
\frac12N(f)&\subseteq\conv\left\{(1,0,1),\left(1,\frac12,\frac12\right),(0,1,1),(0,2,0)\right\}\quad\text{and thus}\\
\frac12N(f)\cap\N_0^3&\subseteq\{(1,0,1),(0,1,1),(0,2,0)\}.
\end{align*}
By the Gram matrix method \ref{gram}, we have to show that there is a
\emph{psd} matrix $G\in SK^{3\times3}$ satisfying
\[(*)\qquad f=\begin{pmatrix}XZ&YZ&Y^2\end{pmatrix}G
\begin{pmatrix}XZ\\YZ\\Y^2\end{pmatrix}.\]
If the monomial $X^2YZ$ actually appeared in $f$, we would now run into
a big problem that we did not have in the proof of \ref{h1888a} because
this monomial is not a product of two entries of
$\begin{pmatrix}XZ&YZ&Y^2\end{pmatrix}$.
But this coefficient vanishes as one easily shows since for all $y\in K$, the leading
coefficient of $f(X,y,1)\in K[X]$ is nonnegative since this polynomial
is nonnegative on $K$. 
As in the proof of \ref{h1888a}, one sees again that there exists
$G\in SK^{3\times3}$ satisfying $(*)$ (one could again see easily
that $G$ is unique). From $(*)$ it follows \emph{automatically} that $G$ is psd
since $f$ is psd. To see this, let $v\in K^3$. To show: $v^TGv\ge0$.
Using \ref{densarg}, one reduces to the case
$v\in K\times(K^\times)^2$. Then set $\la:=v_2^2v_3$ and
$x:=\frac{v_1}{v_2^2}$, $y:=\frac1{v_2}$, $z:=\frac1{v_3}$.
Now $v=\la\begin{pmatrix}xz\\yz\\y^2\end{pmatrix}$ and therefore
\[v^TGv=\la^2\begin{pmatrix}xz&yz&y^2\end{pmatrix}G
\begin{pmatrix}xz\\yz\\y^2\end{pmatrix}\overset{(*)}=\la^2f(x,y,z)\ge0.\]
\end{proof}

\begin{lem}\label{h1888c}
Let $K$ be a Euclidean field and $f\in K[X,Y,Z]$ a $4$-form.
Suppose there are linearly independent $v_1,v_2\in K^3$ satisfying
$f(v_1+Tv_2)\in(T^3)$ and $f(v_2)=0$. Then the following are equivalent:
\begin{enumerate}[(a)]
\item $f$ is psd
\item $f\in\sum K[X,Y,Z]^2$
\item $f$ is a sum of $3$ squares of quadratic forms in $K[X,Y,Z]$.
\end{enumerate}
\end{lem}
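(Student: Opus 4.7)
I plan to follow the strategy of \ref{h1888a} and \ref{h1888b}: a linear change of coordinates, obtained by completing $v_1, v_2$ to a basis by any third vector, reduces us to the case $v_1 = e_1$ and $v_2 = e_2$, since this change preserves both psd-ness and the property of being a sum of three squares of quadratic forms. The implications (c)$\Rightarrow$(b)$\Rightarrow$(a) are trivial, so it remains to show (a)$\Rightarrow$(c). Assuming $f$ psd, the goal will be to collect enough vanishing coefficients of $f$ so that the Gram matrix method \ref{gram} applies with $v = (XZ, Z^2, YZ)^T$.

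First I collect the standard vanishings. The hypothesis $f(1,T,0)\in(T^3)$ kills the coefficients of $X^4, X^3Y, X^2Y^2$ in $f$, and $f(e_2) = 0$ kills the coefficient of $Y^4$, so $f(1,T,0)$ collapses to a single cubic $cT^3$, whose nonnegativity on $K$ forces the coefficient $c$ of $XY^3$ in $f$ to vanish. Applying the standard observation that a univariate psd polynomial vanishing at $0$ has zero linear term to $f(1,0,T)$ and $f(0,1,T)$ then kills the coefficients of $X^3Z$ and $Y^3Z$ in $f$.

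The key new difficulty, which I expect to be the main obstacle, is to kill the two remaining ``mixed'' coefficients $a, a'$ of $X^2YZ$ and $XY^2Z$ in $f$: unlike in \ref{h1888a} and \ref{h1888b}, no third point at which $f$ vanishes is available, so no one-variable substitution suffices, and a second-order argument is needed. For fixed $t_2, t_3 \in K$, the vanishings above yield
\[
f(1,\varepsilon t_2,\varepsilon t_3)=\varepsilon^2 A(t_2,t_3)+\varepsilon^3 B(t_2,t_3)+\varepsilon^4 C(t_2,t_3),
\]
where $A(t_2,t_3)=at_2t_3+bt_3^2$ and $b$ is the coefficient of $X^2Z^2$ in $f$. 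Since $f\ge 0$ forces $\varepsilon\mapsto f(1,\varepsilon t_2,\varepsilon t_3)$ to be nonnegative on $K$, and we can always choose $\varepsilon\in K$ of small enough absolute value that the $\varepsilon^3,\varepsilon^4$ terms cannot cancel the $\varepsilon^2 A$ term when $A\ne 0$, we obtain $A(t_2,t_3)\ge 0$ for all $t_2,t_3\in K$. Specializing $t_3=1$ makes $at_2+b$ a linear polynomial in $t_2$ that is nonnegative on all of $K$, hence $a=0$. The symmetric argument at $e_2$, applied to $f(\varepsilon t_1,1,\varepsilon t_3)$, gives $a'=0$.

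With these vanishings, $\supp(f)$ is contained in $\{(2,0,2),(1,1,2),(1,0,3),(0,2,2),(0,1,3),(0,0,4)\}$, so $N(f)\subseteq\conv\{(2,0,2),(0,0,4),(0,2,2)\}$ and therefore $\tfrac12 N(f)\cap\N_0^3\subseteq\{(1,0,1),(0,0,2),(0,1,1)\}$. The six pairwise products of the entries of $v=(XZ,Z^2,YZ)^T$ are precisely the six monomials that can appear in $f$, so a unique $G\in SK^{3\times 3}$ with $f=v^TGv$ exists automatically. To verify that $G$ is psd, I reuse the trick from \ref{h1888a} and \ref{h1888b}: by \ref{densarg} it suffices to test $u\in(K^\times)^3$, and setting $(x,y,z):=(u_1,u_3,u_2)$ yields $(xz,z^2,yz)^T=u_2\cdot u$, whence $u^TGu=f(u_1,u_3,u_2)/u_2^2\ge 0$.
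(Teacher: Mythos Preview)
Your proof is correct and follows the same overall strategy as the paper: reduce to $v_1=e_1$, $v_2=e_2$, kill the seven ``obvious'' coefficients, eliminate the two mixed coefficients of $X^2YZ$ and $XY^2Z$, and then apply the Gram matrix method with the monomial vector $\{XZ,YZ,Z^2\}$. The only notable differences are that the paper kills the mixed coefficients by the one-variable observation that the leading coefficient of $f(X,y,1)\in K[X]$ (respectively $f(x,Y,1)\in K[Y]$) is nonnegative for every $y$ (respectively $x$)---so contrary to your remark a one-variable substitution \emph{does} suffice---and that your coordinate-permutation $(x,y,z)=(u_1,u_3,u_2)$ for the psd check is actually cleaner than the paper's rational parametrization.
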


\begin{proof}
One can again suppose WLOG $v_1=e_1$ and $v_2=e_2$.

\smallskip
\underline{(c)$\implies$(b)$\implies$(a)} is again trivial.

\smallskip
\underline{(a)$\implies$(c)} \quad One uses again that a polynomial
$g\in K[T]$ with $g\ge0$ on $K$ and $g\in(T^{2k-1})$ lies in $(T^{2k})$
for $k\in\N$. Suppose now that (a) holds. By considering the polynomials
\[f(1,T,0),\ f(1,0,T),\ f(T,1,0),\ f(0,1,T)\in K[T],\]
one sees easily that the coefficients of
\[X^4,\quad X^3Y,\quad X^2Y^2,\quad XY^3,\quad X^3Z,\quad Y^4,
\quad Y^3Z\]
in $f$ must vanish.  More precisely, the first polynomial is responsible for the
first four of these coefficients, the second for the coefficients of $X^4$ (again)
and $X^3Z$, the third for the coefficients of $Y^4$ and $XY^3$ (again),
and the last for the coefficients of $Y^4$ (again) and $Y^3Z$.
It follows that
\begin{align*}
N(f)&\subseteq\conv\{(2,0,2),(2,1,1),(1,2,1),(0,2,2),\xcancel{(0,1,3)},(0,0,4),
\xcancel{(1,0,3)},\xcancel{(1,1,2)}\}\text{, i.e.,}\\
\frac12N(f)&\subseteq\conv\left\{(1,0,1),\left(1,\frac12,\frac12\right),\left(\frac12,1,\frac12\right),(0,1,1),(0,0,2)\right\}\quad\text{and thus}\\
&\frac12N(f)\cap\N_0^3\subseteq\{(1,0,1),(0,1,1),(0,0,2)\}.
\end{align*}
By the Gram matrix method \ref{gram}, we have to show that there is a
\emph{psd} matrix $G\in SK^{3\times3}$ satisfying
\[(*)\qquad f=\begin{pmatrix}XZ&YZ&Z^2\end{pmatrix}G
\begin{pmatrix}XZ\\YZ\\Z^2\end{pmatrix}.\]
If one of the monomials $X^2YZ$ and $XY^2Z$
actually appeared in $f$, we would have trouble since
these monomials are not a product of two entries of
$\begin{pmatrix}XZ&YZ&Z^2\end{pmatrix}$.
But these coefficients vanish
as one easily shows since for all $x,y\in K$, the leading
coefficients of $f(X,y,1)\in K[X]$ and $f(x,Y,1)\in K[Y]$ are nonnegative since these
polynomials are nonnegative on $K$.
One sees again that there exists
$G\in SK^{3\times3}$ satisfying $(*)$ (one could again see easily
that $G$ is unique). From $(*)$ it follows \emph{automatically} that $G$ is psd
since $f$ is psd. To see this, let $v\in K^3$. To show: $v^TGv\ge0$.
Using \ref{densarg}, one reduces to the case
$v\in K\times(K^\times)^2$. Then set $\la:=v_2^2v_3$ and
$x:=\frac{v_1}{v_2v_3}$, $y:=\frac1{v_3}$, $z:=\frac1{v_2}$.
Now $v=\la\begin{pmatrix}xz\\yz\\z^2\end{pmatrix}$ and therefore
\[v^TGv=\la^2\begin{pmatrix}xz&yz&z^2\end{pmatrix}G
\begin{pmatrix}xz\\yz\\z^2\end{pmatrix}\overset{(*)}=\la^2f(x,y,z)\ge0.\]
\end{proof}

\begin{lem}\label{subtractlin4}
Let $f\in\R[X,Y,Z]$ be a psd $4$-form that is not a sum of $3$ squares of quadratic forms in
$\R[X,Y,Z]$ and that has two linearly independent zeros in $\R^3$. Then there is a linear form
$\ell\in\R[X,Y,Z]\setminus\{0\}$ such that $f-\ell^4$ is psd.
\end{lem}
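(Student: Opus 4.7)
The plan is to take for $\ell$ the (essentially unique up to scalar) nonzero linear form vanishing on the plane $H := \R v_1 + \R v_2$ spanned by the two given zeros, and then rescale it using compactness of the unit sphere. The argument rests on Lemmas~\ref{h1888a}, \ref{h1888b} and \ref{sospsd2}, together with the classification of psd binary quartics from Lemma~\ref{realfund}; these jointly pin down both the zero set and the local behaviour of $f$ quite rigidly.

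First I would establish that every zero of $f$ lies in $H$: a third zero linearly independent of $v_1,v_2$ would, by Lemma~\ref{h1888a}, force $f$ to be a sum of three squares of quadratic forms, contradicting the hypothesis. Next I would rule out $f|_H \equiv 0$. In a basis with $v_1=e_1$, $v_2=e_2$, $H=\{z=0\}$, this would give $z\mid f$, and a standard sign argument forces $z^2\mid f$, so $f = z^2 k$ with $k$ a psd quadratic form; Lemma~\ref{sospsd2} then presents $k$ as a sum of three squares of linear forms, making $f$ a sum of three squares of quadratic forms — contradiction. Hence $f|_H$ is a nonzero psd binary quartic in $(x,y)$ vanishing at $(1,0)$ and $(0,1)$; the classification of psd forms via Lemma~\ref{realfund} (linear factors must have even multiplicity) forces $f|_H = A x^2 y^2$ with $A>0$, so $f = A x^2 y^2 + z\,g(x,y,z)$ for a cubic form $g$. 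Since $v_1,v_2$ are minima of $f$, the condition $\partial_z f(v_i) = g(v_i) = 0$ shows that $g$ has neither an $x^3$ nor a $y^3$ term.

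The decisive step is upgrading the ``transverse Hessians'' at $v_1$ and $v_2$ from positive semidefinite to positive definite. At $v_1=(1,0,0)$ the expansion reads $f(1,y,z) = Ay^2 + p\,yz + q\,z^2 + O(\|(y,z)\|^3)$ for certain $p,q\in\R$, with the leading quadratic form necessarily psd. Suppose it is degenerate. Since $A>0$, its null direction has nonzero $z$-component, so equals $w := y_0 v_2 + z_0 v_3$ with $z_0\neq 0$, where $v_3 := e_3$. Then $f(v_1 + Tw) \in (T^3)$; positive semidefiniteness of $f(v_1+Tw)\in\R[T]$ rules out a $T^3$ coefficient, so $f(v_1 + Tw) = \beta T^4$ with $\beta = f(w) \geq 0$. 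If $\beta = 0$, then $f$ vanishes on the whole line $v_1 + \R w$, producing zeros outside $H$ (since $w$ has nonzero $v_3$-component), contradicting the first step. Hence $\beta > 0$; $\{v_1, w, v_2\}$ is linearly independent (the $v_3$-coordinate of $w$ being nonzero), $f(v_1 + Tw)\in(T^3)$, and $f(v_2)=0$, so Lemma~\ref{h1888b} yields that $f$ is a sum of three squares of quadratic forms — contradiction. The symmetric argument with $v_1$ and $v_2$ swapped handles $v_2$.

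To finish, let $z \in \R[X,Y,Z]$ be the linear form dual to $v_3$, which vanishes on $H$. On the unit sphere $S^2 \subset \R^3$ the function $h := z^4/f$ is defined away from the four zeros $\pm v_i/\|v_i\|$ of $f|_{S^2}$. Positive definiteness of each transverse Hessian yields $f \geq c\cdot d^2$ locally near each zero, where $d$ denotes the transverse distance, while $z^4 = O(d^4)$; hence $h$ extends continuously to $S^2$ with value $0$ at the four zeros. Compactness of $S^2$ gives $M := \sup_{S^2} h \in \R_{>0}$, so $z^4 \leq M f$ on $S^2$ and, by $4$-homogeneity, on all of $\R^3$. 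Setting $\ell := M^{-1/4}\,z$ produces the desired nonzero linear form with $f - \ell^4 \geq 0$. I expect the Hessian step to be the main obstacle, since it requires carefully identifying the null direction, excluding the two degenerate cases ($w \in \R v_2$, ruled out by $A>0$; and $\beta = 0$, ruled out by the localization of the zero set in $H$), and correctly invoking Lemma~\ref{h1888b} with the right triple of vectors.
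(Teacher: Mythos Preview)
Your argument is correct and follows the same overall strategy as the paper: use Lemma~\ref{h1888a} to confine all zeros of $f$ to the plane $H$, take $\ell$ to be the linear form vanishing on $H$, establish that the transverse Hessian at each zero is positive definite, and finish by a compactness argument. The one noteworthy tactical difference is that you compute $f|_H=Ax^2y^2$ with $A>0$ explicitly (disposing of the degenerate case $f|_H\equiv0$ separately via the factorization $f=z^2k$ and Lemma~\ref{sospsd2}); this hands you the in-plane Hessian entry for free, so a single appeal to Lemma~\ref{h1888b} suffices, whereas the paper needs Lemma~\ref{h1888c} as well to control that direction. Your compactness step on $S^2$ via the bounded ratio $z^4/f$ is a clean variant of the paper's argument on the faces of the cube $[-1,1]^3$ (where it uses that $X^4$ and its first two derivatives vanish on $H$ to preserve local minima after subtraction).
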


\begin{proof}
By Lemma \ref{h1888a}, the zeros of $f$ span a two-dimensional subspace of $\R^3$.
By a change of coordinates, we can thus achieve that $f(e_2)=f(e_3)=0$ and
\[f>0\text{ on }\R^\times\times\R^2.\]
We now claim that there is some $\ep\in\R_{>0}$ such that
f$-\ep X^4$ is psd. By homogeneity, it suffices to find $\ep>0$ such that $f-\ep X^4\ge0$ holds
on the compact set \[[-1,1]_\R^3\setminus(-1,1)_\R^3.\]
For this purpose, it is enough show that for each two-dimensional face $F$ of the polytope
$[-1,1]^3$ (i.e., for each side of the cube $[-1,1]^3$) there is some $\ep>0$
such that $f-\ep X^4\ge0$ on $F$. On the two sides $\{-1\}\times[-1,1]^2$ and
$\{1\}\times[-1,1]^2$, $f$ is positive so that the existence of such an $\ep$ for them follows
from \ref{takeson}. After a further change of coordinates, it suffices to consider from the
remaining four sides just $[-1,1]^2\times\{1\}$. Consider therefore
$\widetilde f:=f(X,Y,1)\in\R[X,Y]$ [$\to$ \ref{introhom}(d)]. From Lemma \ref{h1888c}, we deduce
\[\frac{\partial^2\widetilde f}{\partial Y^2}(0,y)>0\] for all $y\in\R$ that satisfy $\widetilde f(0,y)=0$
(apply \ref{h1888c} to $f$, $v_1:=(0,y,1)$ and $v_2:=(0,1,0)$, taking into account that
$\frac{\partial\widetilde f}{\partial Y}(0,y)=0$ due to $\widetilde f\ge0$ on $\R^2$).
In the same way, Lemma \ref{h1888b} implies that for each $y\in\R$ satisfying
$\widetilde f(0,y)=0$ all other directional derivatives of $\widetilde f$ in $(0,y)$ are also positive.
Altogether, $\widetilde f$ has thus only zeros in $\R^2$ at which the second derivative
(i.e., the Hessian) is \emph{pd} (recall that all zeros of $\widetilde f$ lie on the $y$-axis).
From analysis we know that each zero of the nonnegative polynomial $\widetilde f$
(in $\R^2$, or equivalently $\{0\}\times\R$ since all zeros lie on the $y$-axis) is an isolated
\emph{global} minimizer. Therefore
\[\{(x,y)\in\R^2\mid\widetilde f(x,y)=0\}=\{(0,y_1),\dots,(0,y_m)\}\]
for some $m\in\N$ and $y_1,\dots,y_m\in\R$ (one of the $y_i$ is $0$).
Since $-X^4$ as well as its first and second derivative vanishes on the $y$-axis
(since $\frac{\partial X^4}{\partial X}=4X^3$, $\frac{\partial X^4}{\partial Y}=0$,
$\frac{\partial^2X^4}{\partial X^2}=12X^2$, $\frac{\partial^2X^4}{\partial X\partial Y}=0$
and $\frac{\partial^2X^4}{\partial Y^2}=0$), every $(0,y_i)$ is a zero and an isolated
\emph{local} minimizer of $\widetilde f-X^4$. Choose for each $i\in\{1,\dots,m\}$ an open
neighborhood $U_i$ of $(0,y_i)$ such that $\widetilde f-X^4>0$ on $U_i\setminus\{(0,y_i)\}$.
Then of course also $\widetilde f-\ep X^4>0$ on $U_i\setminus\{(0,y_i)\}$
for all $\ep\le1$ and $i\in\{1,\dots,m\}$. Since $\widetilde f$ is positive on the compact set
$[-1,1]^2\setminus(U_1\cup\dots\cup U_m)$, there is an $\ep\in(0,1)_\R$ such that
$\widetilde f-\ep X^4>0$ on $[-1,1]^2\setminus(U_1\cup\dots\cup U_m)$. Altogether,
$\widetilde f-\ep X^4>0$ on $[-1,1]^2\setminus\{(0,y_1),\dots,(0,y_m)\}$ and
$\widetilde f-\ep X^4=0$ on $\{(0,y_1),\dots,(0,y_m)\}$.
\end{proof}

\begin{lem}\label{extremehas2zeros}
Suppose $f$ lies on an extreme ray [$\to$ \ref{defray}(b)] of the cone $P$ of
the psd $4$-forms in $\R[X,Y,Z]$. Then there are linearly independent $v_1,v_2\in\R^3$ such
that $f(v_1)=f(v_2)=0$.
\end{lem}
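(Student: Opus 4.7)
I plan to prove the contrapositive: if $f \in P \setminus \{0\}$ does not have two linearly independent zeros, then $\R_{\ge 0} f$ is not a face of $P$. The strategy is to exhibit some $g \in P$ with $g \notin \R_{\ge 0} f$ and $f - \epsilon g \in P$ for some $\epsilon > 0$; then the decomposition $f = (f - \epsilon g) + \epsilon g$ witnesses that $\R_{\ge 0} f$ is not a face.

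If $f$ is positive definite (no nonzero zeros), then $f$ attains a positive minimum $m$ on the unit sphere $S^2 \subseteq \R^3$ by compactness, while $X^4 \le 1$ there; hence $f - m X^4 \ge 0$ on $S^2$ and by homogeneity on all of $\R^3$. Since $X^4(e_2) = 0$ while $f(e_2) > 0$, the form $X^4$ is not in $\R_{\ge 0} f$, and we are done.

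Suppose instead $f$ has zeros exactly along a line $\R v$ with $v \ne 0$. By a linear change of coordinates on $\R^3$ (preserving both $P$ and the extreme-ray property) I bring $v$ to $e_3$. Since the psd polynomial $t \mapsto f(t e_i + e_3)$ has its minimum at $t = 0$ for $i = 1, 2$, we get $\partial_X f(e_3) = \partial_Y f(e_3) = 0$; in terms of coefficients this forces the monomials $Z^4$, $XZ^3$, $YZ^3$ to have zero coefficient in $f$. Moreover, $f(X, Y, 0)$ is a psd quartic form in $X, Y$ whose only real zero is the origin (any other zero would give a zero of $f$ off $\R e_3$); hence it is pd, so the coefficients of $X^4$ and $Y^4$ in $f$ are strictly positive. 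After a further rotation in the $(X, Y)$-plane I may assume the Hessian of $\widetilde f(u, v) := f(u, v, 1)$ at $(0, 0)$ is diagonal. I then pick $g := X^4 \in P$, which is not in $\R_{\ge 0} f$ since $X^4(e_2) = 0$ while $f(e_2) > 0$.

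The core of the argument is to show $f - \epsilon X^4 \in P$ for some $\epsilon > 0$. By homogeneity this reduces to $\widetilde f(u, v) \ge \epsilon u^4$ for all $(u, v) \in \R^2$ together with $f(X, Y, 0) \ge \epsilon X^4$ on $\R^2$; the latter follows from compactness on the unit circle and the pd-ness of $f(X, Y, 0)$. For the former, $\widetilde f$ is a psd polynomial on $\R^2$ whose unique zero is the origin (again because $f$ has one-dimensional zero set) and whose leading quartic part is pd, so $\widetilde f \gtrsim (u^2 + v^2)^2 \ge u^4$ at infinity and $\widetilde f / u^4$ is bounded below by a positive constant on any compact region at positive distance from the origin. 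The remaining issue is near the origin, where I split on the rank of the Hessian of $\widetilde f$: if pd, then $\widetilde f \gtrsim u^2 + v^2$ which dominates $u^4$; if zero, then the degree-$3$ homogeneous part also vanishes by a standard scaling argument exploiting psd-ness, so $\widetilde f = f(u, v, 0)$ is already pd and one is done; and if of rank $1$, after aligning the null direction to $e_2$ we have $\widetilde f(u, v) = \tfrac{h}{2} u^2 + \text{higher-order}$ with $h > 0$, and a careful local expansion---using both $\widetilde f(0, v) = c v^4$ with $c > 0$ and the strict inequality among the mixed cubic coefficient and $h, c$ forced by the uniqueness of the origin as a zero---yields $\widetilde f \ge \epsilon u^4$ locally. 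Combining these local bounds with the global ones produces the desired $\epsilon > 0$. The rank-one Hessian sub-case is the main technical obstacle.
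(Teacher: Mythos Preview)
Your overall strategy is sound and yields a correct proof, but the route diverges from the paper's in the rank-one case, and your sketch of that case contains a slip.

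Both you and the paper split on the rank of the same quadratic form: your Hessian of $\widetilde f$ at the origin is exactly the coefficient of $Z^2$ in $f$ viewed as a polynomial in $Z$ (the paper's ``$a$'', written there with the zero at $e_1$ rather than $e_3$). In ranks $0$ and $2$ the two arguments are essentially the same perturbation-by-a-fourth-power idea. In rank $1$, however, the paper does \emph{not} try to prove $f-\varepsilon X^4\in P$; instead it completes the square in the zero-direction variable to write $f=(XY+b'/2)^2+(c-b'^2/4)$, checks that both summands lie in $P$, and then invokes the extreme-ray hypothesis directly to force $(XY+b'/2)^2\in\R_{\ge0}f$, from which a second zero drops out by solving a linear equation. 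This is short and purely algebraic, with no local analysis.

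Your rank-one sketch, by contrast, asserts a ``strict inequality among the mixed cubic coefficient and $h,c$'' forced by uniqueness of the zero. That is not true: for $f=(Y^2+XZ)^2+X^4$ one has $\widetilde f=(u+v^2)^2+u^4$, so along $u=-v^2$ the leading quartic coefficient in $v$ vanishes and the relevant discriminant is an equality, not a strict inequality. Your conclusion $\widetilde f\ge\varepsilon u^4$ is nonetheless correct, and here is a clean way to see it. The desired bound is equivalent to $\inf_{y,z}f(1,y,z)>0$. Write $f(1,y,z)=A(y)z^2+B(y)z+C(y)$. Your rank-one normalization (diagonal Hessian with null direction $e_2$) says $a_{112}=a_{022}=0$, so $A(y)\equiv h/2$ is a positive constant; moreover the induced vanishing of the $Y^3Z$ coefficient gives $\deg B\le2$. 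Then
\[
\inf_{z}f(1,y,z)=C(y)-\frac{B(y)^2}{2h}
\]
is a polynomial in $y$ of degree $\le4$ that is strictly positive on all of $\R$, hence has positive infimum. This replaces your ``careful local expansion'' entirely.
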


\begin{proof}
If $f$ were pd, then the forms $f\pm\ep X^4$ would be psd for some $\ep>0$
(choose $\ep$ for instance as the minimum of $f$ on the compact unit sphere of $\R^3$) and
because of $f=\frac12(f-\ep X^4)+\frac12(f+\ep X^4)$ it would follow that
$f+\ep X^4\in\R_{\ge0}f$ and thus $f\in\R X^4$ $\lightning$. Hence $f$ has at least one
zero $v_1\in\R^3\setminus\{0\}$. After a change of coordinates, we can without loss of generality
achieve $v_1=e_1$. Since $(0,0)$ is a local (even a global) minimizer of $f(1,Y,Z)\in\R[Y,Z]$,
we know from analysis that $\frac{\partial f}{\partial Y}(1,0,0)=\frac{\partial f}{\partial Z}(1,0,0)=0$.
It follows that there are a quadratic form $a\in\R[Y,Z]$, a cubic form $b\in\R[Y,Z]$ and a quartic form $c\in\R[Y,Z]$ such that
\[f=aX^2+bX+c.\]
The quadratic form $a$ is positive semidefinite since either $a(y,z)=0$ or $a(y,z)$ is the leading coefficient of
$f(X,y,z)\in\R[X]$ which is nonnegative on $\R$.
We  have to show that there
exists $v_2\in\R\times(\R^2\setminus\{0\})$ such that $f(v_2)=0$. We make a case distinction
by $\rk(a)$ [$\to$ \ref{longremi}(h)].

\medskip
\textbf{Case 1:}  $\rk(a)=0$

\smallskip
Then $a=0$ and thus $b(y,z)=0$ for all $(y,z)\in\R^2$ from which $b=0$ follows by \ref{pol0}.
If $f=c\in\R[Y,Z]$ was pd, then $c\pm\ep Y^4\in\R[Y,Z]$ would be psd for some $\ep>0$
and it would follow that $c+\ep Y^4\in\R_{\ge0}c$ and thus $c\in\R Y^4$ $\lightning$.
Now choose $(y,z)\in\R^2\setminus\{0\}$ such that $c(y,z)=0$ and set $v_2:=(0,y,z)$. Then $f(v_2)=c(y,z)=0$ and $v_1$ and $v_2$ are linearly independent. 

\medskip
\textbf{Case 2:}  $\rk(a)=1$

\smallskip
By a coordinate change in the $y$-$z$-plane WLOG $a=Y^2$. Then $b(0,z)=0$ for all $z\in\R$
and hence $b(0,Z)=0$, i.e., $b=Yb'$ for some $b'\in\R[Y,Z]$. It follows that
$f=X^2Y^2+b'XY+c=\left(XY+\frac{b'}2\right)^2+\left(c-\frac{b'^2}4\right)$.
For all $(y,z)\in\R^\times\times\R$,
we find some $x\in\R$ satisfying $xy+\frac{b'(y,z)}2=0$ from which
$c(y,z)-\frac{b'(y,z)^2}4=f(x,y,z)\ge0$ follows. Hence $c-\frac{b'^2}4\in P$.
Aside from that, we have of course $(XY+\frac{b'}2)^2\in P$. Since $f$ lies on an extreme
ray of $P$, it follows that $(XY+\frac{b'}2)^2\in\R f$ (and $c-\frac{b'^2}4\in\R f$). Now choose
$(y,z)\in\R^\times\times\R$ arbitrary and with it $x\in\R$ such that
$xy+\frac{b'(y,z)}2=0$. Then $f(x,y,z)=0$.

\medskip
\textbf{Case 3:}  $\rk(a)=2$

\smallskip
By a coordinate change in the $y$-$z$-plane WLOG $a=Y^2+Z^2$. Since $f$ is psd, also
the $6$-form $4ac-b^2\in\R[Y,Z]$ is psd. We have to show that there is
$(y,z)\in\R^2\setminus\{0\}$ such that there exists $x\in\R$ satisfying
$a(y,z)x^2+b(y,z)x+c(y,z)=0$. Because of $a(y,z)\ne0$ for all $(y,z)\in\R^2\setminus\{0\}$,
this is equivalent to the existence of $(y,z)\in\R^2\setminus\{0\}$ with $(b^2-4ac)(y,z)\ge0$,
i.e., $(4ac-b^2)(y,z)=0$ (since $4ac-b^2$ is psd). We have thus to show that $4ac-b^2$
is not pd. Aiming for a contradiction, assume that $4ac-b^2$ is pd. Then also the
$6$-forms $4a(c\pm\ep Y^4)-b^2$ are psd for some $\ep>0$ (choose for example
$4\ep$ as the minimum of $4ac-b^2$ on the compact unit sphere of $\R^2$ and
take into account that $a=Y^2+Z^2$). It follows that $f\pm\ep Y^4\in P$.
From $f=\frac12(f+\ep Y^4)+\frac12(f-\ep Y^4)$, we obtain $f+\ep Y^4\in\R_{\ge0}f$
and thus $f\in\R Y^4$ $\lightning$.
\end{proof}

\begin{lem}\label{base}
Let $d,n\in\N_0$ and let $V$ be the $\R$-vector space of all $2d$-forms in $\R[\x]=
\R[X_1,\dots,X_n]$ and $P\subseteq V$ be the cone of all psd forms in $V$. Then $P$
is a closed cone with compact convex base [$\to$ \ref{defray}(c)].
\end{lem}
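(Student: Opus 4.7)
The plan is to treat the two assertions in order: first show that $P$ is closed in $V$, then exhibit a compact convex base.

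First I would observe that $V$ is finite-dimensional, so by \ref{tacitly} it carries its unique vector space topology, and every linear functional on $V$ is continuous. For each point $x\in\R^n$ the evaluation map $\ev_x\colon V\to\R$, $f\mapsto f(x)$, is linear, hence continuous, so $\ev_x^{-1}(\R_{\ge0})$ is closed. Writing
\[
P=\bigcap_{x\in\R^n}\ev_x^{-1}(\R_{\ge0})
\]
exhibits $P$ as an intersection of closed sets, so $P$ is closed.

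Next I would build a linear functional $\ell\colon V\to\R$ that is \emph{strictly} positive on $P\setminus\{0\}$. My preferred choice is
\[
\ell(f):=\int_{S^{n-1}}f(x)\,d\sigma(x),
\]
where $\sigma$ is the surface measure on the unit sphere $S^{n-1}\subseteq\R^n$ (for the degenerate cases $n=0$ or $d=0$, where $V$ is at most one-dimensional, the statement is trivial and I would dispose of them separately). Linearity of $\ell$ is clear. If $f\in P$ satisfies $\ell(f)=0$, then $f$ is a continuous nonnegative function on $S^{n-1}$ with vanishing integral, so $f\equiv0$ on $S^{n-1}$; by homogeneity $f(\la x)=\la^{2d}f(x)$, so $f$ vanishes on all of $\R^n$ and hence $f=0$ by \ref{pol0}. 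Thus $\ell>0$ on $P\setminus\{0\}$.

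Then I would set
\[
B:=\{f\in P\mid\ell(f)=1\}
\]
and verify it is a convex base of $P$. Convexity is immediate as $B$ is the intersection of the convex set $P$ with the affine hyperplane $\ell^{-1}(1)$. For the base property of \ref{defray}(c), any $f\in P\setminus\{0\}$ has $\ell(f)>0$, and $\la f\in B$ forces $\la=1/\ell(f)$, which indeed lies in $\R_{>0}$.

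Finally, to see $B$ is compact I would invoke Heine–Borel in the finite-dimensional space $V$. Closedness is clear: $B=P\cap\ell^{-1}(\{1\})$ is an intersection of closed sets. For boundedness, fix any norm $\|\cdot\|$ on $V$, let $S_V:=\{g\in V\mid\|g\|=1\}$, and consider $S_V\cap P$; this is closed in the compact set $S_V$, hence compact, and $\ell$ attains a positive minimum $c>0$ on it (nonempty because $X_1^{2d}\in P\setminus\{0\}$). For $f\in B$ we then have $1=\ell(f)=\|f\|\,\ell(f/\|f\|)\ge\|f\|\,c$, so $\|f\|\le 1/c$. This gives the required boundedness and completes the proof.

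The only step that requires more than bookkeeping is the strict positivity of $\ell$ on $P\setminus\{0\}$; once that is in hand, the rest is a standard finite-dimensional compactness argument.
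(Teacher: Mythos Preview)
Your proof is correct. The closedness argument is identical to the paper's. For the base, however, the paper makes a different and somewhat slicker choice of linear functional: it sets
\[
\ell(p)=\sum_{x\in\{-d,\dots,d\}^n}p(x),
\]
a finite sum over integer lattice points. The point is that $\|p\|:=\sum_{x\in\{-d,\dots,d\}^n}|p(x)|$ is already a \emph{norm} on $V$ by Lemma~\ref{pol0} (each coordinate set has $2d+1$ elements, enough to force a $2d$-form with those values zero to vanish), and on $P$ this norm coincides with $\ell$. Hence $B=\{p\in P\mid\ell(p)=1\}$ equals $\{p\in P\mid\|p\|=1\}$, which is visibly closed and bounded, so compactness is immediate without your auxiliary minimum argument on $S_V\cap P$. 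Your spherical-integral functional is perfectly legitimate and perhaps the more analytically natural choice, but it costs you a bit of measure theory and an extra step for boundedness; the paper's discrete choice stays within elementary algebra and collapses the compactness verification into a single line.
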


\begin{proof}
As an intersection of closed sets, $P=\bigcap_{x\in\R^n}\{p\in V\mid p(x)\ge0\}$ is closed.
By \ref{pol0},
\[\|p\|:=\sum_{x_1=-d}^d\ldots\sum_{x_n=-d}^d|p(x_1,\dots,x_n)|\qquad(p\in V)\]
defines a norm on $V$. Then
\[B:=\{p\in P\mid\|p\|=1\}=\left\{p\in V\mid\sum_{x_1=-d}^d\ldots
\sum_{x_n=-d}^dp(x_1,\dots,x_n)=1\right\}\]
is a compact convex base of $P$.
\end{proof}

\begin{lem}\label{extremequadratic2}
Let $V$ denote the $\R$-vector space of all $4$-forms in $\R[X,Y,Z]$ and $P\subseteq V$
the cone of all psd forms in $V$. Suppose that $f$ lies on an extreme ray of $P$. Then $f$ is
a square of a quadratic form.
\end{lem}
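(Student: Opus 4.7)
The plan is to dichotomize according to whether or not $f$ can be written as a sum of three squares of quadratic forms, and in each case exploit the extreme ray property directly.

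First I would invoke Lemma \ref{extremehas2zeros} to produce linearly independent $v_1,v_2\in\R^3$ with $f(v_1)=f(v_2)=0$. This puts us in the hypothesis of Lemma \ref{subtractlin4}, which will be the main tool in the harder case. Recall that $f$ lying on an extreme ray $\R_{\ge0}f$ of $P$ means, by \ref{extremepointface}, \ref{extrbase}, and \ref{face234} (or directly by the face property applied to a two-term sum), that whenever $f=p+q$ with $p,q\in P$, both $p$ and $q$ lie in $\R_{\ge0}f$, i.e.\ each is a nonnegative multiple of $f$.

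In Case A, suppose $f$ is \emph{not} a sum of three squares of quadratic forms. By Lemma \ref{subtractlin4}, there exists a nonzero linear form $\ell\in\R[X,Y,Z]$ such that $f-\ell^4$ is psd. Writing $f=(f-\ell^4)+\ell^4$ as a sum of two psd forms and using the extreme-ray property of $f$ yields $\ell^4=cf$ for some $c\ge 0$. Because $\ell\ne 0$, we have $c>0$, and hence
\[
f=\tfrac{1}{c}\ell^4=\left(\tfrac{1}{\sqrt c}\,\ell^2\right)^{\!2},
\]
which exhibits $f$ as the square of the quadratic form $\tfrac{1}{\sqrt c}\,\ell^2$.

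In Case B, suppose $f=q_1^2+q_2^2+q_3^2$ for quadratic forms $q_1,q_2,q_3\in\R[X,Y,Z]$. Each summand $q_i^2$ is psd, so the extreme-ray property (applied iteratively to decompositions $f=q_i^2+(f-q_i^2)$) forces $q_i^2=c_if$ for some $c_i\ge 0$, $i=1,2,3$. Since $f\ne 0$ (as $f$ lies on a ray), not all $c_i$ can vanish; pick $i$ with $c_i>0$ and conclude $f=\bigl(q_i/\sqrt{c_i}\bigr)^{\!2}$, again a square of a quadratic form. I do not expect any real obstacle here: once Lemmas \ref{extremehas2zeros} and \ref{subtractlin4} are available, the argument is a short bookkeeping exercise on the extreme-ray property, and the only subtle point is the passage from the face property (stated for a two-term convex combination in \ref{dfface}) to decompositions into sums of psd forms, which is handled by \ref{face234} or equivalently by iterating the two-term case.
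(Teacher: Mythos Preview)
Your proof is correct and follows essentially the same approach as the paper: both arguments combine Lemma \ref{extremehas2zeros} (to get the two linearly independent zeros) with Lemma \ref{subtractlin4} and the extreme-ray property, differing only in the organization of the case split (you dichotomize on whether $f$ is a sum of three squares, whereas the paper dichotomizes on whether a linear form $\ell$ with $f-\ell^4$ psd exists).
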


\begin{proof}
It is enough to show that $f$ is a \emph{sum of} squares of quadratic forms for if
$f=\sum_{i=1}^mq_i^2\ne0$ with $2$-forms $q_i\in\R[X,Y,Z]$, then
\[f=\frac12\underbrace{2q_1^2}_{\in P}+\frac12
\underbrace{2\sum_{i=2}^mq_i^2}_{\in P}\]
and thus $q_1^2\in\R_{\ge0}f$. If there is a linear form $\ell\in\R[X,Y,Z]\setminus\{0\}$ such that
$f-\ell^4$ is psd, then $\ell^4\in\R_{\ge0}f$ and $f=(c\ell^2)^2$ for some $c\in\R^\times$ so that
we are done. From now on therefore suppose that such a linear form does not exist.
From the Lemmata \ref{subtractlin4} and \ref{extremehas2zeros}, it follows now that $f$ is a
sum of $3$ squares of $2$-forms in $\R[X,Y,Z]$.
\end{proof}

\begin{thm}\label{h1888}
Let $R$ be a real closed field and $f\in R[X,Y,Z]$ a $4$-form. Then the following are equivalent:
\begin{enumerate}[\normalfont(a)]
\item $f$ is psd.
\item $f\in\sum R[X,Y,Z]^2$
\item $f$ is a sum of squares of quadratic forms in $R[X,Y,Z]$.
\end{enumerate}
\end{thm}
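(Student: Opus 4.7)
The implications (c)$\Longrightarrow$(b)$\Longrightarrow$(a) are immediate, so only (a)$\Longrightarrow$(c) needs attention. My plan is to first dispatch the case $R=\R$ using the convex-geometric machinery assembled in the preceding lemmas, and then transfer the result to an arbitrary real closed field $R$ by the Tarski principle from Example \ref{tprinciple}.

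For $R=\R$, let $V$ denote the $\R$-vector space of $4$-forms in $\R[X,Y,Z]$ (which has dimension $\binom{6}{2}=15$) and let $P\subseteq V$ denote the cone of psd $4$-forms. By Lemma \ref{base}, $P$ is a closed cone with compact convex base, so by the conical Minkowski theorem \ref{conicminkowski}, $P$ is the sum of its extreme rays. By Lemma \ref{extremequadratic2}, each extreme ray of $P$ is of the form $\R_{\ge0}q^2$ for some quadratic form $q\in\R[X,Y,Z]$. Hence any $f\in P$ can be written $f=\sum_{i=1}^m\la_iq_i^2$ with $\la_i\in\R_{\ge0}$ and quadratic forms $q_i$, which (after absorbing the $\la_i$ into the squares) is already a sum of squares of quadratic forms. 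This settles the theorem for $R=\R$.

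To prepare for the transfer, I first want to bound the number of squares needed uniformly by $\dim V=15$. Starting with any representation $f=\sum_{i=1}^m\la_iq_i^2$ over $\R$, I apply Theorem \ref{fundthmlin} to the finite set $E=\{q_1^2,\ldots,q_m^2\}\cup\{0\}$ generating its own span. The alternative (b) there would yield an $\ell\in V^*$ with $\ell(q_i^2)\ge0$ for all $i$ but $\ell(f)<0$, which is impossible since $\ell(f)=\sum_i\la_i\ell(q_i^2)\ge0$. Thus alternative (a) holds: $f$ is a nonnegative combination of linearly independent elements of $E$, so $m\le 15$. Absorbing $\sqrt{\la_i}$ into $q_i$, we conclude that every $f\in P$ is a sum of at most $15$ squares of quadratic forms in $\R[X,Y,Z]$.

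For arbitrary real closed $R$, I introduce the $0$-ary semialgebraic class
\[
S:=\left\{R\in\mathcal R~\middle|~
\begin{aligned}
&\forall(a_\al)_{|\al|=4}\in R^{15}:\left(\forall(x,y,z)\in R^3:\sum_{|\al|=4}a_\al x^{\al_1}y^{\al_2}z^{\al_3}\ge0\right)\implies\\
&\exists(b_{i,\be})_{1\le i\le 15,|\be|=2}\in R^{90}:\sum_{|\al|=4}a_\al X^{\al_1}Y^{\al_2}Z^{\al_3}=\sum_{i=1}^{15}\left(\sum_{|\be|=2}b_{i,\be}X^{\be_1}Y^{\be_2}Z^{\be_3}\right)^2
\end{aligned}
\right\},
\]
where $\mathcal R$ is any sufficiently large set of real closed fields (as in \ref{rcfclass}). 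The universal quantifier ``$\forall(x,y,z)\in R^3$'' and the existential quantifier ``$\exists(b_{i,\be})$'' can both be eliminated by applying the real quantifier elimination \ref{elim} finitely many times, so $S$ is indeed semialgebraic, and the polynomial identity in the conclusion translates into finitely many polynomial equations in the coefficients $a_\al$ and $b_{i,\be}$. By the previous paragraph, $\R\in S$; by \ref{nothingorall}, $S=\mathcal R$, which is exactly the desired implication (a)$\Longrightarrow$(c) over arbitrary real closed $R$. The substantive content of the proof lies entirely in the preceding lemmata \ref{h1888a}--\ref{extremequadratic2}; the step that requires the most care here is simply verifying that the uniform degree-and-count bound ``$15$ squares of quadratic forms'' produces a genuinely semialgebraic class amenable to transfer.
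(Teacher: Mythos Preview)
Your proof is correct and follows essentially the same route as the paper: establish (a)$\Longrightarrow$(c) over $\R$ via \ref{base}, \ref{conicminkowski}, and \ref{extremequadratic2}, then transfer using the Tarski principle. The paper invokes the Gram matrix method \ref{gram} (or, parenthetically, \ref{fundthmlin}) to obtain the uniform bound on the number of squares needed for the transfer; you spell out the \ref{fundthmlin} variant explicitly, obtaining $15$ squares rather than the tighter $6$ that \ref{gram} would give---but either bound suffices.
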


\begin{proof}
\underline{(c)$\implies$(b)$\implies$(a)} is trivial. 

\smallskip
\underline{(a)$\implies$(c)} follows for $R=\R$ from \ref{extremequadratic2} together with
the conic version \ref{conicminkowski} of Minkowski's theorem and \ref{base}. Using the Gram matrix method
\ref{gram} (or \ref{fundthmlin}), one sees that the class of all real closed fields $R$ for which
(a)$\implies$(c) holds for all $4$-forms $f\in R[X,Y,Z]$, is semialgebraic. By \ref{nothingorall}, every
real closed field belongs to this class. In short, the statement follows thus from the case $R=\R$
by the Tarski principle \ref{tprinciple}.
\end{proof}

\begin{cor}[dehomogenized version of \ref{h1888}]\label{dh1888}
Let $R$ be a real closed field and $f\in R[X,Y]_4$. Then
\[\text{$f\ge0$ on $R^2$}\iff f\in\sum R[X,Y]^2.\]
\end{cor}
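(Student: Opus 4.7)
The easy direction (b)$\Longrightarrow$(a) is immediate: squares of elements of $R$ are nonnegative by \ref{squares}, and sums of nonnegative elements are nonnegative. The content lies in (a)$\Longrightarrow$(b), and my plan is to reduce this to the homogeneous ternary-quartic case already settled by Theorem \ref{h1888}, using the homogenization/dehomogenization machinery of \ref{introhom}, \ref{homdehom} and \ref{psdpsdhom}.

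Assume $f\ge0$ on $R^2$. The case $f=0$ is trivial, so suppose $f\ne0$. By \ref{lf2} the degree $d:=\deg f$ is even, and since $f\in R[X,Y]_4$ we have $d\in\{0,2,4\}$. Let $f^*\in R[X_0,X,Y]$ denote the homogenization of $f$ with respect to a fresh variable $X_0$; this is a $d$-form. By \ref{psdpsdhom}(a) we have $f^*\ge0$ on $R^3$. The key bookkeeping step is now to bring $f^*$ up to degree exactly $4$ without destroying nonnegativity: since $4-d\in\{0,2,4\}$ is even, $X_0^{4-d}=(X_0^{(4-d)/2})^2$ is a square, hence nonnegative on $R$. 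Therefore
\[
F\;:=\;X_0^{4-d}\,f^*\;\in\;R[X_0,X,Y]
\]
is a ternary $4$-form satisfying $F\ge0$ on $R^3$.

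Now Theorem \ref{h1888} applied to $F$ yields $F\in\sum R[X_0,X,Y]^2$. Dehomogenizing via the ring homomorphism $p\mapsto\widetilde p=p(1,X,Y)$ of \ref{homdehom}(c) sends sums of squares to sums of squares, and by \ref{homdehom}(e) we compute $\widetilde F=1\cdot\widetilde{f^*\,}=f$. Hence $f\in\sum R[X,Y]^2$, as required.

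There is no serious obstacle to overcome, since the deep input—Hilbert's theorem on ternary quartics, Theorem \ref{h1888}—is already in hand. The only subtlety is the multiplier $X_0^{4-d}$, needed because the homogenization $f^*$ is a $d$-form rather than automatically a $4$-form; the parity observation $d\in\{0,2,4\}$ from \ref{lf2} is precisely what makes this multiplier a perfect square, so that nonnegativity is preserved and, after applying \ref{h1888} and dehomogenizing, one recovers a sum-of-squares representation of $f$ itself rather than of some multiple of $f$.
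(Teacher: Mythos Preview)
Your proof is correct and follows essentially the same strategy as the paper: homogenize to a ternary quartic, invoke Theorem~\ref{h1888}, then dehomogenize. The one difference is in the handling of the lower-degree cases: the paper splits off $\deg f\in\{0,2\}$ and treats $\deg f=2$ separately via Proposition~\ref{son1s}, whereas you use the uniform trick of padding $f^*$ by the square $X_0^{4-d}$ (with $4-d$ even by \ref{lf2}) so that Theorem~\ref{h1888} covers all cases at once. Your route is slightly cleaner in that it avoids the case split and the auxiliary appeal to \ref{son1s}; the paper's route, on the other hand, makes transparent that the degree-$2$ case is really just the elementary fact about quadratic polynomials and does not require the full strength of Hilbert's theorem.
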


\begin{proof}
``$\Longleftarrow$'' is trivial.

\smallskip
``$\Longrightarrow$'' Suppose $f\ge0$ on $R^2$. WLOG $f\notin R$. Then $\deg f=2$
or $\deg f=4$ by \ref{soslongrem}(b). For $\deg f=2$, the claim follows from \ref{son1s}.
Suppose therefore $\deg f=4$. Then $f^*:=Z^4f\left(\frac XZ,\frac YZ\right)\in R[X,Y,Z]$
is the homogenization of $f$ with respect to $Z$ [$\to$~\ref{introhom}(c)] and $f^*$ is psd
by \ref{psdpsdhom}(a). Now \ref{h1888} yields $f^*\in\sum R[X,Y,Z]^2$. By
dehomogenization [$\to$ \ref{introhom}(d), \ref{homdehom}], it follows that $f\in\sum R[X,Y]^2$.
\end{proof}

\begin{rem}
A posteriori, we see now that in the situation of Lemma \ref{extremehas2zeros}, there actually exist even infinitely many pairwise linearly independent zeros of $f$. This follows from
\ref{extremequadratic2}. Indeed, if $f=q^2$ with a $2$-form $q\in\R[X,Y,Z]$,
then WLOG $\sg q\ge0$ (otherwise replace $q$ by $-q$) and thus $\sg q\in\{0,1,2,3\}$.

If $\sg q=3$, then $q$ and thus $f$ is positive definite which is of course impossible by \ref{extremehas2zeros}.

If $\sg q=2$, then after a linear change of coordinates we have WLOG $f=X^2+Y^2$ which contradicts again \ref{extremehas2zeros} since any zero of $q$ and hence of $f$
in $\R^3$ lies in $\{(0,0)\}\times\R$.

If $\sg q=1$, then WLOG $q\in\{X^2,X^2+Y^2-Z^2\}$.
If $q=X^2$, then 
for example the $(0,y,1)$ where $y\in\R$ are pairwise linearly
independent zeros of $q$ and therefore also of $f$. If $q=X^2+Y^2-Z^2$, then the $(x,1,\sqrt{x^2+1})$ where $x\in\R$
are pairwise linearly independent zeros of $q$ and therefore of $f$. Indeed even the projections of these vectors onto their first two components are already linearly independent as
we have already seen.

If $\sg q=0$, then WLOG $q\in\{0,X^2-Y^2\}$. The case $q=0$ is trivial. In the case $q=X^2-Y^2$
for example the $(x,x,1)$ where $x\in\R$ are pairwise linearly
independent zeros of $q$ and therefore also of $f$.
Again even the projections of these vectors onto their second and third components are already linearly independent.
\end{rem}

\begin{rem}
We will neither use nor prove the following:
\begin{enumerate}[(a)]
\item In 1888, Hilbert showed a strengthening of \ref{h1888}
(``sum of \emph{three} squares'' instead of ``sum of squares'', cf. also
\ref{h1888a}, \ref{h1888b}, \ref{h1888c} and \ref{subtractlin4}) \cite{hil}.
A very long and tedious elementary proof for this has been given by Scheiderer and
Pfister in 2012 \cite{ps}..
\item Scheiderer showed in 2016 that
\[X^4+XY^3+Y^4-3X^2YZ-4XY^2Z+2X^2Z^2+XZ^3+YZ^3+Z^4\]
is psd but does not belong to $\sum\Q[X,Y,Z]^2$ \cite{s2}. In the same year,
Henrion, Naldi, Safey El Din gave an elementary proof for this \cite{hns}.
\end{enumerate}
\end{rem}

\chapter{Nonnegative polynomials with zeros}

Throughout this chapter, $K$ denotes again always a subfield of $\R$
with the induced order. Moreover, we let $A$ always be a commutative ring (e.g.,
$A=K[X_1,\dots,X_n]$).

\section{Modules over semirings}

\begin{df}\label{deftmodule}
Let $T\subseteq A$. Then we call $T$ a \emph{semiring} of $A$ if $\{0,1\}\subseteq T$,
$T+T\subseteq T$ and $TT\subseteq T$ [$\to$ \ref{defpreorder}]. If $T$ is a semiring of $A$,
then $M\subseteq A$ is called a \emph{$T$-module} of $A$ if $0\in M$, $M+M\subseteq M$
and $TM\subseteq M$.
\end{df}

\begin{rem}\label{semiringrem}
\begin{enumerate}[(a)]
\item $\text{$T$ is a preorder of $A$}\iff(\text{$T$ is a semiring of $A$} \et A^2\subseteq T)$
\item If $T$ is a semiring of $A$, then $T-T$ is a subring of $A$.
\item If $T$ is a semiring of $A$ and $M$ a $T$-module of $A$, then
$M-M$ is a $(T-T)$-module of $A$.
\item If $T$ is a semiring of $A$, then $T$ is a $T$-module of $A$. 
\end{enumerate}
\end{rem}

\begin{df}\label{defarchsemiring}
Let $T$ be a semiring of $A$ and $M$ a $T$-module of $A$.
Then $M$ is called \emph{Archimedean} (in $A$) if
$\forall a\in A:\exists N\in\N:N+a\in M$ [$\to$ \ref{dfarch}(a)].
\end{df}

\begin{rem}
Due to \ref{semiringrem}(d), the notion of an Archimedean semiring is also defined by
\ref{defarchsemiring}. Because of \ref{semiringrem}(a), this generalizes the notion of an
Archimedean preorder of $A$ [$\to$ \ref{dfarch}(a)].
\end{rem}

\begin{df}{}[$\to$ \ref{arithmbounded}]\label{bamu}
Let $T$ be a semiring of $A$, $M$ a $T$-module of $A$ and $u\in A$. Then
\[B_{(A,M,u)}:=\{a\in A\mid\exists N\in\N:Nu\pm a\in M\}\]
the set of with respect to $M$ by $u$ \emph{arithmetically bounded} elements of $A$.
If $u=1$, then we write $B_{(A,M)}:=B_{(A,M,u)}$ and omit the specification ``by $u$''.
\end{df}

\begin{pro}\label{mboundedtimesmbounded}
Suppose $T$ is a semiring of $A$, $M_1$ and $M_2$ are $T$-modules of $A$,
$u_1\in M_1$ and $u_2\in M_2$. Then $\sum M_1M_2$ is also a $T$-module of $A$
and we have
\[B_{(A,M_1,u_1)}B_{(A,M_2,u_2)}\subseteq B_{(A,\sum M_1M_2,u_1u_2)}.\]
\end{pro}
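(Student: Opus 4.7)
The plan is to mimic the argument in the proof of \ref{arithmbounded}, where the closure of $B_{(A,T)}$ under multiplication is obtained via the identity
\[3N^2\pm ab=(N+a)(N\pm b)+N(N-a)+N(N\mp b).\]
Here we need an ``asymmetric'' version of this identity in which, on the right hand side, the first factor of each summand sits in $M_1$ and the second in $M_2$.

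First I would verify that $\sum M_1M_2$ is a $T$-module of $A$. It clearly contains $0$ (as the empty sum) and is closed under addition; for $t\in T$ and $m_1m_2\in M_1M_2$ one has $t(m_1m_2)=(tm_1)m_2\in M_1M_2$ because $M_1$ is a $T$-module, and this extends to arbitrary finite sums.

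For the inclusion, let $a_1\in B_{(A,M_1,u_1)}$ and $a_2\in B_{(A,M_2,u_2)}$, and choose $N_1,N_2\in\N$ with $N_iu_i\pm a_i\in M_i$. Using $u_i\in M_i$ and $\N_0\cdot 1\subseteq T$, the elements $N_iu_i$ themselves lie in $M_i$. The key identity, which one verifies by a direct expansion just as in \ref{arithmbounded}, is
\begin{align*}
3N_1N_2u_1u_2\pm a_1a_2
&=(N_1u_1+a_1)(N_2u_2\pm a_2)\\
&\quad +(N_1u_1-a_1)(N_2u_2)\\
&\quad +(N_1u_1)(N_2u_2\mp a_2).
\end{align*}
In each of the three summands on the right, the first factor lies in $M_1$ and the second in $M_2$, so each summand lies in $M_1M_2$ and the sum lies in $\sum M_1M_2$. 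Taking $N:=3N_1N_2$ therefore yields $Nu_1u_2\pm a_1a_2\in\sum M_1M_2$, which is exactly the condition $a_1a_2\in B_{(A,\sum M_1M_2,u_1u_2)}$.

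The only real obstacle is writing down the correct asymmetric version of the identity. Guessing the wrong distribution of $\pm$-signs between the three summands leads to leftover cross-terms $N_iu_ia_j$ that cannot be absorbed into $\sum M_1M_2$; the specific choice above is forced by requiring that, after expansion, the $N_1u_1a_2$ and $N_2u_2a_1$ cross-terms cancel while the $a_1a_2$ terms do not.
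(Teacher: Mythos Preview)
Your proof is correct and follows the same approach as the paper: both use the identity
\[3N^2u_1u_2\pm a_1a_2=(Nu_1+a_1)(Nu_2\pm a_2)+Nu_2(Nu_1-a_1)+Nu_1(Nu_2\mp a_2)\]
(the paper with a common $N$, you with separate $N_1,N_2$), which is precisely the asymmetric analogue of the identity from \ref{arithmbounded} that you identified as the main point.
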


\begin{proof}
Let $a_i\in B_{(A,M_i,u_i)}$, say $Nu_i\pm a_i\in M_i$ for $i\in\{1,2\}$ with $N\in\N$.
Then (cf. the proof of \ref{arithmbounded})
\[3N^2u_1u_2\pm a_1a_2=(Nu_1+a_1)(Nu_2\pm a_2)+Nu_2(Nu_1-a_1)+
Nu_1(Nu_2\mp a_2).\]
\end{proof}

\begin{cor} Let $T$ be a semiring of $A$, $M$ a $T$-module of $A$, $u\in T$ and $v\in M$.
Then $B_{(A,T,u)}B_{(A,M,v)}\subseteq B_{(A,M,uv)}$.
\end{cor}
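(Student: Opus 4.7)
The plan is to derive this Corollary as a direct specialization of the preceding Proposition \ref{mboundedtimesmbounded}. The only ingredients needed are (i) the observation that $T$ itself qualifies as a $T$-module by Remark \ref{semiringrem}(d), and (ii) the trivial monotonicity of the arithmetic boundedness operator in its middle argument, namely that $M' \subseteq M$ implies $B_{(A,M',w)} \subseteq B_{(A,M,w)}$, which is immediate from the definition \ref{bamu}: if $Nw \pm a \in M'$, then $Nw \pm a \in M$.

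First I would apply Proposition \ref{mboundedtimesmbounded} with $M_1 := T$, $M_2 := M$, $u_1 := u$, $u_2 := v$. The hypotheses $u_1 \in M_1$ and $u_2 \in M_2$ translate to $u \in T$ and $v \in M$, which are exactly the assumptions of the Corollary. The proposition then yields that $\sum TM$ is a $T$-module of $A$ and
\[
B_{(A,T,u)} B_{(A,M,v)} \subseteq B_{(A,\sum TM, uv)}.
\]

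Next I would observe that $\sum TM \subseteq M$: since $M$ is a $T$-module we have $TM \subseteq M$, and since $M$ is closed under addition we also have $\sum TM \subseteq M$. By the monotonicity remarked above, this gives
\[
B_{(A,\sum TM, uv)} \subseteq B_{(A, M, uv)},
\]
and chaining the two inclusions yields the desired containment $B_{(A,T,u)} B_{(A,M,v)} \subseteq B_{(A,M,uv)}$.

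There is no real obstacle here — the statement is a bookkeeping consequence of the proposition, and no new computation (beyond the identity displayed in the proof of \ref{mboundedtimesmbounded}) is required. The only point worth stating carefully is the role of the two remarks: that $T$ itself is a $T$-module, and that the boundedness sets respect inclusion of modules.
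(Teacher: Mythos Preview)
Your proof is correct and follows essentially the same approach as the paper: apply Proposition \ref{mboundedtimesmbounded} with $M_1:=T$, $M_2:=M$, $u_1:=u$, $u_2:=v$. The paper's proof is marginally slicker in that it observes the equality $\sum TM = M$ (since $1\in T$ gives $M\subseteq\sum TM$), whereas you only use the inclusion $\sum TM\subseteq M$ together with monotonicity of $B_{(A,\cdot,w)}$; both routes are fine.
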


\begin{proof}
Apply \ref{mboundedtimesmbounded} to $M_1:=T$, $M_2:=M$, $u_1:=u$,
$u_2:=v$ and observe $\sum M_1M_2=\sum TM=M$.
\end{proof}

\begin{cor}{}\emph{[$\to$ \ref{arithmbounded}]}\label{bmodule}
Let $T$ be a semiring of $A$. Then $B_{(A,T)}$ is a subring of $A$.
Moreover, if $M$ a $T$-module of $A$ and $u\in M$, then $B_{(A,M,u)}$
is a $B_{(A,T)}$-module of $A$.
\end{cor}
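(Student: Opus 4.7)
The plan is to deduce both assertions essentially as applications of Proposition \ref{mboundedtimesmbounded} (or more precisely, the specializations of it already used to prove \ref{arithmbounded}), together with two trivial verifications about additive closure and the presence of $0$ and $1$.

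For the first assertion, I would verify the ring axioms for $B_{(A,T)}$ one by one. Containment of $0$ and $1$ is immediate since $0 = 1 - 1$ and $1$ itself witness the definition with $N=1$ respectively $N=2$ (using $1\in T$ and $T+T\subseteq T$). Closure under negation is tautological from the symmetric ``$\pm$'' in Definition \ref{bamu}. Closure under addition is the direct observation that if $N_1\pm a, N_2\pm b\in T$, then $(N_1+N_2)\pm(a+b)\in T+T\subseteq T$. Closure under multiplication is the only non-trivial point and follows by applying Proposition \ref{mboundedtimesmbounded} with $M_1:=M_2:=T$ (a $T$-module by \ref{semiringrem}(d)) and $u_1:=u_2:=1\in T$: this gives $B_{(A,T)}B_{(A,T)}\subseteq B_{(A,\sum T\cdot T,1)}$, and $\sum T\cdot T\subseteq T$ because $T$ is itself a semiring, so $B_{(A,\sum T\cdot T,1)}\subseteq B_{(A,T)}$.

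For the second assertion, let $M$ be a $T$-module with $u\in M$. The verification proceeds analogously. We have $0\in B_{(A,M,u)}$ because $u=1\cdot u\pm 0\in M$, and additive closure again follows since $N_1u\pm a, N_2u\pm b\in M$ imply $(N_1+N_2)u\pm(a+b)\in M+M\subseteq M$. The key point is closure under multiplication by elements of $B_{(A,T)}$: apply Proposition \ref{mboundedtimesmbounded} with $M_1:=T$, $M_2:=M$, $u_1:=1\in T$, $u_2:=u\in M$, to obtain
\[
B_{(A,T)}\,B_{(A,M,u)}=B_{(A,T,1)}\,B_{(A,M,u)}\subseteq B_{(A,\sum T M,\,u)}.
\]
Since $M$ is a $T$-module we have $\sum TM\subseteq M$, and thus the right-hand side is contained in $B_{(A,M,u)}$, as required.

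There is no real obstacle here; the entire substance of the corollary has already been absorbed into Proposition \ref{mboundedtimesmbounded}, and the only thing to watch is choosing the auxiliary modules $M_1,M_2$ and the anchor points $u_1,u_2$ correctly so that the multiplicative bounds collapse back into the module $T$ (respectively $M$) with the prescribed unit $1$ (respectively $u$).
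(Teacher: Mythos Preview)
Your proof is correct and follows essentially the same approach as the paper intends: the corollary is stated without an explicit proof, but it is meant to follow immediately from Proposition~\ref{mboundedtimesmbounded} (or its specialization in the preceding corollary), together with the trivial checks for $0$, $1$, negation, and additive closure --- exactly as you have done.
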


\begin{rem}{}[$\to$ \ref{defarchsemiring}, \ref{archb}]\label{archmoduleb}
If $T\subseteq A$ is a semiring and $M\subseteq A$ a $T$-module with $1\in M$, then $M$ is Archimedean if
and only if $B_{(A,M)}=A$.
\end{rem}

\begin{thm}{}\emph{[$\to$ \ref{archchar}]}\label{archsemiringchar}
Let $n\in\N_0$ and $T\subseteq K[\x]$ a semiring with $K_{\ge0}\subseteq T$.
Then the following are equivalent:
\begin{enumerate}[\normalfont(a)]
\item $T$ is Archimedean.
\item $\exists N\in\N:\forall i\in\{1,\dots,n\}:N\pm X_i\in T$
\item $\exists m\in\N:\exists\ell_1,\dots,\ell_m\in T\cap K[\x]_1:\exists N\in\N:\\
\emptyset\ne\{x\in K^n\mid\ell_1(x)\ge0,\dots,\ell_m(x)\ge0\}\subseteq[-N,N]_K^n$
\end{enumerate}
\end{thm}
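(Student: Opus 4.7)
The plan is to prove the cycle (a)$\Rightarrow$(b)$\Rightarrow$(c)$\Rightarrow$(a), with essentially all the content in the last implication, which is where the linear Nichtnegativstellensatz enters.

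First I would dispatch (a)$\Rightarrow$(b): by the definition of Archimedean applied to $\pm X_i$, there exist $N_1,\ldots,N_n,N_1',\ldots,N_n'\in\N$ with $N_i+X_i\in T$ and $N_i'-X_i\in T$; now take $N:=\max_i\max\{N_i,N_i'\}$ and observe that $N\pm X_i=(N-N_i)+(N_i\pm X_i)\in\N+T\subseteq K_{\ge0}+T\subseteq T$ (using $K_{\ge0}\subseteq T$).
For (b)$\Rightarrow$(c), I would simply take $m:=2n$ and $\ell_{2i-1}:=N+X_i,\ \ell_{2i}:=N-X_i\in T\cap K[\x]_1$, noting that the feasible set is literally $[-N,N]_K^n$, which contains $0$ and is bounded by $N$.

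The main step is (c)$\Rightarrow$(b). Write $S:=\{x\in K^n\mid\ell_1(x)\ge0,\ldots,\ell_m(x)\ge0\}$. Since $S\subseteq[-N,N]_K^n$ we have $N\pm X_i\ge0$ on $S$ for each $i\in\{1,\ldots,n\}$; since moreover $S\ne\emptyset$, the linear Nichtnegativstellensatz \ref{linnichtnegativstellensatz} applies and yields
\[
N\pm X_i\ \in\ K_{\ge0}+K_{\ge0}\ell_1+\ldots+K_{\ge0}\ell_m
\]
for every $i$. Because $K_{\ge0}\subseteq T$, $\ell_1,\ldots,\ell_m\in T$, and $T$ is closed under sums and products, the right hand side is contained in $T$. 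Hence $N\pm X_i\in T$ for all $i$, which is (b).

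Finally, for (b)$\Rightarrow$(a) (closing the cycle, although one could equally well argue directly that (b)$\Rightarrow$(a) as in the proof of \ref{archchar}) I would invoke the ring $B_{(K[\x],T)}$ of arithmetically bounded elements, which is a subring of $K[\x]$ by Corollary \ref{bmodule}. Condition (b) shows $X_1,\ldots,X_n\in B_{(K[\x],T)}$, and $K\subseteq B_{(K[\x],T)}$ because for $a\in K$ one can pick $N\in\N$ with $N\pm a\ge_\R 0$ and then $N\pm a\in K_{\ge0}\subseteq T$. The subring generated by $K$ and the $X_i$ is all of $K[\x]$, so $B_{(K[\x],T)}=K[\x]$, and \ref{archmoduleb} (applied with $M=T$, using $1\in T$) gives that $T$ is Archimedean. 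The only ``non-routine'' obstacle in the whole argument is the appeal to the linear Nichtnegativstellensatz in (c)$\Rightarrow$(b); crucially this requires $S\ne\emptyset$, which is exactly the hypothesis we demanded in (c).
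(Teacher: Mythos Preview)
Your proof is correct and follows essentially the same route as the paper: (a)$\iff$(b) via the subring $B_{(K[\x],T)}$ of arithmetically bounded elements, (b)$\Rightarrow$(c) trivially, and (c)$\Rightarrow$(b) via the linear Nichtnegativstellensatz \ref{linnichtnegativstellensatz}. The only cosmetic difference is that the paper handles (a)$\iff$(b) in one stroke (noting $K\subseteq B_{(K[\x],T)}$ and hence $B_{(K[\x],T)}=K[\x]\iff X_1,\dots,X_n\in B_{(K[\x],T)}$), whereas you split off a direct argument for (a)$\Rightarrow$(b); both are fine.
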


\begin{proof} Write $A:=K[\x]$. From $K_{\ge0}\subseteq T$, it follows that
$K\subseteq B_{(A,T)}$. Hence we have $B_{(A,T)}=A\iff X_1,\dots,X_n\in B_{(A,T)}$ which
shows (a)$\iff$(b). The implication (b)$\implies$(c) is trivial and (c)$\implies$(b) is an easy
consequence of the linear Nichtnegativstellensatz \ref{linnichtnegativstellensatz}.
\end{proof}

\begin{lem}{}[$\to$ \ref{squarerootsarithmeticallybounded}]\label{rootqmb}
Suppose that $\frac12\in A$ (i.e., $2\in A^\times$), let $M\subseteq A$ be a
$(\sum A^2)$-module with $1\in M$ and let $a\in A$. Then
\[a^2\in B_{(A,M)}\iff a\in B_{(A,M)}.\]
\end{lem}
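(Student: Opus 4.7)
The plan is to prove the two implications separately. The forward direction $a^2 \in B_{(A,M)} \Rightarrow a \in B_{(A,M)}$ is essentially a reprise of the identity used in the proof of \ref{squarerootsarithmeticallybounded}, while the reverse $a \in B_{(A,M)} \Rightarrow a^2 \in B_{(A,M)}$ carries the real content.

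First I record the basic observation that, because $M$ is a $(\sum A^2)$-module with $1 \in M$, we have $\sum A^2 = (\sum A^2)\cdot 1 \subseteq M$, and similarly $\N\subseteq M$ by repeatedly adding $1$ to itself. For the forward direction, I pick $N \in \N$ with $N - a^2 \in M$, and use the identity
\[(N+1) \pm a \;=\; (N - a^2) + \bigl(\tfrac12 \pm a\bigr)^2 + 3\bigl(\tfrac12\bigr)^2,\]
which writes $(N+1) \pm a$ as a sum of $(N - a^2) \in M$ and two elements of $\sum A^2 \subseteq M$; hence $(N+1) \pm a \in M$, so $a \in B_{(A,M)}$.

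For the converse, suppose $N_0 \pm a \in M$. The first step is that $N_0$ may be enlarged freely: for any integer $N \ge N_0$, $N \pm a = (N_0 \pm a) + (N - N_0) \in M + \N \subseteq M$. I therefore choose $N$ to be a power of two with $N \ge N_0$, say $N = 2^k$. The algebraic identity I plan to use is
\[(N-a)^2(N+a) + (N+a)^2(N-a) \;=\; (N-a)(N+a)\bigl[(N-a)+(N+a)\bigr] \;=\; 2N(N^2 - a^2).\]
Each summand on the left-hand side lies in $(\sum A^2)\cdot M \subseteq M$, since $(N\pm a)^2 \in \sum A^2$ and $N \mp a \in M$. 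Therefore $2N(N^2 - a^2) \in M$.

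The main obstacle, and the reason for forcing $N$ to be a power of two by way of the hypothesis $2 \in A^\times$, is to strip the factor $2N$. Since $\tfrac12 = \bigl(\tfrac12\bigr)^2 + \bigl(\tfrac12\bigr)^2 \in \sum A^2$, and $\sum A^2$ is closed under products, $\tfrac{1}{2N} = \bigl(\tfrac12\bigr)^{k+1} \in \sum A^2$. Multiplying $2N(N^2-a^2) \in M$ by $\tfrac{1}{2N}$ gives $N^2 - a^2 \in (\sum A^2)\cdot M \subseteq M$; combined with the trivial $N^2 + a^2 \in \sum A^2 \subseteq M$, this shows $a^2 \in B_{(A,M)}$. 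The key algebraic insight is that although the naive factorisation $(N-a)(N+a) = N^2-a^2$ is useless on its own (neither factor is a sum of squares, so the product is not obviously in $M$), multiplying one factor by the square of the other before summing produces a telescoping identity equal to $2N(N^2 - a^2)$, after which the surplus factor $2N$ can be cancelled using the invertibility of $2$ and our choice of $N$ as a power of two.
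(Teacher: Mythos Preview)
Your proof is correct. Both directions match the paper's in spirit, but your treatment of ``$\Longleftarrow$'' differs in the specific identity chosen. The paper assumes $(2N-1)\pm a\in M$ and uses
\[N^2(2N-1)-a^2=2\left(\tfrac12\right)^2\bigl((N-a)^2(2N-1+a)+(N+a)^2(2N-1-a)\bigr),\]
so that the only scalar to absorb is $2\left(\tfrac12\right)^2=\left(\tfrac12\right)^2+\left(\tfrac12\right)^2\in\sum A^2$. Your identity $(N-a)^2(N+a)+(N+a)^2(N-a)=2N(N^2-a^2)$ is simpler to write down but leaves the factor $2N$, forcing the extra manoeuvre of enlarging $N$ to a power of two so that $\tfrac1{2N}=\left(\tfrac12\right)^{k+1}\in\sum A^2$. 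Both work; the paper's identity is more economical, while yours is perhaps more natural to discover.
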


\begin{proof}
``$\Longrightarrow$'' If $N\in\N$ with $(N-1)-a^2\in M$, then
\[N\pm a=(N-1)-a^2+\left(\frac12\pm a\right)^2+3\left(\frac12\right)^2\in M\]
(exactly as in the proof of \ref{squarerootsarithmeticallybounded}).

``$\Longleftarrow$'' If $N\in\N$ with $(2N-1)\pm a\in M$, then
\[N^2(2N-1)-a^2=2\left(\frac12\right)^2((N-a)^2(2N-1+a)+(N+a)^2(2N-1-a))\in M.\]
\end{proof}

\begin{pro}\label{bammodule}
Suppose $\frac12\in A$, $T\subseteq A$ is a preorder and $M\subseteq A$
is a $T$-module with $1\in M$. Then $B_{(A,M)}$ is a subring of $A$ and $B_{(A,M,u)}$ a
$B_{(A,M)}$-module of $A$ for each $u\in T$.
\end{pro}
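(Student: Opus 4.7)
The plan is to handle the two assertions in sequence; only multiplication/module-closure is genuinely substantial, the rest being routine.

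For the subring property of $B_{(A,M)}$, the verifications that $0,1 \in B_{(A,M)}$ and that the set is closed under addition and negation are immediate (note $T \subseteq M$ because $T = T\cdot 1 \subseteq TM \subseteq M$, which in particular places $\N$ and $\tfrac12$ inside $B_{(A,M)}$). The nontrivial point is closure under multiplication, which I would deduce from the polarization identity $4ab = (a+b)^2 - (a-b)^2$: for $a,b \in B_{(A,M)}$ closure under addition gives $a\pm b \in B_{(A,M)}$; Lemma \ref{rootqmb} then lifts this to $(a\pm b)^2 \in B_{(A,M)}$, hence $4ab \in B_{(A,M)}$. A bound $N \pm 4ab \in M$ is converted to one for $ab$ by multiplying through by $\tfrac14 = \bigl(\tfrac12\bigr)^2 \in A^2 \subseteq T$ (so $TM\subseteq M$ yields $\tfrac N4 \pm ab \in M$) and then absorbing the non-integral offset using a term in $T \subseteq M$, producing some $N' \in \N$ with $N' \pm ab \in M$.

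For the $B_{(A,M)}$-module structure of $B_{(A,M,u)}$, closure under addition and negation are again immediate, and $0 \in B_{(A,M,u)}$ since $u \in T \subseteq M$. The substance is multiplication by $B_{(A,M)}$. My key subclaim is: for every $t \in T \cap B_{(A,M)}$ and $b \in B_{(A,M,u)}$ one has $tb \in B_{(A,M,u)}$. This follows from the algebraic identity
\[
N_bNu \pm tb \;=\; N_b u\,(N-t) \;+\; t\,(N_bu \pm b),
\]
where $N,N_b \in \N$ are chosen so that $N\pm t \in M$ and $N_bu \pm b \in M$. Each summand on the right lies in $TM \subseteq M$, because $N_b u \in T$ together with $N-t \in M$ handles the first, and $t \in T$ together with $N_b u \pm b \in M$ handles the second; hence $N_bNu \pm tb \in M$, i.e.\ $tb \in B_{(A,M,u)}$ with bound $N' := N_bN$.

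To finish, I would reduce the general case to this subclaim via \ref{diffsquare}: writing $a = \bigl(\tfrac{a+1}{2}\bigr)^2 - \bigl(\tfrac{a-1}{2}\bigr)^2 =: s - t$, both $s$ and $t$ lie in $A^2 \subseteq T$, and since the subring $B_{(A,M)}$ constructed in the first part contains $\tfrac12$ and $a$ it contains $\tfrac{a\pm1}{2}$ and therefore their squares $s,t$; so $s,t \in T \cap B_{(A,M)}$, the subclaim gives $sb,tb \in B_{(A,M,u)}$, and subtraction gives $ab = sb-tb \in B_{(A,M,u)}$. The principal obstacle is that a direct adaptation of the identity from \ref{mboundedtimesmbounded} fails: the "cross term" it produces, $(N+a)(N_bu \pm b)$, lies only in $MM$ rather than in $M$, since $M$ is not a ring. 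The trick is to route the argument through $T$, where products with $M$ do land in $M$; the decomposition via \ref{diffsquare}—valid precisely because $\tfrac12 \in A$ and $A^2 \subseteq T$—is exactly what lets one split every $a \in B_{(A,M)}$ into two tractable elements of $T \cap B_{(A,M)}$, converting the $MM$-obstruction into a sum of $TM$-products.
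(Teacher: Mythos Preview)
Your proof is correct and follows essentially the same route as the paper: the key identity $N_bNu \pm tb = N_bu(N-t) + t(N_bu\pm b)$ for $t\in T\cap B_{(A,M)}$ is exactly the paper's identity (specialized there to $t=a^2$), and the reduction via $a=\bigl(\tfrac{a+1}2\bigr)^2-\bigl(\tfrac{a-1}2\bigr)^2$ is identical. The only structural difference is that the paper handles both assertions at once by noting that the subring claim is the special case $u=1$ of the module claim, rather than establishing the subring property separately first.
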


\begin{proof}
It obviously suffices to show $B_{(A,M)}B_{(A,M,u)}\subseteq B_{(A,M,u)}$ for all $u\in T$
(since this means $B_{(A,M)}B_{(A,M)}\subseteq B_{(A,M)}$ for $u=1$).
If $a\in B_{(A,M)}$, then we have \[a=\left(\frac12\right)^2((a+1)^2-(a-1)^2)\] and
because of $1\in M$ also $a+1,a-1\in B_{(A,M)}$. Therefore it is enough to show
$a^2B_{(A,M,u)}\subseteq B_{(A,M,u)}$ for all $a\in B_{(A,M)}$ and $u\in T$. For this purpose,
fix $a\in B_{(A,M)}$, $u\in T$ and $b\in B_{(A,M,u)}$. To show: $a^2b\in B_{(A,M,u)}$.
From \ref{rootqmb}, we get $a^2\in B_{(A,M)}$. Choose $N\in\N$ such that
$N-a^2,Nu\pm b\in M$. Due to $a^2,u\in T$, we get now $Nu-ua^2,Nua^2\pm a^2b\in M$.
Consequently,
\[N^2u\pm a^2b=(N^2u-Nua^2)+(Nua^2\pm a^2b)\in M+M\subseteq M.\]
\end{proof}

\begin{thm}{}\emph{[$\to$ \ref{archchar}, \ref{archsemiringchar}]}
\label{archmodulechar}
Suppose $n\in\N_0$ and $M\subseteq K[\x]$ is a $(\sum K_{\ge0}K[\x]^2)$-module with
$1\in M$. Then the following are equivalent:
\begin{enumerate}[\normalfont(a)]
\item $M$ is Archimedean.
\item $\exists N\in\N:N-\sum_{i=1}^nX_i^2\in M$
\item $\exists N\in\N:\forall i\in\{1,\dots,n\}:N\pm X_i\in M$
\item $\exists m\in\N:\exists\ell_1,\dots,\ell_m\in M\cap K[\x]_1:\exists N\in\N:\\
\emptyset\ne\{x\in K^n\mid\ell_1(x)\ge0,\dots,\ell_m(x)\ge0\}\subseteq[-N,N]_K^n$
\end{enumerate}
\end{thm}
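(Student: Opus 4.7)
The plan is to show the cyclic chain (a)$\Rightarrow$(b)$\Rightarrow$(c)$\Rightarrow$(d)$\Rightarrow$(a). Two of these are immediate: (a)$\Rightarrow$(b) is just Definition \ref{defarchsemiring} applied to the element $-\sum_{i=1}^n X_i^2\in K[\x]$, and (c)$\Rightarrow$(d) follows by taking $m:=2n$ and the linear forms $N-X_i$, $N+X_i$, whose common nonnegativity set is the nonempty cube $[-N,N]_K^n$.

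For (b)$\Rightarrow$(c), I would first observe that every square in $K[\x]$ belongs to $M$: since $1\in M$ and $f^2\in\sum K_{\ge 0}K[\x]^2$ for any $f\in K[\x]$, the module property gives $f^2=f^2\cdot 1\in M$. Consequently, from (b) and
\[
N-X_i^2=\Big(N-\sum_{j=1}^n X_j^2\Big)+\sum_{j\ne i} X_j^2\in M+M\subseteq M,
\]
it follows that $X_i^2\in B_{(K[\x],M)}$ for every $i$. Now Lemma \ref{rootqmb} applies (its hypotheses are satisfied because $\tfrac12\in K[\x]$, $1\in M$, and $M$ is in particular a $(\sum K[\x]^2)$-module), so I obtain $X_i\in B_{(K[\x],M)}$, which is exactly condition (c).

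For (d)$\Rightarrow$(a), the strategy is to reduce to (c) and then invoke the characterization of Archimedeanity via the ring of arithmetically bounded elements. Since the nonempty set $S:=\{x\in K^n\mid\ell_1(x)\ge 0,\dots,\ell_m(x)\ge 0\}$ is contained in $[-N,N]_K^n$, the linear polynomials $N\pm X_i$ are nonnegative on $S$, and the linear Nichtnegativstellensatz \ref{linnichtnegativstellensatz} yields
\[
N\pm X_i\in K_{\ge 0}+\sum_{j=1}^m K_{\ge 0}\ell_j.
\]
Because $1\in M$, each $\ell_j\in M$, and $K_{\ge 0}\subseteq\sum K_{\ge 0}K[\x]^2$, the right-hand side lies in $M$, so $X_1,\dots,X_n\in B_{(K[\x],M)}$. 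Proposition \ref{bammodule}, applied to the preorder $T:=\sum K_{\ge 0}K[\x]^2$, ensures that $B_{(K[\x],M)}$ is a subring of $K[\x]$; as this subring contains $K$ together with $X_1,\dots,X_n$, it equals $K[\x]$, and Remark \ref{archmoduleb} concludes that $M$ is Archimedean.

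The only non-formal step I expect to be delicate is (b)$\Rightarrow$(c): since $M$ is only a module and not a semiring, one cannot directly ``take square roots'' inside $M$. The trick of isolating a single $X_i^2$ in a sum works only because squares automatically land in $M$ thanks to $1\in M$ combined with the $\sum K_{\ge 0}K[\x]^2$-module structure, while the actual passage from $X_i^2$ to $X_i$ is provided by the elementary identity underlying Lemma \ref{rootqmb}. Everything else is bookkeeping built on the linear Nichtnegativstellensatz and on the module-theoretic arithmetically bounded element machinery developed in \ref{bamu}--\ref{bammodule}.
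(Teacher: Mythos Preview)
Your proof is correct and follows essentially the same route as the paper: the paper also treats (a)$\Rightarrow$(b) as trivial, derives (b)$\Rightarrow$(c) via Lemma \ref{rootqmb}, handles the equivalence of (c) and (d) by the linear Nichtnegativstellensatz \ref{linnichtnegativstellensatz}, and obtains (c)$\Rightarrow$(a) from Proposition \ref{bammodule}. The only cosmetic difference is that the paper proves (c)$\Leftrightarrow$(d) and (c)$\Rightarrow$(a) separately, whereas you fold them into a single step (d)$\Rightarrow$(a) that passes through (c); the content is identical.
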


\begin{proof}
\underline{(a)$\implies$(b)} is trivial.

\smallskip
\underline{(b)$\implies$(c)}\quad If (b) holds, then $N-X_i^2\in M$ and thus $X_i^2\in B_{(K[\x],M)}$
for all $i\in\{1,\dots,n\}$. Apply now \ref{rootqmb}.

\smallskip
\underline{(c)$\implies$(d)} is trivial and \underline{(d)$\implies$(c)} follows again from the linear
Nichtnegativstellensatz \ref{linnichtnegativstellensatz}.

\smallskip
\underline{(c)$\implies$(a)} follows from \ref{bammodule}.
\end{proof}

\section{Pure states on rings and ideals}

In this section, we always suppose that the field $K$ is a subring of $A$. In particular,
$\Q\subseteq A$ and $A$ is a $K$-vector space.

\begin{rem}\label{abstractarchimedeanpositivstellensatz2}
Under the just made mild hypothesis $\Q\subseteq A$, one can reformulate the
abstract Archimedean Positivstellensatz \ref{abstractarchimedeanpositivstellensatz}
as follows:

\begin{center}
\fbox{
\begin{minipage}{35em}
For arbitrary $A$ and $K$ as above,
let $T$ be an Archimedean preorder of $A$ such that $K_{\ge0}\subseteq T$ and $a\in A$.
Then the following are equivalent:
\begin{enumerate}[(a)]
\item $\ph(a)>0$ for all $K$-linear ring homomorphisms $\ph\colon A\to\R$ with
$\ph(T)\subseteq\R_{\ge0}$.
\item $\exists N\in\N:a\in\frac1N+T$
\end{enumerate}
\end{minipage}
}
\end{center}
To see this, first note that in (a),
one can omit the $K$-linearity of $\ph$ since it just means that $\ph|_K=\id_K$
which follows from \ref{archemb} by $K_{\ge0}\subseteq T$ since the identity is the
\emph{only} embedding of ordered fields from $K$ to $\R$ (cf. the proof of \ref{evashom}).
But then the theorem becomes
strongest for $K=\Q$ and we can thus assume $K=\Q$ which makes
redundant the hypothesis $K_{\ge0}\subseteq T$ since for all $m,n\in\N$, we have
$\frac mn=mn\left(\frac1n\right)^2\in\sum A^2\subseteq T$. 
This last fact also shows (b)$\iff$(b') where we denote by (a') and (b') the
corresponding conditions from
\ref{abstractarchimedeanpositivstellensatz}, namely:
\begin{enumerate}[(a')]
\item $\widehat a>0$ on $\sper(A,T)$
\item $\exists N\in\N:Na\in1+T$
\end{enumerate}
It remains to show that (a)$\iff$(a'). To this end,
it suffices by \ref{archmax}(d) to show that (a') is equivalent to
\begin{enumerate}[(a'')]
\item $\widehat a(Q)>0$ for all maximal elements $Q$ of $\sper(A,T)$.
\end{enumerate}
It is clear that (a')$\implies$(a''). To show (a'')$\implies$(a'), suppose that (a'') holds and let
$P\in\sper(A,T)$. To show: $\widehat a(P)>0$. Using \ref{inmaxprimecone} or
\ref{spear}, we find a maximal element $Q$ of $\sper(A,T)$ such that
$P\subseteq Q$. By \ref{primeconeinclusion}, we have $Q=P\cup\supp(Q)$. Due to (a''), we
have $a\in Q\setminus-Q$, i.e., $a\in Q\setminus\supp(Q)\subseteq P$,
and because of $a\notin-P$ (for otherwise $a\in-Q$) it follows that $a\in P\setminus-P$,
i.e., $\widehat a(P)>0$. This shows (a')$\iff$(a''). These
arguments were implicitly present already in the proof of 
\ref{archimedeanpositivstellensatz}.
\end{rem}

\begin{rem}\label{semiringu1}
Suppose $T$ is a semiring of $A$ with $K_{\ge0}\subseteq T$ and $M$ a $T$-module of $A$.
Then $M$ is a cone in the $K$-vector space $A$ and we have:
\[\text{$M$ is Archimedean [$\to$ \ref{defarchsemiring}]}\iff
\text{$1$ is a unit for $M$ [$\to$ \ref{defunit}]}\]
\end{rem}

\begin{motivation}\label{puremotivation}
If $T$ is an Archimedean preorder of $A$ with $K_{\ge0}\subseteq T$, then the Archimedean
Positivstellensatz \ref{abstractarchimedeanpositivstellensatz} in the version of
\ref{abstractarchimedeanpositivstellensatz2} amounts to the equivalence of
\[\exists N\in\N:a\in\frac1N+T\] with
\[(*)\qquad\ph(a)>0\text{ for all ($K$-linear) ring homomorphisms }\ph\colon A\to\R
\text{ with $\ph(T)\subseteq\R_{\ge0}$}
\]
while \ref{conemembershipunitextreme}, paying attention to \ref{semiringu1},
tells that the same condition is equivalent to
\[(**)\qquad\ph(a)>0\text{ for all pure states }\ph\text{ of }(A,T,1).
\]
The following imprecise questions arise:
\begin{enumerate}[(a)]
\item What do pure states ``on rings'' have to do with ring homomorphisms?
\item Can the Archimedean Positivstellensatz be generalized from preorders to semirings
or even to modules over semirings?
\item If $(*)$ holds only with ``$\ge$'' instead of ``$>$'', then $\exists N\in\N:a\in\frac1N+T$
can of course not hold anymore but one would still want to prove that $a\in T$.
In this case, is it possible to find an ideal $I\subseteq A$
(e.g., the kernel of a ring homomorphism $\ph$ from $(*)$ with $\ph(a)=0$) such that
$I\cap T$ possesses in the $K$-vector space $I$ a unit $u$ in such a way that
$a\in I$ and $(**)$ holds for $(I,I\cap T,u)$ instead of $(A,T,1)$?
Then one could apply \ref{conemembershipunitextreme} or \ref{conemembershipextr}
in order to finally still show that $a\in T$ (even $a\in\frac1Nu+(I\cap T)$).
\item What can one say about pure states ``on ideals''? This question generalizes (a) and is
motivated by (c).
\end{enumerate}
\end{motivation}

\begin{reminder}\label{binomialseries}
For $z\in\C$ and $k\in\N_0$, the binomial coefficient
\[\binom zk:=\prod_{i=1}^k\frac{z-i+1}i\] is declared. From analysis, one knows that
\[\sqrt{1+t}=(1+t)^{\frac12}=\sum_{i=0}^\infty\binom{\frac12}it^i\] for all $t\in\R$ with $|t|<1$.
\end{reminder}

\begin{lem}\label{binomialmiracle}
For all $k\in\N$, the coefficients of
\[p_k:=\left(\sum_{i=0}^k\binom{\frac12}i(-T)^i\right)^2-(1-T)\in\Q[T]\]
are nonnegative.
\end{lem}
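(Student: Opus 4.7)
Plan. Set $a_i := \binom{1/2}{i}(-1)^i$ for $i\in\N_0$, so that $s_k:=\sum_{i=0}^k a_iT^i$ and $p_k=s_k^2-(1-T)$. A direct inspection shows $a_0=1$ and that for $i\geq 1$ the defining product of $\binom{1/2}{i}$ contains one positive factor (namely $1/2$) and $i-1$ negative factors, so $a_i<0$; I would put $c_i:=-a_i>0$ for $i\geq 1$.

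The crucial input is the formal power series identity $\bigl(\sum_{i=0}^\infty a_iT^i\bigr)^2=1-T$ in $\Q[[T]]$, which follows from \ref{binomialseries} by substituting $t=-T$ in the binomial series for $\sqrt{1+t}$ and squaring. Reading off the coefficient of $T^n$ for $n\geq 2$ gives $\sum_{i+j=n}a_ia_j=0$, and after separating the extreme terms $i=0$ and $i=n$ and substituting $a_0=1$, $a_i=-c_i$, this rewrites as the convolution identity
\[2c_n=\sum_{i=1}^{n-1}c_ic_{n-i}\qquad(n\geq 2),\]
which will be the central lever.

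I would then compute $[T^n]p_k$ by cases. For $0\leq n\leq k$ the truncation at degree $k$ does not affect the relevant convolution, so $[T^n]s_k^2=[T^n](1-T)$ and the coefficient vanishes. For $n>2k$ it vanishes by degree. In the essential range $k<n\leq 2k$, I would write $[T^n]s_k^2=\sum_{i+j=n,\,0\leq i,j\leq k}a_ia_j$, subtract the vanishing sum $0=\sum_{i+j=n,\,0\leq i,j}a_ia_j$, and observe that for $n\leq 2k$ it is impossible to have both $i>k$ and $j>k$ with $i+j=n$, so the "discarded" pairs split by symmetry into two mirror copies. This gives $[T^n]p_k=-2\sum_{i=k+1}^n a_ia_{n-i}$. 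Substituting $a_0=1$ and $a_i=-c_i$ and then applying the convolution identity transforms this expression into $\sum_{i=1}^k c_ic_{n-i}-\sum_{i=k+1}^{n-1}c_ic_{n-i}$.

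Finally, reindexing the second sum by $j=n-i$ shows it equals $\sum_{j=1}^{n-k-1}c_jc_{n-j}$, and the upper bound $n-k-1\leq k-1$ (forced by $n\leq 2k$) identifies it as a sub-sum of the first. Hence
\[[T^n]p_k=\sum_{i=n-k}^k c_ic_{n-i}\geq 0,\]
which yields the claim. The only real obstacle is combinatorial bookkeeping: the nonnegativity becomes visible only after the "boundary" pairs in the convolution are carefully split and the recursion $2c_n=\sum_{i=1}^{n-1}c_ic_{n-i}$ is applied in exactly the right place, after which the whole argument reduces to recognizing one convolution sum as a sub-sum of another.
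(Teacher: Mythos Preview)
Your proof is correct and even yields the explicit formula $[T^n]p_k=\sum_{i=n-k}^{k}c_ic_{n-i}$ for $k<n\le 2k$. However, you take a considerably longer route than the paper. The paper uses exactly the two ingredients you set up---the formal identity $\bigl(\sum_{i\ge0}a_iT^i\bigr)^2=1-T$ and the sign observation $a_0=1$, $a_i<0$ for $i\ge1$---but then finishes in one line: for $n>k$ one has $[T^n]p_k=[T^n]s_k^2=\sum_{i+j=n,\,0\le i,j\le k}a_ia_j$, and since $n>k$ neither index can be $0$ (else the other would exceed $k$), so every summand is a product of two negative numbers. Your recursion $2c_n=\sum_{i=1}^{n-1}c_ic_{n-i}$ and the subsequent reindexing are not needed for nonnegativity; they recover the same sum $\sum_{i=n-k}^{k}c_ic_{n-i}$ that the paper's direct argument gives immediately. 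What your approach buys is the explicit closed form for the coefficients, should one ever want it; what the paper's approach buys is brevity.
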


\begin{proof}
In the ring $\Q[[T]]$ of formal power series, we have because of \ref{binomialseries}
and the identity theorem for power series from analysis that
\[\left(\sum_{i=0}^\infty\binom{\frac12}i(-T)^i\right)^2=1-T.\]
Now let $k\in\N$ be fixed. For $i\in\N_0$ with $i\le k$, the coefficient of $T^i$ in $p_k$
obviously equals the coefficient of $T^i$ in
\[\left(\sum_{i=0}^\infty\binom{\frac12}i(-T)^i\right)^2-(1-T)\]
which is zero. The binomial coefficient $\binom{\frac12}i$ is positive for $i\in\{0,1,3,5,\dots\}$
and negative for $i\in\{2,4,6,\dots\}$. The only positive coefficient of
\[\sum_{i=0}^k\binom{\frac12}i(-T)^i\]
is thus the constant term. Hence,
for $i\in\N_0$ with $i>k$, the coefficient of $T^i$ in $p_k$ is thus a sum of products of two
nonpositive reals and therefore nonnegative.
\end{proof}

\begin{lem}\label{magiclemma}
Suppose $I$ is an ideal of $A$, $T$ is a preorder of $A$ with $K_{\ge0}\subseteq T$,
$M\subseteq I$ is a $T$-module of $A$, $u$ is a unit for $M$ in $I$, $a\in T$ and
$(1-2a)u\in M$. Then [$\to$ \ref{defstate}] \[S(I,M,u)\subseteq S(I,(1-a)M,u).\]
\end{lem}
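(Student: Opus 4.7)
I will fix $\varphi \in S(I,M,u)$ and $m \in M$ and show $\varphi((1-a)m) \ge 0$; this is all that needs proving since a state of $(I,(1-a)M,u)$ differs from one of $(I,M,u)$ only in the nonnegativity condition, and $(1-a)M \subseteq I$ because $I$ is an ideal. The key device will be the truncated binomial approximants to $\sqrt{1-T}$: set $q_k(T) := \sum_{i=0}^{k}\binom{1/2}{i}(-T)^i \in \Q[T]$, so that by Lemma~\ref{binomialmiracle} the polynomial $p_k(T) = q_k(T)^2 - (1-T)$ has nonnegative coefficients, and in fact (as the proof there shows) $p_k(T) = \sum_{i=k+1}^{2k} c_{k,i} T^i$ with $c_{k,i} \in \Q_{\ge 0}$. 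Substituting $a$ yields the decomposition $(1-a)m = q_k(a)^2 m - p_k(a) m$ in $I$. Since $T$ is a preorder, $q_k(a)^2 \in \sum A^2 \subseteq T$, hence $q_k(a)^2 m \in M$, giving $\varphi(q_k(a)^2 m) \ge 0$. The whole argument will thus reduce to showing that $\varphi(p_k(a) m)$ can be made arbitrarily small by taking $k$ large.

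The core estimate I will establish is $0 \le \varphi(a^i m) \le N \cdot 2^{-i}$ for a suitable $N \in \N$ independent of $i$. First, $u \in M$: there is some $N_0 \in \N$ with $N_0 u \in M$ (taking $x = 0$ in the unit property), and then $u = \tfrac{1}{N_0}(N_0 u) \in T \cdot M \subseteq M$ since $\tfrac{1}{N_0} \in K_{\ge 0} \subseteq T$. Consequently $a^i u \in T\cdot M \subseteq M$ for all $i$, and multiplying the hypothesis $(1-2a)u \in M$ by $a^i \in T$ gives $a^i u - 2 a^{i+1} u \in M$, so $\varphi(a^{i+1} u) \le \tfrac{1}{2}\varphi(a^i u)$; induction then yields $\varphi(a^i u) \le 2^{-i}$. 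Next, choose $N \in \N$ with $Nu - m \in M$ (possible since $u$ is a unit for $M$ in $I$ and $m \in I$); multiplication by $a^i \in T$ gives $a^i(Nu - m) \in M$, whence $\varphi(a^i m) \le N \varphi(a^i u) \le N \cdot 2^{-i}$, while $\varphi(a^i m) \ge 0$ because $a^i m \in M$.

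Combining these ingredients, using $c_{k,i} \ge 0$, gives
\[
0 \;\le\; \varphi(p_k(a) m) \;=\; \sum_{i=k+1}^{2k} c_{k,i}\,\varphi(a^i m) \;\le\; N \sum_{i=k+1}^{2k} c_{k,i}\, 2^{-i} \;=\; N \, p_k(\tfrac{1}{2}).
\]
By Reminder~\ref{binomialseries}, $q_k(\tfrac{1}{2}) \to \sqrt{\tfrac{1}{2}}$ as $k\to\infty$, so $p_k(\tfrac{1}{2}) = q_k(\tfrac{1}{2})^2 - \tfrac{1}{2} \to 0$. Therefore
\[
\varphi((1-a) m) \;=\; \varphi(q_k(a)^2 m) - \varphi(p_k(a) m) \;\ge\; -N\, p_k(\tfrac{1}{2})
\]
holds for every $k$, and letting $k \to \infty$ yields $\varphi((1-a) m) \ge 0$, as required. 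The delicate point of the argument, and what I expect to be the main obstacle to discover rather than to verify, is the exact match between the factor $2$ in the hypothesis $(1-2a) u \in M$ — which produces the geometric decay $\varphi(a^i u) \le 2^{-i}$ — and the evaluation point $T = \tfrac{1}{2}$, which lies strictly inside the radius of convergence of the binomial series for $\sqrt{1-T}$ so that $p_k(\tfrac{1}{2}) \to 0$; with any constant strictly smaller than $2$ in place of $2a$, the matching geometric rate would fall outside the radius of convergence and the argument would collapse.
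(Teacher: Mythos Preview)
Your proof is correct and follows essentially the same approach as the paper's: both use the decomposition $(1-a)m = q_k(a)^2 m - p_k(a)m$ via Lemma~\ref{binomialmiracle}, the geometric decay $\varphi(a^i u)\le 2^{-i}$ derived from $(1-2a)u\in M$, and the fact that $p_k(\tfrac12)\to 0$. The only cosmetic differences are that the paper normalizes $m$ so that $u-m\in M$ (i.e., your $N=1$) and bounds $\varphi(p_k(a)u)\le p_k(\tfrac12)$ in one step via $(p_k(\tfrac12)-p_k(a))u\in M$ rather than working coefficient-by-coefficient as you do; your version is arguably more transparent.
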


\begin{proof}
Let $\ph\in S(I,M,u)$. To show: $\ph((1-a)M)\subseteq\R_{\ge0}$. Let $b\in M$.
To show: \[\ph((1-a)b)\ge0.\] WLOG $u-b\in M$
(otherwise choose $N\in\N$ with $Nu-b\in M$ and replace $b$ by $\frac1Nb\in M$).
We show $\ph((1-a)b)>-\ep$ for all $\ep>0$. To this end, let $\ep>0$. It is enough to show
that there is a $k\in\N$ satisfying
\[\ph((1-a)b)>\ph\left(\left(\sum_{i=0}^k\binom{\frac12}i(-a)^i\right)^2b\right)-\ep\]
since $A^2M\subseteq TM\subseteq M\subseteq\ph^{-1}(\R_{\ge0})$. Because of $a\in T$,
we have
\[(1-(2a)^i)u=\sum_{j=0}^{i-1}((2a)^j-(2a)^{j+1})u=\sum_{j=0}^{i-1}(2a)^j(1-2a)u\in M\]
for all $i\in\N_0$, i.e.,
\[(\square)\qquad\left(\frac1{2^i}-a^i\right)u\in M\]
for all $i\in\N_0$. By \ref{binomialseries}, we can choose $k\in\N$ such that
\[\left(\sum_{i=0}^k\binom{\frac12}i\left(-\frac12\right)^i\right)^2<\left(1-\frac12\right)+\ep,\]
i.e., $p_k\left(\frac12\right)<\ep$ with $p_k$ as in Lemma \ref{binomialmiracle}.
We show that $\ph(p_k(a)b)<\ep$ which is exactly our claim. Since $p_k(a)\in T$ holds by Lemma \ref{binomialmiracle},
it is enough to show that $\ph(p_k(a)u)<\ep$ since
$\ph(p_k(a)b)\le\ph(p_k(a)u)$ holds due to
$p_k(a)(u-b)\in M$. But we have
\[\ph(p_k(a)u)\le\ph(p_k\left(\frac12\right)u)
=p_k\left(\frac12\right)\ph(u)=p_k\left(\frac12\right)<\ep\]
due to $(p_k\left(\frac12\right)-p_k(a))u\in M$ 
(use \ref{binomialmiracle} and $(\square)$).
\end{proof}

\begin{thm}[Burgdorf, Scheiderer, Schweighofer \cite{bss}]\emph{[$\to$ \ref{puremotivation}(d)]}
\label{puremult}
Suppose that $I$ is an ideal of $A$, $T$ is a preorder or an Archimedean semiring of $A$,
$K_{\ge0}\subseteq T$, $M\subseteq I$ is a $T$-module of $A$, $u$ is a unit for $M$ in
$I$ and $\ph$ is a pure state of $(I,M,u)$. Then
\[(*)\qquad\ph(ab)=\ph(au)\ph(b)\]
for all $a\in A$ and $b\in I$.
\end{thm}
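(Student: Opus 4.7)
The plan is to exploit the extremality of $\ph$ by writing it as a convex combination of two other elements of $S(I, M, u)$ built from $\ph$ via multiplication, so that purity forces equality and thereby yields $(*)$. Both sides of $(*)$ are $K$-linear in $a \in A$; since $A = T - T$ in both cases---via \ref{diffsquare} applied to $\frac12 \in A$ in the preorder case, and since $T + \Z = A$ together with $\N \subseteq T$ in the Archimedean semiring case---it suffices to establish $(*)$ when $a \in T$.

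I then aim to produce $c \in K_{>0}$ such that, defining the $K$-linear maps $\ps_1, \ps_2 \colon I \to \R$ by $\ps_1(b) := \ph(ab)$ and $\ps_2(b) := \ph((c-a)b)$, one has (i) $\ps_1 + \ps_2 = c\ph$ on $I$; (ii) $\ps_i(M) \subseteq \R_{\ge 0}$; and (iii) $\ps_i(u) \ge 0$. In the Archimedean semiring case, Archimedean-ness of $T$ produces $c = N \in \N$ with $c - a \in T$, and then (ii) and (iii) are immediate from $aM, (c-a)M \subseteq TM \subseteq M$ and $u \in M$. In the preorder case, $T$ need not be Archimedean, so I proceed differently: since $-2au \in I$ and $u$ is a unit for $M$ in $I$, some $N \in \N$ satisfies $(N-2a)u \in M$; replacing $a$ by $a/N \in T$ (allowed since both sides of $(*)$ are linear in $a$), we may take $c = 1$ with $(1-2a)u \in M$. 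Then $\ps_1(M) \subseteq \R_{\ge 0}$ follows from $aM \subseteq M$, and the crucial inclusion $\ps_2(M) \subseteq \R_{\ge 0}$ is precisely the content of Lemma \ref{magiclemma}; (iii) then follows at once.

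With $\lambda := \ps_1(u)/c = \ph(au)/c \in [0,1]$, the generic case $\lambda \in (0,1)$ gives normalized functionals $\tilde\ps_i := \ps_i/\ps_i(u) \in S(I,M,u)$, and (i) yields $\ph = \lambda \tilde\ps_1 + (1-\lambda)\tilde\ps_2$; purity of $\ph$ forces $\ph = \tilde\ps_1$, which rearranges to $(*)$. The boundary case $\lambda = 0$ (so $\ps_1(u) = 0$) is handled by noting that for any $b \in I$, using that $u$ is a unit for $M$ in $I$ one finds $N, N' \in \N$ with $b + Nu, -b + N'u \in M$, so $\pm \ps_1(b) \ge 0$ and hence $\ps_1 \equiv 0$, trivializing $(*)$; the case $\lambda = 1$ is symmetric. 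The main obstacle is the preorder case, where the absence of an Archimedean hypothesis on $T$ blocks the direct convex-combination argument; the key technical device that overcomes it is Lemma \ref{magiclemma}, whose proof rests on the nonnegativity of the coefficients of the squared partial sums of the Taylor series of $\sqrt{1-t}$ minus $1-t$ established in \ref{binomialmiracle}.
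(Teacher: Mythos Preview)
Your proof is correct and follows essentially the same approach as the paper's: reduce to $a\in T$ by linearity and $A=T-T$, arrange $(c-a)M\subseteq\ph^{-1}(\R_{\ge0})$ (trivially when $T$ is an Archimedean semiring, via Lemma~\ref{magiclemma} when $T$ is a preorder), then write $\ph$ as a convex combination of the normalizations of $b\mapsto\ph(ab)$ and $b\mapsto\ph((c-a)b)$ and invoke extremality. The only cosmetic differences are that the paper normalizes to $c=1$ in both cases and separately scales so that $\ph(au)<1$, whereas you keep a general $c$ in the semiring case and instead handle the boundary $\lambda=1$ explicitly; both packagings are equivalent.
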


\begin{proof}
Due to $T-T=A$ [$\to$ \ref{diffsquare}, \ref{defarchsemiring}] it suffices to show $(*)$ for all
$a\in T$ and $b\in I$. If $T$ is \alal{an Archimedean semiring}{a preorder}, then one can
here suppose by scaling $a$ that $\malal{1-a\in T}{u-2au\in M}$ and thus because of
\alal{$TM\subseteq M$}{\text{Lemma \ref{magiclemma}}} that
\[S(I,M,u)\subseteq S(I,(1-a)M,u).\]
Moreover, we can suppose that $\ph(au)<1$. Fix therefore $a\in T$ with
$S(I,M,u)\subseteq S(I,(1-a)M,u)$ and $\ph(au)<1$. We have to show $(*)$ for all $b\in I$.

\medskip
\textbf{Case 1:}  $\ph(au)=0$

\smallskip
Then we have to show that $\ph(ab)=0$ for all $b\in I$. For this purpose, fix $b\in I$. Choose
$N\in\N$ such that $Nu\pm b\in M$. Then $Nau\pm ab\in TM\subseteq M$ and therefore
$|\ph(ab)|\le N\ph(au)=0$. Hence $\ph(ab)=0$.

\medskip
\textbf{Case 2:}  $\ph(au)\ne0$

\smallskip
Then $\ph(au)>0$ because of $au\in TM\subseteq M$. Furthermore, we have
$\ph((1-a)u)>0$ since $\ph(au)<1=\ph(u)$. For each $c\in A$ with $\ph(cu)>0$ and
$\ph\in S(I,cM,u)$,
\[\ph_c\colon I\to\R,\ b\mapsto\frac{\ph(cb)}{\ph(cu)}\]
is a state of $(I,M,u)$. In particular, $\ph_a,\ph_{1-a}\in S(I,M,u)$. Because of
\[\ph=\ph(au)\ph_a+\ph((1-a)u)\ph_{1-a},\] $\ph(au)>0$, $\ph((1-a)u)>0$ and
$\ph(au)+\ph((1-a)u)=\ph(u)=1$, we have by \ref{extremeexo} or
\ref{ratiootherthan2} that $\ph=\ph_a$ (and $\ph=\ph_{1-a}$).
\end{proof}

\begin{cor}{}\emph{[$\to$ \ref{puremotivation}(a)]}\label{pureringhom1}
Let $T$ be an Archimedean semiring of $A$ such
that $K_{\ge0}\subseteq T$ and $M$ a $T$-module of $A$ with $1\in M$. Then every pure state
of $(A,M,1)$ is a ring homomorphism.
\end{cor}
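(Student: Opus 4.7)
The plan is to deduce this as a direct specialization of Theorem \ref{puremult} applied to the ``degenerate'' ideal $I := A$ and the unit $u := 1$. Once multiplicativity is established by that theorem, the remaining ring-homomorphism axioms come for free from the definition of a state.

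First I would verify that the hypotheses of \ref{puremult} are fulfilled in the setting $I = A$, $u = 1$. The containment $M \subseteq I = A$ is automatic. The one nontrivial point is that $u = 1$ must be a unit for $M$ in the $K$-vector space $A$, i.e.\ $M$ must be Archimedean in $A$ in the sense of \ref{defarchsemiring} (cf.\ \ref{semiringu1}). Since $1 \in M$ and $TM \subseteq M$, one has $T = T \cdot 1 \subseteq M$, so the Archimedean property of $T$ (hypothesis on $T$) transfers immediately to $M$: for every $a \in A$ there is $N \in \N$ with $N + a \in T \subseteq M$. Hence $1$ is a unit for $M$ in $A$.

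Now let $\varphi$ be a pure state of $(A,M,1)$. Applying Theorem \ref{puremult} with $I := A$, $u := 1$, and the given $T$ and $M$, we obtain
\[
\varphi(ab) \;=\; \varphi(a \cdot 1)\,\varphi(b) \;=\; \varphi(a)\,\varphi(b)
\]
for all $a \in A$ and all $b \in I = A$. Combined with $K$-linearity of $\varphi$ (which yields additivity and $\Q$-linearity, in particular $\varphi(0)=0$) and the normalization $\varphi(1) = 1$ built into the definition of a state, this shows that $\varphi \colon A \to \R$ is a unital ring homomorphism.

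There is essentially no obstacle beyond checking that \ref{puremult} actually applies: the only substantive verification is the Archimedean property of $M$, which, as noted above, follows at once from $T \subseteq M$. All the real work has been done in \ref{puremult}; this corollary is just the special case $I = A$, $u = 1$ and answers the imprecise question \ref{puremotivation}(a).
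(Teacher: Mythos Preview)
Your proof is correct and is exactly the intended argument: the paper states this as an immediate corollary of Theorem~\ref{puremult}, and your specialization $I=A$, $u=1$ (with the observation that $T\subseteq M$ forces $M$ to be Archimedean) is precisely what is meant.
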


\begin{cor}{}\emph{[$\to$ \ref{puremotivation}(a)]}\label{pureringhom2}
Let $M$ be an Archimedean $\left(\sum K_{\ge0}A^2\right)$-module of $A$.
Then every pure state of $(A,M,1)$ is a ring homomorphism.
\end{cor}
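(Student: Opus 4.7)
The plan is to derive this as a direct specialisation of the preceding multiplicativity theorem \ref{puremult}, applied with $I=A$ and $u=1$, once the hypotheses are checked.

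First I would set $T:=\sum K_{\ge0}A^2$ and observe that $T$ is a \emph{preorder} of $A$ containing $K_{\ge0}$: indeed $A^2\subseteq T$ because $1\in K_{\ge0}$, closure under addition is immediate, and closure under multiplication follows from $(\la a^2)(\mu b^2)=(\la\mu)(ab)^2$ for $\la,\mu\in K_{\ge0}$ and $a,b\in A$. By hypothesis $M$ is a $T$-module of $A$, so we are now in the setting required by \ref{puremult}, where $T$ is allowed to be a preorder.

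Next I would verify that $u:=1$ is a unit for the cone $M$ inside the $K$-vector space $I:=A$. Since $K_{\ge0}\subseteq T$ and $TM\subseteq M$, the set $M$ is indeed a cone in the $K$-vector space $A$. By \ref{semiringu1}, the hypothesis that $M$ is Archimedean is equivalent to saying that $1$ is a unit for $M$ in $A$.

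Let now $\ph$ be a pure state of $(A,M,1)$. Applying \ref{puremult} with $I=A$, $T=\sum K_{\ge0}A^2$, $u=1$ and this $\ph$, we obtain
\[
\ph(ab)=\ph(a\cdot 1)\ph(b)=\ph(a)\ph(b)
\]
for all $a\in A$ and $b\in A$, so $\ph$ is multiplicative. Together with the $K$-linearity and the normalisation $\ph(1)=1$ built into the definition of a state [$\to$ \ref{defstate}], this shows that $\ph\colon A\to\R$ is a ring homomorphism.

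There is no genuine obstacle here: the entire content is that the hypotheses of \ref{puremult} are satisfied, which only requires recognising $\sum K_{\ge0}A^2$ as a preorder and translating ``Archimedean'' into ``$1$ is a unit'' via \ref{semiringu1}. The mild point to watch is that \ref{puremult} is stated for pure states of $(I,M,u)$ with $M\subseteq I$; taking $I=A$ trivialises this containment and lets us feed arbitrary $a\in A=I$ into the formula $\ph(ab)=\ph(au)\ph(b)$, which is precisely what multiplicativity on all of $A$ requires.
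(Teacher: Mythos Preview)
Your proof is correct and is precisely the intended argument: the paper states this corollary without proof because it follows immediately from Theorem~\ref{puremult} applied with $I=A$, $u=1$, and $T=\sum K_{\ge0}A^2$ in its preorder case, exactly as you have written out. The only difference from Corollary~\ref{pureringhom1} is that here $T$ need not itself be Archimedean, which is why one invokes the preorder alternative in the hypothesis of \ref{puremult} rather than the Archimedean-semiring one.
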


\begin{cor}[Becker, Schwartz \cite{bs}, first generalization of the abstract Archimedean 
Positivstellensatz \ref{abstractarchimedeanpositivstellensatz}
in the version of \ref{abstractarchimedeanpositivstellensatz2}]
\emph{[$\to$ \ref{puremotivation}(b)]}
\label{beckerschwartzarchimedean}
Let $T$ be an Archimedean semiring of $A$ with $K_{\ge0}\subseteq T$, $M$ a $T$-module
of $A$ with $1\in M$ and $a\in A$. Then the following are equivalent:
\begin{enumerate}[\normalfont(a)]
\item $\ph(a)>0$ for all ($K$-linear) ring homomorphisms $\ph\colon A\to\R$ with
$\ph(M)\subseteq\R_{\ge0}$.
\item $\exists N\in\N:a\in\frac1N+M$
\end{enumerate}
\end{cor}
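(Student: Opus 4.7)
The plan is to dispose of the trivial direction first and then to deduce the nontrivial direction from the cone-theoretic strengthening \ref{conemembershipunitextreme} in combination with the multiplicativity of pure states \ref{pureringhom1} that was built from Theorem \ref{puremult}.

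For (b)$\implies$(a) I would just observe that any ring homomorphism $\ph\colon A\to\R$ sends $\frac1N$ to $\frac1N$ (as $N\cdot\frac1N=1$), so if $a=\frac1N+m$ with $m\in M$ and $\ph(M)\subseteq\R_{\ge0}$, then $\ph(a)=\frac1N+\ph(m)\ge\frac1N>0$. No $K$-linearity is needed here.

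For (a)$\implies$(b) my first step would be to check that the state-space framework of Chapter~9 applies to the triple $(A,M,1)$: from $K_{\ge0}\subseteq T$ and $TM\subseteq M$ one sees that $M$ is a convex cone in the $K$-vector space $A$; since $1\in M$ we have $T=T\cdot 1\subseteq TM\subseteq M$, so the Archimedean hypothesis on $T$ gives that $M$ is Archimedean, which by \ref{semiringu1} is exactly the assertion that $1$ is a unit for $M$. Then I would invoke Theorem \ref{conemembershipunitextreme} (applied to the $K$-vector space $V=A$, the cone $C=M$, and the unit $u=1$), which reduces the task to showing that $\ph(a)>0$ for every \emph{pure} state $\ph$ of $(A,M,1)$.

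The final step, where the real content sits, is to identify the pure states of $(A,M,1)$ with (a subset of) the ring homomorphisms appearing in hypothesis~(a). This is precisely Corollary \ref{pureringhom1}: under our hypotheses, every pure state of $(A,M,1)$ is a ring homomorphism $A\to\R$; and being a state it is automatically $K$-linear and satisfies $\ph(M)\subseteq\R_{\ge0}$. Therefore hypothesis~(a) applies and yields $\ph(a)>0$ for each such $\ph$, completing the proof. The only ``hard'' step is the appeal to \ref{pureringhom1}, but this is handed to us; everything else is bookkeeping (verifying that $M$ is a cone with unit $1$ so that the state-space machinery applies).
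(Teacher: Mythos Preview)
Your proof is correct and follows exactly the approach of the paper, whose proof consists solely of the three references \ref{conemembershipunitextreme}, \ref{semiringu1}, \ref{pureringhom1}. You have simply spelled out the bookkeeping that connects these ingredients (in particular the observation $T\subseteq M$ making $M$ Archimedean so that \ref{semiringu1} applies), which is precisely what the paper leaves implicit.
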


\begin{proof}
\ref{conemembershipunitextreme}, \ref{semiringu1}, \ref{pureringhom1}
\end{proof}

\begin{cor}[Jacobi \cite{jac}, second generalization of the abstract Archimedean 
Positivstellensatz \ref{abstractarchimedeanpositivstellensatz}
in the version of \ref{abstractarchimedeanpositivstellensatz2}]
\emph{[$\to$ \ref{puremotivation}(b)]}
\label{jacobiarchimedean}
Let $M$ be an Archimedean $\left(\sum K_{\ge0}A^2\right)$-module of $A$. Then (a) and (b)
from \ref{beckerschwartzarchimedean} are equivalent.
\end{cor}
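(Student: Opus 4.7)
The implication (b)$\implies$(a) is immediate: if $a=\tfrac1N+m$ with $m\in M$, then for every $K$-linear ring homomorphism $\ph\colon A\to\R$ with $\ph(M)\subseteq\R_{\ge0}$ we have $\ph(a)=\tfrac1N+\ph(m)\ge\tfrac1N>0$, since $\ph$ being $K$-linear forces $\ph|_K=\id_K$ and in particular $\ph(1/N)=1/N$.

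For the nontrivial direction (a)$\implies$(b), the plan is to apply the strengthened cone-membership theorem \ref{conemembershipunitextreme} to the $K$-vector space $V:=A$, the cone $C:=M$, and the distinguished element $u:=1$. First I must verify that $1$ is a unit for $M$ in $A$: since $M$ is a $\bigl(\sum K_{\ge0}A^2\bigr)$-module and $\sum K_{\ge0}A^2$ is a semiring of $A$ containing $K_{\ge0}$, Remark \ref{semiringu1} tells me that the Archimedean property of $M$ is equivalent to $1$ being a unit for $M$. In particular $1\in M$, so $M$ is indeed a cone in the $K$-vector space $A$ admitting $1$ as a unit.

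With these hypotheses in force, Theorem \ref{conemembershipunitextreme} asserts that the condition $\exists N\in\N:a\in\tfrac1N+M$ (which is exactly our (b)) is equivalent to the positivity of $a$ on every pure state of $(A,M,1)$. The crucial step is then Corollary \ref{pureringhom2}, which, because $M$ is an Archimedean $\bigl(\sum K_{\ge0}A^2\bigr)$-module, identifies every pure state of $(A,M,1)$ with a ring homomorphism. Each pure state $\ph$ is by Definition \ref{defstate} a $K$-linear map satisfying $\ph(M)\subseteq\R_{\ge0}$ and $\ph(1)=1$; combined with \ref{pureringhom2}, it is therefore a $K$-linear ring homomorphism $A\to\R$ sending $M$ into $\R_{\ge0}$. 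Hypothesis (a) of \ref{jacobiarchimedean} thus guarantees $\ph(a)>0$ for every such $\ph$, and a fortiori for every pure state of $(A,M,1)$. Invoking (a)$\implies$(c) of \ref{conemembershipunitextreme} delivers (b).

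No real obstacle remains once the groundwork is set up: the entire proof is an assembly of the three prior results \ref{semiringu1}, \ref{conemembershipunitextreme}, and \ref{pureringhom2}. The only thing to watch is the verification that a pure state qualifies as one of the $K$-linear ring homomorphisms considered in (a)—which, as observed above, follows at once from $K$-linearity built into \ref{defstate} together with \ref{pureringhom2}.
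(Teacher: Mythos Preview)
Your proof is correct and follows essentially the same approach as the paper: the paper gives no explicit proof of this corollary, but its intended argument (parallel to the proof of \ref{beckerschwartzarchimedean}) is precisely to combine \ref{conemembershipunitextreme}, \ref{semiringu1}, and \ref{pureringhom2}, which is exactly what you do.
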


\begin{rem}
Using Lemma \ref{evashom}, one gets for the polynomial ring
$K[\x]$ concrete geometric versions of \ref{beckerschwartzarchimedean} and
\ref{jacobiarchimedean} which are completely analogous to \ref{archimedeanpositivstellensatz}
(first and second generalization of the Archimedean Positivstellensatz). Instead of
stating them, we give immediately concrete examples.
\end{rem}

\begin{ex}{}[$\to$ \ref{beckerschwartzarchimedean}]\label{hman}
Let $\ell_1,\dots,\ell_m\in\R[\x]_1$ such that
\[\{x\in\R^n\mid\ell_1(x)\ge0,\dots,\ell_m(x)\ge0\}\]
is nonempty and compact. Moreover, let $g_1,\dots,g_\ell\in\R[\x]$ and set
\[S:=\{x\in\R^n\mid\ell_1(x)\ge0,\dots,\ell_m(x)\ge0,g_1(x)\ge0,\dots,g_\ell(x)\ge0\}.\]
Then for each $f\in\R[\x]$ with $f>0$ on $S$, we have
\[f\in\sum_{i=0}^\ell\sum_{\al\in\N_0^m}\R_{\ge0}\ell_1^{\al_1}\dotsm\ell_m^{\al_m}g_i=:M\]
where $g_0:=1$. This is because $M$ is a $T$-module with $1\in M$ for the
semiring \[T:=\sum_{\al\in\N_0^m}\R_{\ge0}\ell_1^{\al_1}\dotsm\ell_m^{\al_m}\] which is
Archimedean by \ref{archsemiringchar}(c).
\end{ex}

\begin{ex}[Putinar][$\to$ \ref{jacobiarchimedean}]\label{putinar}
Let $R\in\R_{\ge0}$ and let $g_1,\dots,g_m\in\R[\x]$. Set
\[S:=\{x\in\R^n\mid g_1(x)\ge0,\dots,g_m(x)\ge0,\|x\|\le R\}.\]
Then for every $f\in\R[\x]$ with $f>0$ on $S$, we have
\[f\in\sum_{i=0}^{m+1}\sum\R[\x]^2g_i\]
with $g_0:=1$ and $g_{m+1}:=R^2-\sum_{i=1}^nX_i^2$ [$\to$ \ref{archmodulechar}(b)].
\end{ex}

\begin{ex}[P\'olya \cite{pol}][$\to$ \ref{beckerschwartzarchimedean}]
Let $k\in\N_0$ and suppose $f\in\R[\x]$ a $k$-form such that $f(x)>0$ for all
$x\in\R_{\ge0}^n\setminus\{0\}$. Then there is some $N\in\N$ such that
\[(X_1+\dots+X_n)^Nf\in\sum_{\substack{\al\in\N_0^n\\|\al|=N+k}}
\R_{>0}\x^\al.\]
This can be shown as follows: We have $f>0$ on
$\De:=\{x\in\R_{\ge0}^n\mid x_1+\ldots+x_n=1\}$. By \ref{beckerschwartzarchimedean},
we obtain analogously to \ref{hman} that
\[f-\ep\in\sum_{\al\in\N_0^{n+2}}\R_{\ge0}X_1^{\al_1}\dotsm X_n^{\al_n}
(1-(X_1+\dots+X_n))^{\al_{n+1}}
(X_1+\dots+X_n-1)^{\al_{n+2}}.
\]
By substituting $X_i\mapsto\frac{X_i}{X_1+\dots+X_n}$ and clearing denominators, one gets the
claim due to homogeneity of $f$.
\end{ex}

\section{Dichotomy of pure states on ideals}

In this section, we let $K$ again be a subring of $A$ so that we consider $A$ also
as a $K$-vector space.

\begin{pro}\label{assringhom}
Let $I$ be a ideal of $A$ and $u\in I$. Let $\ph\in S(I,\emptyset,u)$
\emph{[$\to$ \ref{defstate}]}.
Then the following are equivalent:
\begin{enumerate}[\normalfont(a)]
\item $\forall a\in A:\forall b\in I:\ph(ab)=\ph(au)\ph(b)$ \emph{[$\to$ \ref{puremult}$(*)$]}
\item There is a ring homomorphism $\Ph\colon A\to\R$ such that
\[(**)\qquad\forall a\in A:\forall b\in I:\ph(ab)=\Ph(a)\ph(b).\]
\end{enumerate}
In Condition (b), $\Ph$ is uniquely determined since $(**)$ implies
$\Ph(a)=\ph(au)$ for all $a\in A$ and we call $\Ph$ the ring homomorphism \emph{belonging
to} or \emph{associated to}
$\ph$ (on $A$). Note that $\Ph$ does not depend on $u$ for if $v\in I$ with $\ph(v)=1$ then
$(**)$ of course also implies $\Ph(a)=\ph(av)$. Exactly one of the following alternatives occurs:
\begin{enumerate}[\normalfont(1)]
\item $\Ph(u)\ne0$ and $\forall b\in I:\ph(b)=\frac{\Ph(b)}{\Ph(u)}$
\item $\Ph|_I=0$
\end{enumerate}
\end{pro}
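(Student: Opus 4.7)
The plan is to define $\Ph\colon A\to\R$ by $\Ph(a):=\ph(au)$ and verify directly that this formula gives the required ring homomorphism. The equivalence (a)$\iff$(b) together with the uniqueness of $\Ph$ should fall out essentially by inspection, while the dichotomy will come from applying the multiplicative identity $(**)$ to the pair $(a,b)=(u,b)$ for $b\in I$ in two different ways.

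First I will handle (b)$\Rightarrow$(a) and uniqueness in one stroke: assuming $(**)$, substituting $b:=u$ gives $\ph(au)=\Ph(a)\ph(u)=\Ph(a)$, which both recovers condition (a) and pins down $\Ph(a)=\ph(au)$. The same substitution with any other $v\in I$ satisfying $\ph(v)=1$ yields $\Ph(a)=\ph(av)$, so $\Ph$ is genuinely independent of the choice of $u$. For (a)$\Rightarrow$(b), I will then \emph{define} $\Ph(a):=\ph(au)$ and check it is a ring homomorphism. $K$-linearity of $\Ph$ is immediate from $K$-linearity of $\ph$ (using that $K\subseteq A$ so that $(\la a+\mu a')u=\la(au)+\mu(a'u)$), and $\Ph(1)=\ph(u)=1$ since $\ph\in S(I,\emptyset,u)$. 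The one substantive step is multiplicativity: for $a,a'\in A$ the element $a'u$ lies in $I$, so applying (a) to $a\in A$ and $b:=a'u\in I$ gives
\[\Ph(aa')=\ph(aa'u)=\ph(a(a'u))=\ph(au)\,\ph(a'u)=\Ph(a)\Ph(a').\]
That one line is really the only place where condition (a) is used nontrivially; everything else is routine bookkeeping. Condition $(**)$ then holds with this $\Ph$ by the very definition of $\Ph$ and condition (a).

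For the dichotomy, let $b\in I$ be arbitrary. Applying $(**)$ with $a:=b\in A$ gives $\Ph(b)=\Ph(b)\ph(u)=\ph(bu)$, while applying $(**)$ with $a:=u$ and the given $b\in I$ gives $\ph(ub)=\Ph(u)\ph(b)$. Since $bu=ub$, combining these identities produces the key relation
\[\Ph(b)=\Ph(u)\,\ph(b)\qquad\text{for every }b\in I.\]
If $\Ph(u)\ne0$, then dividing yields $\ph(b)=\Ph(b)/\Ph(u)$, which is alternative (1). If instead $\Ph(u)=0$, then $\Ph(b)=0$ for all $b\in I$, which is alternative (2). The two alternatives are clearly mutually exclusive (they disagree on the value of $\Ph(u)$, noting that in case (1), $\Ph(u)\ne 0$ is assumed while $\Ph(u)=0$ would force $\ph=0$, contradicting $\ph(u)=1$), so the dichotomy is established. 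There is no real obstacle in this proposition; the only decision is to recognize that the right definition of $\Ph$ is forced by setting $b=u$ in $(**)$, after which all verifications are one-line computations.
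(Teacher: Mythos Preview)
Your proof is correct and follows essentially the same approach as the paper: define $\Ph(a):=\ph(au)$, verify multiplicativity via $\ph(a(a'u))=\ph(au)\ph(a'u)$, and derive the dichotomy from the identity $\Ph(b)=\Ph(u)\ph(b)$ for $b\in I$ (the paper writes this as $\ph(bu)=\ph(u^2)\ph(b)$, which is the same thing). The only cosmetic difference is that the paper's one-line argument for mutual exclusivity of (1) and (2) simply notes $u\in I$, whereas you phrase it via $\ph(u)=1$.
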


\begin{proof}
\underline{(a)$\implies$(b)}\quad If (a) holds, then
$\Ph\colon A\to\R,\ a\mapsto\ph(au)$ is a ring homomorphism since
$\Ph(a)\Ph(b)=\ph(au)\ph(bu)\overset{(*)}=\ph(abu)=\Ph(ab)$ holds for all $a,b\in A$.

\smallskip
\underline{(b)$\implies$(a)} is clear.

\medskip\noindent
Because of $u\in I$ it is clear that (1) and (2) exclude
each other. If $\ph(u^2)\ne0$, then (1) occurs since $(*)$ implies
$\ph(bu)=\ph(u^2)\ph(b)$ for all $b\in I$. If $\ph(u^2)=0$, then
$\ph(bu)=\ph(u^2)\ph(b)=0\ph(b)=0$ for all $b\in I$.
\end{proof}

\begin{thm}[Dichotomy]\label{dichotomy}
Under the hypotheses of \ref{puremult}, exactly one of the following cases occurs:
\begin{enumerate}[\normalfont(1)]
\item $\ph$ is the restriction of a scaled ring homomorphism: There is a ring homomorphism
$\Ph\colon A\to\R$ such that $\Ph(u)\ne0$ and $\ph=\frac1{\Ph(u)}\Ph|_I$.
\item There is a ring homomorphism $\Ph\colon A\to\R$ with $\Ph|_I=0$ such that
$(**)$ from \ref{assringhom}(b) holds.
\end{enumerate}
We have $\text{\normalfont(1)}\iff\ph(u^2)\ne0$ and $\text{\normalfont(2)}\iff\ph(u^2)=0$. 
In both {\normalfont(1)} and {\normalfont(2)}, $\Ph$ is uniquely determined, namely it is the ring homomorphism
that according to \ref{assringhom} belongs to $\ph$. We have $\Ph(T)\subseteq\R_{\ge0}$.
If $u\in T$, then additionally $\Ph(M)\subseteq\R_{\ge0}$.
\end{thm}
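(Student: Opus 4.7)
The plan is to reduce everything to Theorem \ref{puremult} and Proposition \ref{assringhom}, which do the real work.

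First, Theorem \ref{puremult} tells us that the pure state $\varphi$ already satisfies condition (a) of \ref{assringhom}, namely $\varphi(ab) = \varphi(au)\varphi(b)$ for all $a \in A$ and $b \in I$. Hence \ref{assringhom} produces a (unique) ring homomorphism $\Phi \colon A \to \R$ with $\Phi(a) = \varphi(au)$ for every $a \in A$, and this $\Phi$ automatically satisfies $(**)$. The alternatives listed in \ref{assringhom} say exactly that either $\Phi(u) \neq 0$ and $\varphi = \frac{1}{\Phi(u)}\Phi|_I$ (which is case (1)), or $\Phi|_I = 0$ (which is case (2)). Since $\Phi(u) = \varphi(u \cdot u) = \varphi(u^2)$, the dichotomy is governed by the value of $\varphi(u^2)$ as claimed.

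Next I would record three quick verifications: (a) cases (1) and (2) are mutually exclusive because $u \in I$, so $\Phi|_I = 0$ forces $\Phi(u) = 0$; (b) in either case the relation $(**)$ holds globally — in (1) one multiplies $\varphi(b) = \Phi(b)/\Phi(u)$ by $\Phi(a)$ using that $\Phi$ is a ring homomorphism, and in (2) one reads $\varphi(ab) = \varphi(au)\varphi(b) = \Phi(a)\varphi(b)$ directly from Theorem \ref{puremult}; (c) the uniqueness of $\Phi$ in both cases is forced by substituting $b = u$ into $(**)$, which gives $\Phi(a) = \varphi(au)$ and thus coincides with the homomorphism produced by \ref{assringhom}.

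Finally I would establish the sign conditions. Here I use that $u$ is a unit for $M$ in $I$, so by \ref{unitchar}(d) applied inside the $K$-vector space $I$ we have $u \in M$. For any $a \in T$, the $T$-module property gives $au \in TM \subseteq M$, whence $\Phi(a) = \varphi(au) \ge 0$ because $\varphi(M) \subseteq \R_{\ge 0}$. If in addition $u \in T$, then for any $b \in M$ we have $ub \in TM \subseteq M$, so $\Phi(b) = \varphi(bu) \ge 0$ as well.

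There is no genuine obstacle: the heart of the matter is the multiplicative identity $(*)$ of Theorem \ref{puremult}, which has already been proved. The only place requiring a little care is to remember, when extracting the nonnegativity statement $\Phi(T) \subseteq \R_{\ge 0}$, that the hypothesis ``$u$ is a unit for $M$ in $I$'' already forces $u \in M$, so that the $T$-module axiom $TM \subseteq M$ applies to $au$.
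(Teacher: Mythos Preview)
Your proof is correct and follows exactly the route the paper intends: reduce everything to the multiplicative identity of Theorem \ref{puremult} and then read off the dichotomy and uniqueness from Proposition \ref{assringhom}. Your explicit justification that $u\in M$ via \ref{unitchar}(d) before invoking $TM\subseteq M$ for the sign conditions is the one detail the paper leaves implicit, and you handle it cleanly.
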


\begin{proof}
Easy with \ref{puremult} and \ref{assringhom}.
\end{proof}

\begin{cor}
Let $M$ be a $\left(\sum K_{\ge0}A^2\right)$-module of $A$ with $1\in M$. If $M$ has a unit in
$A$, then $M$ is Archimedean.
\end{cor}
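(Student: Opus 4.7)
The plan is to reduce the assertion to showing that $1 \in A$ is itself a unit for $M$, and then to verify this via the pure-state dichotomy proved in the chapter.

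First, I would set $T := \sum K_{\ge0} A^2$, which is a preorder of $A$ containing $K_{\ge0}$; since $M$ is a $T$-module with $1 \in M$, it follows that $T = T\cdot 1 \subseteq M$, and in particular $K_{\ge0} \subseteq M$. Thus $M$ is a cone in the $K$-vector space $A$, and Remark \ref{semiringu1} identifies the statement ``$M$ is Archimedean'' with ``$1$ is a unit for $M$ in $A$''. So the entire problem reduces to proving that $1$ is a unit for the cone $M$, given that some $u \in A$ already is.

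By Theorem \ref{conemembershipunitextreme} applied to the cone $M$ with reference unit $u$, this is equivalent to $\ph(1) > 0$ for every pure state $\ph \in \extr S(A, M, u)$. Fix such a $\ph$ (if none exist, we are trivially done) and apply the dichotomy \ref{dichotomy} with $I := A$, the preorder $T$, the module $M$ and the unit $u$: either (Case 1) there is a ring homomorphism $\Ph\colon A \to \R$ with $\Ph(u) \ne 0$ and $\ph = \frac{1}{\Ph(u)}\Ph$, or (Case 2) there is a ring homomorphism $\Ph\colon A \to \R$ with $\Ph|_A = 0$ satisfying the multiplicativity identity from \ref{assringhom}(b). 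The crucial observation is that Case 2 is vacuous here: a ring homomorphism satisfies $\Ph(1) = 1$, so $\Ph|_A \equiv 0$ is impossible. Hence Case 1 holds, which gives $\ph(1) = \Ph(1)/\Ph(u) = 1/\Ph(u) \ne 0$; combined with $1 \in M$ and $\ph(M) \subseteq \R_{\ge0}$, this forces $\ph(1) > 0$, as required.

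The only subtle point, and hence the ``main obstacle'' to speak of, is recognizing that choosing $I = A$ in the dichotomy automatically kills the degenerate Case 2, because ring homomorphisms are unital. The hypothesis $1 \in M$ plays two roles in the argument: it turns the Archimedean condition into the statement that $1$ is a unit for $M$, and it supplies the nonnegativity $\ph(1) \ge 0$ that upgrades $1/\Ph(u) \ne 0$ to $1/\Ph(u) > 0$. Everything else is a direct application of \ref{conemembershipunitextreme} together with \ref{dichotomy}, with no further computation.
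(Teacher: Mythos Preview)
Your proof is correct and follows essentially the same route as the paper: reduce via \ref{conemembershipunitextreme} to showing $\ph(1)>0$ for every pure state of $(A,M,u)$, then use the Dichotomy \ref{dichotomy} with $I=A$ and observe that Case~(2) is impossible because a ring homomorphism sends $1$ to $1$. The only cosmetic difference is in the final step: you conclude $\ph(1)>0$ from $\ph(1)\ne0$ together with $1\in M\Rightarrow\ph(1)\ge0$, whereas the paper instead argues $\Ph(u)=\ph(u^2)\ge0$ (using $u^2\cdot1\in M$) to get $\Ph(u)>0$ directly.
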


\begin{proof}
Let $u$ be a unit for $M$ in $A$. By \ref{conemembershipunitextreme}, it is enough to show
that $\ph(1)>0$ for all $\ph\in\extr S(A,M,u)$. Now let $\ph$ be a pure state of
$(A,M,u)$ with the associated ring homomorphism $\Ph\colon A\to\R$. Due to
$\Ph(1)=1\ne0$, in the Dichotomy \ref{dichotomy}
only case (1) can occur, i.e., $\Ph(u)\ne0$ and $\ph=\frac1{\Ph(u)}\Ph$.
Because of $\Ph(u)=\ph(u^2)=\ph(u^2\cdot1)\in\ph(M)\subseteq\R_{\ge0}$, we have
$\Ph(u)>0$. It follows that $\ph(1)>0$.
\end{proof}

\begin{ex}\label{triangle1}
Consider the semiring $T:=\sum_{\al,\be,\ga\in\N_0}K_{\ge0}X^\al Y^\be(1-X-Y)^\ga$
of $K[X,Y]$ and
\[S:=\{(x,y)\in\R^2\mid\forall p\in T:p(x,y)\ge0\}=
\{(x,y)\in\R^2\mid x\ge0,y\ge0,x+y\le1\}.\] Since $S$ is bounded and
$X,Y,1-X-Y$ are linear, $T$ is Archimedean by \ref{archsemiringchar}(c).
Consider the ideal $I:=(X,Y)$ and the $T$-module $M:=T\cap I$ of $K[X,Y]$.
Then $u:=X+Y$ is a unit for $M$ in $I$ because $B_{(K[X,Y],T)}\overset{\ref{archmoduleb}}=K[X,Y]$
and thus by \ref{bmodule}
$B_{(K[X,Y],M,u)}$ is an ideal of
$K[X,Y]$ that contains $X$, $Y$ and thus
$I$ since $u\pm X,u\pm Y\in M$. The ring homomorphisms
\[\Ph\colon K[X,Y]\to\R\]
satisfying $\Ph(T)\subseteq\R_{\ge0}$ are obviously exactly the evaluations
$\ev_x$ in points $x\in S$ (compare Lemma \ref{evashom}). Now let $\ph$ be a pure state
of $(I,M,u)$. By the Dichotomy \ref{dichotomy}, exactly one of the following cases occurs:
\begin{enumerate}[(1)]
\item There is some $(x,y)\in S\setminus\{(0,0)\}$ with
$\ph(p)=\frac{p(x,y)}{x+y}$ for all $p\in I$.
\item $\ph(pX+qY)=\ph(pX)+\ph(qY)=p(0,0)\ph(X)+q(0,0)\ph(Y)$ for all $p,q\in K[X,Y]$.
\end{enumerate}
In Case (2), one can set $\la_1:=\ph(X)\ge0$ and $\la_2:=\ph(Y)\ge0$ and one obtains
$\la_1+\la_2=\ph(X+Y)=\ph(u)=1$ as well as $\ph=\la_1\ph_1+\la_2\ph_2$ with
$\ph_1\colon I\to\R,\ p\mapsto\frac{\partial p}{\partial X}(0,0)$ and
$\ph_2\colon I\to\R,\ p\mapsto\frac{\partial p}{\partial Y}(0,0)$. Since every polynomial
in $M$ vanishes in the origin and is nonnegative on $S$, we obtain
$\ph_1,\ph_2\in S(I,M,u)$. Because of $\ph\in\extr S(I,M,u)$, in Case~(2)
we have $\ph=\ph_1$ or $\ph=\ph_2$. Using \ref{conemembershipunitextreme}, we now
obtain: If $f\in K[X,Y]$ with $f>0$ on $S\setminus\{0\}$, $f(0)=0$,
$\frac{\partial f}{\partial X}(0)>0$ and $\frac{\partial f}{\partial Y}(0)>0$, then $f\in T$.
\end{ex}

\begin{ex}\label{triangle2}
Let $T$ and $S$ be as in Example \ref{triangle1}. Consider the ideal $I:=(X)$ and the
$T$-module $M:=T\cap I$ of $K[X,Y]$. Then $u:=X$ is a unit for $M$ in $I$ since
$B_{(K[X,Y],M,u)}$ is an ideal of $K[X,Y]$ by \ref{bmodule} that contains $X$ and thus $I$
because $u\pm X\in M$. Let $\ph$ be a pure state of $(I,M,u)$. By the Dichotomy
\ref{dichotomy}, exactly one of the following cases occurs:
\begin{enumerate}[(1)]
\item There is some $(x,y)\in S\setminus(\{0\}\times\R)$ with $\ph(p)=\frac{p(x,y)}x$ for all
$p\in I$.
\item There is some $y\in[0,1]$ such that $\ph(pX)=p(0,y)\ph(X)=p(0,y)\ph(u)=p(0,y)$ for
alle $p\in K[X,Y]$.
\end{enumerate}
In Case (2), there is obviously a $y\in[0,1]$ such that $\ph(p)=\frac{\partial p}{\partial X}(0,y)$
for all $p\in I$. Observe that each $f\in K[X,Y]$ with $f=0$ on $S\cap(\{0\}\times\R)$
satisfies $f(0,Y)=0$ and thus $f\in I$. Now \ref{conemembershipunitextreme} yields:
If $f\in K[X,Y]$ with $f>0$ on $S\setminus(\{0\}\times\R)$, $f=0$ on
$S\cap(\{0\}\times\R)$ and $\frac{\partial f}{\partial X}(0,y)>0$ for all $y\in[0,1]$, then
$f\in T$. At first glance, it might irritate that one would have to check here that
$\frac{\partial f}{\partial X}(0,1)>0$. However, note that for $y=1$ and in fact for every
$y\in\R$, $\frac{\partial f}{\partial X}(0,y)$ is the derivative of $f$ in \emph{every} direction
$(1,z)$ with $z\in\R$ since $\frac{\partial f}{\partial Y}(0,y)=0$.
\end{ex}

\begin{ex}
Let $T$ and $S$ again be as in \ref{triangle1} and \ref{triangle2}. Consider the ideal
$I:=(X^2,XY)$ and the $T$-module $M:=T\cap I$ of $K[X,Y]$. Then $u:=X^2+XY$ is a
unit for $M$ in $I$ since $u\pm X^2,u\pm XY\in M$. Let $\ph$ be a pure state of
$(I,M,u)$. By the Dichotomy \ref{dichotomy}, exactly one of the following cases occurs:
\begin{enumerate}[(1)]
\item There is some $(x,y)\in S\setminus(\{0\}\times\R)$ with
$\ph(p)=\frac{p(x,y)}{x(x+y)}$ for all $p\in I$.
\item There is some $y\in[0,1]$ such that $\ph(pX^2+qXY)=p(0,y)\ph(X^2)+q(0,y)\ph(XY)$
for all $p,q\in K[X,Y]$.
\end{enumerate}
Suppose now that (2) holds and fix $y\in[0,1]$ accordingly. Consider
$\la_1:=\ph(X^2)\ge0$, $\la_2:=\ph(XY)\ge0$. Then $\la_1+\la_2=\ph(u)=1$. 

\medskip
Consider first the case $y>0$. From $0=\ph(YX^2-X(XY))=\la_1y-\la_20=\la_1y$ we
get $\la_1=0$. Then
$\frac1y\frac{\partial(pX^2+qXY)}{\partial X}(0,y)=\frac1yq(0,y)y=q(0,y)=\la_1p(0,y)+\la_2q(0,y)
=\ph(pX^2+qXY)$
for all $p,q\in K[X,Y]$.
Hence $\ph=\ph_y$ with
\[\ph_y\colon I\to\R,\ p\mapsto\frac1y\frac{\partial p}{\partial X}(0,y).\]

\medskip
Consider now the case $y=0$. Then
$
\frac12\frac{\partial^2(pX^2+qXY)}{\partial X^2}(0,0)=p(0,0)=p(0,y)$ and
$\frac{\partial^2(pX^2+qXY)}{\partial X\partial Y}(0,0)=q(0,0)=q(0,y)$
for all $p,q\in K[X,Y]$. Hence $\ph=\la_1\ps_1+\la_2\ps_2$ with
\[\ps_1\colon I\to\R,\ p\mapsto\frac12\frac{\partial^2p}{\partial X^2}(0,0)\qquad\text{and}\qquad
\ps_2\colon I\to\R,\ p\mapsto\frac{\partial^2p}{\partial X\partial Y}(0,0).\]

\medskip\noindent
Before we give a summary, we observe that
\[I=\left\{f\in K[X,Y]\mid f=0\text{ on }S\cap(\{0\}\times\R),\frac{\partial f}{\partial X}(0)=0\right\}\]
where ``$\subseteq$'' is clear since the right hand side forms obviously an ideal and
``$\supseteq$'' can be seen as follows:
If $f\in K[X,Y]$ with $f=0$ on $S\cap(\{0\}\times\R)$, then $f(0,Y)=0$ and thus $f\in(X)$.
If $f=Xg\in K[X,Y]$ with $\frac{\partial f}{\partial X}(0)=0$, then $g(0)=0$, hence
$g\in(X,Y)$ and consequently $f\in(X^2,XY)$. Taking into account that each polynomial in $M$
is nonnegative on $S$, one obtains $\ph_y\in S(I,M,u)$ for all $y\in(0,1]_\R$ and
$\ps_1,\ps_2\in S(I,M,u)$. The above considerations therefore yield
\[\extr S(I,M,u)\subseteq
\{\ph_y\mid y\in(0,1]_\R\}\cup\{\ps_1,\ps_2\}\] from which one obtains with
\ref{conemembershipunitextreme}: If $f\in K[X,Y]$ with
\begin{itemize}
\item $f>0$ on $S\setminus(\{0\}\times\R)$,
\item $f=0$ on $S\cap(\{0\}\times\R)$,
\item $\frac{\partial f}{\partial X}(0,y)>0$ for $y\in(0,1]_\R$,
\item $\frac{\partial f}{\partial X}(0,0)=0$,
\item $\frac{\partial^2f}{\partial X^2}(0,0)>0$ and
\item $\frac{\partial^2f}{\partial X\partial Y}(0,0)>0$,
\end{itemize}
then $f\in T$.
\end{ex}

\section{A local-global-principle}

\begin{pro}\label{uaaa}
Let $T$ be a semiring of $A$ with $K_{\ge0}\subseteq T$, $M$ a $T$-module of
$A$, $n\in\N_0$ and $a_1,\dots,a_n\in A$. Set $I:=(a_1,\dots,a_n)$. Moreover, let $u$ be a
unit for $\malal TM$ in $A$ and suppose $a_1,\dots,a_n\in\malal MT$. Then
$u(a_1+\ldots+a_n)$ is a unit for $M\cap I$ in $I$.
\end{pro}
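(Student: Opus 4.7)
The plan is to unwind Definition \ref{defunit}: I must produce, for each $x \in I$, some $N \in \N$ with $Nu(a_1 + \ldots + a_n) + x \in M \cap I$. The natural first move is to write $x = \sum_{i=1}^n b_i a_i$ (possible since $I = \sum_{i=1}^n A a_i$) and rearrange
\[
Nu(a_1 + \ldots + a_n) + x = \sum_{i=1}^n (Nu + b_i) a_i,
\]
an element that visibly lies in $I$. The entire task reduces to choosing $N$ so that this sum also lies in $M$.

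A key preliminary observation is that $u$ itself lies in the set for which it is a unit, that is $u \in T$ in the first case (unit for $T$, $a_i \in M$) and $u \in M$ in the second (unit for $M$, $a_i \in T$). This follows from Proposition \ref{unitchar} applied to $T$ and $M$ viewed as convex cones in the $K$-vector space $A$: they are cones since $0 \in T \cap M$, both are closed under addition, and $K_{\ge 0} \subseteq T$ forces $K_{\ge 0} T \subseteq T$ and $K_{\ge 0} M \subseteq TM \subseteq M$. With this in hand, the unit property of $u$ provides for each $i$ an $N_i \in \N$ with $N_i u + b_i$ in $T$ respectively $M$; taking $N := \max_i N_i$ (and $N := 1$ if $n = 0$) and writing $Nu + b_i = (N_i u + b_i) + (N - N_i) u$ keeps $Nu + b_i$ in the respective set, because the slack $(N - N_i) u$ lies in $K_{\ge 0} T \subseteq T$ in the first case and in $K_{\ge 0} M \subseteq M$ in the second.

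The final step produces membership in $M$ via the defining relation $TM \subseteq M$: in the first case each $(Nu + b_i) a_i$ is a product of an element of $T$ with $a_i \in M$, while in the second case it is a product of an element of $M$ with $a_i \in T$. Summing over $i$ and invoking $M + M \subseteq M$ concludes the verification.

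There is no substantial obstacle here; the statement is essentially symmetric bookkeeping across the two cases encoded in the hypothesis. The only place where a little care is needed is the preliminary observation that a unit must lie in the cone itself, which ensures that the slack $(N - N_i)u$ used to equalize the indices actually sits inside $T$ respectively $M$.
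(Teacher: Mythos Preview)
Your proof is correct and follows essentially the same route as the paper's: write $x=\sum b_ia_i$, choose a uniform $N$ so that each $Nu+b_i$ lands in $T$ (resp.\ $M$), and conclude via $TM\subseteq M$ that $\sum(Nu+b_i)a_i\in M\cap I$. Your write-up is somewhat more explicit than the paper's in justifying why a common $N$ can be chosen (using $u\in T$ resp.\ $u\in M$ from \ref{unitchar}); the paper simply asserts this choice and additionally records the symmetric fact $Nv\pm b\in M$, which is slightly more than needed.
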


\begin{proof}
Let $b\in I$ and set $v:=u(a_1+\dots+a_n)$. To show: $\exists N\in\N:Nv+b\in M\cap I$.
Write $b=\sum_{i=1}^nc_ia_i$ with $c_1,\dots,c_n\in A$. Choose $N\in\N$ such that
$Nu\pm c_i\in\malal TM$ for $i\in\{1,\dots,n\}$. Then $Nv\pm b=\sum_{i=1}^n(Nua_i\pm c_ia_i)
=\sum_{i=1}^n(Nu\pm c_i)a_i\in M$.
\end{proof}

\begin{thm}[Burgdorf, Scheiderer, Schweighofer \cite{bss}]
Let $T$ be an Archimedean semiring of $A$ with $K_{\ge0}\subseteq T$ and $M$ a $T$-module
of $A$. Let $a\in A$ such that there is for each maximal ideal $\m$ of $A$ some
$t\in T\setminus\m$ with $ta\in M$. Then $a\in M$.
\end{thm}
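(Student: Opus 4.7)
The plan is to argue by contradiction. Assuming $a\notin M$, the strategy is to construct, via the pure state machinery of this section, a ring homomorphism $\ph\colon A\to\R$ with $\ph(T),\ph(M)\subseteq\R_{\ge 0}$ that weakly separates $a$ from $M$, then extend $\ker\ph$ to a maximal ideal $\m$ and derive a contradiction with the hypothesis. The main building blocks are \ref{conemembershipextr} (to produce a separating pure state), \ref{pureringhom1} (to promote the pure state to a ring homomorphism), and the Dichotomy \ref{dichotomy} together with the multiplicativity formula \ref{puremult} (to complete a descent argument when only non-strict inequalities are directly available).

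First I would arrange a $T$-module with unit $1$ so that the Archimedean pure state theorems apply: replace the cone under study by $M+T$, or work inside a suitable $K$-subspace of $A$ containing $M$ and $a$, taking care that the conclusion $a\in M$ can still be recovered. Supposing $a\notin M$, the cone $M-K_{\ge 0}a$ is proper by \ref{xelc}, so \ref{conemembershipextr} yields a pure state $\ph$ of the relevant structure with $\ph(a)\le 0$; by \ref{pureringhom1}, $\ph$ is a ring homomorphism $A\to\R$ with $\ph(T),\ph(M)\subseteq\R_{\ge 0}$. Extending $\ker\ph$ to a maximal ideal $\m$ and applying the hypothesis produces $t\in T\setminus\m$ with $ta\in M$, so $\ph(t)>0$ and $\ph(ta)=\ph(t)\ph(a)\ge 0$, forcing $\ph(a)\ge 0$. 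Combined with $\ph(a)\le 0$ this gives $\ph(a)=0$.

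The main obstacle is that $\ph(a)=0$ yields no immediate contradiction. To close the argument, I would descend into the ideal $I:=\ker\ph$, which now contains $a$, and study pure states of the triple $(I,M\cap I,u)$ for a suitable unit $u$; such units are produced by \ref{uaaa} once one localises attention to a finitely generated sub-ideal whose generators lie in $M$. The Dichotomy \ref{dichotomy} classifies each such pure state as either the scaled restriction of a ring homomorphism $\Ph\colon A\to\R$ distinct from $\ph$, or a derivation-type pure state with $\Ph|_I=0$; the multiplicativity formula \ref{puremult} then imposes strong structural constraints. Iterating the maximal-ideal/hypothesis argument with the new $\Ph$ should, after finitely many descents, yield the contradiction. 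Controlling this descent and verifying its termination is the main technical difficulty of the proof.
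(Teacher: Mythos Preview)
Your approach has two genuine gaps. First, the reduction to a cone with unit $1$ is not as innocuous as you suggest: $M$ need not contain $1$, and replacing $M$ by $M+T$ changes the conclusion you are proving (you would only get $a\in M+T$, not $a\in M$). You acknowledge this but do not resolve it. Second, and more seriously, even granting the setup you correctly arrive at $\ph(a)=0$ and then propose an iterative descent into $\ker\ph$; but this descent has no well-defined termination mechanism, and you yourself flag it as the main unresolved difficulty. This is not a minor technicality --- it is the heart of the proof, and the contradiction approach simply does not close.

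The paper's argument avoids both problems by a single clever choice of ideal and unit. From the hypothesis one sees that the ideal $I=(\{t\in T\mid ta\in M\})$ is not contained in any maximal ideal, hence $1=\sum_id_it_i$ for finitely many $t_i\in T$ with $t_ia\in M$. Multiplying by $a$ gives $a\in J:=(at_1,\ldots,at_m)$, and \ref{uaaa} shows that $u:=a(t_1+\ldots+t_m)=at$ is a unit for $M\cap J$ in $J$. Now for any pure state $\ph$ of $(J,M\cap J,u)$ with associated ring homomorphism $\Ph$ [$\to$ \ref{puremult}, \ref{assringhom}], one computes
\[1=\ph(u)=\ph(ta)=\Ph(t)\ph(a),\]
and since $t\in T$ forces $\Ph(t)\ge0$, this gives $\ph(a)>0$ directly. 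Then \ref{conemembershipextr} yields $a\in M\cap J\subseteq M$. The key idea you are missing is to build $a$ into the unit $u$ itself, so that the normalization $\ph(u)=1$ does the work for you --- no contradiction, no descent.
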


\begin{proof}[Proof (simplified by Leonhard Nenno)]
The ideal \[I:=(\{t\in T\mid ta\in M\})\] is not contained in any maximal ideal $\m$ of $A$ for otherwise we find by our hypothesis some $s\in T\setminus\m$ with
$sa\in M$ which entails the contradiction $s\in I\subseteq\m$. It follows that $1\in I$, i.e., we can choose $m\in\N$ and $t_1,\dots,t_m\in T$ and
$d_1,\dots,d_m\in A$ with $t_1a,\dots,t_ma\in M$ such that
\[1=d_1t_1+\ldots+d_mt_m.\]
Multiplying with $a$, it follows that
\[a\in(at_1,\ldots,at_m)=:J.\]
By the (first version of) \ref{uaaa}, $u:=at_1+\ldots+at_m=at$
with $t:=t_1+\ldots+t_m\in T$ is a unit for $M\cap J$ in $J$. To show that $a\in M$, we will now
apply \ref{conemembershipextr}. So let $\ph$ be a pure state of $(J,J\cap M,u)$. To show:
$\ph(a)>0$. Denote by $\Ph$ the ring homomorphism that belongs to $\ph$ according to \ref{puremult} and
\ref{assringhom}. We have $\Ph(T)\subseteq\R_{\ge0}$ [$\to$ \ref{dichotomy}].
Now \[1=\ph(u)=\ph(at)=\ph(ta)=\underbrace{\Ph(t)}_{\ge0}\ph(a).\] Thus $\ph(a)>0$.
\end{proof}

\chapter{Pure states and nonnegative polynomials over real closed fields}

\section{Pure states and polynomials over real closed fields}

Throughout this section, we let $R$ be a real closed extension field of $\R$, we set
$\O:=\O_R$, $\m:=\m_R$ and we make extensive use of the standard part maps
$\O\to\R,\ a\mapsto\st(a)$, $\O[\x]\to\R[\x],\ p\mapsto\st(p)$ [$\to$ \ref{ordval}] and
$\O^n\to\R^n,\ x\mapsto\st(x):=(\st(x_1),\ldots,\st(x_n))$ which are
surjective ring homomorphisms.

\begin{df}{}[$\to$ \ref{defpreorder}, \ref{dfarch}(a)]\label{defqm}
Let $A$ be a commutative ring and $M\subseteq A$. Then $M$ is called a \emph{quadratic module} of $A$ if
$M$ is a $\sum A^2$-module of $A$ containing $1$ [$\to$ \ref{deftmodule}], or in other words, if
$\{0,1\}\subseteq M$, $M+M\subseteq M$ and $A^2M\subseteq M$.
We call a quadratic module $M$ of $A$ \emph{Archimedean} if
$B_{(A,M)}=A$ [$\to$ \ref{arithmbounded}, \ref{archb}, \ref{bamu}].
\end{df}

\begin{pro}{}\emph{[$\to$ \ref{archmodulechar}]}\label{archmodulecharrcf}
Suppose $n\in\N_0$ and $M$ is a quadratic module of $\O[\x]$. Then the following are equivalent:
\begin{enumerate}[\normalfont(a)]
\item $M$ is Archimedean.
\item $\exists N\in\N:N-\sum_{i=1}^nX_i^2\in M$
\item $\exists N\in\N:\forall i\in\{1,\dots,n\}:N\pm X_i\in M$
\end{enumerate}
\end{pro}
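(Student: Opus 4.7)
The plan is to mirror the proof of \ref{archmodulechar} essentially verbatim, with the ring $\O=\O_R$ of finite elements of $R$ playing the role of the ordered subfield $K\subseteq\R$ there. Three background observations make this transplant work: (i) $\tfrac12\in\Q\subseteq\O$, so $\tfrac12\in\O[\x]$; (ii) $\sum\O[\x]^2$ is a preorder of $\O[\x]$, and since $M$ is a quadratic module of $\O[\x]$ it is automatically a $\sum\O[\x]^2$-module containing $1$, so the general lemmata \ref{rootqmb} and \ref{bammodule} apply directly to $A=\O[\x]$ and $T=\sum\O[\x]^2$; (iii) $\O_{\ge 0}\subseteq\O^2$, because for $a\in\O$ with $0\le a\le N\in\N$ the square root $\sqrt a\in R$ (which exists since $R$ is real closed) satisfies $0\le\sqrt a\le\max\{1,N\}$, hence lies in $\O$.

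The implication (a)$\Longrightarrow$(b) is trivial: apply the Archimedean property directly to $a=-\sum_{i=1}^nX_i^2\in\O[\x]$.

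For (b)$\Longrightarrow$(c), note that from $N-\sum_j X_j^2\in M$ one obtains for each $i\in\{1,\dots,n\}$ that
\[
N-X_i^2=\Bigl(N-\sum_{j=1}^n X_j^2\Bigr)+\sum_{j\ne i}X_j^2\in M+\sum\O[\x]^2\subseteq M,
\]
so $X_i^2\in B_{(\O[\x],M)}$. Invoking \ref{rootqmb} (legitimate by (i) and (ii)) then yields $X_i\in B_{(\O[\x],M)}$, which is exactly (c).

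For (c)$\Longrightarrow$(a), one shows $B_{(\O[\x],M)}=\O[\x]$. By \ref{bammodule}, $B_{(\O[\x],M)}$ is a subring of $\O[\x]$ (again using (i) and (ii)); by (c) it contains $X_1,\dots,X_n$; and it contains $\O$ because for $a\in\O$ one can pick $N\in\N$ with $|a|\le N$, giving $N\pm a\in\O_{\ge 0}\subseteq\O^2\subseteq M$ by (iii). Hence $B_{(\O[\x],M)}\supseteq\O[X_1,\dots,X_n]=\O[\x]$, so $M$ is Archimedean by \ref{archmoduleb}. The only genuinely non-routine step is verifying observation (iii), i.e.\ showing that the standard part map constraints on $\O$ still allow extraction of square roots inside $\O$; everything else is a direct transcription of the earlier proof.
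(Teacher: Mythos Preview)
Your proof is correct and follows essentially the same route as the paper's: both use \ref{rootqmb} for (b)$\Rightarrow$(c) and \ref{bammodule} together with $\O\subseteq B_{(\O[\x],M)}$ for (c)$\Rightarrow$(a). You spell out the justification $\O_{\ge0}\subseteq\O^2$ (your observation (iii)) that the paper leaves implicit, but this is a detail rather than a different argument.
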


\begin{proof}
\underline{(a)$\implies$(b)} is trivial.

\smallskip
\underline{(b)$\implies$(c)}\quad If (b) holds, then $N-X_i^2\in M$ and thus $X_i^2\in B_{(\O[\x],M)}$
for all $i\in\{1,\dots,n\}$. Apply now \ref{rootqmb}.

\smallskip
\underline{(c)$\implies$(a)} follows from \ref{bammodule} since $\O\subseteq B_{(\O[\x],M)}$.
\end{proof}

\begin{rem}
In contrast to \ref{archmodulechar}(d), one cannot add
\begin{multline*}
\exists m\in\N:\exists\ell_1,\dots,\ell_m\in M\cap\O[\x]_1:\exists N\in\N:\\
\emptyset\ne\{x\in R^n\mid\ell_1(x)\ge0,\dots,\ell_m(x)\ge0\}\subseteq[-N,N]_R^n
\end{multline*}
as another equivalent condition
in \ref{archmodulecharrcf}. Indeed, choose $R$ non-Archimedean [$\to$ \ref{boundex}] and $\ep\in\m
\setminus\{0\}$.
Then $\emptyset\ne\{0\}=\{x\in R\mid\ep x\ge0,-\ep x\ge0\}\subseteq[-1,1]_R$ but
the quadratic module \[\sum\O[X]^2+\sum\O[X]^2\ep X+\sum\O[X]^2(-\ep X)
\overset{\ref{diffsquare}}=\sum\O[X]^2+\O[X]\ep X\]
generated by $\ep X$ and $-\ep X$
in $\O[X]$ is not Archimedean for if we had $N\in\N$ with
\[N-X^2\in\sum\O[X]^2+\O[X]\ep X,\]
then taking standard parts would yield
$N-X^2\in\sum\R[X]^2$ which contradicts \ref{soslongrem}(b).
\end{rem}

\begin{df}{}[$\to$ \ref{evashom}]\label{adamshom}
For every $x\in\O^n$, we define the ring homomorphism
\[\ev_x\colon\O[\x]\to\O,\ p\mapsto p(x)\]
and set $I_x:=\ker\ev_x$.
\end{df}

\begin{pro}
Let $x\in\O^n$. Then $I_x=(X_1-x_1,\dots,X_n-x_n)$.
\end{pro}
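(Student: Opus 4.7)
The plan is to reduce to the case $x=0$ by a translation automorphism, and then exploit that a polynomial vanishing at the origin has no constant term.

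First, the inclusion $(X_1-x_1,\dots,X_n-x_n)\subseteq I_x$ is immediate, since $\ev_x(X_i-x_i)=x_i-x_i=0$ and $I_x$ is an ideal. For the reverse inclusion, I would introduce the ring endomorphism
\[
\tau\colon\O[\x]\to\O[\x],\qquad p\mapsto p(X_1+x_1,\dots,X_n+x_n).
\]
This is well-defined because each $x_i$ lies in $\O$ and $\O$ is a ring [$\to$~\ref{ordval}], so the coefficients of $\tau(p)$ remain in $\O$. One checks that $\tau$ is a ring automorphism with inverse $p\mapsto p(X_1-x_1,\dots,X_n-x_n)$, and that
\[
\tau\bigl((X_1,\dots,X_n)\bigr)=(X_1-x_1,\dots,X_n-x_n)\qquad\text{and}\qquad\ev_0\circ\tau=\ev_x.
\]
The second identity gives $\tau^{-1}(I_x)=I_0$, so to prove $I_x=(X_1-x_1,\dots,X_n-x_n)$ it suffices, by the first identity, to prove the case $x=0$, namely $I_0=(X_1,\dots,X_n)$.

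For the case $x=0$, let $p\in I_0$ and write $p=\sum_{\al\in\N_0^n}a_\al\x^\al$ with $a_\al\in\O$ [$\to$~\ref{monomnotation}]. The condition $p(0)=0$ says precisely that $a_0=0$. For every $\al\in\N_0^n\setminus\{0\}$, pick $i\in\{1,\dots,n\}$ with $\al_i\ge1$; then $\x^\al$ is divisible by $X_i$ in $\O[\x]$. Collecting terms according to such a choice of index yields $p\in(X_1,\dots,X_n)$.

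There is no real obstacle here; the only point that requires a moment of attention is checking that the translation $\tau$ preserves $\O[\x]$ (as opposed to only $R[\x]$), which is clear from $x\in\O^n$ and the ring structure of $\O$. Everything else is routine commutative algebra.
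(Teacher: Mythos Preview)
Your approach is sound and amounts to the same idea as the paper's one-line argument (namely $p\equiv_J p(x)$ where $J=(X_1-x_1,\dots,X_n-x_n)$), only with the translation made explicit. However, there are two compensating sign slips: with $\tau(p)=p(X_1+x_1,\dots,X_n+x_n)$ one has $\tau(X_i)=X_i+x_i$, so $\tau\bigl((X_1,\dots,X_n)\bigr)=(X_1+x_1,\dots,X_n+x_n)$, not $(X_1-x_1,\dots,X_n-x_n)$; and from $\ev_0\circ\tau=\ev_x$ one deduces $I_x=\tau^{-1}(I_0)$, i.e.\ $\tau(I_x)=I_0$, not $\tau^{-1}(I_x)=I_0$. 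The correct chain is $I_x=\tau^{-1}(I_0)=\tau^{-1}\bigl((X_1,\dots,X_n)\bigr)=(X_1-x_1,\dots,X_n-x_n)$.
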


\begin{proof}
It is trivial that $J:=(X_1-x_1,\dots,X_n-x_n)\subseteq I_x$.
Conversely, $p\equiv_Jp(x)=0$ for all
$p\in I_x$. This shows the converse inclusion $I_x\subseteq J$.
\end{proof}

\begin{notation}
Suppose $A$ is a commutative ring and $I$ is an ideal of $A$.
As it is customary in commutative algebra, we will in the following often denote by
$I^2$ the product of the ideal $I$ with itself which in our suggestive notation
[$\to$ \ref{divnot}] would be written $\sum II$. From the context,
the reader should be able to avoid misinterpreting
$I^2$ as what it would mean in this suggestive notation, namely
$\{a^2\mid a\in I\}$. The same applies to $I^3$ and so on.
Another source of confusion could be that, we will often use the notation
$\m^n$ to denote the Cartesian power \[\underbrace{\m\times\ldots\times\m}_{\text{$n$ times}}.\]
\end{notation}

\begin{lem}\label{coprime}
Suppose $x,y\in\O^n$ with $\st(x)\ne\st(y)$. Then $I_x$ and $I_y$ are coprime, i.e.,
$1\in I_x+I_y$.
\end{lem}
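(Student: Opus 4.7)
The plan is to exploit that the standard parts differ in at least one coordinate, producing a unit of $\O$ lying in $I_x+I_y$.

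First I would choose $i\in\{1,\dots,n\}$ with $\st(x_i)\ne\st(y_i)$, which exists because $\st(x)\ne\st(y)$. Since $\st\colon\O\to\R$ is a ring homomorphism [$\to$ \ref{ordval}], we have $\st(x_i-y_i)=\st(x_i)-\st(y_i)\ne0$, whence $x_i-y_i\in\O\setminus\m$. By \ref{ordval}, the set $\O\setminus\m=\O^\times$ is precisely the group of units of $\O$, so there exists $c\in\O$ with $c(x_i-y_i)=1$.

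Next I would observe the elementary identity
\[
(X_i-y_i)-(X_i-x_i)=x_i-y_i,
\]
which shows $x_i-y_i\in I_x+I_y$, since $X_i-x_i\in I_x$ and $X_i-y_i\in I_y$. Multiplying by the element $c\in\O\subseteq\O[\x]$ inside the ideal $I_x+I_y$ of $\O[\x]$ yields $1=c(x_i-y_i)\in I_x+I_y$, which is what was to be shown.

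There is no real obstacle here; the only point worth checking carefully is that units of $\O$ remain units in the polynomial ring $\O[\x]$, which is immediate.
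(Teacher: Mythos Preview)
Your proof is correct and is essentially identical to the paper's: both pick a coordinate $i$ where $x_i-y_i\notin\m$, hence $x_i-y_i\in\O^\times$, and then write $1$ as an $\O$-linear combination of $X_i-x_i\in I_x$ and $X_i-y_i\in I_y$.
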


\begin{proof}
WLOG $x_1-y_1\notin\m$. Then $x_1-y_1\in\O^\times$ and
\[1=\frac{x_1-X_1}{x_1-y_1}+\frac{X_1-y_1}{x_1-y_1}\in I_x+I_y.\]
\end{proof}

\begin{lem}\label{ixunit}
Let $M$ be an Archimedean quadratic module of $\O[\x]$ and $x\in\O^n$. Then
\[u_x:=(X_1-x_1)^2+\ldots+(X_n-x_n)^2\]
is a unit for $M\cap I_x^2$ in the real vector space $I_x^2$ [$\to$ \ref{defunit}].
\end{lem}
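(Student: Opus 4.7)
Unpacking Definition \ref{defunit}, the claim splits into: (i) $u_x\in M\cap I_x^2$, and (ii) for every $b\in I_x^2$ there is some $N\in\N$ with $Nu_x+b\in M\cap I_x^2$. Since $I_x^2$ is closed under negation, (ii) is equivalent to showing $I_x^2\subseteq B_{(\O[\x],M,u_x)}$ in the sense of \ref{bamu}. Claim (i) is immediate: $u_x=\sum_{k=1}^n(X_k-x_k)^2$ sits in $I_x^2$ by inspection and in $M$ because $M$ is a quadratic module.

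For (ii), the strategy is to combine the algebraic structure of $B_{(\O[\x],M,u_x)}$ with an explicit sum-of-squares identity for the generators of $I_x^2$. Since $M$ is an Archimedean quadratic module of $\O[\x]$, \ref{archmoduleb} gives $B_{(\O[\x],M)}=\O[\x]$. Applying \ref{bammodule} to the preorder $T:=\sum\O[\x]^2$, the $T$-module $M$, and the element $u_x\in T$, we conclude that $B_{(\O[\x],M,u_x)}$ is a $B_{(\O[\x],M)}$-module of $\O[\x]$, i.e.\ an ideal of $\O[\x]$. Because $I_x=(X_1-x_1,\dots,X_n-x_n)$, the ideal $I_x^2$ is generated by the $n^2$ products $(X_i-x_i)(X_j-x_j)$, so it is enough to check that each such product lies in $B_{(\O[\x],M,u_x)}$.

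This last check is elementary and handled uniformly with $N=1$. For $i=j$ we have $u_x-(X_i-x_i)^2=\sum_{k\ne i}(X_k-x_k)^2\in\sum\O[\x]^2\subseteq M$ and $u_x+(X_i-x_i)^2=2(X_i-x_i)^2+\sum_{k\ne i}(X_k-x_k)^2\in M$. For $i\ne j$, the identity
\[
(X_i-x_i)^2\pm(X_i-x_i)(X_j-x_j)+(X_j-x_j)^2=\bigl((X_i-x_i)\pm\tfrac12(X_j-x_j)\bigr)^2+\tfrac34(X_j-x_j)^2
\]
together with $\tfrac14,\tfrac34\in\sum\Q^2\subseteq\sum\O[\x]^2$ exhibits $u_x\pm(X_i-x_i)(X_j-x_j)$ as an element of $\sum\O[\x]^2\subseteq M$. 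Hence each generator lies in $B_{(\O[\x],M,u_x)}$, so $I_x^2\subseteq B_{(\O[\x],M,u_x)}$.

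The only subtlety worth flagging is the interpretation of ``unit in the real vector space'': one must verify that $M\cap I_x^2$ is actually a cone in the $\R$-vector space $I_x^2$ in the sense of \ref{defcone}. Closedness under addition is clear, and $\R_{\ge0}(M\cap I_x^2)\subseteq M\cap I_x^2$ because $\R_{\ge0}\subseteq\sum\Q^2\subseteq\sum\O[\x]^2$ annihilates into $M$ via the quadratic module property (and trivially preserves $I_x^2$). Aside from this bookkeeping, there is no real obstacle; the argument is a clean combination of the ideal-theoretic bound on $B_{(\O[\x],M,u_x)}$ coming from Archimedeanness and the purely formal completing-the-square identity above.
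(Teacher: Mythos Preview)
Your proof is correct and follows essentially the same route as the paper: show $B_{(\O[\x],M,u_x)}$ is an ideal via \ref{bammodule} and Archimedeanness, then verify the generators $(X_i-x_i)(X_j-x_j)$ of $I_x^2$ lie in it by a sum-of-squares identity. The paper first translates to $x=0$ and uses $\tfrac12(X_i\pm X_j)^2=\tfrac12X_i^2+\tfrac12X_j^2\pm X_iX_j$, whereas you work directly with a completing-the-square identity; these are cosmetically different but the same argument.

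One small slip: in your cone-bookkeeping paragraph you write $\R_{\ge0}\subseteq\sum\Q^2$, which is false (e.g.\ $\pi$). What you actually need, and what holds, is $\R_{\ge0}=\R^2\subseteq\O^2\subseteq\sum\O[\x]^2$ since $\R\subseteq\O$. The conclusion $\R_{\ge0}(M\cap I_x^2)\subseteq M\cap I_x^2$ is then correct.
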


\begin{proof}
Using the ring automorphism \[\O[\x]\to\O[\x],\ p\mapsto p(X_1-x_1,\ldots,X_n-x_n),\]
which is also an isomorphism of real vector spaces, we can reduce to the case $x=0$.
Since $u_x\in I_0^2$, it suffices to show that $I_0^2\subseteq B_{(\O[\x],M,u_x)}$.
Since $M$ is Archimedean, \ref{bammodule} yields that
$B_{(\O[\x],M,u_x)}$ is an $\O[\x]$-module of $\O[\x]$ [$\to$~\ref{deftmodule}], i.e., an ideal of
$\O[\x]$. Because of
\[I_0^2=(X_iX_j\mid i,j\in\{1,\dots,n\}),\]
it suffices therefore to show that $X_iX_j\in B_{(\O[\x],M,u_x)}$ for all $i,j\in\{1,\dots,n\}$.
Thus fix $i,j\in\{1,\dots,n\}$. Then $\frac12(X_i^2+X_j^2)\pm X_iX_j=\frac12(X_i\pm X_j)^2\in M$
and thus $\frac12u_x\pm X_iX_j\in M$. Since $u_x\in M$, this implies
$u_x\pm X_iX_j\in M$.
\end{proof}

\begin{nt}
We use the symbols $\nabla$ and $\hess$ to denote the gradient and the Hessian of a real-valued function of $n$ real variables, respectively. For a \emph{polynomial} $p\in\R[\x]$, we understand its
gradient $\nabla p$ as a column vector from $\R[\x]^n$, i.e., as a vector of polynomials. Similarly, its Hessian
$\hess p$ is a symmetric matrix polynomial of size $n$, i.e., a symmetric matrix from $\R[\x]^{n\times n}$.
Using formal partial derivatives, we more generally define $\nabla p\in R[\x]^n$ and
$\hess p\in R[\x]^{n\times n}$ even for $p\in R[\x]$.
\end{nt}

\begin{lem}\label{secondtypelemma}
Let $x\in\O^n$ and $\ph\in S(I_x^2,\sum\O[\x]^2\cap I_x^2,u_x)$ [$\to$ \ref{defstate}] such that
$\ph|_{I_x^3}=0$. Then there exist
$v_1,\dots,v_n\in\R^n$ such that
$\sum_{i=1}^nv_i^Tv_i=1$ and
\[\ph(p)=\frac12\st\left(\sum_{i=1}^nv_i^T(\hess p)(x)v_i\right)\]
for all $p\in I_x^2$.
\end{lem}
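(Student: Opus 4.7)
The plan is to reduce via the translation $\O[\x]\to\O[\x]$, $p\mapsto p(X_1-x_1,\ldots,X_n-x_n)$, which is an $\R$-linear ring isomorphism carrying $I_x^k$ onto $I_0^k$, $u_x$ onto $u_0$, $\sum\O[\x]^2\cap I_x^2$ onto $\sum\O[\x]^2\cap I_0^2$, and satisfying $(\hess\widetilde p)(0)=(\hess p)(x)$ for $\widetilde p(X):=p(X+x)$; so we may assume $x=0$. For any $p\in I_0^2$, separating its homogeneous parts of degrees $2$ and $\ge3$ gives the Taylor decomposition
\[
p=\tfrac12\,\x^TH\x+r,\qquad H:=(\hess p)(0)\in\O^{n\times n},\ r\in I_0^3,
\]
and the hypothesis $\ph|_{I_0^3}=0$ reduces computing $\ph(p)$ to computing $\ph(\x^TH\x)$.

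The crux of the argument is the claim
\[
\ph(\ep s)=0\qquad\text{for all }\ep\in\m\text{ and }s\in\textstyle\sum\O[\x]^2\cap I_0^2.
\]
Indeed, for any $N\in\N$ with $|\ep|\le\tfrac1N$, the elements $\tfrac1N\pm\ep$ are nonnegative and finite, so by real closedness of $R$ their square roots exist and, having square bounded by $1$, themselves lie in $\O$. Hence $(\tfrac1N\pm\ep)s=(\sqrt{\tfrac1N\pm\ep})^2s\in\sum\O[\x]^2\cap I_0^2$, giving $\ph((\tfrac1N\pm\ep)s)\ge0$ and thus $|\ph(\ep s)|\le\tfrac1N\ph(s)$ for every $N$, whence $\ph(\ep s)=0$. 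This is the main obstacle: it is exactly what upgrades the mere $\R$-linearity of $\ph$ into the compatibility with the $\O$-module structure needed to take standard parts. Setting $H_0:=\st(H)\in\R^{n\times n}$ and using $X_iX_j=\tfrac14((X_i+X_j)^2-(X_i-X_j)^2)$ to write $\x^T(H-H_0)\x$ as a sum of terms of the form $\ep\ell^2$ with $\ep\in\m$ and $\ell\in I_0\cap\R[\x]_1$, the claim yields $\ph(\x^TH\x)=\ph(\x^TH_0\x)$.

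To finish, consider the $\R$-linear functional
\[
L\colon SR^{n\times n}\cap\R^{n\times n}\to\R,\qquad A\mapsto\ph(\x^TA\x),
\]
which is nonnegative on psd matrices: if $A\succeq0$ then $A=\sum_kw_kw_k^T$ with $w_k\in\R^n$, so $\x^TA\x=\sum_k(w_k^T\x)^2\in\sum\R[\x]^2\cap I_0^2\subseteq\sum\O[\x]^2\cap I_0^2$. By self-duality of the psd cone in the real symmetric matrices under the trace inner product, there is a psd $Y\in\R^{n\times n}$ with $L(A)=\tr(YA)$; writing $Y=\sum_{i=1}^nv_iv_i^T$ with $v_i\in\R^n$ (padding with zeros if $\rk Y<n$) gives $L(A)=\sum_iv_i^TAv_i$. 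Combining,
\[
\ph(p)=\tfrac12L(H_0)=\tfrac12\sum_iv_i^T\st((\hess p)(x))v_i=\tfrac12\,\st\!\Bigl(\sum_iv_i^T(\hess p)(x)v_i\Bigr),
\]
where the last equality uses $v_i\in\R^n$ so that $\st$ commutes with the real bilinear pairing. The normalization $1=\ph(u_0)=\tfrac12L(2I_n)=\sum_iv_i^Tv_i$ supplies the constraint $\sum_iv_i^Tv_i=1$, completing the proof.
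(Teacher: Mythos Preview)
Your proof is correct and follows essentially the same approach as the paper's: reduce to $x=0$, show that $\ph$ annihilates infinitesimal multiples of squares in $I_0^2$ (your single claim packages the paper's Claims~1--3), and then represent $\ph$ on the quadratic part by a psd matrix (your $Y$ obtained via self-duality is exactly the paper's $A$ with $A_{ij}=\ph(X_iX_j)$, and your decomposition $Y=\sum_iv_iv_i^T$ is the paper's $A=B^TB$ with $v_i$ the rows of $B$). The only difference is cosmetic streamlining.
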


\begin{proof}
As in the proof of Lemma \ref{ixunit}, one easily reduces to the case $x=0$.

\medskip
\textbf{Claim 1:} $\ph(au_x)=0$ for all $a\in\m$.

\smallskip
\emph{Explanation.} Let $a\in\m$. WLOG $a\ge0$. Then $a\in\O\cap R_{\ge0}=\O^2$ and thus
$au_x\in\sum\O[\x]^2\cap I_0^2$. This shows $\ph(au_x)\ge0$. It remains to show that $\ph(au_x)\le\frac1N$ for all
$N\in\N$. For this purpose, fix $N\in\N$. Then $\frac1N-a\in\O\cap R_{\ge0}=\O^2$ and thus
$\left(\frac1N-a\right)u\in\sum\O[\x]^2\cap I_0^2$. It follows that $\ph\left(\left(\frac1N-a\right)u_x\right)\ge0$, i.e.,
$\ph(au_x)\le\frac1N$.

\medskip
\textbf{Claim 2:} $\ph(aX_i^2)=0$ for all $a\in\m$ and $i\in\{1,\dots,n\}$.

\smallskip
\emph{Explanation.} Let $a\in\m$. WLOG $a\ge0$ and thus $a\in\O^2$.
Then \[\sum_{i=1}^n\underbrace{\ph(\overbrace{aX_i^2}^{\rlap{$\scriptstyle\in\O[\x]^2\cap I_0^2$}})}_{\ge0}=\ph(au_x)\overset{\text{Claim 1}}=0.\]

\medskip
\textbf{Claim 3:} $\ph(aX_iX_j)=0$ for all $a\in\m$ and $i,j\in\{1,\dots,n\}$.

\smallskip
\emph{Explanation.} Fix $i,j\in\{1,\dots,n\}$ and $a\in\m$. If $i=j$, then we are done by Claim 2. So suppose
$i\ne j$. WLOG $a\ge0$ and thus $a\in\O^2$.
Then \[a(X_i^2+X_j^2\pm 2X_iX_j)=a(X_i\pm X_j)^2\in\O[\x]^2\cap I_0^2\] and thus
$\pm 2\ph(aX_iX_j)\underset{\text{Claim 2}}=\ph(aX_i^2)+\ph(aX_j^2)\pm 2\ph(aX_iX_j)\ge 0$.

\medskip
\textbf{Claim 4:} $\ph(p)=\frac12\st\left(\tr\left((\hess p)(0)A\right)\right)$ for all $p\in I_0^2$ where
\[A:=\begin{pmatrix}\ph(X_1X_1)&\dots&\ph(X_1X_n)\\
\vdots&\ddots&\vdots\\
\ph(X_nX_1)&\dots&\ph(X_nX_n)
\end{pmatrix}.
\]

\smallskip
\emph{Explanation.} Let $p\in I_0^2$. By $\ph|_{I_0^3}=0$, we can reduce to the case
$p=aX_iX_j$ with $i,j\in\{1,\dots,n\}$ and $a\in\O$.
Using  Claim 3, we can assume $a=1$. Comparing both sides,
yields the result.

\medskip
\textbf{Claim 5:} $A$ is psd [$\to$ \ref{psdpd}(b)].

\smallskip
\emph{Explanation.} If $w\in\R^n$, then $w^TAw=\ph((w_1X_1+\ldots+w_nX_n)^2)\ge0$ since
\[(w_1X_1+\ldots+w_nX_n)^2\in\R[\x]^2\cap I_0^2\subseteq\sum\O[\x]^2\cap I_0^2.\]

\medskip\noindent
By Claim 5 and \ref{psdeq}(c), we can choose $B\in\R^{n\times n}$ such that $A=B^TB$. Denote by $v_i$ the $i$-th
row of $B$ for $i\in\{1,\dots,n\}$. Then by Claim 4, we get
\begin{multline*}
\ph(p)=\frac12\st(\tr((\hess p)(0)A))
=\frac12\st(\tr((\hess p)(0)B^TB))\\
=\frac12\st(\tr(B(\hess p)(0)B^T))=\frac12\st\left(\sum_{i=1}^nv_i^T(\hess p)(0)v_i\right)
\end{multline*}
for all $p\in I_0^2$.
In particular, we obtain $1=\ph(u_0)=\sum_{i=1}^nv_i^Tv_i$.
\end{proof}

\begin{lem}\label{stpointev}
Let $\Ph\colon\O[\x]\to\R$ be a ring homomorphism. Then there is some $x\in\R^n$ such that
$\Ph(p)=\st(p(x))$ for all $p\in\O[\x]$.
\end{lem}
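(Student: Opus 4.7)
The plan is to first identify the restriction $\Ph|_{\O}$ completely, then to define $x$ in the obvious way via $x_i:=\Ph(X_i)$ and to verify the stated formula by direct computation on each monomial.

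First, I would show that $\Ph|_{\O}=\st$. Note that $\Ph|_{\O}\colon\O\to\R$ is a ring homomorphism, and as such it fixes $\Q$. The key observation is that $\Ph|_{\O}$ is automatically order-preserving: every $a\in\O\cap R_{\ge0}$ is a square in $\O$, since $R$ is real closed so $\sqrt a\in R$ exists, and if $|a|\le N$ for some $N\in\N$, then $\sqrt a\le\max\{1,N\}$, whence $\sqrt a\in\O$; therefore $\Ph(a)=\Ph(\sqrt a)^2\ge0$. Combining this with $\Ph|_\Q=\id_\Q$, one deduces $\m\subseteq\ker\Ph|_{\O}$: indeed, if $a\in\m$ then $\pm a\le\frac1N$ in $R$ for every $N\in\N$, so $\pm\Ph(a)\le\frac1N$ in $\R$, forcing $\Ph(a)=0$. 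Hence $\Ph|_{\O}$ factors through the quotient $\O/\m\cong\R$, and the induced map $\R\to\R$ is a ring homomorphism fixing $\Q$ and respecting the order, hence is the identity by \ref{archemb}. Therefore $\Ph|_{\O}=\st$.

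Now define $x:=(\Ph(X_1),\dots,\Ph(X_n))\in\R^n$. For every $p=\sum_{\al}c_\al\x^\al\in\O[\x]$ with $c_\al\in\O$, we have on the one hand
\[
\Ph(p)=\sum_\al\Ph(c_\al)\Ph(X_1)^{\al_1}\dotsm\Ph(X_n)^{\al_n}=\sum_\al\st(c_\al)x^\al
\]
using $\Ph|_{\O}=\st$. On the other hand, since $x\in\R^n\subseteq\O^n$ and $\O$ is a subring of $R$, the element $p(x)=\sum_\al c_\al x^\al$ lies in $\O$, and because $\st\colon\O\to\R$ is a ring homomorphism with $\st(x_i)=x_i$,
\[
\st(p(x))=\sum_\al\st(c_\al)\st(x^\al)=\sum_\al\st(c_\al)x^\al.
\]
Comparing both expressions yields $\Ph(p)=\st(p(x))$ as claimed.

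The only genuinely nontrivial step is the identification $\Ph|_{\O}=\st$; everything else is formal. The two ingredients that make that step work are the availability of square roots in $\O$ for nonnegative finite elements (which forces order-preservation of $\Ph$) and the Archimedean property of the target $\R$ (which forces $\m\subseteq\ker\Ph$). Neither is very deep, but it is important that both are used; without the real-closedness of $R$ one could not conclude that $\Ph|_{\O}$ is monotonic.
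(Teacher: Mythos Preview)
Your proof is correct and follows essentially the same approach as the paper. The paper organizes the identification of $\Ph|_{\O}$ with $\st$ slightly differently---it first observes $\Ph|_\R=\id_\R$ directly via \ref{archemb} (using that $\R\subseteq\O$) and then shows $\Ph|_\m=0$ by the same squares argument you give, rather than packaging these as order-preservation plus factoring through $\O/\m$---but the substance is identical, and both proofs then define $x_i:=\Ph(X_i)$ and conclude by a routine computation.
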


\begin{proof}
By \ref{archemb}, we have $\Ph|_\R=\id_\R$.
It is easy to see that $\Ph|_\m=0$.
Indeed, for each
$N\in\N$ and $a\in\m$, we have $\frac1N\pm a\in R_{\ge0}\cap\O=\O^2$
and therefore
$\frac1N\pm\Ph(a)\in\R_{\ge0}$. 
Finally set \[x:=(\Ph(X_1),\dots,\Ph(X_n))\in\R^n\]
and use that $\Ph|_\R=\id_\R$, $\Ph|_\m=0$ and that $\Ph$ is a ring homomorphism.
\end{proof}

\begin{thm}{}\emph{[$\to$ \ref{dichotomy}]}\label{dicho}
Let $M$ be an Archimedean quadratic module of $\O[\x]$ and set
\[S:=\{x\in\R^n\mid\forall p\in M:\st(p(x))\ge0\}.\]
Moreover, suppose $k\in\N_0$ and let $x_1,\ldots,x_k\in\O^n$ satisfy $\st(x_i)\ne\st(x_j)$ for
$i,j\in\{1,\dots,k\}$ with $i\ne j$. Then
$u:=u_{x_1}\dotsm u_{x_k}$ is a unit for the cone $M\cap I$ in the real vector space
\[I:=I_{x_1}^2\dotsm I_{x_k}^2=I_{x_1}^2\cap\ldots\cap I_{x_k}^2\]
and for all pure states $\ph$ of $(I,M\cap I,u)$
exactly one of the following cases occurs:
\begin{enumerate}[\normalfont(1)]
\item There is an $x\in S\setminus\{\st(x_1),\dots,\st(x_k)\}$ such that
\[\ph(p)=\st\left(\frac{p(x)}{u(x)}\right)\]
for all $p\in I$.
\item There is an $i\in\{1,\dots,k\}$ and $v_1,\dots,v_n\in\R^n$ such that
$\sum_{\ell=1}^nv_\ell^Tv_\ell=1$ and
\[\ph(p)=\st\left(\frac{\sum_{\ell=1}^nv_\ell^T(\hess p)(x_i)v_\ell}{2\prod_{\substack{j=1\\j\ne i}}^ku_{x_j}(x_i)}\right)\]
for all $p\in I$.
\end{enumerate}
\end{thm}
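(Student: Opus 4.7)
The plan is to apply the Dichotomy Theorem \ref{dichotomy} to the ideal $I = I_{x_1}^2\cdots I_{x_k}^2$, which equals $\bigcap_i I_{x_i}^2$ by the coprimality statement \ref{coprime}, together with the preorder $T := \sum\O[\x]^2$ of $A := \O[\x]$ and the $T$-module $M':= M \cap I \subseteq I$. Note that $T$ need only be a preorder (not Archimedean) for \ref{puremult}/\ref{dichotomy} to apply, and $K_{\ge 0} = \R_{\ge 0} = \R^2 \subseteq T$. First I will verify that $u$ is a unit for $M \cap I$ in $I$. Expanding the product gives $u = \sum_{\alpha\in\{1,\dots,n\}^k} g_\alpha$ with $g_\alpha := \prod_{i=1}^k (X_{\alpha_i}-x_{i,\alpha_i})^2$; each $g_\alpha$ is a square, and as $\alpha$ varies these generate $I$ as an $\O[\x]$-ideal (since $I_{x_i}^2$ is generated by the $(X_j-x_{i,j})^2$). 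For $p \in I$ written as $p = \sum_\alpha c_\alpha g_\alpha$ with $c_\alpha \in \O[\x]$, Archimedeanness of $M$ gives $N\in\N$ with $N\pm c_\alpha \in M$ for every $\alpha$; combining this with $u-g_\alpha = \sum_{\beta\ne\alpha} g_\beta \in \sum\O[\x]^2 \subseteq M$ yields $Nu\pm c_\alpha g_\alpha = N(u-g_\alpha)+(N\pm c_\alpha)g_\alpha \in M$, and summing over $\alpha$ produces $N'u\pm p \in M \cap I$.

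With the hypotheses of \ref{dichotomy} satisfied, the pure state $\ph$ comes equipped with a ring homomorphism $\Ph\colon \O[\x]\to\R$ satisfying $\Ph(T)\subseteq\R_{\ge 0}$; since $u$ is a sum of squares, also $\Ph(M\cap I)\subseteq\R_{\ge 0}$. By \ref{stpointev}, $\Ph(p)=\st(p(x))$ for some $x\in\R^n$, and one computes $\Ph(u) = \prod_j\|x-\st(x_j)\|^2$. In Case~(1), $\Ph(u)\ne 0$, which forces $x\notin\{\st(x_1),\dots,\st(x_k)\}$. For any $p\in M$, the product $pu$ lies in $M\cap I$ (as $u\in \sum\O[\x]^2\cap I$), so $\Ph(p)\Ph(u)=\Ph(pu)\ge 0$; since $\Ph(u)>0$ this gives $\Ph(p)\ge 0$, i.e.\ $x\in S$. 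The identity $\ph = \frac{1}{\Ph(u)}\Ph|_I$ from Case~(1) of \ref{dichotomy} together with $u(x)\in\O^\times$ (its standard part is nonzero) yields the formula $\ph(p)=\st(p(x)/u(x))$.

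In Case~(2), $\Ph|_I=0$ and the multiplicativity identity $\ph(ab)=\Ph(a)\ph(b)$ holds for $a\in A, b\in I$. From $\Ph(u)=0$ and the product formula, $x=\st(x_i)$ for a unique $i$. Set $a_i:=\prod_{j\ne i}u_{x_j}$ and define $\ps\colon I_{x_i}^2\to\R$ by $\ps(p):=\ph(a_ip)$; since $a_i\in\prod_{j\ne i}I_{x_j}^2$, $p\in I_{x_i}^2$ implies $a_ip\in I$, so $\ps$ is well-defined. It is $\R$-linear, satisfies $\ps(u_{x_i})=\ph(u)=1$, and is nonnegative on $\sum\O[\x]^2\cap I_{x_i}^2$ (since for such $p$ we have $a_ip\in\sum\O[\x]^2\cap I\subseteq M\cap I$), so $\ps$ is a state of $(I_{x_i}^2,\sum\O[\x]^2\cap I_{x_i}^2,u_{x_i})$. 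For $p=qr$ with $q\in I_{x_i}$ and $r\in I_{x_i}^2$, multiplicativity yields $\ps(p)=\ph(q\cdot a_ir)=\Ph(q)\ph(a_ir)$; but $q(x_i)=0$ gives $\st(q(x_i))=\st(q)(\st(x_i))=0$, hence $\Ph(q)=\st(q(\st(x_i)))=0$, so $\ps|_{I_{x_i}^3}=0$.

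Now Lemma \ref{secondtypelemma} produces $v_1,\dots,v_n\in\R^n$ with $\sum_\ell v_\ell^Tv_\ell=1$ and $\ps(p)=\frac{1}{2}\st\bigl(\sum_\ell v_\ell^T(\hess p)(x_i)v_\ell\bigr)$ for all $p\in I_{x_i}^2$. For $p\in I$, multiplicativity gives $\ph(a_ip)=\Ph(a_i)\ph(p)$, and the distinctness hypothesis forces $\Ph(a_i)=\prod_{j\ne i}\|\st(x_i)-\st(x_j)\|^2>0$; dividing yields $\ph(p)=\ps(p)/\Ph(a_i)$. Since $\st(a_i(x_i))=\Ph(a_i)\ne 0$ we have $a_i(x_i)\in\O^\times$, so $\st$ commutes with division by $2\prod_{j\ne i}u_{x_j}(x_i)$, giving the stated formula. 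The two cases are mutually exclusive because $\Ph(u)\ne 0\iff x\notin\{\st(x_j)\}$. The main obstacle is the construction of the auxiliary state $\ps$ and the proof that $\ps|_{I_{x_i}^3}=0$, which is precisely the hypothesis that unlocks \ref{secondtypelemma}; once this is in place, everything else is routine manipulation of standard parts.
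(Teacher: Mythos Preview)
Your treatment of the pure-state dichotomy is correct and follows the paper's approach closely: the application of \ref{dichotomy}, the identification of $\Ph$ via \ref{stpointev}, the construction of the auxiliary state $\ps$ in Case~(2), the verification that $\ps|_{I_{x_i}^3}=0$, and the final appeal to \ref{secondtypelemma} are all as in the paper.

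However, your proof that $u$ is a unit for $M\cap I$ in $I$ has a genuine gap. You assert that $I_{x_i}^2$ is generated as an ideal by the squares $(X_j-x_{i,j})^2$, but this is false: the cross terms $(X_j-x_{i,j})(X_\ell-x_{i,\ell})$ with $j\ne\ell$ are also required. For instance, with $n=2$, $k=1$, $x_1=0$, one has $X_1X_2\in I_0^2$ but $X_1X_2\notin(X_1^2,X_2^2)$. Consequently your $g_\alpha$ generate a proper subideal of $I$, and your argument only establishes that $u$ is a unit for $M$ on that smaller ideal. (Compare the proof of \ref{ixunit}, which explicitly treats the generators $X_iX_j$ for all pairs $i,j$.)

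The paper closes this gap differently. Rather than bounding generators of $I$ directly, it observes that every element of $I=I_{x_1}^2\cdots I_{x_k}^2$ is a sum of products $ab$ with $a=a_1\cdots a_k$ and $b=b_1\cdots b_k$ where $a_i,b_i\in I_{x_i}$, and then uses the identity
\[
Nu+ab=\Bigl(Nu-\tfrac12a^2-\tfrac12b^2\Bigr)+\tfrac12(a+b)^2
\]
to reduce to showing $Nu-a^2\in M$. This in turn follows from \ref{ixunit} (which gives $N_iu_{x_i}-a_i^2\in M$) together with a short induction showing $N_1\cdots N_i\,u_{x_1}\cdots u_{x_i}-a_1^2\cdots a_i^2\in M$.
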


\begin{proof}
The Chinese remainder theorem from commutative algebra shows that
\[I=I_{x_1}^2\dotsm I_{x_k}^2=I_{x_1}^2\cap\ldots\cap I_{x_k}^2\]
since $I_{x_i}$ and $I_{x_j}$ and thus also $I_{x_i}^2$ and $I_{x_j}^2$ are coprime for all
$i,j\in\{1,\dots,k\}$ with $i\ne j$.
By \ref{ixunit}, $u_{x_i}$ is a unit for $M\cap I_{x_i}^2$ in $I_{x_i}^2$
for each $i\in\{1,\dots,k\}$. To show that $u$ is a unit
for the cone $M\cap I$ in the real vector space $I$, it suffices to find for all
$a_1,b_1\in I_{x_1},\dots,a_k,b_k\in I_{x_k}$ an $N\in\N$ such that
$Nu+ab\in M$ where we set $a:=a_1\dotsm a_k$ and $b:=b_1\dotsm b_k$.
Because of $Nu+ab=(Nu-\frac12a^2-\frac12b^2)+\frac12(a+b)^2$, it is enough to find $N\in\N$ with
$Nu-a^2\in M$ and $Nu-b^2\in M$. By symmetry, it suffices to find $N\in\N$ with
$Nu-a^2\in M$. Choose $N_i\in\N$ with $N_iu_{x_i}-a_i^2\in M$ for $i\in\{1,\ldots,k\}$. We now claim that
$N:=N_1\dotsm N_k$ does the job. Indeed, the reader shows easily by induction that actually
\[N_1\dotsm N_iu_{x_1}\dotsm u_{x_i}-a_1^2\dotsm a_i^2\in M\]
for $i\in\{1,\dots,k\}$. Now let $\ph$ be a pure state of $(I,M\cap I,u)$.
Denote by $\Ph\colon\O[\x]\to\R$ the ring homomorphism belonging to
$\ph$, i.e.,
\[(*)\qquad\ph(pq)=\Ph(p)\ph(q)\]
for all $p\in\O[\x]$ and $q\in I$ [$\to$ \ref{assringhom}, \ref{dichotomy}].
By Lemma \ref{stpointev}, we can choose $x\in\R^n$ such that
\[\Ph(p)=\st(p(x))\]
for all $p\in\O[\x]$.
Since $u\in I\cap\sum\O[\x]^2$, we have
\[\st(p(x))=\Ph(p)=\Ph(p)\ph(u)\overset{(*)}=\ph(pu)=\ph(up)\overset{up\in M}\in\ph(M)\subseteq\R_{\ge0}\]
for all $p\in M$ which means $x\in S$.

\smallskip
Now first suppose that Case (1) in the Dichotomy \ref{dichotomy} occurs.
We show that $x$ satisfies (1). Note that $\Ph(u)\ne0$ by \ref{dichotomy}. This means $\st(u_{x_i}(x))\ne0$
and therefore $\st(x)\ne\st(x_i)$ for all $i\in\{1,\dots,k\}$. The rest follows from \ref{dichotomy}.

\smallskip
Now suppose that Case (2) in the Dichotomy \ref{dichotomy} occurs. We show that then (2) holds.
First note that $\prod_{i=1}^k\Ph(u_{x_i})=
\Ph(u)=0$ because $u\in I$ and $\Ph|_I=0$. Choose $i\in\{1,\dots,k\}$ such that $\st(u_{x_i}(x))=\Ph(u_{x_i})=0$.
Then $x=\st(x_i)$. Define
\[\ps\colon I_{x_i}^2\to\R,\ p\mapsto\ph\left(p\prod_{\substack{j=1\\j\ne i}}^ku_{x_j}\right).
\]
Since $u_{x_j}\in\sum\O[\x]^2\cap I_{x_j}^2$ for all $j\in\{1,\dots,k\}$, it follows that
$\ps\in S(I_{x_i}^2,M\cap I_{x_i}^2,u_{x_i})$. If $p\in I_{x_i}$ and $q\in I_{x_i}^2$, then
\[\ps(pq)=\ph\left(pq\prod_{\substack{j=1\\j\ne i}}^ku_{x_j}\right)
\overset{(*)}=
\Ph(p)\ph\left(q\prod_{\substack{j=1\\j\ne i}}^ku_{x_j}\right)=0\]
since $\Ph(p)=\st(p(x))=(\st(p))(x)=(\st(p))(\st(x_i))=\st(p(x_i))=\st(0)=0$.
It follows that $\ps|_{I_{x_i}^3}=0$.
We can thus apply Lemma \ref{secondtypelemma} to $\ps$ and obtain
$v_1,\dots,v_n\in\R^n$ such that
$\sum_{\ell=1}^nv_\ell^Tv_\ell=1$ and
\[\ps(p)=\frac12\st\left(\sum_{\ell=1}^nv_\ell^T(\hess p)(x_i)v_\ell\right)\]
for all $p\in I_{x_i}^2$.
Because of $\st(x_i)\ne\st(x_j)$ for $j\in\{1,\dots,k\}\setminus\{i\}$, we have
\[c:=\Ph\left(\prod_{\substack{j=1\\j\ne i}}^k u_{x_j}\right)=\prod_{\substack{j=1\\j\ne i}}^k\Ph(u_{x_j})=
\prod_{\substack{j=1\\j\ne i}}^k(\st(u_{x_j}))(\st(x_i))\ne0.\]
Hence we obtain
\[c\ph(p)\overset{(*)}=\ps(p)\]
for all $p\in I$.

\smallskip
It only remains to show that (1) and (2) cannot occur both at the same time. If (1) holds, then
we have obviously $\ph(u^2)\ne0$. If (2) holds, then $\ph(u^2)=0$ since $\hess(u^2)(x_i)=0$ for
all $i\in\{1,\dots,k\}$ as one easily shows.
\end{proof}

\begin{lem}\label{membershipix}
For all $x\in\O^n$, we have
\[I_x^2=\left\{p\in\O[\x]\mid p(x)=0,\nabla p(x)=0\right\}.\]
\end{lem}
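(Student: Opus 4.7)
The plan is to prove the two inclusions separately. For ``$\subseteq$'', let $p \in I_x^2$ and write $p = \sum_{k=1}^m f_k g_k$ with $f_k, g_k \in I_x$. Since $I_x = \ker(\ev_x)$, each $f_k$ and $g_k$ vanishes at $x$, so $p(x) = 0$ is immediate. Applying the product rule (for formal partial derivatives) gives
\[
\nabla p(x) = \sum_{k=1}^m \bigl(g_k(x)\, \nabla f_k(x) + f_k(x)\, \nabla g_k(x)\bigr) = 0.
\]

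For ``$\supseteq$'', I would reduce to the case $x = 0$ by translating: the map
\[
\tau\colon \O[\x] \to \O[\x],\quad p \mapsto p(X_1 - x_1, \ldots, X_n - x_n)
\]
is a ring automorphism that commutes with formal differentiation (in the sense that $(\nabla \tau(p))(0) = (\nabla p)(x)$) and satisfies $\tau(I_x^k) = I_0^k$ for all $k$. So it suffices to show that every $p \in \O[\x]$ with $p(0) = 0$ and $\nabla p(0) = 0$ lies in $I_0^2$.

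Here the key observation is that $I_0 = (X_1, \ldots, X_n)$, so $I_0^2$ is the ideal generated by all products $X_i X_j$ with $i,j \in \{1, \ldots, n\}$, which in turn is the $\O$-linear span of all monomials $\x^\alpha$ with $|\alpha| \ge 2$. Writing any $p \in \O[\x]$ as
\[
p = a_0 + \sum_{i=1}^n a_i X_i + q,
\]
with $a_0, a_1, \ldots, a_n \in \O$ and $q \in I_0^2$ (separating the constant and linear terms from the rest), one reads off $p(0) = a_0$ and $(\partial p / \partial X_i)(0) = a_i$. Hence $p(0) = 0$ and $\nabla p(0) = 0$ force $a_0 = a_1 = \ldots = a_n = 0$, so $p = q \in I_0^2$.

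There is no real obstacle here; the statement is essentially a Taylor-expansion identity at the polynomial level. The only subtle point is making sure the translation $\tau$ really transports both the ideal $I_x^2$ and the ``vanishing plus vanishing gradient'' condition to the analogous condition at the origin, which is a direct computation from the chain rule for formal derivatives.
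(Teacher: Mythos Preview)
Your proof is correct and follows essentially the same approach as the paper: reduce to the case $x=0$ via the translation automorphism $p\mapsto p(X_1-x_1,\ldots,X_n-x_n)$ (exactly as in the proof of Lemma~\ref{ixunit}), and then handle $x=0$ by noting that $I_0^2$ is the $\O$-linear span of the monomials of degree $\ge2$. The paper's proof is in fact just the two-sentence sketch ``For $x=0$ it is easy. One reduces the general case to the case $x=0$ as in the proof of \ref{ixunit}.'', so you have simply spelled out the details.
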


\begin{proof}
For $x=0$ it is easy. One reduces the general case to the case $x=0$ as in the proof of
\ref{ixunit}.
\end{proof}

\begin{thm}\label{mainrep}
Let $M$ be an Archimedean quadratic module of $\O[\x]$ and set
\[S:=\{x\in\R^n\mid\forall p\in M:\st(p(x))\ge0\}.\]
Moreover, suppose $k\in\N_0$ and let $x_1,\ldots,x_k\in\O^n$ have pairwise distinct standard parts. Let \[f\in\bigcap_{i=1}^kI_{x_i}^2\]
such that \[\st(f(x))>0\] for all $x\in S\setminus\{\st(x_1),\ldots,\st(x_k)\}$ and
\[\st(v^T(\hess f)(x_i)v)>0\] for all $i\in\{1,\dots,k\}$ and $v\in\R^n\setminus\{0\}$.
Then $f\in M$.
\end{thm}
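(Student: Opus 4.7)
The plan is to apply the strengthened cone membership criterion \ref{conemembershipextr} to the real vector space $I:=I_{x_1}^2\cap\ldots\cap I_{x_k}^2$, the cone $M\cap I$, and the unit $u:=u_{x_1}\dotsm u_{x_k}$, exactly as set up in the Dichotomy Theorem \ref{dicho}. First I would verify that $I$ is indeed an $\R$-vector space (using $\R\subseteq\O$), that $M\cap I$ is a cone in the $\R$-vector space sense (every nonnegative real is a square in $\O$, hence lies in $\sum\O[\x]^2$), and that $u$ is a unit for $M\cap I$ in $I$ (this is precisely \ref{dicho}). Since $f\in I$ by assumption, it then suffices to show that $\ph(f)>0$ for every pure state $\ph$ of $(I,M\cap I,u)$.

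Given such a pure state $\ph$, Theorem \ref{dicho} supplies two mutually exclusive alternatives, and I would handle them separately. In Case (1), there is $x\in S\setminus\{\st(x_1),\ldots,\st(x_k)\}$ with $\ph(p)=\st(p(x)/u(x))$ for all $p\in I$. Because the $\st(x_i)$ are pairwise distinct and $\st(x)$ equals none of them, each $\st(u_{x_i}(x))=\sum_\ell(x_\ell-\st(x_i)_\ell)^2$ is a strictly positive real, so $\st(u(x))>0$. Combined with the hypothesis $\st(f(x))>0$, this yields $\ph(f)=\st(f(x))/\st(u(x))>0$.

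In Case (2), there is an $i\in\{1,\dots,k\}$ and $v_1,\dots,v_n\in\R^n$ with $\sum_\ell v_\ell^Tv_\ell=1$ such that
\[
\ph(f)=\st\!\left(\frac{\sum_{\ell=1}^n v_\ell^T(\hess f)(x_i)v_\ell}{2\prod_{j\ne i}u_{x_j}(x_i)}\right).
\]
The denominator has strictly positive standard part, since $\st(u_{x_j}(x_i))=\sum_\ell(\st(x_i)_\ell-\st(x_j)_\ell)^2>0$ for $j\ne i$. For the numerator, the normalization $\sum_\ell v_\ell^Tv_\ell=1$ forces some $v_\ell$ to be nonzero; the hypothesis $\st(v^T(\hess f)(x_i)v)>0$ for every $v\in\R^n\setminus\{0\}$ then makes each term with $v_\ell\ne 0$ contribute a strictly positive standard part, while terms with $v_\ell=0$ vanish. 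Hence $\ph(f)>0$.

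In either case $\ph(f)>0$, so by \ref{conemembershipextr}, $f\in M\cap I\subseteq M$. There is no real obstacle here beyond correctly invoking the Dichotomy Theorem: the proof is essentially a bookkeeping exercise in standard-part arithmetic, and all the genuine work (the existence of the unit $u$, the classification of pure states, the passage from all pure states being positive to membership in $M$) has been done in \ref{dicho} and \ref{conemembershipextr}. The only thing one must be careful about is to recognize that the second-derivative positivity hypothesis is tailored precisely to handle the "Hessian-type" pure states that arise in Case (2).
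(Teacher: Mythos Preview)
Your proof is correct and follows exactly the same approach as the paper: apply \ref{conemembershipextr} to $(I,M\cap I,u)$ as set up in \ref{dicho}, and verify $\ph(f)>0$ for every pure state by going through the two cases of the Dichotomy. The paper compresses the case analysis into a single sentence (``From Theorem \ref{dicho}, we see indeed easily that $\ph(f)>0$''), whereas you have spelled out the standard-part computations in full, but there is no difference in substance.
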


\begin{proof}
Define $I$ and $u$ as in Theorem \ref{dicho} so that $f\in I$.
We will apply \ref{conemembershipextr} to the real vector space $I$, the cone $M\cap I$ in $I$ and
the unit $u$ for $M\cap I$. From Theorem \ref{dicho}, we see indeed easily that $\ph(f)>0$ for all
$\ph\in\extr S(I,M\cap I,u)$.
\end{proof}

\begin{cor}\label{mainrep2}
Let $M$ be an Archimedean quadratic module of $\O[\x]$ and set
\[S:=\{x\in\R^n\mid\forall p\in M:\st(p(x))\ge0\}.\]
Moreover, let $k\in\N_0$ and $x_1,\ldots,x_k\in\O^n$ such that their standard parts are pairwise distinct and lie in the interior of $S$. 
Let \[f\in\bigcap_{i=1}^kI_{x_i}^2.\]
Set again $u:=u_{x_1}\dotsm u_{x_k}\in\O[\x]$.
Suppose there is $\ep\in\R_{>0}$ such
that \[f\ge\ep u\text{ on }S.\]
Then $f\in M$.
\end{cor}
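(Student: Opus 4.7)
The plan is to deduce the statement from Theorem \ref{mainrep} by verifying its two positivity hypotheses for the given $f$. Specifically, we need to check that $\st(f(x))>0$ for every $x\in S\setminus\{\st(x_1),\dots,\st(x_k)\}$, and that $\st(v^T(\hess f)(x_i)v)>0$ for each $i\in\{1,\dots,k\}$ and each $v\in\R^n\setminus\{0\}$. Throughout, set $y_i:=\st(x_i)\in\R^n$ and $g:=\st(f)\in\R[\x]$; note that forming standard parts commutes with ring operations and with formal partial differentiation, so $(\st f)(y_i)=\st(f(x_i))$, $\st(\nabla f)(y_i)=\nabla g(y_i)$, and $\st((\hess f)(x_i))=(\hess g)(y_i)$. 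In particular, since $f\in I_{x_i}^2$, Lemma \ref{membershipix} gives $f(x_i)=0$ and $\nabla f(x_i)=0$, hence $g(y_i)=0$ and $\nabla g(y_i)=0$.

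First I verify the pointwise positivity condition. Note that $\st u=\prod_{i=1}^k\|\x-y_i\|^2\in\R[\x]$, so $(\st u)(x)=\prod_i\|x-y_i\|^2$ for $x\in\R^n$. If $x\in S\setminus\{y_1,\dots,y_k\}$, then $(\st u)(x)>0$, and the hypothesis $f\ge\ep u$ on $S$ yields $\st(f(x))\ge\ep(\st u)(x)>0$, as required.

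Next I tackle the Hessian condition, which I expect to be the main obstacle, since the hypothesis $f\ge\ep u$ on $S$ is a global pointwise statement but the Hessian condition concerns infinitesimal behaviour at the points $x_i\in\O^n$. The key idea is to reduce to a standard calculus argument by passing to $g$ and $y_i$: since $y_i\in S^\circ$, there is $\de>0$ such that $y_i+tv\in S$ for all $v\in\R^n$ with $\|v\|=1$ and all $t\in(-\de,\de)$. For such $v$ and $t$, applying $\st$ to $f(y_i+tv)\ge\ep u(y_i+tv)$ and using Taylor's formula (which is valid for polynomials over $\R$) gives
\[
\tfrac{t^2}{2}v^T(\hess g)(y_i)v+O(t^3)=g(y_i+tv)\ge\ep c_it^2\|v\|^2+O(t^3),
\]
where $c_i:=\prod_{j\ne i}\|y_i-y_j\|^2>0$ by the distinctness of the standard parts $y_j$; the right-hand side comes from the Taylor expansion of $\st u$ around $y_i$ using that $\|y_i+tv-y_i\|^2=t^2\|v\|^2$. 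Dividing by $t^2$ and letting $t\to0$ yields $v^T(\hess g)(y_i)v\ge 2\ep c_i\|v\|^2>0$ for every $v\ne0$, which is exactly the second hypothesis of Theorem \ref{mainrep} since $v^T(\hess g)(y_i)v=\st(v^T(\hess f)(x_i)v)$.

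With both positivity conditions established, Theorem \ref{mainrep} applies and concludes $f\in M$. The only delicate point in the write-up is ensuring that the Taylor comparison is done with a uniform $\de$ that works for all unit vectors $v$, which follows at once from $y_i\in S^\circ$ by choosing a full open ball around $y_i$ inside $S$.
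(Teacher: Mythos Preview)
Your proof is correct and follows essentially the same route as the paper: both reduce to Theorem~\ref{mainrep}, handle the pointwise condition by $\st(f)\ge\ep\,\st(u)$ on $S$ with $\st(u)(x)>0$ off the $y_i$, and obtain the Hessian condition from the fact that $\st(f-\ep u)$ has a local minimum at each $y_i\in S^\circ$ with $\hess(\st u)(y_i)=2c_iI_n$ where $c_i=\prod_{j\ne i}\|y_i-y_j\|^2>0$. The only cosmetic difference is that the paper quotes the second-order necessary condition for a local minimizer directly, whereas you spell out the Taylor comparison by hand; the content is the same.
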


\begin{proof}
By \ref{mainrep}, we have to show:
\begin{enumerate}[(a)]
\item $\forall x\in S\setminus\{\st(x_1),\ldots,\st(x_k)\}:\st(f(x))>0$
\item $\forall i\in\{1,\dots,k\}:\forall v\in\R^n\setminus\{0\}:\st(v^T(\hess f)(x_i)v)>0$
\end{enumerate}
It is easy to show (a). To show (b), fix $i\in\{1,\ldots,k\}$.
Because of $f-\ep u\ge0$ on $S$ and \[(f-\ep u)(x_i)=f(x_i)-\ep u(x_i)=0-0=0,\]
$\st(x_i)$ is a local minimum of $\st(f-\ep u)\in\R[\x]$ on $\R^n$. From
elementary analysis, we know therefore that $(\hess\st(f-\ep u))(\st(x_i))$ is psd.
Because of $u_{x_i}(x_i)=0$ and $\nabla u_{x_i}(x_i)=0$, we get
\[(\hess u)(x_i)=
\left(\prod_{\substack{j=1\\j\ne i}}^ku_{x_j}(x_i)\right)(\hess u_{x_i})(x_i)=
2\left(\prod_{\substack{j=1\\j\ne i}}^ku_{x_j}(x_i)\right)I_n.
\]
Therefore
\[\st(v^T(\hess f)(x_i)v)\ge\ep\st(v^T(\hess u)(x_i)v)=2\ep v^Tv\st\left(\prod_{\substack{j=1\\j\ne i}}^ku_{x_j}(x_i)\right)>0\]
for all $v\in\R^n\setminus\{0\}$.
\end{proof}

\begin{cor}\label{mainrep3}
Let $n,m\in\N_0$ and suppose $g_1,\ldots,g_m\in\R[\x]$ generate an Archimedean quadratic module
in $\R[\x]$ \emph{[$\to$ \ref{archmodulechar}]}. Set
\[S:=\{x\in R^n\mid g_1(x)\ge0,\ldots,g_m(x)\ge0\}.\]
Moreover, let $k\in\N_0$ and $x_1,\ldots,x_k\in\O^n$ and $\ep\in\R_{>0}$ such that
the sets $x_1+\ep B,\ldots,x_k+\ep B$ are pairwise disjoint and all contained in $S$
where \emph{[$\to$ \ref{normcont}]} \[B:=\{x\in R^n\mid\|x\|_2<1\}\subseteq\O^n.\]
Set once more $u:=u_{x_1}\dotsm u_{x_k}\in\O[\x]$.
Let $f\in\O[\x]$ such that $f\ge\ep u$ on $S$ and
\[f(x_1)=\ldots=f(x_k)=0.\]
Then $f$ lies in the quadratic module generated by $g_1,\ldots,g_m$ in $\O[\x]$.
\end{cor}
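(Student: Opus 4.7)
Let $M$ denote the quadratic module generated by $g_1,\ldots,g_m$ in $\O[\x]$. The strategy is to verify the hypotheses of Corollary~\ref{mainrep2} for $M$ and conclude $f\in M$. There are four things to check: that $M$ is Archimedean in $\O[\x]$; that $S':=\{x\in\R^n\mid\forall p\in M:\st(p(x))\ge0\}$ equals $S\cap\R^n$ so that the $\st(x_i)$ lie in its interior; that $f$ lies in every $I_{x_i}^2$; and finally the lower bound $f\ge\ep u$ on $S'$.

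First I would handle the Archimedean property. By hypothesis the quadratic module generated by $g_1,\ldots,g_m$ in $\R[\x]$ is Archimedean, so by Proposition~\ref{archmodulechar} there is $N\in\N$ with $N-\sum_{i=1}^nX_i^2\in\sum\R[\x]^2+\sum_{i=1}^m\sum\R[\x]^2\,g_i$. Since $\R\subseteq\O$, this very identity witnesses $N-\sum_{i=1}^nX_i^2\in M$, and by Proposition~\ref{archmodulecharrcf} the quadratic module $M$ of $\O[\x]$ is Archimedean. Next, for $x\in\R^n$ and $p=\sigma_0+\sum_i\sigma_ig_i\in M$ with $\sigma_i\in\sum\O[\x]^2$, each $\sigma_i(x)$ is a sum of squares in $\O$, hence lies in $\O\cap R_{\ge0}$ with $\st(\sigma_i(x))\ge0$, while $g_i(x)\in\R$. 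So $\st(p(x))=\sum_i\st(\sigma_i(x))g_i(x)$, which is $\ge0$ as soon as all $g_i(x)\ge0$. The reverse inclusion is trivial since each $g_i\in M$ has real coefficients. Hence $S'=S\cap\R^n$.

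Third, I would show the $\st(x_i)$ are pairwise distinct interior points of $S'$. Distinctness follows from the disjointness of the balls $x_i+\ep B$: if $\st(x_i)=\st(x_j)$ then $x_i-x_j\in\m^n$, so $\|x_i-x_j\|_2\in\m$ is in particular less than $\ep\in\R_{>0}$, making both balls contain $x_i$. For the interior condition, pick any real $\ep'\in(0,\ep)$. If $y\in\R^n$ satisfies $\|y-\st(x_i)\|_2<\ep'$, then since $\|\st(x_i)-x_i\|_2$ is infinitesimal and hence $<\ep-\ep'$, we get $\|y-x_i\|_2<\ep$, so $y\in x_i+\ep B\subseteq S$; as $y\in\R^n$, this places $y$ in $S'=S\cap\R^n$. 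Thus $\st(x_i)+\ep'B_\R\subseteq S'$, where $B_\R$ is the open real unit ball.

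The step that takes the most care is showing $f\in\bigcap_{i=1}^k I_{x_i}^2$, since Lemma~\ref{membershipix} requires both $f(x_i)=0$ and $\nabla f(x_i)=0$; only the first is given directly. I would argue that $(f-\ep u)(x_i)=0$ (using $u(x_i)=0$) and $f-\ep u\ge0$ on $x_i+\ep B$, so for any $v\in R^n\setminus\{0\}$ the univariate polynomial $h(t):=(f-\ep u)(x_i+tv)\in R[t]$ satisfies $h(0)=0$ and $h(t)\ge0$ for $|t|<\ep/\|v\|_2$; by the sign-change argument underlying Lemma~\ref{sgnbounds}(b), its linear coefficient $h'(0)=v^T\nabla(f-\ep u)(x_i)$ must vanish, giving $\nabla f(x_i)=\ep\,\nabla u(x_i)=0$. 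Applying Lemma~\ref{membershipix} over $\O$ then yields $f\in I_{x_i}^2$ for each $i$. Finally, since $S'\subseteq S$, the estimate $f\ge\ep u$ on $S$ restricts to the same estimate on $S'$, so Corollary~\ref{mainrep2} applies and gives $f\in M$, which is exactly the claim. The only real obstacle is the gradient vanishing step over the non-Archimedean $R$, and this is dispatched cleanly by the univariate restriction argument.
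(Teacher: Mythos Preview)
Your proof is correct and follows the same overall strategy as the paper: verify the hypotheses of Corollary~\ref{mainrep2} for the quadratic module $M$ generated by $g_1,\ldots,g_m$ in $\O[\x]$, the key step being to establish $\nabla f(x_i)=0$ so that $f\in I_{x_i}^2$ via Lemma~\ref{membershipix}.

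The one genuine difference lies in how you prove $\nabla f(x_i)=0$. The paper fixes a degree bound $d$ and invokes the Tarski transfer principle: the first-order statement ``if $p\in R'[\x]_d$ has $p(x)=0$ and $p\ge0$ on a ball around $x$, then $\nabla p(x)=0$'' defines a $0$-ary semialgebraic class, holds over $\R$ by elementary analysis, and therefore holds over every real closed $R'$ by \ref{nothingorall}. You instead argue directly: restricting $f-\ep u$ to a line through $x_i$ gives a univariate polynomial $h\in R[t]$ with $h(0)=0$ and $h\ge0$ on a nondegenerate interval around $0$, and Lemma~\ref{sgnbounds}(b) (valid over any ordered field) forces the lowest nonzero term of $h$ to have even exponent, so $h'(0)=0$. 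Your route is more elementary in that it avoids the quantifier-elimination machinery entirely; the paper's route has the advantage of being a drop-in application of a principle already established and used repeatedly in the text. Both are perfectly clean.
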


\begin{proof}
The quadratic module $M$ generated by $g_1,\ldots,g_m$ in $\O[\x]$ is clearly also Archimedean [$\to$ 9.1.2].
Moreover, it is easy to see that $\{x\in\R^n\mid \forall p\in M:\st(p(x))\ge0\}=S\cap\R^n$.
Hence $f\in M$ follows from \ref{mainrep2} once we show that \[\nabla f(x_1)=\ldots=\nabla f(x_k)=0.\]
Choose $d\in\N_0$ with $f\in\R[\x]_d$.
Since $f\ge\ep u\ge0$ on $S$ and thus $f\ge0$ on $x_i+\ep B$ for all $i\in\{1,\ldots,k\}$, it suffices
to prove the following for $R'=R$: If $p\in R'[\x]_d$, $x\in R'^n$, $\de\in R'_{>0}$ such that $p\ge0$ on
$x+\de B'$ where $B':=\{x\in R'^n\mid\|x\|_2<1\}$ and
$p(x)=0$, then $\nabla p(x)=0$. To see this, we employ the Tarski principle [$\to$ \ref{tprinciple}]:
The class
of all $R'\in\mathcal R$ [$\to$ \ref{introsemialg}] such that this holds true for all $p\in R'[\x]_d$ is
obviously a $0$-ary semialgebraic class by real quantifier elimination.
By elementary analysis, $\R$ is an element of this class.
We conclude thus by \ref{nothingorall}.
\end{proof}

\section{Degree bounds and quadratic modules}

\begin{df}\label{dftrunc}
Let $d,m\in\N_0$, $g_1,\dots,g_m\in\R[\x]$ and set $g_0:=1\in\R[\x]$. For $i\in\{0,\dots,m\}$, set
$r_i:=\frac{d-\deg g_i}2$ if $g_i\ne0$ and $r_i:=-\infty$ if $g_i=0$.
Then we denote by $M(g_1,\dots,g_m)$ the quadratic module generated by
$g_1,\dots,g_m$ in $\R[\x]$.
Moreover, we define the \emph{$d$-truncated quadratic module} $M_d(g_1,\dots,g_m)$
associated to $g_1,\dots,g_m$ by
\[
M_d(g_1,\dots,g_m):=
\left\{\sum_{i=0}^m\sum_jp_{ij}^2g_i\mid p_{ij}\in\R[\x]_{r_i}\right\}\subseteq M(g_1,\dots,g_m)\cap\R[\x]_d.
\]
\end{df}

\begin{rem}
Let $m\in\N_0$ and $g_1,\dots,g_m\in\R[\x]$. Set again $g_0:=1\in\R[\x]$.
\begin{enumerate}[(a)]
\item $M(g_1,\dots,g_m)=\bigcup_{d\in\N_0}M_d(g_1,\dots,g_m)$
\item For all $d\in\N_0$,
\[M_d(g_1,\dots,g_m)=\sum_{i=0}^m\left(\left(\sum\R[\x]^2g_i\right)\cap\R[\x]_d\right)\]
by \ref{soslongrem}(b).
\item In general, the inclusion $M_d(g_1,\dots,g_m)\subseteq M(g_1,\dots,g_m)\cap\R[\x]_d$ is proper
as \ref{nonexdegbounds} shows.
In fact, the validity of Schmüdgen's and Putinar's Positivstellensätze \ref{schmuedgen} and
\ref{putinar} strongly relies on this.
\end{enumerate}
\end{rem}

\begin{thm}[Putinar's Positivstellensatz with zeros and degree bounds]
\label{putinarzerosdegreebound}
Let $n,m\in\N_0$ and $g_1,\ldots,g_m\in\R[\x]$ such that $M(g_1,\ldots,g_m)$ is Archimedean. Set
\begin{align*}
B&:=\{x\in\R^n\mid\|x\|<1\}\qquad\text{and}\\
S&:=\{x\in\R^n\mid g_1(x)\ge0,\ldots,g_m(x)\ge0\}.
\end{align*}
Moreover, let $k\in\N_0$, $N\in\N$ and $\ep\in\R_{>0}$.
Then there exists \[d\in\N_0\] such that for all $f\in\R[\x]_N$ with all coefficients in $[-N,N]_\R$
and $\#\{x\in S\mid f(x)=0\}=k$, we have: Denoting by $x_1,\dots,x_k$ the distinct zeros of $f$ on $S$,
if the sets $x_1+\ep B,\ldots,x_k+\ep B$ are pairwise disjoint and contained in $S$ and if we have
$f\ge\ep u$ on $S$
where $u:=u_{x_1}\dotsm u_{x_k}\in\R[\x]$ \emph{[$\to$ \ref{ixunit}]}, then \[f\in M_d(g_1,\dots,g_m).\]
\end{thm}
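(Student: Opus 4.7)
The strategy is to reduce the claim to Corollary \ref{mainrep3} via the finiteness theorem for semialgebraic classes (Corollary \ref{finitenesscor}), in the same spirit as the proof of \ref{h17bound}. Let $L := \dim_\R \R[\x]_N$, so that a polynomial $f \in \R[\x]_N$ (or $R[\x]_N$) is encoded by an element of $\R^L$ (or $R^L$). For each $d \in \N_0$, let $S_d \subseteq \mathcal{R}_{L+kn}$ denote the class of all tuples $(R', (f, x_1, \dots, x_k))$ such that either the hypothesis of the theorem, interpreted over $R'$, fails for this data, or else $f$ admits a weighted sum-of-squares representation $f = \sum_{i=0}^m \sum_j p_{ij}^2 g_i$ with $g_0 := 1$ and $p_{ij} \in R'[\x]_{(d-\deg g_i)/2}$ (i.e., $f$ lies in the $R'$-analogue of $M_d(g_1,\dots,g_m)$). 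By real quantifier elimination \ref{elim}, each $S_d$ is a semialgebraic class, and since $M_d \subseteq M_{d+1}$ we have $S_d \subseteq S_{d+1}$, so the family $\{S_d\}_{d \in \N_0}$ satisfies the hypothesis of Corollary \ref{finitenesscor}. The conclusion of the present theorem is exactly $\set_\R(S_d) = \R^{L+kn}$ for some $d$. Hence it suffices to prove $\bigcup_{d\in\N_0} S_d = \mathcal{R}_{L+kn}$.

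To verify that this union exhausts $\mathcal{R}_{L+kn}$, fix a real closed extension $R' \in \mathcal R$ of $\R$ and a tuple $(f, x_1, \dots, x_k) \in R'^{L+kn}$ satisfying the transferred hypotheses; we must produce $d$ with $(R', (f,x_1,\dots,x_k)) \in S_d$. Since $M(g_1,\dots,g_m)$ is Archimedean, \ref{archmodulechar} yields some $M' \in \N$ with $M' - \sum_{i=1}^n X_i^2 \in M(g_1,\dots,g_m)$, whence $\|x\|^2 \le M'$ on the transferred semialgebraic set $S_{R'}$. Combined with the bound $|$coefficients of $f| \le N$, this forces $f \in \O[\x]$ and $x_1, \dots, x_k \in \O^n$ for $\O := \O_{R'}$. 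The disjointness of the balls $x_i + \epsilon B$ forces $\|x_i - x_j\| \ge 2\epsilon$ for $i \ne j$, so in particular $\st(x_1), \dots, \st(x_k)$ are pairwise distinct; the remaining hypotheses of Corollary \ref{mainrep3} are either hypotheses of the present theorem or follow verbatim from them. Applying \ref{mainrep3}, we obtain a representation $f = \sum_{i=0}^m \sum_j p_{ij}^2 g_i$ with $p_{ij} \in \O[\x]$ (writing each of the weight polynomials as a sum of squares over $\O$). Choosing $d$ large enough that $2\deg p_{ij} + \deg g_i \le d$ for all relevant $i, j$ gives $f \in M_d^\O(g_1,\dots,g_m) \subseteq M_d^{R'}(g_1,\dots,g_m)$, so $(R', (f, x_1, \dots, x_k)) \in S_d$, as desired.

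The main obstacle lies in verifying rigorously that the classes $S_d$ are indeed $\R$-semialgebraic. Several quantifications have to be packaged: the universal quantifier over $x \in S$ in the inequality $f \ge \epsilon u$ and in the disjointness/containment conditions, the existential quantifier over the coefficients of the $p_{ij}$ in the membership condition $f \in M_d^{R'}$, and the condition that the distinct zeros of $f$ on $S$ are exactly $\{x_1, \dots, x_k\}$. Each of these is semialgebraic by real quantifier elimination \ref{elim}, but the bookkeeping is somewhat delicate (for instance, the number of squares needed in each weight slot must be bounded, which follows from a linear-algebraic Gram-matrix observation: any element of $\sum_j p_{ij}^2$ with $p_{ij} \in R'[\x]_{r_i}$ is already a sum of $\dim R'[\x]_{r_i}$ such squares). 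Once the semialgebraic nature of the $S_d$ is secured, the argument is a routine combination of \ref{finitenesscor} and \ref{mainrep3}, exactly parallel to the degree-bound results already established in the text.
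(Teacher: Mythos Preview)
Your proposal is correct and follows essentially the same route as the paper's proof: encode the data as a semialgebraic class, invoke Corollary~\ref{mainrep3} to show the union of the $S_d$ is everything, and conclude via the finiteness theorem~\ref{finitenesscor}. The only noteworthy difference is that the paper parametrizes its classes $S_d$ by the coefficient vector of $f$ alone (so $S_d\subseteq\mathcal R_\nu$ with $\nu=\dim\R[\x]_N$) and recovers the $x_i$ implicitly as ``the $k$ zeros of $f$ on $S$'' inside the semialgebraic description, whereas you carry the $x_i$ along as extra coordinates (so $S_d\subseteq\mathcal R_{L+kn}$); both encodings work, yours trading a slightly larger ambient dimension for not having to quantify out the $x_i$.
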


\begin{proof}{}(cf. the proof of Theorem \ref{h17bound})
Set $\nu:=\dim\R[\x]_N$. For each $d\in\N_0$, the class $S_d$ of
all pairs $(R,a)$ where $R$ is a real closed extension field of $\R$
and $a\in R^\nu$ such that the following holds
is obviously a $\nu$-ary $\R$-semialgebraic class [$\to$ \ref{introsemialg}]:
If $a\in[-N,N]_R^\nu$ and if
$a$ is the vector of coefficients (in a certain fixed order) of a polynomial $f\in R[\x]_N$ with
exactly $k$ zeros $x_1,\dots,x_k$ on $S':=\{x\in R^n\mid g_1(x)\ge0,\dots,g_m(x)\ge0\}$, then
at least one of the following conditions (a), (b) and (c) is fulfilled:
\begin{enumerate}[(a)]
\item The sets $x_1+\ep B',\ldots,x_k+\ep B'$ are not pairwise disjoint or not all contained in $S'$ where
$B':=\{x\in R^n\mid\|x\|_2<1\}$.
\item $f\ge\ep u$ on $S'$ is violated where $u:=u_{x_1}\dotsm u_{x_k}\in R[\x]$.
\item $f$ is a sum of $d$ elements from $R[\x]$ where each term in the sum is of degree at most $d$
and is of the form $p^2g_i$ with $p\in R[\x]$ and $i\in\{0,\dots,m\}$ where $g_0:=1\in R[\x]$.
\end{enumerate}
Set $\mathcal E:=\{S_d\mid d\in\N_0\}$ and
observe that $\forall d_1,d_2\in\N_0:\exists d_3\in\N_0:S_{d_1}\cup S_{d_2}\subseteq
S_{d_3}$ (take $d_3:=\max\{d_1,d_2\}$). By \ref{mainrep3},
we have $\bigcup\mathcal E=\mathcal R_\nu$. Now
\ref{finitenesscor} yields $S_d=\mathcal R_\nu$ for some $d\in\N_0$.
\end{proof}

\begin{cor}[Putinar's Positivstellensatz with degree bounds \cite{pre,ns,kri'}]\emph{[$\to$~\ref{putinar}]}
Let $n,m\in\N_0$ and $g_1,\ldots,g_m\in\R[\x]$ such that $M(g_1,\ldots,g_m)$ is Archimedean. Set
\[S:=\{x\in\R^n\mid g_1(x)\ge0,\ldots,g_m(x)\ge0\}.\]
Moreover, let $N\in\N$ and $\ep\in\R_{>0}$.
Then there exists \[d\in\N_0\] such that for all $f\in\R[\x]_N$ with all coefficients in $[-N,N]_\R$
and with $f\ge\ep$ on $S$, we have \[f\in M_d(g_1,\dots,g_m).\]
\end{cor}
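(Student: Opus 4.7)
The plan is to obtain this corollary directly from the preceding Theorem \ref{putinarzerosdegreebound} by specializing to the case $k=0$, so no new argument is actually needed beyond carefully unwinding what the $k=0$ case says.

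Concretely, I would fix $n,m,N,\ep$ and the polynomials $g_1,\dots,g_m$ as in the hypothesis, and apply Theorem \ref{putinarzerosdegreebound} to these data with $k:=0$. This produces a single $d\in\N_0$ which I claim already works. To verify this, take an arbitrary $f\in\R[\x]_N$ whose coefficients all lie in $[-N,N]_\R$ and which satisfies $f\ge\ep$ on $S$. Since $\ep>0$, this already forces $\#\{x\in S\mid f(x)=0\}=0=k$, so the ``$k$ distinct zeros'' hypothesis of Theorem \ref{putinarzerosdegreebound} is met (vacuously), and there are no points $x_1,\dots,x_k$ to list. Consequently the disjointness-and-containment condition on the neighborhoods $x_i+\ep B$ is vacuous, and the polynomial $u=u_{x_1}\dotsm u_{x_k}$ is the empty product, hence equals $1\in\R[\x]$. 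With this interpretation the hypothesis $f\ge\ep u$ on $S$ reduces exactly to $f\ge\ep$ on $S$, which we have assumed. Theorem \ref{putinarzerosdegreebound} thus yields $f\in M_d(g_1,\dots,g_m)$, which is precisely the conclusion of the corollary.

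There is no genuine obstacle here; the only things to double-check are the bookkeeping conventions, namely that the empty product of $u_{x_i}$'s is indeed $1$ (so the factor $u$ disappears cleanly in the estimate $f\ge\ep u$) and that the quantifiers ``for all $x_1,\dots,x_k\in\O^n$ with pairwise distinct standard parts lying in the interior of $S$'' and ``the sets $x_i+\ep B$ are pairwise disjoint and contained in $S$'' are correctly read as vacuously true when $k=0$. Both points are harmless, so the corollary is an immediate specialization of Theorem \ref{putinarzerosdegreebound}.
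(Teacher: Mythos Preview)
Your proposal is correct and matches the paper's approach: the corollary is stated without proof, immediately after Theorem \ref{putinarzerosdegreebound}, and is indeed just the $k=0$ specialization you describe, with $u$ the empty product equal to $1$ and all conditions on the $x_i$ vacuous.
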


\begin{pro}\label{squeeze}
Suppose $S\subseteq\R^n$ is compact, $x_1,\dots,x_k\in S^\circ$ are pairwise distinct,
$u:=u_{x_1}\dotsm u_{x_k}\in\R[\x]$ \emph{[$\to$ \ref{ixunit}]} and $f\in\R[\x]$ with $f(x_1)=\ldots=f(x_k)=0$.
Then the following are equivalent:
\begin{enumerate}[\normalfont(a)]
\item $f>0$ on $S\setminus\{x_1,\ldots,x_k\}$ and $\hess f(x_1),\dots,\hess f(x_k)$ are pd.
\item There is some $\ep\in\R_{>0}$ such that $f\ge\ep u$ on $S$.
\end{enumerate}
\end{pro}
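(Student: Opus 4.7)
The plan is to prove the two directions separately; (b)$\Rightarrow$(a) will be essentially a second-order-minimum argument, while (a)$\Rightarrow$(b) requires combining a local estimate at each $x_i$ with a global compactness argument. The key analytic facts about $u$ at the $x_i$, used throughout, are: $u(x_i) = 0$, $\nabla u(x_i) = 0$, and $\hess u(x_i) = 2 c_i I_n$ with $c_i := \prod_{j \ne i} u_{x_j}(x_i) > 0$ (positive because the $x_j$ are pairwise distinct). These follow from $u = u_{x_i}\prod_{j \ne i} u_{x_j}$, the identities $u_{x_i}(x_i) = 0$, $\nabla u_{x_i}(x_i) = 0$, $\hess u_{x_i} \equiv 2 I_n$, and the product rule for the gradient and Hessian. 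Also note that $u > 0$ on $\R^n \setminus \{x_1,\ldots,x_k\}$ since each $u_{x_i}(x) = \|x - x_i\|^2$.

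For (b)$\Rightarrow$(a), suppose $f \ge \ep u$ on $S$. Away from the $x_i$, $u > 0$ forces $f > 0$, giving the first assertion. For the Hessian assertion, set $g := f - \ep u$; then $g \ge 0$ on $S$, $g(x_i) = 0$, and $x_i \in S^\circ$, so $x_i$ is an interior local minimum of $g$. The second-order necessary condition yields $\hess g(x_i) = \hess f(x_i) - 2\ep c_i I_n \succeq 0$, so $\hess f(x_i) \succeq 2\ep c_i I_n$ is pd.

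For (a)$\Rightarrow$(b), I first deduce $\nabla f(x_i) = 0$ for each $i$: since $x_i \in S^\circ$, $f(x_i) = 0$, and $f > 0$ on a punctured neighborhood of $x_i$ in $S^\circ$, $x_i$ is an interior local minimum of $f$, so the first-order condition gives $\nabla f(x_i) = 0$. Choose $\ep_0 > 0$ so small that $\hess f(x_i) - 2\ep_0 c_i I_n$ is pd for every $i \in \{1,\ldots,k\}$ (finitely many conditions). By continuity of the Hessian of $f - \ep_0 u$, pick an open ball $V_i$ around each $x_i$, contained in $S^\circ$, on which $\hess(f - \ep_0 u)$ is pd everywhere; shrinking if necessary, the $V_i$ may be taken pairwise disjoint. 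Since $V_i$ is convex, $f - \ep_0 u$ is strictly convex on $V_i$, and combined with $\nabla(f - \ep_0 u)(x_i) = 0$ and $(f - \ep_0 u)(x_i) = 0$, this makes $x_i$ the global minimum of $f - \ep_0 u$ on $V_i$ with value $0$. Hence $f \ge \ep_0 u$ on each $V_i$.

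To handle the rest of $S$, set $K := S \setminus \bigcup_{i=1}^k V_i$. Since $S$ is compact and $\bigcup V_i$ is open, $K$ is compact and contains none of the $x_i$, so $f$ attains a positive minimum $\de > 0$ on $K$, while $u$ attains a finite maximum $M$ on the compact set $S$. If $M = 0$ then $u \equiv 0$ on $S$ and any $\ep > 0$ works; otherwise $\ep := \min(\ep_0, \de/M) > 0$ yields $f \ge \ep u$ on both $K$ and $\bigcup V_i$, hence on all of $S$. The main obstacle is the local step on each $V_i$: using continuity to promote the pointwise positive-definiteness of $\hess(f - \ep_0 u)$ at $x_i$ to positive-definiteness on a small convex ball, and then invoking strict convexity rather than an explicit Taylor expansion with remainder, is the cleanest route to the local bound $f \ge \ep_0 u$.
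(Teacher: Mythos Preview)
Your proof is correct. Both directions are handled cleanly: the facts $u(x_i)=0$, $\nabla u(x_i)=0$, $\hess u(x_i)=2c_iI_n$ with $c_i>0$ are exactly what is needed, and your local-convexity-plus-compactness argument for (a)$\Rightarrow$(b) is airtight.

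Your route is, however, genuinely different from the one the paper writes out. For (a)$\Rightarrow$(b) the paper does \emph{not} argue analytically. Instead it reduces to the case where $S$ is a disjoint union of small closed balls around the $x_i$, realizes this $S$ as the nonnegativity set of an Archimedean quadratic module $M$ of $\R[\x]$, and then applies a strengthened form of Theorem~\ref{mainrep} (using the implication (a)$\Rightarrow$(c) of \ref{conemembershipunitextreme} rather than \ref{conemembershipextr}) to conclude that $f-\ep u\in M$ for some $\ep>0$, whence $f\ge\ep u$ on $S$. The paper then remarks that a direct proof ``using only basic multivariate analysis'' is possible and leaves it as an exercise --- your argument is precisely that exercise. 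What each approach buys: yours is entirely self-contained, needs none of the pure-state/quadratic-module machinery, and would be readable to someone who skipped Chapters~8--9 entirely; the paper's argument, by contrast, illustrates that the representation theorem~\ref{mainrep} is strong enough to recover even purely analytic statements of this type, at the cost of invoking a refinement of that theorem that was not stated explicitly.
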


\begin{proof}
\underline{(b)$\implies$(a)} is easy to show (cf. the proof of \ref{mainrep2}).

\smallskip
\underline{(a)$\implies$(b)} \quad It is easy to show that one can WLOG
assume that $S=\bigdotcup_{i=1}^k(x_i+\ep B)$ for some $\ep>0$ where $B$ is the closed unit ball
in $\R^n$. Then one finds easily an Archimedean quadratic module $M$ of $\R[\x]$ such that
\[S=\{x\in\R^n\mid\forall p\in M:p(x)\ge0\}.\]
A strengthened version of Theorem \ref{mainrep} now yields $f-\ep u\in M$ for some $\ep\in\R_{>0}$
and thus $f-\ep u\ge0$ on $S$. One gets this strengthened version of Theorem \ref{mainrep}
by applying (a)$\implies$(c) from \ref{conemembershipunitextreme} instead of
\ref{conemembershipextr} in its proof. Alternatively, we leave it as an exercise to the reader to
give a direct proof using only basic multivariate analysis.
\end{proof}

\begin{cor}[Putinar's Positivstellensatz with zeros \cite{s1}]\label{putinarzeros}
Let $g_1,\ldots,g_m\in\R[\x]$ such that $M(g_1,\ldots,g_m)$ is Archimedean. Set
\[S:=\{x\in\R^n\mid g_1(x)\ge0,\ldots,g_m(x)\ge0\}.\]
Moreover, suppose $k\in\N_0$ and $x_1,\ldots,x_k\in S^\circ$ are pairwise distinct.
Let $f\in\R[\x]$ such that $f(x_1)=\ldots=f(x_k)=0$, $f>0$ on $S\setminus\{x_1,\ldots,x_k\}$ and
$\hess f(x_1),\dots,\hess f(x_k)$ are pd.
Then \[f\in M(g_1,\ldots,g_m).\]
\end{cor}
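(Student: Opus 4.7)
The plan is to deduce the corollary by combining Proposition~\ref{squeeze} with Corollary~\ref{mainrep3}, applied with the real closed field $R$ taken to be $\R$ itself, so that $\O = \O_\R = \R$ and $\O[\x] = \R[\x]$. Both ingredients are already available; the work consists essentially in matching the hypotheses.

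First I would invoke Proposition~\ref{squeeze}: the set $S$ is compact because $M(g_1,\dots,g_m)$ is Archimedean (so by \ref{archmodulechar}(b) some $N-\sum_i X_i^2$ lies in $M(g_1,\dots,g_m)$, forcing $S$ to be bounded, and $S$ is closed as an intersection of closed sets). The hypotheses of \ref{squeeze} on $f$ coincide with condition (a) there, so we obtain some $\ep_2 \in \R_{>0}$ with $f \ge \ep_2\, u$ on $S$, where $u := u_{x_1}\dotsm u_{x_k}$.

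Next I would produce a single $\ep$ that simultaneously satisfies the geometric and analytic hypotheses of Corollary~\ref{mainrep3}. Since $x_1,\dots,x_k$ are pairwise distinct points of the (open) interior $S^\circ$, elementary topology yields an $\ep_1 \in \R_{>0}$ so small that the open balls $x_1 + \ep_1 B,\dots,x_k + \ep_1 B$ (with $B$ the open Euclidean unit ball) are pairwise disjoint and all contained in $S$. Set $\ep := \min(\ep_1,\ep_2) > 0$. Then the balls $x_i + \ep B \subseteq x_i + \ep_1 B$ remain pairwise disjoint and contained in $S$, and since each factor $u_{x_i}$ is a sum of squares, $u \ge 0$ on $\R^n$, so $f \ge \ep_2 u \ge \ep\, u$ on $S$.

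Finally I would apply Corollary~\ref{mainrep3} with this $\ep$: taking $R = \R$, hence $\O = \R$, the corollary's hypotheses are met (the Archimedean quadratic module generated by $g_1,\dots,g_m$ is the same in $\R[\x]$ as ``in $\O[\x]$'', the points $x_i$ lie in $\R^n = \O^n$, $f \in \R[\x] = \O[\x]$, and $f(x_1) = \dots = f(x_k) = 0$ is given). The conclusion is precisely $f \in M(g_1,\dots,g_m)$. There is no real obstacle here beyond keeping the role of $\ep$ consistent across the two results; the genuinely hard content — the dichotomy of pure states and its use to control representations with prescribed zeros — has already been absorbed into \ref{mainrep3} and \ref{squeeze}.
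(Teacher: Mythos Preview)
Your proof is correct. Both you and the paper start with Proposition~\ref{squeeze} to convert the positive-definite-Hessian hypothesis into the inequality $f\ge\ep u$ on $S$, and your handling of the single $\ep$ (via $\ep:=\min(\ep_1,\ep_2)$ together with $u\ge0$) is exactly what is needed to feed the result into a representation theorem.

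The routes diverge at the second step. The paper invokes Theorem~\ref{putinarzerosdegreebound}, which in turn was proved by applying \ref{mainrep3} over \emph{all} real closed extension fields of $\R$ and then using the finiteness theorem \ref{finitenesscor} to extract a uniform degree bound. You instead apply Corollary~\ref{mainrep3} directly with $R=\R$ (so $\O_R=\R$), which is legitimate since $\R$ is a real closed extension field of itself. Your route is shorter and avoids the detour through Tarski transfer and semialgebraic compactness; the paper's route, by contrast, emphasizes that \ref{putinarzeros} is really the qualitative shadow of the quantitative degree-bound statement \ref{putinarzerosdegreebound}, which is the point the subsequent remark in the paper wants to make.
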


\begin{proof}
This follows from \ref{putinarzerosdegreebound} by Proposition \ref{squeeze}.
\end{proof}

\begin{rem}
Because of Proposition \ref{squeeze}, Theorem \ref{putinarzerosdegreebound} is really a
quantitative version of Corollary \ref{putinarzeros}.
\end{rem}

\begin{rem}\label{commentonbounds}
\begin{enumerate}[(a)]
\item In Condition (c) from the proof of Theorem \ref{putinarzerosdegreebound}, we speak of
``a sum of $d$ elements'' instead of ``a sum of elements'' (which would in general
be strictly weaker).
Our motivation to do this was that this is the easiest way to make sure that we can formulate
(c) in a ``semialgebraic way''. A second motivation could have been to formulate
Theorem \ref{putinarzerosdegreebound} in stronger way, namely by letting $d$ be a bound
not only on the degree of the quadratic module representation but also on the number of terms in it.
This second motivation is however not interesting because we get also from the Gram matrix method
\ref{gram} a bound on this number of terms (a priori bigger than $d$ but after readjusting $d$ we can
again assume it to be $d$). We could have used the Gram matrix method already to see that
``a sum of elements'' (instead of ``a sum of $d$ elements'') can also be expressed semialgebraically.
\item We could strengthen condition (c) from the proof of
Theorem \ref{putinarzerosdegreebound}, by writing ``with $p\in R[\x]$ all of whose coefficients lie
in $[-d,d]_R$'' instead of just ``with $p\in R[\x]$''. Then $\bigcup\mathcal E=\mathcal R_\nu$ would
still hold since Corollary \ref{mainrep3} states that
$f$ lies in the quadratic module generated by $g_1,\ldots,g_m$ even in $\O[\x]$ not just in $\R[\x]$.
This would lead to a real strengthening of Theorem \ref{putinarzerosdegreebound}, namely we
could ensure that $d$ is a bound
not only on the degree of the quadratic module representation but also on the size of the coefficients
in it. However, we do currently not know of any application of this and therefore renounced to carry this
out.
\end{enumerate}
\end{rem}

\chapter{Linearizing systems of polynomial inequalities}

\section{The Lasserre hierachy for a system of polynomial inequalities}

By a \emph{system of polynomial inequalities},
we understand a (finite) system of (real non-strict) polynomial inequalities in several variables, i.e., a condition of the form
\[g_1(x)\ge0,\ldots,g_m(x)\ge0\qquad(x\in\R^n)\]
where $m,n\in\N_0$ and $\g=(g_1,\ldots,g_m)\in\R[\x]^m$. Its sets of solutions are exactly the basic closed semialgebraic subsets of
$\R^n$ [$\to$ \ref{dfbasicopenclosed}]. We now introduce notation for these.

\begin{df}
For $\g=(g_1,\ldots,g_m)\in\R[\x]^m$, we consider the basic closed semialgebraic set [$\to$ \ref{dfbasicopenclosed}]
\[S(\g):=\{x\in\R^n\mid g_1(x)\ge0,\ldots,g_m(x)\ge0\}.\]
\end{df}

\noindent
One can easily imagine that many ``horribly complicated'' computational problems can be easily translated into the problem of solving
systems of polynomial inequalities. It is outside of the scope of these lecture notes to make this statement more formal. Consequently,
it would be no surprise if it were in general very hard to deal with polynomial inequalities algorithmically. The proof of real quantifier elimination
[$\to$ Theorem \ref{elim}, §\ref{sec:qe}]
(by means of which one can for example decide whether a given system of polynomial inequalities is solvable) can be read as an algorithm
of horrible complexity which is basically useless for practical purposes.
We try to identify cases where efficient algorithms for dealing with systems of polynomial inequalities might possibly exist.
Our approach will be to relate systems of polynomial inequalities to \emph{linear matrix inequalities}. Before we introduce the latter, we need
more notation.

\begin{df}{}[$\to$ \ref{psdpd}(b)]
Let $A\in S\R^{k\times k}$. We write
$A\malal\succeq\succ0$ to express that
$A$ is \alal{psd}{pd}, i.e., $A$ is symmetric and $x^TMx\malal\ge>0$ for all $x\in\malal{\R^k}{\R^k\setminus\{0\}}$.
If $B\in\R^{k\times k}$ is another matrix, we write $A\malal\succeq\succ B$ or
$B\malal\preceq\prec A$ to express that
$A-B\malal\succeq\succ0$.
We say that $A$ is
\emph{\alal{negative semidefinite (nsd)}{negative definite (nd)}} if $A\preceq 0$.
\end{df}

\noindent
By a \emph{linear matrix inequality (LMI)},
we understand a condition of the form
\[A_0+x_1A_1+\ldots+x_nA_n\succeq0\qquad(x\in\R^n)\]
where $n\in\N_0$, $k\in\N_0$ and $A_0,\ldots,A_n\in S\R^{k\times k}$. An equivalent way of writing this is by saying it is of the form
\[\begin{pmatrix}\ell_{11}(x)&\ldots&\ell_{1k}(x)\\\vdots&&\vdots\\\ell_{k1}(x)&\ldots&\ell_{kk}(x)\end{pmatrix}\qquad(x\in\R^n)
\]
where each $\ell_{ij}\in\R[\x]_1$ is a linear polynomial [$\to$ \ref{quintic}] and $\ell_{ij}=\ell_{ji}$ for all $i,j\in\{1,\ldots,k\}$.
We speak of a \emph{diagonal} LMI if, in the above, each $A_i$ is diagonal or, equivalently, $\ell_{ij}=0$ for all $i$ different from $j$.
A diagonal LMI obviously just corresponds to a finite system of (non-strict) linear inequalities. Solution sets of LMIs are called 
\emph{spectrahedra}. They generalize the solution sets of diagonal LMIs that are called \emph{polyhedra}.

\begin{ex}
Consider the LMIs
\[\begin{pmatrix}1+x_1&x_2\\x_2&1-x_1\end{pmatrix}\succeq0\qquad(x_1,x_2\in\R)\]
and
\[\begin{pmatrix}1&x_1&x_2\\x_1&1&0\\x_2&0&1\end{pmatrix}\succeq0\qquad(x_1,x_2\in\R).\]
\noindent
By \ref{psdeq}(f), we have that the first LMI is equivalent to
\[1\ge0\et1+x_1\ge0\et1-x_1\ge0\et(1+x_1)(1-x_1)-x_2^2\ge0\qquad(x_1,x_2\in\R)\]
and the second one to
\[1\ge0\et1-x_1^2\ge0\et1-x_2^2\ge0\et1-x_1^2-x_2^2\ge0\qquad(x_1,x_2\in\R).\]
Hence both are actually equivalent to
\[x_1^2+x_2^2\le1\qquad(x_1,x_2\in\R)\]
so that they define the closed unit disk in the plane.
\end{ex}

There are very efficient algorithms to deal with finite systems of linear inequalities (for example, to decide whether such a system is
solvable). We catched already a glimpse of this in the algorithmic proof of \ref{fundthmlin}. Finite systems of linear inequalities
correspond to diagonal LMIs. As a matter of fact which goes beyond our scope, there are still very efficient algorithms to deal with
arbitrary LMIs. In fact, solving a system of linear inequalities in such a way that a given linear objective function is maximized or minimized is the
problem of \emph{linear programming} (LP). LP is ubiquitous in science and engineering and there are extremely efficient
LP solvers. The more general problem of solving an LMI in such a way that a given linear objective function is maximized or minimized is the
problem of \emph{semidefinite programming} (SDP). SDP is still in its infancy but gets more and more appreciated by many people
within mathematics and its applications. For SDP there are still very efficient solvers although they cannot yet compete with LP solvers.

\begin{ex}\label{x4y4} Consider the following toy system of polynomial inequalities:
\[(*)\qquad\quad 1-x_1+x_2\ge0,\quad 1-x_1^4-x_2^4\ge0\qquad(x_1,x_2\in\R).\]
The terms $x_1^4$ and $x_2^4$ make that it is not a system of linear inequalities.
A first idea which is in general way too naive is to simply replace them by
new unknowns $y_1$ and $y_2$ and consider
\[(**)\qquad1-x_1+x_2\ge0,\quad1-y_1-y_2\ge0\qquad(x_1,x_2,y_1,y_2\in\R).\]
Every solution of $(*)$ clearly gives rise to a solution of $(**)$ by setting $y_1:=x_1^4$ and $y_2:=x_2^4$. This implies that the projection
\[\{(x_1,x_2)\in\R^2\mid\exists y_1,y_2:(x_1,x_2,y_1,y_2)\text { solves }(**)\}\]
of the set of solutions of $(**)$ onto $x$-space contains the set of solutions of $(*)$. The converse is however obviously not 
true and this does not come as a surprise since one lost too much information about $(*)$.

The idea is now to add a whole bunch of redundant inequalities to $(*)$ before replacing all nonlinear terms by new variables.
For example, we can simply add for each $a,b,c,d,e,f\in\R$ the inequality
\[(***)\qquad(a+bx_1+cx_2+dx_1^2+ex_1x_2+fx_2^2)^2\ge0.\]
We could now expand these inequalities into $a^2+2abx_1+\ldots+f^2x_2^4\ge0$
and replace not only the term $x_1^4$ and $x_2^4$ by $y_1$ and $y_2$ as before but also all other nonlinear terms
by new variables $y_3$, $y_4$ and so on. This would lead to an infinite family of linear inequalities parametrized by $a,b,c,\ldots\in\R$.
The hope is that inequalities that were redundant before the linearization, could be valuable after the linearization.

One of the problems is
that one does in general not know how to deal algorithmically with \emph{infinite} systems of linear inequalities. Happily, there is a way of
turning the infinite system parametrized by $a,b,c,\dots$ into one single LMI. In order to get rid of the parameters, one first separates the
parameters $a,b,c,\ldots$ from the monomial expressions $1,x_1,x_2,x_1^2,\ldots$ by the following trick: Rewrite $(***)$ as a (matrix) product of
row and column vectors as follows:
\[
\begin{pmatrix}a&b&c&d&e&f\end{pmatrix}
\begin{pmatrix}1\\x_1\\x_2\\x_1^2\\x_1x_2\\x_2^2\end{pmatrix}
\begin{pmatrix}1&x_1&x_2&x_1^2&x_1x_2&x_2^2\end{pmatrix}
\begin{pmatrix}a\\b\\c\\d\\e\\f\end{pmatrix}\ge0.
\]
Multiplying the two interior vectors gives a symmetric matrix of monomials of a certain structure. More precisely it transform our condition
to
\[\begin{pmatrix}a&b&c&d&e&f\end{pmatrix}
\begin{pmatrix}1&x_1&x_2&x_1^2&x_1x_2&x_2^2\\
x_1&x_1^2&x_1x_2&x_1^3&x_1^2x_2&x_1x_2^2\\
x_2&x_1x_2&x_2^2&x_1^2x_2&x_1x_2^2&x_2^3\\
x_1^2&x_1^3&x_1^2x_2&x_1^4&x_1^3x_2&x_1^2x_2^2\\
x_1x_2&x_1^2x_2&x_1x_2^2&x_1^3x_2&x_1^2x_2^2&x_1x_2^3\\
x_2^2&x_1x_2^2&x_2^3&x_1^2x_2^2&x_1x_2^3&x_2^4\\
\end{pmatrix}
\begin{pmatrix}a\\b\\c\\d\\e\\f\end{pmatrix}\ge0.
\]
If we now linearize again, we get again an infinite family of linear inequalities, namely the same as we would have had got before
by simply expanding and linearizing $(***)$:
\[\begin{pmatrix}a&b&c&d&e&f\end{pmatrix}
\begin{pmatrix}1&x_1&x_2&y_3&y_4&y_5\\
x_1&y_3&y_4&y_6&y_7&y_8\\
x_2&y_4&y_5&y_7&y_8&y_9\\
y_3&y_6&y_7&y_1&y_{11}&y_{12}\\
y_4&y_7&y_8&y_{11}&y_{12}&y_{10}\\
y_5&y_8&y_9&y_{12}&y_{10}&y_2\\
\end{pmatrix}
\begin{pmatrix}a\\b\\c\\d\\e\\f\end{pmatrix}\ge0.
\]
The big advantage is now however that we can get rid of the parameters $a,b,c,\ldots$ by
passing over to a linear matrix inequality. Namely, the above is by \ref{psdpd}(b) valid for all $a,b,c,\ldots\in\R$ if and only if
the LMI
\[\begin{pmatrix}1&x_1&x_2&y_3&y_4&y_5\\
x_1&y_3&y_4&y_6&y_7&y_8\\
x_2&y_4&y_5&y_7&y_8&y_9\\
y_3&y_6&y_7&y_1&y_{11}&y_{12}\\
y_4&y_7&y_8&y_{11}&y_{12}&y_{10}\\
y_5&y_8&y_9&y_{12}&y_{10}&y_2\\
\end{pmatrix}\succeq0\qquad(x_1,x_2,y_1,y_2,\ldots\in\R)
\]
holds.
But there are other redundant inequalities that one can add before the linearization. For example, one could multiply
the inequality $1-x_1+x_2\ge0$ from the original system $(*)$ by a square of a general polynomial of some degree. Because
we do not want to introduce to many new variables $y_i$ in the end (and intuitively we think that each variable $y_i$ should
ideally appear many times in the final LMIs so that there is not too much freedom), we decide to multiply this time just with a general
linear polynomial. So we add for each $a,b,c\in\R$ the inequality
\[(***\,*)\qquad(a+bx_1+cx_2)^2(1-x_1+x_2)\ge0.\]
We could again expand this and linearize (re-using some of the $y_i$ from before of course) to get another infinite family of linear
inequalities. But we again apply our trick that will lead to an LMI by doing this in the following equivalent way:
Rewrite $(***\,*)$ as
\[
\begin{pmatrix}a&b&c\end{pmatrix}
(1-x_1+x_2)
\begin{pmatrix}1\\x_1\\x_2\end{pmatrix}
\begin{pmatrix}1&x_1&x_2\end{pmatrix}
\begin{pmatrix}a\\b\\c\end{pmatrix}\ge0
\]
where the second (invisible) multiplication dot out of the four is a scalar product and the others are matrix products of row and column vectors. Linearizing
\begin{multline*}
(1-x_1+x_2)
\begin{pmatrix}1\\x_1\\x_2\end{pmatrix}
\begin{pmatrix}1&x_1&x_2\end{pmatrix}=(1-x_1+x_2)\begin{pmatrix}1&x_1&x_2\\x_1&x_1^2&x_1x_2\\x_2&x_1x_2&x_2^2\end{pmatrix}
\\
=\begin{pmatrix}1-x_1+x_2&x_1-x_1^2+x_1x_2&x_2-x_1x_2+x_2^2\\x_1-x_1^2+x_1x_2&x_1^2-x_1^3+x_1^2x_2&x_1x_2-x_1^2x_2+x_1x_2^2\\x_2-x_1x_2+x_2^2&x_1x_2-x_1^2x_2+x_1x_2^2&x_2^2-x_1x_2^2+x_2^3\end{pmatrix}
\end{multline*}
we obtain the LMI
\[
\begin{pmatrix}
-x_1+x_2+1&x_1-y_3+y_4&x_2-y_4+y_5\\
x_1-y_3+y_4&y_3-y_6+y_7&y_4-y_7+y_8\\
x_2-y_4+y_5&y_4-y_7+y_8&y_5-y_8+y_9
\end{pmatrix}\succeq0
\qquad
(x_1,x_2,y_1,y_2,\ldots\in\R).
\]
Finally, we could also add redundant inequalities stemming from the inequality $1-x_1^4-x_2^4$. So we could multiply it
by a square of a general polynomial of some degree. Again, to economize additional variables $y_i$, we decide to take degree zero
here which amounts to linearize the inequality $1-x_1^4-x_2^4$ just like this without further ado. This leads to the
the linear inequality
\[1-y_1-y_2\ge0\qquad(y_1,y_2\in\R)\]
that we had already in $(**)$ and which can be seen as an LMI of size $1$.
All in all, we consider now the following system of three
LMIs of sizes $6$, $3$ and $1$ (which could easily be written as a single LMI of size $10$ by forming a block diagonal matrix)
\[(\Box)\qquad
\begin{aligned}
\begin{pmatrix}1&x_1&x_2&y_3&y_4&y_5\\
x_1&y_3&y_4&y_6&y_7&y_8\\
x_2&y_4&y_5&y_7&y_8&y_9\\
y_3&y_6&y_7&y_1&y_{11}&y_{12}\\
y_4&y_7&y_8&y_{11}&y_{12}&y_{10}\\
y_5&y_8&y_9&y_{12}&y_{10}&y_2\\
\end{pmatrix}&\succeq0
\\
\begin{pmatrix}
-x_1+x_2+1&x_1-y_3+y_4&x_2-y_4+y_5\\
x_1-y_3+y_4&y_3-y_6+y_7&y_4-y_7+y_8\\
x_2-y_4+y_5&y_4-y_7+y_8&y_5-y_8+y_9
\end{pmatrix}&\succeq0
\\
1-y_1-y_2&\ge0.
\end{aligned}
\qquad(x_1,x_2,y_1,y_2,\ldots\in\R).
\]
It is clear that the projection
\[\{(x_1,x_2)\in\R^2\mid\exists y_1,\ldots,y_{12}:(x_1,x_2,y_1,\ldots,y_{12})\text { solves }(\Box)\}\]
of the set of solutions of $(\Box)$ onto $x$-space contains the set of solutions of the original system $(*)$.
Each solution of $(*)$ can be made into a solution of $(\Box)$ by assigning to each $y_i$ the value of the nonlinear
monomial expression that has been replaced by $y_i$. In this sense $(\Box)$ is a so-called \emph{relaxation} of $(*)$.

Conversely, it is not clear in what extent a solution of $(\Box)$ might perhaps
give rise to a solution of $(*)$. A first step to investigate this, is to find a more schematic way of describing the solution set of
$(\Box)$. This time we will hide the parameters $a,b,c,...$ not in an LMI but in a truncated quadratic module [$\to$ \ref{dftrunc}].
Note that, before we came up
with the idea of writing it as an LMI, we had produced simply certain infinite systems of linear inequalities in the $x_1,x_2,y_1,y_2,\ldots$.
If we take, this time really redundantly, also sums of these linear inequalities, we observe that we effectively have added
the linearizations of the inequalities
\[p(x)\ge0\qquad(x\in\R^n)\]
for each $p\in M_4(g_1,g_2)\in\R[X_1,X_2]_4$
where $g_1:=1-X_1+X_2\in\R[X_1,X_2]$ and $g_2:=1-X_1^4-X_2^4\in\R[X_1,X_2]$. An elegant way of
expressing this is to replace the variables of the linearized system $x_1,x_2,y_1,y_2,\ldots$ by the unknown
values \[\ph(X_1),\ph(X_2),\ph(X_1^4),\ph(X_2^4),\ldots\] of some linear function $\ph\colon\R[X_1,X_2]_4\to\R$
that satisfies $\ph(1)=1$. Knowing that a linear function $\ph\colon\R[X_1,X_2]_4\to\R$ can be uniquely determined by
arbitrarily prescribed values on the
monomials, we see easily that the solution set of $(\Box)$ can be identified with the state space [$\to$ \ref{defstate}]
\[S(\R[X_1,X_2]_4,M_4(g_1,g_2),1)\subseteq\R[X_1,X_2]^*.\]
\end{ex}

\begin{df}\label{lasserredef}
Let $d\in\N$, $m\in\N_0$ and $\g\in\R[\x]^m$. Then we call
\[L_d(\g):=S(\R[\x]_d,M_d(\g),1)\subseteq\R[\x]_d^*\] the \emph{degree $d$ state space}
associated to $\g$ and
\[S_d(\g):=\{(\ph(X_1),\ldots,\ph(X_n))\mid\ph\in L_d(\g)\}\subseteq\R^n\]
the \emph{degree $d$ Lasserre relaxation} associated to $\g$.
\end{df}

\begin{rem}\label{hierarchy}
Let $m\in\N_0$ and $\g\in\R[\x]^m$.
\begin{enumerate}[(a)]
\item For each $d\in\N$, $L_d(\g)$ is obviously a convex subset of the real vector space $\R[\x]_d^*$.
\item For each $d\in\N$,  $S_d(\g)$ is a convex subset of $\R^n$ due to (a).
\item We have \[S(\g)\subseteq S_d(\g)\] for all $d\in\N$ since the evaluation at $x$
\[\R[\x]_d\to\R,\ p\mapsto p(x)\]
is a degree $d$ state with $(\ph(X_1),\ldots,\ph(X_n))=x$ for each $x\in S(\g)$.
\item We even have \[\conv S(\g)\subseteq S_d(\g)\] for all $d\in\N$ by (b) and (c).
\item We have
\[S_{d+1}(\g)\subseteq S_d(\g)\]
for all $d\in\N$ for if we restrict a degree $d+1$ state associated to $\g$ to $\R[\x]_d$, we get a degree $d$ state associated to $\g$.
\item If there is some $i\in\{1,\ldots,m\}$ such that $g_i=0$ or $\deg g_i>d$, then $M_d(\g)$ and therefore $L_d(\g)$ and $S_d(\g)$ do
not change if we remove $g_i$ from $\g$ [$\to$ \ref{dftrunc}].  In this sense, $g_i$ is simply ignored by $M_d(\g)$, $L_d(\g)$ and $S_d(\g)$ . 
Therefore one often considers only those $d\in\N$ that are greater than or equal to  the degree of each $g_i$.
\item Let $d\in\N$ such that $g_1,\ldots,g_m\in\R[\x]_d\setminus\{0\}$ and set $g_0:=1\in\R[\x]_d$. For $i\in\{0,\dots,m\}$, set
$r_i:=\lfloor\frac{d-\deg g_i}2\rfloor$ [$\to$ \ref{dftrunc}]. Let $1,u_1,\ldots,u_N$ be the monomial basis of $\R[\x]_d$.
Then $\{(\ph(u_1),\ldots,\ph(u_N))\mid\ph\in L_d(\g)\}$ can be defined by m+1 simultaneous LMIs of sizes $\dim(\R[\x]_{r_0}),\ldots,
\dim(\R[\x]_{r_m})$ in
$N$ variables. This can be seen analogously to Example \ref{x4y4}.
\end{enumerate}
\end{rem}

\begin{lem}\label{modclo}
Let $d,m\in\N_0$ and $\g\in\R[\x]^m$. Suppose $S_d(\g)\ne\emptyset$. Then
\[\overline{M_d(\g)}=\{f\in\R[\x]_d\mid\forall\ph\in L_d(\g):\ph(f)\ge0\}.\]
\end{lem}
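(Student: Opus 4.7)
The plan is to prove the two inclusions separately, using the finite-dimensionality of $\R[\x]_d$ throughout. We equip $\R[\x]_d$ with its unique vector space topology [$\to$~\ref{onetop}, \ref{tacitly}], with respect to which every linear functional $\ph\colon\R[\x]_d\to\R$ is automatically continuous.

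For the inclusion $\subseteq$, I would note that for each $\ph\in L_d(\g)$, the preimage $\ph^{-1}(\R_{\ge0})$ is a closed half-space of $\R[\x]_d$ by continuity of $\ph$. Hence the right-hand side is an intersection of closed sets and therefore closed. Since by the very definition of a state [$\to$~\ref{defstate}] we trivially have $M_d(\g)\subseteq\{f\in\R[\x]_d\mid\forall\ph\in L_d(\g):\ph(f)\ge0\}$, taking closures yields $\overline{M_d(\g)}\subseteq\{f\in\R[\x]_d\mid\forall\ph\in L_d(\g):\ph(f)\ge0\}$.

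For the inclusion $\supseteq$, I would argue by contrapositive: given $f\in\R[\x]_d\setminus\overline{M_d(\g)}$, I produce $\ph\in L_d(\g)$ with $\ph(f)<0$. The set $\overline{M_d(\g)}$ is a closed convex cone (closure of a convex cone, cf.~\ref{intcloconvex}) and $\{f\}$ is compact and disjoint from it, so the separation theorem \ref{seplocconvs} for locally convex vector spaces gives a continuous linear $\psi\colon\R[\x]_d\to\R$ and $r<s$ in $\R$ with $\psi(g)\le r<s\le\psi(f)$ for all $g\in\overline{M_d(\g)}$. Since $\overline{M_d(\g)}$ is a cone containing $0$, the scaling argument $\la\psi(g)=\psi(\la g)\le r$ for all $\la>0$ forces $\psi(g)\le0$ on $\overline{M_d(\g)}$, so setting $\ph_1:=-\psi$ yields $\ph_1(M_d(\g))\subseteq\R_{\ge0}$ and $\ph_1(f)<0$.

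It remains to normalize $\ph_1$ so that $\ph_1(1)=1$; this is the main (though mild) obstacle, since a priori we only know $\ph_1(1)\ge0$ because $1\in M_d(\g)$. If $\ph_1(1)>0$, I simply rescale $\ph:=\frac{1}{\ph_1(1)}\ph_1\in L_d(\g)$ and have $\ph(f)<0$. If $\ph_1(1)=0$, I use the hypothesis $S_d(\g)\ne\emptyset$, which gives some $\ph_0\in L_d(\g)$. Then for every $\ep\in\R_{>0}$ the functional $\ph_1+\ep\ph_0$ is nonnegative on $M_d(\g)$ and takes the value $\ep>0$ at $1$, while its value at $f$ is $\ph_1(f)+\ep\ph_0(f)$, which is still negative for $\ep$ sufficiently small since $\ph_1(f)<0$. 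Rescaling by $1/\ep$ yields an element of $L_d(\g)$ with negative value at $f$, completing the proof.
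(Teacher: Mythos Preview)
Your proof is correct and follows essentially the same approach as the paper: both directions use the same arguments (closedness of the intersection of half-spaces for $\subseteq$; separation via \ref{seplocconvs} plus a cone-scaling argument for $\supseteq$), and your normalization in the case $\ph_1(1)=0$ via $\ph_1+\ep\ph_0$ rescaled by $1/\ep$ is just the paper's $\ps+N\ph_1$ with $N=1/\ep$.
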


\begin{proof}
The right hand side of the claimed equation equals
\[\bigcap_{\ph\in L_d(\g)}\ph^{-1}(\R_{\ge0})\]
and is therefore closed since every $\ph\in L_d(\g)$ is linear and therefore continuous [$\to$~\ref{tacitly}].
To prove the inclusion from left to right, it is therefore enough to show that $M_d(\g)$ is contained in the right hand side, which is trivial.

For the converse inclusion, we let $f\in\R[\x]_d\setminus\overline{M_d(\g)}$ and we want to find $\ph\in L_d(\g)$ with $\ph(f)<0$.
By \ref{seplocconvs} there are a linear $\ph_0\colon\R[\x]_d\to\R$ with $\ph_0(g)<\ph_0(f)$ for all $g\in\overline{M_d(\g)}$.
Because $M_d(\g)$ is a cone, it follows easily that $\ph_0(M_d(\g))\subseteq\R_{\le0}$ using a scaling argument. Since
$0\in M_d(\g)$, we have moreover $0<\ph_0(f)$.
Setting $\ph_1:=-\ph_0$, we have $\ph_1(M_d(\g))\subseteq\R_{\ge0}$ and $\ph_1(f)<0$.
If $\ph_1(1)>0$ then we set $\ph:=\frac1{\ph_1(1)}\ph_1\in L_d(\g)$ and we are done. Suppose therefore $\ph_1(1)=0$.
From $S_d(\g)\ne\emptyset$ it follows that $L_d(\g)\ne\emptyset$. Choose $\ps\in L_d(\g)$. Now $\ps+N\ph_1\in L_d(\g)$ for all $N\in\N$.
Choose $N\in\N$ big enough such that $\ps(f)+N\ph_1(f)<0$ and set $\ph:=\ps+N\ph_1$.
\end{proof}

\begin{rem}\label{usedallover}
The following trivial fact will be crucial and we will use it tacitly:
For all $f\in\R[\x]_1$ and linear $\ph\colon\R[\x]_1\to\R$ with $\ph(1)=1$, we have
\[f(\ph(X_1),\ldots,\ph(X_n))=\ph(f).\]
\end{rem}

\begin{lem}\label{exact1}
Let $d\in\N$, $m\in\N_0$ and $\g\in\R[\x]^m$ such that $S_d(\g)\ne\emptyset$. Then the following are equivalent:
\begin{enumerate}[\normalfont(a)]
\item $S_d(\g)\subseteq\overline{\conv S(\g)}$
\item $\forall f\in\R[\x]_1:(f\ge0\text{ on }S(\g)\implies f\in\overline{M_d(\g)})$
\end{enumerate}
\end{lem}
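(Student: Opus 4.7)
My plan is to exploit Lemma \ref{modclo}, which identifies $\overline{M_d(\g)}$ with the cone of polynomials in $\R[\x]_d$ that are nonnegative on every state $\ph \in L_d(\g)$, together with the trivial but crucial observation from \ref{usedallover}: for any $f \in \R[\x]_1$ and $\ph \in L_d(\g)$ one has $f(x_\ph) = \ph(f)$, where $x_\ph := (\ph(X_1),\dots,\ph(X_n)) \in S_d(\g)$. Both directions of the equivalence will then essentially come down to connecting $\overline{\conv S(\g)} \subseteq \R^n$ (a geometric object) with $\overline{M_d(\g)} \subseteq \R[\x]_d$ (an algebraic object) through the pairing with linear polynomials.

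For the implication (a) $\Rightarrow$ (b), I fix $f \in \R[\x]_1$ with $f \geq 0$ on $S(\g)$ and apply Lemma \ref{modclo} (which is applicable since $S_d(\g) \neq \emptyset$ forces $L_d(\g) \neq \emptyset$): it suffices to show $\ph(f) \geq 0$ for every $\ph \in L_d(\g)$. Given such $\ph$, the point $x_\ph$ lies in $S_d(\g) \subseteq \overline{\conv S(\g)}$ by (a). Since $f$ is affine linear, nonnegativity on $S(\g)$ propagates first to $\conv S(\g)$ by linearity, then to its closure by continuity of $f$, so $\ph(f) = f(x_\ph) \geq 0$.

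For the harder direction (b) $\Rightarrow$ (a), I proceed by contradiction: assume $x \in S_d(\g) \setminus \overline{\conv S(\g)}$. The boundary case $S(\g) = \emptyset$ must be handled first and separately; there (b) applied to $f = -1$ (vacuously nonnegative on $\emptyset$) would force $-1 \in \overline{M_d(\g)}$, which Lemma \ref{modclo} contradicts by evaluating at any $\ph \in L_d(\g)$. When $S(\g) \neq \emptyset$, the set $\overline{\conv S(\g)}$ is a nonempty closed convex subset of $\R^n$ disjoint from the compact convex set $\{x\}$, so the separation theorem for locally convex vector spaces \ref{seplocconvs} yields a continuous linear $\psi \colon \R^n \to \R$ and reals $r < s$ with $\psi(y) \leq r < s \leq \psi(x)$ for all $y \in \overline{\conv S(\g)}$. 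Writing $\psi(y) = \sum_i a_i y_i$ and setting $f := \sum_i a_i X_i - r \in \R[\x]_1$, we have $-f \geq 0$ on $S(\g)$, so (b) gives $-f \in \overline{M_d(\g)}$. Picking $\ph \in L_d(\g)$ with $x_\ph = x$ and evaluating via Lemma \ref{modclo} yields $\ph(-f) \geq 0$, whereas $\ph(-f) = -f(x) = r - \psi(x) \leq r - s < 0$, the desired contradiction.

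The main obstacle is the (b) $\Rightarrow$ (a) direction, and specifically having the right separation theorem available in the correct form: I need to separate a point from a closed convex set, which requires a genuinely strict separation (hence the use of \ref{seplocconvs} rather than the weaker \ref{septopvs}). The other subtlety is remembering to dispose of the edge case $S(\g) = \emptyset$ beforehand, since the separation argument implicitly assumes $\overline{\conv S(\g)}$ is nonempty in order to produce a meaningful $r$.
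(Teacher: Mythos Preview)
Your proof is correct and follows essentially the same route as the paper: both directions hinge on Lemma~\ref{modclo} together with the identity $\ph(f)=f(x_\ph)$ for linear $f$, and the (b)$\Rightarrow$(a) direction uses the separation theorem~\ref{seplocconvs} to produce a linear polynomial strictly negative at the offending point. The only notable difference is that you treat the case $S(\g)=\emptyset$ separately, whereas the paper applies~\ref{seplocconvs} uniformly (its statement covers the case where the closed set is empty); your explicit case distinction is harmless extra care rather than a genuine divergence in strategy.
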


\begin{proof}
Suppose first that (a) holds. In order to show (b), let $f\in\R[\x]_1$ with $f\ge0$ on $S(\g)$. Since $f$ is linear it follows that
$f\ge0$ on $\conv S(\g)$ and by continuity even on its closure.
Then (a) implies that in particular $f\ge0$ on $S_d(\g)$. But then
$\ph(f)=f(\ph(X_1),\ldots,\ph(X_n))\ge0$ for all $\ph\in L_d(\g)$. By Lemma \ref{modclo}, this implies $f\in\overline{M_d(\g)}$ as desired.

Conversely, suppose now that (b) holds and let $x\in\R^n\setminus\overline{\conv S(\g)}$. We show that $x\notin S_d(\g)$.
By \ref{seplocconvs} there is a linear form $f_0\in\R[\x]_1$ and $r\in\R$ with $f_0(y)\le r<f_0(x)$ for all $y\in\overline{\conv S(\g)}$.
Setting $f:=r-f_0\in\R[\x]_1$, it follows that $f(y)\ge0>f(x)$ for all $y\in\overline{\conv S(\g)}$. In particular $f\ge0$ on $S(\g)$ and therefore
$f\in\overline{M_d(\g)}$ by (b). By Lemma \ref{modclo}, we have now $f(\ph(X_1),\ldots,\ph(X_n))=\ph(f)\ge0$ for all $\ph\in L_d(\g)$. Hence
$f\ge0$ on $S_d(\g)$. From $f(x)<0$ we now deduce that $x\notin S_d(\g)$ as desired.
\end{proof}

\begin{dfpro}\emph{[$\to$ \ref{dfconv}, \ref{defcone}]}\quad Let $V$ be a real vector space and $A\subseteq V$. The smallest cone containing $A$ is obviously
\[\cone A:=\left\{\sum_{i=1}^m\la_ix_i\mid m\in\N_0,\la_i\in\R_{\ge0},x_i\in A\right\}.\]
We call it the cone \emph{generated} by $A$ or the \emph{conic hull} of $A$.
\end{dfpro}

\begin{rem}\label{convcone}
Let $V$ be real vector space and $A\subseteq V$. Consider the subset $A\times\{1\}$ of the real vector space $V\times\R$.
One easily sees that \[\conv A=\{x\mid(x,1)\in\cone(A\times\{1\})\}.\]
\end{rem}

\begin{lem}[Carathéodory]\label{cara}
Let $n\in\N_0$, $V$ an $n$-dimensional real vector space and $A\subseteq V$.
\begin{enumerate}[(a)]
\item $\forall x\in\cone A:\exists B:(B\subseteq A\et~\#B\le n\et x\in\cone B)$
\item $\forall x\in\conv A:\exists B:(B\subseteq A\et~\#B\le n+1\et x\in\conv B)$
\end{enumerate}
\end{lem}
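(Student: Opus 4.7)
The plan is to prove (a) by a standard minimality argument and then deduce (b) from (a) via the reduction in Remark \ref{convcone}, so the real work is in (a).

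For (a), fix $x\in\cone A$. By definition there exist $m\in\N_0$, pairwise distinct $x_1,\dots,x_m\in A$ and $\lambda_1,\dots,\lambda_m\in\R_{>0}$ with $x=\sum_{i=1}^m\lambda_ix_i$; among all such representations I choose one with $m$ minimal. The claim is then $m\le n$. Suppose for contradiction that $m>n=\dim V$. Then $x_1,\dots,x_m$ are linearly dependent, so there exist $\mu_1,\dots,\mu_m\in\R$, not all zero, with $\sum_{i=1}^m\mu_ix_i=0$. After replacing $\mu$ by $-\mu$ if needed, at least one $\mu_i$ is strictly positive. Set
\[
t:=\min\left\{\frac{\lambda_i}{\mu_i}\mid i\in\{1,\dots,m\},\ \mu_i>0\right\}>0,
\]
and fix some $j$ attaining this minimum. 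Then $\lambda_i-t\mu_i\ge0$ for every $i$, with equality at $i=j$, and
\[
x=\sum_{i=1}^m\lambda_ix_i-t\sum_{i=1}^m\mu_ix_i=\sum_{i\ne j}(\lambda_i-t\mu_i)x_i,
\]
which after dropping the zero coefficients (and relabeling) is a representation of $x$ as a positive combination of strictly fewer than $m$ pairwise distinct elements of $A$, contradicting minimality. Taking $B:=\{x_1,\dots,x_m\}$ then yields $\#B=m\le n$ and $x\in\cone B$. The degenerate case $m=0$ (so $x=0$) is trivial with $B:=\emptyset$.

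For (b), apply (a) inside $V\times\R$, which has dimension $n+1$. Given $x\in\conv A$, Remark \ref{convcone} gives $(x,1)\in\cone(A\times\{1\})$, so (a) applied to $A\times\{1\}\subseteq V\times\R$ produces a set $B'\subseteq A\times\{1\}$ with $\#B'\le n+1$ and $(x,1)\in\cone B'$. Writing $B'=\{(y_1,1),\dots,(y_k,1)\}$ with the $y_i$ pairwise distinct and $k\le n+1$, we obtain $\mu_1,\dots,\mu_k\in\R_{\ge0}$ with $(x,1)=\sum_{i=1}^k\mu_i(y_i,1)$. Comparing the second coordinates gives $\sum_{i=1}^k\mu_i=1$, so $B:=\{y_1,\dots,y_k\}\subseteq A$ satisfies $\#B\le n+1$ and $x\in\conv B$.

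There is no serious obstacle here; the only points requiring mild care are the minimality/exchange step (making sure $t>0$ exists and that at least one coefficient truly drops to zero while none goes negative) and keeping track of pairwise distinctness so that ``$m$ terms'' really means ``$\#B=m$'' rather than just a count with repetition. Once (a) is phrased in terms of a minimal support, (b) is essentially a one-line corollary of the homogenization trick in \ref{convcone}.
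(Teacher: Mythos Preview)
Your proof is correct. For part (b) you do exactly what the paper does: lift to $V\times\R$ via Remark \ref{convcone} and apply (a) in dimension $n+1$.

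For part (a) the approaches differ. You give the classical self-contained minimality/exchange argument: take a shortest positive combination, and if it uses more than $n$ vectors, exploit a linear dependence to shorten it. The paper instead reduces (a) to Theorem \ref{fundthmlin}: after passing to the span of $A$, pick a finite generating set $E\subseteq A$ with $x\in\cone E$ and apply \ref{fundthmlin}, which directly hands you a basis $B\subseteq E$ with $x\in\cone B$ (the Farkas-type alternative being ruled out by $x\in\cone E$). Your route is more elementary and needs no prior machinery; the paper's route is shorter on the page because the real work (a simplex-type pivoting argument) has already been done inside \ref{fundthmlin}. Both yield the same bound $\#B\le n$, and the paper's version even gives a linearly independent $B$ when $A$ spans $V$, though that extra precision is not used later.
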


\begin{proof}
To prove (a), we may suppose that $A$ generates $V$ as a vector space since otherwise we can pass over from $V$ to the subspace
generated by $A$. Let $x\in\cone A$. Choose a finite set $E\subseteq A$ such that $E$ generates $V$ and
$x\in\cone E$. By \ref{fundthmlin} there is a basis
$B\subseteq E$ of $V$ such that $x\in\cone B$ or there is some $\ell\in V^*$ with $\ell(E)\subseteq\R_{\ge0}$ and $\ell(x)<0$. But the latter
cannot occur since $\ell(E)\subseteq\R_{\ge0}$ would imply $\ell(x)\in\ell(\cone E)\subseteq\R_{\ge0}$. Clearly $\#B=n$ since $B$ is a basis.

To prove (b), let $x\in\conv A$. By Remark \ref{convcone}, we have $(x,1)\in\cone(A\times\{1\})$. Since any subset of $A\times\{1\}$ is of the form $B\times\{1\}$ for some $B\subseteq A$,
(a) yields that there is some $B\subseteq A$ with $\#B\le\dim(V\times\R)=n+1$ and $(x,1)\in\cone(B\times\{1\})$. Again by Remark
\ref{convcone}, we get $x\in\conv B$.
\end{proof}

\begin{lem}\label{convhullcompact}
If $S\subseteq\R^n$ is compact, then $\conv S$ is compact.
\end{lem}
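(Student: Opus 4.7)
The plan is to exhibit $\conv S$ as the continuous image of a compact set, and then invoke the fact that continuous images of (quasi)compact spaces are (quasi)compact (Exercise \ref{quasicompactimage}), together with the observation that $\R^n$ is a Hausdorff space so that such images are automatically closed (\ref{comclo}), although we really only need compactness.

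First, I would introduce the standard simplex
\[
\Delta := \left\{\lambda \in \R_{\ge 0}^{n+1} \;\middle|\; \sum_{i=1}^{n+1}\lambda_i = 1\right\} \subseteq \R^{n+1},
\]
which is a bounded closed subset of $\R^{n+1}$ and hence compact (this is the classical Heine--Borel statement over $\R$, which is \ref{sacompreals} applied to a semialgebraic set, or just analysis over $\R$). Then $\Delta \times S^{n+1}$ is a product of finitely many compact spaces, hence compact by Tikhonov's theorem \ref{tikhonov} (or \ref{tikhonovcor}).

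Next, I would consider the map
\[
f \colon \Delta \times S^{n+1} \to \R^n, \qquad (\lambda, x_1, \ldots, x_{n+1}) \mapsto \sum_{i=1}^{n+1} \lambda_i x_i,
\]
which is continuous because addition and scalar multiplication in $\R^n$ are continuous. By the definition of the convex hull, $f(\Delta \times S^{n+1}) \subseteq \conv S$; conversely, Carathéodory's lemma \ref{cara}(b) applied in the $n$-dimensional real vector space $\R^n$ tells us that every element of $\conv S$ can be written as a convex combination of at most $n+1$ elements of $S$, so by padding with repetitions if needed we obtain $\conv S \subseteq f(\Delta \times S^{n+1})$. Hence $\conv S = f(\Delta \times S^{n+1})$ is the continuous image of a compact space and therefore compact by \ref{quasicompactimage} (using \ref{topvshausdorff} or rather the elementary fact that $\R^n$ is Hausdorff to ensure that the image is also closed, which is part of being compact in our definition \ref{dfcomp}).

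I do not anticipate a real obstacle here: the only subtle point is being pedantic about Carathéodory giving \emph{at most} $n+1$ summands rather than exactly $n+1$, which is easily handled by repeating one of the chosen points of $S$ with weight $0$ (well-defined since $S \ne \emptyset$ unless $\conv S = \emptyset$, in which case the statement is trivial).
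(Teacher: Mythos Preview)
Your proof is correct and follows essentially the same route as the paper: both express $\conv S$ as the image of the compact set $\Delta\times S^{n+1}$ under the continuous map $(\lambda,x_1,\dots,x_{n+1})\mapsto\sum_i\lambda_ix_i$, invoking Carath\'eodory \ref{cara}(b) for surjectivity and \ref{quasicompactimage} for compactness of the image. Your extra remarks on padding and on the Hausdorff property are harmless elaborations (note that the image is automatically Hausdorff as a subspace of $\R^n$, so quasicompactness already gives compactness without separately arguing closedness).
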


\begin{proof} Set $\De:=\{\la\in\R_{\ge0}^{n+1}\mid\la_1+\ldots+\la_{n+1}=1\}$. The set
$\De\times S^{n+1}\subseteq\R^{n+1+n(n+1)}$ is bounded and closed and therefore compact.
By Carathéodory \ref{cara}(b), we have that $\conv S$ is the image of the map
\[\De\times S^{n+1}\to\R^n,\ (\la_1,\ldots,\la_{n+1},x_1,\ldots,x_{n+1})\mapsto\sum_{i=1}^{n+1}\la_ix_i.\]
But since this map is continuous, its image is compact [$\to$ \ref{quasicompactimage}].
\end{proof}

\begin{lem}\label{homogeneousclosed}
Let $V$ and $W$ finite-dimensional real vector spaces [$\to$ \ref{tacitly}] and \[f\colon V\to W\] continuous with $f^{-1}(\{0\})=\{0\}$. Suppose there exists $r\in\R_{>0}$ such that \[\forall v\in V:\forall\la\in\R_{\ge0}:f(\la x)=\la^rf(x).\]
Then $f(V)$ is closed in $W$.
\end{lem}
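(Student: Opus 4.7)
The plan is to reduce the closedness of $f(V)$ to the closedness of the set $\R_{\ge0}\cdot K$ where $K:=f(S)$ is the image of the unit sphere $S$ in $V$ (with respect to any norm, which exists by Proposition \ref{onetop}). First I would observe that $f(0)=0$: applying the homogeneity with $\lambda=0$ and any $x\in V$ gives $f(0)=0^r f(x)=0$ since $r>0$. Then by \ref{onetop} and the discussion in \ref{tacitly}, the topology on $V$ is the one induced by any norm $\|\cdot\|$, so $S$ is closed and bounded, hence compact (cf.\ the argument in \ref{onetop} itself, or via \ref{sacompreals} after identifying $V$ with $\R^{\dim V}$). Since $f$ is continuous, $K:=f(S)$ is compact in $W$ by \ref{quasicompactimage}, hence closed in $W$ by \ref{comclo} and \ref{topvshausdorff}.

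Next I would use the homogeneity to identify $f(V)$ with $\R_{\ge0}\cdot K:=\{\lambda y\mid\lambda\ge0,y\in K\}$. For the inclusion $f(V)\subseteq\R_{\ge0}\cdot K$: if $x=0$ then $f(x)=0=0\cdot y$ for any $y\in K$ (which is nonempty since $V\ne\{0\}$ is the only interesting case; if $V=\{0\}$ then $f(V)=\{0\}$ is trivially closed); if $x\ne0$ then $f(x)=\|x\|^r f(x/\|x\|)\in\R_{>0}\cdot K$. For the reverse inclusion, given $\lambda\ge0$ and $y=f(s)$ with $s\in S$, we have $\lambda y=f(\lambda^{1/r}s)\in f(V)$ when $\lambda>0$, and $\lambda y=0=f(0)\in f(V)$ when $\lambda=0$.

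It remains to show that $\R_{\ge0}\cdot K$ is closed in $W$. Since $f^{-1}(\{0\})=\{0\}$ and $0\notin S$, we have $0\notin K$, and so by compactness of $K$ together with the continuity of any norm $\|\cdot\|_W$ on $W$ (which takes a minimum on $K$ by \ref{takeson}), there is $d\in\R_{>0}$ with $\|y\|_W\ge d$ for all $y\in K$. Let $(y_n)_{n\in\N}$ be a sequence in $\R_{\ge0}\cdot K$ converging to some $y\in W$, say $y_n=\lambda_nz_n$ with $\lambda_n\ge0$ and $z_n\in K$. If the sequence $(\lambda_n)$ were unbounded, then along a subsequence $\lambda_n\to\infty$ and hence $\|y_n\|_W=\lambda_n\|z_n\|_W\ge\lambda_nd\to\infty$, contradicting the convergence of $(y_n)$. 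So $(\lambda_n)$ is bounded, and by compactness of $K$ and of a bounded interval in $\R_{\ge0}$ we can pass to a subsequence along which $\lambda_n\to\lambda\in\R_{\ge0}$ and $z_n\to z\in K$, whence $y=\lambda z\in\R_{\ge0}\cdot K$.

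The argument is essentially the same as in the proof of \ref{compactbaseclosed}, the only new ingredient being the use of the homogeneity to express $f(V)$ as $\R_{\ge0}\cdot K$; the main (but rather mild) obstacle is to check carefully that the homogeneity combined with $f^{-1}(\{0\})=\{0\}$ really yields $0\notin K$, which is where $r>0$ and the injectivity of $f$ at the origin enter.
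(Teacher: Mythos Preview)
Your proof is correct and follows essentially the same approach as the paper: both arguments norm the spaces, use that $f$ is bounded away from zero on the unit sphere $S$ (via compactness and $f^{-1}(\{0\})=\{0\}$), deduce boundedness of the relevant sequence, and conclude by Bolzano--Weierstrass. The only cosmetic difference is that the paper takes the convergent subsequence directly in $V$, whereas you first rewrite $f(V)=\R_{\ge0}\cdot f(S)$ and then argue in $W$ in the spirit of \ref{compactbaseclosed}; the underlying idea is identical.
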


\begin{proof}
Make both $V$ and $W$ into normed vector spaces by fixing arbitrary norms. Let $(x_k)_{k\in\N}$ be a sequence in $V$ such that
$\lim_{k\to\infty}f(x_k)=:y$ exists. We show that $y\in f(V)$. Since $f(0)=0$, we suppose WLOG $y\ne0$. Then WLOG
$f(x_k)\ne0$ and in particular $x_k\ne0$ for all $k\in\N$. Due to $f^{-1}(\{0\})=\{0\}$, we have for the continuous function
\[h\colon S:=\{x\in V\mid\|x\|=1\}\to\R,\ x\mapsto\|f(x)\|\]
that $h>0$ on $S$. Because $S$ is compact, there is some $\ep\in\R_{>0}$ such that $h\ge\ep$ on $S$. Now
\[\|f(x_k)\|=\|x_k\|^r\left\|f\left(\frac{x_k}{\|x_k\|}\right)\right\|\ge\|x_k\|^r\ep.\]
The convergent sequence $(f(x_k))_{k\in\N}$ is of course bounded and thus $(x_k)_{k\in\N}$ is now also bounded.
By the Bolzano–Weierstrass theorem, we may WLOG suppose that $(x_k)_{k\in\N}$ is convergent with $x:=\lim_{k\to\infty}x_k\in V$.
Now \[y=\lim_{k\to\infty}f(x_k)\overset{f\text{ continuous}}=f\left(\lim_{k\to\infty}x_k\right)=f(x)\in f(V).\]
\end{proof}

\begin{lem}\label{truncclosed}
Let $d,m\in\N_0$ and $\g\in\R[\x]^m$. Suppose that $S(\g)$ has nonempty interior. Then $M_d(\g)$ is closed.
\end{lem}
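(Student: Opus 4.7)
The plan is to realise $M_d(\g)$ as the image of a continuous, positively homogeneous polynomial map defined on a \emph{finite-dimensional} real vector space, and then to invoke Lemma~\ref{homogeneousclosed}. The reason this is not automatic from the definition \ref{dftrunc} is that the sums of squares involved are a priori of unbounded length, so the defining parameter space of $M_d(\g)$ is infinite-dimensional; I first need to cap the number of terms.

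To do this, I would fix $i \in \{0,\dots,m\}$ with $g_i \ne 0$ and apply the conic form of Carathéodory (Lemma~\ref{cara}(a)) to the cone $\sum \R[\x]_{r_i}^2 \subseteq \R[\x]_{2r_i}$, whose conic generators are the squares $p^2$ with $p \in \R[\x]_{r_i}$. This gives that every element of $\sum \R[\x]_{r_i}^2$ is a nonnegative linear combination of at most $N_i := \dim \R[\x]_{2r_i}$ such squares, and after absorbing the nonnegative scalars into the squares themselves one obtains an honest sum of at most $N_i$ squares. Consequently, setting $I := \{i \in \{0,\dots,m\} \mid g_i \ne 0\}$, every element of $M_d(\g)$ can be written as $\sum_{i \in I} \sum_{j=1}^{N_i} p_{ij}^2 g_i$, so $M_d(\g) = \im f$ for the polynomial map
\[
f \colon V := \prod_{i \in I} \R[\x]_{r_i}^{N_i} \longrightarrow \R[\x]_d, \qquad (p_{ij}) \longmapsto \sum_{i \in I} \sum_{j=1}^{N_i} p_{ij}^2 g_i.
\]
This $f$ is continuous and satisfies $f(\la \cdot (p_{ij})) = \la^2 f((p_{ij}))$ for all $\la \in \R_{\ge 0}$, so Lemma~\ref{homogeneousclosed} applies as soon as I show $f^{-1}(\{0\}) = \{0\}$.

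This last verification is the heart of the matter and is where the hypothesis that $S(\g)$ has nonempty interior is used. Suppose $f((p_{ij})) = 0$, i.e.\ $\sum_{i,j} p_{ij}^2 g_i = 0$ as a polynomial. Pick a nonempty open set $U \subseteq S(\g)$. Evaluating at any $x \in U$ yields a sum of nonnegative reals $p_{ij}(x)^2 g_i(x)$ equal to $0$, so each such term vanishes. For $i = 0$, $g_0 = 1$, hence $p_{0j}$ vanishes on the nonempty open set $U$ and thus $p_{0j} = 0$ by Lemma~\ref{pol0}. For $i \in I$ with $i \ge 1$, the zero set $\{g_i = 0\}$ of the nonzero polynomial $g_i$ has empty interior in $\R^n$ (again by Lemma~\ref{pol0}), so $U_i := U \cap \{g_i > 0\}$ is open and nonempty; on $U_i$ we have $g_i > 0$ together with $p_{ij}^2 g_i = 0$, forcing $p_{ij}$ to vanish on $U_i$ and hence, by Lemma~\ref{pol0} once more, $p_{ij} = 0$ identically.

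The principal obstacle is precisely this last step: without the nonempty interior assumption one could not produce an open set on which to carry out the pointwise argument, and one could not exclude nontrivial algebraic relations among the $g_i$ that would produce a nonzero element in $f^{-1}(\{0\})$. The Carathéodory reduction and the application of Lemma~\ref{homogeneousclosed} are then essentially bookkeeping.
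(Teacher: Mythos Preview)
Your proof is correct and follows essentially the same approach as the paper's: Carath\'eodory to bound the number of square terms, then Lemma~\ref{homogeneousclosed} applied to the resulting quadratically homogeneous map, with the nonempty interior used to show $f^{-1}(\{0\})=\{0\}$. The only cosmetic difference is that the paper fixes a single nonempty open set $S'=\{x\in S(\g)\mid g_1(x)\dotsm g_m(x)\ne0\}$ on which all $g_i$ are simultaneously positive, whereas you treat each $i$ separately via $U_i=U\cap\{g_i>0\}$; both work equally well.
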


\begin{proof}
Obviously $M_d(\g)$ does not change if we discard those $g_i$ which are either the zero polynomial or whose degree exceeds $d$
[$\to$ \ref{hierarchy}(f)].
Moreover, discarding some of the $g_i$ does certainly not take away any points from $S(\g)$.
Therefore we have WLOG $0\le\deg(g_i)\le d$ for all $i\in\{1,\dots,m\}$.
Set $g_0:=1\in\R[\x]$. For $i\in\{0,\dots,m\}$, set $r_i:=\left\lfloor\frac{d-\deg g_i}2\right\rfloor$ and
$C_i:=\cone(\{p^2\mid p\in\R[\x]_{r_i}\})\subseteq\R[\x]_{2r_i}$.
By \ref{dftrunc}, we get easily
\[
M_d(\g)=
\left\{\sum_{i=0}^ms_ig_i\mid s_0\in C_0,\ldots,s_m\in C_m\right\}\subseteq\R[\x]_d.
\]
Setting $k_i:=\dim(\R[\x]_{2r_i})$ for $i\in\{0,\dots,m\}$, we get from this easily by Carathéodory \ref{cara}(a) that
\[
M_d(\g)=
\left\{\sum_{i=0}^m\left(\sum_{j=1}^{k_i}p_{ij}^2\right)g_i~\middle|~p_{ij}\in\R[\x]_{r_i}\right\}\subseteq\R[\x]_d.
\]
In other words, $M_d(\g)$ is the image of the map
\[
f\colon\prod_{i=0}^m\R[\x]_{r_i}^{k_i}\to\R[\x]_d,\ ((p_{ij})_{j\in\{1,\ldots,k_i\}})_{i\in\{0,\ldots,m\}}\mapsto
\sum_{i=0}^m\sum_{j=1}^{k_i}p_{ij}^2g_i.
\]
This map is quadratically homogeneous, that is $f(\la p)=\la^2f(p)$ for all $p$ in its domain and all $\la\in\R$.
If we can prove that $f(p)=0$ implies $p=0$ for all $p$ in the domain of $f$, then we will be done by Lemma \ref{homogeneousclosed}.
Consider the polynomial $h:=g_1\cdots g_m\ne0$. Since $S(\g)$ has non-empty interior, Lemma \ref{pol0} implies that
$h$ does not vanish on the whole of $S(\g)$. Hence
\[S':=\{x\in S(\g)\mid h(x)\ne0\}=\{x\in\R^n\mid g_1(x)>0,\ldots,g_m(x)>0\}\]
is non-empty.
Now if
\[
p:=((p_{ij})_{j\in\{1,\ldots,k_i\}})_{i\in\{0,\ldots,m\}}\in\prod_{i=0}^m\R[\x]_{r_i}^{k_i}
\]
such that $f(p)=0$, then each $p_{ij}$ vanishes on $S'$ so that $p=0$ as desired because $S'$ is open.
\end{proof}

\begin{thm}\label{exact2}\emph{[$\to$ \ref{exact1}]}\quad
Let $d\in\N$, $m\in\N_0$ and $\g\in\R[\x]^m$ such that $S(\g)$ is compact with nonempty interior. Then the following are equivalent:
\begin{enumerate}[\normalfont(a)]
\item $S_d(\g)=\conv S(\g)$
\item $\forall f\in\R[\x]_1:(f\ge0\text{ on }S(\g)\implies f\in M_d(\g))$
\end{enumerate}
\end{thm}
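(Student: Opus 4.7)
The plan is to deduce Theorem \ref{exact2} as a direct ``closure upgrade'' of the already established Lemma \ref{exact1}, using the two preceding lemmas to replace the closures by the sets themselves. In Lemma \ref{exact1}, the statement involves $\overline{\conv S(\g)}$ on the geometric side and $\overline{M_d(\g)}$ on the algebraic side; under the stronger hypotheses of Theorem \ref{exact2} (compactness of $S(\g)$ and nonempty interior), both closures will turn out to be redundant, and a trivial inclusion from Remark \ref{hierarchy}(d) will promote the one-sided containment of Lemma \ref{exact1}(a) to an equality.

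Concretely, I would first check that the hypotheses of Lemma \ref{exact1} are in force. Since $S(\g)$ has nonempty interior, it is in particular nonempty, hence by Remark \ref{hierarchy}(c) we have $S(\g)\subseteq S_d(\g)$, so $S_d(\g)\ne\emptyset$. Next, I would observe that the two closures appearing in Lemma \ref{exact1} are superfluous here: by Lemma \ref{convhullcompact}, compactness of $S(\g)$ yields that $\conv S(\g)$ is compact, hence closed in $\R^n$ (Hausdorff + compact is closed, \ref{comclo}), so $\overline{\conv S(\g)}=\conv S(\g)$; and by Lemma \ref{truncclosed}, the nonempty interior hypothesis on $S(\g)$ yields $\overline{M_d(\g)}=M_d(\g)$. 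After these two substitutions, the two equivalent conditions from Lemma \ref{exact1} become
\[
\text{(a')}\quad S_d(\g)\subseteq\conv S(\g)\qquad\text{and}\qquad\text{(b')}\quad\forall f\in\R[\x]_1:(f\ge0\text{ on }S(\g)\implies f\in M_d(\g)).
\]

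Finally, (b') is literally condition (b) of the theorem, while (a') combined with the always-valid inclusion $\conv S(\g)\subseteq S_d(\g)$ from Remark \ref{hierarchy}(d) is equivalent to $S_d(\g)=\conv S(\g)$, which is (a). Applying Lemma \ref{exact1} thus yields the desired equivalence (a)$\iff$(b).

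There is no real obstacle to surmount, since the work has been done in the preceding three lemmas; the only thing to verify carefully is that both closure-removing lemmas genuinely apply under the hypotheses stated (compactness of $S(\g)$ feeds Lemma \ref{convhullcompact}, and nonempty interior of $S(\g)$ feeds Lemma \ref{truncclosed}), and that Lemma \ref{exact1} is applicable, for which one only needs $S_d(\g)\ne\emptyset$, guaranteed via $S(\g)\ne\emptyset$ and Remark \ref{hierarchy}(c).
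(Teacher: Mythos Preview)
Your proposal is correct and follows essentially the same approach as the paper: verify $S_d(\g)\ne\emptyset$ from $S(\g)^\circ\ne\emptyset$, invoke Lemmata \ref{convhullcompact} and \ref{truncclosed} to remove the closures, and then apply Lemma \ref{exact1} together with the trivial inclusion $\conv S(\g)\subseteq S_d(\g)$. The paper's proof is just a more condensed version of what you wrote.
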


\begin{proof}
We have $\emptyset\ne S(\g)^\circ\subseteq S(\g)\subseteq S_d(\g)$ and therefore $S_d(\g)\ne\emptyset$.
By the Lemmata \ref{convhullcompact} and \ref{truncclosed}, we have that $\conv S(\g)$ and $M_d(\g)$ are closed.
Now the theorem follows from Lemma \ref{exact1}.
\end{proof}

\begin{cor}\label{exact3}
Let $d\in\N$, $m\in\N_0$ and $\g\in\R[\x]^m$ such that $S(\g)$ is compact with nonempty interior. Suppose that we are given
$F\subseteq\R[\x]_1$ such that \[\conv S(\g)=\{x\in\R^n\mid\forall f\in F:f(x)\ge0\}.\] Then the following are equivalent:
 \begin{enumerate}[\normalfont(a)]
\item $S_d(\g)=\conv S(\g)$
\item $F\subseteq M_d(\g)$
\end{enumerate}
\end{cor}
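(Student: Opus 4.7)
The plan is to derive this as a direct consequence of Theorem \ref{exact2}, using the explicit half-space description of $\conv S(\g)$ given by $F$ to replace the abstract condition ``$f \ge 0$ on $S(\g) \implies f \in M_d(\g)$'' by the concrete condition ``$F \subseteq M_d(\g)$''. One direction will invoke Theorem \ref{exact2}, while the other will be handled by a direct argument using the definitions of $L_d(\g)$ and $S_d(\g)$.

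For the implication (a)$\implies$(b), I would argue as follows. Assuming $S_d(\g) = \conv S(\g)$, Theorem \ref{exact2} gives that every $f \in \R[\x]_1$ with $f \ge 0$ on $S(\g)$ lies in $M_d(\g)$. Now take any $f \in F$. By the hypothesized description of $\conv S(\g)$, we have $f \ge 0$ on $\conv S(\g)$, and since $S(\g) \subseteq \conv S(\g)$, also $f \ge 0$ on $S(\g)$. Hence $f \in M_d(\g)$, proving $F \subseteq M_d(\g)$.

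For the implication (b)$\implies$(a), I would avoid Theorem \ref{exact2} and argue directly. The inclusion $\conv S(\g) \subseteq S_d(\g)$ is free from Remark \ref{hierarchy}(d). Conversely, take $x \in S_d(\g)$ and pick $\ph \in L_d(\g)$ with $x = (\ph(X_1), \ldots, \ph(X_n))$. For any $f \in F$, our hypothesis $F \subseteq M_d(\g)$ gives $\ph(f) \ge 0$ because $\ph$ is a state of $(\R[\x]_d, M_d(\g), 1)$; by Remark \ref{usedallover}, $\ph(f) = f(\ph(X_1), \ldots, \ph(X_n)) = f(x)$. Hence $f(x) \ge 0$ for every $f \in F$, so $x \in \conv S(\g)$ by the assumed description of the latter.

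There is no real obstacle here: the content of the statement resides entirely in Theorem \ref{exact2}, and the corollary merely repackages its conclusion in a form tailored to situations where one already has a concrete linear description of $\conv S(\g)$ (for instance, when $\conv S(\g)$ is a polytope given by its facet inequalities, so that $F$ can be chosen to be a finite set of linear polynomials). The only subtlety worth flagging is that, in the direction (b)$\implies$(a), one genuinely does \emph{not} need the compactness or interior hypotheses on $S(\g)$ at all, but they are needed for (a)$\implies$(b) because Theorem \ref{exact2} relies on the closedness of $M_d(\g)$ provided by Lemma \ref{truncclosed} and on compactness of $\conv S(\g)$ provided by Lemma \ref{convhullcompact}.
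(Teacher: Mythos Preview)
Your proof is correct and follows essentially the same approach as the paper's: (a)$\implies$(b) via Theorem \ref{exact2}, and (b)$\implies$(a) by a direct verification using $\ph\in L_d(\g)$ and the linearity of each $f\in F$. Your version is in fact slightly more explicit, invoking Remark \ref{hierarchy}(d) for the trivial inclusion and Remark \ref{usedallover} for the identity $\ph(f)=f(\ph(X_1),\ldots,\ph(X_n))$, both of which the paper leaves implicit.
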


\begin{proof}(a)$\implies$(b) follows directly from the corresponding implication in Theorem \ref{exact2}. For the other implication, suppose
that (b) holds and let $x\in S_d(\g)$. We claim that $x\in\conv S(\g)$. Choose $\ph\in L_d(\g)$ such that $x=(\ph(X_1),\ldots,\ph(X_n))$.
By our hypothesis on $F$, it is enough to show that $f(x)\ge0$ for
all $f\in F$. But if $f\in F$, then $f\in M_d(\g)$ by (b) and hence $\ph(f)\ge0$. Since $f$ is linear, we have
$f(x)=f(\ph(X_1),\ldots,\ph(X_n))=\ph(f)\ge0$ as desired.
\end{proof}

\begin{ex}\label{x4y4rev}
Taking up our Example \ref{x4y4}, we consider again the same system of polynomial inequalities
\[(*)\qquad\quad 1-x_1+x_2\ge0,\quad 1-x_1^4-x_2^4\ge0\qquad(x_1,x_2\in\R).\]
This has given rise to a system $(\Box)$ of three LMIs of sizes $6$, $3$ and $1$
in $14$ unknowns $x_1,x_2,y_1,\ldots y_{12}$. We had asked the question whether
\[\{(x_1,x_2)\in\R^2\mid\exists y_1,\ldots,y_{12}:(x_1,x_2,y_1,\ldots,y_{12})\text { solves }(\Box)\}\]
equals the set of solutions of $(*)$. Translating this into our setting, we declare
\[g_1:=1-X_1-X_2\in\R[X_1,X_2],\qquad g_2:=1-X_1^4-X_2^4\in\R[X_1,X_2]\]
and $\g:=(g_1,g_2)$. Then $S(\g)$ is the set of solutions of $(*)$ and the
degree four state space associated to $\g$ can be identified with the set of solutions of $(\Box)$.
Moreover, the just mentioned question just asks whether $S(\g)=S_4(\g)$. We leave it as an exercise to the reader to show that 
$S(\g)$ is convex and compact with nonempty interior and that
\begin{align*}
F:=\{f\in\R[\x]_1\mid\exists x\in\R^2:(&f(x)=0=g_2(x)~\et\\
&\nabla f(x)=\nabla g_2(x))\}\cup\{g_1\}\subseteq\R[\x_1]
\end{align*}
satisfies $S(\g)=\{x\in\R^n\mid\forall f\in F:f(x)\ge0\}$. By Corollary \ref{exact3}, the question whether $S(\g)=S_4(\g)$
is now equivalent to $F\subseteq M_4(\g)$. Clearly $g_1\in M_4(\g)$. 
Now suppose $f\in F\setminus\{g_1\}$. We will then show
that $f\in M_4(g_2)\subseteq M_4(\g)$. By \ref{dh1888} and \ref{soslongrem}(b) this
means that there is some $\la\in\R_{\ge0}$ such that $f-\la g_2\ge0$ on $\R^2$. We claim that we can take $\la:=1$,
i.e., $f-g_2\ge0$ on $\R^2$. For this purpose, write $f=a_1X_1+a_2X_2+b$ with $a_1,a_2,b\in\R$.
Choose $x\in\R^2$ with $f(x)=0=g_2(x)$ and $\nabla f(x)=\nabla g_2(x)$. We have
\begin{align*}
&a_1x_1+a_2x_2+b=0=1-x_1^4-x_2^4\qquad\text{and}\\
&\begin{pmatrix}a_1\\a_2\end{pmatrix}=\nabla f(x)=\nabla g_2(x)=-4\begin{pmatrix}x_1^3\\x_2^3\end{pmatrix}.
\end{align*}
It follows that $-4x_1^4-4x_2^4+b=0$ and hence $b=4(x_1^4+x_2^4)=4$. Therefore we have to show that
\begin{align*}
h&:=f-g_2=-4x_1^3X_1-4x_2^3X_2+4-1+X_1^4+X_2^4\\
&=X_1^4+X_2^4-4x_1^3X_1-4x_2^3X_2+3
\end{align*}
is nonnegative on the whole of $\R^2$. Since the leading form $X_1^4+X_2^4$ of $h$ [$\to$ \ref{introhom}] is positive definite [$\to$ \ref{introhom}],
it is easy to show that $h$ assumes a minimum on $\R^2$. Choose $y\in\R^2$ such that $h(y)\le h(z)$ for all $z\in\R^2$. Then
\[4\begin{pmatrix}y_1^3\\y_2^3\end{pmatrix}-4\begin{pmatrix}x_1^3\\x_2^3\end{pmatrix}=\nabla h(y)=0.\]
Using that $\R\to\R,\ t\mapsto t^3$ is injective, we deduce $x=y$. Hence \[0=f(x)-g(x)=h(x)=h(y)\le h(z)\] for all $z\in\R^2$ as desired.
\end{ex}

\section{Strict quasiconcavity}

\begin{df}\label{dfqc}
Let $g\in\R[\x]$. If $x\in\R^n$, then we call $g$
\emph{strictly concave} at $x$ if $(\hess g)(x)\prec0$
and \emph{strictly quasiconcave} at $x$ if \[((\nabla g)(x))^Tv=0\implies
v^T(\hess g)(x)v<0\]
for all $v\in\R^n\setminus\{0\}$. If $S\subseteq\R^n$,
we call $g$ \emph{strictly (quasi-)concave} on $S$ is $g$ is
\emph{strictly (quasi-)concave} at every point of $S$.
\end{df}

\begin{rem}
Let $g\in\R[\x]$ and $x\in\R^n$ such that $\nabla g(x)=0$.
\begin{enumerate}[(a)]
\item
$g$ is strictly quasiconcave at $x$ if and only if
$\hess g(x)\prec0$.
\item
If $g$ is strictly quasiconcave at $x$ and $g(x)=0$, then there is a neighborhood $U$ of $x$ such that
$U\cap S(g)=\{x\}$.
\end{enumerate}
\end{rem}

\begin{lem}\label{prepqcc}
Let $n\in\N$, $g\in\R[\x]$ and $x\in\R^n$ such that $g(x)=0$ and $\nabla g(x)\ne0$.
Suppose $v_1,\dots,v_n$ form a basis of $\R^n$, $U$ is an open neighborhood of $0$ in
$\R^{n-1}$, $\ph\colon U\to\R$ is smooth and satisfies $\ph(0)=0$ as well as
\[(*)\qquad g(x+\xi_1v_1+\ldots+\xi_{n-1}v_{n-1}+\ph(\xi)v_n)=0\]
for all $\xi=(\xi_1,\dots,\xi_{n-1})\in U$. Then the following hold:
\begin{enumerate}[(a)]
\item $(\nabla g(x))^Tv_1=\ldots=(\nabla g(x))^Tv_{n-1}=0\iff
\nabla\ph(0)=0$
\item If $\nabla\ph(0)=0$ and $(\nabla g(x))^Tv_n>0$, then
\[\text{$g$ is strictly quasiconcave at $x$}\iff\hess\ph(0)\succ0.
\]
\end{enumerate}
\end{lem}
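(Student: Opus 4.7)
The plan is to introduce the auxiliary smooth function
\[F\colon U\to\R,\quad F(\xi):=g(x+\xi_1v_1+\ldots+\xi_{n-1}v_{n-1}+\ph(\xi)v_n),\]
which by hypothesis $(*)$ vanishes identically on $U$. Both claims will follow by computing $\nabla F$ and $\hess F$ at $\xi=0$ via the chain rule, using $\ph(0)=0$ (so that the ``base point'' is $x$) and reading off the forced identities.

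For (a), applying the chain rule to $F\equiv 0$ and evaluating at $0$ gives
\[0=\frac{\partial F}{\partial\xi_i}(0)=(\nabla g(x))^Tv_i+\frac{\partial\ph}{\partial\xi_i}(0)\,(\nabla g(x))^Tv_n\qquad(i=1,\dots,n-1).\]
If $(\nabla g(x))^Tv_1=\ldots=(\nabla g(x))^Tv_{n-1}=0$, then $\nabla g(x)\ne 0$ together with the fact that $v_1,\dots,v_n$ is a basis forces $(\nabla g(x))^Tv_n\ne 0$, and then each $\frac{\partial\ph}{\partial\xi_i}(0)$ must vanish. Conversely, $\nabla\ph(0)=0$ immediately yields $(\nabla g(x))^Tv_i=0$ for $i=1,\dots,n-1$ from the same identity.

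For (b), assume $\nabla\ph(0)=0$ and $(\nabla g(x))^Tv_n>0$. Differentiating once more and using $\nabla\ph(0)=0$ to discard cross terms, the identity $F\equiv 0$ yields at $\xi=0$
\[0=v_j^T(\hess g)(x)v_i+(\nabla g(x))^Tv_n\cdot\frac{\partial^2\ph}{\partial\xi_i\partial\xi_j}(0)\]
for all $i,j\in\{1,\dots,n-1\}$. Therefore the symmetric $(n-1)\times(n-1)$ matrix $M$ with entries $M_{ij}:=v_i^T(\hess g)(x)v_j$ satisfies $M=-(\nabla g(x))^Tv_n\cdot\hess\ph(0)$. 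Moreover, by part (a), a vector $v=\sum_{i=1}^nc_iv_i$ satisfies $(\nabla g(x))^Tv=c_n(\nabla g(x))^Tv_n$, which vanishes iff $c_n=0$. Consequently $g$ is strictly quasiconcave at $x$ iff $c^TMc<0$ for every nonzero $c\in\R^{n-1}$, i.e.\ iff $M\prec 0$. Since $(\nabla g(x))^Tv_n>0$, this is in turn equivalent to $\hess\ph(0)\succ 0$.

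There is no conceptual obstacle here; the work is essentially a disciplined chain-rule computation. The one place requiring care is organizing the second-order computation so that the term involving $\hess\ph(0)$ is isolated cleanly — this is precisely where the assumption $\nabla\ph(0)=0$ buys us that the mixed terms $(\hess g)(x)(v_n,v_i)\cdot\partial_j\ph$ vanish and we are left with the simple relation $M=-(\nabla g(x))^Tv_n\cdot\hess\ph(0)$ that drives (b).
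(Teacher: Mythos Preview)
Your proof is correct and follows essentially the same approach as the paper: differentiate the identity $(*)$ once to obtain $(\nabla g(x))^T(v_i+\partial_i\ph(0)\,v_n)=0$ for part (a), then differentiate again and use $\nabla\ph(0)=0$ to get $v_j^T(\hess g(x))v_i+(\nabla g(x))^Tv_n\cdot\partial_i\partial_j\ph(0)=0$ for part (b), concluding via the fact that $v_1,\dots,v_{n-1}$ span exactly the kernel of $v\mapsto(\nabla g(x))^Tv$. The only cosmetic difference is that you package the computation in terms of an auxiliary function $F\equiv0$, whereas the paper differentiates $(*)$ directly.
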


\begin{proof}
Taking the derivative of $(*)$ with respect to $\xi_i$, we get
\[(**)\qquad
(\nabla g(x+\xi_1v_1+\ldots+\xi_{n-1}v_{n-1}+\ph(\xi)v_n))^T\left(v_i+\frac{\partial\ph(\xi)}{\partial\xi_i}v_n\right)=0\]
for all $i\in\{1,\dots,n-1\}$ and $\xi\in U$. Setting here $\xi$ to $0$,
we get \[(\nabla g(x))^T\left(v_i+\frac{\partial\ph(\xi)}{\partial\xi_i}\middle|_{\xi=0}v_n\right)=0\] 
for each $i\in\{1,\dots,n-1\}$. From this, (a) follows easily (for ``${\implies}$'' use that $(\nabla g(x))^Tv_n\ne0$ since $v_1,\dots,v_n$ is a basis).
Taking the derivative of $(**)$ with respect to $\xi_j$, we get
\begin{multline*}
\left(v_j+\frac{\partial\ph(\xi)}{\partial\xi_j}v_n\right)^T(\hess g(x+\xi_1v_1+\ldots+\xi_{n-1}v_{n-1}+\ph(\xi)v_n))\left(v_i+\frac{\partial\ph(\xi)}{\partial\xi_i}v_n\right)\\
+(\nabla g(x+\xi_1v_1+\ldots+\xi_{n-1}v_{n-1}+\ph(\xi)v_n))^T
\left(\frac{\partial^2\ph(\xi)}{\partial\xi_i\partial\xi_j}v_n\right)=0
\end{multline*}
for all $i,j\in\{1,\dots,n-1\}$ and $\xi\in U$.
To prove (b), suppose now that $\nabla\ph(0)=0$ and $(\nabla g(x))^Tv_n>0$. Then the preceding equation implies
\[\hess\ph(0)=-\frac1{(\nabla g(x))^Tv_n}(v_i^T(\hess g(x))v_j)_{i,j\in\{1,\dots,n-1\}}.\]
Since $v_1,\ldots,v_{n-1}$ now form a basis of the orthogonal complement of $\nabla g(x)$ by (a), the matrix
$(v_i^T(\hess g(x))v_j)_{i,j\in\{1,\dots,n-1\}}$ is negative definite if and only if $g$ is strictly quasiconcave at $x$
(see Definition \ref{dfqc}).
\end{proof}

\noindent
The following proposition is important for understanding the notion of quasiconcavity. It is trivial that
quasiconcavity of a polynomial $g$ at $x$ depends only on $x$ and the function $V\to\R,\ x\mapsto g(x)$
where $V$ is an arbitrarily small neighborhood of $x$. But if $g(x)=0$ and
$\nabla g(x)\ne0$, then it actually
depends only on $x$ and the function
\[V\to\{-1,0,1\},\ x\mapsto\sgn(g(x))\]
as the equivalence of Conditions (a) and (b) of the following proposition show.

\begin{notation}
For $g\in\R[\x]$, we call
\[Z(g):=\{x\in\R^n\mid g(x)=0\}\]
the \emph{real zero set} of $g$.
\end{notation}

\begin{pro}\label{qcc}
Let $n\in\N$, $g\in\R[\x]$ and $x\in\R^n$ such that
\[g(x)=0\text{ and }\nabla g(x)\ne0.\]
Suppose that $V$ is a neighborhood of $x$. Then the following are equivalent:
\begin{enumerate}[\normalfont(a)]
\item $g$ is strictly quasiconcave at $x$.
\item There is a basis $v_1,\dots,v_n$ of $\R^n$, an open neighborhood $U$ of $0$ in $\R^{n-1}$
and a smooth function $\ph\colon U\to\R$ such that $\ph(0)=0$, $\nabla\ph(0)=0$,
$\hess\ph(0)\succ0$,
\[(*)\qquad x+\xi_1v_1+\ldots+\xi_{n-1}v_{n-1}+\ph(\xi)v_n\in Z(g)\cap V\]
for all $\xi\in U$ and
\[(**)\qquad x+\la v_n\in S(g)\cap V\]
for all small enough $\la\in\R_{>0}$.
\item Condition (b) holds with ``basis'' replaced by ``orthogonal basis''.
\end{enumerate}
For any basis $v_1,\dots,v_n$ of $\R^n$ like in (b), one has
\[(***)\qquad(\nabla g(x))^Tv_1=\ldots=(\nabla g(x))^Tv_{n-1}=0\qquad\text{and}\qquad(\nabla g(x))^Tv_n>0.
\]
\end{pro}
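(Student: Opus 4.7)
The implication (c)${\Rightarrow}$(b) is immediate, so only (b)${\Rightarrow}$(a) together with $(***)$, and (a)${\Rightarrow}$(c), require work. Lemma \ref{prepqcc} will do the bulk of the linear-algebraic bookkeeping; the real content consists in (i) extracting the sign information $(***)$ from the one-sided condition $(**)$, and (ii) producing the implicit parametrization $\ph$ in the direction (a)${\Rightarrow}$(c).

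For (b)${\Rightarrow}$(a) plus $(***)$: assume a basis $v_1,\dots,v_n$ and a function $\ph$ as in (b) are given. Since $\nabla\ph(0)=0$, Lemma \ref{prepqcc}(a) yields $(\nabla g(x))^Tv_i=0$ for $i<n$. Because $\nabla g(x)\ne0$ and $v_1,\dots,v_n$ spans $\R^n$, this forces $(\nabla g(x))^Tv_n\ne0$. To pin down its sign I use $(**)$: a first-order Taylor expansion gives $g(x+\la v_n)=\la(\nabla g(x))^Tv_n+O(\la^2)$, and if $(\nabla g(x))^Tv_n$ were negative then $g(x+\la v_n)<0$ for all sufficiently small $\la>0$, contradicting $(**)$. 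Hence $(\nabla g(x))^Tv_n>0$, which proves $(***)$; Lemma \ref{prepqcc}(b) then translates $\hess\ph(0)\succ0$ directly into strict quasiconcavity of $g$ at $x$.

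For (a)${\Rightarrow}$(c): choose $v_n$ to be any positive scalar multiple of $\nabla g(x)$, and complete it to an orthogonal basis $v_1,\dots,v_n$ of $\R^n$ whose first $n-1$ vectors span $(\nabla g(x))^\perp$; by construction $(\nabla g(x))^Tv_i=0$ for $i<n$ and $(\nabla g(x))^Tv_n>0$. Consider the smooth map $\Psi\colon\R^{n-1}\times\R\to\R$, $\Psi(\xi,t):=g(x+\sum_{i<n}\xi_iv_i+tv_n)$. Then $\Psi(0,0)=0$ and $\tfrac{\partial\Psi}{\partial t}(0,0)=(\nabla g(x))^Tv_n>0$, so the implicit function theorem produces an open neighborhood $U$ of $0$ in $\R^{n-1}$ and a smooth $\ph\colon U\to\R$ with $\ph(0)=0$ solving $\Psi(\xi,\ph(\xi))=0$; after shrinking $U$ the parametrized points lie in $V$, giving $(*)$. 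Condition $(**)$ holds by the same Taylor argument as above, since $(\nabla g(x))^Tv_n>0$. Finally, Lemma \ref{prepqcc}(a) converts $(\nabla g(x))^Tv_i=0$ for $i<n$ into $\nabla\ph(0)=0$, and Lemma \ref{prepqcc}(b), combined with strict quasiconcavity of $g$ at $x$ and $(\nabla g(x))^Tv_n>0$, gives $\hess\ph(0)\succ0$.

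The ``for any basis'' addendum then follows from the first bullet of (b)${\Rightarrow}$(a), which only used the hypotheses of (b) together with $(**)$. The main obstacle I anticipate is not any single step but rather keeping the bookkeeping honest: carefully recording which of the conditions on $\ph$ go into which conclusion, and in the direction (a)${\Rightarrow}$(c) making sure that the neighborhood supplied by the implicit function theorem is small enough that $(*)$ and $(**)$ both land inside the prescribed $V$, which is achieved by a final common shrinking of $U$ and of the range of $\la$.
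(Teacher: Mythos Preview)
Your proposal is correct and follows essentially the same route as the paper: reduce everything to Lemma~\ref{prepqcc}, use the implicit function theorem for (a)$\Rightarrow$(c) with an orthogonal basis adapted to $\nabla g(x)$, and handle (c)$\Rightarrow$(b) trivially. Your Taylor-expansion argument for extracting the \emph{positivity} of $(\nabla g(x))^Tv_n$ from $(**)$ is in fact more explicit than the paper, which merely says $(***)$ is ``easy to show'' while only spelling out why that quantity is nonzero.
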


\begin{proof}
Using Lemma \ref{prepqcc}(a), it is easy to show that any $v_1,\dots,v_n$ like in (b) satisfy $(***)$ using that $(\nabla g(x))^Tv_n=0$ would
contradict the hypothesis $\nabla g(x)\ne0$ since $v_1,\dots,v_n$ is a basis. Now Part (b) of the same lemma shows that (b) implies (a).
Since it is trivial that (c) implies (b), it only remains to show that (a) implies (c).

To this end,
let (a) be satisfied. In order to show (c), choose an orthogonal basis $v_1,\dots,v_n$ of $\R^n$ satisfying $(***)$.
The implicit function theorem yields an open neighborhood
$U$ of the origin in $\R^{n-1}$ such that for each $\xi=(\xi_1,\dots,\xi_{n-1})\in U$ there is a
unique $\ph(\xi)\in\R$ satisfying $(*)$, in particular $\ph(0)=0$. Moreover, one can choose $U$ such that the resulting function
$\ph\colon U\to\R$ is smooth. From $(\nabla g(x))^Tv_n>0$, we get $(**)$. From Part (a) of Lemma \ref{prepqcc}, we get
$\nabla\ph(0)=0$. From Part (b) of the same lemma and from (a), we obtain $\hess\ph(0)\succ0$.
\end{proof}

\begin{rem}\label{quasiconcavelift}
For $g\in\R[\x]$ and $x\in\R^n$, the following are obviously equivalent:
\begin{enumerate}[(a)]
\item $g$ is strictly quasiconcave at $x$.
\item $\exists\la\in\R:\la(\nabla g(x))(\nabla g(x))^T\succ(\hess g)(x)$
\end{enumerate}
\end{rem}

\begin{pro}\label{psdinterior}
$\R^{n\times n}_{\succeq0}:=\{A\in\R^{n\times n}\mid A\succeq0\}$ is a cone in the vector space
$S\R^{n\times n}$ whose interior \emph{[$\to$ \ref{tacitly}]} is
$\R^{n\times n}_{\succ0}:=\{A\in\R^{n\times n}\mid A\succ0\}$.
\end{pro}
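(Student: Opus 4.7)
The plan is to check the three cone axioms from Definition \ref{defcone} as a warm-up and then to establish the two inclusions characterizing the interior, using that $S\R^{n\times n}$ is finite-dimensional and therefore carries by \ref{onetop} (together with the convention \ref{tacitly}) a unique vector space topology in which the entry-functionals $A\mapsto a_{ij}$, and hence all quadratic forms $A\mapsto x^T A x$ (for fixed $x\in\R^n$), are continuous. The cone properties are immediate from the definition via $x^T(\lambda A+\mu B)x=\lambda x^TAx+\mu x^TBx$ for $\lambda,\mu\in\R_{\ge0}$, so I would dispatch them in one line.

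For the inclusion $\R^{n\times n}_{\succ 0}\subseteq(\R^{n\times n}_{\succeq 0})^\circ$, I would fix $A\succ 0$ and consider the unit sphere $\Sigma:=\{x\in\R^n\mid\|x\|=1\}$, which is compact by \ref{sacompreals}. The continuous function $\Sigma\to\R,\ x\mapsto x^T A x$ attains by \ref{takeson} a minimum $\mu$, and $\mu>0$ because $A\succ 0$. Now I would invoke joint continuity of $(B,x)\mapsto x^T B x$ on $S\R^{n\times n}\times\Sigma$ together with the compactness of $\Sigma$ (a tube-lemma style argument) to produce an open neighborhood $U$ of $A$ in $S\R^{n\times n}$ such that $x^T B x>\mu/2>0$ for every $B\in U$ and every $x\in\Sigma$. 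Homogeneity in $x$ then gives $B\succ 0$, and in particular $B\succeq 0$, so $U\subseteq\R^{n\times n}_{\succeq 0}$ witnesses that $A$ is an interior point.

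For the converse inclusion $(\R^{n\times n}_{\succeq 0})^\circ\subseteq\R^{n\times n}_{\succ 0}$, I would argue by contraposition: suppose $A\in\R^{n\times n}_{\succeq 0}$ is not pd and pick $x\in\R^n\setminus\{0\}$ with $x^TAx=0$. Consider the one-parameter family $A_\varepsilon:=A-\varepsilon xx^T\in S\R^{n\times n}$ for $\varepsilon\in\R_{>0}$. Then
\[
x^T A_\varepsilon x \;=\; x^TAx-\varepsilon(x^Tx)^2 \;=\; -\varepsilon\|x\|^4 \;<\;0,
\]
so $A_\varepsilon\notin\R^{n\times n}_{\succeq 0}$. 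By continuity of scalar multiplication in the vector space topology, $A_\varepsilon\to A$ as $\varepsilon\to 0^+$, hence every neighborhood of $A$ contains some $A_\varepsilon$ outside the psd cone, proving $A\notin(\R^{n\times n}_{\succeq 0})^\circ$.

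The main obstacle is not any deep computation but rather marshalling the topological prerequisites cleanly: one must be explicit that the topology on $S\R^{n\times n}$ is the (unique) finite-dimensional vector space topology, so that entry-wise and scalar-multiplication continuity are available, and one must invoke compactness of the unit sphere together with \ref{takeson} to obtain the uniform lower bound $\mu$ in the first inclusion. Once these are pinned down, both directions are short.
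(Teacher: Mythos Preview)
Your proof is correct. The overall structure (verify the cone axioms, then establish both inclusions for the interior) is the same as the paper's, but the execution differs in both directions.

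For $\R^{n\times n}_{\succ0}\subseteq(\R^{n\times n}_{\succeq0})^\circ$, the paper works with the explicit operator norm $\|A\|:=\max_{\|x\|\le1}|x^TAx|$ and the smallest eigenvalue $\ep$ of $A$: from $A-\ep I_n\succeq0$ one reads off directly that $x^TBx\ge x^TAx-\|B-A\|\ge\ep-\ep=0$ whenever $\|B-A\|\le\ep$ and $\|x\|=1$. Your version replaces this explicit norm computation by a soft tube-lemma argument via joint continuity and compactness of $\Sigma$; this works just as well, trading the concrete eigenvalue bound for a more abstract existence statement.

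For the converse inclusion, the paper perturbs an interior point $A$ by $-\ep I_n$: since $A-\ep I_n\succeq0$ for small $\ep>0$, one gets $A\succeq\ep I_n\succ0$ immediately. Your contrapositive perturbs instead by $-\ep xx^T$ along a null direction of $A$. Both are one-line arguments; the paper's version has the slight advantage of not needing to first pick a specific null vector, while yours makes the failure of psd-ness more visibly tied to the witness $x$.
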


\begin{proof}
Equip $S\R^{n\times n}$ with the norm defined by
\[\|A\|:=\max_{\substack{x\in\R^n\\\|x\|\le1}}|x^TAx|\]
for $A\in S\R^{n\times n}$. By \ref{topvsex}(c), this norm (as any other norm)
induces the unique vector space topology on $S\R^{n\times n}$. 

If $A$ is an interior point of $\R^{n\times n}_{\succeq0}$, then there exists $\ep\in\R_{>0}$ such that
$A-\ep I_n\succeq0$ and thus $A\succeq\ep I_n\succ0$.

Conversely, let $A\in\R^{n\times n}$ satisfy
$A\succ0$. We show that $A$ is an interior point of $\R^{n\times n}_{\succeq0}$.
By \ref{psdeq}, the lowest eigenvalue $\ep$ of $A$ is nonnegative since $A\succeq0$. Actually, we have
even $\ep>0$
since $A$ has trivial kernel due to $A\succ0$. Now $A-\ep I_n$ has only nonnegative
eigenvalues and thus $A-\ep I_n\succeq0$ by \ref{psdeq}.
It suffices to show that a ball around $A$ with radius $\ep$ in
$S\R^{n\times n}$ is contained in $\R^{n\times n}_{\succeq0}$. For this purpose, let
$B\in S\R^{n\times n}$ with $\|B-A\|\le\ep$ and fix $x\in\R^n$ with $\|x\|=1$. We have to show that
$x^TBx\ge0$. But we have $x^TBx=x^TAx+x^T(B-A)x\ge x^TAx-\|B-A\|\ge\ep x^TI_nx-\ep=0$.
\end{proof}

\begin{lem}\label{strictly2neighbor}
Let $g\in\R[\x]$. If $g$ is strictly (quasi-)concave at $x\in\R^n$,
then there is a neighborhood $U$ of $x$ such that $g$ is
strictly (quasi-)concave on $U$.
\end{lem}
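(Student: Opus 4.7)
The plan is to reduce both statements to the openness of the cone of positive definite matrices inside $S\R^{n\times n}$, which was established in Proposition \ref{psdinterior}, combined with the continuity of the matrix-valued functions $y\mapsto\hess g(y)$ and $y\mapsto(\nabla g(y))(\nabla g(y))^T$ whose entries are polynomials in $y$.

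First I would handle the strictly concave case. Strict concavity of $g$ at $x$ means $-(\hess g)(x)\succ 0$, i.e.\ $-(\hess g)(x)\in\R^{n\times n}_{\succ 0}$. Consider the map
\[
\Phi\colon\R^n\to S\R^{n\times n},\quad y\mapsto-(\hess g)(y).
\]
All of its entries are polynomials in $y$, so $\Phi$ is continuous (with respect to the unique vector space topology on $S\R^{n\times n}$, cf.\ \ref{onetop} and \ref{tacitly}). By Proposition \ref{psdinterior}, $\R^{n\times n}_{\succ 0}$ is open in $S\R^{n\times n}$. Hence $U:=\Phi^{-1}(\R^{n\times n}_{\succ 0})$ is open in $\R^n$ and contains $x$, and $g$ is strictly concave on $U$.

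For the strictly quasiconcave case I would invoke the reformulation from Remark \ref{quasiconcavelift}: $g$ is strictly quasiconcave at $y$ if and only if there exists $\la\in\R$ with $\la(\nabla g(y))(\nabla g(y))^T-(\hess g)(y)\succ 0$. Pick such a $\la$ for our given point $x$. Now consider the map
\[
\Psi\colon\R^n\to S\R^{n\times n},\quad y\mapsto\la(\nabla g(y))(\nabla g(y))^T-(\hess g)(y),
\]
whose entries are again polynomials in $y$, hence continuous. Once more by Proposition \ref{psdinterior}, the set $U:=\Psi^{-1}(\R^{n\times n}_{\succ 0})$ is open in $\R^n$ and contains $x$. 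For every $y\in U$ the same $\la$ witnesses strict quasiconcavity of $g$ at $y$ via Remark \ref{quasiconcavelift}, so $g$ is strictly quasiconcave on $U$.

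There is no real obstacle here; the only small subtlety is making sure to use the \emph{same} $\la$ throughout when transferring strict quasiconcavity to nearby points, which is exactly what Remark \ref{quasiconcavelift} allows us to do since it provides a single scalar certificate whose positivity is an open condition on the matrix pencil.
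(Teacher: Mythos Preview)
Your proof is correct and follows exactly the approach of the paper: openness of $\R^{n\times n}_{\succ0}$ from Proposition \ref{psdinterior} together with continuity of the Hessian handles the concave case, and for the quasiconcave case the paper likewise appeals to Remark \ref{quasiconcavelift}, fixing a single $\la$ as you do. Your write-up simply spells out the details that the paper leaves implicit.
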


\begin{proof}
The first statement follows from the openness of $\R^{n\times n}_{\succ0}$ [$\to$ \ref{psdinterior}]
by the continuity of $\R^n\to S\R^{n\times n},\ x\mapsto(\hess g)(x)$. The second statement follows
similarly by using the equivalence of (a) and (b) in Remark \ref{quasiconcavelift}.
\end{proof}

\begin{rem}\label{derexo}
Let $k\in\N_0$, $g\in\R[\x]$ and $x\in\R^n$ with $g(x)=0$. Then
\begin{align*}
(\nabla(g(1-g)^k))(x)&=(\nabla g)(x)\qquad\text{and}\\
(\hess(g(1-g)^k))(x)&=(\hess g-2k(\nabla g)(\nabla g)^T)(x).
\end{align*}
\end{rem}

\begin{lem}\label{quasi2concave}
Suppose $g\in\R[\x]$, $u\in\R^n$ and $g(u)=0$.
Then the following are equivalent:
\begin{enumerate}[(a)]
\item $g$ is strictly quasiconcave at $u$.
\item There exists $k\in\N$ such that $g(1-g)^k$ is strictly concave at $u$.
\item There exists $k\in\N$ such that for all $\ell\in\N$ with $\ell\ge k$, we have
that $g(1-g)^\ell$ is strictly concave at $u$.
\end{enumerate}
\end{lem}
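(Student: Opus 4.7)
The plan is to reduce everything to two pieces of data at the point $u$: the gradient vector $v:=(\nabla g)(u)\in\R^n$ and the Hessian matrix $H:=(\hess g)(u)\in S\R^{n\times n}$. Since $g(u)=0$, Remark \ref{derexo} gives, for every $k\in\N$,
\[
(\nabla(g(1-g)^k))(u)=v,\qquad(\hess(g(1-g)^k))(u)=H-2k\,vv^T.
\]
Because $\nabla(g(1-g)^k)(u)=v$, strict concavity of $g(1-g)^k$ at $u$ (Definition \ref{dfqc}) is by definition the condition $H-2k\,vv^T\prec0$, i.e.\ $2k\,vv^T\succ H$. On the other hand, Remark \ref{quasiconcavelift} reformulates strict quasiconcavity of $g$ at $u$ as the existence of some real $\lambda$ with $\lambda\,vv^T\succ H$. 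After this translation, the three conditions read:
(a) $\exists\lambda\in\R:\lambda\,vv^T\succ H$; (b) $\exists k\in\N:2k\,vv^T\succ H$; (c) $\exists k\in\N:\forall \ell\ge k:2\ell\,vv^T\succ H$.

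The implications (c)$\Rightarrow$(b)$\Rightarrow$(a) are immediate: (c) specializes to (b) by taking $\ell=k$, and (b) specializes to (a) by taking $\lambda:=2k$. For (a)$\Rightarrow$(c), I would use the elementary linear-algebra observation that the sum of a positive definite matrix and a positive semidefinite matrix is positive definite. Assuming $\lambda\,vv^T\succ H$, I pick any $k\in\N$ with $2k\ge\lambda$ (possible because $\N$ is unbounded in $\R$). Then for every $\ell\ge k$ we have $2\ell\ge\lambda$ and can write
\[
2\ell\,vv^T-H=(2\ell-\lambda)\,vv^T+(\lambda\,vv^T-H),
\]
where $(2\ell-\lambda)\,vv^T\succeq0$ and $\lambda\,vv^T-H\succ0$, hence the sum is $\succ0$, which is (c).

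There is no genuine obstacle here; the whole argument is routine once the hypothesis $g(u)=0$ is used to invoke Remark \ref{derexo} (reducing the Hessian of $g(1-g)^k$ at $u$ to a rank-one perturbation of $H$), and once Remark \ref{quasiconcavelift} is invoked to rewrite strict quasiconcavity in Loewner-order language. The only mildly subtle point is remembering that a nonpositive $\lambda$ in (a) still poses no problem: if $\lambda\le0$ then $H\prec\lambda\,vv^T\preceq0$, so $-H\succ0$ and any $k\in\N$ satisfies $2k\,vv^T-H\succeq-H\succ0$; this case is in fact absorbed into the general argument above by observing that any $k\in\N$ satisfies $2k\ge0\ge\lambda$.
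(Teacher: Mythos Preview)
Your proof is correct and is precisely what the paper intends: its own proof consists of the single line ``Combine Remarks \ref{derexo} and \ref{quasiconcavelift},'' and you have spelled out exactly this combination. The only cosmetic slip is the phrase ``Because $\nabla(g(1-g)^k)(u)=v$, strict concavity \dots\ is by definition the condition $H-2k\,vv^T\prec0$''---strict concavity is defined purely via the Hessian, so the gradient computation is irrelevant there (though harmless).
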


\begin{proof}
Combine Remarks \ref{derexo} and \ref{quasiconcavelift}.
\end{proof}

\section{Lagrange multipliers from real closed fields}

\begin{rem}\label{gradientpositive}
If $u,x\in\R^n$ with $v:=x-u\ne0$, $g\in\R[\x]$, $g$ is strictly quasiconcave at $u$, $g(u)=0$ and $g\ge0$ on $\conv\{u,x\}$, then
obviously $(\nabla g(u))^Tv>0$.
\end{rem}

\begin{lem}[Existence of real Lagrange multipliers]\label{lagrange}
Suppose $u\in\R^n$, $f\in\R[\x]$, $m\in\N_0$, $\g\in\R[\x]^m$.
Let $U$ be a neighborhood of $u$ in $\R^n$ such that $U\cap S(\g)$
is convex and not a singleton. Moreover, suppose $g_1,\ldots,g_m$ are strictly quasiconcave at $u$, $f\ge0$ on $U\cap S(\g)$ and
\[f(u)=g_1(u)=\ldots=g_m(u)=0.\]
Then there are $\la_1,\ldots,\la_m\in\R_{\ge0}$ such that
\[\nabla f(u)=\sum_{i=1}^m\la_i\nabla g_i(u).\]
\end{lem}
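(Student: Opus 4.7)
The plan is to reduce the claim to the linear homogeneous Nichtnegativstellensatz \ref{linhomnichtnegativstellensatz} applied to the linear forms $\ell(X) := (\nabla f(u))^T X$ and $\ell_i(X) := (\nabla g_i(u))^T X$ ($i = 1, \dots, m$) in $\R[X_1, \dots, X_n]$. Once I establish that $\ell \ge 0$ on $S := \{v \in \R^n \mid \ell_1(v) \ge 0, \dots, \ell_m(v) \ge 0\}$, \ref{linhomnichtnegativstellensatz} directly produces $\la_1, \dots, \la_m \in \R_{\ge 0}$ with $\ell = \sum_i \la_i \ell_i$, which is exactly the desired identity $\nabla f(u) = \sum_i \la_i \nabla g_i(u)$.

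As preparation, I manufacture a privileged feasible direction. The hypotheses allow me to pick some $x \in (U \cap S(\g)) \setminus \{u\}$; set $v_0 := x - u$. Since $U \cap S(\g)$ is convex, each $g_i$ is nonnegative on $\conv\{u, x\}$, and since $g_i$ is strictly quasiconcave at $u$ with $g_i(u) = 0$, Remark \ref{gradientpositive} yields $\ell_i(v_0) > 0$ for every $i \in \{1, \dots, m\}$.

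To show $\ell \ge 0$ on $S$, fix $w \in S$ and, for each $\ep > 0$, set $w_\ep := \ep v_0 + w$. Then $\ell_i(w_\ep) = \ep \ell_i(v_0) + \ell_i(w) > 0$ for all $i$, so a first-order Taylor expansion at $u$ gives $g_i(u + t w_\ep) = t \ell_i(w_\ep) + O(t^2) > 0$ for all sufficiently small $t > 0$. Shrinking $t$ further to guarantee $u + t w_\ep \in U$, I obtain $u + t w_\ep \in U \cap S(\g)$. Combining $f \ge 0$ on this set with $f(u) = 0$ forces the one-sided directional derivative $\ell(w_\ep)$ of $f$ at $u$ along $w_\ep$ to be nonnegative; that is, $\ep \ell(v_0) + \ell(w) \ge 0$. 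Letting $\ep \to 0^+$ yields $\ell(w) \ge 0$.

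The main delicacy is the perturbation trick in the third paragraph: the given direction $w$ need not itself be strictly feasible (some $\ell_i(w)$ may vanish), so $f$ cannot be tested directly along $w$. The strict positivity $\ell_i(v_0) > 0$ furnished by Remark \ref{gradientpositive} via strict quasiconcavity is precisely what allows the infinitesimal perturbation $\ep v_0$ to strictly activate every constraint while the limit $\ep \to 0^+$ retains the original direction $w$. Once this analytic step is in hand, the passage from the resulting inequality on the linearized feasible cone to the conic combination representation is exactly what \ref{linhomnichtnegativstellensatz} delivers.
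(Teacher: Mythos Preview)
Your proof is correct and follows essentially the same approach as the paper: both pick a point $x\in(U\cap S(\g))\setminus\{u\}$, invoke Remark \ref{gradientpositive} to obtain a strictly feasible direction $v_0$, and then use the perturbation $w\mapsto w+\ep v_0$ to pass from weak to strict linearized feasibility before testing $f$ along the resulting direction. The only cosmetic difference is that the paper argues by contradiction from \ref{fundthmlin}, whereas you argue directly and then invoke its corollary \ref{linhomnichtnegativstellensatz}; these are two sides of the same coin.
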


\begin{proof}
Choose $x\in U\cap S(\g)$ with $v:=x-u\ne0$. By Remark \ref{gradientpositive}, we have \[(\nabla g_i(u))^Tv>0\]
for $i\in\{1,\ldots,m\}$ since
$U\cap S(\g)$ is convex. Assume the required Lagrange multipliers do not exist. Then \ref{fundthmlin} yields $w\in\R^n$
such that $((\nabla f)(u))^Tw<0$ and $((\nabla g_i)(u))^Tw\ge0$ for all $i\in\{1,\ldots,m\}$. Replacing $w$ by $w+\ep v$ for some
small $\ep>0$, we get even $((\nabla g_i)(u))^Tw>0$ for all $i\in\{1,\ldots,m\}$. Then for all sufficiently small $\de\in\R_{>0}$,
we have $u+\de v\in U\cap S(\g)$ but $f(u+\de v)<0$ $\lightning$.
\end{proof}

\begin{df}{}[$\to$ \ref{interiorclosure}]\label{dfboundary}
Let $M$ be a topological space and $A\subseteq M$.
We call \[\partial A:=\overline A\setminus A^\circ=\overline A\cap\overline{M\setminus A}\]
the \emph{boundary} of $A$.
\end{df}

\begin{df}\label{nearconvexboundary}
Let $S\subseteq\R^n$.
We call \[\convbd S:=S\cap\partial\conv S\] the \emph{convex boundary} of $S$.
Obviously,
 \[\convbd S=\{x\in S\mid\forall U\in\mathcal U_x:U\not\subseteq\conv S\}.\]
We say that $S$ \emph{has nonempty interior near its convex boundary}
if $\convbd S\subseteq\overline{S^\circ}$.
\end{df}

\begin{pro}\label{convbdchar}
Let $S\subseteq\R^n$. Then
\[\convbd S=\{u\in S\mid\exists\ph\in(\R^n)^*\setminus\{0\}:\forall x\in S:\ph(u)\le\ph(x)\}.\]
\end{pro}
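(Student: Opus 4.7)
The plan is to prove the two inclusions separately. For the inclusion ``$\supseteq$'', I would start with $u\in S$ and some $\ph\in(\R^n)^*\setminus\{0\}$ satisfying $\ph(u)\le\ph(x)$ for all $x\in S$. By linearity of $\ph$, this inequality extends to all convex combinations of elements of $S$, so $\ph(u)\le\ph(y)$ for all $y\in\conv S$. Since $u\in S\subseteq\conv S\subseteq\overline{\conv S}$, it remains to show $u\notin(\conv S)^\circ$, and this follows because any neighborhood of an interior point of $\conv S$ would contain points at which the nonzero continuous linear form $\ph$ takes values strictly smaller than $\ph(u)$. Hence $u\in S\cap\partial\conv S=\convbd S$.

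For the inclusion ``$\subseteq$'', I would take $u\in\convbd S$ and split into two cases depending on whether $\conv S$ has full dimension. If $\aff\conv S\ne\R^n$, then the direction of $\aff\conv S$ (in the sense of \ref{dfdimaff}) is a proper subspace of $\R^n$, so by elementary linear algebra there exists a nonzero linear form $\ph\in(\R^n)^*$ vanishing on this direction. Then $\ph$ is constant on $\aff\conv S$, and in particular $\ph(u)=\ph(x)$ for all $x\in S$, which gives the required inequality with equality.

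In the remaining case $\aff\conv S=\R^n$, the relative interior $\relint\conv S$ coincides with the Euclidean interior $(\conv S)^\circ$ by \ref{dfrelint}, so $u\in\partial\conv S$ forces $u\in\conv S\setminus\relint\conv S$. I would then apply Theorem~\ref{hasaface} to the convex set $\conv S$ and the point $u$ to obtain an exposed face $F$ of $\conv S$ with $\dim F<\dim\conv S=n$ and $u\in F$. By Definition~\ref{defexposed}, there is some $\ph\in(\R^n)^*$ with $F=\{x\in\conv S\mid\forall y\in\conv S:\ph(x)\le\ph(y)\}$; in particular $\ph(u)\le\ph(x)$ for all $x\in S$. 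The form $\ph$ must be nonzero, for otherwise $F$ would equal $\conv S$, contradicting $\dim F<n$.

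The main obstacle I expect is the second inclusion in the case where $\conv S$ has empty interior, since then ``$u\in\partial\conv S$'' conveys almost no geometric information and one cannot directly apply a separation theorem to separate $u$ from $(\conv S)^\circ$. The dimensional case split handles this cleanly: one treats the degenerate case by exhibiting a trivial supporting functional coming from the proper affine hull, and reduces the nondegenerate case to the machinery already developed in \ref{hasaface}. Everything else is a routine unwinding of definitions.
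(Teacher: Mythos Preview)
Your proposal is correct and follows essentially the same approach as the paper: the same dimensional case split for ``$\subseteq$'' (degenerate affine hull versus full dimension, the latter handled via Theorem~\ref{hasaface}), and the same linearity-plus-continuity argument for ``$\supseteq$''. The only cosmetic difference is that the paper phrases the ``$\supseteq$'' direction by exhibiting an explicit sequence $u-\ep v$ leaving $\conv S$ rather than invoking the interior-point contradiction directly.
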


\begin{proof}
``$\supseteq$'' Let $u\in S$ and $\ph\in(\R^n)^*\setminus\{0\}$ such that
$\forall x\in S:\ph(u)\le\ph(x)$. Then even $\forall x\in\conv S:\ph(u)\le\ph(x)$. Choose
$v\in\R^n$ such that $\ph(v)>0$. Then $\ph(u-\ep v)<\ph(u)$ and hence $u-\ep v\notin\conv S$ for
each $\ep\in\R_{>0}$. It follows that no neighborhood of $u$ is contained in the convex hull of
$S$. Hence $u\in\convbd S$.

\smallskip
``$\subseteq$''
If $\dim\conv S<n$ [$\to$ \ref{dfdimconv}], we have $\partial\conv S=\overline{\conv S}$ and hence
$\convbd S=S$ and one easily finds $\ph\in(\R^n)^*\setminus\{0\}$ that is constant on
$\conv S$. So now suppose that $\dim\conv S=n$. Let $u\in\convbd S$. By
Theorem \ref{hasaface}, we get an exposed face $F$ of $\conv S$ with $\dim F<n$ and $u\in F$.
Choose $\ph\colon\R^n\to\R$ linear such that
\[F=\{y\in\conv S\mid\forall x\in\conv S:\ph(y)\le\ph(x)\}.\]
Since $\dim F<n$, we have obviously $\ph\ne0$.
\end{proof}

\begin{lem}\label{prep1}
Let $B\subseteq\R^n$ be a closed ball in $\R^n$, $m\in\N_0$ and $\g\in\R[\x]^m$.
Suppose that $g_1,\ldots,g_m\in\R[\x]$ are strictly quasiconcave on $B$.
Then the following hold:
\begin{enumerate}[(a)]
\item $S:=B\cap S(\g)$ is convex.
\item Every linear form from $\R[\x]\setminus\{0\}$ [$\to$ \ref{longremi}(a)] has at most one minimizer on $S$.
\item Let $u$ be a minimizer of the linear form $f\in\R[\x]\setminus\{0\}$ on $S$ and set
\[I:=\{i\in\{1,\ldots,m\}\mid g_i(u)=0\}.\]
Then $u$
is also minimizer of $f$ on $S':=\{x\in B\mid\forall i\in I:g_i(x)\ge0\}\supseteq S$. 
\end{enumerate}
\end{lem}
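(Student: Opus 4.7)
The unifying idea is a standard second-derivative argument: whenever $g$ is strictly quasiconcave at a point $z\in\R^n$ and $h(t):=g(z+tv)$ has a critical point at $t=0$ for some $v\ne0$, then $h''(0)=v^T(\hess g)(z)v<0$ by Definition \ref{dfqc}, so $0$ is a strict local maximum of $h$ and in particular not a local minimum. All three parts will be reduced to this one-variable observation, including the ball $B=\{x\mid\|x-c\|\le r\}$, which I plan to treat as one more strictly (indeed strongly) concave constraint $g_0:=r^2-\|\cdot-c\|^2\ge0$; note that $g_0$ is strictly concave and hence strictly quasiconcave everywhere.

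For (a), I would fix $i\in\{1,\dots,m\}$ and show that $S_i:=\{x\in B\mid g_i(x)\ge0\}$ is convex; then $S=B\cap\bigcap_iS_i$ is convex as an intersection of convex sets. Given $x,y\in S_i$, I consider $h(\lambda):=g_i((1-\lambda)x+\lambda y)$ on $[0,1]$ with $v:=y-x$. Any critical point $\lambda_0\in(0,1)$ of $h$ satisfies $\nabla g_i(z)^Tv=0$ at $z:=(1-\lambda_0)x+\lambda_0y\in B$, so strict quasiconcavity of $g_i$ at $z$ yields $h''(\lambda_0)<0$. Hence the minimum of $h$ on $[0,1]$ must be taken at an endpoint, where it is $\ge0$.

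For (b), suppose $u_1\ne u_2$ in $S$ both minimize $f$. By linearity of $f$ and the convexity of $S$ already established, the entire segment $[u_1,u_2]$ consists of minimizers; set $u:=\tfrac12(u_1+u_2)$ and $v:=u_2-u_1\ne0$. A nonzero linear form cannot attain a local minimum at an interior point of $S$, so $u\in\partial S$, and inspection of $S=B\cap\bigcap_i\{g_i\ge0\}$ shows that some $g\in\{g_0,g_1,\dots,g_m\}$ must satisfy $g(u)=0$ and be strictly quasiconcave at $u$. Now $h(t):=g(u+tv)$ is $\ge0$ for $t\in[-\tfrac12,\tfrac12]$ (since $u+tv\in S\subseteq B$ there as a convex combination of $u_1$ and $u_2$) and $h(0)=0$, so $0$ is a local minimum of $h$, forcing $h'(0)=0$; strict quasiconcavity then yields $h''(0)<0$, the desired contradiction.

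For (c), suppose there is $x\in S'$ with $f(x)<f(u)$; I will derive a contradiction with the minimality of $u$ on $S$. Parametrize $u(t):=(1-t)u+tx$ for $t\in[0,1]$, which lies in $B$ by convexity of $B$. For $i\in I$, both $g_i(u)=0$ and $g_i(x)\ge0$ hold, so part (a) applied to the single constraint $g_i$ gives $g_i(u(t))\ge0$ throughout $[0,1]$. For $i\notin I$, $g_i(u)>0$ and continuity give $g_i(u(t))>0$ for $t$ in some neighborhood of $0$. Hence $u(t)\in S$ for all sufficiently small $t>0$, while $f(u(t))=f(u)+t(f(x)-f(u))<f(u)$, contradicting minimality. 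The hardest part will be (b), specifically treating $u\in\partial B$ uniformly with the algebraic constraints; the key is that a closed ball in $\R^n$ is defined by a strongly concave polynomial, so the same one-variable Taylor argument applies to $g_0,g_1,\dots,g_m$ without modification.
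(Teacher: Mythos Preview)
Your proof is correct. Parts (a) and (c) follow essentially the same path as the paper: the paper's (a) is literally your one-variable second-derivative argument, and the paper's (c) is phrased as ``$S$ and $S'$ coincide on a neighborhood of $u$, so $u$ is a local minimizer of $f$ on the convex set $S'$, hence a global one'', which is your segment argument compressed into one line.

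For (b), your route genuinely differs from the paper's. The paper first uses the strict convexity of the ball to observe that the open segment between two minimizers $x,y$ lies in $B^\circ$; since that segment also lies in $\partial S$, the product $g_1\cdots g_m$ must vanish on it, and hence some $g_i$ vanishes identically on $\aff\{x,y\}$. Then both the first and second directional derivatives of $g_i$ in the direction $y-x$ are zero, contradicting strict quasiconcavity. Your approach is more direct: you only need \emph{some} constraint (possibly the ball, treated as $g_0=r^2-\|\cdot-c\|^2$) to vanish at the \emph{midpoint}, and the local-minimum-of-$h$ argument already forces $h'(0)=0$ while quasiconcavity forces $h''(0)<0$. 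Your version avoids the ``vanishes on the whole line'' step and the case distinction between ball and polynomial constraints; the paper's version avoids introducing $g_0$ but needs the extra algebraic observation that a polynomial vanishing on a segment vanishes on the line. Both are short; yours is arguably the more transparent of the two.
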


\begin{proof}
(a)
Let $x,y\in S$ with $x\ne y$ and $i\in\{1,\ldots,m\}$. The polynomial \[f:=g_i(Tx+(1-T)y)\in\R[T]\]
attains a minimum $a$ on $[0,1]_\R$ [$\to$ \ref{takeson}]. We have to show $a\ge0$.
Because of $f(0)=g_i(y)\ge0$ and $f(1)=g_i(x)\ge0$, it is enough to show that this minimum is not attained
in a point $t\in(0,1)_\R$. Assume it is. Then $f'(t)=0$, i.e., $((\nabla g_i)(z))^Tv=0$ for $z:=tx+(1-t)y$
and $v:=x-y\ne0$. Since $z\in B$ and hence $g_i$ is strictly quasiconcave at $z$, it follows that
$v^T((\hess g_i)(z))v<0$, i.e., $f''(t)<0$. Then $f<a$ on a neighborhood of $t$ [$\to$~\ref{sgnbounds}(b)]
$\lightning$.

\medskip
(b) Suppose $x$ and $y$ are minimizers of the linear form $f\in\R[\x]\setminus\{0\}$ on $S$.
Then $x,y\in\convbd S$ by \ref{convbdchar}. Since $f$ is linear, it is constant on $\aff\{x,y\}$. Hence even
\[\conv\{x,y\}\overset{\text{(a)}}\subseteq\aff\{x,y\}\cap S\overset{\ref{convbdchar}}\subseteq\convbd S\overset{\text{(a)}}=S\cap\partial S
=S\cap(\overline S\setminus S^\circ)\overset{\text{$S$ closed}}=S\setminus S^\circ=\partial S.\]
Since $\conv\{x,y\}\setminus\{x,y\}\subseteq B^\circ$, we have then that
$\conv\{x,y\}\setminus\{x,y\}\subseteq Z(g_1\dotsm g_m)$.
Assume now for a contradiction that $x\ne y$.
Then this implies that at least one of the $g_i$ vanishes on $\aff\{x,y\}$. Fix a corresponding $i$.
Setting $v:=y-x$, we have then $((\nabla g_i)(x))^Tv=0$ and $v^T((\hess g_i)(x))v=0$.
Since $g_i$ is strictly quasiconcave at $x$, this implies $v=0$, i.e., $x=y$ as desired.

\medskip
(c) By definition of $I$, the sets $S$ and $S'$ coincide on a neighborhood of $u$ in $\R^n$. Hence $u$ is a
\emph{local} minimizer of $f$ on $S'$. Since $S'$ is convex by (a) and $f$ is linear, $u$ is also a (\emph{global})
minimizer of $f$ on $S'$.
\end{proof}

\begin{lem}\label{lagrange2}
Suppose $B$ is a closed ball in $\R^n$, $m\in\N_0$ and $\g\in\R[\x]^m$.
Suppose that $g_1,\ldots,g_m\in\R[\x]$ are strictly quasiconcave on $B$ and that $S:=B\cap S(\g)$
has nonempty interior. Then the following hold:
\begin{enumerate}[(a)]
\item For every real closed extension field $R$ of $\R$
and all linear forms $f\in R[\x]\setminus\{0\}$,
$f$ has a unique minimizer on $\transfer_{\R,R}(S)$.
\item
For every real closed extension field $R$ of $\R$,
all linear forms $f\in R[\x]$ with \[\|\nabla f\|_2=1\] [$\to$~\ref{normcont}]
(note that $\nabla f\in R^n$ as $f$ is linear) and every
$u\in\transfer_{\R,R}(B^\circ)$ which minimizes $f$ on $\transfer_{\R,R}(S)$,
there are $\la_1,\ldots,\la_m\in\O_R\cap R_{\ge0}$ with
\[\la_1+\ldots+\la_m\notin\m_R\] such that both $f-f(u)-\sum_{i=1}^m\la_ig_i$ and its gradient
vanish at $u$.
\end{enumerate}
\end{lem}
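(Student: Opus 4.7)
\emph{Part (a).} The set $\transfer_{\R,R}(S)$ is a nonempty, closed, bounded semialgebraic subset of $R^n$---nonempty since $S$ has nonempty interior, closed and bounded because these properties are encoded by the same real-coefficient polynomial inequalities that define $S$. It is thus semialgebraically compact in the sense of Definition~\ref{dfsacompact}, and by Theorem~\ref{compactimage} together with \ref{dim1}(b) the continuous semialgebraic function $x\mapsto f(x)$ attains a minimum on it. For uniqueness I will transfer Lemma~\ref{prep1}(b): parametrizing a linear form by its coefficient tuple (the coefficients of $B$ and $\g$ being fixed real data), the statement ``every nonzero linear form admits at most one minimizer on the transferred $S$'' is an $\R$-semialgebraic class, so since it holds over $\R$ it holds over $R$ by the Tarski principle~\ref{tprinciple} and Remark~\ref{nothingorall}.

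\emph{Part (b), constructing Lagrange multipliers.} Replace $f$ by $f-f(u)$; then $f(u)=0$ while $\|\nabla f\|_2=1$ persists. Because $u\in\transfer_{\R,R}(B)\subseteq\O_R^n$ (as $B$ is bounded in $\R^n$), all coefficients of the new $f$ lie in $\O_R$. Let $I:=\{i:g_i(u)=0\}$. A transferred form of Lemma~\ref{prep1}(c) shows $u$ still minimizes $f$ after the inactive constraints are discarded. The assumption that $S$ has nonempty interior forces, via a convexity argument using the transferred Lemma~\ref{prep1}(a) together with full-dimensionality of $\transfer_{\R,R}(S)$, that each active gradient $\nabla g_i(u)$ is nonzero; Proposition~\ref{qcc} then presents each active zero set locally as a smooth hypersurface, and for a small ball $U\subseteq\transfer_{\R,R}(B^\circ)$ around $u$ the set $U\cap\bigcap_{i\in I}\{g_i\ge 0\}$ is convex and not a singleton. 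Lemma~\ref{lagrange}, whose proof relies only on Theorem~\ref{fundthmlin} (valid over any ordered field) and therefore transfers to $R$, now produces $\la_i\in R_{\ge 0}$ with $\nabla f(u)=\sum_i\la_i\nabla g_i(u)$ and $\la_ig_i(u)=0$ for all $i$, which is precisely the condition that $f-\sum_i\la_ig_i$ and its gradient both vanish at $u$.

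\emph{Part (b), bounding the multipliers.} The lower bound $\sum\la_i\notin\m_R$ is immediate: the triangle inequality applied to $\nabla f(u)=\sum\la_i\nabla g_i(u)$ together with $\|\nabla f(u)\|_2=1$ gives $1\le\sum\la_i\|\nabla g_i(u)\|_2\le M\sum\la_i$ with $M:=1+\max_{x\in B,\,i}\|\nabla g_i(x)\|_2\in\R_{>0}$, hence $\sum\la_i\ge 1/M$. The upper bound $\la_i\in\O_R$ is the chief obstacle, and my plan is to deduce it by a uniform-bounds transfer. Concretely, for each fixed $N\in\N$ the assertion \emph{``for every $u\in B^\circ$ and every linear $f\in\R[\x]$ with $\|\nabla f\|_2=1$, $f(u)=0$ which has $u$ as a minimizer on $S$, there is a choice of Lagrange multipliers with $\la_i\le N$ for all $i$''} is $\R$-semialgebraic in the coefficient data of $B$ and $\g$; once verified over $\R$ for some $N$, Remark~\ref{nothingorall} transfers it verbatim to $R$ and forces $\la_i\le N\in\R\subseteq\O_R$. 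The proof of this statement over $\R$ combines Carath\'eodory's Theorem~\ref{cara}(a)---selecting multipliers supported on a linearly independent set of at most $n$ active gradients---with a compactness argument on the compact set $B\times S^{n-1}$ stratified by which subsets of the $\nabla g_i(u)$ are linearly independent. On each stratum the selected multipliers are continuous functions of $(u,f)$ expressible by Cramer's rule, and strict quasiconcavity keeps the Gram determinants bounded away from zero along stratum closures, giving a uniform bound. Making this stratified compactness argument rigorous across the finitely many index subsets is the main technical work of the proof.
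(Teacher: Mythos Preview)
Your treatment of part~(a) and of the existence and lower-bound portions of part~(b) matches the paper's proof essentially line for line: Tarski transfer of Lemma~\ref{prep1}(b) for uniqueness, transfer of \ref{prep1}(c) to reduce to the active constraints, transfer of Lemma~\ref{lagrange} to obtain multipliers $\la_i\in R_{\ge0}$, and the triangle-inequality bound $1\le\big(\sum_i\la_i\big)\max_i\|\nabla g_i(u)\|_2$ for $\sum_i\la_i\notin\m_R$.

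The divergence is in the upper bound $\la_i\in\O_R$, and here the paper's argument is far simpler than your stratified-compactness plan. Fix a \emph{real} interior point $x\in S'^\circ$ with $g_i(x)>0$ for all $i\in I$ (possible since $S^\circ\ne\emptyset$ and $S\subseteq S'$), and set $v:=x-u\in\O_R^n$. Pairing $(*)$ with $v$ gives $(\nabla f)^Tv=\sum_{i\in I}\la_i(\nabla g_i(u))^Tv$; the left side lies in $\O_R$. On the right, each summand is nonnegative and, crucially, $\st\big((\nabla g_i(u))^Tv\big)=(\nabla g_i(\st u))^T(x-\st u)>0$ by Remark~\ref{gradientpositive} applied at the real point $\st(u)\in S'$ (where $g_i(\st u)=0$ since $i\in I$) along the convex segment to $x$. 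A sum of nonnegative terms lying in $\O_R$, each with a coefficient whose standard part is a strictly positive real, forces every $\la_i\in\O_R$. That is the whole argument.

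By contrast, your plan requires first proving over $\R$ a \emph{uniform} bound on multipliers valid for all $u\in B^\circ$ and all normalized $f$ minimized at $u$, then transferring that bound. Your sketch (Carath\'eodory to reduce to linearly independent active gradients, Cramer's rule on strata indexed by which gradient subsets are independent, and a claim that strict quasiconcavity keeps the relevant Gram determinants bounded away from zero along stratum closures) is not complete: the strata are not closed, their closures generally intersect loci where the chosen subset becomes dependent, and nothing in the hypotheses prevents the Cramer denominators from degenerating there. It may be salvageable, but the paper's single-direction standard-part trick sidesteps the entire uniform-bound question and is the argument you want here.
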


\begin{proof}
(a) Consider the class of all real closed extension fields $R$ of $\R$ such that
all linear forms from $R[\x]\setminus\{0\}$ have a unique minimizer on $\transfer_{\R,R}(S)$.
By real quantifier elimination [$\to$ \ref{elim}], this is easily seen to be a $0$-ary $\R$-semialgebraic class
[$\to$ \ref{introsemialg}]. By \ref{nothingorall}, this class is either empty or consists of all
real closed extensions fields of $\R$. Hence it suffices to prove the statement in the case $R=\R$
[$\to$ \ref{tprinciple}]. But then the unicity part follows from Lemma \ref{prep1}(b) and the existence part
from \ref{takeson}.

\medskip
(b) Now let $R$ be a real closed field extension of $\R$, $f\in R[\x]$ a linear form with $\|\nabla f\|_2=1$ and
$u$ a minimizer of $f$ on $\transfer_{\R,R}(S\cap B^\circ)$.
Set \[I:=\{i\in\{1,\ldots,m\}\mid g_i(u)=0\}\]
and define the set
\[S':=\{x\in B\mid\forall i\in I:g_i(x)\ge0\}\supseteq S\]
which is convex by  \ref{prep1}(a).
Using the Tarski principle [$\to$ \ref{tprinciple}],
one shows easily that $u$ is a minimizer of $f$ on $\transfer_{\R,R}(S')$ by Lemma \ref{prep1}(c).
Note also that of course $u\in\O_R^n$ and $\st(u)\in S$.

Now Lemma \ref{lagrange} says in particular that for all linear forms $\widetilde f\in\R[\x]$ and
minimizers $\widetilde u$ of $\widetilde f$ on $S'\cap B^\circ$ with $\forall i\in I:g_i(\widetilde u)=0$, there is a family
$(\la_i)_{i\in I}$ in $\R_{\ge0}$ such that \[\nabla\widetilde f=\sum_{i\in I}\la_i\nabla g_i(\widetilde u).\]

Using the Tarski principle [$\to$ \ref{tprinciple}], we see that actually for all real closed extension fields
$\widetilde R$ of $\R$, all linear forms $\widetilde f\in\widetilde R[\x]$ and
all minimizers $\widetilde u$ of $\widetilde f$ on $\transfer_{\R,R}(S'\cap B^\circ)$
with $\forall i\in I:g_i(\widetilde u)=0$, there is a family
$(\la_i)_{i\in I}$ in $R_{\ge0}$ such that
\[\nabla\widetilde f=\sum_{i\in I}\la_i\nabla g_i(\widetilde u).\]

We apply this to $\widetilde R:=R$, $\widetilde u:=u$, $\widetilde f:=f$ and thus obtain a family
$(\la_i)_{i\in I}$ in $R_{\ge0}$ such that
\[(*)\qquad\nabla f=\sum_{i\in I}\la_i\nabla g_i(u).\]

In order to show that $\la_i\in\O_R$ for all $i\in I$, we choose a point $x\in S'^\circ\ne\emptyset$ with $\prod_{i\in I}g_i(x)\ne0$
and thus $g_i(x)>0$ for all $i\in I$. Setting $v:=x-u\in\O_R^n$, we get from $(*)$ that
$(\nabla f)^Tv=\sum_{i\in I}\la_i(\nabla g_i(u))^Tv$. Since $\st(u)\in S'$ and $S'$ is convex,
Remark~\ref{gradientpositive} yields $\st((\nabla g_i(u))^Tv)=(\nabla g_i(\st(u)))^T\st(v)>0$ for all $i\in I$
(use that $\st(u)\ne x$ since $g_i(\st(u))=0$ while $g_i(x)>0$).
Together with $\la_i\ge0$ for all $i\in I$, this shows $\la_i\in\O_R$ for all $i\in I$ as desired.

It now suffices to show that $\sum_{i\in I}\la_i\notin\m_R$. But this is clear since $(*)$ yields in particular
\[1=\|\nabla f\|_2\le\sum_{i\in I}\la_i\|(\nabla g_i)(u)\|_2\le\left(\sum_{i\in I}\la_i\right)\max_{i\in I}\|(\nabla g_i)(u)\|_2\]
(note that $I\ne\emptyset$ by the first inequality) which readily implies $\sum_{i\in I}\la_i\notin\m_R$.
\end{proof}

\begin{lem}\label{finitecontact}
Let $m\in\N_0$ and $\g\in\R[\x]^m$ such that $S:=S(\g)$ is compact and has nonempty interior near its convex boundary.
Suppose that $g_i$ is strictly quasiconcave on \[(\convbd S)\cap Z(g_i)\] for each
$i\in\{1,\dots,m\}$.
Let $R$ be real closed extension field of $\R$ and
$f\in R[\x]$ be a linear form with $\|\nabla f\|_2=1$.
Then the following hold:
\begin{enumerate}[(a)]
\item $F:=\{u\in S\mid\forall x\in S:\st(f(u))\le\st(f(x))\}$
is a finite subset of $\convbd S$.
\item $S':=\transfer_{\R,R}(S)\subseteq\O_R^n$ and
$f$ has a unique minimizer $x_u$ on \[\{x\in S'\mid \st(x)=u\}\] for each $u\in F$.
\item For every $u\in F$, there are
$\la_{u1},\ldots,\la_{um}\in\O_R\cap R_{\ge0}$ with
\[\la_{u1}+\ldots+\la_{um}\notin\m_R\] such that both $f-f(x_u)-\sum_{i=1}^m\la_{ui}g_i$ and its gradient
vanish at $x_u$.
\end{enumerate}
\end{lem}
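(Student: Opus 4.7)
First I normalize $f$: replacing $f$ by $f - f(0)$ leaves $\nabla f$ and differences $f(x) - f(y)$ unchanged, so we may assume the constant term of $f$ is zero. Since $\|\nabla f\|_2 = 1$ in $R$, each coefficient of $f$ then lies in $\O_R$, so $\st(f) \in \R[\x]_1$ is well-defined with $\nabla(\st f) = \st(\nabla f)$ of Euclidean norm $1$ in $\R$; in particular $\st(f)$ is a nonzero linear form. The fact that $S \subseteq \R^n$ is bounded, say by $N \in \N$, transfers to $S' \subseteq [-N,N]_R^n \subseteq \O_R^n$, giving the first claim of (b). Discarding the trivial case $S = \emptyset$, the set $F$ is the preimage of its minimum under the continuous function $\st(f)|_S \colon S \to \R$, hence nonempty and compact; since the nonzero linear form $\st(f)$ attains its minimum on $S$ at every $u \in F$, Proposition \ref{convbdchar} forces $F \subseteq \convbd S$.

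To establish finiteness in (a), I assume $F$ infinite and pick an accumulation point $u^* \in F$. Since $F \subseteq \convbd S \subseteq \partial S$, the active index set $I := \{i \mid g_i(u^*) = 0\}$ is nonempty, and by hypothesis each $g_i$ ($i \in I$) is strictly quasiconcave at $u^* \in \convbd S \cap Z(g_i)$, hence by \ref{strictly2neighbor} strictly quasiconcave on a small closed ball $B$ around $u^*$, which I further shrink so that $g_i > 0$ on $B$ for $i \notin I$; this ensures $B \cap S = B \cap S(\g_I)$ where $\g_I := (g_i)_{i \in I}$. Every point of $F \cap B$ minimizes the nonzero linear form $\st(f)$ on $B \cap S$, so Lemma \ref{prep1}(b) applied to $B$ and $\g_I$ yields $\#(F \cap B) \le 1$, contradicting $u^*$ being an accumulation point.

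For (b), fix $u \in F$ and pick $B$ as above, requiring additionally $B \cap F = \{u\}$, which is possible by (a). The hypothesis $\convbd S \subseteq \overline{S^\circ}$ supplies points of $S^\circ$ arbitrarily close to $u$, so $B \cap S \subseteq B \cap S(\g_I)$ has nonempty interior. Lemma \ref{lagrange2}(a) applied to $B$, $\g_I$, and $f$ gives a unique minimizer $x_u$ of $f$ on $\transfer_{\R,R}(B \cap S(\g_I)) = \transfer_{\R,R}(B) \cap S'$ (using that transfer commutes with finite intersections). The chain
\[
\st(f(x_u)) \le \st(f(u)) = \min_S \st(f) \le \st(f)\bigl(\st(x_u)\bigr) = \st(f(x_u)),
\]
where the penultimate step uses that $\st(x_u) \in B \cap S$ by closedness of $B$ and $S$ under the standard part, forces $\st(x_u) \in F \cap B = \{u\}$. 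Thus $x_u \in \{x \in S' \mid \st(x) = u\}$, and any other such competitor $y$ automatically lies in $\transfer_{\R,R}(B) \cap S'$, so $y = x_u$ by uniqueness.

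Finally for (c), since $\st(x_u) = u \in B^\circ$, we have $x_u \in \transfer_{\R,R}(B^\circ)$. Applying Lemma \ref{lagrange2}(b) to $B$, $\g_I$, $f$, and $x_u$ yields $\la_{ui} \in \O_R \cap R_{\ge 0}$ ($i \in I$) with $\sum_{i \in I} \la_{ui} \notin \m_R$ such that $f - f(x_u) - \sum_{i \in I} \la_{ui} g_i$ and its gradient both vanish at $x_u$; setting $\la_{ui} := 0$ for $i \notin I$ preserves everything. The main delicate point throughout is the standard-part juggling between $S'$ and $S$: uniqueness from Lemma \ref{lagrange2}(a) is proved on the comparatively large set $\transfer_{\R,R}(B) \cap S'$, and only the cushion $B \cap F = \{u\}$ coming from (a) guarantees that this unique minimizer actually sits in the infinitesimal neighborhood $\{x \in S' \mid \st(x) = u\}$.
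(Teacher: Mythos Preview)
Your proof is correct and follows essentially the same approach as the paper's: localize to a ball $B$ on which the active constraints are strictly quasiconcave, apply Lemma~\ref{prep1}(b) for finiteness and Lemma~\ref{lagrange2} for the minimizer and multipliers. Two minor remarks: your normalization $f\mapsto f-f(0)$ is unnecessary since in the paper's terminology a \emph{linear form} is already homogeneous (so $f(0)=0$); and your explicit restriction to the active index set $\g_I$ is cleaner than the paper's phrasing ``strictly quasiconcave or positive on $B$'', which tacitly drops the positive $g_i$ before invoking \ref{prep1} and \ref{lagrange2}.
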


\begin{proof}
(a) Obviously $\st(f)\ne0$ and hence \[F=\{u\in S\mid\forall x\in S:(\st(f))(u)\le(\st(f))(x)\}\subseteq
\convbd S\] by Proposition \ref{convbdchar}.
We now prove that $F$ is finite.
WLOG $S\ne\emptyset$. Set [$\to$ \ref{takeson}] \[a:=\min\{(\st(f))(x)\mid x\in S\}\] so that
\[F=\{u\in S\mid(\st(f))(u)=a\}.\] By compactness of $S$, it is enough to show that
every $x\in S$ possesses a neighborhood $U$ in $S$ such that $U\cap F\subseteq\{x\}$.
This is trivial for the points of $S\setminus F$. So consider an arbitrary point $x\in F$.
Since $x\in\convbd S$, each $g_i$ is positive or strictly
quasiconcave at $x$. According to \ref{strictly2neighbor}, we can choose a closed ball $B$ of positive radius around
$x$ in $\R^n$ such that each $g_i$ is positive or strictly
quasiconcave even on $B$. By Lemma \ref{prep1}(b), $\st(f)$ has at most one minimizer on $U:=S\cap B$,
namely $x$, i.e., $U\cap F\subseteq\{x\}$.

\medskip
(b) First observe that
$S':=\transfer_{\R,R}(S)\subseteq\O_R^n$ since the
transfer from $\R$ to $R$ is an isomorphism of Boolean algebras [$\to$ \ref{transfer}]:
Choosing $N\in\N$ with $S\subseteq[-N,N]_\R^n$, we have $S'\subseteq
\transfer_{\R,R}([-N,N]^n_\R)=[-N,N]^n_R\subseteq\O_R^n$.

Now we fix $u\in F$ and we show that $f$ has a unique minimizer on
\[A:=\{x\in S'\mid\st(x)=u\}.\]
Choose $\ep\in\R_{>0}$ such that
each $g_i$ is strictly quasiconcave or positive on
the ball \[B:=\{v\in\R^n\mid\|v-u\|\le\ep\}.\]
Since $u\in\convbd S\subseteq\overline{S^\circ}$,
Lemma \ref{lagrange2}(a) says that $f$ has a unique minimizer $x$ on $\transfer_{\R,R}(S\cap B)$.
Because of $A\subseteq\transfer_{\R,R}(S\cap B)$, it is thus enough to show $x\in A$.
Note that $u\in F\cap B\subseteq S\cap B\subseteq\transfer_{\R,R}(S\cap B)$ and thus
$f(x)\le f(u)$. This implies $\st(f(\st(x)))=\st(f(x))\le\st(f(u))$ which yields together with $\st(x)\in S$ that
$\st(x)\in F$ (and $\st(f(\st(x)))=\st(f(u))$). Again by Lemma \ref{lagrange2}(a), $\st(f)$ has a unique minimizer on
$S\cap B$ . But $u$ and $\st(x)$ are both a minimizer of $\st(f)$ on $S\cap B$ (note that $\st(x)\in S\cap B$).
Hence $u=\st(x)$ and thus $x\in A$ as desired.

\medskip
(c) Fix $u\in F$. Choose again $\ep\in\R_{>0}$ such that
each $g_i$ is strictly quasiconcave or positive on
the ball $B:=\{v\in\R^n\mid\|v-u\|\le\ep\}$ and such that $B\cap F=\{u\}$.
Since $x_u\in\transfer_{\R,R}(B^\circ)$ obviously minimizes $f$ on $\transfer_{\R,R}(S\cap B)$, we get the necessary
Lagrange multipliers by Lemma \ref{lagrange2}(b).
\end{proof}

\section{Linear polynomials and truncated quadratic modules}

\begin{exo}\label{calculusexo}
For all $k\in\N$ and $x\in[0,1]_\R$, we have $x(1-x)^k\le\frac1k$.
\end{exo}

The main geometric idea in the proof of the following theorem is as follows: Consider a hyperplane that isolates a basic closed
semialgebraic subset of $\R^n$ and that is defined over a real closed extension field of $\R$. Because we want to apply
Theorem $\ref{mainrep}$ to get a sums of squares ``isolation certificate'', the points where the hyperplane gets infinitesimally
close to the set pose problems unless the hyperplane \emph{exactly} touches the set in the respective point. The idea is
to find a nonlinear infinitesimal deformation of the hyperplane so that all ``infinitesimally near points'' becoming ``touching points''.
This would be easier (although still not obvious) if there is at most one ``infinitesimally near point'' but since we deal in this article
with \emph{not necessarily convex} basic closed semialgebraic sets, it is crucial to cope with several such points.

\begin{thm}\label{linearstability}
Let $m\in\N_0$ and $\g\in\R[\x]^m$ such that $M(\g)$ is Archimedean and suppose that $S:=S(\g)$
has nonempty interior near its convex boundary.
Suppose that $g_i$ is strictly quasiconcave on $(\convbd S)\cap Z(g_i)$ for each
$i\in\{1,\dots,m\}$. Let $R$ be a real closed extension field of $\R$ and
$\ell\in\O_R[\x]_1$ such that $\ell\ge0$ on $\transfer_{\R,R}(S)$.
Then $\ell$ lies in the quadratic module generated by $g_1,\ldots,g_m$ in $\O_R[\x]$.
\end{thm}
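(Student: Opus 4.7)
The plan is to apply Theorem \ref{mainrep} to an auxiliary polynomial that differs from $\ell$ by an element of the quadratic module $M$ generated by $g_1,\ldots,g_m$ in $\O_R[\x]$. First I dispose of degenerate cases: if $\ell=0$ there is nothing to prove, while scaling $\ell$ by its largest absolute coefficient $\ep>0$ (which lies in $\O_R\cap R_{\ge0}$, hence in $\O_R^2$, so $\ep M\subseteq M$) reduces to the case where every coefficient of $\ell$ lies in $[-1,1]_R\cap\O_R$. If every coefficient of $\nabla\ell$ is then infinitesimal, the largest absolute coefficient must be the constant term; nonnegativity on $\transfer_{\R,R}(S)$ forces it to equal $+1$, so $\st(\ell)=1>0$ on $S(\g)$ and Corollary \ref{jacobiarchimedean} yields $\ell-\tfrac{1}{N}\in M$ for some $N\in\N$, whence $\ell\in M$. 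Otherwise a further scaling by the unit $\|\nabla\ell\|_2\in\O_R\setminus\m_R$ brings us to $\|\nabla\ell\|_2=1$, and Lemma \ref{finitecontact} supplies a finite set $F\subseteq\convbd S$, unique lifts $x_u\in\O_R^n$ with $\st(x_u)=u$, and Lagrange multipliers $\la_{ui}\in\O_R\cap R_{\ge0}$ with $\sum_i\la_{ui}\notin\m_R$ such that $\ell-\ell(x_u)-\sum_i\la_{ui}g_i$ and its gradient both vanish at $x_u$. Subtracting the common minimum value $\ell(x_u)\in\O_R\cap R_{\ge0}\subseteq M$ reduces to $\ell(x_u)=0$, and complementary slackness lets me take $\la_{ui}=0$ whenever $i\notin I_u:=\{i:g_i(x_u)=0\}$.

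For each $u\in F$ I introduce the sum-of-squares bump
\[
A_u:=\frac{1}{C_u}\prod_{v\in F\setminus\{u\}}\|\x-x_v\|_2^{4},\qquad C_u:=\prod_{v\neq u}\|x_u-x_v\|_2^{4}\in\O_R\setminus\m_R,
\]
whose fourth powers guarantee $A_u(x_u)=1$ while $A_u$, $\nabla A_u$ and $\hess A_u$ all vanish at every $x_v$ with $v\neq u$. Using Lemma \ref{quasi2concave}, I deform each $g_i$ to $\widetilde g_i:=g_i(1-\al_ig_i)^{2k_i}$ with $\al_i\in\R_{>0}$ chosen so that $\al_ig_i\le1$ on $S$ and with $k_i\in\N$ large (to be fixed at the end); then $\widetilde g_i\in M$, and by Remark \ref{derexo} it agrees with $g_i$ in value and gradient at every zero of $g_i$ while its Hessian there is shifted by $-4k_i\al_i(\nabla g_i)(\nabla g_i)^{T}$. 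With $s_i:=\sum_{u\in F}\la_{ui}A_u\in\sum\O_R[\x]^2$ and $h:=\ell-\sum_{i=1}^{m}s_i\widetilde g_i$, each summand $s_i\widetilde g_i$ lies in $M$, so proving $\ell\in M$ is equivalent to proving $h\in M$ via Theorem \ref{mainrep}.

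The three hypotheses of Theorem \ref{mainrep} are then verified at the lifted points $x_u$. A direct computation using the bump properties of $A_u$ combined with the Lagrange identity yields $h(x_w)=0$ and $\nabla h(x_w)=0$ for every $w\in F$, and
\[
\hess h(x_w)=-\!\sum_{i\in I_w}\la_{wi}\!\left[\hess g_i(x_w)-4k_i\al_i\nabla g_i(x_w)\nabla g_i(x_w)^{T}+\nabla A_w(x_w)\nabla g_i(x_w)^{T}+\nabla g_i(x_w)\nabla A_w(x_w)^{T}\right].
\]
By Remark \ref{quasiconcavelift} combined with $\sum\la_{wi}\notin\m_R$, the standard part of $-\sum_{i\in I_w}\la_{wi}[\hess g_i-4k_i\al_i\nabla g_i\nabla g_i^{T}]$ is positive definite once the $k_i$ are large enough, and the bounded rank-$2$ cross terms from $\nabla A_w(x_w)$ are then absorbed by taking the $k_i$ still larger via completion of squares on the compact unit sphere. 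For strict positivity of $\st(h(x))$ on $S\setminus F$, Exercise \ref{calculusexo} gives $0\le\widetilde g_i(x)\le 1/(2\al_ik_i)$ uniformly on the compact set $S$, so the correction $\sum_is_i\widetilde g_i$ is uniformly $O(\max_i1/k_i)$ and is dominated by $\st(\ell)$ off any fixed neighbourhood of $F$; within a small enough such neighbourhood, the positivity of $\st(h)$ follows from Taylor's formula together with the positive definiteness of $\hess\st(h)(u)$ just established. The main obstacle is the \emph{simultaneous} calibration of the $k_i$: they must be large enough both to neutralise the sign-indefinite cross terms in the Hessian at every $w\in F$ and to make the uniform correction bound small relative to the vanishing rate of $\st(\ell)$ near each $u\in F$. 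Strict quasiconcavity of the $g_i$ on $(\convbd S)\cap Z(g_i)$ supplies the uniform quantitative constants needed, and compactness of $S$ lifts these to uniform bounds, resolving the calibration.
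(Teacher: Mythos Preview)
Your overall architecture is close to the paper's: both arguments aim to subtract from $\ell$ an element of $M$ so that the difference lies in $\bigcap_{u\in F}I_{x_u}^2$ and satisfies the hypotheses of Theorem~\ref{mainrep}. The paper constructs its interpolants $s_0,\ldots,s_m$ via the Chinese remainder theorem (so that $s_i^2\equiv\la_{ui}\pmod{I_{x_u}^3}$), whereas you build explicit bump functions $A_u$. Either device can in principle be made to work, and your Hessian computation and the simultaneous calibration of the $k_i$ are carried out essentially as in the paper.

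There is, however, a genuine gap in your reduction step ``Subtracting the common minimum value $\ell(x_u)\in\O_R\cap R_{\ge0}\subseteq M$ reduces to $\ell(x_u)=0$''. The values $\ell(x_u)$ for different $u\in F$ share the same standard part (namely $\min_{x\in S}\st(\ell(x))$), but they are in general \emph{different} elements of $\m_R\cap R_{\ge0}$: each $x_u$ is only the minimizer of $\ell$ on the fiber $\{x\in\transfer_{\R,R}(S)\mid\st(x)=u\}$, not a global minimizer. Hence no single constant can be subtracted to make $\ell(x_u)=0$ for every $u\in F$. With your actual $h=\ell-\sum_i s_i\widetilde g_i$ (without the purported reduction), a direct computation gives $h(x_w)=\ell(x_w)$, which is a nonzero infinitesimal in general; thus $h\notin I_{x_w}^2$ and Theorem~\ref{mainrep} does not apply. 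The naive patch $h':=h-\sum_u\ell(x_u)A_u$ does kill the values, but then $\nabla h'(x_w)=-\ell(x_w)\nabla A_w(x_w)\ne0$, since your bumps satisfy $\nabla A_w(x_w)\ne0$. This is precisely why the paper uses the CRT to produce an $s_0\in\O_R[\x]$ with $s_0^2\equiv\ell(x_u)\pmod{I_{x_u}^3}$ for every $u$ (and with infinitesimal coefficients): subtracting $s_0^2$ simultaneously annihilates the value, gradient, \emph{and} Hessian contributions at all the $x_u$. To repair your argument, you must either adopt this CRT construction for an $s_0$ term, or replace your $A_u$ by bump functions that are $\equiv 1\pmod{I_{x_u}^3}$ (not merely $A_u(x_u)=1$), so that the correction $\sum_u\ell(x_u)A_u$ lies in $I_{x_w}^3$ at each $x_w$.
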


\begin{proof}
We will apply Theorem $\ref{mainrep}$. Since $S$ is compact, we can rescale the $g_i$ and suppose WLOG that
\[g_i\le1\text{ on }S\]
for $i\in\{1,\ldots,m\}$.
Let $M$ denote the quadratic module generated by $g_1,\ldots,g_m$ in $\O_R[\x]$.
Since $M(\g)$ is Archimedean, also $M$ is Archimedean by
\ref{archmodulechar}(b) and \ref{archmodulecharrcf}(b). Moreover, $S$ could now alternatively be defined
from $M$ as in Theorem $\ref{mainrep}$. Write
\[\ell=f-c\]
with a linear form $f\in\O_R[\x]$ and $c\in\O_R$.
By a rescaling argument, we can suppose that at least one of the coefficients of $\ell$ lies
in $\O_R^\times$ [$\to$ \ref{ordval}].
If $\st(\ell(x))>0$ for all $x\in S$, then Theorem $\ref{mainrep}$ applied to $\ell$ with $k=0$ yields $\ell\in M$
and we are done. Hence we can from now on suppose that there is some $u\in S$ with $\st(\ell(u))=0$. For such an $u$,
we have $\st(c)=\st(f(u))$ so that at least one coefficient of $f$ must lie $\O_R^\times$.
By another rescaling, we now can suppose WLOG that $\|\nabla f\|_2=1$. Now we are in the situation of
Lemma~\ref{finitecontact} and we define
\[F,\quad (x_u)_{u\in F}\quad\text{and}\quad
(\la_{ui})_{(u,i)\in F\times\{1,\ldots,m\}}\]
accordingly. Note that
\[F=\{u\in S\mid\st(\ell(u))=0\}\ne\emptyset\]
since $\st(\ell(x))\ge0$ for all $x\in S$.
We have $f(x_u)-c=\ell(x_u)\ge0$ and \[\st(f(x_u)-c)=\st(\ell(u))=0\] for all $u\in F$.
Hence $f(x_u)-c\in\m_R\cap R_{\ge0}$ for all $u\in F$. We thus have
\[\ell-\underbrace{(f(x_u)-c)}_{=:\la_{u0}\in\m_R\cap R_{\ge0}}-\sum_{i=1}^m\underbrace{\la_{ui}}_{\in\O_R\cap R_{\ge0}}g_i\in I_{x_u}^2\]
for all $u\in F$ by \ref{finitecontact}(c) and \ref{membershipix}. Evaluating this in $x_u$ (and using $g_i(x_u)\ge0$) yields
\begin{gather}
\tag{$*$}g_i(x_u)\ne0\implies\la_{ui}=0\qquad\text{and thus}\\
\tag{$**$}\la_{ui}g_i\equiv_{I_{x_u}^2}\la_{ui}g_i(1-g_i)^k
\end{gather}
for all $u\in F$, $i\in\{1,\ldots,m\}$ and $k\in\N$.
By the Chinese remainder theorem, we find polynomials $s_0,\ldots,s_m\in\O_R[\x]$
such that $s_i\equiv_{I_{x_u}^3}\sqrt{\la_{ui}}\in\O_R$ for all $u\in F$ and $i\in\{0,\ldots,m\}$
because the ideals $I_{x_u}^3$ ($u\in F$) are pairwise coprime [$\to$ \ref{coprime}] (use that
$\st(x_u)=u\ne v=\st(x_v)$ for all $u,v\in F$ with $u\ne v$).
By an easy scaling argument, we can even guarantee that the coefficients
of $s_0$ lie in $\m_R$ since $\sqrt{\la_{u0}}\in\m_R$. Then we have
\begin{equation}
\tag{$***$}s_i^2\equiv_{I_{x_u}^3}\la_{ui}
\end{equation}
which means in other words
\[s_i^2(x_u)=\la_{ui},\qquad(\nabla(s_i^2))(x_u)=0\qquad\text{and}\qquad(\hess(s_i^2))(x_u)=0\]
for all $i\in\{0,\ldots,m\}$  and $k\in\N$.
It suffices to show that there is $k\in\N$ such that the polynomial
\[\ell-s_0^2-\sum_{i=1}^ms_i^2(1-g_i)^{2k}g_i\overset{(***)}{\underset{(**)}\in}\bigcap_{u\in F}I_{x_u}^2\]
lies in $M$ since this implies immediately $\ell\in M$. By Theorem $\ref{mainrep}$, this task reduces to find $k\in\N$ such that
$f_k>0$ on $S\setminus F$ and $(\hess(f_k))(u)\succ0$ for all $u\in F$ where
\[f_k:=\st(\ell)-\sum_{i=1}^m\st(s_i^2)(1-g_i)^{2k}g_i\in\R[\x]
\]
is the standard part of this polynomial. Note for later use that $f_k$ and $\nabla f_k$ vanish on $F$ for all $k\in\N$.
In order to find such a $k$, we calculate
\begin{align*}
(\hess f_k)(u)&\overset{(***)}=-\sum_{i=1}^m\st(\la_{ui})\hess((1-g_i)^{2k}g_i)(u)\\
&\overset{\ref{derexo}}{\underset{(*)}=}\sum_{i=1}^m\st(\la_{ui})(4k(\nabla g_i)(\nabla g_i)^T-\hess g_i)(u)
\end{align*}
for $u\in F$ and $k\in\N$.
By Lemma \ref{quasi2concave} we can choose $k\in\N$ such that
$g_i(1-g_i)^{2k}$ is strictly concave on $\{x\in F\mid g_i(x)=0\}$ for $i\in\{1,\ldots,m\}$. Since $\st(\la_1)+\ldots+\st(\la_m)>0$
[$\to$ \ref{finitecontact}(c)], we get together with $(*)$ and \ref{derexo} that for all sufficiently large $k$, we
have $(\hess f_k)(u)\succ0$ for all $u\in F$. In particular, we can choose $k_0\in\N$ such that
$\hess(f_{k_0})(u)\succ0$ for all $u\in F$. Since $f_{k_0}$ and $\nabla f_{k_0}$ vanish on $F$,
we have by elementary analysis that there is an open subset $U$ of $\R^n$ containing $F$ such that $f_{k_0}>0$ on $U\setminus F$.
Now $S\setminus U$ is compact so that we can choose $N\in\N$ with $\st(\ell)\ge\frac1N$ and
$\st(s_i^2)\le N$ on $S\setminus U$. Then $f_k\ge\frac1N-m\frac N{2k}$ on $S\setminus U$ by
Exercise \ref{calculusexo} since $0\le g_i\le1$ on $S$ for all $i\in\{1,\ldots,m\}$.
For all sufficiently large $k\in\N$ with $k\ge k_0$, we now have $f_k>0$ on $S\setminus U$ and
because of $f_k\ge f_{k_0}>0$ on $(S\cap U)\setminus F$ (use again that $0\le g_i\le1$ on $S$) even $f_k>0$ on $S\setminus F$.
\end{proof}

\begin{cor}\label{linearstabilitycor}
Let $m\in\N_0$ and $\g\in\R[\x]^m$ such that $M(\g)$ is Archimedean and suppose that $S:=S(\g)$
has nonempty interior near its convex boundary.
Suppose that $g_i$ is strictly quasiconcave on $(\convbd S)\cap Z(g_i)$ for each
$i\in\{1,\dots,m\}$. Let $R$ be a real closed extension field of $\R$ and
$\ell\in R[\x]_1$ such that $\ell\ge0$ on $\transfer_{\R,R}(S)$.
Then $\ell$ lies in the quadratic module generated by $g_1,\ldots,g_m$ in $R[\x]$.
\end{cor}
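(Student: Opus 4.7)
The plan is to reduce Corollary \ref{linearstabilitycor} directly to Theorem \ref{linearstability} by a rescaling argument. The only discrepancy between the two statements is that the coefficients of $\ell$ are now allowed to be arbitrary elements of $R$ instead of being constrained to $\O_R$, and accordingly we seek membership in the quadratic module $M$ generated by $g_1,\ldots,g_m$ in $R[\x]$ rather than in the smaller quadratic module $M'$ generated in $\O_R[\x]$.

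First I would write $\ell=c_0+c_1X_1+\ldots+c_nX_n$ with $c_0,\ldots,c_n\in R$ and pick some $N\in R_{>0}$ with $N\ge\max\{1,|c_0|,\ldots,|c_n|\}$ (such an $N$ certainly exists since $R$ is totally ordered). Setting $c:=\tfrac{1}{N}\in R_{>0}$, each coefficient $cc_i$ of the scaled polynomial $c\ell$ satisfies $|cc_i|\le 1$, so $c\ell\in\O_R[\x]_1$. Scaling by the positive constant $c$ preserves nonnegativity, so $c\ell\ge 0$ on $\transfer_{\R,R}(S)$.

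At this point Theorem \ref{linearstability} applies to $c\ell$ and yields $c\ell\in M'\subseteq M$. To recover $\ell$ itself, I would use that $R$ is real closed, so $N$ has a square root $\sqrt{N}\in R_{>0}$ (see \ref{notremsqrt}(b) together with \ref{dfrealclosed}). Then
\[
\ell = N(c\ell) = (\sqrt{N})^2(c\ell),
\]
and since $M$ is a quadratic module of $R[\x]$, it is closed under multiplication by squares of elements of $R[\x]$ (in particular by $(\sqrt{N})^2$, viewed as a constant polynomial). Hence $\ell\in M$, as required.

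There is really no substantial obstacle in this argument; the entire geometric and analytic content lives in Theorem \ref{linearstability}, and the corollary merely absorbs a clearing-of-denominators step. The only point requiring a moment's care is verifying that the scaling constant can be taken positive and that its square root exists in $R$, both of which are guaranteed because $R$ is real closed, so that closure of $M$ under multiplication by squares lets us undo the rescaling without leaving the quadratic module.
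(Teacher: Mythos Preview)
Your proof is correct and is exactly the intended derivation: the paper states this corollary without proof because it follows from Theorem \ref{linearstability} by precisely the rescaling-by-a-square argument you give.
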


\begin{cor}\label{strqc}
Let $m\in\N_0$ and $\g\in\R[\x]^m$ such that $M(\g)$ is Archimedean and suppose that $S(\g)$
has nonempty interior near its convex boundary.
Suppose that $g_i$ is strictly quasiconcave on $(\convbd S(\g)\cap Z(g_i)$ for each
$i\in\{1,\dots,m\}$.
Then there exists \[d\in\N\] such that for all $\ell\in\R[\x]_1$ with $\ell\ge0$ on $S(\g)$, we have \[\ell\in M_d(\g).\]
\end{cor}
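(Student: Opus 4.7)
The plan is to deduce the corollary from Corollary \ref{linearstabilitycor} via the finiteness theorem for semialgebraic classes (Corollary \ref{finitenesscor}), in exactly the spirit of the proofs of \ref{h17bound} and \ref{putinarzerosdegreebound}. Corollary \ref{linearstabilitycor} already produces a quadratic-module representation of every nonnegative linear $\ell$ over every real closed extension field of $\R$, but without any uniform degree bound; the finiteness theorem is the natural mechanism for extracting such a bound.

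Set $\nu := \dim \R[\x]_1 = n+1$ and identify a linear polynomial $\ell \in R[\x]_1$ with its coefficient vector $a \in R^\nu$ via $\ell_a := a_0 + a_1 X_1 + \ldots + a_n X_n$. For each $d \in \N$, I will consider the $\nu$-ary class
\begin{align*}
S_d := \bigl\{(R, a) \in \mathcal{R}_\nu \mid{} &\ell_a \not\geq 0 \text{ on } \transfer_{\R,R}(S(\g)) \\
&\text{or } \ell_a \in M_d(\g; R)\bigr\},
\end{align*}
where $M_d(\g; R)$ denotes the $d$-truncated quadratic module associated to $\g$ computed in $R[\x]$ (the generators $g_1, \ldots, g_m$ lie in $\R[\x] \subseteq R[\x]$, so this makes sense). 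To see that $S_d$ is an $\R$-semialgebraic class, I will first invoke Carathéodory (exactly as in the proof of \ref{truncclosed}) to fix integers $k_0, \ldots, k_m$ depending only on $d$, $n$ and the $\deg g_i$ such that, setting $g_0 := 1$, one has
\[M_d(\g; R) = \left\{\sum_{i=0}^m \sum_{j=1}^{k_i} p_{ij}^2 g_i ~\middle|~ p_{ij} \in R[\x]_{r_i}\right\}\]
for every real closed extension field $R$ of $\R$. Both clauses in the definition of $S_d$ are then readily expressed by a first-order formula involving universal and existential quantifiers over finitely many variables in $R$, and real quantifier elimination \ref{elim} delivers semialgebraicity.

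Setting $\mathcal{E} := \{S_d \mid d \in \N\}$, the inclusion $M_d(\g; R) \subseteq M_{d+1}(\g; R)$ gives $S_d \subseteq S_{d+1}$, so in particular $S_{d_1} \cup S_{d_2} \subseteq S_{\max\{d_1,d_2\}}$ for all $d_1, d_2 \in \N$. Corollary \ref{linearstabilitycor} says that for every $R \in \mathcal{R}$ and every $a \in R^\nu$ with $\ell_a \geq 0$ on $\transfer_{\R,R}(S(\g))$, the linear polynomial $\ell_a$ lies in the quadratic module generated by $g_1, \ldots, g_m$ in $R[\x]$, i.e., in $\bigcup_{d \in \N} M_d(\g; R)$. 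Hence $\bigcup_{d \in \N} S_d = \mathcal{R}_\nu$, and Corollary \ref{finitenesscor} yields some $d \in \N$ with $S_d = \mathcal{R}_\nu$. Specializing to $R = \R$ is exactly the desired uniform degree bound.

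The only slightly delicate step is justifying the semialgebraicity of $S_d$: one must ensure that the existential quantifier implicit in the condition $\ell_a \in M_d(\g; R)$ ranges over only finitely many variables in $R$, so that \ref{elim} applies. This is precisely what the Carathéodory bound on the number of square summands (or, equivalently, the Gram matrix method \ref{gram}) provides, exactly as in the proofs of \ref{truncclosed} and \ref{putinarzerosdegreebound}.
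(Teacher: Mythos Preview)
Your proof is correct and follows essentially the same approach as the paper: both deduce the result from Corollary \ref{linearstabilitycor} via the finiteness theorem \ref{finitenesscor}, by encoding membership in a truncated quadratic module as a semialgebraic class $S_d$. The only cosmetic difference is that the paper bounds the number of summands simply by $d$ (as discussed in \ref{commentonbounds}(a)), whereas you invoke Carathéodory to fix bounds $k_0,\dots,k_m$; both devices serve the same purpose of making the existential quantifier range over finitely many variables so that \ref{elim} applies.
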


\begin{proof}{}(cf. the proofs of Theorems \ref{h17bound} and \ref{putinarzerosdegreebound})
For each $d\in\N$, consider the class $S_d$
of all pairs $(R,a_0,a_1,\ldots,a_n)$ where $R$ is a real closed extension field of $\R$ and
$a_0,a_1,\ldots,a_n\in R$
such that whenever \[\forall x\in\transfer_{\R,R}(S):a_1x_1+\ldots+a_nx_n+a_0\ge0\]
holds, the polynomial $a_1X_1+\ldots+a_nX_n+a_0$ is a sum
of $d$ elements from $R[\x]$ where each term in the sum is of degree at most $d$
and is of the form $p^2g_i$ with $p\in R[\x]$ and $i\in\{0,\dots,m\}$ where $g_0:=1\in R[\x]$ [$\to$ \ref{commentonbounds}(a)].
By real quantifier elimination \ref{elim}, it
is easy to see that this is an $(n+1)$-ary $\R$-semialgebraic class.
Set $\mathcal E:=\{S_d\mid d\in\N\}$ and
observe that $\forall d_1,d_2\in\N:\exists d_3\in\N:S_{d_1}\cup S_{d_2}\subseteq
S_{d_3}$ (take $d_3:=\max\{d_1,d_2\}$). By \ref{linearstabilitycor},
we have $\bigcup\mathcal E=\mathcal R_{n+1}$. Now
\ref{finitenesscor} yields $\set_\R(S_d)=\R^{n+1}$ for some $d\in\N$.
\end{proof}

Our lecture notes culminate in the following result which is a contribution to the theory of solving systems of polynomial inequalities.

\begin{cor}[Kriel, Schweighofer \cite{ks'}]
Let $m\in\N_0$ and $\g\in\R[\x]^m$ such that $M(\g)$ is Archimedean and suppose that $S(\g)$
has nonempty interior near its convex boundary.
Suppose that $g_i$ is strictly quasiconcave on $(\convbd S(\g))\cap Z(g_i)$ for each
$i\in\{1,\dots,m\}$. Then \[S_d(\g)=\conv S(\g)\] for all sufficiently large $d\in\N$.
\end{cor}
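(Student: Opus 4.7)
The plan is to combine Corollary \ref{strqc} with Theorem \ref{exact2} after disposing of the degenerate case $S(\g)=\emptyset$ separately. The central observation is that \ref{strqc} already delivers, under exactly the strict quasiconcavity assumption supplied, a uniform degree bound $d_0$ beyond which every nonnegative linear polynomial on $S(\g)$ is captured by the truncated quadratic module $M_{d_0}(\g)$; Theorem \ref{exact2} is then the mechanism that translates this purely algebraic statement into the required geometric equality $S_{d_0}(\g)=\conv S(\g)$.

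First I would handle $S(\g)=\emptyset$ directly. Here every linear $\ell\in\R[\x]_1$ is vacuously nonnegative on $S(\g)$, so Corollary \ref{strqc} produces some $d_0$ with $\R[\x]_1\subseteq M_{d_0}(\g)$; in particular $-1\in M_{d_0}(\g)$. For any $d\ge d_0$ and any $\ph\in L_d(\g)$, one would then have $\ph(-1)\ge 0$ and $\ph(1)=1$, a contradiction, forcing $L_d(\g)=\emptyset$ and hence $S_d(\g)=\emptyset=\conv S(\g)$.

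Second, in the main case $S(\g)\neq\emptyset$, I would verify the two hypotheses required by Theorem \ref{exact2}. Archimedeanness of $M(\g)$ via \ref{archmodulechar}(b) yields $N-\sum X_i^2\in M(\g)$ for some $N\in\N$, bounding $S(\g)$ inside a Euclidean ball; combined with closedness of $S(\g)$ as a basic closed semialgebraic set, this gives compactness. For nonempty interior, I would argue that any nonzero linear functional attains its minimum on the nonempty compact set $S(\g)$, and by Proposition \ref{convbdchar} this minimizer lies in $\convbd S(\g)$, so $\convbd S(\g)\neq\emptyset$; the hypothesis $\convbd S(\g)\subseteq\overline{S(\g)^\circ}$ then forces $S(\g)^\circ\neq\emptyset$. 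With \ref{exact2} applicable, Corollary \ref{strqc} yields $d_0\in\N$ such that every linear $\ell\in\R[\x]_1$ nonnegative on $S(\g)$ lies in $M_{d_0}(\g)$, which is exactly condition (b) of \ref{exact2}, so $S_{d_0}(\g)=\conv S(\g)$. For arbitrary $d\ge d_0$, the inclusion $M_{d_0}(\g)\subseteq M_d(\g)$ together with Remark \ref{hierarchy}(d),(e) gives the chain $\conv S(\g)\subseteq S_d(\g)\subseteq S_{d_0}(\g)=\conv S(\g)$, completing the argument.

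All of the substantive difficulty—the Lagrange multiplier analysis in real closed field extensions developed in Lemmata \ref{lagrange}--\ref{finitecontact}, the infinitesimal hyperplane deformation trick using the factors $(1-g_i)^{2k}$ in the proof of Theorem \ref{linearstability}, and the semialgebraic compactness extraction of uniform degree bounds—has already been absorbed into Corollary \ref{strqc}. Consequently the only obstacle at this final stage is the bookkeeping of the degenerate possibilities ($S(\g)$ empty, or nonempty with empty interior), and the nonempty-interior-near-convex-boundary hypothesis is precisely tailored to rule out the latter while leaving the former to be dispatched by a one-line emptiness argument.
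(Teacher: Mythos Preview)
Your proposal is correct and follows essentially the same route as the paper: dispose of the empty case via $-1\in M_{d_0}(\g)$ obtained from \ref{strqc}, and in the nonempty case verify compactness (from Archimedeanness) and nonempty interior (via $\convbd S(\g)\ne\emptyset$ together with the nonempty-interior-near-convex-boundary hypothesis) so that Theorem \ref{exact2} reduces the claim to the degree-bound statement of Corollary \ref{strqc}. Your write-up is slightly more explicit than the paper's in spelling out the monotonicity $S_d(\g)\subseteq S_{d_0}(\g)$ for $d\ge d_0$ and the contradiction $\ph(-1)\ge0$ in the empty case, but the structure is identical.
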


\begin{proof} First consider the special case $S(\g)=\emptyset$. In this case, $-1\in M(\g)$ for example by \ref{putinar} or by \ref{strqc}. By Definition \ref{lasserredef}, this
entails $L_d(\g)=\emptyset$ and thus $S_d(\g)=\emptyset=S(\g)$ for all sufficiently large $d\in\N$.

Now suppose that $S(\g)\ne\emptyset$. Since $M(\g)$ is Archimedean, $S(\g)$ is then compact. By \ref{convbdchar} and \ref{takeson}, it follows that $\convbd S(\g)\ne\emptyset$.
In particular, $S^\circ\ne\emptyset$ by Definition \ref{nearconvexboundary}. Hence the conditions of Theorem \ref{exact2} are met and what we have to show is therefore
exactly that there is $d\in\N$ such that
\[\forall f\in\R[\x]_1:(f\ge0\text{ on }S(\g)\implies f\in M_d(\g)).\]
But this is exactly what Corollary \ref{strqc} says.
\end{proof}









\backmatter

\printindex

\end{document}